\numberwithin{equation}{section}
\theoremstyle{plain}
\newtheorem{theorem}[subsubsection]{Theorem}
\newtheorem{lemma}[subsubsection]{Lemma}
\newtheorem{prop}[subsubsection]{Proposition}
\newtheorem{cor}[subsubsection]{Corollary}
\newtheorem{conj}[subsubsection]{Conjecture}
\theoremstyle{definition}
\newtheorem{defn}[subsubsection]{Definition}
\newtheorem{remark}[subsubsection]{Remark}
\newtheorem{exam}[subsubsection]{Example}
\newtheorem{ex}[subsubsection]{Example}
\def\module{\operatorname{-mod}}
\def\bimod{\operatorname{-mod-}}
\def\rmod{\operatorname{mod-}}
\def\sfA{\mathsf{A}}
\def\sfB{\mathsf{B}}
\def\sfQ{\mathsf{Q}}
\def\sfQ{\mathsf{Q}}
\def\sfV{\mathsf{V}}
\def\sfW{\mathsf{W}}
\def\sfX{\mathsf{X}}
\def\sfp{\mathsf{p}}
\def\sfBun{\mathsf{Bun}}
\def\sfHk{\mathsf{Hk}}
\def\AA{\mathbb{A}}
\def\BB{\mathbb{B}}
\def\CC{\mathbb{C}}
\def\EE{\mathbb{E}}
\def\GG{\mathbb{G}}
\def\NN{\mathbb{N}}
\def\PP{\mathbb{P}}
\def\QQ{\mathbb{Q}}
\def\RR{\mathbb{R}}
\def\SS{\mathbb{S}}
\def\UU{\mathbb{U}}
\def\WW{\mathbb{W}}
\def\XX{\mathbb{X}}
\def\YY{\mathbb{Y}}
\def\ZZ{\mathbb{Z}}
\newcommand\cA{\mathcal{A}}
\newcommand\cC{\mathcal{C}}
\newcommand\cD{\mathcal{D}}
\newcommand\cE{\mathcal{E}}
\newcommand\cF{\mathcal{F}}
\newcommand\cG{\mathcal{G}}
\newcommand\cH{\mathcal{H}}
\newcommand\cK{\mathcal{K}}
\newcommand\cL{\mathcal{L}}
\newcommand\cN{\mathcal{N}}
\newcommand\cO{\mathcal{O}}
\newcommand\cQ{\mathcal{Q}}
\newcommand\cR{\mathcal{R}}
\newcommand\cS{\mathcal{S}}
\newcommand\cU{\mathcal{U}}
\newcommand\cW{\mathcal{W}}
\newcommand\cY{\mathcal{Y}}
\newcommand\cZ{\mathcal{Z}}
\def\bI{\mathbf{I}}
\def\bJ{\mathbf{J}}
\def\bP{\mathbf{P}}
\newcommand\frB{\mathfrak{B}}
\newcommand\frC{\mathfrak{C}}
\newcommand\frO{\mathfrak{O}}
\newcommand\frP{\mathfrak{P}}
\newcommand\frR{\mathfrak{R}}
\newcommand\frV{\mathfrak{V}}
\newcommand\frW{\mathfrak{W}}
\newcommand\frX{\mathfrak{X}}
\newcommand\frY{\mathfrak{Y}}
\newcommand\frZ{\mathfrak{Z}}
\newcommand\fra{\mathfrak{a}}
\newcommand\frb{\mathfrak{b}}
\newcommand\frc{\mathfrak{c}}
\newcommand\frg{\mathfrak{g}}
\newcommand\frh{\mathfrak{h}}
\newcommand\frn{\mathfrak{n}}
\newcommand\frp{\mathfrak{p}}
\newcommand\frq{\mathfrak{q}}
\newcommand\tilW{\widetilde{W}}
\newcommand\bc{\textup{bc}}
\newcommand\bimon{\mathit{bimon}}
\newcommand\bub{\mathit{bub}}
\newcommand{\Bun}{\textup{Bun}}
\newcommand\ev{\textup{ev}}
\newcommand\Gal{\textup{Gal}}
\newcommand{\Gr}{\textup{Gr}}
\newcommand{\Hk}{\textup{Hk}}
\newcommand\id{\textup{id}}
\newcommand\inv{\textup{inv}}
\newcommand\Lie{\textup{Lie}}
\newcommand\Loc{\textup{Loc}}
\newcommand{\Perf}{\textup{Perf}}
\newcommand{\Pic}{\textup{Pic}}
\newcommand{\qc}{\textup{QC}}
\newcommand{\Res}{\textup{Res}}
\newcommand\Spec{\textup{Spec }}
\newcommand\Spf{\textup{Spf}}
\newcommand\St{\mathit{St}}
\newcommand{\univ}{\textup{univ}}
\newcommand\Aut{\textup{Aut}}
\newcommand\Hom{\textup{Hom}}
\newcommand\End{\textup{End}}
\newcommand{\Gm}{\GG_m}
\newcommand{\der}{\textup{der}}
\newcommand\xch{\mathbb{X}^*}
\newcommand\xcoch{\mathbb{X}_*}
\renewcommand\a\alpha
\renewcommand\b\beta
\newcommand\g\gamma
\renewcommand\d\delta
\newcommand\D\Delta
\newcommand{\e}{\epsilon}
\renewcommand{\th}{\theta}
\newcommand{\Th}{\Theta}
\newcommand{\ph}{\varphi}
\newcommand{\Sig}{\Sigma}
\newcommand{\s}{\sigma}
\renewcommand{\t}{\tau}
\newcommand{\y}{\eta}
\newcommand{\z}{\zeta}
\renewcommand{\l}{\lambda}
\renewcommand{\L}{\Lambda}
\newcommand{\om}{\omega}
\newcommand{\Om}{\Omega}
\newcommand{\io}{\iota}
\DeclareMathOperator{\Eis}{Eis}
\newcommand{\incl}{\hookrightarrow}
\newcommand{\isom}{\stackrel{\sim}{\to}}
\newcommand{\bij}{\leftrightarrow}
\newcommand{\twtimes}[1]{\times^{#1}}
\newcommand{\wt}[1]{\widetilde{#1}}
\newcommand{\wh}[1]{\widehat{#1}}
\newcommand\quash[1]{}
\newcommand\un{\underline}
\newcommand\ov{\overline}
\newcommand\bs{\backslash}
\newcommand\ot{\otimes}
\newcommand{\op}{\oplus}
\newcommand{\tl}[1]{[\![#1]\!]}
\newcommand{\lr}[1]{(\!(#1)\!)}
\newcommand{\cohog}[2]{\textup{H}^{#1}({#2})}     
\newcommand{\oll}[1]{\overleftarrow{#1}}
\newcommand{\orr}[1]{\overrightarrow{#1}}
\newcommand\pt{\textup{pt}}
\newcommand\vn{\varnothing}
\newcommand\hs{\heartsuit}
\newcommand\ds{\diamondsuit}
\newcommand\xr{\xrightarrow}
\renewcommand\c{\circ}
\newcommand{\quot}{/\hspace{-.25em}/}
\newcommand{\Sph}{\mathit{sph}}
\newcommand{\Wh}{\textup{Wh}}
\newcommand{\Maps}{\textup{Maps}}
\newcommand{\sph}{\mathit{sph}}
\newcommand{\aff}{\mathit{aff}}
\newcommand{\Sh}{\mathit{Sh}}
\newcommand{\Sta}{\mathit{Stk}}
\newcommand{\Sch}{\mathit{Sch}}
\newcommand{\beq}{\begin{equation}}
\newcommand{\eeq}{\end{equation}}
\newcommand{\ssupp}{\mathit{ss}}
\newcommand{\oo}{\infty}
\newcommand{\ol}{\overline}
\newcommand{\Fun}{\operatorname{Fun}}
\newcommand\sss{\subsubsection}
\newcommand\Corr{\operatorname{Corr}}
\newcommand\ver{\mathit{vert}}
\newcommand\hor{\mathit{horiz}}
\newcommand\adm{\mathit{adm}}
\newcommand\inj{\mathit{inj}}
\newcommand\na{\natural}
\renewcommand\r{\rho}
\newcommand\bt{\boxtimes}
\newcommand\bu{\bullet}
\newcommand\mt{\mapsto}
\newcommand\nb{\nabla}
\newcommand\Rad{\operatorname{Rad}}
\newcommand\tGm{\widetilde{\mathbb{G}}_{m}}
\title{Automorphic gluing functor in Betti Geometric Langlands}
\author{David Nadler}
\address{Department of Mathematics\\University of California, Berkeley\\Berkeley, CA  94720-3840}
\email{nadler@math.berkeley.edu}
\author{Zhiwei Yun}
\address{Department of Mathematics, MIT,  77 Massachusetts Ave., Cambridge, MA 02139}
\email{zyun@mit.edu}
\date{\today}
\subjclass[]{}
\dedicatory{}	
\date{\today}
\keywords{}
\begin{document}


\maketitle

\begin{abstract}

We study automorphic categories of nilpotent sheaves under  degenerations of smooth curves to nodal Deligne-Mumford curves. Our constructions  realize affine Hecke operators as the result of bubbling projective lines from marked points. We use this to construct a ``gluing functor" from the automorphic category of a nodal
Deligne-Mumford  curve to the automorphic category of a smoothing. 
\end{abstract}

\tableofcontents



\section{Introduction}

This is part of a series of papers devoted to Verlinde formulas in Betti Geometric Langlands~\cite{BN}.
By this, we mean the idea -- developed in conjunction with D. Ben-Zvi -- to study automorphic and spectral categories   associated to smooth curves via degenerations to nodal curves. Our aims are to produce results  for automorphic categories parallel to established results for spectral categories~\cite{BN-spec}. We seek to express  automorphic categories for smooth curves by ``gluing"  automorphic categories for   marked genus zero curves. 
This paper constructs the functors underlying this gluing.

\subsection{First statement of the main result}\label{ss:intro main} Here we state the main result of this paper in a simplified situation in terms of familiar objects in geometric Langlands.  The proof  involves several new constructions that are characteristic of our approach to Betti geometric Langlands, and will be sketched in the subsequent sections of the introduction.

We will work with schemes and stacks over the complex numbers. 
We will  study sheaves of complex vector spaces in the classical topology. All constructions will be derived without further comment. Thus we will write sheaf in place of complex of sheaves, category in place of dg or $\oo$-category, etc.

%
%

\begin{defn}\label{def intro:sep nod deg A1} A {\em separating nodal degeneration over $\AA^{1}$} is a flat family of projective curves $\tau:\frY\to \AA^{1}$   such that:
\begin{enumerate}
\item the total space $\frY$ is a smooth surface;
\item $\tau$ is smooth over $\Gm=\AA^{1}\bs\{0\}\subset \AA^{1}$ with connected fibers;
\item the special fiber $Y=\tau^{-1}(0)$ is scheme-theoretically the union of two smooth transversely intersecting curves  $Y_{-}, Y_{+} \subset \frY$ with nodes $R=Y_{-}\cap Y_{+}$.
\end{enumerate} 
\end{defn}

\begin{remark}
In the main body of the paper, we allow the base of the family $\t$ to be an arbitrary smooth curve. Here in the introduction for simplicity we discuss our results when the base is $\AA^1$.
\end{remark}

Let $G$ be a connected reductive group, $B\subset G$ a Borel subgroup, $N = [B, B]$ its unipotent radical, and $H = B/N$ the universal Cartan. 
Let $G\lr{z}$ be the corresponding loop group, $\bI\subset  G\lr{z}$ the Iwahori, and $\bI^{\c} \subset \bI$ its pro-unipotent radical, so that $H \simeq \bI/\bI^\c$.

On the one hand, let $\Bun_{G,N}(Y_{\pm},R)$ be the moduli stack of $G$-bundles over $Y_{\pm}$ with $N$-reductions along the points $R=Y_{-}\cap Y_{+}$. Let $\cA(Y_{\pm},R)$ be the category of nilpotent sheaves on $\Bun_{G,N}(Y_{\pm},R)$, i.e.~sheaves with  nilpotent singular support. 

Let $\cH$ be the universal monodromic version of the affine Hecke category (see \S\ref{sss:aff Hk}). Its objects are sheaves on the ind-stack $\bI^{\c}\bs G\lr{z}/\bI^{\c}$ that are monodromic under the left and right $H$-actions. For each $y\in R$, the affine Hecke category $\cH$ acts on $\cA(Y_{\pm },R)$  on the left by modifications at $y$. 
The actions evidently commute making $\cA(Y_{\pm},R)$ a left $\cH^{\ot R}$-module category.

We would like to regard $\cA(Y_{- },R)$ as a right  $\cH^{\ot R}$-module category. We do this by using the  monoidal equivalence $\cH\simeq \cH^{op}$ that sends each Wakimoto operator $J_\lambda$ to its conjugate by the long intertwiner $\wh\D(w_{0})$, i.e.,  $\wh\D(w_{0})\star J_{\l}\star \wh\nb(w_{0})$; and  sends each standard object $\wh\D(w)$ in the finite Hecke category to $\wh\D(w_{0}w^{-1}w_{0}^{-1})$. For the source of this anti-equivalence, see Remark \ref{r:two eq H}.

%

On the other hand, consider the moduli  $\Pi:\Bun_{G}(\t) \to \AA^1 $ classifying a point of the base $\AA^1$ and a  $G$-bundle along the corresponding fiber of $\t$.   
Consider the  real analytic family of curves $\tau_{>0} = \tau|_{\RR_{>0}}:\frY_{>0} = \t^{-1}(\RR_{>0}) \to \RR_{>0}$ obtained by restricting to the positive real ray $\RR_{>0} \subset \AA^1(\CC)$.
Let $\cA(\frY_{>0})$ denote the category of nilpotent sheaves on the corresponding real analytic moduli $ \Bun_{G}(\tau_{>0}) =\Pi^{-1}(\RR_{>0})$.

Here the notion of nilpotent sheaves  involves a conic closed Lagrangian in $T^{*}\Bun_{G}(\t)$ which we call the {\em universal nilpotent cone}. It is a family version of the global nilpotent cone. Its construction is of independent interest and will be discussed in more detail in \S\ref{ss:intro univ cone}. Also, in Appendix \ref{app:A}, we develop foundations of sheaf theory on certain real analytic stacks.

The main result of this paper can be stated as follows.

\begin{theorem}[See Theorem \ref{th:bar complex}]\label{thm:main coarse} There is a canonical functor
\begin{equation}
\xymatrix{
\alpha:\cA(Y_-, R) \otimes_{\cH^{\ot R}} \cA(Y_+, R) \ar[r] & \cA(\frY_{>0}).
}
\end{equation}
In particular, for any $\b>0$, with fiber curve denoted by $Y_{\b}=\t^{-1}(\b)$ and automorphic category of nilpotent sheaves $\cA(Y_\beta)$, further restricting from $\Bun_{G}(\t_{>0})$ to $\Bun_{G}(Y_{\b})$ gives a functor
\begin{equation}
\xymatrix{
\alpha_{\b}:\cA(Y_-, R) \otimes_{\cH^{\ot R}} \cA(Y_+, R) \ar[r] & \cA(Y_{\b}).
}
\end{equation}

\end{theorem}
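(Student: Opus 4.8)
The plan is to construct the gluing functor $\alpha$ via a bar-complex (two-sided bar construction) realizing the relative tensor product $\cA(Y_-,R)\otimes_{\cH^{\otimes R}}\cA(Y_+,R)$ as a geometric colimit, and to produce compatible functors $\cA(Y_-,R)\otimes(\cH^{\otimes R})^{\otimes n}\otimes\cA(Y_+,R)\to\cA(\frY_{>0})$ by moduli-theoretic operations. The starting point is a \emph{bubbling} degeneration: near each node $y\in R$, the total space $\frY$ restricted to a disk in the base admits a real-analytic family where the smooth fiber $Y_\beta$ degenerates, as $\beta\to 0$, to $Y_-\cup Y_+$; more refined degenerations insert a chain of $n$ bubbled $\PP^1$'s between $Y_-$ and $Y_+$ at $y$. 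Passing to $G$-bundles, a bundle on such a chain-degenerate curve is the data of bundles on $Y_\pm$ together with bundles on each bubble $\PP^1$ and gluing/$N$-reduction data at the successive nodes; this is precisely the moduli interpretation of $\Bun_{G,N}(Y_-,R)\times_{(\Bun_{B}(\text{node}))}\cdots\times\Bun_{G,N}(Y_+,R)$, and nilpotent sheaves on a $\PP^1$-bubble with two marked points are governed by the affine Hecke category $\cH$ (this is the ``affine Hecke operators from bubbling'' mentioned in the abstract, which I would take from the main body). The $n$-th term of the bar complex thus has a geometric incarnation as nilpotent sheaves on a moduli stack of such chain-curves with $G$-bundles, and the face maps are the correspondences that collapse a bubble (composing in $\cH$) or absorb it into $Y_\pm$ (the module actions).

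\textbf{Key steps, in order.} First, I would set up the moduli stack $\frBun^{(n)}$ over a neighborhood of $0$ fibering over the base, whose special fiber is $\Bun_G$ of the $n$-bubbled curve and whose generic fiber is $\Bun_G(Y_\beta)$, and establish that the nearby-cycles/specialization-type functor along $\tau_{>0}$ carries nilpotent sheaves to nilpotent sheaves — this is where the \emph{universal nilpotent cone} in $T^*\Bun_G(\tau)$ (promised in \S\ref{ss:intro univ cone}) does the work, since one needs the singular-support condition to be preserved under the relevant pushforward/pullback along the real-analytic family $\frY_{>0}\to\RR_{>0}$. Second, I would assemble, for each simplicial degree $n$, a functor $\cA(Y_-,R)\otimes(\cH^{\otimes R})^{\otimes n}\otimes\cA(Y_+,R)\to\cA(\frY_{>0})$ by: box-producting the sheaves onto the chain-moduli $\frBun^{(n)}$, then applying the bubbling-specialization functor to land in $\cA(\frY_{>0})$. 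Third, I would check that these functors are compatible with the bar differentials — the two ways of removing a bubble (merging with a neighbor in $\cH$, or with the end module) must give the same map after specialization, which is a geometric base-change statement for the correspondence diagrams of chain-moduli. Fourth, by the universal property of geometric realization (bar resolution computing the relative tensor product), these compatible functors descend to the desired $\alpha$. Finally, restricting from $\frY_{>0}$ to the single fiber $Y_\beta$ for $\beta>0$ — i.e. $!$- or $*$-restricting along $\Bun_G(Y_\beta)\hookrightarrow\Bun_G(\tau_{>0})$, which preserves nilpotence by the universal-cone property — gives $\alpha_\beta$; the ``in particular'' clause is then immediate by post-composition.

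\textbf{The main obstacle} I expect is Step 1 together with the compatibility in Step 3: controlling singular support uniformly in the real-analytic family. One must show that the universal nilpotent cone restricts correctly on each stratum (generic fiber, and the bubbled special fiber with its $\cH$-module structure) and that the specialization functor genuinely intertwines the $\cH^{\otimes R}$-actions on the two sides — i.e. that the affine Hecke action ``by modifications at $y\in R$'' on $\cA(Y_\pm,R)$ matches the action obtained geometrically by sliding a bubble from $Y_\pm$ toward the node. This is the heart of ``realizing affine Hecke operators as bubbling,'' and it is where the nonstandard foundations (sheaf theory on real analytic stacks, Appendix \ref{app:A}) are needed, since the family $\tau_{>0}$ is only real-analytic and one cannot invoke the usual algebraic nearby-cycles machinery directly. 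Secondary technical points — convergence/finiteness of the bar colimit, and coherence of the simplicial functor as an $\infty$-functor rather than just degreewise — I would handle by the standard $\infty$-categorical bar-construction formalism once the geometric inputs are in place.
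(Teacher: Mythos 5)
Your proposal follows essentially the same route as the paper: the relative tensor product is computed by a bar complex whose $n$-th term is nilpotent sheaves on $\Bun_G$ of the $n$-fold bubbled special fiber, the bar differentials and the augmentation to $\cA(\frY_{>0})$ are the left adjoints of (iterated) nearby cycles, the key compatibility is exactly the base-change/commuting-nearby-cycles statement you flag (which the paper verifies using the universal nilpotent cone and the criterion of \cite{N}, packaged through the Gaitsgory--Rozenblyum correspondence formalism), and $\alpha_\beta$ is obtained by restriction to a fiber. The only points to watch are variance and the Hecke identification: the maps out of the bar terms are the \emph{left adjoints} of the specialization (nearby cycles) functors rather than specialization itself, and the equivalence of the bubble category with $\cH$ together with the matching of its module actions on $\cA(Y_\pm,R)$ (Theorems \ref{th:Hbub} and \ref{th:matching actions}) is a genuine ingredient that you correctly note must be imported from the body of the paper.
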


The proof of this result involves the following major steps:
\begin{itemize}
\item Starting from the family of curves $\t:\frY\to \AA^{1}$, we will construct a cosimplicial family of curves $\pi^{[n]}: \frX^{[n]}\to A^{[n]}\cong \AA^{n+1}$  called the {$k$th twisted cosimplicial bubbling of $\t$}. The central fiber $X(0)$ of $\pi^{[0]}$ is obtained from $Y=\t^{-1}(0)$ by turning the nodes to orbifold points with automorphism group $\mu_{k}$. The central fiber $X(n)$ of $\pi^{[n]}$ is the obtained by inserting $n$ copies of $[\PP^{1}/\mu_{k}]$ at each node of $X(0)$.

\item Rewriting $\cA(Y_\pm, R)$ as $\cA(X_{\pm},Q)$, where $X_{\pm}$ are Deligne-Mumford curves obtained from $Y_{\pm}$ by turning the nodes $R$ to orbifold points $Q\cong R\times \BB\mu_{k}$, and $X(0)=X_{-}\cup_{Q}X_{+}$.

\item Rewriting the affine Hecke category $\cH$ as the bubbling Hecke category $\cH^{\bub}$ as in
Theorem~\ref{thm:equiv of hecke cats}. The bubbling Hecke category $\cH^{\bub}$ consists of nilpotent sheaves on the moduli of $G$-bundles on the Deligne-Mumford curve $[\PP^{1}/\mu_{k}]$.

\item Identifying the bar complex calculating $\cA(X_-, Q) \otimes_{\cH^{\bub,\ot R}} \cA(X_+, Q)$ as sheaf categories coming from the moduli of $G$-bundles on the twisted cosimplicial bubbling. More precisely,  the $n$th term in the bar complex of $\cA(X_-, Q) \otimes_{\cH^{\bub,\ot R}} \cA(X_+, Q)$ is the category of nilpotent sheaves on the central fiber $X(n)$ of $\pi^{[n]}$. The maps in the bar complex come as left adjoints to nearby cycles for the families  $\{\pi^{[n]}\}$. The functor $\a$ in the main theorem also comes as the left adjoint to nearby cycles for $\pi^{[0]}$.
\end{itemize}


Below we give more details of these constructions.

\subsection{Deligne-Mumford curves and automorphic sheaves} 

We fix a maximal torus $T\subset B$, with coweight lattice $\XX_*(T)= \Hom(\GG_m, T)$, Weyl group $W = N_G(T)/T$, and extended affine Weyl group $ \tilW  = \XX_*(T) \rtimes W$.

\subsubsection{Bundles on Deligne-Mumford curves}
In the work of Solis~\cite{Solis} on the compactification of the moduli of bundles on nodal degenerations of smooth curves, one is naturally led to consider nodal Deligne-Mumford (DM for short) curves. Inspired by this, we will also use DM curves in the proof of the main theorem.

Let $X$ be a smooth connected projective DM curves $X$. We will assume there is an open dense $X^\circ \subset X$ that is a scheme and whose complement $Q = X \setminus X^\circ$ is finitely many twisted points $x \in X $ (i.e., points with nontrivial automorphism group) each with cyclic group $\mu_{k_x}$ as automorphisms, for some $k_x\in \NN$. Thus the formal neighborhood of $x \in X$ is modeled on $(\Spec \CC\tl{u})/\mu_{k_x}$, where $\zeta\in \mu_{k_x}$ acts by $\zeta\cdot u = \zeta u$. We will write $\un X$ for the   coarse moduli of $X$, and likewise $\un Q\subset \un X$  and $\un x\in \un X$  for the respective coarse moduli of $Q\subset  X$ and  $x\in Q$.

Given a $G$-bundle $\cE$ on $X$, its restriction $\cE|_{x}$ to a twisted point $x \in X$ gives a $G$-bundle on the classifying stack $x \simeq \BB\mu_{k_x}$. A $G$-bundle on $\BB\mu_{k_x}$ is represented by a homomorphism $\wt \y_x:\mu_{k_x} \to G$ up to conjugacy, so up to isomorphism such bundles are parametrized by the set $\YY_{k_x}$ of $\tilW$-orbits on $\frac{1}{k_x} \XX_*(T)$. Each $\wt\y_x\in \frac{1}{k_x} \XX_*(T)$ determines a parahoric subgroup $\bP_{\wt\y_x}\subset G\lr{z}$ in the loop group of $G$ centered at $\un x \in \un X$, and we write $\bP^\circ_{\wt \y_x} \subset  \bP_{\wt \y_x}$ for its pro-unipotent radical and   $L_{\wt \y_x} = \bP_{\wt \y_x}/\bP^\circ_{\wt \y_x}$ for its Levi quotient.

Given a collection $\eta = (\y_x)_{x\in Q}$ with $\y_x\in \YY_{k_x}$, let $\Bun_{G}(X)_\eta$ (resp. $\Bun_{G, 1}(X, Q)_\eta$) denote the moduli of $G$-bundles on $X$ in the class indexed by $\y_x$ at each twisted point $x\in Q$ (resp. together with a rigidification along $Q$). 
Given lifts $\wt \eta = (\wt \y_x)_{x\in Q}$ with $\y_x\in\frac{1}{k_x} \XX_*(T)$, let $\Bun_{G, \bP_{\wt \eta}}(\un X, \un Q)$ (resp. $\Bun_{G, \bP^\circ_{\wt \eta}}(\un X, \un Q)$) denote the moduli of $G$-bundles on $\un X$ with  $\bP_{\wt \y_x}$  (resp. $\bP^\circ_{\wt \y_x}$) level structure at  each point $\un x\in \un Q$.

\begin{prop}[See Proposition \ref{p:Bun on tw}]\label{th:intro Bun tw} Assume  for simplicity that the derived group $G^{\der}$ is simply-connected. Then there is a canonical commutative diagram with horizontal isomorphisms 
\beq\label{eq:twist=para}
\xymatrix{
\ar[d]\Bun_{G, 1}(X, Q)_\eta  \ar[r]^-\sim &  \Bun_{G, \bP^\circ_{\wt \eta}}(\un X, \un Q)\ar[d]\\
\Bun_{G}(X)_\eta \ar[r]^-\sim &  \Bun_{G, \bP_{\wt \eta}}(\un X, \un Q) 
}
\eeq
with the bottom row resulting from the top by quotienting by changes of rigidification along $Q$ on the left hand side and quotienting by the natural $\prod_{x\in Q} L_{\wt \y_x}$-action   on the right hand side.
\end{prop}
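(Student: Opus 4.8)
The plan is to construct the top isomorphism $\Bun_{G,1}(X,Q)_\eta \xrightarrow{\sim} \Bun_{G,\bP^\circ_{\wt\eta}}(\un X, \un Q)$ by a local-to-global descent argument, and then obtain the bottom isomorphism and the commutativity of the square by passing to appropriate quotients. First I would recall the local picture at a single twisted point $x \in Q$: the formal neighborhood is $[\Spec\CC\tl u / \mu_{k_x}]$ with $\zeta \cdot u = \zeta u$, whose coarse space is $\Spec\CC\tl t$ with $t = u^{k_x}$; correspondingly the punctured formal neighborhood $[\Spec\CC\lr u/\mu_{k_x}]$ has coarse space $\Spec\CC\lr t$, and $\CC\lr u$ is the totally ramified degree-$k_x$ extension. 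A $G$-bundle on $[\Spec\CC\tl u/\mu_{k_x}]$ in the class $\y_x$, once rigidified, is the same as a $\mu_{k_x}$-equivariant $G$-bundle on $\Spec\CC\tl u$ whose fiber at the origin carries the $\mu_{k_x}$-action through $\wt\y_x \colon \mu_{k_x}\to T \subset G$ (using the lift $\wt\y_x \in \frac1{k_x}\XX_*(T)$, which is literally a cocharacter of $T$ after multiplying by $k_x$, hence a genuine homomorphism $\mu_{k_x}\to T$). The key local computation — essentially Teleman–Woodward / the theory of parahorics — identifies the groupoid of such equivariant bundles (trivialized away from the origin) with $G\lr t / \bP^\circ_{\wt\y_x}$, i.e.\ the datum of a $\bP^\circ_{\wt\y_x}$-level structure on the trivial bundle over $\Spec\CC\tl t$. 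Concretely, the $\mu_{k_x}$-equivariant structure is encoded by the "twisting" element $z^{\wt\y_x}$ (where $z = u$, $z^{k_x} = t$), and the stabilizer of this twisting inside $G\lr t$ acting on $G\lr u$ is exactly $\bP_{\wt\y_x}$, with pro-unipotent radical $\bP^\circ_{\wt\y_x}$; the assumption that $G^{\der}$ is simply connected guarantees $\pi_0$ of the twisted loop group is controlled so that the class $\y_x$ matches up cleanly with $\tilW$-orbits on $\frac1{k_x}\XX_*(T)$.

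Next I would globalize. Write $\un X^\circ = \un X \setminus \un Q = X^\circ$ and observe that $X \to \un X$ is an isomorphism over $X^\circ$. A $G$-bundle on $X$ is glued from its restriction to $X^\circ$ and its restrictions to the formal neighborhoods of the twisted points along the punctured formal neighborhoods; this is the standard Beauville–Laszlo / formal gluing (valid for stacks), and on the coarse side a $\bP^\circ_{\wt\eta}$-level-structured bundle on $\un X$ is glued from a bundle on $\un X^\circ$ together with trivializations-mod-$\bP^\circ$ on the formal disks. Applying the local identification of the previous paragraph at each $x \in Q$ and the identity over $X^\circ$, the two gluing data are identified functorially in families, giving the top horizontal isomorphism. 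I would spell this out as an equivalence of the two moduli problems represented as fiber products over $\prod_{x\in Q}(G\lr{t_x}$-loop data$)$.

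Finally, the bottom isomorphism: on the left, forgetting the rigidification along $Q$ quotients $\Bun_{G,1}(X,Q)_\eta$ by the group of automorphisms of the restriction $\cE|_Q$ of a bundle to the twisted points — and by the local description, $\Aut$ of a $G$-bundle on $\BB\mu_{k_x}$ in class $\y_x$ with its rigidification removed is precisely $L_{\wt\y_x} = \bP_{\wt\y_x}/\bP^\circ_{\wt\y_x}$ (the centralizer in $G$ of the homomorphism $\wt\y_x$, which is the Levi of the parahoric). On the right, passing from $\bP^\circ$-level structure to $\bP$-level structure is exactly quotienting by $\prod_{x\in Q} L_{\wt\y_x} = \prod \bP_{\wt\y_x}/\bP^\circ_{\wt\y_x}$. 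Since the top isomorphism is equivariant for these two identified group actions (this is built into the local picture), it descends to the claimed bottom isomorphism, and the square commutes by construction. The main obstacle — the only place real work is needed — is the local identification at a twisted point: one must check carefully that the $\mu_{k_x}$-equivariant bundles with the prescribed monodromy correspond to $\bP^\circ_{\wt\y_x}$- rather than some other level structure, that the correspondence is insensitive to the choice of lift $\wt\y_x$ up to the $\tilW$-action in the right way, and that it is functorial in families (so as to get an isomorphism of stacks, not just of groupoids of points); the hypothesis $G^{\der}$ simply connected is what makes the bookkeeping of connected components of twisted affine Grassmannians go through, and I would flag where it is used.
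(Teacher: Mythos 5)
Your proposal is correct in outline and its local core is the same as the paper's: the identification, at each twisted point, of (rigidified) $G$-bundles on the orbifold disk of type $\y_x$, trivialized on the punctured disk, with $G(\cK_x)/\bP^{\circ}_{\wt\y_x}$ (resp.\ $G(\cK_x)/\bP_{\wt\y_x}$ without rigidification), via the $\mu_{k_x}$-fixed locus in the ramified affine Grassmannian and the computation of stabilizers. Where you differ is the globalization. You glue the two moduli problems by Beauville--Laszlo descent along the punctured formal disks; the paper (proof of Proposition \ref{p:Bun on tw}) instead constructs a distinguished $G$-torsor $\cE_{\wt\y}$ on $X$ carrying a commuting left $\nu^{*}\cG_{\wt\y}$-action, defines the map $\Bun_{G,\bP'_{\wt\y}}(\un X,\un Q)\to\Bun_{G}(X)_{\y}$ by twisting $\cG_{\wt\y}$-torsors against $\cE_{\wt\y}$, and then proves it is an isomorphism by exhibiting both sides as quotients of the same uniformizing spaces $\prod_{x}G(\cK_x)/\bP'_{\wt\y_x}$. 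The paper's route buys two things rather cheaply that your route leaves as flagged-but-unexecuted checks: (i) canonicity, i.e.\ independence of the choices of $T$ and of the lift $\wt\y$ (handled in \S\ref{sss:parah} and at the end of the paper's proof), and (ii) the equivariance under $\prod_x L_{\wt\y_x}$ needed to descend from the top row to the bottom row, which is visible directly from the twisting construction. Conversely your gluing formulation handles families somewhat more transparently. One refinement: the role of $G^{\der}$ simply connected is not really about $\pi_0$ of twisted Grassmannians matching $\tilW$-orbits (the identification $\YY_{k}\simeq\frac{1}{k}\xcoch(T)/\tilW$ holds in general); it is that the full stabilizer $\bP'_{\wt\y_x}$ of the point $\wt\y_x$ in the building is then connected, hence equals the parahoric $\bP_{\wt\y_x}$, and the Levi $L'_{\wt\y_x}=C_G(\wt\y_x)$ is connected. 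Without that assumption the correct statement (and the one the paper proves in general) uses $\bP'_{\wt\y_x}$ in place of $\bP_{\wt\y_x}$, as noted after the proposition in the introduction.
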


Thus for a fixed curve, there is no difference between allowing twisted points or parahoric  structure.

When $G^{\der}$ is not assumed to be simply-connected, $\bP_{\wt\y}$ should be replaced by a slightly larger group $\bP'_{\wt\y}$. In \S\ref{s:Bun tw}, we will prove a more general version of the above isomorphism for nodal DM curves, as well as a family version.


We will be most interested in collections $\eta = (\y_x)$ where each $\y_x$ is {\em strongly regular}, in the sense that its centralizer is a maximal torus of $G$. When $G^{\der}$ is simply-connected we may take $\wt \y_x = \rho^\vee/h$ where $\rho^\vee$ is  half the sum of the positive coroots, and $h$ is the Coxeter number. When $\y_x$ is strongly regular,  the parahoric  $\bP_{\wt\y_x}$ is in fact an Iwahori subgroup so its Levi $L_{\wt \y_x}$ is canonically isomorphic to $H$.

\subsubsection{Automorphic categories of nilpotent sheaves} Fix a collection $\eta = (\y_x)$ with each $\y_x$ strongly regular. We have the moduli stack $\Bun_{G, 1}(X, Q)_\eta$ with its natural  $H^{\un Q}$-action  by changes of rigidification along $Q$.

The cotangent bundle $T^*\Bun_{G, 1}(X, Q)_\eta$ admits a description in terms of Higgs bundles with a corresponding Hitchin system. The zero fiber of the Hitchin map $\cN \subset T^* \Bun_{G, 1}(X, Q)_\eta$ is a closed conic Lagrangian of everywhere nilpotent Higgs fields.  

Let $\Sh (\Bun_{G, 1}(X, Q)_\eta)$ denote the stable category of sheaves on $\Bun_{G, 1}(X, Q)_\eta$. 
To each sheaf 
 $\cF\in \Sh (\Bun_{G, 1}(X, Q)_\eta)$, we can assign its singular support $\ssupp(\cF) \subset T^* \Bun_{G, 1}(X, Q)_\eta$ which is a closed conic coisotropic subset.

We say  $\cF\in \Sh (\Bun_{G, 1}(X, Q)_{\y})$ is a {\em nilpotent sheaf} if its singular support satisfies $\ssupp(\cF) \subset \cN$. This implies $\cF$ is weakly constructible for some stratification of  $ \Sh (\Bun_{G, 1}(X, Q)_\eta)$, but we place no finiteness condition on the stalks or the support of $\cF$. Our main object of study is the automorphic category of nilpotent sheaves to be denoted 
\begin{equation*}
\cA(X, Q) = \Sh_{\cN} (\Bun_{G, 1}(X, Q)_\eta)
\end{equation*}
The natural $H^{\un Q}$-action on   $\Bun_{G, 1}(X, Q)_\eta$ by changes of rigidification provides  a $\Sh_{0}(H)^{\ot \un Q}$-action on $\cA(X, Q)$ where $\Sh_{0}(H)$ denotes the tensor category of sheaves with locally constant cohomology on $H$ under convolution. Note the Fourier dual description  $\Sh_{0}(H) \simeq \qc(H^\vee)$ as quasi-coherent sheaves on the dual torus.

The diagram \eqref{eq:twist=para} implies an equivalence
\begin{equation}
\cA(X, Q)\simeq \cA(\un X, \un Q)
\end{equation}
as $\Sh_{0}(H)^{\ot \un Q}$-modules. We will match the affine Hecke action on the right side with a symmetry on the left given by the bubbling Hecke category.


\subsection{Bubbling Hecke category}

Fix $k\in\NN$. Let $\PP^1$ be the projective line with affine coordinate $x$. Consider the $\mu_k$-action on $\PP^1$ given by $\zeta\cdot x = \zeta x$.  Let ${}_{k}P_{k} = [ \PP^1/\mu_k]$ denote the  quotient DM curve with orbifold points $0_{k}, \infty_{k} \in {}_{k}P_{k}$ each with automorphisms  $\mu_k$.

Let $\eta_{0}\in \YY_{k}$ be strongly regular.   Set $\y_{\infty}=-\y_{0}$ and $\y=(\y_{0},\y_{\infty})$.

\begin{defn}
The bubbling Hecke category is the automorphic category of nilpotent sheaves
\begin{equation*}
\cH^\bub = \cA({}_{k}P_{k}, \{0_{k}, \infty_{k}\}) =   \Sh_{\cN} (\Bun_{G, 1}({}_{k}P_{k}, \{0_{k}, \infty_{k}\})_\eta)
\end{equation*}
\end{defn}

We have the $\Sh_0(H)$-actions on $\cH^\bub$ at $0_{k}, \infty_{k}$ induced by the $H$-actions on $\Bun_{G, 1}({}_{k}P_{k}, \{0_{k}, \infty_{k}\})_\eta$ at $0_{k}, \infty_{k}$.
The inverse map $\iota:H \to H$, $\iota(h)= h^{-1}$ gives an identification  $\iota_!:\Sh_0(H)\to \Sh_0(H)^{\star op} $ with the monoidal opposite.
 We will apply  this at $\infty$ to regard $\cH^\bub$ as a left $\Sh_0(H)$-module at $0$ and a right $\Sh_0(H)$-module at $\infty$, and hence a $\Sh_0(H)$-bimodule.

In \S\ref{ss:bub Hk}, we will construct a family $\pi:\frP\to \AA^1$ of DM curves with the following properties. Its generic restriction $\frP|_{\Gm} \to \Gm$ is isomorphic to the constant DM curve ${}_{k}P_{k}  \times \Gm \to \Gm$. Its special fiber $\frP|_0$ is isomorphic to the nodal DM curve ${}_{k}P_{k} \vee_{\BB\mu_k} {}_{k}P_{k}$ where  $\infty_{k} \simeq \BB\mu_k$ in the first ${}_{k}P_{k}$ is glued to $0_{k}\simeq \BB\mu_k$ in the second ${}_{k}P_{k}$. 

We consider the moduli  of $G$-bundles along the fibers of $\pi$ as a family over $\AA^1$. Nearby cycles of nilpotent sheaves in this  family  admit a left adjoint providing a functor
\beq\label{eq:mult}
\xymatrix{
m = \psi^L :\cH^\bub \otimes_{\Sh_0(H)} \cH^\bub \ar[r] & \cH^\bub 
}\eeq
We also construct analogous families $\pi^{[n]}:\frP^{[n]} \to \AA^{[n]}$, for $[n] = \{0, \ldots, n\}$, with special fiber $\frP^{[n]}|_0$ isomorphic to a chain of $n+1$ components 
${}_{k}P_{k} \vee_{\BB\mu_k} \cdots \vee_{\BB\mu_k} {}_{k}P_{k}$.  
We consider the moduli  of $G$-bundles along the fibers of $\pi^{[n]}$ as a family over $ \AA^{[n]}$.
The left adjoints to iterated nearby cycles in these families  provide associativity constraints on   the multiplication $m$.

\begin{theorem} \label{thm:intro hbub}

The left adjoint $m$ of \eqref{eq:mult}, together with its associativity constraints, makes $\cH^\bub$ into a monoidal category in  $\Sh_0(H)$-bimodules.

\end{theorem}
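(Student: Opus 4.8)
The plan is to exhibit $\cH^{\bub}$ as an algebra object in the monoidal $2$-category of $\Sh_0(H)$-bimodule categories by supplying a coherent system of multiplication, unit, and higher associativity data, all obtained uniformly as left adjoints to nearby-cycles functors for the family of degenerations $\{\pi^{[n]}:\frP^{[n]}\to\AA^{[n]}\}$. Concretely, I would package the data as a map of (semi-)simplicial objects and invoke a recognition principle: a monoidal structure on an object $\cC$ of a monoidal $2$-category is the same as an augmented (co)simplicial object whose $n$-th term is $\cC^{\otimes n}$ with the correct face/degeneracy maps satisfying the Segal-type conditions. So the first step is to construct, for each $n$, a functor $m^{[n]}$ realizing the $n$-fold multiplication as $\psi^L$ for $\pi^{[n]}$, identifying the source with the relative tensor product $\cH^{\bub}\otimes_{\Sh_0(H)}\cdots\otimes_{\Sh_0(H)}\cH^{\bub}$ via the bar complex — this last identification is exactly the genus-zero / chain-of-$\PP^1$'s instance of the bar-complex computation that the introduction flags as the fourth major step, applied to the chain ${}_{k}P_{k}\vee_{\BB\mu_k}\cdots\vee_{\BB\mu_k}{}_{k}P_{k}$.

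Second, I would establish the compatibilities. The face maps of the simplicial object come from the inclusions of boundary strata $\AA^{[n']}\hookrightarrow \AA^{[n]}$ (setting some smoothing parameters to zero, which degenerates a component to a further nodal chain) together with the open inclusions (smoothing a node, which contracts a $\vee$); passing to $G$-bundles and then to left adjoints of nearby cycles, base change along these strata inclusions yields the required commuting squares. The key structural input is that nearby cycles for these families commute with the relevant smooth/proper base change and with the $H^{\un Q}$-actions by change of rigidification, so that every functor in sight is a map of $\Sh_0(H)$-bimodule categories; this is where one uses that the level-structure torus acting at the gluing points is precisely $H$ (by strong regularity of $\eta$, via Proposition \ref{th:intro Bun tw}) and that nilpotent singular support is preserved — which requires knowing that the universal nilpotent cone restricts correctly along each $\pi^{[n]}$, i.e. the Higgs/Hitchin description behaves well in these bubbling families.

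Third, I would verify the Segal condition: that the canonical map from $\cH^{\bub}$ applied to the chain of length $n$ to the $n$-fold fiber product over $\cH^{\bub}(\text{pt})$ of the length-one pieces is an equivalence. This is the assertion that the central fiber $\frP^{[n]}|_0$ genuinely presents the $n$-fold composition, and it follows from the gluing/factorization description of $\Bun_G$ of a nodal curve as a fiber product of the $\Bun_G$'s of the components over $\Bun_G(\BB\mu_k)$ at each node, combined with a Künneth-type statement for nilpotent sheaves on such fiber products (descent along the gluing maps). Unit and coherence of units come from the degenerate families where a component is the ``trivial'' one, i.e. where the bubbled $[\PP^1/\mu_k]$ carries no modification.

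The main obstacle, I expect, is the bar-complex identification together with the well-definedness of the left adjoint $\psi^L$ at every simplicial level: one must show that nearby cycles for each $\pi^{[n]}$ (on the large, non-constructible-in-stalks categories $\cA$) actually admits a left adjoint, that this left adjoint preserves nilpotence of singular support, and that the resulting functors on bar complexes assemble into a genuine simplicial diagram rather than merely a diagram-up-to-incoherent-homotopy. Controlling the singular support of $\psi^L\cF$ — equivalently, understanding how the global nilpotent cone degenerates across the bubbling family and matching it with the relative tensor product's natural Lagrangian — is the technical heart; everything else (base change, Künneth, reduction to the simply-connected case, passage between twisted points and parahoric level via \eqref{eq:twist=para}) is comparatively formal once the foundational sheaf theory of Appendix \ref{app:A} is in place. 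I would isolate this as a lemma ``$\psi^L$ exists and preserves $\cN$ for each $\pi^{[n]}$,'' prove it once using the Higgs-bundle picture of the cotangent bundles and a specialization argument for the Hitchin map, and then run the formal recognition-principle argument above.
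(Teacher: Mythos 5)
Your proposal reproduces the paper's geometric skeleton (multiplication and its higher compatibilities as left adjoints of nearby cycles for the bubbling families $\pi^{[n]}$, with the $n$-fold terms identified via a K\"unneth statement over $\Sh_0(H)$ — this is exactly the monodromic gluing of Lemma \ref{lem:monod gl} applied to the product decomposition of bundles on the chain), but it misplaces the technical heart and leaves the coherence step without a mechanism. The step you call ``comparatively formal'' — that ``base change along these strata inclusions yields the required commuting squares'' — is precisely what is \emph{not} formal: the needed identities are Beck--Chevalley isomorphisms between $*$-pushforward from open strata and $*$-restriction to closed strata over a higher-dimensional base, i.e.\ commutation of (iterated) nearby cycles, which fails in general. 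In the paper this is the longest part of the argument (Proposition \ref{p:sh functor from C}(2), via an induction reducing to Theorem \ref{th:comm nc stack}, the microlocal criterion of \cite{N}), and it is exactly where the two singular-support conditions are used (generic nilpotence gives non-characteristicity via Corollary \ref{c:univ cone non-char}; relative nilpotence gives the isotropic ``Thom'' condition at the central fiber). By contrast, the lemma you isolate as the crux — existence of $\psi^L$ and preservation of nilpotence — is comparatively soft in the paper: left adjoints exist by compact generation and Lemma \ref{lem:rest adj st}, and preservation of the support conditions is Lemma \ref{lem:rel ss stacks}.

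Second, your plan to ``run the formal recognition-principle argument'' presupposes a homotopy-coherent (semi-)simplicial diagram of the left adjoints, which you acknowledge as an obstacle but do not resolve. The paper's device is to avoid constructing coherences for the left adjoints by hand: one builds a strict diagram of stacks indexed by the combinatorial category $\cC$, hence a genuine functor $\Phi_*$ of $*$-pushforwards, verifies the Beck--Chevalley condition, and invokes the Gaitsgory--Rozenblyum theorem \cite{GR} to extend to the correspondence $2$-category $\Corr^\adm_{\ver;\hor}(\cC^{op})$; the comultiplication is then encoded by the coalgebra object $c^1=(1,\vn)$ in $\Corr^\adm_{\ver;\hor}(\cC_\star^{op})$, and passing to left adjoints yields $\cH^\bub$ as an algebra with all coherences inherited. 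Relatedly, the geometry only produces a \emph{semi}-simplicial (augmented) structure — no degeneracies — so the monoidal structure is a priori non-unital; the unit is not obtained from a ``trivial-component'' degeneration as you suggest, but from the separately constructed object $e$ together with the identification $\cH^\bub\simeq\cH$ (Theorem \ref{th:Hbub}, Corollary \ref{c:unit}). Without an argument supplying the coherent diagram and the non-formal base-change isomorphisms, the proposal as written does not yet prove the theorem.
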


The point of Theorem~\ref{thm:intro hbub} is not the abstract construction  of the monoidal category $\cH^\bub$, but rather its geometric realization in terms of degenerations of curves. It will allow us to establish results about more general automorphic categories under degenerations of curves. 

In fact, we will identify $\cH^\bub$ as a monoidal category with the universal version of the local affine Hecke category.

\begin{theorem}[See Theorem \ref{th:Hbub}]\label{thm:equiv of hecke cats}
There is a canonical equivalence of monoidal categories $\cH^\bub\simeq \cH$.
\end{theorem}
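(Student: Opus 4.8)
The strategy is to identify both sides as nilpotent sheaves on a moduli of $G$-bundles and then to match them via the uniformization of $\Bun_G$ on the bubble curve ${}_{k}P_{k}$. First I would write down uniformization: a $G$-bundle on ${}_{k}P_{k}$ with rigidifications at $0_k$ and $\infty_k$ and in the class $\eta = (\eta_0, -\eta_0)$ is, via Proposition~\ref{th:intro Bun tw}, the same as a $G$-bundle on $\un{{}_{k}P_{k}} \cong \PP^1$ with $\bP^\circ_{\wt\eta_0}$-level structure at $\un{0}$ and $\bP^\circ_{-\wt\eta_0}$-level structure at $\un\infty$. Since $\eta_0$ is strongly regular, both parahorics are Iwahori subgroups, and using the Birkhoff/Beauville--Laszlo factorization on $\PP^1$ and the affine flag variety presentation, $\Bun_{G,1}({}_{k}P_{k},\{0_k,\infty_k\})_\eta$ is identified with the double quotient $\bI^\circ \bs G\lr{z} / \bI^\circ$ (the sign on $\eta_\infty$ and the inverse map $\iota$ at $\infty$ are exactly what convert the right $\bP^\circ_{-\wt\eta_0}$ to the right $\bI^\circ$ appearing in $\cH$). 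This already produces, at the level of underlying stacks, the ind-stack whose monodromic nilpotent sheaves form $\cH$.

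Second, I would check that the singular-support (nilpotence) conditions match. On ${}_{k}P_{k}$ the Hitchin base is described by sections of $\omega$ twisted appropriately at the orbifold points, and the global nilpotent cone $\cN \subset T^*\Bun_{G,1}({}_{k}P_{k},\{0_k,\infty_k\})_\eta$ should be identified under uniformization with the union of conormals to the $\bI^\circ\times\bI^\circ$-orbits, i.e.\ the condition that cuts out $\cH$ as a full subcategory of all sheaves on $\bI^\circ\bs G\lr{z}/\bI^\circ$. Concretely this is the statement that the Higgs fields on a genus-zero orbicurve with two marked points that are nilpotent of the correct (regular at the Iwahori points) type correspond to the global nilpotent cone of the affine flag variety; for $\PP^1$ with parabolic structure this is classical (the parahoric Hitchin map has nilpotent cone equal to the conormal variety of the Schubert stratification), and the orbifold refinement follows from Proposition~\ref{th:intro Bun tw}. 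Hence $\cA({}_{k}P_{k},\{0_k,\infty_k\}) \simeq \cH$ as plain categories, compatibly with the $\Sh_0(H)^{\otimes 2} \simeq \Sh_0(H)$-bimodule structure (matching the $H$-actions by changes of rigidification at $0_k, \infty_k$ with the left/right $H$-monodromy actions on $\cH$, again with $\iota$ at $\infty$).

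Third, and this is where the real content lies, I would promote this to a \emph{monoidal} equivalence. The monoidal structure on $\cH^\bub$ is, by Theorem~\ref{thm:intro hbub}, the left adjoint $m = \psi^L$ to nearby cycles for the degeneration $\pi:\frP \to \AA^1$ whose special fiber is ${}_{k}P_{k} \vee_{\BB\mu_k} {}_{k}P_{k}$. On the other hand, convolution in $\cH$ is realized geometrically by the correspondence $\bI^\circ\bs G\lr{z}\times^{\bI^\circ} G\lr{z}/\bI^\circ$, i.e.\ by gluing two copies of the local situation at a point. The key geometric input is that uniformization is compatible with gluing: the moduli of $G$-bundles on the nodal curve ${}_{k}P_{k}\vee_{\BB\mu_k}{}_{k}P_{k}$ (with the outer rigidifications) is the convolution space $\bI^\circ\bs G\lr{z}\times^{\bI^\circ} G\lr{z}/\bI^\circ$, the map to ${}_{k}P_{k} \times \Gm$ near $\Gm$ corresponds to the open embedding of the convolution space realizing the Hecke correspondence, and nearby cycles along $\pi$ corresponds to the convolution (pull-push) composition. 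Taking left adjoints on both sides then identifies $m$ with the convolution product $\star$ on $\cH$. Matching associativity constraints uses the same comparison for the families $\pi^{[n]}$ and the fact that both the cosimplicial bubbling and the bar construction for $\cH$ are controlled by chains of $n+1$ copies of the local picture, i.e.\ by $\bI^\circ\bs G\lr{z}\times^{\bI^\circ}\cdots\times^{\bI^\circ}G\lr{z}/\bI^\circ$.

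\textbf{Main obstacle.} The delicate point is not the set-theoretic identification of stacks but the \emph{compatibility of nearby cycles with convolution}, i.e.\ showing that $\psi^L$ for the bubbling family $\pi$ computes the same functor as pull-push along the convolution correspondence. This requires (a) knowing that nilpotent sheaves are preserved and that the relevant nearby-cycles functor admits a left adjoint in this infinite-type, ind-stacky setting — which is where the sheaf-theoretic foundations on real analytic stacks from the Appendix are needed — and (b) a careful analysis of how the generic fiber ${}_{k}P_{k}\times\Gm$ degenerates to the nodal curve, to see that the specialization map on $\Bun_G$ is exactly the (closed embedding of the) convolution space inside the Hecke stack, with the orbit stratification behaving as expected (no unexpected vanishing cycles, monodromic structure preserved). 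Managing the orbifold structure $\BB\mu_k$ at the node uniformly in $k$, and checking it is harmless because $\eta_0$ is strongly regular so that $L_{\wt\eta_0}\cong H$, is a further technical layer but should be routine given Proposition~\ref{th:intro Bun tw}.
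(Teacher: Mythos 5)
Your proposal contains a genuine gap, and it sits exactly where you place your "main obstacle": step three is asserted, not proved, and it is the whole content of the theorem. Two problems compound here. First, the geometric identifications in steps one and three are not correct as stated: uniformizing $\Bun_{G,1}({}_{k}P_{k},\{0_k,\infty_k\})_{\eta,-\eta}$ via Proposition \ref{p:Bun on tw} and Beauville--Laszlo yields the \emph{global} (Birkhoff-type) double quotient $\bJ^{\c}\bs G\lr{z}/\bI^{\c}$, with $\bJ^{\c}\subset G[z^{-1}]$ an opposite Iwahori at $\infty$, not the local stack $\bI^{\c}\bs G\lr{z}/\bI^{\c}$; likewise the special fiber of the bubbling family $\frP$ is a twisted product of two such global stacks, not the convolution ind-stack $\bI^{\c}\bs G\lr{z}\times^{\bI^{\c}}G\lr{z}/\bI^{\c}$. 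Passing from monodromic nilpotent sheaves on the global stack to $\cH$ is the Radon transform, a genuine theorem (proved via the contraction principle, the identity $\wh\D(1)\star\wh\nb(w)\simeq\wh\D(w)$, and generation arguments), not a consequence of uniformization plus a singular-support match. Second, even granting those identifications, the claim that $\psi^L$ for $\frP$ "corresponds to the convolution pull-push" is precisely the kind of nearby-cycles base-change statement the paper deliberately does not prove (compare the conjecture that $\alpha$ is an equivalence, which is left open pending such a base-change identity); you would have to control the specialization stratum by stratum, rule out extra vanishing-cycle contributions, and do so for all the iterated families $\pi^{[n]}$ to get the associativity data. None of that is supplied.

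The paper's proof of Theorem \ref{th:Hbub} avoids this comparison entirely. It introduces the twisting bimodule $\cH'=\Sh_\cN(\Bun_{G,1,N}({}_kP,0_k,\infty)_\y)\in\cH^\bub\bimod\cH$, proves in Lemma \ref{lem:free rank 1} that $\cH'$ is free of rank one as a right $\cH$-module on the generator $e'$ (this is where the Radon-transform input lives), and then proves freeness of rank one over $\cH^\bub$ by showing that the composite of the bubbling comultiplication with restriction to the open locus of the special fiber where the bundle is trivial on one component is the identity --- a nearby-cycles computation that is trivial because over that open locus the family of moduli stacks is literally constant. The monoidal equivalence $\gamma$ is then defined by transport of structure through $\End_{\rmod\cH}(\cH')$, characterized by $h\star e'\simeq e'\star\gamma(h)$, so monoidality comes for free, and the same mechanism produces the second equivalence $\zeta$ (your proposal does not see that there are two canonical equivalences differing by the automorphism $\t$). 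If you want to pursue your direct route, you must actually prove the compatibility of nearby cycles with convolution for the families $\pi^{[n]}$; short of that, the module-theoretic detour is what makes the theorem provable with the tools developed in the paper.
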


\begin{remark}\label{r:two eq H}
In fact, as one finds in Theorem \ref{th:Hbub}, there are two canonical equivalences  $\cH^\bub\simeq \cH$. One is suited to studying left modules and the other to right. Here in the introduction, we default to the one compatible with left modules as for example found in Theorem~\ref{th:intro matching actions}.

The difference between the two canonical equivalences  $\cH^\bub\simeq \cH$ gives a monoidal auto-equivalence $\t: \cH\simeq \cH$. Geometrically,  $\t$ results from the pointed $\cH$-bimodule given by nilpotent sheaves on $\Bun_{G, N}(\PP^1, \{0, \infty\})$ pointed by the extension by zero of a universal local system on the open stratum (here the left $\cH$-action  is by modification at $0$ and the right action at $\infty$).  The anti-equivalence $\s:\cH\to \cH^{op}$ mentioned in \S\ref{ss:intro main} is the composition of $\t$ with the inverse on the loop group.
\end{remark}


Before continuing, let us mention some compatibilities with related Hecke categories.

First, the inclusion $G\subset G\lr{z}$ gives a monoidal embedding of the universal finite Hecke category $\cH_f = \Sh_\bimon(N\bs G/N)$ into the affine Hecke category $\cH$. Under the equivalence of Theorem~\ref{thm:equiv of hecke cats}, $\cH_f$ corresponds to a finite bubbling Hecke category $\cH_f^\bub$ inside of the bubbling Hecke category $\cH^\bub$. 
More directly, one can define $\cH^\bub_f$ as the full subcategory of $\cH^\bub$ of sheaves supported on a certain open locus of $\Bun_{G, 1}({}_{k}P_{k}, \{0_{k}, \infty_{k}\})_\eta$. The  multiplication map of $\cH^\bub_f$ transported back to $\cH_f$ can be identified with the left adjoint  to the  ``limit functor of the wonderful degeneration" studied in~\cite{CY}. 

Second, our techniques similarly provide  a  global construction of the spherical Hecke category $\cH_\sph = \Sh(G\tl{z} \bs G\lr{z}/G\tl{z})$  of sheaves on the ind-stack $G\tl{z} \bs G\lr{z}/G\tl{z}$ where $G\tl{z} \subset G\lr{z}$ is the arc subgroup. Here we start with $\PP^1$ instead of $[\PP^{1}/\mu_{k}]$, and define
the bubbling spherical Hecke category to be the automorphic category of sheaves (automatically nilpotent)
\begin{equation*}
\cH_\sph^\bub = \cA(\PP^1) =   \Sh(\Bun_{G}(\PP^1)).
\end{equation*}
Our techniques equip $\cH_\sph^\bub$ with a monoidal structure and provide a monoidal equivalence
$\cH_\sph^\bub \simeq \cH_\sph$. 

We expect one can go further and consider bubbling at all points of $\PP^1$ to establish the following. The monoidal structure on the usual spherical Hecke category $\cH_\sph$ can be upgraded to an $E_3$-structure by combining convolution, giving the usual monoidal or  $E_1$-structure, with a fusion product, giving an additional compatible $E_2$-structure. One can also transport an evident $E_3$-structure from the spectral description of $\cH_\sph$ as coherent sheaves on $G^\vee$-local systems on $\PP^1$.

\begin{conj}
Bubbling at any point of $\PP^1$ equips
$\cH_\sph^\bub$ with the structure of $E_3$-category, and there is an equivalence of $E_3$-categories  
$\cH_\sph^\bub \simeq \cH_\sph$.
\end{conj}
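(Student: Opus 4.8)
The $E_1$-part is already available: the spherical analogue of the bubbling family $\pi\colon\frP\to\AA^1$ — now built from $\PP^1$ rather than ${}_kP_k$, so that its special fiber is the nodal genus-zero curve $\PP^1\vee\PP^1$ — together with the left adjoint $\psi^L$ to nearby cycles and the cosimplicial associativity constraints coming from the chain-of-$\PP^1$ families analogous to the $\pi^{[n]}$, endows $\cH_\sph^\bub=\Sh(\Bun_G(\PP^1))$ with its convolution monoidal structure, and the resulting $\cH_\sph^\bub\simeq\cH_\sph$ is the spherical analogue of Theorem~\ref{thm:equiv of hecke cats}. So the content of the conjecture is: (i) a fusion $E_2$-structure on $\cH_\sph^\bub$ coming from bubbling at varying points; (ii) its compatibility with convolution; and (iii) the identification of the combined $E_3$-structure with the one transported from $\mathrm{IndCoh}(\mathrm{LocSys}_{G^\vee}(\PP^1))$.

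The plan is to promote the bubbling family to a factorizable family over configuration spaces of $\PP^1$. For a finite set $I$ one builds $\pi_I\colon\frP_I\to\mathrm{Conf}_I(\PP^1)$ whose fiber over a tuple of distinct points $(x_i)_{i\in I}$ is $\PP^1$ with a rational bubble attached at each $x_i$, degenerating along the diagonals to the bubble chains of the families $\pi^{[n]}$ (so that colliding bubbling points create bubble trees), and one assembles these into a single family over $\mathrm{Ran}(\PP^1)$. Passing to the moduli of $G$-bundles along the fibers yields a family of automorphic categories of nilpotent sheaves whose key geometric feature should be the \emph{factorization property}: over pairwise-distinct points the category is the external product $\boxtimes_{i\in I}\cH_\sph^\bub$, while its behaviour along the diagonals is controlled — through left adjoints to iterated nearby cycles — by the convolution powers of $\cH_\sph^\bub$ already visible in the cosimplicial bubbling picture. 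Restricting to an affine chart $\CC=\PP^1\setminus\{\infty\}$ and packaging these structure functors equips $\cH_\sph^\bub$ with a Betti factorization (equivalently, $E_2$-monoidal) structure in $\dgCat$; this is the fusion product.

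To see compatibility, observe that convolution is the ``transverse'' degeneration ($\AA^1$) direction while fusion is the ``spatial'' one (points moving in $\CC\subset\PP^1$); producing a single family over $\AA^1\times\mathrm{Conf}_I(\CC)$ that simultaneously degenerates and moves points exhibits the interchange law, making $\cH_\sph^\bub$ an algebra over $E_1\otimes E_2$, hence — by Dunn additivity, $E_1\otimes E_2\simeq E_3$ — an $E_3$-category. For the equivalence, note that the source curve $\PP^1$ is topologically $S^2=\partial D^3$, so that $\mathrm{LocSys}_{G^\vee}(\PP^1)=\Maps(S^2,BG^\vee)$ and the ``evident'' $E_3$-structure on $\mathrm{IndCoh}$ is the one induced by the complementary cobordisms of the embeddings $\bigsqcup D^3\hookrightarrow D^3$. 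The splitting $D^3\simeq D^1\times D^2$ again presents this as $E_1\otimes E_2$: the $E_1$-factor is the pinch $S^2\vee S^2\to S^2$, matching the degeneration $\PP^1\vee\PP^1\to\PP^1$ under Betti Langlands and hence matching convolution; the $E_2$-factor is points moving in a $2$-disk of $S^2$, matching bubbling points moving in $\CC\subset\PP^1$ and hence matching fusion. One therefore upgrades the $E_1$-equivalence $\cH_\sph^\bub\simeq\cH_\sph$ to an $E_2$-, and thus $E_3$-, equivalence.

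The main obstacle lies in the second step, in two guises. First, one must extend the sheaf-theoretic foundations — existence and functoriality of left adjoints to nearby cycles, together with the cosimplicial/bar bookkeeping — from a single family to the factorization setting over $\mathrm{Ran}(\PP^1)$, and establish the factorization property of $\cA$ for the global bubbling family with all of its higher coherences, not merely the underlying products. Second, matching fusion with the spectral $E_2$ requires a \emph{factorizable} form of Betti Langlands for $\PP^1$ — an equivalence over configuration spaces of the source curve intertwining the two spatial structures — which genuinely exceeds the $E_1$-statement invoked above, and is presumably the reason the statement is recorded as a conjecture rather than a theorem.
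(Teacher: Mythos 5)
The statement you are addressing is recorded in the paper as a conjecture: the paper offers no proof of it, only the surrounding heuristic (``We expect one can go further and consider bubbling at all points of $\PP^1$\dots''), namely that convolution should give the $E_1$-part, a fusion product from moving bubbling points should give a compatible $E_2$-part, and the spectral side $\qc$ of $G^\vee$-local systems on $\PP^1$ carries an evident $E_3$-structure to match. Your sketch reproduces exactly this expectation and organizes it sensibly (factorizable bubbling family over configuration spaces degenerating to bubble trees along diagonals; interchange via a family over $\AA^1\times\mathrm{Conf}_I(\CC)$; Dunn additivity $E_1\otimes E_2\simeq E_3$; matching the pinch $S^2\vee S^2\to S^2$ with the degeneration $\PP^1\vee\PP^1$), so as a roadmap it is consistent with what the authors have in mind. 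But it is not a proof, and the gaps are precisely the ones you name at the end, so let me make them concrete.

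First, the paper's machinery (the correspondence-category extension criterion of Gaitsgory--Rozenblyum, the commuting-nearby-cycles theorem of Appendix B, and the left adjoints to nearby cycles) is set up for the specific cosimplicial bases $A^{[n]}$ with their $S\Gm^{[n]}$-symmetry; to get even the binary fusion product you must redo the Beck--Chevalley verifications for families over $\mathrm{Conf}_I(\PP^1)$ and its diagonal degenerations, and to get an actual $E_2$-structure (not just a product with an associator) you need the full factorization coherences over the Ran space, i.e.\ an operadic statement, since Dunn additivity requires an algebra over the tensor product of operads rather than a single interchange isomorphism. Nothing in the paper supplies this, and your proposal does not indicate how the singular-support and base-change hypotheses would be checked when bubbling points collide (the spherical case at least has automatic nilpotency on $\Bun_G(\PP^1)$, which helps, but the coherence problem remains). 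Second, the final step --- upgrading the $E_1$-equivalence $\cH_\sph^\bub\simeq\cH_\sph$ to an $E_2$-, hence $E_3$-, equivalence --- requires identifying the bubbling fusion with the fusion/factorization structure on the spectral side, i.e.\ a factorizable (derived Satake-type) form of the Betti equivalence over configuration spaces of the curve; this genuinely exceeds anything proved or cited in the paper. So your text should be read as a plausible strategy aligned with the authors' stated intent, not as a proof; the statement remains open both in the paper and in your write-up.
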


\subsection{Bubbling Hecke actions}

\sss{Bubbling from a smooth point}
Let $X$ be a smooth connected projective DM curve with a single twisted point $x\in X$ with automorphisms $\mu_k$. Set $\eta \in \YY_{k}$ be strongly regular and consider the automorphic category 
\begin{equation*}
\cA(X, \{x\}) = \Sh_{\cN} (\Bun_{G, 1}(X, \{x\})_\eta)
\end{equation*}
with its $\Sh_0(H)$-action at $x_0$ 
 induced by the $H$-action on $\Bun_{G, 1}(X, \{x_0\})_\eta$ at $x_0$.

We construct a family $\pi:\frX\to \AA^1$ of DM curves with the following properties. Its generic restriction $\frX|_{\Gm} \to \Gm$ is isomorphic to the constant DM curve $X  \times \Gm \to \Gm$. Its special fiber $\frX|_0$ is isomorphic to the nodal DM curve ${}_{k}P_{k} \vee_{\BB\mu_k} X$ where  $\infty_{k} \simeq \BB\mu_k$ in the component ${}_{k}P_{k}$ is glued to $x\simeq \BB\mu_k$ in the component $X$.

We consider the moduli stack of $G$-bundles along the fibers of $\pi$ as a family over $\AA^1$. Nearby cycles of nilpotent sheaves in this  family  admit a left adjoint providing a functor
\beq\label{eq:action}
\xymatrix{
a = \psi^L:\cH^\bub \otimes_{\Sh_0(H)} \cA(X, \{x\}) \ar[r] &\cA(X, \{x\})
}\eeq

We also construct analogous families $\pi^{[n]}:\frX^{[n]} \to \AA^{[n]}$, for $[n] = \{0, \ldots, n\}$, with special fiber $\frX^{[n]}|_0$ isomorphic to chains of $n+1$ components 
${}_{k}P_{k} \vee_{\BB\mu_k} \cdots \vee_{\BB\mu_k} {}_{k}P_{k} \vee_{\BB\mu_k} X$.  The case $n=0$ recovers the family $\pi: \frX=\frX^{[0]}\to \AA^{1}$. The collection $\{\frX^{[n]}\}_{n\ge0}$ form a cosimplicial stack, and is a special case of the {\em twisted cosimplicial bubbling} construction in \S\ref{s:bub}. We consider the moduli stacks of $G$-bundles along the fibers of $\pi^{[n]}$ as a family over $ \AA^{[n]}$. The left adjoints to iterated nearby cycles in these families  provide associativity constraints on the original functor $a$ compatible with the multiplication $m$ of the monoidal category $\cH^\bub$.

\begin{theorem} \label{thm:intro hbub mod}
The functor $a$ of \eqref{eq:action}, together with its associativity constraints coming from the cosimplicial bubbling $\{\pi^{[n]}: \frX^{[n]}\to \AA^{[n]}\}$, makes $\cA(X, \{x\})$ into a $\cH^\bub$-module in  $\Sh_0(H)$-modules.
\end{theorem}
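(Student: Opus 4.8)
The plan is to construct the $\cH^\bub$-module structure on $\cA(X,\{x\})$ by upgrading the functor $a$ of \eqref{eq:action} to a coherent family of structure maps indexed by the cosimplicial bubbling $\{\pi^{[n]}:\frX^{[n]}\to\AA^{[n]}\}$. Recall that a module over a monoidal category is equivalently a functor from the bar construction: more precisely, to equip $\cA(X,\{x\})$ with a $\cH^\bub$-module structure compatible with the monoidal structure of Theorem~\ref{thm:intro hbub} one wants an augmented simplicial object whose $n$-th term is $(\cH^\bub)^{\otimes_{\Sh_0(H)} n}\otimes_{\Sh_0(H)}\cA(X,\{x\})$ together with face and degeneracy maps. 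The key geometric input is that the moduli of $G$-bundles on the central fiber $\frX^{[n]}|_0 = {}_kP_k\vee\cdots\vee{}_kP_k\vee_{\BB\mu_k}X$ ($n$ copies of ${}_kP_k$) computes exactly $(\cH^\bub)^{\otimes n}\otimes\cA(X,\{x\})$; this uses the gluing/factorization description of $\Bun_G$ on a chain of DM curves in terms of the parahoric moduli via Proposition~\ref{th:intro Bun tw}, together with a Künneth-type statement identifying nilpotent sheaves on the gluing with the relative tensor product over $\Sh_0(H)$ acting at each node.

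First I would set up the cosimplicial family $\{\pi^{[n]}\}$ and the induced family of moduli stacks $\Bun_G(\pi^{[n]})\to\AA^{[n]}$ precisely as in \S\ref{s:bub}, and record the combinatorics: the cosimplicial structure on $\{[n]\mapsto\frX^{[n]}\}$ assigns to each order-preserving map $[m]\to[n]$ a map of families, where coface maps correspond to inserting a ${}_kP_k$ bubble (degenerating a smooth fiber to a nodal one) and codegeneracy maps correspond to contracting a bubble. Then for each structure map I would take nearby cycles $\psi$ along the relevant $\AA^1$-degeneration direction and pass to its left adjoint $\psi^L$ — the existence of these left adjoints is the same kind of statement already invoked for $a$ and $m$, and follows from the constructibility/properness control on the universal nilpotent cone (developed in the body and Appendix~\ref{app:A}). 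This produces, term by term, the required simplicial maps. The second step is to check that these maps assemble into an actual simplicial (equivalently, $\infty$-operadic module) structure: the simplicial identities among the $\psi^L$'s follow from base change and the fact that iterated nearby cycles along coordinate directions in $\AA^{[n]}$ commute up to canonical equivalence, which is exactly what the iterated-bubbling families $\pi^{[n]}$ are built to witness. Compatibility with the monoidal structure $m$ on $\cH^\bub$ from Theorem~\ref{thm:intro hbub} is automatic since both are manufactured from the same cosimplicial bubbling, restricted to the sub-cosimplicial object not involving the $X$-factor.

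The main obstacle I expect is the coherence: producing not just the structure maps but all higher homotopies making $\cA(X,\{x\})$ a module in the $\infty$-categorical sense, rather than merely a homotopy-module. Concretely, one must show the augmented simplicial diagram $n\mapsto\cA(\frX^{[n]}|_0,\cdot)$ is a \emph{bar resolution} exhibiting $\cA(X,\{x\})$ as a module — i.e. that the relevant colimit (geometric realization) computes what it should and that the $\cH^\bub$-action extends over the whole simplicial diagram functorially in $[n]$. The cleanest route is to realize the whole package as a functor out of a category of correspondences: the families $\pi^{[n]}$ and their degenerations organize into a single object over (a version of) $\Corr$ of the relevant base stratifications, and applying the sheaf-theoretic formalism (nearby cycles with left adjoints, as a lax symmetric monoidal functor on this correspondence category, cf. Appendix~\ref{app:A}) yields the module structure with all coherences at once. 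The remaining work is then bookkeeping: verifying the correspondence diagram one writes down genuinely encodes the $\Sh_0(H)$-equivariance at $x$ and at each bubble node, and that the $\Sh_0(H)$-bimodule structure on $\cH^\bub$ matches the one used to form the relative tensor products $\otimes_{\Sh_0(H)}$. Given Theorems~\ref{thm:intro hbub} and \ref{th:intro Bun tw} this is routine but lengthy, and is where I would expect the bulk of the actual proof in \S\ref{s:bub} to lie.
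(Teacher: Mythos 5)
Your plan is, in outline, the paper's own: the module structure is produced by organizing the twisted bubbling families into a combinatorial diagram category (\S\ref{s:diagram}--\S\ref{ss:cosim cat}), passing to nilpotent sheaves with $*$-pushforwards, extending to a Gaitsgory--Rozenblyum correspondence $2$-category and then taking left adjoints, with the identification of $\Bun_G$ on a chain ${}_kP_k\vee\cdots\vee{}_kP_k\vee_{\BB\mu_k}X$ with the relative tensor products $(\cH^\bub)^{\otimes_{\Sh_0(H)} n}\otimes_{\Sh_0(H)}\cA(X,\{x\})$ supplied by the monodromic gluing lemma (Lemma \ref{lem:monod gl}); the module coherences come from a comodule object in the correspondence category, exactly as you suggest.

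There is, however, one step you assert as automatic which is in fact the heart of the proof and would fail as stated. You write that ``the simplicial identities among the $\psi^L$'s follow from base change and the fact that iterated nearby cycles along coordinate directions in $\AA^{[n]}$ commute up to canonical equivalence, which is exactly what the iterated-bubbling families are built to witness.'' Base change only gives a natural transformation $\psi^{c}_{a}\to\psi^{b}_{a}\circ\psi^{c}_{b}$ between a one-step and an iterated nearby cycles functor; for arbitrary (even constructible) sheaves this map is \emph{not} an equivalence, and no choice of family makes it so formally. In the paper this is precisely the right Beck--Chevalley condition of Proposition \ref{p:sh functor from C}(2), and its verification is the microlocal criterion for commuting nearby cycles of \cite{N} (Theorem \ref{th:comm nc stack}, Appendix \ref{app:B}, not Appendix \ref{app:A}), whose hypotheses are exactly what the nilpotent singular-support condition buys: the universal nilpotent cone is non-characteristic for the projection to the base (Corollary \ref{c:univ cone non-char}) and the closure of its image in the relative cotangent has isotropic special fibre (the Thom condition, via the relative global nilpotent cone). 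In your write-up the nilpotent cone is invoked only for the existence of left adjoints (which is the comparatively soft point, handled by compact generation and Lemma \ref{lem:rest adj st}), so as written the crucial justification of the coherences is missing; it is repairable with the tools you already cite, but it must be supplied rather than attributed to base change. A smaller point: the paper's diagram category only uses order-preserving injections (a semi-(co)simplicial, a priori non-unital structure), with unitality of the action established separately (Lemma \ref{l:unit action}), so the degeneracy maps in your bar-construction framing are not needed and are not what the construction provides.
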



The top row of diagram~\eqref{eq:twist=para} gives a canonical isomorphism
 \begin{equation*}
\xymatrix{
\Bun_{G, 1}(X, \{x\})_\eta  \ar[r]^-\sim &  \Bun_{G, N}(\un X, \un x)
}
\end{equation*}
which provides  a canonical equivalence
 \beq\label{eq:aut cat trad form}
\xymatrix{
\cA(X, \{x\}) = \Sh_\cN(\Bun_{G, 1}(X, \{x\})_\eta)  \ar[r]^-\sim &  \Sh_\cN(\Bun_{G, N}(\un X, \un x))=\cA(\un X, \{\un x\})
}
\eeq

\begin{theorem}[See Theorem \ref{th:matching actions}]\label{th:intro matching actions}
The canonical equivalence~\eqref{eq:aut cat trad form} intertwines the $\cH^\bub$-module structure on $\cA(X, \{x\}) $ with the  $\cH$-module structure on $ \cA(\un X, \un x)$ via  the monoidal equivalence   $\cH^\bub\simeq \cH$ of Theorem \ref{thm:equiv of hecke cats}.
\end{theorem}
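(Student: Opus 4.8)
The plan is to reduce the matching of module structures to two things already established: the monoidal equivalence $\cH^\bub \simeq \cH$ of Theorem \ref{thm:equiv of hecke cats}, and the compatibility of both equivalences with the family-version of the parahoric-vs-twisted-point dictionary \eqref{eq:twist=para}. Concretely, both $\cH$-actions and $\cH^\bub$-actions arise from nearby cycles (or their left adjoints) along degenerating families of curves, so the statement is really an assertion that two families of curves, together with their moduli of $G$-bundles, get matched. I would set this up as follows.

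First, I would recall that the $\cH$-module structure on $\cA(\un X, \{\un x\}) = \Sh_\cN(\Bun_{G,N}(\un X, \un x))$ is the standard one by Hecke modifications at $\un x$: it comes from the Hecke correspondence at $\un x$, or equivalently — and this is the form we want — from nearby cycles along the degeneration of $\un X$ that sprouts a $\PP^1$ at $\un x$, carrying the affine flag variety's worth of modifications. On the other side, the $\cH^\bub$-module structure of Theorem \ref{thm:intro hbub mod} is by construction the left adjoint to nearby cycles along the family $\pi:\frX\to\AA^1$ whose special fiber is ${}_kP_k \vee_{\BB\mu_k} X$. So the key geometric input is: applying the family-version of the isomorphism \eqref{eq:twist=para} (proved for nodal DM curves and in families in \S\ref{s:Bun tw}) to the family $\pi:\frX\to\AA^1$ produces precisely the $\PP^1$-sprouting degeneration of $\un X$ with the parahoric level structure at $\un x$ replaced along the way — i.e. the bubbling family for $X$ at the twisted point matches, after passing to coarse spaces and parahoric structures, the bubbling family for $\un X$ at $\un x$ that computes the ordinary affine Hecke action. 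This identification of families is compatible with the identification of the bubbled-off component ${}_kP_k$ with the affine Hecke geometry that underlies Theorem \ref{thm:equiv of hecke cats}: that theorem is exactly the $n=0$, "single bubble" incarnation of this comparison, so the module comparison is the "$X$ in place of the second ${}_kP_k$" version of the same argument.

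The remaining work is to upgrade this matching from the level of the bare action functor $a$ to the level of module \emph{structures} — i.e. to check compatibility with all the associativity/coherence constraints. For this I would invoke the cosimplicial bubbling $\{\pi^{[n]}:\frX^{[n]}\to\AA^{[n]}\}$ on the twisted side and the analogous $\PP^1$-chain sprouting family for $\un X$ on the parahoric side. The family-version of \eqref{eq:twist=para} applies uniformly to the whole cosimplicial diagram (each $\frX^{[n]}$ has special fiber a chain ${}_kP_k \vee \cdots \vee {}_kP_k \vee X$, whose coarse/parahoric avatar is a chain $\PP^1\vee\cdots\vee\PP^1\vee\un X$ with the appropriate level structures), and nearby-cycle functors commute with the base change / restriction maps organizing these families. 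Hence the two cosimplicial objects of categories — the one whose totalization encodes the $\cH^\bub$-module structure on $\cA(X,\{x\})$, and the one encoding the $\cH$-module structure on $\cA(\un X,\{\un x\})$ — are identified term by term compatibly with all face and degeneracy maps, which is precisely the datum of an equivalence of module categories over the monoidal equivalence $\cH^\bub\simeq\cH$. One should also check that the residual $\Sh_0(H)$-action (by change of rigidification at the \emph{other} data, if any, or trivially here with a single point) is carried along, which is immediate since \eqref{eq:twist=para} is an equivalence of $\Sh_0(H)^{\ot\un Q}$-modules.

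The main obstacle, I expect, is not any single step but the bookkeeping needed to make the comparison of the two cosimplicial families of curves genuinely \emph{coherent} rather than just levelwise: one must produce the comparison as a map of cosimplicial (or simplicial) objects in an appropriate $\infty$-category of "families of curves with moduli of $G$-bundles," and then know that nearby cycles and their left adjoints are functorial enough along this diagram that passing to sheaf categories yields a map of cosimplicial categories, whose totalization is the module-structure comparison. Concretely this rests on: (i) the family version of Proposition \ref{th:intro Bun tw} for nodal DM curves, which the paper defers to \S\ref{s:Bun tw}; (ii) base-change and adjunction compatibilities for nearby cycles in these degenerations, which belong to the sheaf-theoretic foundations in Appendix \ref{app:A}; and (iii) the already-granted monoidal identification $\cH^\bub\simeq\cH$ of Theorem \ref{thm:equiv of hecke cats}, which we use as the "$X = {}_kP_k$" base case to fix all sign/orientation conventions (left vs.\ right, the role of $\y_\infty = -\y$) so that the $X$-version is forced. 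Everything else is a matter of transporting structure along \eqref{eq:twist=para}.
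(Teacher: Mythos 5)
There is a genuine gap, and it sits precisely at the point you wave through with ``or equivalently'': you assert that the standard $\cH$-action on $\cA(\un X,\{\un x\})=\Sh_{\cN}(\Bun_{G,N}(\un X,\un x))$, which is \emph{defined} by convolution along the Hecke correspondence at $\un x$, ``equivalently'' arises from nearby cycles along a $\PP^1$-sprouting degeneration of $\un X$. That equivalence is not standard and is not something the paper has available before this theorem; it is essentially the content being proved. Your step matching the twisted bubbling family for $X$ with its coarse/parahoric avatar via \eqref{eq:twist=para} (in its family form, Proposition \ref{p:Bun tw family}) is correct but nearly tautological: it only shows that the two \emph{degeneration-defined} actions agree, since the ``bubbling family for $\un X$ with Iwahori-type level structure'' is nothing other than the coarse picture of the same twisted family. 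It never connects either action to the convolution action of $\cH$ that appears in the statement, and invoking Theorem \ref{th:Hbub} as a ``base case to fix conventions'' does not force the module comparison: knowing the monoidal identification $\cH^\bub\simeq\cH$ does not by itself identify the nearby-cycles module structure with the Hecke-correspondence module structure on $\cA(\un X,\un x)$.

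The paper closes exactly this gap by a different mechanism. It introduces the twisting bimodule $\cH'=\Sh_\cN(\Bun_{G,1,N}({}_kP,0_k,\infty)_\eta)$, shows it is free of rank one as a right $\cH$-module on the canonical generator $e'$ (Lemma \ref{lem:free rank 1}, via a Radon-transform argument), and uses an auxiliary degeneration of $Y_-$ sprouting ${}_kP$'s at $R_-$ together with the bar complex with marked sections (Theorem \ref{th:bar cx bub marked sections}) to produce a functor $\e':\cA_-\otimes_{\cH^{\bub,\ot R}}\cH'^{\ot R}\to\cA'_-$ of right $\cH^{\ot R}$-modules. The composite $\e'\circ\theta'$ with $\theta'(\cF)=\cF\ot e'^{\ot R}$ then intertwines the actions via $\gamma$ by construction, and the remaining — and genuinely geometric — step is to show $\e'\circ\theta'$ coincides with the canonical equivalence coming from \eqref{eq:twist=para}; this uses Lemma \ref{l:unit action} (the unit $e^{\ot R}$ acts by the identity, proved by a trivial-family nearby-cycles computation) and the partial-coarse-space comparison of two specific families. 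Your proposal contains no analogue of these computations (freeness of $\cH'$, the unit lemma, the identification with the canonical equivalence), and without them the coherence bookkeeping you describe has nothing to compare the convolution action against.
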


In the main body of the paper the theorem is proved in the more general setting with multiple twisted points instead of just one.

%


%
%


\sss{Bubbling from nodes}
Let $\t: \frY\to \AA^1$ be a separating nodal degeneration  as in Definition \ref{def intro:sep nod deg A1}. Let $A^{[0]}$ be an affine line, with coordinate $\e$, viewed as a branched cover of $\AA^{1}$ via the $k$th power map $A^{[0]}\to \AA^1$, $\e\mapsto t=\e^k$.  We construct a family $\pi:\frX\to A^{[0]}$ of DM curves with the following properties.  Let $A^{[0],\times}=A^{[0]}\bs \{0\}$.
 The generic restriction $\frX|_{A^{[0],\times}} \to A^{[0],\times}$ is isomorphic 
 to the base-change $\frY \times_{\Gm}A^{[0],\times}$. 
 The  special fiber $\frX|_{0}$ is isomorphic to the 
   nodal DM curve $X_- \vee_{Q} X_+$ where $X_\pm$ is the smooth DM curve with coarse moduli $Y_\pm$ but with twisted nodes $Q \subset X_\pm $ with automorphisms $\mu_k$ in place of the original nodes $R \subset Y_\pm$.  Note $R=\un Q$.
   
As in \S\ref{ss:intro main}, we form the real analytic family $\frY_{>0} = \tau^{-1}(\RR_{>0})$, which is isomorphic to $\frX_{>0}=\pi^{-1}(\RR_{>0})$.    
   
We consider the moduli stack of $G$-bundles along the fibers of $\pi$ as a family over $A^{[0]}$. Nearby cycles of nilpotent sheaves in this  family  admit a left adjoint providing a functor
\beq\label{eq:coequal}
\xymatrix{
a: \cA(X_-, Q) \otimes_{\Sh_0(H)} \cA(X_+, Q) \ar[r] & \cA(\frY_{>0}).
}\eeq

Moreover, for each $n\ge0$, we construct a relative  DM curve $\pi^{[n]}: \frX^{[n]}\to A^{[n]}\cong \AA^{n+1}$. The central fiber of $\pi^{[n]}$ is isomorphic to a chain of  components 
$X_- \vee_{Q} (R\times {}_{k}P_{k}) \vee \cdots  \vee (R\times {}_{k}P_{k}) \vee_{Q} X_+$. Here the gluing of the adjacent copies of $R\times {}_{k}P_{k}$ identifies $R\times \infty_{k}$ of one copy with $R\times 0_{k}$ of the next copy. The case $n=0$ recovers the family $\pi:\frX=\frX^{[0]}\to A^{[0]}$. Again this is the twisted cosimplicial bubbling construction given in \S\ref{s:bub}.



By considering the moduli of $G$-bundles along fibers of $\pi^{[n]}$, and considering nilpotent sheaves on them, we get a semi-simplicial category with functors given by iterated nearby cycles in these families. Passing to  left adjoints to nearby cycles provides the bar resolution calculating the tensor product of the right $\cH^{\bub,\ot R}$-module $\cA(X_-, Q)$ and the left $\cH^{\bub,\ot R}$-module $\cA(X_+, Q)$. We obtain the following result.

\begin{theorem}\label{thm:main}
The functor $a$ of \eqref{eq:coequal} descends to  a functor
\begin{equation}
\xymatrix{
\alpha:\cA(X_-, Q) \otimes_{\cH^{\bub,\ot R}} \cA(X_+, Q) \ar[r] & \cA(\frY_{>0})
}
\end{equation}
\end{theorem}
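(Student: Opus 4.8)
The strategy is to show that the functor $a$ of \eqref{eq:coequal}, built as the left adjoint $\psi^L$ to nearby cycles for $\pi^{[0]}=\pi:\frX\to A^{[0]}$, is canonically the $0$th term of an augmented semi-simplicial object and hence factors through the bar construction computing the balanced tensor product over $\cH^{\bub,\ot R}$. Concretely: the families $\{\pi^{[n]}:\frX^{[n]}\to A^{[n]}\}_{n\ge 0}$ organize the moduli stacks $\Bun_G$ along their central fibers $X(n) = X_-\vee_Q (R\times {}_{k}P_{k})^{\vee n}\vee_Q X_+$ into a semi-simplicial stack; taking nilpotent sheaves and then left adjoints to (iterated) nearby cycles gives a semi-simplicial category $[n]\mapsto \cA(X(n))$ augmented over $\cA(\frY_{>0})$ via $a$. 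By the identification (cited from the bubbling-Hecke results, Theorems~\ref{thm:intro hbub}, \ref{thm:intro hbub mod}, and the global reformulations around \eqref{eq:twist=para}) each insertion of an $R\times {}_{k}P_{k}$ component contributes a copy of $\cH^{\bub,\ot R}$, and the face maps are exactly the module-action and multiplication functors $a$ and $m$ together with the two ``outer'' projections. Therefore $[n]\mapsto \cA(X(n))$ is precisely the bar complex $\mathrm{Bar}_\bullet\big(\cA(X_-,Q),\,\cH^{\bub,\ot R},\,\cA(X_+,Q)\big)$, and its geometric realization (colimit) is by definition $\cA(X_-,Q)\otimes_{\cH^{\bub,\ot R}}\cA(X_+,Q)$. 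The augmentation then yields the desired $\alpha$.

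\textbf{Key steps, in order.} First, I would set up the cosimplicial/semi-simplicial bookkeeping: verify that the central fibers $X(n)$ of the twisted cosimplicial bubbling assemble into a semi-simplicial object in (nodal DM) curves, with the degeneracy-type maps being the curve contractions that collapse a ${}_{k}P_{k}$ component to its node (these exist because ${}_{k}P_{k}$ with its two marked points carries a canonical ``identity'' degeneration); this is the content of \S\ref{s:bub} which I would cite. Second, apply the $\Bun_G(-)$ functor and sheaf theory: for each simplicial face map coming from a one-parameter degeneration within some $\pi^{[n]}$, pass to the left adjoint $\psi^L$ of nearby cycles; the existence of these left adjoints on nilpotent sheaves (the real-analytic/stack subtleties handled in Appendix~\ref{app:A}) gives a semi-simplicial diagram of categories. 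Third, \emph{identify the terms}: using Theorem~\ref{thm:equiv of hecke cats} ($\cH^\bub\simeq\cH$) and Theorem~\ref{th:intro matching actions}, recognize $\cA(X(n))$ as $\cA(X_-,Q)\otimes_{\Sh_0(H)^{\ot R}}\cH^{\bub,\ot R}\otimes_{\Sh_0(H)^{\ot R}}\cdots\otimes_{\Sh_0(H)^{\ot R}}\cH^{\bub,\ot R}\otimes_{\Sh_0(H)^{\ot R}}\cA(X_+,Q)$ ($n$ factors of $\cH^{\bub,\ot R}$), compatibly with all face maps — this requires knowing that gluing a curve at a twisted node corresponds to the relative tensor product of automorphic categories over $\Sh_0(H)$ at that node, which is the node-bubbling analogue of Theorem~\ref{thm:intro hbub mod}. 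Fourth, invoke the universal property of the bar construction: a semi-simplicial (indeed simplicial, after checking the degeneracies) diagram of this shape augmented to $\cA(\frY_{>0})$ induces a unique functor from the colimit, namely $\alpha$.

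\textbf{Main obstacle.} The crux is the third step: \emph{coherently} identifying the semi-simplicial category $[n]\mapsto\cA(X(n))$ with the bar complex, i.e.\ checking that the left adjoints to nearby cycles in the various families $\pi^{[n]}$ realize precisely the bar differentials — the inner faces as the module actions $a$ and the monoidal product $m$ (associativity constraints already supplied by Theorems~\ref{thm:intro hbub}, \ref{thm:intro hbub mod}), and the outer faces as the two collapse maps $X(1)\to X(0)$ — and that all the higher compatibilities (the simplicial identities at the level of functors, not just objects) hold. The subtlety is twofold: (a) one must control base-change and composition of nearby-cycles functors across the multi-parameter bases $A^{[n]}\cong\AA^{n+1}$ where several nodes smooth simultaneously, using compatibility of nearby cycles with iterated specialization along normal-crossing divisors (so that iterated $\psi^L$ along $\frX^{[n]}$ matches a composite of the single-node functors); and (b) the real-analytic cutoff to $\RR_{>0}$ and the fact that $\cA(\frY_{>0})$ is only a \emph{limit} over $\beta>0$ of the $\cA(Y_\beta)$ means the augmentation must be shown well-defined on the nose, which is where Appendix~\ref{app:A}'s foundations for sheaves on real-analytic stacks and the behavior of $\psi^L$ under restriction to $\Pi^{-1}(\RR_{>0})$ are essential. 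Everything else — existence of the families, existence of left adjoints, and the monoidal/module structures — is quoted from earlier in the paper.
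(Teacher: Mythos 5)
Your overall route---the bubbling families, a semi-simplicial category of nilpotent sheaf categories obtained from left adjoints to nearby cycles, identification of its terms with the bar complex for $\cH^{\bub,\ot R}$, and then the augmentation---is the same as the paper's. But the step you yourself flag as the ``main obstacle'' is where the actual content lies, and your proposed way through it does not work as stated. ``Compatibility of nearby cycles with iterated specialization along normal-crossing divisors'' is not a theorem for arbitrary weakly constructible sheaves: nearby cycles over higher-dimensional bases do not commute in general, and the entire reason the nilpotent singular-support condition is imposed is to make them commute. The paper's input here is the microlocal criterion of~\cite{N} (Theorem~\ref{th:comm nc stack}), whose hypotheses---non-characteristicity over the base and the Thom condition at the origin---are verified using the universal nilpotent cone of \S4 (Corollary~\ref{c:univ cone non-char}) and the relative nilpotent cone. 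Your proposal never brings the universal nilpotent cone into the argument, so the hypothesis that makes the iterated $\psi$'s compose correctly is unaccounted for.

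There is also a structural gap: you propose to ``check the simplicial identities at the level of functors'' for the left adjoints $\psi^L$, i.e.\ to build a homotopy-coherent augmented semi-simplicial diagram of left adjoints by hand, and then to invoke the universal property of the colimit. The paper avoids this coherence problem by a different mechanism: it first defines a strict functor $\Phi_*$ out of the honest $1$-category $\cC$ of \S\ref{ss:diag cat} using $*$-pushforwards (which assemble with no coherence issue), verifies the right Beck--Chevalley condition for the closed morphisms (Proposition~\ref{p:sh functor from C}, which is exactly where Theorem~\ref{th:comm nc stack} is applied), invokes the Gaitsgory--Rozenblyum extension theorem~\cite{GR} to get a functor out of $\Corr^\adm_{\ver;\hor}(\cC^{op})$, and only then passes to left adjoints and restricts along $c^\bullet:\Delta_{\inj,+}\to\Corr^\adm_{\ver;\hor}(\cC^{op})$. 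Without some such mechanism your final step has nothing to apply to, since the augmented semi-simplicial object in $\St$ has not actually been constructed. A smaller but genuine point: identifying the $n$-th term with $\cA(X_-,Q)\ot_{\Sh_0(H)^{\ot R}}\cH^{\bub,\ot R}\ot\cdots\ot\cA(X_+,Q)$ compatibly with all face maps is not just ``gluing at a node gives a relative tensor product''; in the paper it uses the truncation $\sfX_\tau\simeq R\times \sfV_\tau$ (which depends on the rigidification of $\om_{Y_-}|_R$) together with the monodromic gluing Lemma~\ref{lem:monod gl} and its functoriality (Lemmas~\ref{l:monod gl func}, \ref{l:Phi eta tau Corr}).
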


The construction of the functor $\alpha$ of Theorem~\eqref{thm:main} already has implications for automorphic categories (as  for example in \S\ref{s:low genus} below). We expect it is possible to resolve the following conjecture by establishing a base-change identity for nearby cycles.

\begin{conj}
The functor $\alpha$ of Theorem~\eqref{thm:main} is an equivalence.
\end{conj}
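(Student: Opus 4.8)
\textbf{Strategy.} The plan is to recognize $\cA(\frY_{>0})$ as a category of modules over the monad on $\cA(X(0))$ given by convolution with the bubbling Hecke category, via the Barr--Beck--Lurie monadicity theorem applied to the nearby-cycles adjunction, and to identify that monad geometrically through a base-change property of nearby cycles in the twisted cosimplicial bubbling $\{\pi^{[n]}\}$. Recall from the construction of Theorem~\ref{thm:main} (together with the gluing description of $\Bun_G$ of a nodal DM curve used there) that $\cA(X(0))\simeq\cA(X_-,Q)\otimes_{\Sh_0(H)^{\ot R}}\cA(X_+,Q)$ and, more generally, that the $n$th term of the bar complex is $\cA(X(n))\simeq\cA(X_-,Q)\otimes_{\Sh_0(H)^{\ot R}}(\cH^{\bub,\ot R})^{\ot n}\otimes_{\Sh_0(H)^{\ot R}}\cA(X_+,Q)$, with the simplicial bar structure maps induced by the geometry of the $\pi^{[n]}$; hence
$$\cA(X_-,Q)\otimes_{\cH^{\bub,\ot R}}\cA(X_+,Q)\;\simeq\;\colim_{[n]\in\Delta^{op}}\cA(X(n)),$$
and $\alpha$ is the functor on this colimit assembled from the left adjoints $\psi_n^L:\cA(X(n))\to\cA(\frY_{>0})$ to nearby cycles. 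Write $a=\psi_0^L:\cA(X(0))\to\cA(\frY_{>0})$, with right adjoint the nearby-cycles functor $\psi_0$, and let $\bT=\psi_0\circ a$ be the resulting monad on $\cA(X(0))$.

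\textbf{Step 1: the base-change identity.} The core step is to show that $\bT$ is canonically the $\cH^{\bub,\ot R}$-convolution monad: under the gluing identification, with $\cA(X(0))$ viewed as $\cA(X_-,Q)\otimes_{\Sh_0(H)^{\ot R}}\cA(X_+,Q)$, the monad $\bT$ inserts a factor of $\cH^{\bub,\ot R}=\cA(R\times{}_{k}P_{k})$ in the middle, compatibly with the higher coherences $\bT^{m}\to\bT$. Geometrically, $\psi_0\circ\psi_0^L$ should be computed by the correspondence of moduli stacks $\Bun_G(X(0))\leftarrow\Bun_G(X(1))\to\Bun_G(X(0))$ coming from the two ways the one-bubble chain $X(1)=X_-\vee_Q(R\times{}_{k}P_{k})\vee_Q X_+$ degenerates to $X(0)$: the required statement is a Beck--Chevalley identity asserting that $\psi_0$ applied to $\psi_0^L(\cC)$ may be evaluated by spreading $\cC$ out over the family $\pi^{[1]}$ and taking nearby cycles there, so that $\psi_0\psi_0^L$ is pull-push along that correspondence, i.e. convolution with $\cH^{\bub,\ot R}$; the higher coherences come from the analogous identities for $\pi^{[m]}$. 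One should carry this out by localizing near the node: using Iwahori uniformization of $\Bun_G$ along the degenerating section $Q\subset\frX$, the relevant nearby-cycles functors are modelled on a degeneration of affine flag varieties, where the statement becomes the known fact that nearby cycles in that (Beilinson--Gaitsgory type) deformation intertwine the appropriate convolution functors.

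\textbf{Step 2: conservativity and conclusion.} Next, show that $\psi_0:\cA(\frY_{>0})\to\cA(X(0))$ is conservative and preserves geometric realizations of $\psi_0$-split simplicial objects. Conservativity is where the nilpotent singular-support condition is essential: a nilpotent sheaf on $\Bun_G(Y_\beta)$ is weakly constructible for some stratification, and one reduces the vanishing of such a sheaf, given vanishing of its nearby cycle, to a stratum-by-stratum statement supported near $\Bun_G$ of the node, again via Iwahori uniformization, where the model nearby-cycles functor on the affine flag variety is visibly conservative on the nilpotent part. Preservation of the relevant geometric realizations holds since, on nilpotent sheaves, $\psi_0$ may be computed by a colimit-preserving formula, using the foundations of Appendix~\ref{app:A}. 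Granting Steps~1--2, the Barr--Beck--Lurie theorem yields a monadic equivalence $\cA(\frY_{>0})\simeq\bT\module(\cA(X(0)))$; by rigidity of the monoidal category $\cH^\bub\simeq\cH$ (Theorem~\ref{thm:equiv of hecke cats}), modules over the convolution monad of Step~1 are precisely $\cA(X_-,Q)\otimes_{\cH^{\bub,\ot R}}\cA(X_+,Q)=\colim_{\Delta^{op}}\cA(X(n))$, and under these identifications the monadic comparison functor is inverse to $\alpha$. Hence $\alpha$ is an equivalence.

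\textbf{Main obstacle.} The crux is Step~1. Nearby cycles do not commute with arbitrary base change, while here the base change is along the non-proper, infinite-type maps of moduli of bundles induced by the bubbling families, so one must exploit the nilpotent-cone constraint to cut the situation down to a proper, finite-type model near the node before any clean base-change theorem applies -- and then check that the resulting intertwiner is exactly the one packaged in the monoidal and module structures of $\cH^\bub$, with all its higher coherences. Establishing conservativity in Step~2, which again genuinely uses nilpotence rather than formal properties of $\Bun_G$, is the secondary difficulty.
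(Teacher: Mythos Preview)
This statement is recorded in the paper as a \emph{conjecture}; the authors do not prove it, only remarking that they ``expect it is possible to resolve the following conjecture by establishing a base-change identity for nearby cycles.''  So there is no paper proof to compare against, and your proposal should be read as a strategy toward the open problem.

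Your identification of the crux in Step~1 is exactly what the paper singles out: the base-change identity asserting that $\psi_0\psi_0^L$ is computed by the correspondence through $\Bun_G(X(1))$ (and its higher analogues through $\Bun_G(X(m))$).  Your sketch of how to attack it---uniformize near the nodes and reduce to a model statement on affine flag varieties---is a reasonable plan of attack, though it is worth noting that the paper's own machinery (the Beck--Chevalley squares proved in Proposition~\ref{p:sh functor from C}) covers only the ``closed'' morphisms in $\cC$ and pointedly \emph{excludes} the square involving the augmentation $(-1,\vn)\to(0,\vn)$; the missing square is precisely your Step~1.

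There is, however, a structural gap in your final step.  Barr--Beck--Lurie, applied to $\psi_0^L\dashv\psi_0$, yields $\cA(\frY_{>0})\simeq\bT\text{-mod}(\cA(X(0)))$ for the monad $\bT=\psi_0\psi_0^L$.  But the conjecture asks for $\cA(\frY_{>0})\simeq\colim_{\Delta^{op}}\cA(X(n))$, the geometric realization of a simplicial object in $\St$.  These are a priori different beasts: one is a category of algebras over an endofunctor of $\cA(X(0))$, the other is a colimit of a diagram of \emph{categories}.  Your bridge between them---``by rigidity of $\cH^\bub\simeq\cH$, modules over the convolution monad are precisely the tensor product''---is not justified as stated.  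Even granting that $\cH$ has enough dualizable objects (its compact generators $\wh\Delta(w)$ are invertible under convolution, hence dualizable), you would still need to exhibit a specific monad on $\cA_-\otimes_{\Sh_0(H^R)}\cA_+$ whose module category is $\cA_-\otimes_{\cH^{\bub,\otimes R}}\cA_+$, and then match it with $\bT$.  The natural candidate comes from the projection $\cA_-\otimes_{\Sh_0(H^R)}\cA_+\to\cA_-\otimes_{\cH^{\bub,\otimes R}}\cA_+$ \emph{if} it has a fully faithful right adjoint, but that is essentially equivalent to what you are trying to prove.  A cleaner route, once you have Step~1 in its full coherent form, would be to argue directly that the augmented simplicial object $\cS_\bullet\to\cS_{-1}$ becomes split after applying $\psi_0$ (the extra degeneracy coming from the unit of the adjunction), and then invoke conservativity; this bypasses the rigidity discussion entirely.

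A smaller point: your claim that ``on nilpotent sheaves, $\psi_0$ may be computed by a colimit-preserving formula, using the foundations of Appendix~\ref{app:A}'' is not something the appendix actually provides.  Appendix~\ref{app:A} establishes compact generation and tensor identities for $\Sh_\Lambda$, but gives no colimit-preserving model for nearby cycles; you would need to supply this.
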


%
%
%
%
%
%
%
%

\begin{remark}
The arguments establishing Theorem~\ref{thm:intro hbub}, Theorem~\ref{thm:intro hbub mod} 
and Theorem~\ref{thm:main} involve several independent ingredients: 1) The (twisted) cosimplicial bubbling construction carried out in \S\ref{s:bub} gives cosimplicial stacks of DM curves  organizing iterated degenerations; 2) In \S\ref{s:diagram}, to pass from a diagram of moduli spaces of $G$-bundles to the diagram of categories of nilpotent sheaves on them, we invoke Gaitsgory-Rozenblyum's criterion~\cite{GR} for extending functors from categories to correspondence categories; 3) To verify this criterion, we use the commuting nearby cycles criterion of~\cite{N} as recalled in Appendix \ref{app:B}; 4) To talk about sheaves on real analytic stacks such as $\Bun_{G}(\t_{>0})$ (where $\t_{>0}: \frY_{>0}\to \RR_{>0}$), we develop the notion of relative real analytic spaces over an algebraic stack and sheaf theory on them in Appendix~\ref{app:A}.
\end{remark}

\subsection{Universal nilpotent cone}\label{ss:intro univ cone}
The categories involved in the statement and proof of Theorem~\ref{thm:main} are categories of nilpotent sheaves on the moduli stack of $G$-bundles along fibers of certain families of curves. We emphasize here that the nilpotent singular support condition is crucial in applying results of \cite{N} on commuting nearby cycles over higher dimensional bases.

To define the notion of nilpotent sheaves that constitute the category $\cA(\frY_{>0})$, we need to specify a conic Lagrangian in the cotangent bundle to the total space $\Bun_{G}(\t_{>0})$ 
that generalizes the global nilpotent cone for a single curve.  We state here the solution to this problem.


Let $S$ be a smooth base scheme. 

Let $\pi:\frX\to S$ be a family of smooth connected projective curves. We could also allow twisted points or marked points but will leave them aside for simplicity.

Let $\Pi: \Bun_G(\pi) \to S$ denote the moduli of relative $G$-bundles. So an $S'$-point of $\Bun_G(\pi)$ classifies a map $S' \to S$ together with a $G$-bundle on the fiber $S' \times_S \frX$.

Consider the relative cotangent bundle $T_\Pi^* \to \Bun_G(\pi)$ defined as the quotient $ T_\Pi^* = T^*\Bun_G(\pi)/\Pi^*(T^*S)$. Then  $T^{*}_{\Pi}$ can be identified with the moduli of Higgs bundles along fibers of $\pi$, and it admits a relative Hitchin system. The zero fiber of the relative Hitchin map gives a closed conic substack $\cN_{\Pi} \subset T^*_\Pi$ that we  refer to as the {\em relative global nilpotent cone}. For any $s\in S$, under the canonical identification $T^*_\Pi|_s \simeq T^*\Bun_G(\frX_s)$, the fiber $\cN_{\Pi}|_s$  is the usual global nilpotent cone for $\Bun_G(\frX_s)$.

To define nilpotent sheaves in the family $\Pi: \Bun_G(\pi) \to S$, we establish the following. 

\begin{theorem}[See Theorem \ref{th:Eis cone closed}, Corollary \ref{c:univ cone non-char} and Proposition \ref{p:univ cone base change}]
There exists a  canonically constructed closed conic Lagrangian $\wt\cN_\Pi \subset T^*\Bun_G(\pi)$ such that:
\begin{enumerate}
\item The map $T^*\Bun_G(\pi)\to T^{*}_{\Pi}$ sends $\wt\cN_{\Pi}$ bijectively onto the relative global nilpotent cone $\cN_{\Pi}$;
\item In particular, $\wt\cN_{\Pi}$ is non-characteristic with respect to the map $\Pi$, i.e., the intersection $\wt\cN_\Pi \cap \Pi^*(T^*S)$ is the zero-section of  $T^*\Bun_G(\pi)$;
\item The formation of $\wt\cN_{\Pi}$ commutes with arbitrary base change.
\end{enumerate}
\end{theorem}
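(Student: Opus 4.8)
The plan is to construct $\wt\cN_\Pi$ by ``spreading out'' the classical construction of the global nilpotent cone over the family, using the Eisenstein/Hecke description of the nilpotent cone as a union of conormals to the substacks where the bundle is reducible, and then to verify the three listed properties by reduction to the fiberwise case together with a properness argument. Concretely, for a single curve $X$ the global nilpotent cone can be described (following e.g.\ Laumon, Ginzburg, Faltings) as the closure of the union of conormal bundles to the locally closed substacks $\Bun_B \to \Bun_G$ coming from reductions to Borel (or parabolic) subgroups; equivalently it is the image of the moment map for the action on the Hitchin base. I would set up the relative version: form the relative stack $\Bun_B(\pi)\to S$ of $B$-reductions along fibers, the relative Hecke/Eisenstein correspondence, and take $\wt\cN_\Pi$ to be the closure in $T^*\Bun_G(\pi)$ of the union over parabolics $P\supseteq B$ of the (absolute, not relative) conormals to the images of $\Bun_P(\pi)$. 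The point of taking absolute conormals to substacks that are themselves ``spread out'' over $S$ is that this automatically lands in a Lagrangian — conormals to substacks are always Lagrangian — and the issue is only closedness and the non-characteristic property.

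\textbf{Key steps.} First I would establish that $\wt\cN_\Pi$ so defined is closed; this is the content referenced as Theorem~\ref{th:Eis cone closed}, and the natural route is to exhibit $\wt\cN_\Pi$ as the image of a proper map. The standard trick (Ginzburg, Beilinson--Drinfeld) is that the nilpotent cone is the image of the ``global Springer'' / relative cotangent of $\Bun_B$ under the proper map induced by $\Bun_B\to\Bun_G$ after a suitable compactification, or by working on the Hitchin side where $\cN_\Pi$ is the zero-fiber of a proper Hitchin map, and then noting that $T^*\Bun_G(\pi)\to T^*_\Pi$ has $\wt\cN_\Pi$ as the preimage-intersection one wants. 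Second, I would prove property~(1): the projection $T^*\Bun_G(\pi)\to T^*_\Pi$ restricted to $\wt\cN_\Pi$ is a bijection onto $\cN_\Pi$. The content of the relative cotangent sequence $0\to \Pi^*T^*S\to T^*\Bun_G(\pi)\to T^*_\Pi\to 0$ means a point of $\wt\cN_\Pi$ maps to a relative Higgs field that is fiberwise nilpotent (that is exactly $\cN_\Pi$), and bijectivity amounts to showing that over each fiber there is a \emph{unique} lift inside $\wt\cN_\Pi$ of each point of $\cN_\Pi$ — i.e.\ the ``horizontal'' degrees of freedom in $\Pi^*T^*S$ are pinned down. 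This should follow from the concrete description: a reduction of the bundle along the fiber, being a discrete-type datum in the $S$-direction, forces the $S$-cotangent component of a conormal covector to vanish. Third, property~(2) is then immediate: bijectivity of $\wt\cN_\Pi\to\cN_\Pi$ combined with the cotangent sequence forces $\wt\cN_\Pi\cap\Pi^*T^*S$ to be the zero section, which is precisely the non-characteristic condition (Corollary~\ref{c:univ cone non-char}). Fourth, property~(3), base change, I would get from the explicit moduli description: $\Bun_B(\pi)$, the Eisenstein correspondence, and the conormal construction are all compatible with base change $S'\to S$, and closure commutes with flat (even arbitrary, using the properness in step one) base change here because the cone is cut out by the Hitchin equations whose formation is base-change compatible (Proposition~\ref{p:univ cone base change}).

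\textbf{Main obstacle.} I expect the hard part to be the closedness statement, step one — in the non-relative case already the closedness of the global nilpotent cone is a genuine theorem (it fails to be obvious because $\Bun_G$ is a non-quasi-compact stack and the union over parabolics is infinite), and here one must make the argument uniformly over $S$ and over the non-quasi-compact stack $\Bun_G(\pi)$. The right tool is a relative version of the properness of the Hitchin map (or of the compactified Eisenstein map): one wants that the zero-fiber of the relative Hitchin map $T^*_\Pi\to \mathrm{Hitch}(\pi)$ is proper over $\Bun_G(\pi)$, which gives closedness of $\cN_\Pi$, and then transport closedness up to $\wt\cN_\Pi$ via step two's bijection — but that transport is circular unless one first knows $\wt\cN_\Pi$ is at least locally closed and the projection is finite, so some care in the order of the argument is needed. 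A secondary subtlety is checking that the ``absolute conormal of a relative substack'' really is Lagrangian in the \emph{total} $T^*\Bun_G(\pi)$ and not merely coisotropic; this is where property~(1)'s dimension count does the work, so steps one and two are somewhat intertwined and I would likely prove a combined statement: $\wt\cN_\Pi$ is a closed substack, finite and bijective over $\cN_\Pi$, hence of the right dimension, hence Lagrangian.
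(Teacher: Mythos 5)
Your overall strategy points at the same circle of ideas the paper uses (an Eisenstein-type cone built from $\Bun_B(\pi)$, Ginzburg's fiberwise identification, a compactification for closedness, and the bijection onto $\cN_\Pi$ doing the dimension count), but the construction as you literally state it does not work, and the one step that carries the real content is missing. The maps $\Bun_P(\pi)\to\Bun_G(\pi)$ are surjective for every parabolic $P$ (every $G$-bundle on a curve admits a $B$-reduction), so ``the conormal to the image of $\Bun_P(\pi)$'' is just the zero section, and there is no locally closed substack to take a conormal of. The correct object, and the paper's definition, is the transport of the zero section through the correspondence $T^*\Bun_B(\pi)\xleftarrow{d\frp}\Bun_B(\pi)\times_{\Bun_G(\pi)}T^*\Bun_G(\pi)\xrightarrow{\frp^{\na}}T^*\Bun_G(\pi)$, i.e.\ $\wt\cN_\Pi=\frp^{\na}((d\frp)^{-1}(0))$: pairs $(\cE,\ph)$ such that $\ph\in H^0(\cE_B(\frn)\otimes\om)$ for \emph{some} $B$-reduction. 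In particular ``Lagrangian because conormals to substacks are Lagrangian'' is not available; in the paper isotropy comes from transport of an isotropic through a correspondence, and the Lagrangian property comes from the bijection with $\cN_\Pi$ (pure dimension $\dim\Bun_G(\pi)$) — which you do eventually gesture at. Defining $\wt\cN_\Pi$ as a closure also creates a new problem your plan does not resolve: the closure could a priori acquire points with nonzero $\Pi^*(T^*S)$-component, destroying the bijectivity in (1). The paper instead proves that the un-closed set is already closed.

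That closedness is the crux, and your step one does not supply its mechanism. Saying ``exhibit it as the image of a proper map after a suitable compactification'' is the right pointer (Drinfeld's $\ov\Bun_B^{\l}(\pi)$), but what makes it work is an explicit \emph{closed} condition extending ``$\ph$ preserves the generic $B$-reduction'' to the boundary: the vanishing of $\ph\wedge\t$ where $\t:\cL(2\r)\to\wedge^n\cE(\frg)$ is built from the Pl\"ucker data, giving closed $\cW^{\l}$ whose images $\ov\cW^{\l}$ under the proper $\ov\frp^{\l,\na}$ are closed; one then needs the local finiteness of $\bigcup_\l\ov\cW^{\l}$ on finite-type opens, which the paper gets from constructibility of $\cN_\Pi$ plus the already-established bijection. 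Your alternative route ``work on the Hitchin side where $\cN_\Pi$ is the zero fiber of a proper Hitchin map'' does not address the issue: closedness of $\cN_\Pi$ in $T^*_\Pi$ is immediate (zero fiber of a map), and bijectivity of $\wt\cN_\Pi\to\cN_\Pi$ does not transport closedness upstairs, as you yourself note. Two smaller points: the bijectivity in (1) is proved in the paper simply by comparing the exact sequences $0\to T^*_bS\to T^*\Bun_{G/B}(\frX/S)\to T^*\Bun_{G/B}(\frX_b)\to 0$ — smoothness of $\Bun_B(\pi)\to S$ forces $\ker(d\frp)=\ker(dp_b)$ — not by any ``discreteness of reductions in the $S$-direction''; and base change (3) is a formal consequence of the Cartesian square relating $\Bun_B(\pi')\to\Bun_G(\pi')$ to $\Bun_B(\pi)\to\Bun_G(\pi)$ (transport of Lagrangians commutes with base change in Cartesian squares), rather than an assertion that closures commute with arbitrary base change, which in your closure-based setup would itself need justification.
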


We refer to the specified  closed conic Lagrangian $\wt\cN_\Pi \subset T^*\Bun_G(\pi)$ as the {\em universal nilpotent cone}.  The construction of $\wt\cN_{\Pi}$ uses the microlocal geometry of the map $\Bun_{B}\to \Bun_{G}$, generalizing an idea of Ginzburg \cite{Gin}.

The notion of the universal nilpotent cone allows us make precise the expectation that the automorphic category $\cA(Y)$ ``varies locally constantly with the curve $Y$'', as predicted by the Betti geometric Langlands conjecture. A special case of this expection can be stated as:
\begin{conj} In the situation of \S\ref{ss:intro main}, the restriction functor to any point $\b\in\RR_{>0}$  is an equivalence of automorphic categories
\begin{equation*}
i^{*}_{\b}: \cA(\frY_{>0})\isom \cA(Y_{\b}).
\end{equation*}
\end{conj}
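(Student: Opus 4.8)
The plan is to prove that $i^{*}_{\b}$ is fully faithful and essentially surjective, the first by a microlocal comparison built on the universal nilpotent cone, the second by spreading out a generating collection of objects along the family. Write $\Pi:\Bun_G(\t_{>0})\to\RR_{>0}$ for the structure map, so that $i_\b$ is the inclusion of the fibre $\Bun_G(Y_\b)=\Pi^{-1}(\b)$; all sheaf operations on the real-analytic stack $\Bun_G(\t_{>0})$ are as developed in Appendix~\ref{app:A}. The single property that drives everything is that the universal nilpotent cone $\wt\cN_\Pi$ is non-characteristic for $\Pi$ (part (2) of the cited theorem) and maps bijectively onto the relative nilpotent cone $\cN_\Pi$ (part (1)). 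From the first, every $\cF\in\cA(\frY_{>0})$ is non-characteristic for $\Pi$, and hence for each fibre inclusion $i_\b$. I would also record that $\RuHom(\cF,\cG)$, for $\cF,\cG\in\cA(\frY_{>0})$, is again non-characteristic for $\Pi$ even though it need not be nilpotent: one has $\ssupp\RuHom(\cF,\cG)\subset\ssupp(\cF)^{a}\,\widehat{+}\,\ssupp(\cG)$, and if $\xi=\eta_1+\eta_2$ with $\eta_2\in\wt\cN_\Pi$, $\eta_1\in\wt\cN_\Pi^{a}=-\wt\cN_\Pi$ lay in $\Pi^{*}(T^{*}\RR_{>0})=\ker\bigl(q:T^{*}\Bun_G(\t_{>0})\to T^{*}_\Pi\bigr)$, then $q(\eta_1)=-q(\eta_2)=q(-\eta_2)$ forces $\eta_1=-\eta_2$ by injectivity of $q$ on $-\wt\cN_\Pi$, so $\xi=0$. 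This uses only parts (1)--(2) of the theorem.

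\emph{Full faithfulness.} Fix $\cF,\cG\in\cA(\frY_{>0})$ and put $\cK=\RuHom(\cF,\cG)$. Since $i_\b$ is non-characteristic for $\cF$ and $\cG$, Kashiwara's non-characteristic restriction theorem gives a natural isomorphism $i_\b^{*}\cK\isom\RuHom(i_\b^{*}\cF,i_\b^{*}\cG)$, the shifts in $i_\b^{!}\simeq i_\b^{*}[-1]$ cancelling because they occur in both arguments. Since $\cK$ is non-characteristic for the submersion $\Pi$ and $\RR_{>0}$ is contractible, the non-characteristic deformation lemma of Kashiwara--Schapira (in the real-analytic setting of Appendix~\ref{app:A}) shows that the restriction map $R\Gamma(\Bun_G(\t_{>0}),\cK)\to R\Gamma(\Bun_G(Y_\b),i_\b^{*}\cK)$ is an isomorphism --- morally, $\cK$ is locally constant in the $\RR_{>0}$-direction. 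Composing the two gives $\Hom_{\cA(\frY_{>0})}(\cF,\cG)\isom\Hom_{\cA(Y_\b)}(i_\b^{*}\cF,i_\b^{*}\cG)$.

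\emph{Essential surjectivity.} As $i_\b^{*}$ preserves colimits and, by the previous paragraph, is fully faithful, its essential image is a full subcategory of $\cA(Y_\b)$ closed under colimits, so it suffices to place a set of generators of $\cA(Y_\b)$ in the image, i.e.\ to extend each such object to a nilpotent sheaf on $\Bun_G(\t_{>0})$. The natural candidates --- extensions by zero of rank-one local systems from the natural constructible strata of $\Bun_G(Y_\b)$ (Harder--Narasimhan strata, parabolic induction loci), Whittaker and Eisenstein sheaves, images of Hecke operators --- all admit evident relative versions over $\RR_{>0}$, since the relevant stacks and correspondences form families over $\RR_{>0}$ and the resulting relative sheaves are nilpotent by the universal nilpotent cone theorem; thus they restrict over $\b$ to the wanted objects. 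Equivalently one may localize: cover $\RR_{>0}$ by small intervals, prove $\cA(\Pi^{-1}(a,b))\isom\cA(Y_\b)$ on each by the same microlocal argument supplemented by a ``microlocal trivialization'' of $\Pi$ over $(a,b)$, and glue using that a locally constant family of categories over an interval is constant. One may also note that the conjecture would follow formally once both $\alpha$ and $\alpha_\b=i_\b^{*}\circ\alpha$ are known to be equivalences.

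\emph{The main obstacle.} I expect essential surjectivity, and within it the construction of the local microlocal trivialization of the family, to be the crux. The family of complex-analytic stacks $\{\Bun_G(Y_\b)\}_{\b\in\RR_{>0}}$ is not locally trivial --- even though $\t_{>0}\to\RR_{>0}$ is real-analytically trivial by Ehresmann --- so one cannot transport objects by hand; one must extract from the non-characteristic property of $\wt\cN_\Pi$ that the relevant relative sheaves are genuinely \emph{constant} along $\RR_{>0}$ at the level of objects, not merely of Hom-complexes between fixed objects. This is the analogue, for the non-characteristic projection $\Pi$, of the nearby-cycles base-change identity the authors flag in connection with the companion conjecture that $\alpha$ is an equivalence.
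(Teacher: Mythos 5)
First, a point of orientation: the paper does not prove this statement. It appears only as Conjecture~\ref{c:res eq} (with the quoted version in the introduction as a special case), and the authors explicitly tie its resolution to a nearby-cycles base-change identity they do not establish. So the question is whether your sketch closes the conjecture on its own, and it does not. In the full faithfulness half, the microlocal estimate you invoke is $\ssupp\,\RuHom(\cF,\cG)\subset \ssupp(\cF)^{a}\mathbin{\widehat{+}}\ssupp(\cG)$, where $\widehat{+}$ is the Kashiwara--Schapira operation that includes limits of sums of covectors at nearby points; it collapses to the naive pointwise sum only under a transversality hypothesis (roughly $\ssupp(\cF)\cap\ssupp(\cG)^{a}$ in the zero section), which fails here because both singular supports lie in the same Lagrangian $\wt\cN_{\Pi}$, and $\wt\cN_{\Pi}$, being a complex cone, is antipodally stable. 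Your computation only treats honest decompositions $\xi=\eta_{1}+\eta_{2}$, so it does not exclude limit covectors of $\widehat{+}$ landing in $\Pi^{*}(T^{*}\RR_{>0})$, and non-characteristicity of $\RuHom(\cF,\cG)$ is not established. Even granting it, the step ``$R\Gamma(\Bun_G(\t_{>0}),\cK)\to R\Gamma(\Bun_G(Y_\b),i_\b^{*}\cK)$ is an isomorphism'' is not an application of the non-characteristic deformation lemma as stated: that lemma carries properness/boundedness-of-support hypotheses, and $\Bun_G$ is not quasi-compact, so one must rule out cohomology escaping to infinity in the $\Bun$-direction (e.g.\ through deeper unstable strata) as $\b$ varies. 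That is exactly the kind of base-change-for-nearby-cycles statement the authors single out as missing; asserting that $\cK$ is ``locally constant in the $\RR_{>0}$-direction'' at the level of global sections is the content of the problem, not a consequence of part (2) of the cited theorem.

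The essential surjectivity half has a larger gap. You give no argument that clean extensions from Harder--Narasimhan or parabolic strata, Eisenstein/Whittaker objects, and Hecke images generate $\cA(Y_\b)$: nilpotent singular support does not confine sheaves to a fixed stratification, and the only generators supplied by the paper's formalism are the Morse-theoretic corepresentatives of Lemma~\ref{lem:comp gens}, which depend on local choices at smooth points of the Lagrangian and come with no evident spreading-out over the family. The fallback of covering $\RR_{>0}$ by intervals, ``microlocally trivializing'' $\Pi$, and gluing assumes precisely what is to be proved, namely that $\b\mapsto\cA(Y_\b)$ is a locally constant family of categories; you correctly flag this as the main obstacle, but then what remains is a plausible strategy rather than a proof. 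So the proposal has genuine gaps in both halves ($\widehat{+}$ and non-quasi-compactness issues for full faithfulness; generation and trivialization for essential surjectivity), consistent with the statement's status in the paper as an open conjecture.
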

See Conjecture \ref{c:res eq} for a more general version.

\sss{Hecke preserves nilpotent sheaves} We also prove the key result that sheaves with singular support in the universal nilpotent cone are preserved by Hecke operators. More precisely, consider a base-change $\theta: S'\to S$ with $\pi':\frX' = \frX \times_S S' \to S'$, and  a commutative diagram 
\begin{equation*}
\xymatrix{  & \frX\ar[d]^-{\pi}\\
S'\ar[r]^-\theta\ar[ur]^-{\s} & S}
\end{equation*}
Given a spherical kernel $\cK\in \cH_\sph$ invariant under automorphisms of the disk, there is  a natural Hecke functor
\begin{equation*}
\xymatrix{
H^{\Sph}_{\s,\cK}: \Sh(\Bun_{G}(\pi))\ar[r] &  \Sh(\Bun_{G}(\pi'))
}\end{equation*}

\begin{theorem}[See Theorem \ref{th:sph Hk pres}]\label{th:intro sph Hk pres} The spherical Hecke functor $H^{\Sph}_{\s,\cK}$ preserves universal nilpotent singular support: if a sheaf $\cF$ has singular support in $\wt \cN_\Pi$, then 
the sheaf $H^{\Sph}_{\s,\cK}(\cF)$ has singular support in $\wt \cN_{\Pi'}$. 
A similar statement holds for affine Hecke modifications.
\end{theorem}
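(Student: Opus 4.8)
The plan is to reduce the statement about singular support in the universal nilpotent cone $\wt\cN_\Pi$ to the known statement over a single curve via the base-change property (part (3) of the universal nilpotent cone theorem) together with the non-characteristic property (part (2)). The starting point is the well-known fact that, for a fixed smooth projective curve $Y$, Hecke modifications at a point preserve the condition of having singular support in the global nilpotent cone $\cN_{\Bun_G(Y)}$; this is a microlocal computation on the Hecke correspondence $\Bun_G(Y) \leftarrow \Hk \to \Bun_G(Y)$, using that the correspondence maps are proper (so push-forward preserves constructibility and is microlocally controlled) and that the ``difference'' in Higgs fields across a modification at a single point is itself nilpotent at that point. The task is to upgrade this fiberwise statement to the relative situation over the base $S'$, uniformly in the section $\s$.

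First I would set up the relative Hecke correspondence: over $S'$, form the stack $\Hk^{\Sph}_\s$ classifying a point of $S'$, a pair of $G$-bundles on the corresponding fiber of $\frX$ (pulled back via $\s$) and $\frX'$, together with an identification away from the graph of $\s$, bounded by the support condition built into $\cK\in\cH_\sph$. This sits in a correspondence $\Bun_G(\pi) \xleftarrow{\oll{h}} \Hk^{\Sph}_\s \xrightarrow{\orr{h}} \Bun_G(\pi')$, and $H^{\Sph}_{\s,\cK} = \orr{h}_! (\oll{h}^* (-) \ot \cK)$ (suitably interpreted relative to $S'$). The key geometric input is a microlocal estimate: the relative conormal variety of this correspondence, i.e. the image in $T^*\Bun_G(\pi)\times_{\Hk}T^*\Bun_G(\pi')$ of the conormal to the graphs involved, composed with $\wt\cN_\Pi$ (restricted along $\s$), lands inside $\wt\cN_{\Pi'}$. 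Because $\oll{h}$ and $\orr{h}$ are proper (the spherical/bounded condition on $\cK$ guarantees this), the standard microlocal estimates for functorial operations (Kashiwara--Schapira, in the constructible setting) apply: $\orr{h}_!$ moves singular support inside the composition of $SS(-)$ with the Lagrangian correspondence attached to $\Hk^{\Sph}_\s$, and $\oll{h}^*$ moves it by pullback, both controlled because the maps are non-characteristic for the cones in play, which is exactly where part (2) of the cone theorem enters.

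To verify the microlocal estimate itself, I would argue fiberwise over $S'$: by part (3) (compatibility with base change), for each geometric point $s\in S'$ the restriction $\wt\cN_\Pi|_{\s(s)}$ is the ordinary global nilpotent cone of $\Bun_G(\frX_{\s(s)})$ and likewise $\wt\cN_{\Pi'}|_s$; and the restriction of the relative Hecke correspondence to $s$ is the ordinary Hecke correspondence at the point $\un{\s(s)}$ on the curve $\frX_s$. Thus the fiberwise estimate is the classical statement. Since $\wt\cN_\Pi$ is a closed conic \emph{Lagrangian} that maps bijectively to the relative nilpotent cone $\cN_\Pi$ (part (1)), a conic closed subset of $T^*\Bun_G(\pi)$ that is fiberwise contained in $\wt\cN_\Pi$ over a dense (indeed all of) $S'$ and is itself $S'$-flat/closed is contained in $\wt\cN_\Pi$; the same logic on the $\frX'$ side puts $SS(H^{\Sph}_{\s,\cK}\cF)$ inside $\wt\cN_{\Pi'}$. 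The affine Hecke case is identical, replacing spherical kernels and $\Bun_G$ by Iwahori-level kernels in $\cH$ and $\Bun_{G,\bP}$, using the analogous parahoric Hecke correspondence; the nilpotency of the modification at a single (possibly twisted) point is unchanged.

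The main obstacle I anticipate is making the microlocal functoriality rigorous in the stacky and non-quasi-compact setting: $\Bun_G(\pi)$ is a (typically non-quasi-compact) algebraic stack, the sheaves are not assumed to have finite-type support, and the Hecke correspondence involves the loop group. Controlling singular support under $\orr{h}_!$ requires knowing that $\orr{h}$ is proper on the relevant locus (true for a fixed spherical kernel by boundedness, but one must be careful about the ind-structure) and that the usual Kashiwara--Schapira microlocal bounds for $f_!$ and $f^*$ extend to stacks — which is standard after smooth-local descent but must be invoked carefully. A secondary subtlety is the interaction with the base $S'$: one must ensure the estimate is \emph{relative}, i.e. that no extra covectors along $\Pi^*T^*S'$ are created, and this is exactly guaranteed by the non-characteristic property (2) of the universal nilpotent cone, so the argument is structured to feed that in at the decisive step rather than reprove it.
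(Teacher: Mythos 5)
There is a genuine gap, and it lies exactly at the step you treat as routine: the passage from a fiberwise estimate to containment in $\wt\cN_{\Pi'}$. The content of the theorem is not that $H^{\Sph}_{\s,\cK}(\cF)$ is \emph{relatively} nilpotent (i.e.\ that the image of its singular support under $P_{\Pi'}:T^*\Bun_G(\pi')\to T^*_{\Pi'}$ lies in $\cN_{\Pi'}$); it is that the singular support lies in the specific closed conic Lagrangian $\wt\cN_{\Pi'}$, which maps \emph{bijectively} to $\cN_{\Pi'}$ and hence selects, for each relative nilpotent Higgs field, one distinguished lift among the whole affine space of lifts differing by $\Pi'^*(T^*S')$. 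Restricting to fibers over $S'$ (via the base-change property) can only ever see the relative direction: singular support does not localize along fibers, there is no canonical splitting of $T^*\Bun_G(\pi')$ into ``fiber'' and ``base'' covectors, and covectors with components along $\Pi'^*(T^*S')$ are invisible to the restriction $i_s^*$. So the single-curve statement of \cite[Theorem 5.2.1]{NY}, applied fiber by fiber, cannot control precisely the covectors whose control is the point of the theorem; and your closing lemma (``a closed conic subset fiberwise contained in $\wt\cN_\Pi$ is contained in $\wt\cN_\Pi$'') is not available in the needed sense. Note also that the paper explicitly runs the logic in the opposite direction: Theorem \ref{th:intro sph Hk pres} \emph{implies} the result of \cite{NY}, and its proof is different even in that special case, so taking \cite{NY} as the input begs the real question.

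For comparison, the paper's argument never reduces to fibers. It first reduces the spherical case to the Iwahori case (Theorem \ref{th:aff Hk pres}) by pulling back along the smooth surjection $r:\Bun_{G,N}(\pi,\s)\to\Bun_G(\pi)$ and using $\oll{r}$, $\orr{r}$ to transfer the cones (\S\ref{sss:red to Iw}). Then, in \S\ref{sss:proof Iw}, it reduces to the monoidal generators $\wh\D(s)$ and $\wh\D(\om)$ of $\cH$, quotients by $H_{>0}$ so that the relevant correspondences $q_{1,\le s}, q_{2,\le s}$ become proper (your properness concern is handled this way, not by boundedness of the kernel alone), and thereby reduces the theorem to an inclusion of transported Lagrangians, e.g.\ $\orr{q}_{2,\le s}\oll{q}_{1,\le s}(\wt\cN^{N}_{\Pi,\s})\subset\wt\cN^{N}_{\Pi,\s}$. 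That inclusion is then proved \emph{directly in the family}, using the Eisenstein description \eqref{NB aw} of the universal cone as $\bigcup_{w}\orr{\a}_w(0_{\Bun_{B,1}(w)})$ and an explicit analysis of the spaces $\wt C(w_1,w_2)$ of pairs of $B$-bundles related by a modification of type $s$; it is this global, Eisenstein-cone computation---which simultaneously tracks the base-direction components---that your fiberwise reduction skips and that would have to be supplied to close the gap.
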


\begin{remark} Theorem \ref{th:intro sph Hk pres} implies the invariance of spherical Hecke operators on nilpotent sheaves proved in~\cite[Theorem 5.2.1]{NY}. Namely, for a single curve $X$, the Hecke modification $H_{\cK}: \Sh(\Bun_{G}(X))\to \Sh(X\times \Bun_{G}(X))$ along a moving point in $X$ sends $\Sh_{\cN}(\Bun_{G}(X))$ to $\Sh_{0_{X}\times \cN}(X\times \Bun_{G}(X))$, i.e.~the singular support of the result is contained in the product of the zero section $0_X \subset T^{*}X$ and the nilpotent cone $\cN \subset T^{*}\Bun_{G}(X)$.
To recover this statement, we can apply Theorem \ref{th:intro sph Hk pres} to the constant curve $\frX=X$ over $S=\pt$ and $S'=X, \s=\id_{X}:S'=X\to X$. Note that $\Bun_{G}(\pi')=X\times \Bun_{G}(X)$ and its universal nilpotent cone is $0_X\times \cN$ by the base change property in Proposition \ref{p:univ cone base change}. The proof of Theorem \ref{th:intro sph Hk pres} is different from that in \cite{NY} even in this special case. 
\end{remark}


\subsection{Further directions}
We briefly mention further results in the series~\cite{NY-comp}, \cite{NY-2pt} devoted to Verlinde formulas.

\subsubsection{Compatibilities}
The functor $a$ of \eqref{eq:coequal} and thus the functor $\alpha$ of Theorem~\eqref{thm:main} satisfy essential  compatibilities with other structures.
For example, we show in this paper that $\alpha$ is compatible with spherical Hecke operators along sections of the family  $\pi:\frX\to A^{[0]}$ and similarly affine Hecke operators acting along sections of marked points with Iwahori level structure. 

In the  paper~\cite{NY-comp}, we show  $\alpha$ is compatible with parabolic induction and Whittaker normalization. (Both of these statements are to be expected as they are satisfied under the spectral gluing of~\cite{BN-spec}.)
To  informally state this, we write $\Eis^{\lambda}$ for the universal Eisenstein series of twist $\lambda \in \XX_*(H)$ obtained by induction of the universal local system on $\Bun_H^\lambda$. We write $\Wh$ for the nilpotent Whittaker object, i.e.~the nilpotent sheaf co-representing the functor of ``taking the first Whittaker coefficient''. 

\begin{theorem}[\cite{NY-comp}]
\begin{enumerate}
\item The functor $a$ of  \eqref{eq:coequal} maps the tensor of Eisenstein series $\Eis^{\lambda_-} \otimes \Eis^{\lambda_+}$ to the Eisenstein series $\Eis^{\lambda_- + \lambda_+}$, for any $\l_{-},\l_{+}\in \xcoch(H)$. 

\item The functor $a$ of  \eqref{eq:coequal} maps the tensor of Whittaker objects $\Wh_- \otimes \Wh_+$ to the extended Whittaker object $\Wh \otimes_{\cO(H^\vee/W)^{\otimes R}} \cO(H^\vee)^{\otimes R}$, where the  $ \cO(H^\vee/W)^{\otimes R}$-action on $\Wh$ comes from monodromy around the vanishing cycles of the degeneration $\t:\frY\to \AA^{1}$.
\end{enumerate}
\end{theorem}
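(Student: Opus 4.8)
The plan is to deduce both parts from the principle that the gluing functor $a$ of \eqref{eq:coequal}, being the left adjoint of the nearby-cycles functor $\psi$ of the bubbling family $\pi\colon\frX\to A^{[0]}$, is compatible with any construction assembled from proper pushforward and smooth pullback, once one works relatively over the base $A^{[0]}$ and stays inside the categories of nilpotent sheaves. Concretely, one introduces the relative moduli $\Bun_G(\pi)$, $\Bun_B(\pi)$, $\Bun_H(\pi)$, and (for part (2)) the relevant relative Whittaker moduli over $A^{[0]}$, whose special fibers compute the objects attached to the nodal DM curve $\frX|_0 = X_-\vee_Q X_+$ and whose generic fibers compute those attached to $\frY$; here the source of $a$ is identified with the category of nilpotent sheaves on $\Bun_G(\frX|_0)$. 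Proper and smooth base change for nearby cycles, applied to the Eisenstein diagram $\Bun_H(\pi)\xleftarrow{q}\overline{\Bun}_B(\pi)\xrightarrow{p}\Bun_G(\pi)$ (using a relative compactification so that $p$ is proper, which does not affect the Eisenstein object on nilpotent sheaves), give a canonical isomorphism $\psi_G\circ\Eis_\frY \simeq \Eis_{\frX|_0}\circ\psi_H$ of functors; Theorem~\ref{th:intro sph Hk pres} and the universal nilpotent cone of \S\ref{ss:intro univ cone} ensure that all objects remain nilpotent, so this takes place in the automorphic categories. In a commuting square of functors one always has a canonical comparison map on the left adjoints, which is invertible as soon as one vertical functor is an equivalence; this is how one transfers such identities to $a = \psi_G^L$.

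For part (1), the point is that the $H$-part of the degeneration is tame: since the node is separating, $\Bun_H(\pi)$ is smooth over $A^{[0]}$ with no vanishing cohomology, so $\psi_H$ is an equivalence and the comparison above upgrades to an isomorphism $\psi_G^L\circ\Eis_{\frX|_0} \simeq \Eis_\frY\circ\psi_H^L$, with $\psi_H^L$ simply carrying $\cL^\lambda_{\frX|_0}$ back to $\cL^\lambda_\frY$. It remains to identify, on the special fibre, the object $\Eis^{\lambda_-}\otimes_{\Sh_0(H)^{\otimes R}}\Eis^{\lambda_+}$ with the Eisenstein object $\Eis^{\lambda_-+\lambda_+}_{\frX|_0}$ of the nodal curve. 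This follows from the Künneth description of $\Bun_B$ and $\Bun_H$ of $\frX|_0$ as fibre products of the two components over the moduli at $Q$ (with the $\Sh_0(H)^{\otimes R}$-balancing matching the torus rigidification at the nodes), together with the multiplicativity of the universal local system under gluing, which sends $\cL^{\lambda_-}\boxtimes\cL^{\lambda_+}$ to $\cL^{\lambda_-+\lambda_+}_{\frX|_0}$. Combining, $a(\Eis^{\lambda_-}\otimes\Eis^{\lambda_+}) = \psi_G^L\Eis_{\frX|_0}(\cL^{\lambda_-+\lambda_+}_{\frX|_0}) = \Eis_\frY(\cL^{\lambda_-+\lambda_+}_\frY) = \Eis^{\lambda_-+\lambda_+}$.

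For part (2) the degeneration is not tame: there is genuine vanishing cohomology at each node of $R$, and this is exactly what produces the extension of scalars in the target. The Whittaker coefficient functor is again built from a proper pushforward and a restriction along the Whittaker locus, hence commutes with nearby cycles; but relatively over $A^{[0]}$ it is linear over a sheaf of commutative rings that specializes from $\cO(H^\vee)^{\otimes R}$ at the central fibre (the orbifold nodes of $\frX|_0$ carry Iwahori-type level, so the Whittaker object sees affine Hecke eigenvalues in $H^\vee$, i.e.\ the Bernstein subalgebra $\cO(H^\vee)=\CC[\xcoch(H)]$) to the subalgebra of invariants $\cO(H^\vee/W)^{\otimes R}$, where $\cO(H^\vee/W)=\cO(H^\vee)^W$, generically (the monodromy around each vanishing cycle is a central, equivalently spherical, Hecke operator, hence factors through $\cO(H^\vee/W)$). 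Now $\Wh_-\otimes_{\Sh_0(H)^{\otimes R}}\Wh_+$ co-represents, as an $\cO(H^\vee)^{\otimes R}$-module, the balanced Whittaker coefficient functor on the source of $a$; tracing the compatibility of $a$ with these coefficient functors and the restriction/induction adjunction along $\cO(H^\vee/W)^{\otimes R}\hookrightarrow\cO(H^\vee)^{\otimes R}$ shows that $a$ sends it to the object of $\cA(\frY_{>0})$ co-representing the composite functor, which is $\Wh\otimes_{\cO(H^\vee/W)^{\otimes R}}\cO(H^\vee)^{\otimes R}$.

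I expect the main obstacle to be twofold. First, one must establish the left-adjoint base-change for nearby cycles against the \emph{non-proper} Eisenstein and Whittaker maps while staying within nilpotent sheaves: here the relative compactifications, the universal nilpotent cone, and Theorem~\ref{th:intro sph Hk pres} do the real work, and one has to check that the compactification boundary and the Whittaker truncations contribute nothing spurious. Second, for part (2) one must pin down precisely the vanishing-cycle monodromy as the $\cO(H^\vee/W)^{\otimes R}$-action on $\Wh$ and identify it, under the equivalence $\cH\simeq\cH^\bub$ of Theorem~\ref{thm:equiv of hecke cats} and its Bernstein presentation, with the restriction along $\cO(H^\vee/W)^{\otimes R}\hookrightarrow\cO(H^\vee)^{\otimes R}$ of the action on the nodal Whittaker object; this matching of symmetry algebras across the degeneration is the technical heart of the argument.
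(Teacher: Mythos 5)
First, a caveat on the comparison itself: this theorem is not proved in the present paper — it is quoted from the companion paper \cite{NY-comp} (``Compatibilities of automorphic gluing functor''), so there is no in-paper proof to measure your argument against. Judging your proposal on its own terms, it contains genuine gaps, not just unfinished technicalities.

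The main structural gap is in how you pass from a commutation isomorphism for nearby cycles to one for the left adjoint $a=\psi^{L}$. Your stated principle — that the comparison map on left adjoints attached to a commuting square is ``invertible as soon as one vertical functor is an equivalence'' — is false. The mate of $\psi_{G}\circ\Eis_{\frY}\simeq \Eis_{X(0)}\circ\psi_{H}$ is built from the unit $\id\to\psi_{H}\psi_{H}^{L}$ and the counit $\psi_{G}^{L}\psi_{G}\to\id$; even if $\psi_{H}$ were an equivalence, you would still need the counit $\psi_{G}^{L}\psi_{G}\to\id$ to be invertible on the Eisenstein objects (i.e.\ full faithfulness of $\psi_{G}$ there), and that is exactly the nontrivial content, not a formality. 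Worse, the input to this shortcut is also wrong: for a separating degeneration with $|R|\ge 2$ nodes the family $\Bun_{H}(\pi)$ has genuine vanishing cohomology — the Jacobian degenerates to a semiabelian variety, $b_{1}$ of the fiber drops by $|R|-1$, and the vanishing cycles of the curve act by nontrivial monodromy — so $\psi_{H}$ is not an equivalence and the ``tame $H$-part'' argument for part (1) collapses except in the special case of a single node. A correct argument has to compute $a$ on $\Eis^{\lambda_-}\otimes\Eis^{\lambda_+}$ directly (e.g.\ through the bar resolution/bubbling description of $a$, or via a genuine base-change theorem for $\psi^{L}$ against the compactified Eisenstein diagram with control of the boundary), using the $\Sh_{0}(H)^{\otimes R}$-balancing at the nodes; your Künneth remark points in the right direction but the mechanism you propose to conclude does not work.

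For part (2) the proposal is an outline of the statement rather than a proof. The two assertions carrying all the weight — that the relative Whittaker coefficient functor over the base is linear over a sheaf of rings interpolating between $\cO(H^\vee)^{\otimes R}$ on the special fiber and $\cO(H^\vee/W)^{\otimes R}$ generically, with the generic action realized by monodromy around the vanishing cycles, and that corepresentability transfers through the left adjoint $a$ so as to produce exactly $\Wh\otimes_{\cO(H^\vee/W)^{\otimes R}}\cO(H^\vee)^{\otimes R}$ — are precisely what needs to be established, and neither is supplied. In particular, a left adjoint sends a corepresenting object of a functional $F$ to a corepresenting object of $F\circ\psi$ only once one has identified $F\circ\psi$ with the intended extended Whittaker functional together with its module structure; that identification (matching the vanishing-cycle monodromy with the Bernstein-center/$\cO(H^\vee/W)$-action under $\cH\simeq\cH^{\bub}$) is the theorem, so the argument as written is circular at its crucial step.
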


\subsubsection{Langlands duality for the universal affine Hecke category}

In Theorem~\ref{thm:equiv of hecke cats}, we have identified the bubbling Hecke category $\cH^\bub$ 
with the universal affine Hecke category $\cH$.
This is an identification of monoidal categories in bimodules for $\Sh_0(H) \simeq \qc(H^\vee)$. 

If we constrain the $H$-monodromies to be trivial (so supported scheme-theoretically at the identity  $e\in H^{\vee}$) or pro-unipotent (so supported topologically at   $e\in  H^\vee$), then Bezrukavnikov's tamely ramified local Langlands equivalence \cite{Be} applies. We plan to show in~\cite{NY-2pt} that it is possible to establish such duality without specifying the monodromy, as stated in the following conjecture.

\begin{conj}\label{thm:univ aff hecke duality}
There is an equivalence of monoidal categories 
\begin{equation*}
\cH  \simeq \qc^!((\wt G^\vee \times_{G^\vee} \wt G^\vee)/G^\vee)
\end{equation*}
extending Bezrukavnikov's  equivalence.
\end{conj}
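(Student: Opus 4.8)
The plan is to deduce the equivalence from a \emph{spectral realization of the universal monodromic Iwahori-Whittaker module}, combined with a kernel/Morita identification of $\cH$. On the automorphic side, let $\cH^{IW}$ denote the universal monodromic Iwahori-Whittaker category of the affine flag variety of $G$: sheaves on $\bI^{\circ}\bs G\lr{z}$ that are monodromic for the left $H$-action and $(N^{-}\lr{z},\psi)$-equivariant on the right for a generic character $\psi$. Left convolution makes $\cH^{IW}$ a left $\cH$-module. On the spectral side, write $\cR := \qc^{!}((\tdG\times_{\dG}\tdG)/\dG)$ for the target; it acts on $\qc^{!}(\tdG/\dG)$ by convolution along $\tdG\leftarrow \tdG\times_{\dG}\tdG\rightarrow\tdG$, where $\tdG\to\dG$ is the group Grothendieck-Springer map. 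The strategy is: (i) produce an equivalence of module categories $\cH^{IW}\simeq\qc^{!}(\tdG/\dG)$; (ii) upgrade it to the monoidal equivalence $\cH\simeq\cR$ by identifying both sides with ``$\qc^{!}(\dG/\dG)$-linear endofunctors of the Whittaker module''; (iii) check compatibility with Bezrukavnikov's equivalence by restricting the monodromy to the formal neighborhood of $e\in H^{\vee}$.

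For (i), the plan is to establish the universal-monodromic, group-level analogue of the Arkhipov-Bezrukavnikov equivalence $\Sh_{IW}(\Fl_{G})\simeq\Coh^{\dG}(\tdN)$, with $\tdN$ replaced by $\tdG$. The left $H$-monodromy on $\cH^{IW}$ is to be matched with the map $\tdG\to\dT$ recording the image of $g$ in the abelianization of the chosen Borel, so that both sides become compatibly linear over $\Sh_{0}(H)\simeq\qc(H^{\vee})=\qc(\dT)$. I would build the functor on explicit generators: universal monodromic Wakimoto sheaves and the image of the monoidal unit should go to the structure sheaf and to line bundles on $\tdG$ pulled back from $\cB^{\vee}$, using Theorem~\ref{thm:equiv of hecke cats} to reduce computations in $\cH$, hence in $\cH^{IW}$, to the finite monodromic Hecke category together with the lattice $\XX_{*}(T)$, and matching these with the corresponding structures on $\tdG\to\cB^{\vee}$.

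For (ii), the plan is a \emph{de-equivariantization/Morita} argument. The Grothendieck-Springer map $\tdG\to\dG$ is proper with smooth source and target, hence of finite Tor-dimension, so $\qc^{!}(\tdG/\dG)$ is dualizable over $\qc^{!}(\dG/\dG)$ and the standard kernel computation gives
\[
\End_{\qc^{!}(\dG/\dG)}\bigl(\qc^{!}(\tdG/\dG)\bigr)\;\simeq\;\qc^{!}\bigl((\tdG\times_{\dG}\tdG)/\dG\bigr)=\cR .
\]
Transporting through (i), it remains to prove that the monoidal functor $\cH\to\End_{\qc^{!}(\dG/\dG)}(\cH^{IW})$ --- detect a kernel in $\cH$ from how it convolves the Whittaker module --- is an equivalence. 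Full faithfulness and essential surjectivity amount to the statement that the universal monodromic Whittaker module is a \emph{faithful} $\cH$-module realizing $\cH$ as its endomorphisms, compatibly with the $\Rep(\dG)$-action coming from derived Satake at the spherical end; this is the affine-Hecke incarnation of ``Whittaker coefficients see everything''.

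The main obstacle is precisely step (ii), and specifically proving this faithfulness/endomorphism statement \emph{over the whole torus $H^{\vee}$} rather than formally near a point. At unipotent monodromy it is Bezrukavnikov's theorem, whose proof uses perverse-coherent $t$-structures, the structure of the regular bimodule, and reduction to the (sub)regular locus; globalizing forces one to run these arguments uniformly across the adjoint quotient $\dG/\!/\dG\cong\dT/W$, over which $\tdG\times_{\dG}\tdG$ is genuinely singular (so the $\qc^{!}$ versus $\Coh$ bookkeeping is essential) and the loop-group side is not proper. A natural route is to first establish a \emph{spectral decomposition} of $\cH$ over $\dT/W$, identifying the fibre over a semisimple class $s\in\dG$ with the (possibly disconnected) affine Hecke category of $Z_{\dG}(s)$, built from Gaitsgory-style central sheaves with monodromy upgraded to the group $H^{\vee}$, and then to glue; but making this gluing precise over all of $\dT/W$ is essentially the content of the conjecture. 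Granting (i) and (ii), step (iii) is comparatively formal: since the universal monodromic $\cH$ is flat over $H^{\vee}=\dT$, its $!$-restriction to the formal neighborhood of $e$ recovers the Iwahori-equivariant affine Hecke category while $\cR$ restricts to coherent sheaves on the classical Steinberg variety $\tdN\times_{\dcN}\tdN$, so the equivalence specializes to the one of \cite{Be}.
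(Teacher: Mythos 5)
The statement you are proving is not proved in this paper: it is stated as Conjecture~\ref{thm:univ aff hecke duality}, and the authors explicitly defer its proof to the forthcoming work \cite{NY-2pt}. So there is no argument in the paper to compare yours against, and your proposal has to stand on its own. As it stands it does not: it is a strategy outline whose central step is left open, and you say so yourself.

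Concretely, the gap is step (ii). The assertion that the universal monodromic Iwahori--Whittaker module is a faithful $\cH$-module with $\cH\simeq\End_{\qc^!(\dG/\dG)}\bigl(\cH^{\mathit{IW}}\bigr)$, uniformly over all of $H^{\vee}$ rather than after completion at $e\in H^{\vee}$, is precisely the new content the conjecture asks for beyond Bezrukavnikov's theorem \cite{Be}; reducing the conjecture to this unproven statement of essentially the same depth (you call it ``essentially the content of the conjecture'') is not a proof. Step (i) is also not available off the shelf: the group-level, universal-monodromic Arkhipov--Bezrukavnikov equivalence $\cH^{\mathit{IW}}\simeq\qc^!(\tdG/\dG)$, linear over $\Sh_0(H)\simeq\qc(\dT)$, is itself a substantial theorem not established in this paper or (at this generality) in the cited literature, where known versions hold with trivial/unipotent monodromy or formally near a point. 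So your plan rests on two major unproven inputs. Secondary points that would also need care: the kernel identification $\End_{\qc^!(\dG/\dG)}(\qc^!(\tdG/\dG))\simeq\qc^!((\tdG\times_{\dG}\tdG)/\dG)$ must be justified in the ind-coherent setting, since $\tdG\times_{\dG}\tdG$ is singular and possibly derived, and the specialization in step (iii) to Bezrukavnikov's equivalence involves the ``mixed'' Steinberg-type fiber product appearing in \cite{Be} rather than literally $\tdN\times_{\dcN}\tdN$, so the claimed restriction statement should be formulated more carefully. None of this says the route is wrong --- it is a plausible attack, broadly in the spirit of how such dualities are expected to be proven --- but as written it is a reduction of the conjecture to statements at least as hard, not a proof.
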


\subsubsection{Betti Geometric Langlands in genus one}\label{s:low genus}
A primary motivation for the results of this paper is the potential application to low genus situations (genus one or genus zero with marked points). 
Using a degeneration of a genus one curve to a nodal pair of rational curves, our main result implies the following.

\begin{cor}[of Theorem \ref{thm:main coarse}]\label{cor:glue in genus 1}
There is a canonical map from the cocenter of the universal affine Hecke category to the Betti automorphic category of   
a genus one curve $E$:
\begin{equation*}
\xymatrix{
hh(\cH)=\cH \otimes_{\cH \ot \cH^{op}} \cH \ar[r] & \cA(E) 
}
\end{equation*}
\end{cor}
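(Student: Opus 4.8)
The plan is to specialize Theorem \ref{thm:main coarse} to a concrete separating nodal degeneration of a genus one curve and then identify the two sides of the resulting functor with the objects in the statement. First I would choose the degeneration: let $\t:\frY\to\AA^1$ be the family whose generic fiber is a smooth genus one curve $E$ and whose special fiber $Y=Y_-\cup_R Y_+$ is a ``banana'' configuration, i.e.\ two copies of $\PP^1$ glued at two nodes $R=\{r_1,r_2\}$, so that $|R|=2$. (One can produce such a family inside a smooth surface, e.g.\ as a pencil of curves of the appropriate bidegree on $\PP^1\times\PP^1$, or by taking a suitable degeneration of plane cubics; the precise model is immaterial, only conditions (1)--(3) of Definition \ref{def intro:sep nod deg A1} matter.) With this choice, Theorem \ref{thm:main coarse} produces, for any $\b>0$, a canonical functor
\begin{equation*}
\a_\b:\cA(Y_-,R)\otimes_{\cH^{\ot R}}\cA(Y_+,R)\longrightarrow \cA(Y_\b)\simeq \cA(E),
\end{equation*}
using that $Y_\b$ is a smooth genus one curve, hence abstractly isomorphic to $E$ (and the Betti automorphic category depends only on the topological type, so $\cA(Y_\b)\simeq\cA(E)$).

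Next I would identify the source. Since $R=\{r_1,r_2\}$ has two elements, $\cH^{\ot R}\simeq \cH\ot\cH$, and the module categories are $\cA(Y_-,R)$ on the right and $\cA(Y_+,R)$ on the left. The key computation is that each of $Y_-$ and $Y_+$ is a copy of $\PP^1$ with two marked points carrying $N$-level (equivalently, via Proposition \ref{th:intro Bun tw}, Iwahori-level) structure. But $\Bun_{G,N}(\PP^1,\{0,\infty\})$ with its two commuting $\cH$-actions by modification at $0$ and at $\infty$ is, essentially by definition together with the equivalence $\cH^\bub\simeq\cH$ of Theorem \ref{thm:equiv of hecke cats} (compare the discussion of $\cH^\bub = \cA({}_kP_k,\{0_k,\infty_k\})$ and Remark \ref{r:two eq H}), the regular bimodule: one gets a canonical equivalence $\cA(Y_\pm,R)\simeq\cH$ as an $(\cH,\cH)$-bimodule. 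Tracing through the conventions of \S\ref{ss:intro main} that turn $\cA(Y_-,R)$ into a right $\cH^{\ot R}$-module via the anti-equivalence $\s:\cH\to\cH^{op}$, the right $\cH\ot\cH$-module $\cA(Y_-,R)$ becomes $\cH$ regarded as a right module over $\cH\ot\cH^{op}$ (one $\cH$ factor acting by each of $r_1,r_2$, with the $\s$-twist converting the action at the ``outgoing'' node into a right action), and likewise $\cA(Y_+,R)$ becomes $\cH$ as a left $\cH\ot\cH^{op}$-module. Hence
\begin{equation*}
\cA(Y_-,R)\otimes_{\cH^{\ot R}}\cA(Y_+,R)\;\simeq\;\cH\otimes_{\cH\ot\cH^{op}}\cH,
\end{equation*}
which is exactly the cocenter (Hochschild homology category) of $\cH$. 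Substituting this identification into $\a_\b$ yields the asserted map $\cH\otimes_{\cH\ot\cH^{op}}\cH\to\cA(E)$.

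The main obstacle I expect is the bookkeeping in the previous paragraph: making the identification $\cA(\PP^1;\text{two }N\text{-points})\simeq\cH$ as a \emph{bimodule} precise and compatible with the specific right-module conventions (the Wakimoto/long-intertwiner twist $\s$, and the $\io_!$ flip at $\infty$) used to form the relative tensor product in Theorem \ref{thm:main coarse}, so that the relative tensor product genuinely becomes the cocenter $\cH\otimes_{\cH\ot\cH^{op}}\cH$ rather than some variant twisted by an auto-equivalence. This is where one must invoke Theorem \ref{thm:equiv of hecke cats} and Remark \ref{r:two eq H} carefully, checking that the ``left-suited'' and ``right-suited'' equivalences $\cH^\bub\simeq\cH$ are used at the two nodes in the way that matches the $op$ on the second tensor factor. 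A secondary, more routine point is justifying $\cA(Y_\b)\simeq\cA(E)$ and the independence of $\b$, which follows from the topological invariance built into the Betti theory (and is consistent with Conjecture \ref{c:res eq}); since the corollary only claims the existence of a canonical map, one does not need the restriction functors $i_\b^*$ to be equivalences, only that the target can be named $\cA(E)$. Everything else is a direct substitution into Theorem \ref{thm:main coarse}.
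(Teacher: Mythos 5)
Your proposal is correct and is essentially the argument the paper intends: degenerate $E$ to a nodal pair of rational curves (an $I_2$-type fiber $Y_-\cup_R Y_+$ with $Y_\pm\simeq\PP^1$ and $|R|=2$), apply Theorem~\ref{thm:main coarse}, and identify $\cA(\PP^1,\{0,\infty\})=\Sh_\cN(\Bun_{G,N}(\PP^1,\{0,\infty\}))\simeq\cH$ as a bimodule --- which is exactly Lemma~\ref{lem:free rank 1} together with the $\tau$-twist discussion following Theorem~\ref{th:matching actions} --- so that the bar-complex tensor product becomes the cocenter $\cH\otimes_{\cH\ot\cH^{op}}\cH$. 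The convention/twist bookkeeping you single out is indeed the only delicate point, and it is exactly what the precise formulation of Theorem~\ref{th:bar complex} is designed to settle.
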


Combining this with Conjecture~\ref{thm:univ aff hecke duality} and results of \cite{BNP-hecke}, one expects to deduce the following.

\begin{conj} \label{thm:genus one} The map of Corollary~\ref{cor:glue in genus 1} is an equivalence. Hence
the Betti geometric Langlands equivalence holds for the genus one curve $E$:
\begin{equation*}
\cA(E) \simeq \qc^!_\cN(\Loc_{G^\vee}(E)) 
\end{equation*}
\end{conj}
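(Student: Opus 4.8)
The plan is to exhibit $\cA(E)$ as the categorical trace (cocenter) of the universal affine Hecke category $\cH$ via a genus-one degeneration, and then to compute this trace after passing to the spectral side. Concretely, choose a separating nodal degeneration $\tau\colon\frY\to\AA^1$ whose smooth fibers $Y_\beta$ have genus one. The arithmetic-genus formula $p_a(Y)=g(Y_-)+g(Y_+)+|R|-1$ forces $g(Y_-)=g(Y_+)=0$ and $|R|=2$, so the central fiber is $Y_-\cup_R Y_+$ with $Y_\pm\simeq\PP^1$ glued along two pairs of points (a torus after smoothing). Theorem~\ref{thm:main coarse} then produces the functor $\alpha_\beta\colon\cA(Y_-,R)\otimes_{\cH^{\otimes R}}\cA(Y_+,R)\to\cA(Y_\beta)=\cA(E)$.

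I would next identify the source with the cocenter of $\cH$. Each $Y_\pm$ is a $\PP^1$ with two marked points, so $\cA(Y_\pm,R)$ is the pointed $\cH$-bimodule $\Sh_\cN(\Bun_{G,N}(\PP^1,\{0,\infty\}))$ of Remark~\ref{r:two eq H}. This bimodule is invertible, i.e.\ equivalent to $\cH$ as an $\cH$-bimodule, after applying the anti-equivalence $\sigma$ of \S\ref{ss:intro main} on one factor. Gluing the two copies of $\cH$ at the first point of $R$ returns $\cH$ as an $\cH$-bimodule, and gluing at the second point identifies its left and right actions, so that $\cA(Y_-,R)\otimes_{\cH^{\otimes R}}\cA(Y_+,R)\simeq\cH\otimes_{\cH\otimes\cH^{op}}\cH$; this recovers the map of Corollary~\ref{cor:glue in genus 1}. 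Granting the (presently conjectural) fact that $\alpha$ of Theorem~\ref{thm:main} is an equivalence for this degeneration and remains so upon restriction to $Y_\beta$, we obtain the first assertion, that the map $\cH\otimes_{\cH\otimes\cH^{op}}\cH\to\cA(E)$ is an equivalence.

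For the second assertion I would move to the spectral side. By Conjecture~\ref{thm:univ aff hecke duality} there is a monoidal equivalence $\cH\simeq\qc^!((\tdG\times_{G^\vee}\tdG)/G^\vee)$, compatible with the $\Sh_0(H)\simeq\qc(H^\vee)$-bimodule structures; since categorical traces are invariant under monoidal equivalences, $\cH\otimes_{\cH\otimes\cH^{op}}\cH$ becomes the trace of the identity endofunctor of $\qc^!((\tdG\times_{G^\vee}\tdG)/G^\vee)$ with its convolution monoidal structure. The results of \cite{BNP-hecke} compute this trace, via the cyclic bar construction of the Steinberg convolution diagram, as $\qc^!$ of the derived mapping stack $\Map(E,\BB G^\vee)=\Loc_{G^\vee}(E)$, with the coherence built into the Steinberg variety forcing exactly the nilpotent singular-support condition. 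Composing the three identifications gives $\cA(E)\simeq\qc^!_\cN(\Loc_{G^\vee}(E))$, which is the Betti geometric Langlands equivalence in genus one.

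The crux — and the only genuinely new analytic ingredient — is showing that $\alpha$ is an equivalence. This amounts to a proper-base-change/compatibility statement for the left adjoints to nearby cycles in the twisted cosimplicial bubbling of \S\ref{s:diagram}: one must show that the bar resolution computing $\cA(Y_-,R)\otimes_{\cH^{\otimes R}}\cA(Y_+,R)$ is carried isomorphically onto the constant family $\cA(\frY_{>0})$, and that this persists after restriction to a single fiber. A secondary, more bookkeeping difficulty is tracking singular-support and monodromy data along the chain of equivalences — matching the global nilpotent cone on the $\Bun_G$-side with the spectral nilpotent cone on $\Loc_{G^\vee}(E)$, and checking that the residual $\qc(H^\vee)$-structures around the two vanishing cycles are carried along correctly so that no spurious twist is introduced into $\cA(E)$. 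These are expected to be routine given the compatibilities of $\alpha$ with Hecke operators established here and in \cite{NY-comp}.
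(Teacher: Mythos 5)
The statement you are proving is stated in the paper only as a conjecture (Conjecture~\ref{thm:genus one}); the paper gives no proof, saying only that one ``expects to deduce'' it by combining Conjecture~\ref{thm:univ aff hecke duality} with the results of \cite{BNP-hecke}. Your proposal reproduces exactly that expected route, but it is not a proof: its two load-bearing steps are themselves open conjectures of the same paper. First, you write ``Granting the (presently conjectural) fact that $\alpha$ of Theorem~\ref{thm:main} is an equivalence and remains so upon restriction to $Y_\beta$'' --- but that is precisely the first assertion of Conjecture~\ref{thm:genus one} (the map of Corollary~\ref{cor:glue in genus 1} being an equivalence), together with the restriction Conjecture~\ref{c:res eq}. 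So the genuinely new content of the statement is assumed rather than established; what the paper actually proves (Theorem~\ref{thm:main coarse}/\ref{th:bar complex}) is only the existence of the functor, and the suggested ``proper-base-change for left adjoints to nearby cycles'' is a restatement of the open problem, not an argument. Second, the identification of the cocenter on the spectral side invokes Conjecture~\ref{thm:univ aff hecke duality}, which the paper defers to future work; you also need, and do not supply, the compatibility of that equivalence with the $\tau$-twist and the $\qc(H^\vee)$-bimodule structures when gluing the two nodes, which is exactly the ``no spurious twist'' point you flag but leave unresolved.

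Two smaller points. The arithmetic-genus count does not \emph{force} $g(Y_-)=g(Y_+)=0$ and $|R|=2$ (one could degenerate $E$ to an elliptic curve meeting a $\PP^1$ at one point); one simply \emph{chooses} the degeneration to a pair of rational curves glued at two nodes, as in Corollary~\ref{cor:glue in genus 1}. And the identification $\cA(Y_-,R)\otimes_{\cH^{\otimes R}}\cA(Y_+,R)\simeq \cH\otimes_{\cH\otimes\cH^{op}}\cH$ does follow from Theorem~\ref{th:matching actions} together with Lemma~\ref{lem:free rank 1} and Remark~\ref{r:two eq H} (the pointed bimodule $\Sh_{\cN}(\Bun_{G,N}(\PP^1,\{0,\infty\}))$ is invertible, implementing $\tau$), so that part of your reduction is consistent with the paper; but it only recovers the map of Corollary~\ref{cor:glue in genus 1}, not its invertibility. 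In short, your proposal is a correct account of the intended strategy but contains no proof of the conjecture's essential content.
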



\subsection{Acknowledgements}
The motivation for this paper, constructing Verlinde formulas in Betti Geometric Langlands,  is the result of many  discussions with David Ben-Zvi. We are grateful to him for his many insights. We thank Martin Olsson for illuminating discussions about twisted curves, and Nick Rozenblyum for advice on homotopical algebra.

DN was partially supported by NSF grant DMS-1802373. ZY was partially supported by the Packard Fellowship and the Simons Investigator grant.


\section{Bubbling of a nodal degeneration}\label{s:bub}


In this section, starting with a separating nodal degeneration $\t: \frY\to C$ of curves, we construct a cosimplicial scheme $\{\t^{[n]}:\frY^{[n]}\to C^{[n]}\}_{n\ge0}$ called the ``cosimplicial bubbling '' of $\t$ such that the special fibers of $\t^{[n]}$ are further degenerations of the special fiber of $\frY$ with several rational components added. We also construct a twisted version of $\t^{[n]}$ by making the nodes of its fibers into orbifold points.

The construction of the cosimplicial bubbling is basis of our approach to the Betti geometric Langlands conjecture.

\subsection{The initial family}

\begin{defn}\label{def:base}
By a {base} we mean a triple $(C,g, c_{0})$ where $C$ is a smooth  curve, $g:C\to \AA^1$ an \'etale morphism   such that $g^{-1}(0)=\{c_{0}\}$ is a single point.  We denote $C^{\times}:=C\bs \{c_{0}\}$.
\end{defn}

The basic example of a  base is $C = \AA^1$ with $g=\id$ and $c_{0}=0$.

%
%

\begin{defn}\label{def:sep nodal curve} Fix a base $(C, g, c_{0})$.

A {\em separating nodal degeneration} is  a flat  projective map  $\tau:\frY\to C$  
  such that:
\begin{enumerate}
\item $\frY$ is a smooth surface.
\item $\tau$ is smooth over $C^{\times}=C \setminus \{c_0\}$ with connected fibers.
\item The special fiber $Y=\tau^{-1}(c_0)$ is scheme-theoretically the union of two smooth transversely intersecting curves designated $Y_{-}, Y_{+} \subset \frY$.
\end{enumerate} 

Note that we do not assume $Y_{-}$ or $Y_{+}$ is connected. We write $R=Y_{-}\cap Y_{+}$ for the set of nodes of $Y$. When we view $R$ as a subscheme of $Y_{-}$ (resp. $Y_{+}$) we denote it by $R_{-}$ (resp. $R_{+}$).
\end{defn}

\begin{remark}
In the definition, we are free to choose which component of the special fiber $Y=\tau^{-1}(c_0)$ is designated $Y_-$ and which $Y_+$. We regard this choice as part of the structure of the separating nodal degeneration, and can also consider the alternative separating nodal degeneration with the choice swapped.
\end{remark}

\begin{ex}\label{ex: nodal degen}
Let $Z$ be a smooth connected projective curve with a finite set of points $R_{0}\subset Z$. Let $\frY$ be the blowup of $Z\times \AA^{1}$ at the points $(z,0)$ for $z\in R_{0}$. The resulting projection $\tau :\frY\to \AA^1$ is a separating nodal degeneration with a canonical identification of the general fiber $\frY|_{\GG_m} \simeq Z \times \GG_m$. The special fiber $\frY|_0 = Y$ has components $Y_-\simeq Z$ and $Y_+\simeq \coprod_{z\in R_{0}} \PP^1$ (the union of the exceptional divisors corresponding to $z\in R_{0}$)  intersecting transversely in the nodes $R$ which are in bijection with $R_{0}$. 
\end{ex}


\sss{Reformulation of input data}\label{sss:YA2}
We will make frequent use of the following constructions. Let $t$ denote the coordinate of $\AA^1$ and also its pullback under $g\circ \tau:\frY\to \AA^1$. 
Since $c_0 = g^{-1}(0)$, and $Y = \tau^{-1}(c_0) = Y_+ \cup Y_- $,  we see $Y_{+}+Y_{-}$ is the principal divisor on $\frY$ of the regular function 
 $t = g \circ \tau$.
 Thus we have a canonical isomorphism $\cO_{\frY}(Y_{+})\ot \cO_{\frY}(Y_{-})\simeq \cO_{\frY}$. Throughout what follows, we denote the tautological section $1\in \Gamma(\frY,\cO_{\frY}(Y_{+}))$ by $x$ (resp. $1\in \Gamma(\frY, \cO_{\frY}(Y_{-}))$  by $y$). Then we have the relation $xy=t$ as regular functions on $\frY$. The equation $x=0$ (resp. $y=0$) cuts out the reduced curve $Y_+ \subset \frY$ (resp. $Y_- \subset \frY)$, and the equations $x=0, y=0$ cut out the reduced set of points $R\subset \frY$. 

Recall for any stack $\cY$,  a map $\cY\to [\AA^{1}/\Gm]$ (here $\Gm$ acts on $\AA^{1}$ by dilation) is the same data as a pair $(\cL, a)$ where $\cL$ is a line bundle on $\cY$ and $a$ is a section of $\cL$. Applying this observation to $\frY$, the pair $(\cO_{\frY}(Y_{+}),x)$ gives a map $a_{+}: \frY\to [\AA^{1}/\Gm]$, and the pair $(\cO_{\frY}(Y_{-}),y)$ gives a map $a_{-}: \frY\to [\AA^{1}/\Gm]$. The canonical trivialization of $\cO_{\frY}(Y_{+})\ot \cO_{\frY}(Y_{-})$ given by $t$ gives a canonical lifting of the map $(a_{+},a_{-}):\frY\to [\AA^{1}/\Gm]^{2}$ to a map
\begin{equation*}
\xymatrix{
\fra: \frY\ar[r] &  [\AA^{2}/\Gm]
}
\end{equation*}
where $\Gm$ acts on $(u,v)\in \AA^{2}$ by $\l\cdot (u,v)=(\l u, \l^{-1} v)$.  Note that $\fra^{-1}(0\times \AA^{1})=Y_{+}$ and $\fra^{-1}(\AA^{1}\times 0)=Y_{-}$. We have a commutative diagram
\begin{equation}\label{YA2}
\xymatrix{ \frY\ar[r]^-{\fra}\ar[d]^{\t} & [\AA^{2}/\Gm]\ar[d]^{(u,v)\mapsto uv}\\
C\ar[r]^-{g} & \AA^{1}
}
\end{equation}

It is easy to check the following.

\begin{lemma}
Given a base $(C,g,c_{0})$ and a flat projective map $\t: \frY\to C$ of relative dimension $1$, $\t$ is a separating nodal degeneration in the sense of Definition \ref{def:sep nodal curve} if and only if it can be completed into a commutative diagram \eqref{YA2} such that $\fra$ is smooth.
\end{lemma}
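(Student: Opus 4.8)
The statement is a local criterion: a flat projective family $\t:\frY\to C$ of relative dimension one is a separating nodal degeneration if and only if it fits into a diagram \eqref{YA2} with $\fra$ smooth. I would prove the two implications separately, and the content is essentially the local structure of a node.

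First I would prove the forward direction. Given a separating nodal degeneration, the construction of $\fra:\frY\to[\AA^2/\Gm]$ has already been carried out in \S\ref{sss:YA2}: the divisors $Y_\pm$ give line bundles with sections $(\cO_\frY(Y_+),x)$ and $(\cO_\frY(Y_-),y)$, the relation $xy=t$ coming from $Y_++Y_-$ being the principal divisor of $t=g\circ\t$ gives the trivialization of the tensor product, hence the lift to $\fra$, and the square \eqref{YA2} commutes by construction. So the only thing to check is that $\fra$ is smooth. Smoothness of a map to $[\AA^2/\Gm]$ can be checked after base change along the smooth atlas $\AA^2\to[\AA^2/\Gm]$; concretely, working \'etale-locally on $\frY$, one must verify that the pair of functions $(x,y)$ cutting out $Y_+,Y_-$ defines a smooth map to $\AA^2$ at every point. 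Away from $R$ this is clear since one of $x,y$ is a unit and the other restricted to its vanishing divisor, a smooth curve, is a coordinate transverse to the fiber direction (this uses that $\frY$ is a smooth surface and $Y_\pm$ are smooth). At a node $r\in R$, smoothness of $\frY$ together with the hypothesis that $Y_-,Y_+$ meet transversely at $r$ says precisely that $x,y$ form part — in fact all — of a regular system of parameters of $\cO_{\frY,r}$, which is exactly the assertion that $(x,y):\frY\to\AA^2$ is \'etale, hence smooth, near $r$. Assembling these gives smoothness of $\fra$.

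For the converse, suppose we are given \eqref{YA2} with $\fra$ smooth. Since $[\AA^2/\Gm]$ is a smooth stack of dimension $1$ (the $\Gm$-action on $\AA^2$ being the hyperbolic one, the quotient has dimension $2-1=1$), smoothness of $\fra$ forces $\frY$ to be a smooth surface: the composite $\frY\xrightarrow{\fra}[\AA^2/\Gm]$ followed by — well, rather, $\frY\to C$ has relative dimension $1$ over the smooth curve $C$, so $\dim\frY=2$, and $\fra$ smooth of relative dimension $1$ over a smooth $1$-dimensional target likewise gives $\dim\frY=2$ with $\frY$ smooth. That establishes (1) of Definition \ref{def:sep nodal curve}. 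For (2) and (3), pull back the stratification of $[\AA^2/\Gm]$: the open locus $\{uv\neq0\}$ maps to $\Gm\subset\AA^1$, and $\fra^{-1}$ of it is smooth over $C$ by base change, giving smoothness of $\t$ over $C^\times$; connectedness of the general fiber is part of the data being asserted equivalent — here I should note that the lemma as stated presumably carries the standing hypothesis that the fibers over $C^\times$ are connected, or this is folded into what one checks, so I would simply remark that this condition is transported unchanged. The special fiber is $Y=\fra^{-1}(\{uv=0\})$, and $\{uv=0\}\subset[\AA^2/\Gm]$ is the union of the two smooth divisors $\{u=0\}$ and $\{v=0\}$ meeting transversely at the origin; pulling back along the smooth map $\fra$, the preimages $Y_+=\fra^{-1}(\{u=0\})$ and $Y_-=\fra^{-1}(\{v=0\})$ are smooth curves in $\frY$ meeting transversely, and scheme-theoretically $Y=Y_+\cup Y_-$ because the scheme structure is preserved under flat (indeed smooth) pullback. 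This is exactly (3).

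\textbf{Main obstacle.} The only genuinely substantive point is the equivalence, at a node, between ``$\frY$ smooth surface and $Y_\pm$ meet transversely'' and ``$(x,y)$ is a regular system of parameters,'' i.e. the local-model statement that a transverse intersection of two smooth curves in a smooth surface is \'etale-locally $\{uv=0\}\subset\AA^2$; once that is in hand, everything else is a formal manipulation with the smooth atlas $\AA^2\to[\AA^2/\Gm]$ and pullback of smoothness/transversality. I expect the write-up to be short, with most of the work being to phrase the node computation cleanly in terms of the map $\fra$ rather than the functions $(x,y)$ directly, so that both directions read off the single diagram.
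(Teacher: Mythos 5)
Your converse direction is essentially right (smoothness of $\frY$ from smoothness of $\fra$, smoothness of $\t$ over $C^{\times}$ from the identification $\{uv\neq 0\}/\Gm\simeq\Gm$ under which $\fra$ becomes $g\circ\t$, and $Y_{\pm}$ as smooth transverse pullbacks of $\{u=0\},\{v=0\}$), and your remark that connectedness of the general fibers is not detected by the diagram and must simply be carried along is a fair observation. The paper offers no written proof to compare with, but your forward direction has a genuine gap. Base-changing $\fra$ along the atlas $\AA^{2}\to[\AA^{2}/\Gm]$ does not give the map $(x,y):\frY\to\AA^{2}$; it gives the $\Gm$-equivariant map from the total space of the $\Gm$-torsor, locally $U\times\Gm\to\AA^{2}$, $(p,\lambda)\mapsto(\lambda x(p),\lambda^{-1}y(p))$. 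Smoothness of $\fra$ is strictly weaker than smoothness of $(x,y)$, and away from $R$ the map $(x,y)$ is in general \emph{not} smooth: near a point of $Y_{+}\setminus R$ one may choose the local trivialization so that $y\equiv 1$, and then $(x,y)$ has image in a line, yet $\fra$ is perfectly smooth there. So ``verify that $(x,y)$ defines a smooth map to $\AA^{2}$ at every point'' is the wrong criterion, and your verification of it away from $R$ does not go through.

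More seriously, you never treat points of $\frY\setminus Y$, and there smoothness of $\fra$ is \emph{not} a consequence of conditions (1) and (3): computing the differential of $(p,\lambda)\mapsto(\lambda x(p),\lambda^{-1}y(p))$, surjectivity at a point off $Y$ is equivalent to $d(xy)=d(g\circ\t)\neq 0$ there, i.e.\ to condition (2) of Definition \ref{def:sep nodal curve} (using that $g$ is \'etale). Your argument never invokes (2), so it would ``prove'' smoothness of $\fra$ for families that are not separating nodal degenerations; e.g.\ $\frY=\AA^{2}$ with $t=a(1+a)$, $x=a$, $y=1+a$ satisfies (1) and (3) but $\fra$ fails to be smooth along $a=-1/2$, where $d(xy)=0$. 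The correct case analysis for the forward direction is: at $r\in R$, surjectivity of the differential needs $dx,dy$ independent, which is exactly smoothness and transversality of $Y_{\pm}$ at $r$; at $p\in Y_{\pm}\setminus R$ it needs only $dx\neq0$ (resp.\ $dy\neq0$), i.e.\ smoothness of $Y_{\pm}$, because the $\Gm$-direction already supplies the second tangent vector; and at $p\notin Y$ it needs $d(g\circ\t)\neq0$, which is where condition (2) enters. With that repair (and the Jacobian criterion, valid since $\frY$ is smooth and we are in characteristic zero), the forward direction goes through.
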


\begin{defn}\label{def:rig sep nodal curve} A {\em rigidification} of a separating nodal degeneration $\tau:\frY\to C$  is a trivialization of the cotangent line $\om_{Y_-}|_{r}$ for each node $r\in R$. 
\end{defn}

Note that
\begin{equation*}
\cO(Y_+)|_R\simeq (\cO(-Y_-)|_{Y+})|_R\simeq \cO_{Y_{+}}(-R)|_{R}\simeq \om_{Y_-}|_R.
\end{equation*}
Therefore the trivialization of $\om_{Y_-}|_R$ gives a lifting of $\fra|_{R}:R\to [\{0\}/\Gm^{\WW}]$ to $R\to \{0\}$.



\subsection{Cosimplicial bubbling}\label{ss:cosim bub}


Starting with a separating nodal degeneration $\t: \frY\to C$,  the goal of this subsection is to construct a sequence of families of curves $\t^{[n]}: \frY^{[n]}\to C^{[n]}$ over higher dimensional bases with more nodes in the special fibers.  This is an elaboration of a construction of Gieseker \cite[Lemma 4.2]{Gies}. 

We first make the construction for the standard example of separating nodal degeneration, namely $\{xy=t\}_{t\in \AA^{1}}$, and then base change to $\frY$ using the diagram \eqref{YA2}. In this case, the detailed construction already appeared in Drinfeld \cite[Section 3.2]{Dr}.

\subsubsection{Cosimplicial preliminaries}\label{sss:cosim prel}

 Let $\D$ be the simplex category  with objects the ordered sets  $[n]=\{0,\cdots, n\}$ for $n\ge 0$, and morphisms $\ph:[m] \to [n]$ order-preserving maps $i \leq j \implies \ph(i)\leq \ph(j)$. 

By definition, a cosimplicial object of a category $\cC$ is a functor $\Delta \to \cC$. We will work with  cosimplicial schemes, cosimplicial group-schemes, and cosimplicial stacks. An action of a cosimplicial group-scheme on a 
cosimplicial scheme is a term-wise action compatible with the cosimplicial maps, and given such an action, the term-wise quotients naturally form a cosimplicial stack.
Note also if $Y \to X$ is a map of schemes, and $X^\Delta \to X$ is a cosimplicial scheme over $X$ regarded as a constant cosimplicial scheme,  then the fiber product $Y \times_X X^\Delta$ is naturally a cosimplicial scheme.

To start, we define $\AA^{\D}$ to be the cosimplicial scheme  sending the ordered set $[n]$ to the affine space
$\AA^{[n]} = \Spec \CC[t_i | i\in [n]] \simeq \AA^{n+1}$ and  an order-preserving  map $\ph: [m]\to [n]$ to the morphism $a_{\ph}: \AA^{[m]}\to \AA^{[n]}$ defined by 
\begin{equation*}
a_{\ph}^{*}t_{j}=\left\{
\begin{array}{ll} 
\prod_{i\in [m], \ph(i)=j}t_{i} & j\in \ph([m])\\
 1 &  j\not \in \ph([m])
\end{array}
\right.
\end{equation*}
 In particular,  an
 order-preserving  inclusion $\ph: [m]\hookrightarrow  [n]$ is sent to the closed embedding  $a_{\ph}: \AA^{[m]}\hookrightarrow \AA^{[n]}$
 given by 
 \begin{equation*}
a_{\ph}^{*}t_{j}=\left\{
\begin{array}{ll} 
t_{i} & j = \ph(i)\\
 1 &  j\not \in \ph([m])
\end{array}
\right.
\end{equation*}
We have the natural map $\Pi^\Delta: \AA^\Delta \to \AA^1$ to the constant cosimplicial affine line defined term-wise by the product of coordinates.
 
Similarly, we define $\Gm^{\D}\subset \AA^{\D}$ to be the cosimplicial group-scheme
sending  $[n]$ to the torus
$\Gm^{[n]} = \Spec \CC[t_i, t_i^{-1} | i\in [n]] \simeq \Gm^{n+1}$ with maps given by the same formulas $a_{\ph}$.
There is a natural $\Gm^{\D}$-action  on $\AA^{\D}$ given term-wise by coordinate-wise scaling.
We have the homomorphism $\Pi^{\D}:\Gm^{\D}\to \Gm$ to the constant cosimplicial multiplicative group
defined term-wise by the product of coordinates.
We define $S\Gm^{\D} \subset \Gm^{\D}$ to be the kernel cosimplicial group-scheme sending 
$[n]$ to the torus $S\Gm^{[n]}=\ker(\Pi^{[n]}:\Gm^{[n]}\to \Gm)$. Note  $S\Gm^{[n]}$ is non-canonically isomorphic to $\Gm^{n}$, and in particular $S\Gm^{[0]}$ is the trivial group.

\sss{Standard bubbling}\label{sss:std bub}
Let $(\AA^{1}, \id, 0)$ be the standard base, and $\t_{\WW}: \WW=\AA^{2}\to \AA^{1}$ be the map $(x,y)\mapsto t=xy$. Then $\t_{\WW}$ satisfies the requirement for being a separating nodal degeneration except that $\t_{\WW}$ is not proper. We will construct a toric variety $\WW^{[n]}$ by explicitly gluing coordinate charts.

Notation: for an interval $[a,b]\subset \ZZ_{\ge0}$, let $t_{[a,b]}$ denote the product $t_{a}t_{a+1}\cdots t_{b}$. 
(If $a=b$, the interval $[a,b]$ is a point and we understand $t_{[a,b]}=t_a$;
if $a>b$, the interval $[a,b]$ is empty and we understand $t_{[a,b]}=1$.)

For $n\ge0$, we begin with  
 \begin{equation*}
 \ov\WW^{[n]}=\WW\times_{\AA^{1}} \AA^{[n]} =\{(x,y,t_{0},\cdots, t_{n})\in \AA^{n+3}|xy=t_{[0,n]}\}.
\end{equation*}
We will construct  $\WW^{[n]}$ by birationally modifying $\ov\WW^{[n]}$.

For $0\le i\le n$, let us define an affine scheme over $\AA^{[n]}$ by 
\begin{equation*}
\UU^{[n]}_{i}=\{(x_{i},y_{i},t_{0},\cdots, t_{n})\in \AA^{n+3}|x_{i}y_{i}=t_{i}\}.
\end{equation*}
Equip $\UU^{[n]}_{i}$ with the map to $\WW=\AA^{2}$ by
\begin{equation}\label{map from Ui}
(x_{i},y_{i},t_{0},\cdots, t_{n})\mapsto (t_{[0,i-1]}x_{i},t_{[i+1,n]}y_{i}).
\end{equation}



The scheme $\WW^{[n]}$ is obtained by gluing $\UU^{[n]}_{0},\UU^{[n]}_{1},\cdots, \UU^{[n]}_{n}$ (over $\AA^{[n]}$). For $0\le i<j\le n$, let $\UU^{[n]}_{i,j}\subset\UU^{[n]}_{i}$ be the open subset defined by inverting $y_{i}$ and $t_{[i+1,j-1]}$, and let $\UU^{[n]}_{j,i}\subset \UU^{[n]}_{j}$ be the open subset defined by inverting $x_{j}$ and $t_{[i+1,j-1]}$. Then the change of coordinates 
\begin{equation*}
\xymatrix{
x_{j}=y^{-1}_{i}t^{-1}_{[i+1,j-1]}  & y_{j}=y_{i}t_{[i+1, j]}
}
\end{equation*} 
defines an isomorphism $
\UU^{[n]}_{i,j}\simeq \UU^{[n]}_{j,i}$  which we use to glue $\UU^{[n]}_{i}$ and $\UU^{[n]}_{j}$. We leave it to the reader to check that these gluings are compatible over triple overlaps so define a scheme $\WW^{[n]}$. Moreover, the maps \eqref{map from Ui} are compatible with the gluing and gives a map
\begin{equation}\label{wn}
w^{[n]}: \WW^{[n]}\to \WW.
\end{equation}
We also get a map
\begin{equation*}
\t^{[n]}_{\WW}: \WW^{[n]}\to\AA^{[n]}
\end{equation*}
by taking the coordinates $(t_{0},\cdots, t_{n})$ on each $\UU^{[n]}_{i}$. When $n=0$, we recover $\WW^{[0]}= \WW$, $\tau^{[0]} _{\WW}= \tau_{\WW}$.

\sss{Toric symmetry on standard bubbling}\label{sss:torus action std bub}
The scheme $\WW^{[n]}$ is toric, but we will only use part of the torus action. Let $\Gm^{\WW}=\Gm$ that acts hyperbolically on $\WW$. Then $\Gm^{\WW}\times S\Gm^{[n]}$ act on $\ov\WW^{[n]}$ where $\Gm^{\WW}$ acts hyperbolically on $\WW$ and $S\Gm^{[n]}$ acts on $\AA^{[n]}$ preserving the product of coordinates. This action has a unique lifting to an action on $\WW^{[n]}$: $(\mu,\l_{0},\cdots, \l_{n})\in \Gm^{\WW}\times S\Gm^{[n]}$ acts on $\UU^{[n]}_{i}$ by $(x_{i},y_{i},t_{0},\cdots, t_{n})\mapsto (\mu\l_{[0,i-1]}^{-1}x_{i},\mu^{-1}\l_{[i+1,n]}^{-1}y_{i},  \l_{0}t_{0},\cdots, \l_{n}t_{n})$.

\sss{Cosimplicial structure on standard bubbling}\label{sss:cosim std bub}
The assignment $\WW^{\D}: [n]\mapsto \WW^{[n]}$ has the structure of a  cosimplicial scheme over $\AA^{\D}$. Indeed, for  any  order-preserving map $\ph: [m]\to [n]$, we construct a map $w_{\ph}: \WW^{[m]}\to \WW^{[n]}$ covering the map $a_{\ph}: \AA^{[m]}\to \AA^{[n]}$ as follows.

For $0\le a\le n$, the indices $i$ such that $\ph(i)=\ph(a)$ form an interval $[a',a'']$. Define a map $f_{a}: \UU^{[m]}_{a}\to \UU^{[n]}_{\ph(a)}$ over $\WW$ by the formula
\begin{equation*}
\xymatrix{
f_{a}^{*}(t_{j})=\prod_{0\le i\le m, \ph(i)=j}t_{i} & 
f_{a}^{*}(x_{\ph(a)})=x_{a}t_{[a',a-1]} &
f_{a}^{*}(y_{\ph(a)})=y_{a}t_{[a+1,a'']}
}
\end{equation*}
We leave it to the reader to check that these maps $\{f_{a}\}_{0\le a\le m}$ are compatible on the overlaps of $\UU^{[m]}_{a}$ so define the desired map $w_{\ph}$.


\subsubsection{General cosimplicial bubbling}\label{sss:cons cosim}
Fix a base $(C,g,c_{0})$. Define the  cosimplicial scheme
\begin{equation*}
C^\Delta = C \times_{\AA^1} \AA^\Delta
\end{equation*}
given by the fiber product of the maps $g:C \to \AA^1$ and $\Pi^\Delta: \AA^\Delta \to \AA^1$. We will also write
 $\Pi^\Delta: C^\Delta \to C$ for the induced projection.
 For $n\geq 0$, we denote elements of $C^{[n]}$ by tuples $(c; t_0, \ldots, t_n)$ where $c\in C$ and $(t_0, \ldots, t_n) \in \AA^{n+1}$ with  $g(c) = t_0 \cdots t_n$. 
 For a non-decreasing  map $\ph: [m]\to [n]$, we denote the corresponding structure morphism by $\a_{\ph}: C^{[m]}\to C^{[n]}$.
Note the 
natural $S\Gm^{\D}$-action  on $\AA^{\D}$ acts along the fibers of $\Pi^\Delta: \AA^\Delta \to \AA^1$, 
and hence 
extends to a  $S\Gm^{\D}$-action  on $C^{\D}$ along the fibers of $\Pi^\Delta: C^\Delta \to C$.

Now let $\tau:\frY \to C$ be a separating nodal degeneration. Recall the diagram \eqref{YA2}. Define the cosimplicial scheme
\begin{equation*}
\frY^{\D}=\frY\times_{[\WW/\Gm^{\WW}]}[\WW^{\D}/\Gm^{\WW}].
\end{equation*}
formed using $\fra: \frY\to [\AA^{2}/\Gm]=[\WW/\Gm^{\WW}]$ and the  $\Gm^{\WW}$-equivariant map $w^{[n]}: \WW^{[n]}\to \WW$ in \eqref{wn}.  The action of $\Gm$ on each $\WW^{[n]}$ comes from the first factor in the $\Gm^{\WW}\times S\Gm^{[n]}$-action on $\WW^{[n]}$ constructed in \S\ref{sss:torus action std bub}. From the construction, $\frY^{[n]}$ is equipped with the map
\begin{equation*}
\t^{[n]}: \frY^{[n]}\to C\times_{\AA^{1}, \Pi^{[n]}\c\t^{[n]}_{\WW}}[\WW^{[n]}/\Gm^{\WW}]\to C\times_{\AA^{1},\Pi^{[n]}}\AA^{[n]}=C^{[n]}.
\end{equation*}
In other words we have a map of cosimplicial schemes
\begin{equation*}
\t^{\D}: \frY^{\D}\to C^{\D}
\end{equation*}
that fit into a term-wise commutative diagram 
\begin{equation}\label{YW}
\xymatrix{ \frY^{\D}\ar[d]^{\t^{\D}}\ar[r]^-{\fra^{\D}} & [\WW^{\D}/\Gm^{\WW}]\ar[d]^{\t_{\WW}^{\D}}\\
C^{\D}\ar[r]^-{g^{\D}} & \AA^{\D}
}
\end{equation}
extending the diagram \eqref{YA2}.

For a non-decreasing  map $\ph: [m]\to [n]$, we denote the corresponding structure morphism by $\y_{\ph}: \frY^{[m]}\to \frY^{[n]}$.

\begin{defn}\label{def:bub} The cosimplicial scheme $\frY^{\D}$ over $C^{\D}$ is called the {\em cosimplicial bubbling} of the separating nodal degeneration $\t:\frY\to C$.
\end{defn}

\sss{Exceptional loci}\label{sss:exc loci}
The preimage of $\frY^{[n]}$ over $\frY\bs R$ is simply the base change $(\frY\bs R)\times_{\AA^{1}}\AA^{[n]}$. Let $\frY^{[n]}_{\t}$ be the complement, i.e., the preimage of $R\subset \frY$ in $\frY^{[n]}$. This is the exceptional locus of the birational map $\frY^{[n]}\to \ov\frY^{[n]}:=\frY\times_{\AA^{1}}\AA^{[n]}$.

Let $\WW^{[n]}_{\t}\subset \WW^{[n]}$ be the preimage of $0\in \WW$. From the definition we get
\begin{equation*}
\frY^{[n]}_{\t}\simeq R\times_{[\{0\}/\Gm^{\WW}]}[\WW^{[n]}_{\t}/\Gm^{\WW}].
\end{equation*}
If $\t:\frY\to C$ is equipped with a rigidification (see Definition \ref{def:rig sep nodal curve}), i.e., the trivialization of $\om_{Y_-}|_R$, hence a lifting of $\fra|_{R}:R\to [\{0\}/\Gm^{\WW}]$ to $R\to \{0\}$. This gives an $S\Gm^{[n]}$-equivariant isomorphism for each $n\ge0$
\begin{equation}\label{exc locus}
\frY^{[n]}_{\t}\simeq R\times \WW^{[n]}_{\t}.
\end{equation}

Next we will give a list of properties enjoyed by the cosimplicial bubbling.  Before that we need some more notation.

\sss{Subspace notation}\label{sss:subspace}
For $n\geq 0$ and $S\subset [n]$, set $\AA^{[n]}_{S} \subset \AA^{[n]}$ 
to be the coordinate subspace  where $t_i = 0$, for $i\notin S$. Set $\AA^{[n],S}=\AA^{[n]}_{S^{c}}$ where $S^{c}=[n]\bs S$. 

For $i\in [n]$, we will also write $i\subset [n]$  for the subset with the single element $i$, and $i^c \subset [n]$ for the complementary subset $[n] \setminus i$. Thus $\AA^{[n],i} \subset \AA^{[n]}$
denotes  the hyperplane cut out by $t_i= 0$, and
 $\AA^{[n]}_{i} \subset \AA^{[n]}$ the line cut out by $t_j= 0$, for $j\neq i$.
 
For $n\geq 1$ and $1\le i\le n$,  we will  write $[i-1, i]\subset [n]$  for the subset $\{i-1, i\}$.
Thus  $\AA^{[n],[i-1, i]} \subset \AA^{[n]}$ denotes the codimension two subspace cut out by $t_{i-1} = t_i = 0$.

For $n\geq 0$ and $S\subset [n]$, set $C^{[n]}_{S}=C \times_{\AA^1} \AA^{[n]}_{S}$ and $C^{[n],S}  = C \times_{\AA^1} \AA^{[n],S} \subset C^{[n]}$  to be the corresponding subspace. Note 
\begin{equation*}
C^{[n], S}  \simeq \{c_0\} \times  \AA^{[n],S}, \quad \mbox{ if }S\ne \vn.
\end{equation*}
So in particular $C^{[n],i} = \{c_0\} \times \AA^{[n],i} \subset C^{[n]}$, and
$C^{[n],[i-1, i]} = \{c_0\} \times A^{[n],[i, i-1]} \subset C^{[n]}$.

Note the $S\Gm^{[n]}$-action  on $\AA^{[n]}$ preserves all of the above subspaces of  $\AA^{[n]}$, and in turn
the $S\Gm^{[n]}$-action  on $C^{[n]}$ preserves all of the above subschemes of  $C^{[n]}$.

For $n\geq 0$ and $S\subset [n]$, 
 set $\frY^{[n]}_{S} = \frY^{[n]}|_{C^{[n]}_{S}}$ and $\frY^{[n],S} = \frY^{[n]}|_{C^{[n],S}}$, and denote by 
 \begin{eqnarray*}
\tau^{[n]}_S :\frY^{[n]}_{S}\to  C^{[n]}_{S}, \quad \tau^{[n],S} :\frY^{[n],S}\to  C^{[n],S}
\end{eqnarray*}
the restrictions of $\t^{[n]}$.
%

For $n\geq 0$,  we denote the central fiber of $\frY^{[n]}$ by
\begin{equation*}
Y(n):= \frY^{[n]}|_{(c_0; 0, \cdots, 0)}.
\end{equation*}

\sss{Properties of the cosimplicial bubbling}\label{sss:prop bub}
Let $\t:\frY\to C$ be a rigidified separating nodal degeneration.

%
%
%
%
%

\begin{enumerate}

\item\label{bub:Y0} $\tau^{[0]}:\frY^{[0]}\to C^{[0]} \simeq C$ is the given separating nodal degeneration $\tau:\frY \to C$.

\item\label{bub:Gm} $\frY^\Delta$ is equipped with an $S\Gm^{\D}$-action, and $\t^{\D}:\frY^{\D}\to C^{\D}$ is $S\Gm^{\D}$-equivariant.

\item\label{bub:inj} The map  $\y_{\ph}:\frY^{[m]} \to \frY^{[n]}$ associated to an order-preserving {\em injection} $\ph:[m]\to [n]$ induces an isomorphism 
\begin{equation*}
\xymatrix{
\frY^{[m]}\ar[r]^-\sim & \frY^{[n]}\times_{C^{[n]}} C^{[m]}
}
\end{equation*}
where the fiber product is of the maps $\tau^{[n]}: \frY^{[n]} \to C^{[n]}$ and $\a_\ph:C^{[m]} \to C^{[n]}$.

\item\label{bub:sec} For $n\geq 0$ and $i\in [n]$, there are $S\Gm^{[n]}$-equivariant disjoint sections of $\tau^{n,i}:\frY^{[n],i} \to C^{[n],i}$ 
indexed by $R$. 
We denote these sections by 
\begin{equation*}
\xymatrix{
\s^{n,i}: R\times  C^{[n],i} \ar[r] & \frY^{[n],i}
}
\end{equation*}
and write $\cR^{[n],i} = \s^{n,i}(R\times C^{[n],i}) \subset \frY^{[n],i}$ for their image.

\item\label{bub:Y+-} For $n\geq 0$, there are $S\Gm^{[n]}$-equivariant closed embeddings 
\begin{equation*}
\xymatrix{
\io^n_{-}: Y_{-}\times C^{[n],0} \ar@{^(->}[r] & \frY^{[n],0}
&
\io^n_{+}: Y_{+}\times C^{[n],n} \ar@{^(->}[r] & \frY^{[n],n }
}
\end{equation*}
such that over $c^{n, 0} = (c_0; 0,1,1,\cdots, 1)\in C^{[n],0}$ (resp. $c^{n,n} = (c_0; 1,1,\cdots, 1,0)\in C^{[n],n}$),  where  \eqref{bub:inj} provides
 isomorphisms 
  \begin{equation*}
  \frY^{[n]}|_{c^{n,0}} \simeq Y \simeq \frY^{[n]}|_{c^{n,n}}\
  \end{equation*}
 the embedding $\io^{n}_-$ (resp. $\io^{n}_+$) is the given embedding of $Y_{-}$  (resp. $Y_{+}$) into $Y$. In particular, $\cR^{[n],0}$ is contained in the image of $\io^n_{-}$, and $\cR^{[n], n}$ is contained in the image of $\io^n_{+}$.

\item\label{bub:Y} 
For $n\geq 1$, and $i\in [n]$ with $i\geq 1$, there is an $S\Gm^{[n]}$-stable closed subscheme $\frZ^{[n],[i-1,i]}\subset \frY^{[n],[i-1,i]}$ containing $\cR^{[n],i-1}|_{C^{[n],[i-1,i]}}$ and $\cR^{[n], i}|_{C^{[n], [i-1,i]}}$, and an $S\Gm^{[n]}$-equivariant isomorphism
\begin{equation*}
\frZ^{[n],[i-1,i]}\simeq R\times \PP^{1}\times C^{[n],[i-1,i]}
\end{equation*}
over $C^{[n],[i-1,i]}$ (where on the right hand side, the action of $(t_{0},\cdots, t_{n})\in S\Gm^{[n]}$ is trivial on the factor $R$, is the scaling of the affine coordinate by $t_{i-1}$ on the factor $\PP^{1}$, and is the coordinate-wise scaling on the factor $C^{[n],[i-1,i]}$). Moreover,  under the above isomorphism, $\cR^{[n], i-1}|_{C^{[n], [i-1,i]}}$corresponds to $R\times \{0\}\times  C^{[n], [i-1,i]}$, and $\cR^{[n], i}|_{C^{[n], [i-1,i]}}$ corresponds to $R\times \{\infty\}\times C^{[n], [i-1,i]}$.

\item\label{bub:fib} For $n\ge1$, and $i \in [n]$ with $i\geq 1$, set $Y(n)_{[i-1,i]}=\frZ^{[n], [i-1,i]}|_{(c_0; 0, \cdots, 0)}\subset Y(n)$, and note  \eqref{bub:Y} provides an isomorphism 
\begin{equation*}
Y(n)_{[i-1,i]}\simeq R\times \PP^{1}
\end{equation*}
Set $Y(n)_{-}=\io^n_{-}(Y_{-}\times \{(c_0; 0, \cdots, 0)\})$, $Y(n)_{+}=\io^n_{+}(Y_{+}\times \{(c_0; 0, \cdots, 0)\})$,  
and $R(n)_{i}=\s^{n, i}(R\times \{(c_0; 0, \cdots, 0)\})$. Then $Y(n)$ is a reduced nodal curve
\begin{equation*}
\xymatrix{
Y(n)\simeq Y(n)_{-}\cup_{R(n)_{0}} Y(n)_{[0,1]}\cup_{R(n)_{1}} \cdots \cup Y(n)_{[n-1,n]}\cup_{R(n)_{n}} Y(n)_{+}
}
\end{equation*}
obtained by gluing $Y(n)_{-}, Y(n)_{[0,1]},\cdots, Y(n)_{[n-1,n]}$ and $Y(n)_{+}$ along the nodes $R(n)_{0},\cdots, R(n)_{n}$ (each $R(n)_{i}\simeq R$).

\item\label{bub:surj} For an 
order-preserving {\em surjection} $\ph:[m] \to [n]$, observe that $\y_{\ph}: \frY^{[m]}\to \frY^{[n]}$ restricts to a map  $c_{\ph}: Y(m)\to Y(n)$. Then $c_{\ph}$ contracts each component $Y(m)_{[i-1,i]}$ of $Y(m)$ such that $\ph(i-1)=\ph(i)$ to the corresponding point of $R(n)_{\ph(i)}\subset Y(n)$. Other components of $Y(m)$ are mapped isomorphically to their image in $Y(n)$.



\end{enumerate}

Below we verify some of the properties listed above, leaving the more obvious ones to the reader.

\sss{Property \eqref{bub:inj}} It is suffices to verify the $\Gm^{\WW}$-equivariant version of the same property for $\WW^{\D}$. When $\ph$ is injective, the  map $f_{a}$ is an isomorphism for each $0 \leq a \leq m$, so $\y_{\ph}$ induces a $\Gm^{\WW}$-equivariant open immersion 
\begin{equation*}
\xymatrix{
\wt w_{\ph}: \WW^{[m]}\ar@{^(->}[r] &  \WW^{[n]}\times_{\AA^{[n]}}\AA^{[m]}
}\end{equation*}
Moreover, over the locus $t_{j}\ne 0$,  $\WW^{[n]}$ is covered by $\{\UU^{[n]}_{j'}\}_{j'\ne j}$. Therefore the restriction
$\WW^{[n]}\times_{\AA^{[n]}}\AA^{[m]}$
 is covered by
$\{\UU^{[n]}_{\ph(a)}\}_{a\in [m]}$, thus  implying $\wt w_{\ph}$ is an isomorphism.

\sss{Property \eqref{bub:sec}}  Fix $n\geq 0$ and $i\in [n]$. Consider the subscheme $\Sig_{i}\subset \UU^{[n]}_{i}\subset \WW^{[n]}$ defined by $x_{i}=0$ and $y_{i}=0$. The projection $\Sig_{i}\to \AA^{[n]}$ is an isomorphism onto the hyperplane $\AA^{[n],i}$. Let $\cR^{[n],i}$ the preimage of $\Sig_{i}$ in $\frY^{[n]}$, then the projection $\cR^{[n],i}\to \frY$ lands in the nodes $R$, and the projection $\cR^{[n],i}\to C^{[n]}$ lands in $C^{[n],i}$. It is easy to see that the induced map $\cR^{[n],i}\to R\times C^{[n],i}$ is an isomorphism.


\sss{Property \eqref{bub:Y+-}} Fix $n\geq 0$. Let $\WW_{-}\subset \WW$ be the subscheme defined by $y=0$, and $\WW_{+}\subset \WW$ defined by $x=0$. Consider the locally closed subscheme of $\ov\WW^{[n]}$ given by $x\ne0, y=0, t_{0}=0, t_{i}\ne0$ ($i>0$), and let $\WW^{[n]}_{-}$ be the closure of its preimage in $\WW^{[n]}$. Concretely, $\WW^{[n]}_{-}$ is contained in $\UU^{[n]}_{0}$ and is cut out by $y_{0}=0$ there. From this description we get an isomorphism $\WW^{[n]}_{-}\simeq \WW_{-}\times \AA^{[n],0}$. Taking preimage in $\frY^{[n]}$ gives the desired embedding $\io^{n}_{-}: Y_{-}\times C^{[n,0]}\simeq \wt\fra^{[n],-1}(\WW^{[n]}_{-})\incl \frY^{[n]}$. The other embedding $\io^n_{+}$ comes by taking the preimage of $\WW^{[n]}_{+}\subset \UU^{[n]}_{n}$ defined by $x_{n}=0$.



\sss{Property \eqref{bub:Y}}  Fix $n\geq 1$ and $i\in [n]$ with $i\geq 1$.
Define $\cZ^{[n],[i-1,i]}\subset \WW^{[n]}$ as the union of the subscheme of $\UU^{[n]}_{i-1}$ defined by $x_{i-1}=0, t_{i-1}=t_{i}=0$, and the subscheme of $\UU^{[n]}_{i}$ defined by $y_{i}=0, t_{i-1}=t_{i}=0$.  We have a canonical isomorphism
\begin{equation*}
\cZ^{[n],[i-1,i]}\simeq \PP^{1}\times \AA^{[n],[i-1,i]}
\end{equation*}
whose projection on $\PP^{1}$ is given by $(x_{i-1},y_{i-1}, t_{0},\cdots, t_{n})\mapsto [1,y_{i-1}]$ on $\cZ^{[n],[i-1,i]}\cap \UU^{[n]}_{i-1}$ and by $(x_{i},y_{i}, t_{0},\cdots, t_{n})\mapsto [x_{i},1]$ on $\cZ^{[n],[i-1,i]}\cap \UU^{[n]}_{i}$.


Let $\frZ^{[n], [i-1,i]}$ be the preimage of $[\cZ^{[n],[i-1,i]}/\Gm^{\WW}]$ in $\frY^{[n]}$. Note that $\frZ^{[n], [i-1,i]}$ lies in the exceptional locus of $\frY^{[n]}$. By \eqref{exc locus}, we get an isomorphism (depending on the trivialization of $\om_{Y_{-}}|_{R}$)
\begin{equation}\label{Ya}
\xymatrix{
\frZ^{[n],[i-1,i]}\simeq R\times \cZ^{[n],[i-1,i]}\simeq R\times \PP^{1}\times \AA^{[n],[i-1,i]} \simeq R\times \PP^{1}\times C^{[n],[i-1,i]}
}
\end{equation}
Under this isomorphism, the images of the sections $\s^{n,i-1}$ and $\s^{n,i}$ restricted to $C^{[n],[i-1,i]}$ are identified with $R\times \{[1,0]\}\times C^{[n],[i-1,i]}$ and $ R\times\{[0,1]\}\times C^{[n],[i-1,i]}$ respectively.

\sss{Property \eqref{bub:fib}}\label{sss:fiber} 
Fix $n\ge1$ and $i \in [n]$ with $i\geq 1$.
We have already constructed the components $Y(n)_{\pm}$ and $Y(n)_{[i-1,i]}$ by taking the special fibers of $\io^{n}_{\pm}$ and $\frZ^{[n],[i-1,i]}$.  By looking at the defining equations of $\UU^{[n]}_{i}$, we observe
\begin{eqnarray*}
\UU^{[n]}_{0}\cap Y(n) &=& Y(n)_{-}\cup_{R(n)_{0}}(Y(n)_{[0,1]}\bs R(n)_{1});\\
\UU^{[n]}_{i}\cap Y(n) &=& (Y(n)_{[i-1,i]}\bs R(n)_{i-1})\cup_{R(n)_{i}} (Y(n)_{[i,i+1]}\bs R(n)_{i+1}); \quad 1\le i\le n-1;\\
\UU^{[n]}_{n}\cap Y(n) &=& (Y(n)_{[n-1,n]}\bs R(n)_{n-1})\cup_{R(n)_{n}} Y(n)_{+}.
\end{eqnarray*}
Property \eqref{bub:fib} follows directly.

This completes the verification of the properties listed in \S\ref{sss:prop bub}. 

\sss{Variant--marked sections}\label{sss:more sections} If the original family $\tau:\frY\to C$  is equipped with a finite collection of disjoint sections $s_{a}: C\to \frY$ (for $a\in \Sigma$) that are necessarily avoiding the nodes $R$, then we obtain
a corresponding collection of disjoint sections $s^\Delta_{a}: C^\Delta\to \frY^\Delta$.
Namely, given such a section $s_{a}$, we first form its base change $\ov s^\Delta_{a}: C^\Delta = C \times_{\AA^1} \AA^\Delta \to \frY \times_C C^\Delta =  \ov\frY^\Delta$, and then take $s^\Delta_{a}$ to be
the proper transform of $\ov s^\Delta_{a}$.

\begin{exam}\label{ex:blowup section}
In the situation of Example \ref{ex: nodal degen}, take the proper transform of $R_{0}\times \AA^{1}$ under the blowup $\frY\to Z\times \AA^{1}$ we get sections $\s_{z}:\AA^{1}\to \frY$ indexed by $z\in R_{0}$. The section $\s_{z}$ meets the $\PP^{1}$ corresponding to $z$ in the special fiber.
\end{exam}

\subsection{Twisting construction}\label{ss:twist}

We recall here the twisting construction for curves that introduces orbifold points.
(We will later consider the effect of the construction on the moduli space of bundles.
We learned such constructions from P.~Solis's work \cite{Solis}.)

A comment about the generality of the constructions we present. We will make assumptions that allow us to work with Zariski as opposed to more general \'etale gluings. We expect such technical distinctions disappear if one works in the equivalent context of log-stacks.

Throughout this section, we fix a positive integer $k$.

\sss{Twisting of the standard bubbling}\label{sss:tw std bub}
We denote by $\wt\WW=\AA^{2}$ with coordinates $(u,v)$,  $\wt\AA^{1}=\AA^{1}$ with coordinate $\e$, and $\wt\t_{\WW}: \wt\WW\to \wt \AA^{1}$ the map $(u,v)\mapsto \e=uv$.  Let $\tGm^{\WW}=\Gm$ act on $\wt\WW$ hyperbolically: $\l\cdot (u,v)=(\l u, \l^{-1} v)$. The reason we add tilde to these objects is that there is a commutative diagram
\begin{equation}\label{wt WW}
\xymatrix{ \wt\WW \ar[d]^{\wt\t_{\WW}}\ar[r]^-{p_{\WW, k}} & \WW\ar[d]^{\t_{\WW}}\\
\wt\AA^{1} \ar[r]^{p_{\AA, k}} & \AA^{1}}
\end{equation}
Here $p_{\WW, k}(u,v)=(u^{k}, v^{k})$ and $p_{\AA, k}(\e)=\e^{k}$. Moreover, $p_{\WW, k}$ is equivariant under the $\tGm^{\WW}$ and $\Gm^{\WW}$ actions via the $k$th power map $\tGm^{\WW}\to \Gm^{\WW}$.

Let $\wt\t^{\D}_{\WW}: \wt\WW^{\D}\to \wt\AA^{\D}$ be the standard bubbling constructed in \S\ref{sss:std bub}, starting with $\wt\t^{[0]}_{\WW}=\wt\t_{\WW}: \wt\WW^{[0]}=\wt\WW\to \wt\AA^{1}=\wt\AA^{[0]}$. Then $S\tGm^{\D}$ acts on $\wt\AA^{\D}$ by coordinate-wise multiplication, and $\tGm^{\WW}\times S\tGm^{\D}$ acts on $\wt\WW^{\D}$.

We have a commutative diagram of cosimplicial schemes extending \eqref{wt WW}
\begin{equation*}
\xymatrix{ \wt\WW^{\D} \ar[d]^{\wt\t^{\D}_{\WW}}\ar[r]^-{p^{\D}_{\WW, k}} & \WW^{\D}\ar[d]^{\t_{\WW}^{\D}}\\
\wt\AA^{\D} \ar[r]^{p^{\D}_{\AA, k}} & \AA^{\D}}
\end{equation*}
Moreover, $p^{\D}_{\AA, k}$ intertwines the $S\tGm^{\D}$-action on $\wt\AA^{\D}$ and the $S\Gm^{\D}$-action on $\AA^{\D}$ via the coordinate-wise $k$th power map $S\tGm^{\D}\to S\Gm^{\D}$. The map $p^{\D}_{\WW, k}$ intertwines the $\tGm^{\WW}\times S\tGm^{\D}$-action on $\wt\WW^{\D}$ and the $\Gm^{\WW}\times S\Gm^{\D}$-action on $\WW^{\D}$ via the coordinate-wise $k$th power map $\tGm^{\WW}\times S\tGm^{\D}\to \Gm^{\WW}\times S\Gm^{\D}$.

\sss{Twisted bubbling}\label{sss:tw bub}
For a rigidified separating nodal degeneration $\t: \frY\to C$, we have constructed the cosimplicial bubbling $\t^{\D}:\frY^{\D}\to C^{\D}$ in \S\ref{sss:cons cosim}. Now define a cosimplicial scheme $\frX^{\D}$ by the Cartesian diagram
\begin{equation*}
\xymatrix{ \frX^{\D}\ar[d]^{p^{\D}_{k}} \ar[r]^-{\wt\fra^{\D}} & [\wt\WW^{\D}/\tGm^{\WW}]\ar[d]^{p^{\D}_{\WW,k}}\\
\frY^{\D} \ar[r]^-{\fra^{\D}} & [\WW^{\D}/\Gm^{\WW}]}
\end{equation*}
Alternatively, we may describe $\frX^{\D}$ as the fiber product
\begin{equation*}
\frX^{\D}\simeq \frY\times_{[\WW/\Gm^{\WW}]}[\wt\WW^{\D}/\tGm^{\WW}]
\end{equation*}
where we use the map $[\wt\WW^{[n]}/\tGm^{\WW}]\to[\wt\WW/\tGm^{\WW}] \xr{p_{\WW,k}} [\WW/\Gm^{\WW}]$ to form the fiber product. 

We define a new cosimplicial base $B^{\D}$ by
\begin{equation*}
B^{\D}=C\times_{\AA^{1}, \Pi_{k}}\wt\AA^{\D}
\end{equation*}
where the map $\Pi_{k}: \AA^{[n]}\to \AA^1$ is given by $(t_0, \ldots, t_n) \mapsto t_0^k \cdots t_n^k$. By construction we have a map of cosimplicial schemes
\begin{equation*}
\pi^{\D}: \frX^{\D}\to B^{\D}
\end{equation*}
covering $\t^{\D}$. 

\begin{defn} \label{def:tw bub} For a fixed positive integer $k$, the cosimplicial scheme $\frX^{\D}$ over $B^{\D}$ is called the {\em $k$-th twisted cosimplicial bubbling} of the separating nodal degeneration $\t:\frY\to C$.
\end{defn}

\sss{New notation}\label{sss:tw new notation}
Denote
\begin{equation*}
\frX:=\frX^{[0]}=\frY\times_{[\WW/\Gm^{\WW}]}[\wt\WW/\tGm^{\WW}], \quad B:=B^{[0]}=C\times_{\AA^{1}}\wt\AA^{1}, \quad \pi=\pi^{[0]}: \frX\to B.
\end{equation*}
Let $b_{0}\in B$ be the unique preimage of $c_{0}\in C$, and $B^{\times}:=B\bs\{b_{0}\}$.

We introduce the notation
\begin{equation}\label{An SG}
A^{[n]}:=\wt\AA^{[n]}, \quad S\Gm^{[n]}:=S\tGm^{[n]}.
\end{equation}
We denote the coordinates on $A^{[n]}$ by $(\e_{0},\cdots, \e_{n})$. For an order-preserving map $\ph:[m]\to [n]$, we denote the corresponding maps in $\frX^{\D}$ and $B^{\D}$ by 
\begin{equation*}
\xi_{\ph}: \frX^{[m]}\to \frX^{[n]}; \quad \b_{\ph}: B^{[m]}\to B^{[n]}.
\end{equation*}

Again we denote a point in $B^{[n]}=C\times_{\AA^{1}, \Pi_{k}}A^{[n]}$ by $(c;\e_{0},\cdots, \e_{n})$ where $c\in C$ and $(\e_{0},\cdots, \e_{n})\in A^{[n]}$ such that $g(c)=\e^{k}_{0}\cdots\e_{n}^{k}$.  In particular, $b_{0}=(c_{0}; 0)$. Denote the special fiber of $\frX\to B$ by 
\begin{equation*}
X := \frX|_{b_{0}}\simeq Y\times_{\AA^{1}}[\wt\WW/\tGm^{\WW}].
\end{equation*}
where $Y$ maps to $0\in \AA^{1}$. Then $X\to  Y$ realizes $Y$ as the coarse moduli space of $X$. We will add an underline to the notation for a stack to denote its coarse moduli. Then
\begin{equation*}
X=X_{-}\cup_{Q} X_{+}, \quad \mbox{ with }\un X_{-}=Y_{-}, \quad \un X_{+}=Y_{+}, \quad \un Q=R.
\end{equation*}
Note that $Q=R\times \BB\mu_{k}$.  


For $n\ge0$, denote the  special fiber of $\frX^{[n]}$  and its twisted nodes by 
\begin{equation*}
\xymatrix{
X(n) = \frX^{[n]}|_{(c_0; 0, \ldots, 0)} & Q(n)_i \subset X(n), \, i \in [n]
}
\end{equation*}
Note that $Q(n)_{i}\simeq Q$.  The map $p^{[n]}_{k}$ induces  isomorphisms on coarse moduli
\begin{equation*}
\xymatrix{
\un \frX^{[n]}  \ar[r]^-\sim & \frY^{[n]} \times_{C^{[n]}} B^{[n]} & 
\un X(n) \ar[r]^-\sim & Y(n) &  \un Q(n)_i  \ar[r]^-\sim & R(n)_i, \, i \in[n].
}\end{equation*}

Denote by $\cQ^{[n],i}\subset \frX^{[n]}$  the preimage of the nodal locus $\cR^{[n],i}\subset \frY^{[n]}$ defined in Property \ref{bub:sec} of \S\ref{sss:prop bub}. 

\sss{Local geometry of $\frX^{[n]}$} To understand the local geometry of $\frX^{[n]}$, we note that $\frY^{[n]}$ and $\WW^{[n]}$ are \'etale locally isomorphic, hence $\frX^{[n]}$ is \'etale locally isomorphic to $[\wt\WW^{[n]}/\mu^{\WW}_{k}]$ which fits into the Cartesian diagram
\begin{equation}\label{W muk}
\xymatrix{ [\wt\WW^{[n]}/\mu^{\WW}_{k}] \ar[d]\ar[r] & [\wt\WW^{[n]}/\tGm^{\WW}]\ar[d]^{p^{[n]}_{\WW,k}}\\
\WW^{[n]} \ar[r] & [\WW^{[n]}/\Gm^{\WW}]
}
\end{equation}
Here $\mu^{\WW}_{k}$ denotes the $k$-torsion in the torus $\Gm^{\WW}$.

For each affine subset $\wt\UU^{[n]}_{i}\subset \wt\WW^{[n]}$ with coordinates $(u_{i},v_{i},\e_{0},\cdots, \e_{n})$ and defining equation $u_{i}v_{i}=\e_{i}$, the action of $\z\in\mu_{k}^{\WW}$ is $\z\cdot (u_{i}, v_{i},\e_{0},\cdots, \e_{n})=(\z u_{i}, \z^{-1} v_{i}, \e_{0},\cdots, \e_n)$. Therefore $\wt\UU^{[n]}_{i}/\mu^{\WW}_{k}$ is a scheme away from the locus $u_{i}=v_{i}=\e_{i}=0$, along which the automorphism group is $\mu_{k}^{\WW}$.  

Transporting the above discussion from $\wt\WW^{[n]}/\mu^{\WW}_{k}$ back to $\frX^{[n]}$ by \'etale local charts, we see that $\frX^{[n]}$ has nontrivial automorphism group $\mu_{k}$ precisely along $\cQ^{[n],i}$  (for all $i\in [n]$), and is a scheme elsewhere.

\sss{Exceptional loci}\label{sss:tw exc loci}
Parallel to the discussion in \S\ref{sss:exc loci}, we denote by $\frX^{[n]}_{\t}$ the preimage of $Q\subset \frX$ in $\frX^{[n]}$. Let $\wt\WW^{[n]}_{\t}\subset \wt\WW^{[n]}$ be the preimage of $0\in \wt\WW$. From the definition we get
\begin{equation*}
\frX^{[n]}_{\t}\simeq R\times_{[\{0\}/\Gm^{\WW}]}[\wt\WW^{[n]}_{\t}/\wt\Gm^{\WW}].
\end{equation*}
Using the rigidification of $\om_{Y_-}|_R$, we get a $S\Gm^{[n]}$-equivariant isomorphism
\begin{equation}\label{tw exc locus}
\frX^{[n]}_{\t}\simeq R\times [\wt\WW^{[n]}_{\t}/\mu^{\WW}_{k}].
\end{equation}

\sss{Properties of twisted bubbling}\label{sss:prop tw bub}
Below we list properties of the twisted cosimplicial bubbling, indexed to match the corresponding properties in \S\ref{sss:prop bub}. We adopt the convention of \S\ref{sss:subspace} for coordinate subspaces such as $B^{[n], S}$ and $\frX^{[n],S}$ ($S\subset [n]$).

\begin{enumerate}

\item[($2'$)]\label{bub:tw Gm} Both $\frX^{\D}$ and $B^{\D}$ are equipped with an action of the cosimplicial torus $S\Gm^{\D}$, and the map $\pi^{\D}$ is $S\Gm^{\D}$-equivariant.

\item[($3'$)]\label{bub:tw inj} The structure map  $\xi_{\ph}:\frX^{[m]} \to \frX^{[n]}$ associated to an order-preserving {\em injection} $\ph:[m]\to [n]$ induces an isomorphism 
\begin{equation*}
\xymatrix{
\frX^{[m]}\ar[r]^-\sim & \frX^{[n]}\times_{B^{[n]}} B^{[m]}
}
\end{equation*}

\item[($4'$)]\label{bub:tw sec} For $n\geq 0$ and $i\in [n]$, there are $S\Gm^{[n]}$-equivariant closed embeddings over $B^{[n],i}$
\begin{equation*}
\xymatrix{
\s^{n}_{i}: Q\times  B^{[n],i} \ar[r] & \frX^{[n], i}
}
\end{equation*}
with image $\cQ^{[n], i}$. The fiber of $\cQ^{[n],i}$ over $(c_{0};0,\cdots, 0)$ is $Q(n)_{i}\subset X(n)$.

\item[($5'$)]\label{bub:tw X+-} For $n\geq 0$, there are $S\Gm^{[n]}$-equivariant closed embeddings 
\begin{equation*}
\xymatrix{
\io^n_{-}: X_{-}\times B^{[n],0} \ar@{^(->}[r] & \frX^{[n],0}
&
\io^n_{+}: X_{+}\times B^{[n],n} \ar@{^(->}[r] & \frX^{[n],n }
}
\end{equation*}
such that over $b^{n,0} = (c_0; 0,1,1,\cdots, 1)\in B^{[n], 0}$ (resp. $b^{n, n} = (c_0; 1,1,\cdots, 1,0)\in B^{[n], n}$),  where  \eqref{bub:tw inj} provides
 isomorphisms 
  \begin{equation*}
  \frX^{[n]}|_{b^{n,0}} \simeq X \simeq \frX^{[n]}|_{b^{n,n}}\
  \end{equation*}
 the embedding $\io^{n}_-$ (resp. $\io^{n}_+$) is the given embedding of $X_{-}$  (resp. $X_{+}$) into $X$. In particular, $\cQ^{[n],0}$ is contained in the image of $\io^n_{-}$, and $\cQ^{[n], n}$ is contained in the image of $\io^n_{+}$.

\item[($6'$)]\label{bub:tw X} 
For $n\geq 1$, and $i\in [n]$ with $i\geq 1$, there is an $S\Gm^{[n]}$-stable closed substack $\frZ^{[n],[i-1,i]}\subset \frX^{[n],[i-1,i]}$ containing $\cQ^{[n],i-1}|_{B^{[n],[i-1,i]}}$ and $\cQ^{[n], i}|_{B^{[n], [i-1,i]}}$, and an $S\Gm^{[n]}$-equivariant isomorphism
\begin{equation*}
\frZ^{[n],[i-1,i]}\simeq  \un Q\times (\PP^{1}/\mu_k) \times B^{[n],[i-1,i]}
\end{equation*}
over $B^{[n],[i-1,i]}$ (where on the right hand side, the action of $(t_{0},\cdots, t_{n})\in S\Gm^{[n]}$ is trivial on the factor $\un Q$, is the scaling of the affine coordinate by $t_{i-1}$ on the factor $\PP^{1}/\mu_k$, and is the coordinate-wise scaling on the factor $B^{[n],[i-1,i]}$). Moreover,  under the above isomorphism, $\cQ^{[n], i-1}|_{B^{[n], [i-1,i]}}$corresponds to $\un Q\times \{0\}/\mu_k\times  B^{[n], [i-1,i]}$, and $\cQ^{[n], i}|_{B^{[n], [i-1,i]}}$ corresponds to $\un Q\times \{\infty\}/\mu_k\times B^{[n], [i-1,i]}$.

\item[($7'$)]\label{bub:tw fib} For $n\ge1$, and $i \in [n]$ with $i\geq 1$, set $X(n)_{ [i-1,i]}=\frZ^{[n], [i-1,i]}|_{(c_0; 0, \cdots, 0)}$, and note  \eqref{bub:tw X} provides an isomorphism 
\begin{equation*}
X(n)_{[i-1,i]}\simeq \un Q\times (\PP^{1}/\mu_k)
\end{equation*}
Set $X(n)_{-}=\io^n_{-}(X_{-}\times \{(c_0; 0, \cdots, 0)\})$ and $X(n)_{+}=\io^n_{+}(X_{n+}\times \{(c_0; 0, \cdots, 0)\})$. Then $X(n)$ is a twisted nodal curve
\begin{equation*}
\xymatrix{
X(n)\simeq X(n)_{-}\cup_{Q(n)_{ 0}} X(n)_{ [0,1]}\cup_{Q(n)_{1}} \cdots \cup X(n)_{ [n-1,n]}\cup_{Q(n)_{n}} X(n)_{+}
}
\end{equation*}
obtained by gluing $X(n)_{-}, X(n)_{ [0,1]},\cdots, X(n)_{[n-1,n]}$ and $X(n)_{+}$ along the twisted nodes $Q(n)_{0},\cdots, Q(n)_{ n}$. Moreover, \'etale locally near each twisted node, $X(n)$ is isomorphic to the quotient stack $(\Spec \CC[u,v]/(uv))/\mu_{k}$ where $\z\in\mu_{k}$ acts on $\CC[u,v]/(uv)$ by $(u,v)\mapsto (\z u,\z^{-1}v)$.
  
\item[($8'$)]\label{bub:tw surj} For an 
order-preserving {\em surjection} $\ph:[m] \to [n]$,  $\xi_{\ph}: \frX^{[m]}\to \frX^{[n]}$ restricts to a map  $c_{\ph}: X(m)\to X(n)$. Then $c_{\ph}$ contracts each component $X(m)_{[i-1,i]}$ of $X(m)$ such that $\ph(i-1)=\ph(i)$ to the corresponding point of $Q(n)_{\ph(i)}\subset X(n)$. Other components of $X(m)$ are mapped isomorphically to their image in $X(n)$. 

%
\end{enumerate}

\sss{Variant -- marked sections}\label{sss:more sections tw} If we have additional disjoint sections $s_{a}: C\to \frY$ ($a\in \Sigma$) as in \S\ref{sss:more sections}, the sections $s^{\D}_{a}: C^{\D}\to \frY^{\D}$ induce sections $\s^{\D}_{a}: B^{\D}\to \frX^{\D}$ that land in the smooth loci of fibers.

\section{Bundles on twisted curves}\label{s:Bun tw}
The fibers of the twisted bubbling of a separating nodal degeneration $\t: \frY\to C$ are DM curves with twisted nodes  (i.e., nodes with automorphism groups). In this section, we describe the moduli stack of $G$-bundles on such twisted nodal curves in terms of the moduli stack of $G$-bundles on its coarse curve with level structures.

\subsection{Smooth case}

\sss{$G$-bundles over $\BB\mu_{k}$}\label{sss:Yk} 
Let $k\in \NN$. Let $\YY_{k}$ be the set of $G$-conjugacy classes of homomorphisms $\y: \mu_{k}\to G$.  If we choose a maximal torus $T\subset G$ with Weyl group $W$, then any $\y:\mu_{k}\to G$ can be conjugated to some $\wt\y: \mu_{k}\to T$, which is classified by an element in $\frac{1}{k}\xcoch(T)/\xcoch(T)$. Therefore we can identify $\YY_{k}$ with the set of orbits of $\frac{1}{k}\xcoch(T)$ under the action of the extended affine Weyl group $\tilW=\xcoch(T)\rtimes W$:
\begin{equation*}
\YY_{k}\simeq \frac{1}{k}\xcoch(T)/\xcoch(T)\rtimes W.
\end{equation*}

Let $\cY_{k}:=\Bun_{G}(\BB \mu_{k})$ be the moduli stack classifying $G$-bundles over $\BB\mu_{k}=[\pt/\mu_{k}]$. Then
\begin{equation*}
\cY_{k}=\un\Hom(\mu_{k}, G)/G
\end{equation*}
where the action of $G$ on the homomorphism scheme $\un\Hom(\mu_{k},G)$ is by conjugation. Therefore the isomorphism classes of points in $\cY_{k}$ are indexed by $\YY_{k}$. For $\y\in \YY_{k}$, let $\cY_{k,\y}\subset \cY_{k}$ be the corresponding component. If we choose a representative $\wt\y: \mu_{k}\to G$ for $\y$, then $\cY_{k,\y}\simeq \BB C_{G}(\wt\y)$.

\sss{Smooth DM curve}\label{sss:DM X} Let $X$ be a smooth DM curve with isolated orbifold points indexed by $R$. We identify $R$ with a set of closed points in the coarse curve $\un X$. For $p\in R$,  the automorphism group $\Aut_{X}(p)$ acts on the tangent line $T_{p}X$. This action  gives a {\em canonical} isomorphism $\Aut_{X}(p)\isom \mu_{k_{p}}$ for some $k_{p}\in \NN$. Therefore we get {\em canonical} embeddings $\BB\mu_{k_{p}}\incl X$ for each $p\in R$.


Let $\Bun_{G}(X)$ be the moduli stack of $G$-bundles over $X$.  For each $G$-torsor $\cE$ over $X$, restricting $\cE$ to $p\in R$ gives a $G$-torsor over $\BB\mu_{k_{p}}$. Therefore we get a map
\begin{equation}\label{res tw nodes}
\Bun_{G}(X)\to \prod_{p\in R}\cY_{k_{p}}
\end{equation}
In particular, $\Bun_{G}(X)$ maps to the discrete set $\prod_{p\in R}\YY_{k_{p}}$. For $\cE\in\Bun_{G}(X)$, its image in $\YY_{k_{p}}$ is called the {\em type} of $\cE$ at $p\in R$. 
According to the types $\y=(\y_{p})_{p\in R}$,  $\Bun_{G}(X)$ decomposes into open-closed substacks
\begin{equation*}
\Bun_{G}(X)=\coprod_{\y\in\prod_{p\in R}\YY_{k_{p}}}\Bun_{G}(X)_{\y}.
\end{equation*}
The map \eqref{res tw nodes} induces
\begin{equation}\label{eta res tw nodes}
\Bun_{G}(X)_{\y}\to \prod_{p\in R}\cY_{k_{p}, \y_{p}}
\end{equation}
where we recall $\cY_{k_{p},\y_{p}}$ is a gerbe with group isomorphic to $C_{G}(\wt\y_{p})$.

\sss{Level structure on the coarse curve}\label{sss:parah}
Let $\nu:X\to \un X$ be the natural map. Choose a maximal torus $T\subset G$ and $\wt\y_{p}\in \frac{1}{k_{p}}\xcoch(T)$ for each $p\in R$. Let $\y_{p}$ be the image of $\wt\y_{p}$ in $\YY_{k_{p}}$, and $\y=(\y_{p})_{p\in R}$.

For $p\in R$, let $\cK_{p}$ be the local field of $\un X$ at $p$. then $\wt\y_{p}\in \xcoch(T)_{\QQ}$ defines a point in the apartment of the building of $G(\cK_{p})$ corresponding to $T$.  Let $\bP'_{\wt\y_{p}}\subset G(\cK_{p})$ be the stabilizer of  $\wt\y_{p}$ under $G(\cK_{p})$. Then the neutral component  $\bP_{\wt\y_{p}}\subset \bP'_{\wt\y_{p}}$ is a parahoric subgroup of $G(\cK_{p})$.

Let $\bP'_{\wt\y}$ denote the collection $\{\bP'_{\wt\y_{p}}\}_{p\in R}$. Let $\Bun_{G, \bP'_{\wt\y}}(\un X, R)$ be the moduli stack of $G$-torsors over $\un X$ with $\bP'_{\wt\y_{p}}$-level structure at each $p\in R$. More precisely, the collection $\bP'_{\wt\y}$ gives a group scheme $\cG_{\wt\y}$ over $\un X$  (by the construction of Bruhat-Tits) whose generic fiber is $G$, and $\Bun_{G, \bP'_{\wt\y}}(\un X, R)$ is the moduli stack of $\cG_{\wt\y}$-torsors over $\un X$.

We claim that  the stack $\Bun_{G, \bP'_{\wt\y}}(\un X, R)$ is canonically independent of the choice of $T$ and the lifting $\wt\y$ of $\y$. Indeed, for another maximal torus $T'\subset G$ and liftings $\wt\y'_{p}:\mu_{k_{p}}\to T'$ of $\y_{p}$ for $p\in R$, let $g_{p}\in G(\cK_{p})$ be such that $g_{p}[\wt\y_{p}]=[\wt\y'_{p}]$ (where $[\wt\y_{p}]$ denotes the point in the building of $G(\cK_{p})$ given by $\wt\y_{p}$). Then $g_{p}\bP'_{\wt\y_{p}}=\bP'_{\wt\y'_{p}}$. Using uniformization $\prod_{p\in R}G(\cK_{p})/\bP'_{\wt\y_{p}}\to \Bun_{G, \bP'_{\wt\y}}(\un X, R)$,  we see that right multiplication by $g=(g_{p})_{g\in R}$ induces an isomorphism $\Bun_{G, \bP'_{\wt\y'}}(\un X, R)\isom \Bun_{G, \bP'_{\wt\y}}(\un X, R)$ where $\wt\y'=(\wt\y'_{p})_{p\in R}$. Moreover, a different choice $g'_{p}$ give the same isomorphism because the double coset $\bP'_{\wt\y'_{p}}g_{p}\bP'_{\wt\y_{p}}$  is well-defined.  The above discussion shows that there is a canonically defined stack $\Bun_{G, \y}(\un X, R)$ that depends only on $\y=(\y_{p})_{p\in R}$ and not on $T$ and $\wt\y$.

Let $L'_{\wt\y_{p}}$ be the reductive quotient of $\bP'_{\wt\y_{p}}$, which is not necessarily connected. Then we have a canonical isomorphism $L'_{\wt\y_{p}}\simeq C_{G}(\wt\y_{p})$. Since $L'_{\wt\y_{p}}$ is a quotient of $\bP'_{\wt\y_{p}}$, 
there is a canonical map $\Bun_{G,\bP'_{\wt\y}}(\un X, R)\to \prod_{p\in R}\BB L'_{\wt\y_{p}}$. Consider the composition
\begin{equation}\label{level to cY}
\Bun_{G,\y}(\un X, R)\simeq \Bun_{G,\bP'_{\wt\y}}(\un X, R)\to \prod_{p\in R}\BB L'_{\wt\y_{p}}\simeq \prod_{p\in R}\BB C_{G}(\wt\y_{p})\simeq \prod_{p\in R}\cY_{k_{p},\y_{p}}.
\end{equation} 
The same argument as above shows that the above map is independent of the choices of $T$ and $\wt\y$, and depends only on $\y$.

\begin{remark} We contrast the canonicity of $\Bun_{G,\y}(\un X,R)$ with the situation of the moduli stack of $G$-bundles with parahoric level structures. Given parahoric subgroups $\bP_{p}\subset G(\cK_{p})$ at $p\in R$, we have the moduli stack $\Bun_{G, (\bP_{p})}(\un X,R)$. Although the isomorphism type of  $\Bun_{G, (\bP_{p})}(\un X,R)$  depends only on the conjugacy classes of $\bP_{p}$ under $G(\cK_{p})$, the isomorphisms for different choices of $\bP_{p}$ cannot be made canonical in general. For example,   when $G$ is not simply-connected and $\bP_{p}$ is an Iwahori subgroup, the group $N_{G(\cK_{p})}(\bP_{p})/\bP_{p}$ then acts nontrivially on $\Bun_{G, (\bP_{p})}(\un X,R)$.
\end{remark}


\begin{prop}\label{p:Bun on tw} For any $\y\in (\y_{p})_{p\in R}\in \prod_{p\in R}\YY_{k_{p}}$, there is a canonical isomorphism of stacks over $\prod_{p\in R}\cY_{k_{p},\y_{p}}$
\begin{equation*}
\Bun_{G,\y}(\un X, R)\isom\Bun_{G}(X)_{\y}.
\end{equation*}
Here the map from the left side to $\prod_{p\in R}\cY_{k_{p},\y_{p}}$ is given by \eqref{level to cY}, and the similar map from the right side is given by \eqref{eta res tw nodes}.
\end{prop}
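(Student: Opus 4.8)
The plan is to construct the isomorphism as the functor induced by pushforward along the coarse-moduli map $\nu : X \to \un X$, and to reduce its verification to a purely local statement at each twisted point.

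First I would set up a Beauville--Laszlo style gluing description of both sides. Since $\nu$ restricts to an isomorphism over $X^{\circ} := X \setminus R \isom \un X \setminus R$, and the $\mu_{k_p}$-covering $\Spec \CC\lr{u_p} \to \Spec \CC\lr{z_p}$ (with $z_p = u_p^{k_p}$) is \'etale, one gets canonical identifications $[\Spec \CC\lr{u_p}/\mu_{k_p}] \simeq \Spec \CC\lr{z_p}$ of punctured orbifold disks. By the gluing property for moduli of torsors under smooth affine group schemes --- in the form used for twisted curves by Solis \cite{Solis} --- a $G$-bundle on $X$ of type $\y$ is the same datum as a $G$-bundle on $X^{\circ}$, together with a $G$-bundle of type $\y_p$ on the formal orbifold disk $[\Spec \CC\tl{u_p}/\mu_{k_p}]$ for each $p \in R$, glued along the common punctured disks $\Spec \CC\lr{z_p}$; likewise a $\cG_{\wt\y}$-torsor on $\un X$ is a $G$-bundle on $\un X \setminus R$ together with a $\cG_{\wt\y_p}$-torsor on $\Spec \wh{\cO}_{\un X, p}$ for each $p$, glued along the same punctured disks. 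It therefore suffices to produce, compatibly with restriction to the punctured disk and with the maps to $\cY_{k_p,\y_p}$, a canonical equivalence between $G$-bundles of type $\y_p$ on $[\Spec \CC\tl{u_p}/\mu_{k_p}]$ and $\cG_{\wt\y_p}$-torsors on $\Spec \CC\tl{z_p}$.

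For the local statement, the relevant functor is pushforward along the coarse map twisted by $\wt\y_p$. Using that $H^1(\mu_{k_p}, -)$ of the pro-unipotent radical of $G(\CC\tl{u_p})$ vanishes, a type-$\y_p$ bundle is \'etale-locally isomorphic to the standard bundle $[G \times \Spec \CC\tl{u_p} / \mu_{k_p}]$ with $\zeta \cdot (g,u) = (\wt\y_p(\zeta) g, \zeta u)$, and its pushforward to $\Spec \CC\tl{z_p}$ is the sheaf $R \mapsto \{\, g \in G(R \otimes_{\CC\tl{z_p}} \CC\tl{u_p}) : g(\zeta u) = \wt\y_p(\zeta) g(u) \,\}$, a torsor under the sheaf of groups $R \mapsto \{\, h : h(\zeta u) = \wt\y_p(\zeta) h(u) \wt\y_p(\zeta)^{-1} \,\}$; fixing the type $\y_p$ is exactly what makes the first derived pushforward vanish on this group sheaf, so that pushforward preserves torsors. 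The key point will be to identify this last group scheme with the Bruhat--Tits group scheme $\cG_{\wt\y_p}$: conjugating $\wt\y_p$ into $T$ and decomposing $h$ in the loop group via the open cell, the equivariance condition confines the torus part to $T(\CC\tl{z_p})$, forces each root subgroup $u_\alpha$ to be valued in $u_p^{j_\alpha}\CC\tl{z_p}$ with $j_\alpha \equiv k_p\langle \alpha, \wt\y_p\rangle \pmod {k_p}$, and retains exactly the components of $N_G(T)$ centralizing $\wt\y_p$ --- which is the Moy--Prasad/Bruhat--Tits description of $\bP'_{\wt\y_p}$ and the group scheme it determines. Granting this, both local stacks become $\BB \bP'_{\wt\y_p}$ (every type-$\y_p$ bundle on the orbifold disk is isomorphic to the standard one, with automorphism group $\cG_{\wt\y_p}(\CC\tl{z_p}) = \bP'_{\wt\y_p}$), the functor is the identity, restriction to the punctured disk corresponds on both sides to $\BB$ of the inclusion $\bP'_{\wt\y_p} \hookrightarrow G(\cK_p)$, and restriction of $\cE$ to $\BB\mu_{k_p}$ corresponds to evaluating automorphisms at $u_p = 0$, i.e.\ to the reductive quotient $\bP'_{\wt\y_p} \to L'_{\wt\y_p} \simeq C_G(\wt\y_p)$ --- exactly the map \eqref{level to cY} --- so the equivalence lies over $\prod_{p \in R} \cY_{k_p,\y_p}$. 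Canonicity is then automatic, since neither $\Bun_G(X)_\y$ nor the pushforward functor involves a choice, and independence of the target from $T$ and the lift $\wt\y$ was settled in \S\ref{sss:parah}.

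I expect the main obstacle to be this local identification: pinning down the twisted-pushforward group scheme as precisely $\cG_{\wt\y_p}$, and in particular capturing the correct non-neutral component group so that one lands in the full stabilizer $\bP'_{\wt\y_p}$ and not merely the neutral parahoric $\bP_{\wt\y_p}$ when $\wt\y_p$ fails to be regular. The Beauville--Laszlo descent in the stacky setting is a standard but nontrivial ingredient; the rest (full faithfulness, essential surjectivity, compatibility with the gerbe maps, canonicity) is comparatively routine once the local dictionary is in place.
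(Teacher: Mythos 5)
Your proposal is correct in outline, but it takes a genuinely different route from the paper. You argue local-to-global: Beauville--Laszlo gluing along the stacky formal disks reduces the statement to a local equivalence between type-$\y_p$ bundles on $[\Spec \CC\tl{u_p}/\mu_{k_p}]$ and $\cG_{\wt\y_p}$-torsors on $\Spec \CC\tl{z_p}$, realized by the invariant (twisted) pushforward along the coarse map. The paper instead argues globally by uniformization: it constructs the standard bundle $\cE_{\wt\y}$ as the image of the point $e_{\Gr}(\wt\y)$ in the $\mu_k$-fixed affine Grassmannian, observes that its stabilizer under $\prod_p G(\cK_p)$ is exactly $\prod_p \bP'_{\wt\y_p}$, so that the tautological $G$-action over $U$ extends to a commuting $\nu^*\cG_{\wt\y}$-action, defines the map by twisting $\cF \mapsto \nu^*\cF \times^{\nu^*\cG_{\wt\y}} \cE_{\wt\y}$, and proves it is an isomorphism by exhibiting both sides as quotients of the same affine partial flag varieties $\prod_p G(\cK_p)/\bP'_{\wt\y_p} \simeq \prod_p (\Gr^{(k_p)}_p)^{\mu_{k_p}}_{\y_p}$ parametrizing objects trivialized over $U$. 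The essential local input is the same in both arguments: your identification of the twisted-conjugation-invariant group sheaf with $\cG_{\wt\y_p}$ is, after conjugating by $u_p^{k_p\wt\y_p}$, precisely the paper's assertion that the $G(\cK_p)$-stabilizer of $e_{\Gr,p}(\wt\y_p)$ is the full stabilizer $\bP'_{\wt\y_p}$, and in both cases the delicate point you rightly flag is to capture the non-neutral components of $\bP'_{\wt\y_p}$ rather than only the parahoric $\bP_{\wt\y_p}$. What your route buys is a choice-free functor (invariant direct image) and a purely local argument, at the cost of justifying stacky Beauville--Laszlo descent in families; what the paper's route buys is that the uniformization presentation yields the isomorphism, its independence of $T$ and $\wt\y$, and the family version (Proposition \ref{p:Bun tw family}) from one global picture, without invoking formal gluing for orbifold curves explicitly.
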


\sss{Uniformization for bundles on a twisted curve}\label{sss:unif tw}
Before giving the proof of Proposition~\ref{p:Bun on tw}, we remark on the uniformization of $\Bun_{G}(X)$ which will be used in the proof.

We first consider one orbifold point  $p\in R$ with automorphism group $\mu_{k}$. Let $\cO_{p}\subset \cK_{p}$ be the completed local ring of $\un X$ at $p$. Over the formal disc $\Spec \cO_{p}\to\un X$, the map $\nu:X\to \un X$ takes the form $(\Spec \cO^{(k)}_{p})/\mu_{k}\to \Spec \cO_{p}$, where $\cO^{k}_{p}$ is the ring of integers in the unique degree $k$ extension $\cK^{(k)}_{p}$ or $\cK_{p}$. Note $\Gal(\cK^{(k)}_{p}/\cK_{p})=\mu_{k}$. Consider the affine Grassmannian $\Gr^{(k)}_{p}:=G(\cK^{(k)}_{p})/G(\cO^{(k)}_{p})$. Let $\mu_{k}$ act on $\Gr^{(k)}_{p}$ via its Galois action on $\cK^{(k)}_{p}$. Then the fixed points $(\Gr^{(k)}_{p})^{\mu_{k}}$ parametrize $G$-torsors on $X$ with a trivialization on $X\bs\{p\}$.  

Replacing $G$ by a maximal torus $T\subset G$ we have the affine Grassmannian $\Gr^{(k)}_{T,p}$ attached to the local field $\cK^{(k)}_{p}$ whose reduced structure is the discrete set $\frac{1}{k}\xcoch(T)$: for $\l\in\frac{1}{k}\xcoch(T)$ the corresponding point in $\Gr^{(k)}_{T,p}$ is $(k\l)(\e)$, where $\e\in \cK^{(k)}_{p}$ is any uniformizer and $k\l$ is now a cocharacter $\Gm\to T$ (the point $(k\l)(\e)\in \Gr^{(k)}_{T,p}$ is independent of the choice of $\e$). The natural embedding $\Gr^{(k)}_{T,p}\incl \Gr^{(k)}_{p}$ restricted to the reduced structure must land in $\mu_{k}$-fixed locus, giving an embedding
\begin{equation*}
e_{\Gr,p}: \frac{1}{k}\xcoch(T)\incl (\Gr^{(k)}_{p})^{\mu_{k}}.
\end{equation*}

The loop group $G(\cK_{p})$ acts on $(\Gr^{(k)}_{p})^{\mu_{k}}$ by left translation. The $G(\cK_p)$-orbits on $(\Gr^{(k)}_{p})^{\mu_{k}}$ are naturally indexed by $\YY_{k}$: each $G(\cK_p)$-orbit contains $e_{\Gr,p}(\l)$ for a unique $\tilW$-orbit of $\l\in \frac{1}{k}\xcoch(T)$. For $\l\in \frac{1}{k}\xcoch(T)$, the stabilizer of $G(\cK_p)$ at $e_{\Gr,p}(\l)$ is $\bP'_{\l}\subset G(\cK_p)$, the stabilizer of the point $\l$ on the standard apartment of $G(\cK_p)$  corresponding to $T$. For $\y\in \YY_{k}$, let $(\Gr^{(k)}_{p})^{\mu_{k}}_{\y}$ be the $G(\cK_p)$-orbit corresponding to $\y$.  By choosing a representative $\wt\y\in \frac{1}{k}\xcoch(T)$ for each $\y\in \YY_{k}$, we get an $G(\cK_p)$-equivariant  isomorphism
\begin{equation}\label{Gr fixed pt}
 (\Gr^{(k)}_{p})^{\mu_{k}}=\coprod_{\y\in \YY_{k}}(\Gr^{(k)}_{p})^{\mu_{k}}_{\y}\simeq \coprod_{\y\in \YY_{k}}G(\cK_p)/\bP'_{\wt\y}.
\end{equation}
Therefore $ (\Gr^{(k)}_{p})^{\mu_{k}}$ is a finite disjoint union of affine partial flag varieties of $G(\cK_p)$.

The above discussion can be generalized to simultaneously uniformization at all $p\in R$ in an obvious way.  Let $U=\un X\bs R$ which is identified with its preimage in $X$.  The product  $\prod_{p\in R}(\Gr^{(k_{p})}_{p})^{\mu_{k_{p}}}$ parametrizes $G$-torsors on $X$ with a trivialization on $U$. We have an embedding
\begin{equation}\label{eGr}
e_{\Gr}: \op_{p\in R}\frac{1}{k_{p}}\xcoch(T)\to \prod_{p\in R}(\Gr^{(k_{p})}_{p})^{\mu_{k_{p}}}.
\end{equation}
Using the $G(\cK_{p})$-orbits on $(\Gr^{(k_{p})}_{p})^{\mu_{k_{p}}}$, we get a uniformization of $\Bun_{G}(X)_{\y}$ by
\begin{equation*}
\prod_{p\in R}(\Gr^{(k_{p})}_{p})^{\mu_{k_{p}}}_{\y_{p}}
\end{equation*}
which parametrizes an object in $\Bun_{G}(X)_{\y}$ together with a trivialization over $U$.

\sss{Proof of Proposition~\ref{p:Bun on tw}}  Choose a maximal torus $T\subset G$ and a lifting $\wt\y_{p}\in \frac{1}{k_{p}}\xcoch(T)$ of $\y_{p}$. Let $\cG_{\wt\y}$ be the Bruhat-Tits group scheme over $\un X$ which is the constant group $G$ over $U$ and $\cG_{\wt\y}(\cO_{p})=\bP'_{\wt\y_{p}}$ at $p\in R$. 

To define a map
\begin{equation}\label{isom Bun}
\ph: \Bun_{G,\bP'_{\wt\y}}(\un X,  R )\to\Bun_{G}(X)_{\y}
\end{equation}
it suffices to construct a right $G$-torsor $\cE_{\wt\y}$ on $X$ with type $\y_x$ at $p_{i}$, together with a commuting left action of $\nu^{*}\cG_{\wt\y}$. Indeed, once we have such a $\cE_{\wt\y}$, for any $\cF\in \Bun_{G,\bP'_{\wt\y}}(\un X)$, viewed as a $\cG_{\wt\y}$-torsor over $\un X$, we form the right $G$-torsor $\cE_{\cF}:=\nu^{*}\cF\twtimes{\nu^{*}\cG_{\wt\y}}\cE_{\wt\y}$ on $X$. It is easy to see that $\cE_{\cF}$ has the same type as $\cE_{\wt\y}$ at $p_{i}$.

Recall the map $e_{\Gr}$ from \eqref{eGr}. We now take $\cE_{\wt\y}$ to be image of $e_{\Gr}(\wt\y)$ in $\Bun_{G}(X)_{\y}$. Under the left translation action of $\prod_{p\in R}G(\cK_{p})$, the stabilizer of $e_{\Gr}(\wt\y)$ is exactly $\prod\bP'_{\wt\y_{p}}$. Therefore the tautological left action of $G$ on the trivial $G$-torsor $\cE_{\wt\y}|_{U}$ extends to an action of $\nu^{*}\cG_{\wt\y}$.

%

This completes the construction of the map $\ph$. To show it is an isomorphism, we may as well show that both sides are uniformized by the same affine partial flag varieties. On the one hand, $\prod_{p\in R}G(\cK_{p})/\bP'_{\wt\y_{p}}$ parametrizes objects in $\Bun_{G,\bP'_{\wt\y}}(\un X, R)$ with trivialization over $U$. On the other hand, $\prod_{p\in R}(\Gr^{(k_{p})}_{p})^{\mu_{k_{p}}}_{\y_{p}}$ parametrizes objects in $\Bun_{G}(X)_{\y}$ with trivialization over $U$. The isomorphism \eqref{Gr fixed pt} gives an isomorphism 
\begin{equation*}
\prod_{p\in R}G(\cK_{p})/\bP'_{\wt\y_{p}}\simeq \prod_{p\in R}(\Gr^{(k_{p})}_{p})^{\mu_{k_{p}}}_{\y_{p}}
\end{equation*}
sending the base point of the left side to the point $e_{\Gr}(\wt\y)$ on the right. The construction of $\cE_{\wt\y}$ guarantees that this isomorphism descends to the map $\ph$. Therefore $\ph$ is also an isomorphism. 

Finally one checks that for a different choice of $T'$ and $\wt\y'$, the canonical isomorphism $\Bun_{G,\bP'_{\wt\y}}(\un X, R)\simeq \Bun_{G,\bP'_{\wt\y'}}(\un X, R)$ constructed in \S\ref{sss:parah} is compatible with the isomorphisms $\ph$ and $\ph'$ defined using the old and new choices. Therefore we get a canonical isomorphism $\Bun_{G,\y}(\un X, R)\simeq \Bun_{G}(X)_{\y}$. \qed

\subsection{Twisted nodal case}
Now we consider the case $X$ has twisted nodes.

\sss{The curve $X$}\label{sss:tw nodal X}
Let $X$ be a proper DM curve over $\CC$ whose orbifold points $R$ are exactly the nodal singularities. More precisely,  for each $p\in R$, the formal neighborhood of $p$ in $X$ is isomorphic to $(\Spec \CC\tl{u,v}/(uv))/\mu_{k_{p}}$ where $\z\in \mu_{k_{p}}$ acts by $\z\cdot (u,v)=(\z u, \z^{-1}v)$. For each $p\in R$ the tangent space $T_{p}X$ is the direct sum of two lines $T^{+}_{p}X\op T^{-}_{p}X$ corresponding to the two analytic branches, such that $\Aut(p)\simeq \mu_{k_{p}}$ acts on $T^{+}_{p}X$ by the tautological character $\mu_{k_{p}}\incl \Gm$ and acts on $T^{-}_{p}X$ by the inverse of the  tautological character.  The choice of the $+$-local branch at $p$ determines an isomorphism $\Aut(p)\simeq \mu_{k_{p}}$. {\em We fix a choice of a $+$-local branch at each $p$ in the sequel.}


For each $G$-torsor $\cE$ over $X$, restricting $\cE$ to $p\in R$ gives a point in $\cY_{k_{p}}$, hence an element $\y_{p}\in \YY_{k_{p}}$ called it type at $p$. The moduli stack of $G$-bundles then decomposes into open-closed substacks according the types $\y=(\y_{p})\in \prod_{p\in R}\YY_{k_{p}}$
\begin{equation*}
\Bun_{G}(X)=\coprod_{\y\in\prod_{p\in R}\YY_{k_{p}}}\Bun_{G}(X)_{\y}.
\end{equation*}


\sss{Level structure on the coarse nodal curve} We use the notations $\un X, \nu: X\to \un X$ and $U=\un X-R$ from \S\ref{sss:parah}. Let $\un\t: \wt {\un X}\to \un X$ be the normalization map.  We also identify $U$ with its preimage in $\wt{\un X}$. We name the preimages of $p\in R$ in $\wt{\un X}$ by $p^{+}$ and $p^{-}$ according to the $\pm$-analytic branches we fixed before.  Let $\cO_{p}^{\pm}$ be the formal completion of $\wt{\un X}$ at $p^{\pm}$, with fraction field $\cK^{\pm}_{p}$. 

Let $\y=(\y_{p})\in \prod_{p\in R}\YY_{k_{p}}$. Choose a maximal torus $T\subset G$ and liftings $\wt\y_{p}\in \frac{1}{k_{p}}\xcoch(T)$ of $\y_{p}$. As in \S\ref{sss:parah} we define $\bP'^{+}_{\wt\y_{p}}\subset G(\cK^{+}_{p})$ and $\bP'^{-}_{-\wt\y_{p}}\subset G(\cK^{-}_{p})$. Note there is a canonical isomorphism between the reductive quotients of $\bP'^{+}_{\wt\y_{p}}$ and $\bP'^{-}_{-\wt\y_{p}}$, which we denote by $L'_{\wt\y_{p}}\simeq L'_{-\wt\y_{p}}$. We form the subgroup
\begin{equation*}
\bP'_{\pm\wt\y}:=\bP'^{+}_{\wt\y_{p}}\times_{L'_{\wt\y_{p}}}\bP'^{-}_{-\wt\y_{p}}\subset G(\cK^{+}_{p})\times G(\cK^{-}_{p}).
\end{equation*}

Let $\bP'_{\pm\wt\y}$ denote the collection $\{\bP'_{\pm\wt\y_{p}}\}_{p\in R}$.   Let $\Bun_{G, \bP'_{\pm\wt\y}}(\un X,R)$ be the moduli stack of $G$-torsors on $\un X$ with level structure $\bP'_{\pm\wt\y_{p}}$ at $p$. It can be described using the normalization $\wt{\un X}$ as follows: $\Bun_{G, \bP'_{\pm\wt\y}}(\un X,R)$ classifies $G$-bundles over $\wt{\un X}$ with $\bP'^{+}_{\wt\y_{p}}$-level structure at $p^{+}$, $\bP'^{-}_{-\wt\y_{p}}$-level structure at $p^{-}$ and an isomorphism between the induced $L'_{\wt\y_{p}}\simeq L'_{-\wt\y_{p}}$-torsors at $p^{+}$ and $p^{-}$ (for each $p\in R$).

The same argument as in \S\ref{sss:parah} shows that $\Bun_{G, \bP'_{\pm\wt\y}}(\un X,R)$ depends only on $\y$ and not on the choices of $T$ and $\wt\y$. We denote this canonical moduli stack by $\Bun_{G, \pm\wt\y}(\un X,R)$. Moreover, there is a canonical map
\begin{equation*}
\Bun_{G, \pm\wt\y}(\un X,R)\to \prod_{p\in R}\cY_{k_{p},\y_{p}}.
\end{equation*}

\begin{lemma}\label{l:Bun on tw nodal} Let $\y=(\y_{p})\in \prod_{p\in R}\YY_{k_{p}}$. Then there is a canonical isomorphism of stacks over $\prod_{p\in R}\cY_{k_{p},\y_{p}}$
\begin{equation}\label{Bun on tw nodal}
\Bun_{G,\pm\wt\y}(\un X,R)\isom\Bun_{G}(X)_{\y}.
\end{equation}
\end{lemma}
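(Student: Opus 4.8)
\emph{Overall approach.} The plan is to deduce the lemma from the smooth case, Proposition \ref{p:Bun on tw}, by passing to the normalization of $X$ and then gluing along the twisted nodes. Let $\nu':\wt X\to X$ be the normalization of $X$; it is a smooth DM curve whose coarse curve is the normalization $\wt{\un X}$ of $\un X$, and whose orbifold points are the two preimages $p^{+},p^{-}$ of each $p\in R$, each carrying a canonical isomorphism $\Aut_{\wt X}(p^{\pm})\simeq\mu_{k_{p}}$ coming from the corresponding analytic branch of $X$ at $p$ (for $p^{+}$ this is the $+$-branch fixed in \S\ref{sss:tw nodal X}). Comparing with the $\mu_{k_{p}}$-action on the tangent lines $T^{\pm}_{p}X$ described there, a $G$-torsor on $X$ of type $\y_{p}$ at $p$ restricts on $\wt X$ to a $G$-torsor of type $\wt\y_{p}$ at $p^{+}$ and of type $-\wt\y_{p}$ at $p^{-}$, the sign recording that $\mu_{k_{p}}$ acts on $T^{-}_{p}X$ through the inverse of the tautological character. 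Writing $\y^{\pm}$ for the resulting type on $\wt X$, restriction along $\nu'$ gives a morphism $\Bun_{G}(X)_{\y}\to\Bun_{G}(\wt X)_{\y^{\pm}}$.

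\emph{Step 1: descent for the twisted curve.} First I would establish that $\Bun_{G}(X)_{\y}$ is reconstructed from $\Bun_{G}(\wt X)_{\y^{\pm}}$ by gluing: a $G$-torsor on $X$ is the same datum as a $G$-torsor on $\wt X$ together with, for each $p\in R$, an isomorphism between its fiber at $p^{+}$ and its fiber at $p^{-}$, the two preimages being compared by the canonical identification of the relevant components of $\cY_{k_{p}}$ furnished by the $\pm$-branch trivializations (under which type $\wt\y_{p}$ is matched with type $-\wt\y_{p}$; this is the canonical $L'_{\wt\y_{p}}\simeq L'_{-\wt\y_{p}}$ used to define $\bP'_{\pm\wt\y}$). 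This is \'etale descent along the diagram exhibiting $X$ as $\wt X$ glued to $\coprod_{p\in R}\BB\mu_{k_{p}}$ along $\coprod_{p\in R}(p^{+}\sqcup p^{-})$: since $X$ is \'etale-locally the standard twisted node $(\Spec \CC\tl{u,v}/(uv))/\mu_{k_{p}}$, and a $G$-torsor on such a node is the same as a pair of $G$-torsors on the two branches plus an identification of their restrictions to the central $\BB\mu_{k_{p}}$, this is a routine Beauville--Laszlo / Ferrand-pushout argument carried out in the DM setting.

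\emph{Step 2: matching the parahoric side.} By the definition recalled in \S\ref{sss:tw nodal X}, $\Bun_{G,\pm\wt\y}(\un X,R)$ has an entirely parallel description, with the coarse normalization $\wt{\un X}$ in place of $\wt X$ and parahoric level data in place of orbifold structure: it classifies $G$-bundles on $\wt{\un X}$ with $\bP'^{+}_{\wt\y_{p}}$-level at $p^{+}$ and $\bP'^{-}_{-\wt\y_{p}}$-level at $p^{-}$, together with an isomorphism of the induced $L'_{\wt\y_{p}}\simeq L'_{-\wt\y_{p}}$-torsors at $p^{+},p^{-}$. Applying Proposition \ref{p:Bun on tw} to the smooth DM curve $\wt X$ — whose coarse curve is $\wt{\un X}$ with orbifold points $\{p^{+}\}\cup\{p^{-}\}$ and types $\y^{\pm}$ — yields a canonical isomorphism $\Bun_{G,\y^{\pm}}(\wt{\un X},\{p^{\pm}\})\isom\Bun_{G}(\wt X)_{\y^{\pm}}$ over the products of the relevant gerbes, intertwining \eqref{level to cY} and \eqref{eta res tw nodes}. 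Under the canonical identifications $L'_{\wt\y_{p}}\simeq C_{G}(\wt\y_{p})$, these gerbes are exactly the components of $\cY_{k_{p}}$ entering Step 1, and the two gluing data are carried one onto the other; hence the corresponding glued stacks $\Bun_{G,\pm\wt\y}(\un X,R)$ and $\Bun_{G}(X)_{\y}$ are canonically identified, which is \eqref{Bun on tw nodal}. By construction this identification is a morphism over $\prod_{p\in R}\cY_{k_{p},\y_{p}}$, and it is independent of the choices of $T$ and of the liftings $\wt\y_{p}$ by the right-translation argument of \S\ref{sss:parah}.

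\emph{Main obstacle.} The delicate points are Step 1 — proving that forming the moduli stack of $G$-bundles genuinely commutes with the pushout presenting the twisted nodal curve from its normalization, as an equivalence of stacks rather than merely on points — together with keeping the various copies of $\mu_{k_{p}}$, the branch choices, and the type flip $\wt\y_{p}\leftrightarrow-\wt\y_{p}$ consistently bookkept across $\Bun_{G}(X)$, $\Bun_{G}(\wt X)$ and the parahoric stack, so that the identifications used in Step 2 really are the tautological ones. Everything else is formal once this descent statement is in place.
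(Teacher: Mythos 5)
Your proposal is correct and follows essentially the same route as the paper: normalize $X$, apply Proposition \ref{p:Bun on tw} to the smooth DM curve $\wt X$ (with types $\y_p$ at $p^{+}$ and $-\y_p$ at $p^{-}$), and recover both sides of \eqref{Bun on tw nodal} by gluing at the nodes, which the paper phrases as base change along the graphs of the canonical isomorphisms $\cY_{k_p,\y_p}\simeq\cY_{k_p,-\y_p}$ given by $\zeta\mapsto\zeta^{-1}$. The only difference is emphasis: the descent statement you isolate as the "main obstacle" in Step 1 (bundles on $X$ = bundles on $\wt X$ plus identifications over the node gerbes) is taken as immediate ("by definition") in the paper rather than argued via a Beauville--Laszlo/pushout discussion.
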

\begin{proof}
Let $\t: \wt X\to X$ be the normalization. There is a natural map $\wt\nu: \wt X\to \wt{\un X}$ realizing $\wt{\un X}$ as the coarse moduli space of $\wt X$. The set of nodes of $\wt {\un X}$ is $\wt R=R^{+}\coprod R^{-}$ where $R^{+}=\{p^{+}; p\in R\}$ and $R^{-}=\{p^{-}; p\in R\}$. Let $\pm\y$ denote the element of $\prod_{p^{\pm}\in \wt R}\YY_{k_{p}}$ that assigns $\y_{p}$ to  $p^{+}$ and $-\y_{p}$ to $p^{-}$. Then $\Bun_{G, \pm\y}(\wt{\un X}, \wt R)$ is defined according to \S\ref{sss:parah}.

By Proposition~\ref{p:Bun on tw}, we have a canonical isomorphism over $\prod_{p\in R}(\cY_{k_{p}, \y_{p}}\times \cY_{k_{p}, -\y_{p}})$
\begin{equation}\label{Bun wt X}
\Bun_{G, \pm\y}(\wt{\un X}, \wt R)\isom \Bun_{G}(\wt X)_{\pm\y}
\end{equation}

Note there is a canonical isomorphism $\cY_{k_{p}, \y_{p}}\simeq\cY_{k_{p}, -\y_{p}}$ given by the automorphism $\z\mt\z^{-1}$ of $\mu_{k_{p}}$. Let $\D^{-}_{p}:\cY_{k_{p}, \y_{p}}\to \cY_{k_{p}, \y_{p}}\times \cY_{k_{p}, -\y_{p}}$ be the graph of this isomorphism. By definition, both sides of \eqref{Bun on tw nodal} are obtained from the corresponding sides of \eqref{Bun wt X} via base change along
\begin{equation*}
(\D^{-}_{p})_{p\in R}: \prod_{p\in R}\cY_{k_{p}, \y_{p}}\to \prod_{p\in R}(\cY_{k_{p}, \y_{p}}\times \cY_{k_{p}, -\y_{p}}).
\end{equation*}
We get the desired isomorphism \eqref{Bun on tw nodal} from \eqref{Bun wt X}.

\end{proof}

\subsection{Family version}
The results from previous subsections can be generalized to families of curves. We state a version that allows both  smooth orbifolds and nodal orbifold points. 

\sss{Twisted nodal family} Let $\pi: \frX\to B$ be a proper relatively Deligne-Mumford map of algebraic stacks. Suppose $\pi$ is flat of relative dimension $1$. Let $\frX^{sm}$ be the locus where $\pi$ is smooth. Let $\Sigma$ be a finite collection of closed embeddings $\s: B_{\s}=B\times \BB\mu_{k_{\s}}\incl \frX^{sm}$ (over $B$) for some $k_{\s}\in \NN$. Suppose $\frX-\frX^{sm}$ is a finite disjoint union $\coprod_{r\in R}B_{r}$ where $B_{r}\cong B\times \BB\mu_{k_{r}}$ for some $k_{r}\in \NN$. Assume the formal completion of $\frX$ along $B_{r}$ is locally isomorphic to $(\Spec A\tl{u,v})/\mu_{k_{r}}\to \Spec A$ (where $\z\in \mu_{k_{r}}$ acts by $\z\cdot (u,v)=(\z u,\z^{-1}v)$).  We can canonically identify $B_{\s}$ with $B\times \BB\mu_{k_{\s}}$ for $\s\in \Sigma$ by using the relative tangent bundle of $\pi$ (as we did in \S\ref{sss:DM X}); similarly, by choose a $+$-local branch along $B_{r}$, we  can canonically identify $B_{r}$ with $B\times \BB\mu_{k_{r}}$ for $r\in R$ (as we did in \S\ref{sss:tw nodal X}).

Let $\y_{\Sigma}=(\y_{\s})_{\s\in \Sigma}\in \prod_{\s\in \Sigma}\YY_{k_{\s}}$ and $\y_{R}=(\y_{r})_{r\in R}\in \prod_{r\in R}\YY_{k_{r}}$. We have the moduli stack $\Bun_{G}(\pi)$ classifying $G$-bundles along the fibers of  $\pi$. We also have its open-closed substack $\Bun_{G}(\pi)_{\y_{\Sigma},\y_{R}}$ by fixing the types of the bundle along $B_{\s}$ and $B_{r}$ to be $\y_{\s}$ and $\y_{r}$.

\sss{Level structure on the coarse family} Let $\un\pi: \un \frX\to B$ be the relative coarse moduli space (i.e., base change along any map $B'\to B$ with $B'$ a scheme, $\un\frX_{B'}$ is the coarse moduli space of $\frX_{B'}$). Let $\wt{\un\pi}: \wt{\un \frX}\to B$ be the fiberwise normalization of $\un\frX$. Let $\un B_{\s}$ and $\un B_{r}$ be the image of $B_{\s}$ and $B_{r}$  in $\un\frX$, and $\un B^{+}_{r}\coprod\un B^{-}_{r}$ be the preimages of $\un B_{r}$ in $\wt{\un \frX}$ for $r\in R$.

By choosing a maximal torus $T\subset G$ and liftings $\wt\y_{\Sigma}\in \op_{\s\in \Sigma}\frac{1}{k_{\s}}\xcoch(T)$ and $\wt\y_{R}\in \op_{r\in R}\frac{1}{k_{r}}\xcoch(T)$, we may form the moduli stack (over $B$)
\begin{equation*}
\Bun_{G, \bP'_{\wt\y_{\Sigma}}, \bP'_{\pm\wt\y_{R}}}(\un \pi, \Sigma, R)
\end{equation*}
classifying $G$-bundles along the fibers of $\wt{\un\pi}: \wt{\un \frX}\to B$ with $\bP'_{\wt\y_{\s}}$-level structure along  $\un B_{\s}$ for $\s\in \Sigma$, $\bP'_{\pm\wt\y_{r}}$-level structure along  $\un B^{\pm}_{r}$ for $\s\in \Sigma$, and an isomorphism between the induced $L'_{\wt\y_{r}}$-torsors along $\un B^{+}_{r}$ and $\un B^{-}_{r}$ (both isomorphic to $\un B_{r}\cong B$). Again this moduli stack is canonically independent of the choices of $T, \wt\y_{\Sigma}$ and $\wt\y_{R}$, and we get a canonical moduli stack
\begin{equation*}
\Bun_{G, \y_{\Sigma}, \pm\y_{R}}(\un \pi, \Sigma, R)
\end{equation*}
over $\prod_{\s\in \Sigma}\cY_{k_{\s},\y_{\s}}\times \prod_{r\in R}\cY_{k_{r},\y_{r}}$.

\begin{prop}\label{p:Bun tw family}
For any $\y_{\Sigma}\in \prod_{\s\in \Sigma}\YY_{k_{\s}}$ and $\y_{R}\in \prod_{r\in R}\YY_{k_{r}}$, there is a canonical isomorphisms of stacks over $\prod_{\s\in \Sigma}\cY_{k_{\s},\y_{\s}}\times \prod_{r\in R}\cY_{k_{r},\y_{r}}$
\begin{equation*}
\Bun_{G, \y_{\Sigma}, \pm\y_{R}}(\un \pi, \Sigma, R)\isom\Bun_{G}(\pi)_{\y_{\Sigma},\y_{R}}.
\end{equation*}
\end{prop}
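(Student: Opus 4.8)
The plan is to run the proofs of Proposition \ref{p:Bun on tw} and Lemma \ref{l:Bun on tw nodal} relative to the base $B$; the only genuinely new input needed is the uniformization of the relevant moduli stacks in families.

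\textbf{Reduction to the smooth case.} First I would reduce to $R=\vn$ exactly as in the proof of Lemma \ref{l:Bun on tw nodal}. Let $\wt\pi:\wt\frX\to B$ be the fiberwise normalization of $\pi$ along the nodal sections $\coprod_{r\in R}B_r$; it is again a proper flat relatively Deligne--Mumford family, now of smooth curves, with orbifold sections $\Sigma\sqcup\wt R$ where $\wt R=R^+\sqcup R^-$ records the two preimages of each $B_r$ together with the chosen $+$-branch, and its relative coarse space is the fiberwise normalization $\wt{\un\frX}$ of $\un\frX$. By definition $\Bun_{G,\y_\Sigma,\pm\y_R}(\un\pi,\Sigma,R)$ is obtained from the smooth-case stack $\Bun_{G,\y_\Sigma,(\y_r)\sqcup(-\y_r)}(\wt{\un\pi},\Sigma,\wt R)$ by base change along the product of the graphs $\D^-_r:\cY_{k_r,\y_r}\to\cY_{k_r,\y_r}\times\cY_{k_r,-\y_r}$ of the inversion isomorphisms $\mu_{k_r}\to\mu_{k_r}$, and likewise $\Bun_G(\pi)_{\y_\Sigma,\y_R}$ is obtained from $\Bun_G(\wt\pi)_{\y_\Sigma,(\y_r)\sqcup(-\y_r)}$ by the same base change. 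So it suffices to treat the case $R=\vn$.

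\textbf{The smooth family case.} Here I would generalize the three steps of the proof of Proposition \ref{p:Bun on tw}. Fix a maximal torus $T\subset G$ and lifts $\wt\y_\Sigma\in\bigoplus_{\s\in\Sigma}\tfrac1{k_\s}\xcoch(T)$. Since each $B_\s$ is canonically identified with $B\times\BB\mu_{k_\s}$ via the relative tangent bundle, the formal neighborhood of $B_\s$ in $\frX$ is \'etale-locally on the base of the form $(\Spec A\tl{u})/\mu_{k_\s}\to\Spec A$; hence the relative twisted affine Grassmannian along $B_\s$, the canonical point $e_\Gr(\wt\y_\Sigma)$ of \eqref{eGr}, and the relative Bruhat--Tits group scheme $\cG_{\wt\y_\Sigma}$ over $\un\frX$ (equal to $G$ away from $\un B_\Sigma$ and to $\bP'_{\wt\y_\s}$ along $\un B_\s$) are all obtained by base change from their absolute counterparts, the local model being constant over $B$. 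Building the universal twisting torsor $\cE_{\wt\y_\Sigma}$ on $\frX$ as in the proof of Proposition \ref{p:Bun on tw} --- trivial away from $\bigcup_\s B_\s$ and equal to the image of $e_\Gr(\wt\y_\Sigma)$ near $B_\s$, carrying a commuting left $\nu^*\cG_{\wt\y_\Sigma}$-action for $\nu:\frX\to\un\frX$ --- I would define
\begin{equation*}
\ph:\Bun_{G,\bP'_{\wt\y_\Sigma}}(\un\pi,\Sigma)\to\Bun_G(\pi)_{\y_\Sigma},\qquad \cF\mapsto\nu^*\cF\twtimes{\nu^*\cG_{\wt\y_\Sigma}}\cE_{\wt\y_\Sigma},
\end{equation*}
compatibly with arbitrary base change $B'\to B$ and with the pointwise map of Proposition \ref{p:Bun on tw}.

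\textbf{Conclusion and the main obstacle.} To see $\ph$ is an isomorphism I would exhibit uniformizations of both sides in families: writing $U=\un\frX\setminus\un B_\Sigma$, the stack $\Bun_{G,\bP'_{\wt\y_\Sigma}}(\un\pi,\Sigma)$ is uniformized by the relative affine flag space $\prod_\s G(\cK_\s)/\bP'_{\wt\y_\s}$ classifying such bundles together with a trivialization over $U$, and $\Bun_G(\pi)_{\y_\Sigma}$ by $\prod_\s(\Gr^{(k_\s)}_\s)^{\mu_{k_\s}}_{\y_\s}$ with the same extra datum; the base-independent group-theoretic isomorphism \eqref{Gr fixed pt} then intertwines these two presentations and, by construction of $\cE_{\wt\y_\Sigma}$, descends to $\ph$, so $\ph$ is an isomorphism. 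Independence of $\ph$ (hence of the stack $\Bun_{G,\y_\Sigma,\pm\y_R}(\un\pi,\Sigma,R)$) from the choices of $T$ and $\wt\y$ follows verbatim from the double-coset argument of \S\ref{sss:parah}, now carried out over $B$. I expect the main obstacle to be precisely the relative uniformization step: one must check that $\Bun_G$ of a family of (twisted) curves is still uniformized by the relevant affine flag varieties, i.e.\ that a $G$-bundle on $\un\frX_{B'}$ is trivial fppf-locally on $B'$ after restriction to $U_{B'}$ --- a relative Drinfeld--Simpson statement. Granting this, everything else is a routine relativization of \S\S\ref{sss:parah}--\ref{sss:unif tw} together with the reduction of Lemma \ref{l:Bun on tw nodal}.
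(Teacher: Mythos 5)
Your proposal follows essentially the same route as the paper, whose own proof is just the one-line remark that the argument of Proposition \ref{p:Bun on tw} and Lemma \ref{l:Bun on tw nodal} goes through in families using uniformization along the sections $\un B_{\s}$ and $\un B^{\pm}_{r}$; your reduction via fiberwise normalization, the relative twisting torsor, and the relative uniformization are exactly that argument spelled out. The relative Drinfeld--Simpson-type triviality you flag as the main obstacle is indeed the implicit input the paper also relies on, so there is no divergence in approach.
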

The proof is essentially the same as that of Proposition \ref{p:Bun on tw} and Lemma \ref{l:Bun on tw nodal}, using uniformization along the sections $\un B_{\s}$ and $\un B^{\pm}_{r}$.


\section{Universal nilpotent cone} 

The automorphic category  in Betti geometric Langlands consists  of sheaves on $\Bun_{G}(X)$   with singular support in the global nilpotent cone $\cN \subset T^*\Bun_G(X)$, i.e. the zero fiber of the Hitchin map. Since we will consider families of curves in this paper, we will need a family version of the global nilpotent cone. 

Suppose $\pi: \frX\to S$ is a smooth projective family of curves over a smooth base stack $S$. Let $\Pi: \Bun_{G}(\pi)\to S$ be the corresponding moduli stack of $G$-bundles along fibers of $\pi$. In this section, we will define a closed conic Lagrangian $\wt\cN_{\Pi}\subset T^{*}\Bun_{G}(\pi)$, called the {\em universal nilpotent cone},  such that its restriction to each fiber $T^{*}\Bun_{G}(\frX_{s})$ ($s\in S$) is the global nilpotent cone of $\Bun_{G}(\frX_{s})$. 


\subsection{Hitchin system and global nilpotent cone}

We recall here some well-known Lie theory ``over a point", its traditional generalization over a  smooth  curve, and then its natural  generalization  over  a  relative DM curve.


\subsubsection{Adjoint quotient} Let $G$ be a reductive group, $B\subset G$ a Borel subgroup, with unipotent radical $N = [B, B]$, and Cartan quotient $H = B/N$. Let $\frg$, $\frb$, $\frn$, and $\frh$ denote the respective Lie algebras. Let $W$ denote the Weyl group, and $\frc = \frh\quot W =  \Spec \cO(\frh)^W$.

We have  the characteristic polynomial map
$$
\xymatrix{
\chi:\frg/(G\times \Gm)  \ar[r] & (\frg\quot G)/\Gm \simeq (\frh\quot W)/\Gm=\frc/\Gm
}
$$ 
where the $G$-action on $\frg$ is the adjoint action, and the $\Gm$-action  is by  scaling. 
Note the $\Gm$-action on $\frh$ is also by  scaling, but 
 the weights of the induced $\Gm$-action on the affine space $\frc$ are the degrees of $W$.
 
Let $\wt \frg \to\frg$ be the Grothendieck-Springer alteration, and recall the canonical isomorphism
$\wt \frg/G \simeq \frb/B$ of adjoint quotients. We have the ordered eigenvalue map 
$$
\xymatrix{
\wt \chi:\wt \frg/(G \times \Gm) \simeq \frb/(B\times \Gm)  \ar[r] & \frh/\Gm 
}
$$ 
induced by $\frb\to \frh = \frb/\frn$.

%
%
%
%
%


\subsubsection{Marked smooth curves}\label{sss:glob nilp cone}

Let $X$ be a smooth  projective curve, and $\omega_X$  the canonical bundle of $X$.

Let $\Sigma\subset X$ be a finite set of closed points, and $\omega_X(\Sigma)$  the sheaf of $1$-forms with possibly simple poles at $\Sigma$. We will use the canonical isomorphism $\Res_{\Sigma}: \omega_X(\Sigma)|_\Sigma \simeq \cO_\Sigma$ given by taking the residues.

For a $G$-bundle $\cE$ over $X$ and a representation $V$ of $G$,  let $\cE(V)$ be the associated vector bundle on $X$.

For a scheme $Y$ with $\Gm$-action, let $Y_{\om_{X}(\Sigma)}\to X$ denote the twist of $Y$ using the $\Gm$-torsor associated to $\om_{X}(\Sigma)$. Then the restriction of $Y_{\om_{X}(\Sigma)}$ over $\Sigma$ has a canonical trivialization $Y\times \Sigma$ using the residue map.

Consider the moduli stack
$$
\xymatrix{ 
\Bun_{G, N}(X, \Sigma) := \Maps((X, \Sigma), (\BB G, \BB N))
}
$$
classifying pairs $(\cE, \cE_{\Sigma,N})$ of a $G$-bundle $\cE$ on $X$ with an $N$-reduction $\cE_{\Sigma,N}$ along $\Sigma$.

 Consider the moduli stack of Higgs bundles
$$
\xymatrix{
 \operatorname{Higgs}_{G, N}(X, \Sigma) = \Maps ((X, \Sigma), (\frg_{\om_{X}(\Sigma)}/(G\times \Gm),
  \frb/N)) 
}$$
classifying data $(\cE, \cE_{\Sigma,N}, \phi, \phi_{\Sigma})$ of a pair $(\cE, \cE_{\Sigma,N}) \in \Bun_{G, N}(X, \Sigma)$ along with a Higgs field 
$$
\xymatrix{
\phi\in H^0(X, \cE(\frg) \otimes \omega_X(\Sigma))
}
$$
and 
$$
\xymatrix{
\phi_\Sigma=\Res_{\Sigma}\phi  \in H^0(\Sigma, \cE_{\Sigma,N}(\frb))
}
$$
Note that the datum of $\phi_{\Sigma}$ is determined by $\phi$, and the existence of $\phi_{\Sigma}$ imposes a linear condition on $\Res_{\Sigma}\phi$.

The Killing form on $\frg$ gives a $G$-invariant identification $\frg^*\simeq \frg$, with induced identification $(\frg/\frn)^* \simeq \frb$. Serre duality provides  
 an isomorphism
$$
\xymatrix{
 T^*\Bun_{G, N}(X, \Sigma)  \simeq \operatorname{Higgs}_{G, N}(X, \Sigma) 
}$$
 
Consider the Hitchin base
$$
\xymatrix{
A_{G, N}(X, \Sigma)= 
\Maps((X, \Sigma), (\frc_{\omega_X(\Sigma)}, \frh))
}
$$ 
classifying pairs $(a, a_\Sigma)$ of a section
$a\in H^0(X,  \frc_{\omega_X(\Sigma)})$ with a lift of its residue $a_\Sigma\in H^0(\Sigma, \frh)$.

 Applying the characteristic polynomial map $\chi$ and ordered eigenvalue map $\wt \chi$ to Higgs fields provides the Hitchin system
 $$
\xymatrix{
H:T^*\Bun_{G, N}(X, \Sigma)  \simeq  \operatorname{Higgs}_{G, N}(X, \Sigma)  \ar[r] &  A_{G, N}(X, \Sigma)
}
$$ 

The  {\em global nilpotent cone} is defined to be the zero-fiber  
$$
\cN =\cN_{X,\Sigma}^{N}= H^{-1}(0) \subset T^*\Bun_{G, N}(X, \Sigma)
$$ 

When $\Sigma=\vn$, it is proved by Laumon \cite{Lau} (in type $A$) and Ginzburg \cite{Gin} (in general) that $\cN$ is a closed, conic Lagrangian.  The same is true when $\Sigma\ne\vn$, as we will show in Lemma \ref{l:two cones B} and Corollary \ref{c:NN}.

%

%
%


\subsubsection{Marked relative DM curves}\label{sss:rel Hitchin}

Let $S$ be a smooth base stack, and $\pi:\frX\to S$   a flat proper Gorenstein relative DM curve. 
Assume the locus $\frX^0 \subset \frX$ where both $\pi$ is  schematic and $\pi$ is smooth is dense in each fiber of $\pi$. Let $\sigma_a\subset \frX^0$, $a\in \Sigma$, be a finite collection of disjoint closed substacks \'etale over $S$. We will treat all  sections at once and so set $\sigma = \cup_{a\in \Sigma} \sigma_a$.
Let $\omega_\pi$ be the relative dualizing sheaf of $\pi$ (which is a line bundle by the Gorenstein assumption), and $\omega_\pi(\sigma)$ the twisting allowing simple poles along $\s$.

Given a map of stacks $s:S'\to  S$, with base change
$\pi_s:\frX_s = \frX \times_S S'\to S'$, $\sigma_s = \sigma \times_S S'$, we have  canonical isomorphisms $\omega_{\pi}|_{\frX_s} \simeq \omega_{\pi_s }$,
 $\omega_{\pi}(\sigma)|_{\frX_s} \simeq \omega_{\pi_s}(\sigma_s)$.
 Note also the canonical isomorphism $\omega_{\pi}(\sigma)|_\sigma \simeq \cO_\sigma$.

Consider the moduli stack $\Bun_{G, N}(\pi, \sigma)$ whose $S'$-points 
classifies triples $(s, \cE, \cE_{\sigma, N})$ where $s:S'\to S$, $\cE$ is a $G$-bundle on $\frX_s = \frX \times_S S'$,
and $\cE_{\sigma, N}$ is an $N$-reduction of $\cE$ along $\sigma_s = \sigma \times_S S'$.

Consider the Higgs  moduli
$
 \operatorname{Higgs}_{G, N}(\pi, \sigma)
 $
classifying data $(s, \cE, \cE_{\sigma, N}, \ph, \ph_\sigma)$ of a triple $(s, \cE, \cE_{\sigma, N}) \in \Bun_{G, N}(\pi, \sigma)$ along with a Higgs field 
$$
\xymatrix{
\ph\in H^0(\frX_s, \cE(\frg)\otimes \omega_{\pi_s}(\sigma_s))
}
$$
and a lift of its residue
$$
\xymatrix{
\ph_\sigma =\Res_{\s}\ph \in H^0(\sigma_s, \cE_{\sigma, N}(\frb)).
}
$$

Let $\Pi: \Bun_{G, N}(\pi, \sigma)\to S$  denote the natural projection. Consider projection to the relative cotangent bundle
$$
\xymatrix{
P_\Pi:T^*\Bun_{G, N}(\pi, \sigma) \ar[r] & T^*_\Pi = T^*\Bun_{G, N}(\pi, \sigma)/\Pi^*(T^*S)
}
$$ 
Note $T^*\Bun_{G, N}(\pi, \sigma)$ and  $T^*_\Pi $ are not in general vector bundles, but  the vector bundle $\Pi^*(T^*S)$ acts freely on $T^*\Bun_{G, N}(\pi, \sigma)$, and $T^*_\Pi $ is the corresponding quotient.
Using Killing form to identify $(\frg/\frn)^*$ with $\frb$, and Grothendieck-Serre  duality provides  
 an isomorphism
$$
\xymatrix{
 T^*_\Pi   \simeq \operatorname{Higgs}_{G, N}(\pi, \sigma) 
}$$

Let $\frc_{\omega_{\pi}(\sigma)}$ denote the $\omega_\pi(\sigma)$-twist of $\frc$ (a scheme over $\frX$), and  consider the Hitchin base $ A_{G, N}(\pi, \sigma) $ 
whose $S'$-points classify triples $(s, a, a_\sigma)$ where $s:S'\to S$, and
$a\in H^0(\frX_s,  \frc_{\omega_{\pi_s}(\sigma_s)})$ with a lift of its residue $a_\sigma\in H^0(\sigma_s, \frh)$.

 Applying the characteristic polynomial map $\chi$ and ordered eigenvalue map $\wt \chi$ to Higgs fields provides the Hitchin system
 $$
\xymatrix{
H_\Pi: T^*_\Pi   \simeq  \operatorname{Higgs}_{G, N}(\pi, \sigma)  \ar[r] &  A_{G, N}(\pi, \sigma)
}
$$

The {\em relative global nilpotent cone} is defined to be the zero-fiber  in the relative cotangent bundle
$$
 \cN_\Pi = H_\Pi^{-1}(0) \subset T^*_\Pi
$$ 
Its fibers over closed points $s\in S$ recover the global nilpotent cone for $\Bun_{G,N}(X_{s},\s_{s})$ defined in \S\ref{sss:glob nilp cone}.

The {\em total global nilpotent cone} is defined to be the inverse image in the cotangent bundle
$$
\xymatrix{
\cN^+_{\Pi}= P^{-1}_\Pi(\cN_{\Pi}) \subset T^*\Bun_{G, N}(\pi, \sigma).
}
$$
It is  closed, conic, and coisotropic.

\subsection{Eisenstein cone}\label{ss:Eis}
Suppose $\pi: \frX\to S$ is a smooth projective family of curves over a smooth base stack $S$. Let $\Pi: \Bun_{G}(\pi)\to S$ be the corresponding family of relative bundles,  $T^{*}_{\Pi} = T^*\Bun_{G}(\pi)/ \Pi^*(T^*S)$ the relative cotangent bundle of $\Pi$, and $\cN_\Pi = H^{-1}(0) \subset T^{*}_{\Pi} $ the global relative nilpotent cone. We would like to define a closed conic Lagrangian $\wt\cN_{\Pi}\subset T^{*}\Bun_{G}(\pi)$ such that $\wt \cN_{\Pi}$ maps isomorphically to  $\cN_{\Pi}$
 under the natural projection
\begin{equation}\label{ppi}
\xymatrix{
P_{\Pi}:  T^{*}\Bun_{G}(\pi)\ar[r] &  T^{*}_{\Pi}.
}
\end{equation}

In the discussion below, we will freely use notations introduced in Appendix \ref{sss:Lag mfd} on transportation of Lagrangians.


\sss{A single curve} 

To motivate the definition of $ \wt\cN_{\Pi}$, we first reformulate the global nilpotent cone $\cN$ for a single curve.

Consider first a smooth connected projective curve $X$ over $\CC$. Let $ p: \Bun_{B}(X)\to \Bun_{G}(X)$ be the  induction map  and consider the associated Lagrangian correspondence
\begin{equation*}
\xymatrix{T^{*}\Bun_{B}(X) &  \Bun_{B}(X)\times_{ \Bun_{G}(X)}T^{*} \Bun_{G}(X)\ar[l]_-{dp}\ar[r]^-{p^{\na}} & T^{*}\Bun_{G}(X) }
\end{equation*}


\begin{defn} Define  the Eisenstein cone $\cN^{\Eis}_{X}\subset T^{*}\Bun_{G}(X)$ to be
\begin{equation*}
\cN^{\Eis}_{X}=\orr{p}(0_{\Bun_{B}(X)}) = p^{\na}((dp)^{-1}(0_{\Bun_{B}(X)})).
\end{equation*}
\end{defn}

\begin{lemma}\label{l:two cones} The subset $\cN^{\Eis}_{X}$ of $T^{*}\Bun_{G}(X)$ coincides with the global nilpotent cone $\cN_{X}$,  i.e.~the zero-fiber of the Hitchin map. In particular, $\cN^{\Eis}_{X}$ is a closed conic Lagrangian of $T^{*}\Bun_{G}(X)$.
\end{lemma}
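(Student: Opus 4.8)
The plan is to identify both $\cN^{\Eis}_X$ and the global nilpotent cone $\cN_X$ pointwise over $\Bun_G(X)$ by working with the Higgs-bundle description of the cotangent space. Fix a $G$-bundle $\cE$ on $X$. By Serre duality, a cotangent vector to $\Bun_G(X)$ at $\cE$ is a Higgs field $\ph \in H^0(X, \cE(\frg)\otimes\omega_X)$, and $\ph\in\cN_X$ iff $\chi(\ph)=0$, i.e.\ $\ph$ is everywhere nilpotent as a section of the adjoint bundle twisted by $\omega_X$. On the other hand, the fiber of $\Bun_B(X)\times_{\Bun_G(X)}T^*\Bun_G(X)$ over $\cE$ together with a $B$-reduction $\cE_B$ consists of such $\ph$; the condition $dp(\ph, \cE_B) \in 0_{\Bun_B(X)}$ says precisely that $\ph$ annihilates the tangent space to the fiber of $p$, which unwinds (again by Serre duality, using $(\frg/\frb)^* \simeq \frn$ under the Killing form) to the statement that $\ph$ lies in the sub-bundle $\cE_B(\frb)\otimes\omega_X \subset \cE(\frg)\otimes\omega_X$. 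Thus $\cN^{\Eis}_X$ is the set of Higgs fields $\ph$ admitting \emph{some} $B$-reduction $\cE_B$ of $\cE$ with $\ph \in H^0(X, \cE_B(\frb)\otimes\omega_X)$.

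So the identification $\cN^{\Eis}_X = \cN_X$ amounts to: a Higgs field $\ph$ is everywhere nilpotent if and only if it admits a global $B$-reduction with respect to which it is ``upper-triangular''. The implication $\Leftarrow$ is immediate, since any section of $\cE_B(\frn)\otimes\omega_X$ is visibly everywhere nilpotent (nilpotency is detected on the Borel, as $\frn = [\frb,\frb]$ is the set of nilpotents in $\frb$ — more precisely $\frb$ maps to $\frc$ via the composite $\frb\to\frh\to\frc$, which kills $\frn$). Wait — one must be slightly careful: $\ph\in\cE_B(\frb)\otimes\omega_X$ has $\chi(\ph)$ equal to the image of $\ph$ under $\frb\to\frh$, so $\ph$ nilpotent is equivalent to this image vanishing, i.e.\ $\ph\in\cE_B(\frn)\otimes\omega_X$; either way the condition $dp(\ph,\cE_B)=0$ together with $\chi(\ph)=0$ are compatible. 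For the implication $\Rightarrow$ — that every everywhere-nilpotent $\ph$ admits such a reduction — I would invoke the argument of Ginzburg~\cite{Gin} (and Laumon~\cite{Lau} in type $A$): given a nilpotent Higgs field, one produces the reduction by taking, fiberwise, the canonical parabolic/Borel attached to the nilpotent element (e.g.\ via the kernel filtration, using that a nilpotent section of $\cE(\frg)\otimes\omega_X$ determines a reduction of structure group), and checking this glues to a global $B$-reduction. This is exactly the content of the cited results, so I would cite them rather than reprove.

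Once the set-theoretic equality $\cN^{\Eis}_X = \cN_X$ is established, the remaining assertions follow: $\cN_X$ is closed (it is the zero-fiber of the proper-over-the-base Hitchin map $H$, or directly a closed condition on Higgs fields), it is conic (both $\chi$ and the condition defining $\cN^{\Eis}$ are $\Gm$-equivariant for the scaling action), and it is Lagrangian by Laumon's and Ginzburg's theorem; alternatively, the Lagrangian property is transparent from the $\cN^{\Eis}$ description, since $\orr{p}$ of a Lagrangian (here the zero-section $0_{\Bun_B(X)}$, which is Lagrangian) along the Lagrangian correspondence attached to the map $p$ is again Lagrangian — this is the general transport principle recalled in Appendix~\ref{sss:Lag mfd}, provided one is careful about the non-properness and stacky issues, which is why the cleanest route is still to fall back on the cited closedness/Lagrangian results for $\cN_X$ itself.

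\textbf{Main obstacle.} The subtle point is the implication ``everywhere nilpotent $\Rightarrow$ globally $B$-reducible'': pointwise one only gets a Borel in each fiber, and one must argue these vary algebraically and descend to a global reduction (this is where connectedness/properness of $X$ and the structure theory of nilpotent elements enter). I expect to handle this by directly citing Ginzburg~\cite{Gin} and Laumon~\cite{Lau}, since reconstructing that argument is exactly what those references do; the genuinely new content here is merely the repackaging of $\cN_X$ as $\cN^{\Eis}_X$, which is what makes the family generalization in the next results possible.
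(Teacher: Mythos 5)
Your overall strategy is the same as the paper's: unwind $\cN^{\Eis}_X$ pointwise via Serre duality and the Killing form, get the easy containment from the description of $\ker(dp)$, and get the reverse containment from the existence of a global $B$-reduction adapted to a nilpotent Higgs field (Ginzburg/Laumon). However, there is one concrete error in your unwinding of the correspondence. The condition $dp(\cE_B,\ph)\in 0_{\Bun_B(X)}$ is that $\ph$ lies in the kernel of $\cohog{0}{X,\cE(\frg^{*})\ot\om_X}\to \cohog{0}{X,\cE_B(\frb^{*})\ot\om_X}$, and since $\ker(\frg^{*}\to\frb^{*})=(\frg/\frb)^{*}\simeq \frn$ under the Killing form, this says $\ph\in \cohog{0}{X,\cE_B(\frn)\ot\om_X}$ --- it is $\frn$-valued, not merely $\frb$-valued as you assert. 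The distinction is not cosmetic: the set of Higgs fields admitting some $B$-reduction for which they are $\frb$-valued is strictly larger than $\cN_X$ (e.g.\ a nonzero differential times a constant regular semisimple element of $\frh$ on the trivial bundle), so with your stated description the inclusion $\cN^{\Eis}_X\subset\cN_X$ would fail. Your ``Wait'' paragraph does not repair this, because $\cN^{\Eis}_X$ is defined purely as the transport of the zero section --- no condition $\chi(\ph)=0$ is available to intersect with; the whole point is that nilpotency comes out automatically once one knows $\ker(dp)$ is $\frn$-valued. Your own parenthetical $(\frg/\frb)^{*}\simeq\frn$ already contains the fix; you just need to draw the correct conclusion from it, which is exactly what the paper does.

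For the harder containment $\cN_X\subset\cN^{\Eis}_X$ you defer entirely to citing Ginzburg and Laumon; the paper instead writes the argument out, and the shape of that argument differs from your sketch. One cannot take the canonical Borel ``fiberwise'': the Jacobson--Morozov construction produces a canonical \emph{parabolic}, and only where the nilpotent element has constant type, so the paper works at the geometric generic point of $X$, uses canonicity to descend the parabolic to the generic point, extends the reduction $\cE_P$ uniquely over all of $X$, checks $\ph\in\cohog{0}{X,\cE_P(\frn_P)\ot\om_X}$, and only then chooses an arbitrary further reduction of $\cE_P$ to $B$ (which still contains $\ph$ in its $\frn$-part). If you intend to cite rather than reprove, that is consistent with the paper's remark that the statement is ``essentially in'' Ginzburg's proof; but your description of how the reduction is produced should be corrected accordingly. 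The closedness, conicity and Lagrangian assertions are then obtained, as you suggest, from the cited results for $\cN_X$ itself.
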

\begin{proof}
This is essentially in the proof of the main theorem of Ginzburg \cite{Gin}. 

First we show that $\cN^{\Eis}_{X}\subset\cN_{X}$. Let $\cE_{B}$ be a $B$-torsor over $X$ and let  $\cE$ be the induced $G$-torsor. The cotangent space of $\Bun_{B}(X)$ at $\cE_{B}$ is $\cohog{0}{X, \cE_{B}(\frb^{*})\ot\om_{X}}$. The differential of the induction map $p: \Bun_{B}(X)\to \Bun_{G}(X)$ at $\cE$ is induced by the restriction map $\cE(\frg^{*})=\cE_{B}(\frg^{*})\to \cE_{B}(\frb^{*})$ 
\begin{equation*}
\xymatrix{  dp:    \cohog{0}{X, \cE(\frg^{*})\ot\om_{X}} \ar[r] &     \cohog{0}{X, \cE_{B}(\frb^{*})\ot\om_{X}}
}
\end{equation*}
Under the Killing form on $\frg$, $\ker(\frg^{*}\to \frb^{*})$ is identified with 
$\frn$, Therefore $\ker(dp)_{\cE} = \cohog{0}{X, \cE_{B}(\frn)\ot\om_{X}}$, which visibly consists of nilpotent Higgs fields.

Now we prove the other inclusion $\cN_{X}\subset\cN^{\Eis}_{X}$. Let $(\cE,\ph)\in \cN_{X}\subset T^{*}\Bun_{G}$, where $\ph\in\cohog{0}{X,\cE(\frg^{*})\ot\om_{X}}$ is a nilpotent Higgs field. Restricting to the geometric generic point $\ov\eta$ of $X$, and choosing trivializations of $\cE_{\ov\eta}$ and $\om_{X,\ov\eta}$, $\ph_{\ov\eta}$ gives a nilpotent element in the Lie algebra $\frg_{\ov\eta}$. By the Jacobson-Morozov theorem, there is a canonical parabolic subgroup $P$ of $G$ defined over the residue field $k(\ov\eta)$, such that $\ph_{\ov\eta}$ lies in its nilpotent radical. The canonicity of $P$  insures one can descend its conjugacy class to the generic point of $X$ and give a canonical reduction $\cE_{P}$ of $\cE$ (first at the generic point then extended to the whole curve uniquely) such that
\begin{equation*}
\ph\in\cohog{0}{X,\cE_{P}(\frn_{P})\ot\om_{X}}
\end{equation*}
Here $\frn_{P}$ is the nilpotent radical of $\Lie(P)$. Now let $\cE_{B}$ be an arbitrary reduction of $\cE_{P}$ to $B$, then $\ph$ lies in $\cohog{0}{X,\cE_{B}(\frn)\ot\om_{X}}$ as well, and hence $(\cE,\ph)\in \cN^{\Eis}_{X}$.
\end{proof}

\sss{A family of curves} 
Let $\pi: \frX\to S$ be a smooth projective family of curves over a smooth base $S$. Let $\frp: \Bun_{B}(\pi)\to \Bun_{G}(\pi)$ be the induction map. Note that $\Bun_{B}(\pi)\to S$ is also smooth. We have the  correspondence
\begin{equation}\label{Lag frp}
\xymatrix{T^{*}\Bun_{B}(\pi) & \Bun_{B}(\pi)\times_{\Bun_{G}(\pi)}T^{*}\Bun_{G}(\pi)\ar[l]_-{d\frp}\ar[r]^-{\frp^{\na}} & T^{*}\Bun_{G}(\pi)
}
\end{equation}

\begin{defn}\label{d:univ nilp cone} Define the universal nilpotent cone $\wt\cN_{\Pi}\subset T^{*}\Bun_{G}(\pi)$ to be
\begin{equation*}
\wt\cN_{\Pi}=\orr{\frp}(0_{\Bun_{B}(\pi)})=\frp^{\na}(d\frp)^{-1}(0_{\Bun_{B}(\pi)}).
\end{equation*}
\end{defn}

\begin{theorem}\label{th:Eis cone closed} The universal nilpotent cone $\wt\cN_{\Pi}$ is a closed conic Lagrangian inside $T^{*}\Bun_{G}(\pi)$.  The natural projection
\begin{equation*}
\xymatrix{
P_{\Pi}:  T^{*}\Bun_{G}(\pi)\ar[r] &  T^{*}_{\Pi} =  T^{*}\Bun_{G}(\pi)/\frp^*(T^*S)
}
\end{equation*}
  maps $\wt\cN_{\Pi}$  bijectively to the relative nilpotent cone $\cN_{\Pi}$.
\end{theorem}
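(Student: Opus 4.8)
The plan is to compare the Lagrangian correspondence \eqref{Lag frp} with its relative analogue over $S$, reduce the content fibrewise to the single-curve statement of Lemma \ref{l:two cones}, and use the smoothness of $\Bun_{B}(\pi)\to S$ to control the extra cotangent directions along the base. Since $\frp:\Bun_{B}(\pi)\to\Bun_{G}(\pi)$ is a morphism over $S$ and $\Bun_{B}(\pi)\to S$, $\Bun_{G}(\pi)\to S$ are both smooth, pullback of $1$-forms along $\frp$ is compatible with the short exact sequences $0\to\Pi^{*}T^{*}S\to T^{*}\Bun_{G}(\pi)\to T^{*}_{\Pi}\to 0$ and its analogue for $\Bun_{B}(\pi)$, and restricts to an isomorphism on the $\Pi^{*}T^{*}S$-terms. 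A snake-lemma argument then shows that at each $B$-bundle $\cE_{B}$ the projection $P_{\Pi}$ carries $\ker(d\frp^{*}_{\cE_{B}})$ isomorphically onto the kernel of the relative differential; hence $P_{\Pi}$ maps $\wt\cN_{\Pi}=\orr{\frp}(0_{\Bun_{B}(\pi)})$ onto the relative Eisenstein cone, and every point of the latter lifts. Moreover a covector in $\ker(d\frp^{*}_{\cE_{B}})$ that is pulled back from $S$ must vanish, so $\wt\cN_{\Pi}\cap\Pi^{*}T^{*}S=0_{\Bun_{G}(\pi)}$ and over a fixed $\cE_{B}$ the lift of a given relative covector is unique.

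Next I would identify the relative Eisenstein cone with $\cN_{\Pi}=H_{\Pi}^{-1}(0)$ and prove the lift is globally unique. The first point is Lemma \ref{l:two cones} applied to the fibres $\frX_{s}$: the easy inclusion is the pointwise computation $\ker(d\frp^{*}_{\cE_{B}})=H^{0}(\frX_{s},\cE_{B}(\frn)\otimes\om_{\pi_{s}})$, and the converse is Ginzburg's Jacobson--Morozov argument, producing over the geometric generic point of $\frX_{s}$ a canonical parabolic $\cP$ and a reduction $\cE_{\cP}$ of $\cE$ into whose nilpotent radical the Higgs field lies, then extending $\cE_{\cP}$ over $\frX_{s}$ --- a field-independent construction that applies verbatim in the family. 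For injectivity of $P_{\Pi}|_{\wt\cN_{\Pi}}$ at $\bar\xi\in\cN_{\Pi}$, factor $\frp$ through parabolic induction $\Bun_{B}(\pi)\to\Bun_{\cP}(\pi)\xrightarrow{\ \frq\ }\Bun_{G}(\pi)$ for the canonical $\cP=\cP(\bar\xi)$; then $\ker(d\frq^{*}_{\cE_{\cP}})\subseteq\ker(d\frp^{*}_{\cE_{B}})$ for every Borel refinement $\cE_{B}$ of $\cE_{\cP}$, so the unique lift of $\bar\xi$ inside $\ker(d\frq^{*}_{\cE_{\cP}})$ is the unique lift inside each such $\ker(d\frp^{*}_{\cE_{B}})$. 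Thus $\wt\cN_{\Pi}$ is the image of a section $s:\cN_{\Pi}\to T^{*}\Bun_{G}(\pi)$ of $P_{\Pi}$, which is \'etale-locally algebraic and glues by uniqueness, and $P_{\Pi}$ restricts to a bijection $\wt\cN_{\Pi}\isom\cN_{\Pi}$.

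Closedness and the Lagrangian property then follow formally. Conicity is clear from the $\Gm$-equivariance of \eqref{Lag frp}. Since $P_{\Pi}:T^{*}\Bun_{G}(\pi)\to T^{*}_{\Pi}$ is an affine bundle (in particular separated) and $\cN_{\Pi}=H_{\Pi}^{-1}(0)$ is closed in $T^{*}_{\Pi}$, the section $s$ is a closed immersion onto a closed subset, so $\wt\cN_{\Pi}=s(\cN_{\Pi})$ is closed in $T^{*}\Bun_{G}(\pi)$. Finally $\wt\cN_{\Pi}$ is the transport $\orr{\frp}(0_{\Bun_{B}(\pi)})$ of the Lagrangian zero-section through \eqref{Lag frp}, hence isotropic by the generalities on transportation of Lagrangians recalled in Appendix \ref{sss:Lag mfd}; and via $P_{\Pi}$ it is identified with $\cN_{\Pi}$, which is pure of dimension $\dim\Bun_{G}(\pi)=\frac12\dim T^{*}\Bun_{G}(\pi)$. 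Being closed, isotropic, and of half dimension, $\wt\cN_{\Pi}$ is Lagrangian.

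The main obstacle I expect is the global uniqueness of the lift, i.e.\ that $\wt\cN_{\Pi}$ is genuinely the graph of a section over $\cN_{\Pi}$ and not merely surjective onto it. A nilpotent Higgs field may lie in the nilpotent radical of Borel reductions that do \emph{not} refine the canonical Jacobson--Morozov parabolic reduction, and one must verify that all of these produce the same lift into $T^{*}\Bun_{G}(\pi)$; routing the argument through $\cE_{\cP}$ and the factorisation of $\frp$ through $\Bun_{\cP}(\pi)$ is the mechanism, but making it precise calls for a careful study of the Eisenstein cone --- constancy of $\cP$ over a dense open of $\cN_{\Pi}$, genericity of Richardson elements in $\frn_{\cP}$, and the behaviour over the deeper strata --- together with carrying out the snake-lemma comparison and the Jacobson--Morozov descent in the relative, stacky setting, which is where the smoothness of $\Bun_{B}(\pi)\to S$ is used.
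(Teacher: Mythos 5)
Your proof of the bijectivity statement follows the same route as the paper (fibrewise reduction, the snake-lemma comparison of the two exact sequences using smoothness of $\Bun_{B}(\pi)\to S$, then Lemma \ref{l:two cones}), and your final step (isotropic and conic by transport as in Lemma \ref{l:pres iso}, then half-dimensionality via the bijection and Lemma \ref{l:Lag dim}) also matches the paper. The genuine gap is in the closedness step. You deduce closedness of $\wt\cN_{\Pi}$ from the claim that the set-theoretic inverse $s:\cN_{\Pi}\to T^{*}\Bun_{G}(\pi)$ of $P_{\Pi}|_{\wt\cN_{\Pi}}$ is ``\'etale-locally algebraic'' and hence a closed immersion, but nothing in your argument shows that $s$ is a morphism, or even continuous: the Borel (or canonical parabolic) reduction used to produce the lift jumps along closed substacks of $\cN_{\Pi}$, and a pointwise-defined section of the affine bundle $P_{\Pi}$ over a closed subset has closed image only once its continuity is known, which is essentially what is to be proved. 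The underlying difficulty is that $\frp:\Bun_{B}(\pi)\to\Bun_{G}(\pi)$ is not proper, so the transport $\orr{\frp}(0_{\Bun_{B}(\pi)})$ of a closed set has no a priori reason to be closed. The paper supplies exactly this missing ingredient: it passes to the Drinfeld compactifications $\ov\Bun^{\l}_{B}(\pi)$, cuts out the closed loci $\cW^{\l}$ by the vanishing of \eqref{wedge ph} (the Higgs field wedged against the Pl\"ucker line in $\wedge^{n}\cE(\frg)$), uses properness of $\ov\frp^{\l}$ to conclude each $\ov\cW^{\l}$ is closed, proves $\wt\cN_{\Pi}=\bigcup_{\l}\ov\cW^{\l}$, and then uses constructibility of $\cN_{\Pi}$ together with the bijection to get local finiteness of this union on finite-type opens. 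Your proposal has no substitute for this compactification argument.

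Concerning the injectivity issue you single out: you are right that a nilpotent Higgs field can lie in the nilradicals of Borel reductions that do not refine its canonical Jacobson--Morozov parabolic (already for the minimal nilpotent in $\sl_{3}$ the Springer fibre is larger than the set of refinements of $\cE_{P}$), so your factorization through $\Bun_{P}(\pi)$ only shows agreement of the lifts coming from Borels refining $\cE_{P}$, and you leave the remaining cases open; showing that an arbitrary admissible Borel reduction yields the same lift is a genuine statement (at bottom it comes down to the vanishing of the Liouville form on Springer fibres over the generic point of the curve), not something your sketch of ``Richardson genericity'' establishes. The paper itself deduces the fibrewise bijection \eqref{ppi b} directly from the kernel identification rather than taking your detour, so on this point your write-up is at best as incomplete as it is tentative. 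In any case, the decisive gap relative to the paper's proof is the closedness argument.
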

\begin{proof}
First let's check that  $P_{\Pi}$ induces a bijection $\wt\cN_{\Pi}\isom \cN_{\Pi}$. This can be checked on geometric fibers over $S$.  For $b\in S(\CC)$, let $\frX_{b}$ denote the fiber $\pi^{-1}(b)$. We show that $P_{\Pi}$ induces a bijection
\begin{equation}\label{ppi b}
\xymatrix{
P_{\Pi, b}: \wt\cN_{\Pi}|_{\Bun_{G}(\frX_{b})}\ar[r]^-\sim &  \cN^{\Eis}_{\frX_{b}}
}\end{equation}
Both $\Bun_{G}(\pi)$ and $\Bun_{B}(\pi)$ are smooth over $S$. When restricted to a point $\cE_{B}\in \Bun_{B}(\frX_{b})$, the differential $d\frp$ fits into exact sequences
\begin{equation*}
\xymatrix{      0\ar[r] & T^{*}_{b}S\ar@{=}[d] \ar[r] & T^{*}_{\cE}\Bun_{G}(\frX/S) \ar[d]^{d\frp} \ar[r] &  T^{*}_{\cE}\Bun_{G}(\frX_{b})\ar[d]^{dp_{b}}\ar[r] & 0\\
0\ar[r] & T^{*}_{b}S \ar[r] & T^{*}_{\cE_{B}}\Bun_{B}(\frX/S) \ar[r] &  T^{*}_{\cE_{B}}\Bun_{B}(\frX_{b})\ar[r] & 0}
\end{equation*}
Therefore $d\frp$ and $dp_{b}$ have the same kernel. This implies \eqref{ppi b}. Since $\cN^{\Eis}_{\frX_{b}}=\cN_{\frX_{b}}$ by Lemma \ref{l:two cones}, we conclude that $P_{\Pi}$ induces a bijection $\wt\cN_{\Pi}\isom \cN_{\Pi}$ fiber by fiber.

Next we show that   $\wt\cN_{\Pi}$ is closed. For this, fix a coweight $\l \in \XX_*(H)$, and consider the family version of the Drinfeld relative compactification $\ov\Bun^{\l}_{B}(\pi)$ of $B$-bundles whose induced $H$-bundle has degree $\l$.
By definition, 
 $\ov\Bun^{\l}_{B}(\pi)$ classifies $(b,\cE, \cL, i_{\mu}: \cL(\mu)\to \cE(V_{\mu}))$ where $b\in S$ is a point, $\cE$ is a $G$-bundle on $\frX_{b}$, $\cL$ is an $H$-bundle over $\frX_{b}$ of degree $\l$, and $i_{\mu}$ is a family of injective maps $\cL(\mu)\to \cE(V_{\mu})$, where $\cL(\mu)$ is the line bundle associated to $\cL$ and a dominant weight $\mu\in \xch(H)$, and $\cE(V_{\mu})$ is the vector bundle on $\frX_{b}$ associated to $\cE$ and the irreducible representation $V_{\mu}$ of $G$ with highest weight $\mu$. The maps $\{i_{\mu}\}$ are required to satisfy the Pl\"ucker relations. By the same argument of \cite[Proposition 1.2.2]{BG}, the natural map
\begin{equation*}
\xymatrix{
\ov\frp^{\l}: \ov\Bun^{\l}_{B}(\pi)\ar[r] &  \Bun_{G}(\pi)
}
\end{equation*}
is representable and proper.

We next define  a closed subset $\cW^{\l}\subset \ov\Bun^{\l}_{B}(\pi)\times_{\Bun_{G}(\pi)}T^{*}\Bun_{G}(\pi)$. Let $2\r$ be the sum of the positive roots, and $n$  the number of positive roots of $G$. Then $\wedge^{n}(\frg)$ has highest weight $2\r$ with multiplicity one, hence it contains $V_{2\r}$ with multiplicity one, i.e., a canonical up to scalar map $\io: V_{2\r}\incl \wedge^{n}(\frg)$. For a point $(b,\cE, \cL, \{i_{\mu}\})$,  consider the composition
\begin{equation*}
\xymatrix{\t: \cL(2\r)\ar[r]^-{i_{2\r}} & \cE(V_{2\r})\ar[r]^-{\io} & \cE(\wedge^{n}(\frg))=\wedge^{n}\cE(\frg)}
\end{equation*}
We define $\cW^{\l}$ to consist of those $(b,\cE, \cL, \{i_{\mu}\}, \ph)\in \ov\Bun^{\l}_{B}(\pi)\times_{\Bun_{G}(\pi)}T^{*}\Bun_{G}(\pi)$, so $\ph\in \cohog{0}{\frX_{b}, \cE(\frg)\ot\om_{\frX_{b}}}$ is a Higgs field, such that the map
\begin{equation}\label{wedge ph}
\xymatrix{\om_{\frX_{b}}^{-1}\ot \cL(2\r)\ar[r]^-{\ph\ot \t} & \cE(\frg)\ot \wedge^{n}\cE(\frg)\ar[r]^-{\wedge} &  \wedge^{n+1}\cE(\frg)}
\end{equation}
is zero. This is clearly closed in $\ov\Bun^{\l}_{B}(\pi)\times_{\Bun_{G}(\pi)}T^{*}\Bun_{G}(\pi)$. Since $\ov\frp^{\l}$ is proper, the image of $\cW^{\l}$ under
\begin{equation*}
\xymatrix{
\ov\frp^{\l,\na}: \ov\Bun^{\l}_{B}(\pi)\times_{\Bun_{G}(\pi)}T^{*}\Bun_{G}(\pi)\ar[r] &  T^{*}\Bun_{G}(\pi)
}\end{equation*}
is also closed.  Denote the image of $\cW^{\l}$ under $\ov\frp^{\l,\na}$ by $\ov\cW^{\l}$. 

We claim that $\wt\cN_{\Pi}$ is the union of $\ov\cW^{\l}$ for all $\l\in \xcoch(H)$. 

First we show $\wt\cN_{\Pi}\subset \cup_{\l}\ov \cW^{\l}$. Let $\cW^{\l,\c}\subset \Bun^{\l}_{B}(\pi)\times_{\Bun_{G}(\pi)}T^{*}\Bun_{G}(\pi)$ be the restriction of $\cW^{\l}$ to $\Bun^{\l}_{B}(\pi)$. For a point $(b,\cE_{B}, \ph)\in \cW^{\l,\c}$ (so that $\cL$ is the $H$-bundle $\cE_{B}/N$), the map $\t:\cL(2\r)\to \wedge^{n}\cE(\frg)$ has image equal to $\cE_{B}(\frn)$. Therefore the condition that \eqref{wedge ph} be zero is equivalent to that $\ph\in \cohog{0}{\frX_{b},\cE_{B}(\frn)\ot\om_{\frX_{b}}}$, i.e., $d\frp(\ph)=0$.  Let $\ov\cW^{\l,\c}=\frp^{\na}\cW^{\l,\c}$. Then  $\wt\cN_{\Pi}=\cup_{\l}\ov \cW^{\l,\c}$ by  definition, hence $\wt\cN_{\Pi}\subset \cup_{\l}\ov \cW^{\l}$.

Conversely, we show that $\ov \cW^{\l}\subset \wt\cN_{\Pi}$ for every $\l$. If $(b,\cE, \cL, \{i_{\mu}\}, \ph)\in \cW^{\l}$,  then $\{i_{\mu}\}$ gives a $B$-reduction $\cE_{\y, B}$ of the generic fiber $\cE_{\y}$ ($\y\in \frX_{b}$ is the generic point), such that the image of $i_{2\r}$ at the generic point is $\wedge^{n}(\cE_{\y, B}(\frn))\subset \cE_{\y}(\frg)$. Let $\cE_{B}$ be the unique $B$-reduction of $\cE$ that is equal to $\cE_{\y,B}$ at the generic point of $\frX_{b}$. Therefore the vanishing of \eqref{wedge ph} at the generic point implies that the image of $\ph_{\y}$ lies in $\cE_{B}(\frn)\ot\om_{\frX_{b}}$, hence $(b,\cE,\ph)\in \wt\cN_{\Pi}$ by the definition of the latter. This proves the asserted containment.

For any finite type open subset $\cU\subset \Bun_{G}(\pi)$, the restriction $\cN_{\Pi}|_{\cU}$ is constructible, hence covered by the image of finitely many $\ov \cW^{\l}$. Since $\wt\cN_{\Pi}|_{\cU}$ maps bijectively to $\cN_{\Pi}|_{\cU}$, $\wt\cN_{\Pi}|_{\cU}$ is covered by  finitely many $\ov \cW^{\l}$. Each $\ov\cW^{\l}|_{\cU}$ is closed in $T^{*}\cU$, therefore $\wt\cN_{\Pi}|_{\cU}$ is closed in $T^{*}\cU$. This being true for all finite type $\cU$, we conclude that $\wt\cN_{\Pi}$ is closed.

Finally, since $\wt\cN_{\Pi}$  is the transport of the zero section of $T^{*}\Bun_{B}(\pi)$ under the Lagrangian correspondence \eqref{Lag frp}, it is conic and isotropic in $T^{*}\Bun_{G}(\pi)$ by Lemma \ref{l:pres iso}. Moreover, $\cN_{\Pi}$ is of pure dimension equal to $\dim\Bun_{G}(\pi)$. Since $\wt\cN_{\Pi}\to \cN_{\Pi}$ is bijective, we conclude that $\dim\wt\cN_{\Pi}=\dim\cN_{\Pi}=\dim\Bun_{G}(\pi)$.  By Lemma \ref{l:Lag dim},  $\wt\cN_{\Pi}$ is Lagrangian. This finishes the proof.
\end{proof}

\begin{cor}\label{c:univ cone non-char} The universal nilpotent cone $\wt\cN_{\Pi}$ is non-characteristic with respect to the projection $\Pi:\Bun_{G}(\pi)\to S$, i.e. its intersection with  $\Pi^*(T^*S) \subset T^* \Bun_{G}(\pi)$ lies in the zero-section.
\end{cor}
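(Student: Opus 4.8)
The plan is to deduce the statement directly from Theorem \ref{th:Eis cone closed}; essentially the only thing to do is unwind the definition of $T^{*}_{\Pi}$. Since $\Pi:\Bun_{G}(\pi)\to S$ is smooth, the relative cotangent sequence exhibits $\Pi^{*}(T^{*}S)$ as the kernel of $P_{\Pi}:T^{*}\Bun_{G}(\pi)\to T^{*}_{\Pi}=T^{*}\Bun_{G}(\pi)/\Pi^{*}(T^{*}S)$; in other words $\Pi^{*}(T^{*}S)$ is exactly the preimage under $P_{\Pi}$ of the zero section $0_{\Bun_{G}(\pi)}\subset T^{*}_{\Pi}$. So if $\xi\in\wt\cN_{\Pi}\cap\Pi^{*}(T^{*}S)$ lies over a point $\cE\in\Bun_{G}(\pi)$, then $P_{\Pi}(\xi)$ is the zero covector in $(T^{*}_{\Pi})_{\cE}$.

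Next I would check that the zero section $0_{\Bun_{G}(\pi)}\subset T^{*}\Bun_{G}(\pi)$ is itself contained in $\wt\cN_{\Pi}$. For the point $\cE$ above, choose a reduction $\cE_{B}\in\Bun_{B}(\pi)$ of $\cE$ to $B$ (which exists since every $G$-bundle on a curve admits a Borel reduction); then in the correspondence \eqref{Lag frp} the pair $(\cE_{B},0_{\cE})$ lies in $(d\frp)^{-1}(0_{\Bun_{B}(\pi)})$ because $d\frp$ is fiberwise linear, and $\frp^{\na}$ sends it to $0_{\cE}$. Hence $0_{\cE}\in\wt\cN_{\Pi}$, and of course $P_{\Pi}(0_{\cE})=0$ in $(T^{*}_{\Pi})_{\cE}$, the same value as $P_{\Pi}(\xi)$.

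Finally, Theorem \ref{th:Eis cone closed} asserts that $P_{\Pi}$ restricts to a bijection $\wt\cN_{\Pi}\isom\cN_{\Pi}$, so in particular $P_{\Pi}$ is injective on $\wt\cN_{\Pi}$. Since $\xi$ and $0_{\cE}$ both lie in $\wt\cN_{\Pi}$ and have the same image under $P_{\Pi}$, they must agree, i.e. $\xi=0_{\cE}$. This gives $\wt\cN_{\Pi}\cap\Pi^{*}(T^{*}S)\subseteq 0_{\Bun_{G}(\pi)}$, which is the assertion. I do not expect any real obstacle: the whole argument is a formal consequence of the bijectivity in Theorem \ref{th:Eis cone closed}, and the only point that deserves an explicit sentence is that $0_{\Bun_{G}(\pi)}$ is contained in $\wt\cN_{\Pi}$, i.e. that the induction map $\frp:\Bun_{B}(\pi)\to\Bun_{G}(\pi)$ is surjective on points.
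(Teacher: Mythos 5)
Your proof is correct and follows essentially the same route as the paper: Theorem \ref{th:Eis cone closed} gives that $P_{\Pi}$ is injective on $\wt\cN_{\Pi}$, and since $\Pi^{*}(T^{*}S)$ is exactly the preimage of the zero section of $T^{*}_{\Pi}$, any $\xi\in\wt\cN_{\Pi}\cap\Pi^{*}(T^{*}S)$ must coincide with the zero covector. The only addition is that you make explicit the (true, and implicitly used in the paper) observation that the zero section lies in $\wt\cN_{\Pi}$, via the surjectivity of $\frp:\Bun_{B}(\pi)\to\Bun_{G}(\pi)$.
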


\begin{proof}
By the theorem, $\wt\cN_{\Pi}$ projects isomorphically to $\cN_{\Pi}$, in particular maps injectively, under the quotient by $\Pi^*(T^*S)$.
\end{proof}

\subsection{Eisenstein cone with level structure}\label{ss:Eis cone level}
In this subsection we extend the construction of the universal nilpotent cone to the moduli of bundles with level structures. As before, it helps to first describe the global nilpotent cone for a single curve $X$ in the case of Iwahori level structure.

\sss{Iwahori level structure on a single curve}\label{sss:univ cone B}
Let $X$ be a smooth projective curve, and $\Sigma\subset X$ finitely many closed points where we impose $B$ or $N$ reductions on $G$-bundles.
 
Let $\Bun_{G,1}(X,\Sigma)$ (resp. $\Bun_{B,1}(X,\Sigma)$) be the moduli of $G$-bundles (resp. $B$-bundles) on $X$ with a trivialization at $\Sigma$. Since $\Bun_{G,1}(X,\Sigma)\to \Bun_{G}(X)$ (resp. $\Bun_{B,1}(X,\Sigma)\to \Bun_{B}(X)$) is a $G^\Sigma$-torsor (resp. $B^{\Sigma}$-torsor), the cotangent bundle $T^{*}\Bun_{G,1}(X,\Sigma)$ (resp. $T^{*}\Bun_{B,1}(X,\Sigma)$) is $G^\Sigma$-equivariant (resp. $B^{\Sigma}$-equivariant) hence descends to a vector bundle $\Om_{G, X,\Sigma}$ (resp. $\Om_{B, X,\Sigma}$) over $\Bun_{G}(X)$ (resp. $\Bun_{B}(X)$), whose fiber at $\cE_G$  (resp. $\cE_{B}$) is 
 $\cohog{0}{X, \cE_{G}(\frg^{*})\ot\om_{X}(\Sigma)}$  (resp. $\cohog{0}{X, \cE_{B}(\frb^{*})\ot\om_{X}(\Sigma)}$).

 Let $p_{\Sigma}: \Bun_{B,1}(X,\Sigma)\to \Bun_{G,1}(X,\Sigma)$ be the induction map, giving rise to  the correspondence
\begin{equation}\label{Om}
\xymatrix{\Om_{B,X,\Sigma} & \Bun_{B}(X)\times_{\Bun_{G}(X)}\Om_{G,X,\Sigma}\ar[l]_-{dp_{\Sigma}}\ar[r]^-{p^{\na}_{\Sig}} & \Om_{G,X,\Sigma}}
\end{equation}
Define 
\begin{equation*}
\un\cN^{\Eis}_{X,\Sigma}=p^{\na}_{\Sig}((dp_{\Sigma}^{-1}(0_{\Bun_{B}(X)})))\subset \Om_{G,X,\Sigma}
\end{equation*}
We have the natural map $q: T^{*}\Bun_{G,B}(X,\Sigma)\to \Om_{G, X,\Sigma}$ given by the differential of the projection
$\Bun_{G,1}(X,\Sigma)\to \Bun_{G,B}(X,\Sigma)$.
Set
\begin{equation*}
\cN^{\Eis, B}_{X,\Sigma}=q^{-1}(\un\cN^{\Eis}_{X,\Sigma})
\end{equation*}

Now we have the following generalization of Lemma \ref{l:two cones}.

\begin{lemma}\label{l:two cones B} 
The subset $\cN^{\Eis}_{X, \Sigma}$ of $T^{*}\Bun_{G, B}(X, \Sigma)$ coincides with the global nilpotent cone $\cN^B_{X, \Sigma}$, i.e.~the zero-fiber of the Hitchin map. Moreover, $\cN^{\Eis}_{X}$ is a closed conic Lagrangian of $T^{*}\Bun_{G, B}(X, \Sigma)$.
\end{lemma}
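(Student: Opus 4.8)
The plan is to mimic the proof of Lemma~\ref{l:two cones} (the unmarked case), carefully tracking the extra data of an $N$-reduction along $\Sigma$ and the residue constraints. First I would unwind the definitions: the map $q: T^{*}\Bun_{G,B}(X,\Sigma)\to \Om_{G,X,\Sigma}$ is the differential of $\Bun_{G,1}(X,\Sigma)\to \Bun_{G,B}(X,\Sigma)$, so a covector in $T^{*}_{(\cE,\cE_{\Sigma,N})}\Bun_{G,B}(X,\Sigma)$ is a Higgs field $\ph\in\cohog{0}{X,\cE(\frg^{*})\ot\om_{X}(\Sigma)}$ whose residue along $\Sigma$ is constrained to lie in the annihilator of $\frn$, i.e.\ in $(\frg/\frn)^{*}\simeq\frb$ (using the Killing form). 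The condition $q(\ph)\in\un\cN^{\Eis}_{X,\Sigma}$ says that, after forgetting the residue constraint, $\ph$ lies in the image under $p^{\na}_{\Sig}$ of $\ker(dp_{\Sigma})$; and $\ker(dp_{\Sigma})_{\cE_{B}}=\cohog{0}{X,\cE_{B}(\frn)\ot\om_{X}(\Sigma)}$, since under Killing duality $\ker(\frg^{*}\to\frb^{*})$ is $\frn$. So $\cN^{\Eis,B}_{X,\Sigma}$ is exactly the locus of pairs $(\cE_{B},\ph)$ with $\ph$ a nilpotent Higgs field valued in $\cE_{B}(\frn)\ot\om_{X}(\Sigma)$ and with residue along $\Sigma$ landing in $\cE_{\Sigma,N}(\frb)$ (which, for $\ph$ valued in $\frn$, is automatic). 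This directly shows $\cN^{\Eis,B}_{X,\Sigma}\subseteq\cN^{B}_{X,\Sigma}$, since such a $\ph$ has nilpotent characteristic polynomial hence Hitchin image zero, and its residue lies in $\frb$ as required.

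For the reverse inclusion $\cN^{B}_{X,\Sigma}\subseteq\cN^{\Eis,B}_{X,\Sigma}$, I would repeat Ginzburg's Jacobson--Morozov argument. Given $(\cE,\cE_{\Sigma,N},\ph)$ in the global nilpotent cone, restrict $\ph$ to the geometric generic point $\ov\eta$ of $X$; it is a nilpotent element of $\frg_{\ov\eta}$, so Jacobson--Morozov produces a canonical parabolic $P$ over $k(\ov\eta)$ with $\ph_{\ov\eta}\in\frn_{P}$, which descends to a canonical $P$-reduction $\cE_{P}$ of $\cE$ over the generic point, then extends uniquely over $X$, with $\ph\in\cohog{0}{X,\cE_{P}(\frn_{P})\ot\om_{X}(\Sigma)}$. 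Then refine $\cE_{P}$ to a $B$-reduction $\cE_{B}$; this gives $\ph\in\cohog{0}{X,\cE_{B}(\frn)\ot\om_{X}(\Sigma)}$, so $(\cE_{B},\ph)$ witnesses $q(\ph)\in\un\cN^{\Eis}_{X,\Sigma}$. The one new point to check is that the $B$-reduction $\cE_{B}$ can be chosen compatibly with the prescribed $N$-reduction $\cE_{\Sigma,N}$ along $\Sigma$ — i.e.\ so that $(\cE,\cE_{\Sigma,N})$ is the image of $\cE_{B}$ under $\Bun_{B,1}\to\Bun_{G,B}$ (after a suitable trivialization). Here I would use that the residue of $\ph$ at each point of $\Sigma$ lies in $\cE_{\Sigma,N}(\frb)$, so the nilpotent $\ph$ is already compatible with the given $N$-structure at $\Sigma$; since $B$-reductions extending a given $N$-reduction at finitely many points always exist (the obstruction lives in $H^{1}$ of a unipotent group, and one has freedom away from $\Sigma$), one can arrange the reduction near $\Sigma$ to refine $\cE_{\Sigma,N}$. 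This matching at the marked points is the step I expect to be the main obstacle, or at least the one requiring the most care.

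Finally, for the closedness, conicity, and Lagrangian claims, I would argue as in Theorem~\ref{th:Eis cone closed} in the family case, specialized to $S=\pt$: $\un\cN^{\Eis}_{X,\Sigma}$ is the transport of the zero-section of $\Om_{B,X,\Sigma}$ along the Lagrangian correspondence~\eqref{Om}, hence conic and isotropic by Lemma~\ref{l:pres iso}; closedness follows by introducing the Drinfeld relative compactification $\ov\Bun^{\l}_{B}(X,\Sigma)$ and the properness of $\ov p^{\l}_{\Sigma}$, realizing $\un\cN^{\Eis}_{X,\Sigma}$ (locally on finite-type opens) as a finite union of images of closed subsets $\cW^{\l}$ cut out by the Pl\"ucker/wedge vanishing condition~\eqref{wedge ph} (the level structure along $\Sigma$ only adds a trivialization and does not affect the argument). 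Pulling back along the smooth map $q$ preserves all three properties, and since $\cN^{B}_{X,\Sigma}$ has pure dimension $\dim\Bun_{G,B}(X,\Sigma)$ and $\cN^{\Eis,B}_{X,\Sigma}=\cN^{B}_{X,\Sigma}$, Lemma~\ref{l:Lag dim} upgrades isotropic to Lagrangian. I would package the last part by citing Theorem~\ref{th:Eis cone closed} with $S$ a point and the sections $\sigma=\Sigma$, so that only the identification of the two descriptions of the cone (the first two paragraphs) really needs to be written out here.
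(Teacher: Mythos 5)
Your treatment of the equality $\cN^{\Eis,B}_{X,\Sigma}=\cN^{B}_{X,\Sigma}$ is essentially the paper's (it simply reruns the Jacobson--Morozov argument of Lemma~\ref{l:two cones}, and then gets closedness and conicity for free from the equality, since the Hitchin zero-fiber is closed and conic; your Drinfeld-compactification detour is the family-version argument of Theorem~\ref{th:univ cone Iw} and is not needed here). But the ``main obstacle'' you single out is a misreading of the definition: in the correspondence \eqref{Om} the left leg is $\Bun_{B}(X)$ with \emph{no} level structure, so membership of $(\cE,\cE_{\Sigma,B},\ph)$ in $\cN^{\Eis,B}_{X,\Sigma}=q^{-1}(\un\cN^{\Eis}_{X,\Sigma})$ only asks for the existence of \emph{some} global $B$-reduction whose nilradical contains $\ph$, with no compatibility required with the marked reduction along $\Sigma$; the marked reduction enters only through the residue constraint already built into $T^{*}\Bun_{G,B}(X,\Sigma)$ (whose covectors have residues in $\frn$, the annihilator of $\frb$, relative to the marked Borel --- you have the $B$- and $N$-versions swapped; the $N$-version is Corollary~\ref{c:NN}, deduced afterwards). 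So no matching at $\Sigma$ is needed, and the fix you sketch would not work anyway, since the marked Borel need not be contained in the Jacobson--Morozov parabolic of $\ph$.

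The genuine gap is the isotropy step. $\Om_{G,X,\Sigma}$ is not a cotangent bundle (its fibers are $\cohog{0}{X,\cE(\frg^{*})\ot\om_{X}(\Sigma)}$, allowing poles along $\Sigma$), so \eqref{Om} is not a Lagrangian correspondence in the sense of Appendix~A, ``isotropic'' is not even defined there, and Lemma~\ref{l:pres iso} does not apply to it; moreover the assertion that pulling back along $q$ preserves isotropy is exactly the nontrivial point, not a formality. Indeed the naive version of this reasoning is false: the full preimage of $\un\cN^{\Eis}_{X,\Sigma}$ in $T^{*}\Bun_{G,1}(X,\Sigma)$ has dimension strictly exceeding $\dim\Bun_{G,1}(X,\Sigma)$ on components where the residues of the Higgs fields are generically nonzero, so it is not isotropic; only the interplay with the residue condition encoded in $T^{*}\Bun_{G,B}(X,\Sigma)$ cuts it down, and that interplay is what must be proved. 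The paper does this by using the Cartesian square \eqref{sq Om} and Lemma~\ref{l:Cart Lag} to rewrite $\cN^{\Eis,B}_{X,\Sigma}=\orr{\a}\oll{\b}(T^{*}(B\bs G/B)^{\Sigma})$, i.e.\ as the transport of the union of conormals to the relative positions of the marked and witness reductions; isotropy then does follow from Lemma~\ref{l:pres iso}, and the dimension count (via the Hitchin base) together with Lemma~\ref{l:Lag dim} finishes the Lagrangian claim as in your outline. Without this reformulation, or some equivalent argument that sees the $B\bs G/B$ geometry, your proof of isotropy does not go through.
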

\begin{proof}
The equality $\cN^{\Eis,B}_{X,\Sigma}=\cN^{B}_{X,\Sigma}$ is proved by the same argument as Lemma \ref{l:two cones}. We leave further details to the reader.  In particular, this shows $\cN^{\Eis,B}_{X,\Sigma}$ is closed and conic.

Next we prove that $\cN^{\Eis,B}_{X,\Sigma}$ is isotropic. We may reformulate the definition of $\cN^{\Eis,B}_{X,\Sigma}$ as follows. We have an isomorphism
\begin{equation*}
\Bun_{G,B}(X,\Sigma)\times_{\Bun_{G}(X)}\Bun_{B}(X)\simeq \Bun_{B,1}(X,\Sig)\twtimes{B^{\Sigma}}(G/B)^{\Sigma}.
\end{equation*}
Let $\a: \Bun_{B,1}(X,\Sig)\twtimes{B^{\Sigma}}(G/B)^{\Sigma}\to \Bun_{G,B}(X,\Sigma)$ be the resulting projection.  Consider the commutative diagram
\begin{equation}\label{sq Om}
\xymatrix{T^{*}(\Bun_{B,1}(X,\Sig)\twtimes{B^{\Sigma}}(G/B)^{\Sigma})\ar[d]^{q_{B}} & \Bun_{B}(X)\times_{\Bun_{G}(X)}T^{*}\Bun_{G,B}(X,\Sigma)\ar[d]^{\wt q}\ar[l]_-{d\a}\ar[r]^-{\a^{\na}} & T^{*}\Bun_{G,B}(X,\Sigma)\ar[d]^{q}\\
\Om_{B,X,\Sigma} & \Bun_{B}(X)\times_{\Bun_{G}(X)}\Om_{G,X,\Sigma}\ar[l]_-{dp_{\Sigma}}\ar[r]^-{p^{\na}_{\Sig}} & \Om_{G,X,\Sigma}}
\end{equation}
Here the top row is the Lagrangian correspondence for $p^{B}_{\Sig}$ and the bottom row is \eqref{Om}. The map $q_{B}$ is induced from the differential of the map $\Bun_{B,1}(X,\Sig)\to \Bun_{B,1}(X,\Sig)\twtimes{B^{\Sigma}}(G/B)^{\Sigma}$ (setting the  $(G/B)^{\Sig}$ factor to be $1$). Note the right square of \eqref{sq Om} is Cartesian. By Lemma \ref{l:Cart Lag} we have
\begin{equation*}
\cN^{\Eis,B}_{X,\Sig}=q^{-1}p^{\na}_{\Sig}dp_{\Sig}^{-1}(0_{\Bun_{B}(X)})=\a^{\na}\wt q^{-1} dp_{\Sig}^{-1}(0_{\Bun_{B}(X)})=\a^{\na}d\a^{-1}q_{B}^{-1}(0_{\Bun_{B}(X)})=\orr{\a}q_{B}^{-1}(0_{\Bun_{B}(X)}).
\end{equation*}
We have another projection
\begin{equation*}
\b: \Bun_{B,1}(X,\Sigma)\twtimes{B^{\Sigma}}(G/B)^{\Sigma}\to (B\bs G/B)^{\Sigma}
\end{equation*}
and 
\begin{equation*}
q_{B}^{-1}(0_{\Bun_{B}(X)})=\oll{\b}(T^{*}(B\bs G/B)^{\Sigma}).
\end{equation*}
Therefore
\begin{equation*}
\cN^{\Eis, B}_{X,\Sigma}=\orr{\a}\oll{\b}(T^{*}(B\bs G/B)^{\Sigma}).
\end{equation*}
The entire classical cotangent bundle $T^{*}(B\bs G/B)^{\Sigma}$ is isotropic,  so $\cN^{\Eis, B}_{X,\Sigma}$ is also isotropic by Lemma \ref{l:pres iso}. 

Finally let us use the equality $\cN^{\Eis,B}_{X,\Sigma}=\cN^{B}_{X,\Sigma}$ to show that $\cN^{\Eis,B}_{X,\Sigma}$ is Lagrangian. In view of Lemma \ref{l:Lag dim}, it suffices to show that $\cN^{B}_{X,\Sigma}$ contains a dense open with  dimension  $\ge\dim\Bun_{G,B}(X,\Sigma)$ (then equality must hold since $\cN^{B}_{X,\Sigma}$ is isotropic). This follows from the fact that the Hitchin base has the same dimension as $\Bun_{G,B}(X,\Sigma)$.
\end{proof}

\begin{cor}\label{c:NN} The global nilpotent cone $\cN=\cN^{N}_{X,\Sigma}$ defined in \S\ref{sss:glob nilp cone} is a closed conic Lagrangian in $T^{*}\Bun_{G,N}(X,\Sigma)$.
\end{cor}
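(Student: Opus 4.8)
The plan is to deduce this $N$-level statement directly from the $B$-level statement of Lemma~\ref{l:two cones B}, by transporting along the smooth map that forgets an $N$-reduction down to a $B$-reduction.

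First I would record the geometric input. Let $\mu\colon \Bun_{G,N}(X,\Sigma)\to \Bun_{G,B}(X,\Sigma)$ be the map sending $(\cE,\cE_{\Sigma,N})$ to $(\cE,\cE_{\Sigma,B})$, where $\cE_{\Sigma,B}$ is the $B$-reduction induced by $\cE_{\Sigma,N}$. Since at each point of $\Sigma$ the fibre of $G/N\to G/B$ is a torsor under $B/N=H$, the map $\mu$ is an $H^{\Sigma}$-torsor; in particular it is smooth of relative dimension $|\Sigma|\dim H$, and $d\mu$ realizes $\mu^{*}T^{*}\Bun_{G,B}(X,\Sigma)$ as a sub-bundle of $T^{*}\Bun_{G,N}(X,\Sigma)$ via a closed embedding (the annihilator of the relative tangent bundle of $\mu$).

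Next I would establish the identity
\begin{equation*}
\cN^{N}_{X,\Sigma}=\oll{\mu}\bigl(\cN^{B}_{X,\Sigma}\bigr)=d\mu\bigl(\cN^{B}_{X,\Sigma}\times_{\Bun_{G,B}(X,\Sigma)}\Bun_{G,N}(X,\Sigma)\bigr).
\end{equation*}
Using the Killing form to dualize, a point of $T^{*}\Bun_{G,N}(X,\Sigma)$ is a triple $(\cE,\cE_{\Sigma,N},\ph)$ with $\ph\in H^{0}(X,\cE(\frg)\ot\om_{X}(\Sigma))$ and $\Res_{\Sigma}\ph\in\cE_{\Sigma,N}(\frb)$, while for $B$-level the residue is instead constrained to lie in $\cE_{\Sigma,B}(\frn)$. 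On the nilpotent cone the Hitchin base component $a$ vanishes, so $\ph$ is pointwise nilpotent; since a nilpotent element of $\frb$ lies in $\frn$, this forces $\Res_{\Sigma}\ph\in\cE_{\Sigma,N}(\frn)$, and then $a_{\Sigma}=0$ automatically. Hence $\cN^{N}_{X,\Sigma}$ consists of those $(\cE,\cE_{\Sigma,N},\ph)$ with $\ph$ pointwise nilpotent and $\Res_{\Sigma}\ph\in\cE_{\Sigma,N}(\frn)$, and the same description holds for $\cN^{B}_{X,\Sigma}$; because $\cE_{\Sigma,N}(\frn)=\cE_{\Sigma,B}(\frn)$ depends only on the underlying $B$-reduction, the displayed identity follows. (Alternatively one could simply repeat the proof of Lemma~\ref{l:two cones B} verbatim with $N$ replacing $B$, using $(N\bs G/B)^{\Sigma}$ in place of $(B\bs G/B)^{\Sigma}$; the route above reuses the statement already proved.)

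Finally I would conclude. As $\cN^{B}_{X,\Sigma}$ is closed in $T^{*}\Bun_{G,B}(X,\Sigma)$ by Lemma~\ref{l:two cones B}, its base change is closed in $\mu^{*}T^{*}\Bun_{G,B}(X,\Sigma)$, hence closed in $T^{*}\Bun_{G,N}(X,\Sigma)$ since $d\mu$ is a closed embedding; it is manifestly conic; and it is isotropic by Lemma~\ref{l:pres iso}, being the transport of the isotropic $\cN^{B}_{X,\Sigma}$ along the Lagrangian correspondence attached to $\mu$. Since $d\mu$ is injective and $\cN^{B}_{X,\Sigma}\times_{\Bun_{G,B}}\Bun_{G,N}\to\cN^{B}_{X,\Sigma}$ is smooth of relative dimension $|\Sigma|\dim H$, we get
\begin{equation*}
\dim\cN^{N}_{X,\Sigma}=\dim\cN^{B}_{X,\Sigma}+|\Sigma|\dim H=\dim\Bun_{G,B}(X,\Sigma)+|\Sigma|\dim H=\dim\Bun_{G,N}(X,\Sigma),
\end{equation*}
using that $\cN^{B}_{X,\Sigma}$ is Lagrangian. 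A closed conic isotropic subset of $T^{*}\Bun_{G,N}(X,\Sigma)$ of this dimension is Lagrangian by Lemma~\ref{l:Lag dim}, which completes the argument. I do not expect a genuine obstacle; the only point needing care is the bookkeeping in the Higgs-bundle description in the second step — in particular that the residue condition cutting out $T^{*}\Bun_{G,N}$ (residue in $\frb$) differs from the one for $T^{*}\Bun_{G,B}$ (residue in $\frn$), yet the two associated $\frn$-subbundles agree — so that the two nilpotent cones really do correspond under $d\mu$.
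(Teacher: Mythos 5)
Your argument is correct and is essentially the paper's own proof: the paper simply takes the projection $p:\Bun_{G,N}(X,\Sigma)\to\Bun_{G,B}(X,\Sigma)$, asserts $\cN^{N}_{X,\Sigma}=\oll{p}\,\cN^{B}_{X,\Sigma}$, and concludes because $\oll{p}$ along a smooth map sends closed conic Lagrangians to closed conic Lagrangians (Lemma \ref{l:trans Lag}). Your additional steps — the Higgs-field verification of the identity $\cN^{N}_{X,\Sigma}=\oll{\mu}(\cN^{B}_{X,\Sigma})$ and the dimension count via Lemmas \ref{l:pres iso} and \ref{l:Lag dim} in place of citing Lemma \ref{l:trans Lag} directly — merely fill in details the paper leaves implicit.
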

\begin{proof}
Let $p: \Bun_{G,N}(X,\Sigma)\to \Bun_{G,B}(X,\Sigma)$ be the projection. We have $\cN^{N}_{X,\Sigma}=\oll{p}\cN^{B}_{X,\Sigma}$. Since $p$ is  smooth, $\oll{p}$ takes a closed conic Lagrangian to a closed conic Lagrangian. 
\end{proof}

\sss{Family version}
Now consider the family version. Let $\s=\{\s_{a}\}_{a\in \Sigma}$ be a finite collections of sections of $\pi:\frX\to S$. As above, introduce the rigidified moduli  $\Bun_{G,1}(\pi, \s)$ (resp. $\Bun_{B,1}(\pi, \s)$), and let 
$\Om_{G,\Pi,\s}$  (resp. $\Om_{B,\Pi,\s}$) denote the descent of its cotangent bundle to $\Bun_{G}(\pi)$ (resp. $\Bun_{B}(\pi)$). 
Let $\frp_{\s}: \Bun_{B,1}(\pi, \s)\to \Bun_{G,1}(\pi, \s)$ be the induction map, giving rise to the correspondence
\begin{equation*}
\xymatrix{ \Om_{B,\Pi,\s} & \Bun_{B}(\pi)\times_{\Bun_{G}(\pi)}\Om_{G,\Pi,\s}\ar[l]_-{d\frp_{\s}}\ar[r]^-{\frp^{\na}}  & \Om_{G,\Pi,\s} 
}
\end{equation*}
Define
\begin{equation*}
\wt{\un\cN}_{\Pi,\s}=\frp^{\na}((d\frp_{\s}^{-1}(0_{\Bun_{B}(\pi)})))\subset \Om_{G,\Pi,\s}.
\end{equation*}
We have the natural maps
\begin{equation*}
\xymatrix{
\frq: T^{*}\Bun_{G,B}(\pi, \sigma)\ar[r] &  \Om_{G, \Pi,\sigma}
&
\frq': T^{*}\Bun_{G,N}(\pi, \sigma)\ar[r] &  \Om_{G, \Pi,\sigma}
}\end{equation*}
 given by the differentials of the respective projections
$\Bun_{G,1}(\pi,\sigma)\to \Bun_{G,B}(\pi,\sigma)$
and $\Bun_{G,1}(\pi,\sigma)\to \Bun_{G,N}(\pi,\sigma)$.

\begin{defn}\label{def:univ cone Eis}
\begin{enumerate}
\item The  universal nilpotent cone $\wt\cN^{B}_{\Pi, \s}\subset T^{*}\Bun_{G,B}(\pi,\s)$ is defined to be $\frq^{-1}(\wt{\un\cN}_{\Pi,\s})$.
\item The universal nilpotent cone $\wt\cN^{N}_{\Pi, \s}\subset T^{*}\Bun_{G,N}(\pi,\s)$ 
is defined to be $(\frq')^{-1}(\wt{\un\cN}_{\Pi,\s})$.
%
\end{enumerate}
\end{defn}

\begin{remark}\label{rem: cone B vs N}
It is elementary to check: for the natural $H^\Sigma$-torsor $q:\Bun_{G,N}(\pi,\sigma)\to \Bun_{G,B}(\pi,\sigma)$
and $(G/B)^\Sigma$-fibration $r:\Bun_{G,B}(\pi,\sigma)\to \Bun_{G}(\pi)$, we have 
$\oll{q}(\wt\cN^{B}_{\Pi, \s}) = \wt\cN^{N}_{\Pi, \s}$,
$\orr{q}(\wt\cN^{N}_{\Pi, \s}) = \wt\cN^{B}_{\Pi, \s}$,
$\orr{r}(\wt\cN^{B}_{\Pi, \s}) = \wt\cN_{\Pi}$.
\end{remark}

Let $T^{*}_{\Pi,\s}$ be the relative cotangent bundle of $\Pi:\Bun_{G,B}(\pi,\s)\to S$, and $P_{\Pi,\s}: T^{*}\Bun_{G,B}(\pi,\s)\to T^{*}_{\Pi,\s}$ the natural map.

\begin{theorem}\label{th:univ cone Iw} The universal nilpotent cone $\wt\cN^{B}_{\Pi, \s}$ is a closed conic Lagrangian inside $T^{*}\Bun_{G,B}(\pi,\s)$. The map $P_{\Pi,\s}$ restricts to a bijection $\wt\cN^{B}_{\Pi, \s}\isom \cN_{\Pi,\s}$. Analogous  statements are true for $\wt\cN^{N}_{\Pi,\s}$.
\end{theorem}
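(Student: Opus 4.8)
The plan is to upgrade the proof of Theorem~\ref{th:Eis cone closed} to incorporate the level structures along $\s$, exactly as Lemma~\ref{l:two cones B} upgraded Lemma~\ref{l:two cones} in the single-curve case, and then to bootstrap the $N$-version from the $B$-version. \textbf{Step 1 (bijectivity of $P_{\Pi,\s}$).} Since $\wt\cN^{B}_{\Pi,\s}=\frq^{-1}(\wt{\un\cN}_{\Pi,\s})$ and the relative Hitchin map cutting out $\cN_{\Pi,\s}$ factors through $\frq$, I would check bijectivity fiber by fiber over $S$. For $b\in S(\CC)$, the rigidified moduli $\Bun_{G,1}(\pi,\s)$ and $\Bun_{B,1}(\pi,\s)$ are smooth over $S$, so, just as in the proof of Theorem~\ref{th:Eis cone closed}, the differential $d\frp_{\s}$ and its restriction $dp_{\s_{b}}$ to the fiber sit in compatible short exact sequences of cotangent spaces with common cokernel $T^{*}_{b}S$; hence $\ker d\frp_{\s}=\ker dp_{\s_{b}}$ and $\wt{\un\cN}_{\Pi,\s}$ restricts over $\Bun_{G}(\frX_{b})$ to $\un\cN^{\Eis}_{\frX_{b},\s_{b}}$. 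Pulling back along $\frq$ and applying Lemma~\ref{l:two cones B}, the fiber of $\wt\cN^{B}_{\Pi,\s}$ over $b$ maps bijectively onto $\cN^{\Eis,B}_{\frX_{b},\s_{b}}=\cN^{B}_{\frX_{b},\s_{b}}=\cN_{\Pi,\s}|_{b}$.

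\textbf{Step 2 (closedness).} It suffices to prove that $\wt{\un\cN}_{\Pi,\s}\subset\Om_{G,\Pi,\s}$ is closed. I would repeat the Drinfeld-compactification argument of Theorem~\ref{th:Eis cone closed}: for each $\l\in\xcoch(H)$ form $\ov\Bun^{\l}_{B}(\pi)$ with its representable proper map $\ov\frp^{\l}$, and inside $\ov\Bun^{\l}_{B}(\pi)\times_{\Bun_{G}(\pi)}\Om_{G,\Pi,\s}$ define $\cW^{\l}$ by the vanishing of the map~\eqref{wedge ph}, now with $\ph$ allowed a simple pole along $\s_{b}$, i.e. a section of $\cE(\frg)\ot\om_{\frX_{b}}(\s_{b})$. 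The same Pl\"ucker computation identifies $\wt{\un\cN}_{\Pi,\s}$ with $\bigcup_{\l}\ov\cW^{\l}$, each $\ov\cW^{\l}$ closed; Step~1 together with constructibility of $\cN_{\Pi,\s}$ then shows that over any finite type open only finitely many $\ov\cW^{\l}$ are needed, so $\wt{\un\cN}_{\Pi,\s}$ is closed. Conicity is immediate from the definition.

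\textbf{Step 3 (isotropy, Lagrangian property, $N$-version).} Following the reformulation in the proof of Lemma~\ref{l:two cones B}, use $\Bun_{G,B}(\pi,\s)\times_{\Bun_{G}(\pi)}\Bun_{B}(\pi)\simeq\Bun_{B,1}(\pi,\s)\twtimes{B^{\Sigma}}(G/B)^{\Sigma}$, with projections $\a$ onto $\Bun_{G,B}(\pi,\s)$ and $\b$ onto $(B\bs G/B)^{\Sigma}$; Lemma~\ref{l:Cart Lag} then gives $\wt\cN^{B}_{\Pi,\s}=\orr{\a}\oll{\b}(T^{*}(B\bs G/B)^{\Sigma})$. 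Since the full cotangent bundle $T^{*}(B\bs G/B)^{\Sigma}$ is isotropic and transportation preserves isotropy (Lemma~\ref{l:pres iso}), $\wt\cN^{B}_{\Pi,\s}$ is isotropic; by Step~1 one has $\dim\wt\cN^{B}_{\Pi,\s}=\dim\cN_{\Pi,\s}=\dim\Bun_{G,B}(\pi,\s)$, the last equality checked fiberwise since the Hitchin base has dimension $\dim\Bun_{G,B}(\frX_{b},\s_{b})$, so Lemma~\ref{l:Lag dim} forces it to be Lagrangian. Finally, by Remark~\ref{rem: cone B vs N}, $\wt\cN^{N}_{\Pi,\s}=\oll{q}(\wt\cN^{B}_{\Pi,\s})$ for the smooth $H^{\Sigma}$-torsor $q:\Bun_{G,N}(\pi,\s)\to\Bun_{G,B}(\pi,\s)$, and $\oll{q}$ carries a closed conic Lagrangian to a closed conic Lagrangian (as in Corollary~\ref{c:NN}) compatibly with the relative cotangent projections; this yields the assertions for $\wt\cN^{N}_{\Pi,\s}$ and $\cN^{N}_{\Pi,\s}$.

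I expect the main obstacle to lie in Step~2: one must ensure that the fiberwise bijection of Step~1 genuinely upgrades to global local finiteness of the family $\{\ov\cW^{\l}\}_{\l}$ over $S$, i.e. control the jumping of the relevant $H^{0}$'s in the family; a secondary nuisance is the careful bookkeeping relating $T^{*}\Bun_{G,B}(\pi,\s)$, $\Om_{G,\Pi,\s}$, $T^{*}_{\Pi,\s}$ and the residue-along-$\s$ conditions, which the sketch above treats cavalierly.
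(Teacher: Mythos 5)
Your proposal is correct and follows essentially the same route as the paper: bijectivity via Lemma~\ref{l:two cones B} checked fiberwise as in Theorem~\ref{th:Eis cone closed}, closedness by the Drinfeld-compactification subsets $\cW^{\l}_{\s}$ with $\om_{\frX_{b}}$ replaced by $\om_{\frX_{b}}(\s_{b})$, the isotropy/Lagrangian step via the identity $\wt\cN^{B}_{\Pi,\s}=\orr{\a}\oll{\b}(T^{*}(B\bs G/B)^{\Sigma})$ together with Lemmas~\ref{l:pres iso} and~\ref{l:Lag dim}, and the $N$-case deduced from Remark~\ref{rem: cone B vs N}. The concerns you flag at the end (local finiteness of the $\ov\cW^{\l}_{\s}$ and the bookkeeping between $T^{*}\Bun_{G,B}(\pi,\s)$, $\Om_{G,\Pi,\s}$ and $T^{*}_{\Pi,\s}$) are resolved exactly as in Theorem~\ref{th:Eis cone closed} and Lemma~\ref{l:two cones B}, so there is no gap.
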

\begin{proof}The proof is similar to that of Theorem \ref{th:Eis cone closed}. We will prove the case of $B$; the $N$ version can be proved similarly or deduced from Remark~\ref{rem: cone B vs N}.

First, Lemma \ref{l:two cones B} implies $P_{\Pi,\s}$ restricts to a bijection $\wt\cN^{B}_{\Pi, \s}\isom \cN_{\Pi,\s}$. 

Next, we show that $\wt\cN^{B}_{\Pi, \s}$ is closed. It suffices to show that $\wt{\un\cN}_{\Pi,\s}$ is closed in $\Om_{G,\Pi,\s}$. For this, we consider the analogous closed subset $\cW^{\l}_{\s}\subset \ov\Bun_{B}(\pi)\times_{\Bun_{G}(\pi)}\Om_{G,\Pi,\s}$ (only changing $\om_{\frX_{b}}$ to $\om_{\frX_{b}}(\s_{b})$) and note its image $\ov\cW^{\l}_{\s}\subset \Om_{G,\Pi,\s}$ is  closed. Since $\wt{\un\cN}_{\Pi,\s}$ is the union of the $\ov\cW^{\l}_{\s}$, the same argument as in Theorem \ref{th:Eis cone closed} proves that $\wt{\un\cN}_{\Pi,\s}$ is closed.

Finally, to show that  $\wt\cN^{B}_{\Pi, \s}$ is Lagrangian, we use the same argument as in Lemma \ref{l:two cones B} to rewrite $\wt\cN^{B}_{\Pi, \s}$ using the maps
\begin{equation*}
\xymatrix{(B\bs G/B)^{\Sigma} & \Bun_{B,1}(\pi,\s)\twtimes{B^{\Sigma}}(G/B)^{\Sigma} \ar[l]_-{\b}\ar[r]^-{\a} & \Bun_{G,B}(\pi,\s)}
\end{equation*}
and we have
\begin{equation}\label{alt NB}
\wt\cN^{B}_{\Pi, \s}=\orr{\a}\oll{\b}(T^{*}(B\bs G/B)^{\Sigma}).
\end{equation}
Lemma \ref{l:pres iso} shows that $\wt\cN^{B}_{\Pi, \s}$ is isotropic. Since $\dim \wt\cN^{B}_{\Pi, \s}=\dim \cN^{B}_{\Pi, \s}=\dim\Bun_{G,B}(\pi,\s)$, we conclude that $\wt\cN^{B}_{\Pi, \s}$ is Lagrangian in $T^{*}\Bun_{G,B}(\pi,\s)$ by Lemma \ref{l:Lag dim}. 
\end{proof}

\sss{Compatibility with base change}
Let $\th: S'\to S$ be an arbitrary map from another smooth stacks $S'$ and let $\pi':\frX'=\frX\times_{S}S'\to S'$ be the base change of $\pi$. Let $\s=\{\s_{a}\}_{a\in \Sigma}$ be a collection of disjoint sections of $\pi: \frX\to S$, and let $\s'$ be its base change to $S'$. Note the natural identification
$\Bun_{G,B}(\pi,\s)\times_{S}S'\simeq\Bun_{G,B}(\pi',\s')$. Let
\begin{equation*}
\xymatrix{
\th_{G,B}: \Bun_{G,B}(\pi',\s')\ar[r] &  \Bun_{G,B}(\pi,\s)
}\end{equation*}
be the natural projection. 


\begin{prop}\label{p:univ cone base change} In the above situation, we have
\begin{equation*}
\wt\cN^{B}_{\Pi',\s'}=\oll{\th_{G,B}}(\wt\cN^{B}_{\Pi,\s}).
\end{equation*}
A similar equality holds when $B$ is replaced by $N$. 
\end{prop}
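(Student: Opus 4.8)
The plan is to reduce the assertion to the analogous statement for the descended cone $\wt{\un\cN}_{\Pi,\s} \subset \Om_{G,\Pi,\s}$, and then prove that statement by the explicit presentation \eqref{alt NB}. First I would recall that, by Definition \ref{def:univ cone Eis}, $\wt\cN^{B}_{\Pi,\s} = \frq^{-1}(\wt{\un\cN}_{\Pi,\s})$ where $\frq\colon T^{*}\Bun_{G,B}(\pi,\s) \to \Om_{G,\Pi,\s}$ is the differential of the smooth surjection $\Bun_{G,1}(\pi,\s)\to\Bun_{G,B}(\pi,\s)$; the analogous map $\frq'$ for $\pi'$ sits in a Cartesian square over $\th_{G,B}$ because all the moduli in sight commute with the base change $\th$. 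Since $\oll{\th_{G,B}}$ is computed by pulling back along the smooth map $\th_{G,B}$ (see the conventions of Appendix \ref{sss:Lag mfd}), and since taking preimages under $\frq$ versus $\frq'$ is compatible with such pullback via the Cartesian square, it suffices to prove the identity $\wt{\un\cN}_{\Pi',\s'} = \ol\th_{G}^{\,\leftarrow}(\wt{\un\cN}_{\Pi,\s})$ for the natural projection $\ol\th_{G}\colon\Bun_{G}(\pi')\to\Bun_{G}(\pi)$ on $\Om_{G,\Pi,\s}$-level. The $N$-version follows identically using $\frq'$ in place of $\frq$, or alternatively from the $B$-version together with Remark \ref{rem: cone B vs N} and the fact that $\oll{q}$, $\orr{q}$ commute with the (smooth) base-change maps.

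Next I would use the reformulation obtained in the proof of Theorem \ref{th:univ cone Iw} (essentially \eqref{alt NB}): namely $\wt\cN^{B}_{\Pi,\s} = \orr{\a}\,\oll{\b}(T^{*}(B\bs G/B)^{\Sigma})$, where $\a\colon \Bun_{B,1}(\pi,\s)\twtimes{B^{\Sigma}}(G/B)^{\Sigma}\to \Bun_{G,B}(\pi,\s)$ and $\b$ is projection to $(B\bs G/B)^{\Sigma}$. The point is that \emph{both} $\a$ and $\b$ are defined purely fibrewise over $S$ and so commute with arbitrary base change $\th\colon S'\to S$: we get a Cartesian square relating the $\a,\b$ for $\pi$ to those for $\pi'$, in which the vertical maps are the base-change projections, and the map $\b$ (resp. its source and target) pulls back to the corresponding data for $\pi'$. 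Then the compatibility of Lagrangian transport with Cartesian squares (Lemma \ref{l:Cart Lag}) and with composition of correspondences lets me compute
\[
\wt\cN^{B}_{\Pi',\s'} = \orr{\a'}\,\oll{\b'}(T^{*}(B\bs G/B)^{\Sigma}) = \oll{\th_{G,B}}\bigl(\orr{\a}\,\oll{\b}(T^{*}(B\bs G/B)^{\Sigma})\bigr) = \oll{\th_{G,B}}(\wt\cN^{B}_{\Pi,\s}),
\]
using that the fixed Lagrangian $T^{*}(B\bs G/B)^{\Sigma}$ is literally unchanged (it involves no curve and no base). Care must be taken that $\th$ need not be smooth, so one cannot naively invoke $\oll{\th}$ on arbitrary cotangent stacks; this is exactly why the presentation through $(B\bs G/B)^{\Sigma}$ — where the only non-Cartesian ingredient is a fixed Lagrangian independent of $S$ — is the right device, and one checks that $\th_{G,B}$ itself \emph{is} a base change of $\th$ along the smooth representable map $\Bun_{G,B}(\pi,\s)\to S$, hence $\oll{\th_{G,B}}$ is well-behaved.

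The main obstacle I anticipate is purely bookkeeping: verifying that the correspondence $\Om_{B,\Pi,\s}\leftarrow\cdots\rightarrow\Om_{G,\Pi,\s}$ (equivalently, the diagram \eqref{sq Om} in the family setting) genuinely base-changes to the corresponding diagram for $\pi'$, i.e.\ that forming $\Bun_{B,1}$, the twist $\twtimes{B^{\Sigma}}(G/B)^{\Sigma}$, the induction map $\frp_\s$, and the descent to $\Bun_{G}$, $\Bun_{B}$ of the rigidified cotangent bundles all commute with $-\times_{S}S'$. Each of these is standard (moduli of bundles along fibres, associated bundles, and cotangent bundles of smooth-over-$S$ stacks all commute with base change), but assembling them into one Cartesian diagram of correspondences compatible with the transport operations $\oll{(-)}$, $\orr{(-)}$ requires invoking Lemma \ref{l:Cart Lag} at each node and keeping track of which squares are Cartesian. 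Once that diagram is in place, the equality of Lagrangians is formal. I would therefore organize the write-up as: (i) reduce to the $\Om_{G,\Pi,\s}$-statement; (ii) record the base-change Cartesian diagram for $\a,\b$; (iii) apply Lemma \ref{l:Cart Lag} and conclude; (iv) remark that the $N$-case follows by the same argument or from Remark \ref{rem: cone B vs N}.
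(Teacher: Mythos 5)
Your argument is correct, but the main step differs from the paper's. You share with the paper the reduction through the commutative diagram relating $\frq$, $\frq'$ and the bundles $\Om_{G,\Pi,\s}$, $\Om_{G,\Pi',\s'}$ (the paper's diagram \eqref{OmT*}, whose left square is Cartesian); but where the paper then proves the $\Om$-level identity by applying Lemma \ref{l:Cart Lag} directly to the Cartesian square $\Bun_{B,1}(\pi',\s')\to\Bun_{G,1}(\pi',\s')$ over $\Bun_{B,1}(\pi,\s)\to\Bun_{G,1}(\pi,\s)$ --- i.e.\ it works straight from the definition of $\wt{\un\cN}_{\Pi,\s}$ as the transport of the zero section under $\frp_\s$, using that the backward transport of a zero section is again a zero section --- you instead transport the fixed Lagrangian $T^*(B\bs G/B)^{\Sigma}$ through the presentation \eqref{alt NB}, applying Lemma \ref{l:Cart Lag} to the Cartesian square for $\a,\a'$ and the functoriality $\oll{\th_\a}\oll{\b}=\oll{\b'}$. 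Both routes are valid and of the same nature (everything commutes with $-\times_S S'$, plus Lemma \ref{l:Cart Lag}); the paper's is slightly more economical since it avoids invoking \eqref{alt NB}, which is itself derived from the zero-section description, while yours has the virtue of making explicit that the transported Lagrangian involves no data over $S$. Two small points: your organization is redundant --- once you use \eqref{alt NB}, which already lives at the level of $T^*\Bun_{G,B}(\pi,\s)$, the preliminary reduction to the $\Om_{G,\Pi,\s}$-statement is unnecessary (and conversely, if you do reduce, you should state and prove the $\Om$-level analogue rather than \eqref{alt NB}); and the worry that non-smoothness of $\th$ forces the $\a,\b$ device is overstated, since $\oll{\th_{G,B}}$ is defined for arbitrary maps and the paper's definitional route needs only that the relevant squares are Cartesian with smooth entries, exactly as yours does. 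Your treatment of the $N$-case (directly, or via Remark \ref{rem: cone B vs N}) matches the paper.
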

\begin{proof} We will prove the case of $B$; the $N$ version can be proved similarly or deduced from Remark~\ref{rem: cone B vs N}

When there are no sections $\s$, the assertion follows by applying Lemma \ref{l:Cart Lag} to the Cartesian diagram
\begin{equation*}
\xymatrix{\Bun_{B}(\pi')\ar[r]^{\frp'}\ar[d]^{\th_B} & \Bun_{G}(\pi')\ar[d]^{\th_G}\\
\Bun_{B}(\pi)\ar[r]^{\frp} & \Bun_{G}(\pi)}
\end{equation*}
When there are sections $\s$, adding trivializations along the sections in the above diagram, 
so replacing $\Bun_{B}(\pi)$ by $\Bun_{B,1}(\pi,\s)$, $\Bun_{G}(\pi)$ by $\Bun_{G,1}(\pi,\s)$, etc.,
we get
\begin{equation*}
\wt{\un\cN}_{\Pi',\s'}=\th^{\na}_{\Om}(d\th_{\Om})^{-1}(\wt{\un\cN}_{\Pi,\s}).
\end{equation*}
where 
\begin{equation*}
\xymatrix{\Om_{G,\Pi',\s'} & S'\times_{S}\Om_{G,\Pi,\s} \ar[r]^-{d\th_{\Om}}\ar[l]_-{\th_{\Om}^{\na}} & \Om_{G,\Pi,\s}}.
\end{equation*}
We have a commutative diagram
\begin{equation}\label{OmT*}
\xymatrix{ T^{*}\Bun_{G,B}(\pi',\s')\ar[d]^{\frq'} & S'\times_{S}T^{*}\Bun_{G,B}(\pi,\s) \ar[r]^-{\th_{G,B}^{\na}}\ar[l]_-{d\th_{G,B}} \ar[d]^{\frq''} & T^{*}\Bun_{G,B}(\pi,\s)\ar[d]^{\frq}\\
\Om_{G,\Pi',\s'} & S'\times_{S}\Om_{G,\Pi,\s} \ar[r]^-{d\th_{\Om}}\ar[l]_-{\th_{\Om}^{\na}}  & \Om_{G,\Pi,\s} }
\end{equation}
Therefore
\begin{eqnarray*}
\wt\cN^{B}_{\Pi',\s'}&=&\frq'^{-1}(\wt{\un\cN}_{\Pi',\s'})=\frq'^{-1}\th^{\na}_{\Om}(d\th_{\Om})^{-1}(\wt{\un\cN}_{\Pi,\s})\\
&=&\th_{G,B}^{\na}\frq''^{-1}(d\th_{\Om})^{-1}(\wt{\un\cN}_{\Pi,\s})=\th_{G,B}^{\na}(d\th_{G,B})^{-1}\frq^{-1}(\wt{\un\cN}_{\Pi,\s})\\
&=&\oll{\th_{G,B}}(\wt\cN^{B}_{\Pi,\s})
\end{eqnarray*}
where we use that the left square in \eqref{OmT*} is Cartesian to get the third equality. \end{proof}

\sss{Real variant}\label{sss:real univ cone} 
Elsewhere in the paper, we will base change families of curves and hence bundles to real analytic bases. We comment here how the preceding constructions naturally extend to this setting.

We start as before with a smooth projective  family $\pi:\frX\to S$ of curves over a smooth base stack $S$.  Suppose $S'\in\frR_{/S}$ is a smooth real analytic space over $S$. Let $\pi': \frX_{S'}\to S'$  be the base change of $\pi$ and $\Pi': \Bun_{G}(\pi')\to S'$  the base change of $\Pi: \Bun_{G}(\pi)\to S$ along $\th$. More precisely, using notation of \S\ref{sss:real bc},  we define $\frX_{S'}=\pi^{\#}S'\in\frR_{/\frX}$, and $\Bun_{G}(\pi')=\Pi^{\#}S'\in\frR_{/\Bun_{G}(\pi)}$.

Observe that Definition \ref{d:univ nilp cone} naturally extends to the setting of  $\Pi'$: we define $\wt\cN_{\Pi'}$ to be the transport of the zero section of $T^{*}\Bun_{B}(\pi')$ to $T^{*}\Bun_{G}(\pi')$. The same argument as Proposition \ref{p:univ cone base change} shows that $\wt\cN_{\Pi'}$ is also the transport of $\wt\cN_{\Pi}$ under the Lagrangian correspondence attached to $\th_{G}: \Bun_{G}(\pi')\to \Bun_{G}(\pi)$.  
Moreover, the same argument as Theorem \ref{th:Eis cone closed} shows that $\wt\cN_{\Pi'}$ is a closed conic Lagrangian in $T^{*}\Bun_{G}(\pi')$ and the projection to the relative global nilpotent cone $\cN_{\Pi}$ is a bijection. 

Similar remarks apply in the case of $B$ or $N$-reductions along sections $\s = \{\s_{a}\}_{a\in \Sigma}$. 

In conclusion,  all results from this section apply to the real analytic family $\pi': \frX'\to S'$ obtained from an algebraic family by base change.

\subsection{Nilpotent sheaves and Hecke stability}
The goal of this subsection is to show that sheaves with singular support in the universal nilpotent cone are stable under Hecke functors. This is a generalization of \cite[Theorem 5.2.1]{NY}.

\sss{Nilpotent sheaves}\label{sss:two real bases} Consider a family of curves $\pi:\frX\to S$ as in the beginning of the section. Let $M',M\in\frR_{/S}$ be smooth real analytic spaces over $S$ with a map $\th$
\begin{equation*}
\xymatrix{M' \ar[r]^{\th} &M\ar[r] & S}
\end{equation*}
Let $\pi_{M}:\frX_{M}:=\pi^{\#}M\to M$, $\pi_{M'}: \frX_{M'}:=\pi^{\#}M' \to M'$ be the base changes of $\pi$, and $\Pi_{M}: \Bun_{G}(\pi_{M})=\Pi^{\#}M\to M$ and $\Pi_{M'}: \Bun_{G}(\pi_{M'})=\Pi^{\#}M'\to M'$  be the base changes of $\Pi$. For notations see \S\ref{sss:real bc}.

Let $\Th: \Bun_{G}(\pi_{M'})\to \Bun_{G}(\pi_{M})$ be the map in $\frR_{/\Bun_{G}(\pi)}$ induced from $\th$.

We use abbreviated notation
\begin{equation*}
\Sh_{\cN}(\Bun_{G}(\pi_{M})):=\Sh_{\wt\cN_{\Pi_{M}}}(\Bun_{G}(\pi_{M}))
\end{equation*}
where $\wt\cN_{\Pi_{M}}$ is the real version of the universal nilpotent cone for $\pi_{M}: \frX_{M}\to M$ (see  \S\ref{sss:real univ cone}). Similarly define $\Sh_{\cN}(\Bun_{G}(\pi_{M'}))$.

\begin{lemma}\label{l:nilp pullback} In the above situation,  $\Th^{*}$ sends $\Sh_{\cN}(\Bun_{G}(\pi_{M}))$ to $\Sh_{\cN}(\Bun_{G}(\pi_{M'}))$.
\end{lemma}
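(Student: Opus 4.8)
The statement to prove is that the pullback $\Th^{*}$ along the base-change map $\Th:\Bun_{G}(\pi_{M'})\to \Bun_{G}(\pi_{M})$ preserves the subcategory of nilpotent sheaves, i.e.\ sends sheaves with singular support in $\wt\cN_{\Pi_{M}}$ to sheaves with singular support in $\wt\cN_{\Pi_{M'}}$. The plan is to reduce this to the microlocal behavior of singular support under a smooth pullback together with the base-change compatibility of the universal nilpotent cone established in Proposition~\ref{p:univ cone base change} (and its real-analytic variant in \S\ref{sss:real univ cone}).

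First I would observe that, since $M'\to M$ is a morphism of smooth real analytic spaces over $S$, the induced map $\Th:\Bun_{G}(\pi_{M'})\to \Bun_{G}(\pi_{M})$ is the base change of $\Bun_{G}(\pi)$ along $M'\to M$; in particular it is obtained (locally on the source) as a smooth morphism followed by a closed embedding of the form cut out by the real-analytic equations defining $M'\subset$ (a smooth space over) $M$ --- or, more cleanly, one reduces to the two standard cases: (a) $\Th$ smooth, and (b) $\Th$ a ``non-characteristic'' locally closed embedding. In case (a) the general microlocal estimate gives $\ssupp(\Th^{*}\cF)\subset \oll{\Th}(\ssupp\cF)$, where $\oll{\Th}$ denotes the transport of a conic subset of $T^{*}\Bun_{G}(\pi_{M})$ to $T^{*}\Bun_{G}(\pi_{M'})$ along the Lagrangian correspondence attached to $\Th$ (this is the transport operation recalled in Appendix~\ref{sss:Lag mfd}). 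In case (b) one needs that $\wt\cN_{\Pi_{M}}$ is non-characteristic for the embedding, which is exactly Corollary~\ref{c:univ cone non-char}: $\wt\cN_{\Pi_{M}}$ meets $\Pi_{M}^{*}(T^{*}M)$ only in the zero section, so it is non-characteristic for any embedding over $M$, and pullback is again controlled by $\oll{\Th}$.

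The key algebraic input is then Proposition~\ref{p:univ cone base change}, whose real-analytic version is stated in \S\ref{sss:real univ cone}: it asserts precisely that $\wt\cN_{\Pi_{M'}}=\oll{\Th}(\wt\cN_{\Pi_{M}})$, i.e.\ the universal nilpotent cone of the base-changed family is the transport of the universal nilpotent cone downstairs. Combining this with the microlocal estimate $\ssupp(\Th^{*}\cF)\subset \oll{\Th}(\ssupp\cF)$ and the hypothesis $\ssupp(\cF)\subset \wt\cN_{\Pi_{M}}$, we get
\begin{equation*}
\ssupp(\Th^{*}\cF)\subset \oll{\Th}(\ssupp\cF)\subset \oll{\Th}(\wt\cN_{\Pi_{M}})=\wt\cN_{\Pi_{M'}},
\end{equation*}
which is the desired conclusion. (One should also note that transport $\oll{\Th}$ is monotone with respect to inclusion of conic subsets, which is immediate from its definition as image under $\Th^{\na}$ of a preimage under $d\Th$.)

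The main obstacle I anticipate is purely foundational rather than conceptual: one must make sure the microlocal estimate $\ssupp(\Th^{*}\cF)\subset\oll{\Th}(\ssupp\cF)$, and the very notion of singular support, are available in the requisite generality --- namely for sheaves on the relative real analytic stacks $\Bun_{G}(\pi_{M})$ of Appendix~\ref{app:A}, where $\Th$ need not be of finite type and the spaces are infinite-dimensional stacks. This requires invoking the sheaf-theoretic formalism on such stacks developed in Appendix~\ref{app:A} (smooth descent for singular support, and the pullback estimate for smooth maps and for non-characteristic embeddings, checked after passing to smooth finite-type charts), and checking that the decomposition of $\Th$ into a smooth part and a non-characteristic embedding is compatible with that chart-by-chart verification. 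Once the formalism is in place, the argument is the two-line computation above.
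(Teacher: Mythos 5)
Your proposal is correct and takes essentially the same route as the paper: after reducing to real manifolds via smooth covers, the paper shows $\Th$ is non-characteristic for $\wt\cN_{\Pi_{M}}$ in the Kashiwara--Schapira sense (using Corollary \ref{c:univ cone non-char}, exactly as you do), applies the non-characteristic pullback estimate from \cite[Proposition 5.4.13]{KS}, and concludes with the real version of Proposition \ref{p:univ cone base change} identifying $\oll{\Th}(\wt\cN_{\Pi_{M}})$ with $\wt\cN_{\Pi_{M'}}$. The only cosmetic difference is that the paper does not factor $\Th$ into a smooth map followed by an embedding: it checks the non-characteristicity condition for the general map $\Th$ in one step, observing that $\ker(d\Th)$ is contained in the pullback of $T^{*}M$.
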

\begin{proof}By passing to smooth covers in $\frR_{/S}$ of $M$ (e.g., $M_{V,v}$ for $(V,v)\in\Sch^{sm}_{/S}$), we reduce to the case where $M'$ and $M$ are both real manifolds. 

We claim that $\Th$ is non-characteristic with respect to $\wt\cN_{\Pi_{M}}$ in the sense of \cite[Definition 5.4.12]{KS}. Indeed, in the Lagrangian correspondence
\begin{equation*}
\xymatrix{T^{*}\Bun_{G}(\pi_{M'}) & (T^{*}\Bun_{G}(\pi_M))\times_{M}M'\ar[l]_-{d\Th}\ar[r]^-{\Th^{\na}} & T^{*}\Bun_{G}(\pi_M)}
\end{equation*}
The kernel of $d\Th$ is contained in the pullback of $T^{*}M$. By Corollary \ref{c:univ cone non-char}, $\wt\cN_{\Pi_{M}}\cap \Pi_{M}^{*}(T^{*}M)$ is the zero section, hence $\Th^{\na,-1}\wt\cN_{\Pi_{M}}$ also intersects the pullback of $T^{*}M'$ via $d\Th$ in the zero section. Therefore $\Th$ is non-characteristic with respect to $\wt\cN_{\Pi_{M}}$.

By \cite[Proposition 5.4.13]{KS}, for $\cF\in \Sh_{\cN}(\Bun_{G}(\pi_{M'}))$, $\ssupp(\Th^{*}\cF)$ is contained in $\oll{\Th}(ss(\cF))\subset \oll{\Th}(\wt\cN_{\Pi_{M}})$, which is $\wt\cN_{\Pi_{M'}}$ by the real version of Proposition \ref{p:univ cone base change}.
\end{proof}

In particular, take $M'$ to be a point $\{s\}$ in $M$, we get a restriction functor
\begin{equation}\label{res Ms}
i_{s}^{*}: \Sh_{\cN}(\Bun_{G}(\pi_{M}))\to \Sh_{\cN}(\Bun_{G}(\frX_{s}))
\end{equation}
where on the right side $\cN$ denotes the usual global nilpotent cone of $\Bun_{G}(\frX_{s})$. 

\begin{conj}\label{c:res eq} Suppose $M$ is a contractible real manifold, then the restriction functor \eqref{res Ms} is an equivalence.
\end{conj}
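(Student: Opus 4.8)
This is a statement of ``local constancy'' of the automorphic category in the curve, and the engine is the non-characteristic property of the universal nilpotent cone. By Corollary~\ref{c:univ cone non-char} and its real-analytic form (\S\ref{sss:real univ cone}), one has $\wt\cN_{\Pi_M}\cap \Pi_M^*(T^*M)=0_{\Bun_G(\pi_M)}$; by Proposition~\ref{p:univ cone base change} (real form) the cone is compatible with base change and restricts over $s\in M$ to the global nilpotent cone of $\Bun_G(\frX_s)$. The slogan is that a sheaf with singular support in $\wt\cN_{\Pi_M}$ has no singular codirections transverse to the fibers of $\Pi_M$, so it is ``parallel along $M$'', and over a contractible $M$ such an object should be determined up to canonical isomorphism by its restriction to any one fiber. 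The plan is: (i) reduce to finite type over $M$; (ii) on each finite-type piece exhibit $\Pi_M$ as a non-characteristically trivial stratified fibration; (iii) pass to the limit.

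\textbf{Reduction and stratified triviality.} The Harder--Narasimhan/instability stratification exhausts $\Bun_G(\pi_M)$ by $\Pi_M$-stable quasi-compact opens $\cU^{\le p}$, each surjecting onto $M$ with $i_s^{-1}(\cU^{\le p})=\cU^{\le p}_s$, and $\Sh_\cN(\Bun_G(\pi_M))=\lim_p \Sh_{\cN}(\cU^{\le p})$ compatibly with $i_s^*$; so it suffices to treat each $\cU^{\le p}$. Working on a smooth atlas, the fixed conic subanalytic Lagrangian $\wt\cN_{\Pi_M}$ admits an adapted Whitney stratification $\mathcal{S}$ with $\bigcup_{Z\in\mathcal{S}}N^*Z\subseteq\wt\cN_{\Pi_M}$; then $\Sh_{\cN}(\cU^{\le p})$ is exactly the category of $\mathcal{S}$-constructible sheaves, since any such sheaf has $SS\subseteq\bigcup N^*Z\subseteq\wt\cN_{\Pi_M}$. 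Because $\bigcup N^*Z$ is disjoint from $\Pi_M^*(T^*M)$ off the zero section, each stratum $Z$ is submersive over $M$ and $(\cU^{\le p},\mathcal{S})$ is $\Pi_M$-transverse, so by Thom's first isotopy lemma $\Pi_M\colon(\cU^{\le p},\mathcal{S})\to M$ is a locally trivial stratified fibration, hence trivial over the contractible $M$: a stratified homeomorphism $\cU^{\le p}\cong \cU^{\le p}_s\times M$ over $M$. Constructibility then yields $i_s^*\colon \Sh_{\mathcal{S}}(\cU^{\le p})\isom\Sh_{\mathcal{S}_s}(\cU^{\le p}_s)$; taking $\lim_p$ gives the theorem. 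For full faithfulness alone one can instead feed $\RuHom(\cF,\cG)$ into the non-characteristic deformation lemma \cite{KS}: its singular support again avoids $\Pi_M^*(T^*M)$ off the zero section, using crucially that $P_{\Pi_M}$ is a \emph{bijection} $\wt\cN_{\Pi_M}\isom\cN_{\Pi_M}$ by Theorems~\ref{th:Eis cone closed} and \ref{th:univ cone Iw}, so two covectors of $\wt\cN_{\Pi_M}$ over the same point whose difference is conormal to the fiber must coincide.

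\textbf{The main obstacle.} Both Thom's isotopy lemma and the non-characteristic deformation lemma require a properness hypothesis --- that $\Pi_M$ be proper on the closure of each stratum, equivalently that the ``deformation annulus'' in $\cU^{\le p}$ over $M$ be compact --- and $\Bun_G(\pi_M)\to M$ is not proper, even after restriction to $\cU^{\le p}$. This is exactly where the argument above breaks, and is why the statement is posed as a conjecture. The resolution should exploit that the nilpotent singular-support condition is itself a ``propagation/finiteness'' condition: one wants to extend $\cU^{\le p}$, the cone $\wt\cN_{\Pi_M}$, and the stratification $\mathcal{S}$ to a compactification along the lines of the Drinfeld/Eisenstein compactifications already used in \S\ref{ss:Eis}, or to leverage the properness of the Hitchin map, so that the relevant maps become proper over $M$ while the category of nilpotent sheaves is unchanged. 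I expect this properness-replacement step to be the entire difficulty; granting it, the remaining ingredients --- existence of the adapted stratification on an Artin stack, compatibility of the equivalences along the filtration by $p$, and the real-analytic bookkeeping from the appendix --- should be routine.
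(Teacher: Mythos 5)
First, a point of calibration: the paper does not prove this statement at all --- it is stated as Conjecture~\ref{c:res eq} (with only the heuristic remark that local constancy in the curve is expected from Betti geometric Langlands), so your argument must stand on its own, and by your own admission it does not. The step you flag is indeed fatal as written: both Thom's first isotopy lemma and the non-characteristic deformation/microlocal Morse lemma from \cite{KS} require properness of $\Pi_M$ (or of $\Pi_M$ restricted to closures of strata, resp.\ compactness of the relevant sublevel sets), and $\Bun_G(\pi_M)\to M$ is not proper even after Harder--Narasimhan truncation; moreover the closure of a stratum of $\cU^{\le p}$ need not stay inside $\cU^{\le p}$, so truncation does not even localize the problem. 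Your proposed repair is only a hope, not an argument: the Drinfeld compactification of \S\ref{ss:Eis} compactifies the fibers of $\Bun_B\to\Bun_G$, not of $\Bun_G(\pi_M)\to M$, and properness of the Hitchin map lives on the Higgs side and does not bound the non-properness of $\Bun_G$ itself. Since this is precisely the content that would be needed to upgrade the conjecture to a theorem, the proposal has a genuine missing idea rather than a routine gap.

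Two further steps presented as firm are also not correct as stated. (i) For a stratification $\mathcal{S}$ adapted to $\wt\cN_{\Pi_M}$ one only gets a full inclusion $\Sh_{\cN}(\cU^{\le p})\subset \Sh_{\mathcal{S}}(\cU^{\le p})$, not an equality: $\mathcal{S}$-constructible sheaves can have singular support anywhere in $\bigcup_Z N^*Z$, which is typically strictly larger than $\wt\cN_{\Pi_M}$. So even granting a stratified trivialization $\cU^{\le p}\cong \cU^{\le p}_s\times M$, the resulting equivalence of constructible categories does not immediately restrict to an equivalence of the singular-support-constrained subcategories; you would still have to show that the inverse functor carries $\Sh_{\cN}(\cU^{\le p}_s)$ into $\Sh_{\wt\cN_{\Pi_M}}(\cU^{\le p})$, presumably using the bijection $\wt\cN_{\Pi_M}\isom\cN_{\Pi_M}$ of Theorem~\ref{th:Eis cone closed} together with local constancy along $M$, and this is not supplied. (ii) Adaptedness gives $\wt\cN_{\Pi_M}\subset\bigcup_Z N^*Z$ but does \emph{not} make $\bigcup_Z N^*Z$ disjoint from $\Pi_M^*(T^*M)$ away from the zero section; only $\wt\cN_{\Pi_M}$ enjoys that property (Corollary~\ref{c:univ cone non-char}). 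Hence the submersivity of every stratum over $M$, and with it the applicability of Thom's lemma, is not automatic --- you would need to construct a stratification that is simultaneously adapted to the cone and transverse to the fibers of $\Pi_M$, which is an additional (nontrivial) requirement on top of \cite[Corollary~8.3.22]{KS}.
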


\begin{remark}Intuitively, this conjecture says that the categories $\Sh_{\cN}(\Bun_{G}(\frX_{s}))$ vary locally constantly with $s$. This is expected from the Betti geometric Langlands conjecture because the spectral side of the conjecture depends only on the underlying topological surface.
\end{remark}

\sss{Spherical Hecke functor} 
Now suppose $S'$ is another stack with a map $\th_{S}: S'\to S$ that lifts to $\frX$ 
\begin{equation*}
\xymatrix{ & \frX\ar[d]^{\pi}\\
S'\ar[r]^{\th_{S}}\ar[ur]^{\s} & S}
\end{equation*}
Attached to this datum there is a Hecke correspondence 
\begin{equation*}
\xymatrix{  & \Hk^{\Sph}_{\s} \ar[rr]^-{\inv^{\Sph}_{\s}}\ar[dr]^{p_{2}}\ar[dl]_{p_{1}}  && \frac{G\tl{z}\bs G\lr{z}/G\tl{z}}{\Aut(\CC\tl{t})}\\
\Bun_{G}(\pi) &  & \Bun_{G}(\pi')  
}
\end{equation*}
which classifies $(s', \cE_{1}, \cE_{2}, \t)$ where $s'\in S'$, $\cE_{1},\cE_{2}$ are $G$-torsors over $X_{\th(s')}$ and  $\t$ is an isomorphism between  $\cE_{1}$ and $\cE_{2}$ over $X_{\th(s')}\bs\{\s(s')\}$.  The map $\inv^{\Sph}_{\s}$ records the relative position of $\cE_{1}$ and $\cE_{2}$ near $\s$.

Consider the spherical Hecke category 
\begin{equation*}
\cH^{\Sph}=\Sh\left(\frac{G\tl{z}\bs G\lr{z}/G\tl{z}}{\Aut(\CC\tl{t})}\right)
\end{equation*}
For $\cK\in \cH^{\Sph}$, we have the Hecke functor
\begin{equation*}
H^{\Sph}_{\s,\cK}=p_{2!}(p^{*}_{1}(-)\ot \inv^{\Sph,*}_{\s}\cK): \Sh(\Bun_{G}(\pi))\to \Sh(\Bun_{G}(\pi'))
\end{equation*}

\begin{theorem}\label{th:sph Hk pres} For any $\cK\in \cH^{\Sph}$, the Hecke functor $H^{\Sph}_{\s,\cK}$ sends $\Sh_{\cN}(\Bun_{G}(\pi))$ to $\Sh_{\cN}(\Bun_{G}(\pi'))$. 
\end{theorem}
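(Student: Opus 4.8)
The plan is to reduce the statement to a singular-support estimate for the three geometric operations that build the Hecke functor---$p_1^*$, tensoring with the pull-back $\inv^{\Sph,*}_\s\cK$, and $p_{2!}$---and to show each of them is controlled by the universal nilpotent cone. First I would factor the Hecke correspondence through the moduli of $G$-bundles with a full level structure at the (moving) point $\s$, exactly as in the construction of Hecke operators in \cite{NY}. Concretely, enlarge $\Hk^{\Sph}_\s$ to the stack classifying $(s',\cE_1,\cE_2,\t)$ together with a trivialization of $\cE_1$ on a formal disc around $\s(s')$; then $\inv^{\Sph}_\s$ lifts to a map to $\Gr_{G}/\Aut(\CC\tl t)$, and $p_1,p_2$ become (after quotienting back) compositions of a smooth map and a proper ind-map. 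Since the kernel $\cK$ is $\Aut(\CC\tl t)$-equivariant and its singular support is automatically ``$0\times\cN_{\Gr}$''-type by the spherical structure, the key input becomes: the relative Hecke correspondence at a point is non-characteristic, in the appropriate sense, for $\wt\cN_{\Pi}$.

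The heart of the argument is then a microlocal one, running parallel to Theorem~\ref{th:Eis cone closed}. Using Remark~\ref{rem: cone B vs N} and the description $\wt\cN_\Pi = \orr r\, \orr\a\,\oll\b(T^*(B\backslash G/B)^\Sigma)$ of the universal nilpotent cone via the induction diagram, I would bound the singular support of $H^{\Sph}_{\s,\cK}(\cF)$ by transporting $\wt\cN_{\Pi}$ through the Hecke correspondence and checking, branch by branch, that the result lands in $\wt\cN_{\Pi'}$. The cleanest route is to reduce to the case where $\cK$ is a single IC sheaf of a Schubert cell, then use the contraction principle / Braden-type hyperbolic localization together with the fact (from the proof of Theorem~\ref{th:Eis cone closed}) that $\wt\cN_\Pi$ is exactly the locus of Higgs fields whose associated $B$-reduction ``propagates'' from the generic point: a Hecke modification at one point changes the bundle but not the generic $B$-reduction determined by a nilpotent Higgs field via Jacobson-Morozov, so nilpotence of the Higgs field is preserved. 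This is the step I expect to be the main obstacle: making the ``propagation of the generic reduction'' argument work uniformly in families over the real analytic base $S'$, where one cannot use generic points of the total space but must instead argue fiberwise and then invoke the base-change compatibility of Proposition~\ref{p:univ cone base change} (and its real variant, \S\ref{sss:real univ cone}) to globalize. One must also be careful that $p_{2!}$, which is only ind-proper, does not enlarge the singular support beyond $\wt\cN_{\Pi'}$; here the properness of the Drinfeld compactification maps $\ov\frp^\l$ used in Theorem~\ref{th:Eis cone closed}, transported to the Hecke setting, provides the needed closedness.

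Assembling: by Lemma~\ref{l:nilp pullback} and Corollary~\ref{c:univ cone non-char}, $p_1^*$ sends $\Sh_\cN(\Bun_G(\pi))$ into sheaves with singular support in $\oll{p_1}(\wt\cN_{\Pi})$; tensoring with $\inv^{\Sph,*}_\s\cK$ adds only directions along the $\Gr$-fibers, which are killed upon applying $p_{2!}$ because of the $\Aut(\CC\tl t)$-equivariance of $\cK$ (so the ``moving point'' directions contribute nothing to $T^*S'$); and $p_{2!}$ then lands in $\orr{p_2}$ of that set, which one identifies with $\wt\cN_{\Pi'}$ using the microlocal computation above. The affine-Hecke variant is handled identically, replacing $\Gr_G$ by the affine flag variety and $\Bun_G$ by $\Bun_{G,B}$, and invoking the $B$/$N$ versions in Theorem~\ref{th:univ cone Iw} together with Remark~\ref{rem: cone B vs N}. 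I would organize the writeup so that the microlocal lemma (Hecke correspondence is non-characteristic for the universal nilpotent cone, and transports it to itself) is stated first as a standalone lemma, and then Theorem~\ref{th:sph Hk pres} follows formally.
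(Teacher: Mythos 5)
You have the right overall shape—reduce to the case $S'=S$ with $\s$ a section (your factorization of $p_1$ through $\Bun_G(\pi')$ plus Lemma \ref{l:nilp pullback} is exactly the paper's first step) and then prove a Lagrangian transport estimate for the Hecke correspondence—but the core estimate is asserted rather than proved, and the mechanisms you offer do not reach it. The theorem says the output has singular support in $\wt\cN_{\Pi'}$, which is a specific Lagrangian \emph{lift} of the relative nilpotent cone; equivalently, besides fiberwise nilpotence one must show the $\Pi'^*(T^*S')$-component of the singular support vanishes (non-characteristicity over the base). Your Jacobson--Morozov argument ("a point modification does not change the generic $B$-reduction of a nilpotent Higgs field") only bounds the image of the singular support in the relative cotangent $T^*_{\Pi'}$, and the claim that the moving-point/base directions "are killed upon applying $p_{2!}$ because of the $\Aut(\CC\tl{z})$-equivariance of $\cK$" is precisely the hard statement, not an argument—this is the part that was already nontrivial for a single curve in \cite{NY}, and the paper stresses its proof differs from \cite{NY} even there. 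Nor can one argue fiberwise and then invoke Proposition \ref{p:univ cone base change}: restricting singular support to fibers of $\Pi'$ presupposes the non-characteristicity you are trying to establish, so the fiberwise bound cannot be bootstrapped to the total space. Finally, "reduce to the IC sheaf of a single spherical Schubert cell and use Braden-type localization" gives no handle on the transport problem over high-dimensional spherical strata; nothing in the proposal computes $\orr{p}_2\oll{p}_1(\wt\cN_\Pi)$ there.

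For comparison, the paper avoids the spherical strata entirely: it pulls back along $r:\Bun_{G,N}(\pi,\s)\to\Bun_G(\pi)$, uses $H_{\s,\wt\cK}(r^*\cF)\simeq r^*H^{\Sph}_{\s,\cK}(\cF)\ot H^*_c(G/N)$ and surjectivity of $r$ to reduce the spherical statement to the Iwahori one (Theorem \ref{th:aff Hk pres}), where the universal monodromic category $\cH$ is monoidally generated by $\wh\D(s)$ for simple affine reflections and $\wh\D(\om)$ for length-zero elements. For these generators the correspondence $\Hk_{\le s}$, after quotienting by $H_{>0}$, has proper projections with $\PP^1$-type fibers, and the transport estimate is done by hand from the presentation $\wt\cN^{B}_{\Pi,\s}=\bigcup_{w\in W}\orr{\a}_{w}(0_{\Bun_{B,1}(w)})$: one introduces the spaces $\wt C(w_1,w_2)$ of pairs of $B$-data related by a modification of relative position $s$, checks the projections $\wt c_{1,w_2},\wt c_{2,w_2}$ are smooth of relative dimension $0$ or $1$, and concludes that transported zero-sections are again contained in images of zero-sections, hence in the cone. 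That explicit computation, which controls the full covector including base directions, is the missing ingredient in your write-up; if you want to salvage your outline, you should replace the "contraction principle" step by this kind of generator-by-generator Lagrangian calculation (or supply an actual proof of the non-characteristicity claim).
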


\begin{remark} We explain how Theorem \ref{th:sph Hk pres} recovers \cite[Theorem 5.2.1]{NY}. The latter says that for a single curve $X$ and Hecke modification $H_{\cK}: \Sh(\Bun_{G}(X))\to \Sh(X\times \Bun_{G}(X))$ for a moving point in $X$, then  $H_{\cK}$ sends $\Sh_{\cN}(\Bun_{G}(X))$ to $\Sh_{X\times \cN}(X\times \Bun_{G}(X))$ (i.e., singular support of the result is contained in the product of the zero section of $T^{*}X$ and $\cN\subset T^{*}\Bun_{G}(X)$).

To recover this statement, we only need to apply Theorem \ref{th:sph Hk pres} to the constant curve $\frX=X$ over $S=\pt$ and $S'=X, \s=\id_{X}:S'=X\to X$. Note that $\Bun_{G}(\pi')=X\times \Bun_{G}(X)$ and its universal nilpotent cone is $X\times \cN$ by the base change property in Proposition \ref{p:univ cone base change}.
\end{remark}

\sss{Real version}
Theorem \ref{th:sph Hk pres} has a real analytic version as follows. Let $M'\in\frR_{/S'}$, $M\in\frR_{/S}$ and $(\th_{S}, \th_{M}): (S', M')\to (S, M)$  be a map in $\frR_{\Sta}$. Then the map $\s:S'\to \frX$ induces a map $\s_{M}: M'\to \frX_{M}$. Define $\Hk^{\Sph}_{\s_{M}}$ to be the base change of $M'$ along $\Hk^{\Sph}_{\s}\to S'$. Then Hecke functors 
\begin{equation*}
H_{\s_{M},\cK}^{\Sph}: \Sh(\Bun_{G}(\pi_{M}))\to \Sh(\Bun_{G}(\pi_{M'}))
\end{equation*}
are defined. 

\begin{theorem}\label{th:sph Hk pres real} In the above situation, for any $\cK\in \cH^{\Sph}$, the Hecke functor $H^{\Sph}_{\s_{M},\cK}$ sends $\Sh_{\cN}(\Bun_{G}(\pi_{M}))$ to $\Sh_{\cN}(\Bun_{G}(\pi_{M'}))$. 
\end{theorem}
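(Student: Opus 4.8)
The plan is to reduce the real analytic statement (Theorem~\ref{th:sph Hk pres real}) to the algebraic one (Theorem~\ref{th:sph Hk pres}) together with the base-change compatibility of the universal nilpotent cone already established in this section. First I would set up the diagram: given $M'\in\frR_{/S'}$, $M\in\frR_{/S}$ and the map $(\th_S,\th_M):(S',M')\to(S,M)$, form the base-changed families $\pi_M:\frX_M\to M$, $\pi_{M'}:\frX_{M'}\to M'$, together with the induced section $\s_M:M'\to\frX_M$ and the Hecke correspondence $\Hk^{\Sph}_{\s_M}$, which by construction is the base change of $\Hk^{\Sph}_{\s}\to S'$ along $M'\to S'$. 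The key point is that all three structure maps in the Hecke correspondence (the two projections $p_1,p_2$ and the invariant map $\inv^{\Sph}_{\s_M}$) are pulled back from the algebraic correspondence over $S$, $S'$ via the maps $M\to S$, $M'\to S'$; hence the Hecke functor $H^{\Sph}_{\s_M,\cK}=p_{2!}(p_1^*(-)\otimes\inv^{\Sph,*}_{\s_M}\cK)$ is compatible, via $*$-pullback along $\Bun_G(\pi_M)\to\Bun_G(\pi)$ and $\Bun_G(\pi_{M'})\to\Bun_G(\pi')$, with the algebraic Hecke functor $H^{\Sph}_{\s,\cK}$.

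Next I would argue the singular-support estimate microlocally. The statement is entirely about whether $\ssupp(H^{\Sph}_{\s_M,\cK}\cF)\subset\wt\cN_{\Pi_{M'}}$, which is a local (indeed microlocal) question on $\Bun_G(\pi_{M'})$. There are two natural routes. The first is to mimic the proof of Theorem~\ref{th:sph Hk pres} directly in the real analytic setting: the proof there estimates the singular support of the output of $p_{2!}(p_1^*(-)\otimes\inv^{\Sph,*}_{\s}\cK)$ using the standard functoriality bounds for singular support under $*$-pullback, tensor product, and proper pushforward (Kashiwara--Schapira), combined with the key geometric input that the relevant correspondence maps are non-characteristic with respect to the universal nilpotent cone (which in turn rests on Theorem~\ref{th:intro sph Hk pres}/\ref{th:sph Hk pres} and Corollary~\ref{c:univ cone non-char}). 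Since these microlocal bounds (\cite[Prop.~5.4.13, 5.4.4, etc.]{KS}) hold verbatim for sheaves on real manifolds, and the foundations of Appendix~\ref{app:A} reduce sheaves on real analytic stacks in $\frR_{/S}$ to sheaves on real manifolds via smooth covers $M_{V,v}$, the same argument applies. The second, cleaner route is: by Lemma~\ref{l:nilp pullback} and the real version of Proposition~\ref{p:univ cone base change}, pullback along $\Bun_G(\pi_M)\to\Bun_G(\pi)$ (resp.\ $\Bun_G(\pi_{M'})\to\Bun_G(\pi')$) takes $\Sh_\cN$ to $\Sh_\cN$, so it would suffice to know the Hecke functor commutes with these pullbacks up to the sheaves we care about; but since $M,M'$ need not map to points, one cannot literally reduce to a single algebraic fiber, so one must run the microlocal estimate over the real base. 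I would therefore take the first route: repeat the proof of Theorem~\ref{th:sph Hk pres}, replacing each appearance of an algebraic stack by its base change to the relevant object of $\frR_{/S}$, and each ``$\cN$'' by the real universal nilpotent cone of \S\ref{sss:real univ cone}, which by \S\ref{sss:real univ cone} still satisfies being closed conic Lagrangian and non-characteristic over the real base, and still transports correctly under base change by the real version of Proposition~\ref{p:univ cone base change}.

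The main obstacle I expect is purely foundational bookkeeping rather than a new idea: one must be careful that the correspondence $\Hk^{\Sph}_{\s_M}$ and the maps $p_1,p_2,\inv^{\Sph}_{\s_M}$ make sense as morphisms in the category $\frR_{/\Bun_G(\pi)}$ (or between such categories), that proper base change and the singular-support functoriality estimates of \cite{KS} are available in the relative real analytic setting developed in Appendix~\ref{app:A}, and that the non-characteristic condition used in the algebraic proof descends to the real analytic base. Concretely, the delicate point is that $p_2$ need not be proper (it is ind-proper / only proper on the relevant support of $\inv^{\Sph,*}_\s\cK$), so one invokes the support condition on $\cK\in\cH^{\Sph}$ exactly as in the algebraic case, and then the microlocal pushforward bound applies on each finite-type piece; checking that this is compatible with taking smooth real covers $M_{V,v}$ is the one place where some care is needed. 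Once these compatibilities are in place the proof is a formal transcription, so I would state the argument as: ``the proof is identical to that of Theorem~\ref{th:sph Hk pres}, replacing the algebraic families and nilpotent cones by their real analytic base changes as in \S\ref{sss:real univ cone}, and using the real version of Proposition~\ref{p:univ cone base change} in place of Proposition~\ref{p:univ cone base change}.''
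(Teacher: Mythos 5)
Your proposal is correct and takes essentially the same route as the paper: the paper proves Theorem \ref{th:sph Hk pres real} by observing that the argument for Theorem \ref{th:sph Hk pres} (itself reduced to the Iwahori statement, Theorem \ref{th:aff Hk pres}, which is already formulated for smooth $M\in\frR_{/S}$) carries over verbatim when every stack is base changed to the real base, using the real universal nilpotent cone of \S\ref{sss:real univ cone} and the real version of Proposition \ref{p:univ cone base change}, exactly as you propose. One small caveat: the algebraic proof you plan to transcribe is not merely a Kashiwara--Schapira non-characteristic estimate but a reduction to affine Hecke generators followed by an explicit Lagrangian-transport computation for the universal nilpotent cone; since your plan is to repeat that proof wholesale over the real base, this does not affect correctness.
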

If we take $\cK$ to be the monoidal unit in $\cH^{\Sph}$, $H_{\s_{M},\cK}^{\Sph}$ is the same as $\Th^{*}$ considered in \S\ref{sss:two real bases}. Therefore this theorem generalizes Lemma \ref{l:nilp pullback}. 

We prove this theorem by first reducing to the Iwahori version (in \S\ref{sss:red to Iw}) and then prove the Iwahori version in \S\ref{sss:proof Iw}.

\sss{Affine Hecke category with universal monodromy}\label{sss:aff Hk}
We shall state a version of Theorem \ref{th:sph Hk pres} with Iwahori level structure. For this we need to define a version of the monodromic affine Hecke category where the monodromy is allowed to vary.

Let $G\lr{z}$ be the loop group of $G$ and $\bI$ (resp. $\bI^{\circ}$) be the preimage of $B\subset G$ (resp. $N\subset G$) under the reduction mod $z$ map $G\tl{z}\to G$. 

Recall the affine Hecke category is the monoidal category
\begin{equation*}
\cH=\Sh_{\bimon}(\bI^{\circ}\bs G\lr{z}/\bI^{\circ})
\end{equation*}
under convolution. Here the subscript $\bimon$ means we are taking only sheaves that are monodromic under both the left and the right translation under $H=B/N$ on $\bI^{\circ}\bs G\lr{z}/\bI^{\circ}$. The monoidal structure is defined by the usual convolution diagram
\begin{equation*}
\xymatrix{ & \bI^{\circ}\bs G\lr{z}\times^{\bI^{\circ}} G\lr{z}/\bI^{\circ} \ar[dl]_{p_{1}}\ar[dr]^{p_{2}}\ar[rr]^-{m}&& \bI^{\circ}\bs G\lr{z}/\bI^{\circ}\\
\bI^{\circ}\bs G\lr{z}/\bI^{\circ} && \bI^{\circ}\bs G\lr{z}/\bI^{\circ}}
\end{equation*}
with the formula
\begin{equation*}
\cK_{1}\star\cK_{2}:=m_{!}(p_{1}^{*}\cK_{1}\otimes p_{2}^{*}\cK_{2})[2\dim H].
\end{equation*}
Recall from \S\ref{sss:Sh0H} the universal local system $\cL_{\univ}$ on $H$. Let $\d: H\simeq \bI/\bI^{\c}\incl G\lr{z}/\bI^{\circ}$ be the natural map. Then $\d_{*}\cL_{\univ}\in \cH$ is an identity object for the monoidal structure on $\cH$. 

The monoidal category $\Sh_{0}(H)$ of local systems on $H$ can be identified with a full monoidal subcategory of $\cH$ of objects supported on $\bI$. Left and right convolution with $\Sh_{0}(H)$ gives $\cH$ the structure of a $\Sh_{0}(H)\ot \Sh_{0}(H)$-module.

For each $w\in \tilW$, fix a lifting $\dot w\in N_{G\lr{z}}(T)$. Let $i_{w}: \bI^{\c}\bs \bI \dot{w} \bI/\bI^{\c}\incl \bI^{\c}\bs G\lr{z}/\bI^{\circ}$ be the inclusion. The map $h_{\dot w}: H\to\bI^{\c}\bs \bI \dot{w} \bI/\bI^{\c}$ mapping $x\mapsto x\dot w$ is a gerbe for a pro-unipotent group. Therefore the universal local system $\cL_{\univ}$ descends to a local system $\cL_{\univ}(w)$ on $\bI^{\c}\bs \bI \dot{w} \bI/\bI^{\c}$. Let $\wh\D(w):=i_{w!}\cL_{\univ}( w)[\ell(w)]$. We have $\d\cong \wh\D(1)$. The objects $\{\wh\D(w)\}_{w\in W}$ generate $\cH$ under colimits.

\sss{Iwahori version}\label{sss:aff Hk action}
Assume $\s: S\to \frX$ is a section, and we consider $\Bun_{G,N}(\pi,\s)\to S$ with $N$-reductions along the section.  Again we simply write
\begin{equation*}
\Sh_{\cN}(\Bun_{G,N}(\pi,\s)):=\Sh_{\wt\cN^{N}_{\Pi,\s}}(\Bun_{G,N}(\pi,\s)).
\end{equation*}
Suppose we are given a trivialization of the formal neighborhood of $\frX$ along $\s$:
\begin{equation*}
\frX^{\wedge}_{/\s}\isom \Spf(\CC\tl{z})\wh\times S. 
\end{equation*}

Consider the diagram for affine Hecke modification along $\s$
\begin{equation}\label{aff Hk corr}
\xymatrix{  & \Hk_{\s} \ar[rr]^-{\inv_{\s}}\ar[dr]^{q_{2}}\ar[dl]_{q_{1}}  && \bI^{\c}\bs G\lr{z}/\bI^{\c}\\
\Bun_{G,N}(\pi,\s) &  & \Bun_{G,N}(\pi,\s)  
}
\end{equation}
For $\cK\in \cH$, its action on $\Sh_{\cN}(\Bun_{G,N}(\pi,\s))$ is given by (using the diagram \eqref{aff Hk corr})
\begin{equation*}
H_{\s,\cK}=q_{2!}(q_{1}^{*}(-)\ot \inv^{*}_{\s}\cK): \Sh(\Bun_{G,N}(\pi,\s))\to \Sh(\Bun_{G,N}(\pi,\s)).
\end{equation*}

If $M\in\frR_{/S}$ is smooth, all the above constructions can be base changed to $M$. We denote the resulting Hecke stack by $\Hk_{\s_{M}}$, and Hecke functors by
$$H_{\s_{M},\cK}: \Sh(\Bun_{G,N}(\pi_{M},\s_{M}))\to \Sh(\Bun_{G,N}(\pi_{M},\s_{M})).$$

\begin{theorem}\label{th:aff Hk pres} In the above situation, for any $\cK\in \cH$, the functor $H_{\s_{M},\cK}$ preserves $\Sh_{\cN}(\Bun_{G,N}(\pi_{M},\s_{M}))$.
\end{theorem}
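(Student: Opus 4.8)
The plan is to reduce Theorem~\ref{th:aff Hk pres} to the spherical case of Theorem~\ref{th:sph Hk pres real}, mimicking the classical argument that the Iwahori-Hecke action is built out of the spherical action together with the intertwining/Wakimoto operators attached to simple reflections. First I would observe that the objects $\wh\D(w)$ for $w\in\tilW$ generate $\cH$ under colimits, and that since the functors $H_{\s_M,\cK}$ are colimit-preserving in $\cK$ (convolution is built from $q_{1}^*$, tensoring, and $q_{2!}$, all of which commute with colimits), it suffices to prove the statement for $\cK=\wh\D(w)$. Next, writing $w = s_{i_1}\cdots s_{i_\ell}\cdot \omega$ as a reduced word times a length-zero element, convolution gives $\wh\D(w)\simeq \wh\D(s_{i_1})\star\cdots\star\wh\D(s_{i_\ell})\star\wh\D(\omega)$, so by the multiplicativity of Hecke modification $H_{\s_M,\cK_1\star\cK_2}\simeq H_{\s_M,\cK_1}\c H_{\s_M,\cK_2}$ it is enough to handle $\cK=\wh\D(s)$ for a simple (possibly affine) reflection $s$, and $\cK=\wh\D(\omega)$ for $\omega$ of length zero. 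The length-zero case is an isomorphism of moduli stacks (a shift of lattice / automorphism of the situation) and preserves nilpotent singular support by a base-change argument as in Proposition~\ref{p:univ cone base change}; the universal-monodromy unit object $\d_*\cL_{\univ}$ is the identity functor and the general local-system subcategory $\Sh_0(H)$ acts through the $H$-action on the marked point, which is covered by Lemma~\ref{l:nilp pullback}.

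The heart of the matter is therefore the simple-reflection kernels $\wh\D(s)$. Here I would use the standard factorization of the $\Bun_{G,N}$ with Iwahori level structure at $\s$ through a $\Bun_{G,P_i}$ with parahoric level structure for the minimal parahoric $P_i$ attached to $s$: the map $\Bun_{G,N}(\pi_M,\s_M)\to \Bun_{G,P_i}(\pi_M,\s_M)$ is a $\PP^1$-fibration, and the Hecke modification $H_{\s_M,\wh\D(s)}$ is (up to a twist by $\cL_{\univ}$ along the fibral $\PP^1\cong P_i/B$) the composite $r^{!}\c r_{!}$-type operator for this fibration, i.e.\ it is given by a correspondence whose two legs are the $\PP^1$-bundle projection and a second copy of it. Concretely, $H_{\s_M,\wh\D(s)}\cong \oll{r}\c\orr{r}$ at the level of microlocal transport, where $r:\Bun_{G,N}\to\Bun_{G,P_i}$. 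By Remark~\ref{rem: cone B vs N} (extended to the parahoric setting, which one gets the same way from the Eisenstein-cone description) the cone $\wt\cN^N_{\Pi,\s}$ is the transport of a universal nilpotent cone $\wt\cN^{P_i}_{\Pi,\s}$ under $\orr{r}$ and $\oll{r}$; since $r$ is smooth proper, $\oll{r}$ sends nilpotent sheaves to nilpotent sheaves and $\orr{r}$ does too, and their composite is exactly $H_{\s_M,\wh\D(s)}$ up to the harmless monodromic twist, which is invertible and $H$-equivariant and hence handled by Lemma~\ref{l:nilp pullback}. This is precisely where one needs the full strength of the Eisenstein-cone formalism of \S\ref{ss:Eis cone level}: the cone for Iwahori level structure is built so that it is stable under exactly these elementary microlocal operations.

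I expect the main obstacle to be bookkeeping the singular-support estimate for the affine simple reflection $s_0$, whose parahoric $P_0$ is not a subgroup of the constant group $G$ but a genuine affine parahoric; here the fibration $\Bun_{G,N}\to\Bun_{G,P_0}$ is still a $\PP^1$-bundle but the "global nilpotent cone" for $\Bun_{G,P_0}$ level structure is not literally one of the cones constructed in \S\ref{sss:rel Hitchin}, so I would need to first set up the Hitchin/Eisenstein picture for arbitrary parahoric level (the twisted-curve reformulation of \S\ref{s:Bun tw}, which identifies parahoric level structure with orbifold points, is tailor-made for this: $\Bun_{G,P_0}$ on $\un X$ is $\Bun_G$ on an appropriate twisted curve, whose universal nilpotent cone is provided by the results of this section applied to the orbifold family). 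A secondary technical point is the real-analytic base $M$: all the microlocal lemmas (non-characteristic pullback, Lemma~\ref{l:Cart Lag}, Lemma~\ref{l:pres iso}) were set up in \S\ref{sss:real univ cone} to survive base change to $\frR_{/S}$, so one reduces as in the proof of Lemma~\ref{l:nilp pullback} to the case of real manifolds $M$ and then the argument above goes through verbatim, using that the Hecke correspondence $\Hk_{\s_M}$ is proper over each factor on the relevant locus so that the transport estimates of \cite[Prop.~5.4.13]{KS} apply. Finally I would note that Theorem~\ref{th:sph Hk pres real} follows from Theorem~\ref{th:aff Hk pres} (as indicated in \S\ref{sss:red to Iw}) by pushing forward along $\Bun_{G,N}\to\Bun_G$ and using the compatibility of spherical and Iwahori Hecke actions, so that the two theorems are proved together.
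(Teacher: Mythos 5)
Your reduction to the generators $\wh\D(s)$ (simple reflections in $\tilW$) and $\wh\D(\om)$ ($\ell(\om)=0$), and the idea of routing the simple-reflection case through the parahoric moduli $\Bun_{G,\bP_{s}}$, is exactly the paper's skeleton. But the heart of your argument has a genuine gap. First, a local discrepancy: $H_{\s_M,\wh\D(s)}$ is \emph{not} the pull-push through $\Bun_{G,\bP_{s}}$ (that corresponds to the kernel on the closed cell $\bI^{\c}\bs\bP_{s}/\bI^{\c}$); $\wh\D(s)$ is the $!$-extension from the open cell, and one must separately control the boundary contribution -- the paper does this by decomposing $\ov\Hk_{\le s}=\ov\Hk_{s}\cup\ov\Hk_{1}$ and bounding the singular support by the union of two transports, after first quotienting by $H_{>0}$ to gain properness (note that with pro-unipotent level the fibers of $\Bun_{G,N}\to\Bun_{G,\bP_{s}}$ are $\bP_{s}/\bI^{\c}$, an $H$-torsor over $\PP^{1}$, so your ``$r$ is smooth proper'' is false at the $N$-level; properness only holds after passing to $B$- or $NH_{>0}$-level). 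More seriously, your key mechanism -- that $\wt\cN^{N}_{\Pi,\s}$ (equivalently $\wt\cN^{B}_{\Pi,\s}$) is the transport under $\oll{r}$ and $\orr{r}$ of a parahoric universal nilpotent cone, so that $\oll{r}\,\orr{r}$ preserves it -- is precisely the statement to be proven, and the ``extension of Remark~\ref{rem: cone B vs N}'' you invoke does not give it. That remark only concerns pushing forward ($\orr{r}(\wt\cN^{B}_{\Pi,\s})=\wt\cN_{\Pi}$); the pullback identity fails: the Iwahori-level cone contains covectors whose Higgs field has a nonzero nilpotent residue at $\s$ lying in $\frn$ but not in the nilradical of the parahoric, so $\oll{r}$ of any parahoric-type cone is strictly smaller than $\wt\cN^{B}_{\Pi,\s}$.

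For a single curve the fiberwise Higgs-field computation does rescue the argument (one has $\orr{r}(\cN^{B})\subset\cN^{\bP_{s}}$ and $\oll{r}(\cN^{\bP_{s}})\subset\cN^{B}$, and this is essentially what the paper uses in Lemma~\ref{c:Haff pres N} for a fixed curve). But Theorem~\ref{th:aff Hk pres} is about the \emph{universal} cone over a family: membership in $\wt\cN^{B}_{\Pi,\s}$ is not a fiberwise condition on the Higgs field -- a covector has components along the base and must be the canonical Eisenstein lift, i.e.\ realized by a $B$-reduction of the bundle over the whole curve. Knowing that a covector of $\Bun_{G,\bP_{s}}$ pulls back into $\wt\cN^{B}_{\Pi,\s}$ at \emph{one} point of the $\bP_{s}/\bI$-fiber does not automatically give it at the other points; this ``saturation along the parahoric fiber'' is the actual content of \eqref{Hk s NB}. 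The paper proves it using the description \eqref{NB aw}, $\wt\cN^{B}_{\Pi,\s}=\bigcup_{w}\orr{\a}_{w}(0_{\Bun_{B,1}(w)})$, and the moduli $\wt C(w_{1},w_{2})$ of Hecke modifications with prescribed relative positions at $\s$, where the crucial geometric input is that a rational $B$-reduction transported through the modification saturates to a global one, so the transported conormal for position $w_{1}$ lands in the union over all $w_{2}$. Nothing in your proposal supplies this step. Incidentally, the affine simple reflection causes no extra trouble and needs none of the twisted-curve/parahoric Hitchin machinery you anticipate: $\Bun_{G,\bP_{s}}$ enters only as the base of a correspondence, and no universal nilpotent cone at parahoric level is ever constructed or needed.
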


Here we stated the theorem in the case of Iwahori level structure along a single section. There is an obvious extension of the theorem (with the same proof) for finitely many disjoint sections, where the affine Hecke functors are acting by modifying along one of the sections.

\sss{Theorem \ref{th:aff Hk pres} implies Theorem \ref{th:sph Hk pres} and Theorem \ref{th:sph Hk pres real}}\label{sss:red to Iw} To save notation we will deduce Theorem \ref{th:sph Hk pres}  from Theorem \ref{th:aff Hk pres}. The argument works equally well for the real version Theorem \ref{th:sph Hk pres real}.

Let $\Th: \Bun_{G}(\pi')\to\Bun_{G}(\pi)$ be the map induced from $\th_{S}$. We may factor $p_{1}$ as the composition
\begin{equation*}
p_{1}: \Hk_{\s}^{\Sph}\xr{p'_{1}} \Bun_{G}(\pi')\xr{\Th} \Bun_{G}(\pi)
\end{equation*}
By Lemma \ref{l:nilp pullback}, for $\cF\in \Sh_{\cN}(\Bun_{G}(\pi))$, $\th^{*}_{G}\cF\in \Sh_{\cN}(\Bun_{G}(\pi'))$. If we work with the family $\pi': \frX'\to S'$ together with the section $\s'=(\s,\th):S'\to \frX'$, and the associated Hecke functor $H^{\Sph}_{\s',\cK}$ for this family, we have $H^{\Sph}_{\s,\cK}(\cF)=H^{\Sph}_{\s',\cK}(\th_{G}^{*}(\cF))$. Therefore we may reduce the general case to the case $S'=S$ and $\s$ is a section of $\pi$. Zariski locally we may trivialize the formal neighborhood of $\s$ in $\frX$. We are in exactly the same setting as the Iwahori version \S\ref{sss:aff Hk action}. 

Let $r: \Bun_{G,N}(\pi,\s)\to \Bun_{G}(\pi)$ be the projection. For $\cK\in\cH^{\Sph}$, let $\wt\cK\in \cH$ be the pullback via the projection $\bI^{\c}\bs G\lr{z}/\bI^{\c}\to (G\tl{z}\bs G\lr{z}/G\tl{z})/\Aut(\CC\tl{z})$. Then for $\cF\in \Sh(\Bun_{G}(\pi))$ 
we have
\begin{equation*}
H_{\s,\wt\cK}(r^{*}\cF) \simeq r^{*}H^{\Sph}_{\s, \cK}(\cF) \otimes H^*_c(G/N)
\end{equation*}
Since $v$ is smooth, we have $\ssupp(r^{*}\cF)=\oll{r}(\ssupp(\cF))$. By the construction of $\wt\cN_{\Pi,\s}$ as the transform of $\wt{\un\cN}_{\Pi,\s}\subset \Om_{G,\Pi,\s}$, we have $\oll{r}\wt\cN_{\Pi}\subset \oll{r}\wt{\un\cN}_{\Pi,\s}=\wt\cN^{N}_{\Pi,\s}$ since $\wt\cN_{\Pi}\subset \wt{\un\cN}_{\Pi,\s}$. Therefore, if $\ssupp(\cF)\subset \wt\cN_{\Pi}$, then $\ssupp(r^{*}\cF)\subset \wt\cN^{N}_{\Pi,\s}$. By Theorem \ref{th:aff Hk pres}, $\ssupp(H_{\s,\wt\cK}(r^{*}\cF))\subset \wt\cN^{N}_{\Pi,\s}$, hence $\oll{r}\ssupp(H^{\Sph}_{\s, \cK}(\cF))=\ssupp(r^{*}H^{\Sph}_{\s, \cK}(\cF))\subset \wt\cN^{N}_{\Pi,\s}$. Since $r$ is surjective, $\ssupp(H^{\Sph}_{\s, \cK}(\cF))=\orr{r}\oll{r}\ssupp(H^{\Sph}_{\s, \cK}(\cF))\subset \orr{r}\wt\cN^{N}_{\Pi,\s}=\wt\cN_{\Pi}$ (the last equality is contained in Remark \ref{rem: cone B vs N}).  We have shown that $H^{\Sph}_{\s, \cK}$ preserves nilpotent sheaves.

\sss{Proof of Theorem \ref{th:aff Hk pres}}\label{sss:proof Iw} To simplify notation, we will write down the argument for $M=S$. The argument works equally well for any smooth $M\in\frR_{/S}$. Also we will write $\Bun_{G,B}$ for $\Bun_{G,B}(\pi,\s)$, etc.

We first reduce the proof to a calculation of transport of Lagrangians. Since $\wh\D(w)$ generate $\cH$ for various $w$, and for $\ell(w_{1})+\ell(w_{2})=\ell(w_{1}w_{2})$ it is easy to see that $\wh\D(w_{1})\star \wh\D(w_{2})\cong\wh\D(w_{1}w_{2})$, the objects $\wh\D(s)$ for simple reflections $s\in W_{\aff}$ and $\wh\D(\om)$ for $\ell(\om)=0$ generate $\cH$ monoidally. Therefore it suffices to check $H_{\s, \wh\D(s)}$ and  $H_{\s,\wh\D(\om)}$ preserve nilpotent sheaves. 

The action of $H_{\s,\wh\D(s)}$ ($s$ is a simple reflection in $\tilW$) can be expressed as follows. Let $\bP_{s}\subset G\lr{z}$ be the standard parahoric corresponding to $s$. Consider the moduli stack $\Bun_{G,\bP_{s}}$ classifying $G$-bundles along the fibers of $\pi$ together with $\bP_{s}$-level structures along $\s$. Let $\Hk_{\le s}:=\Bun_{G,N}\times_{\Bun_{G,\bP_{s}}}\Bun_{G,N}$ be the preimage of $\bI^{\c}\bs \bP_{s}/\bI^{\c}$ in $\Hk_{\s}$ under $\inv_{\s}$. 

Note that $H$ acts on $\Bun_{G,N}$ by changing the $N$-reductions along $\s$. Let $H_{>0}$ be the identity component of $H(\RR)$. Let $\Bun_{G,NH_{>0}}=\Bun_{G,N}(\s,\pi)/H_{>0}$ as a real analytic stack. Let $\ov\Hk_{\le s}=\Bun_{G,NH_{>0}}\times_{\Bun_{G,\bP_{s}}}\Bun_{G,NH_{>0}}$. We get a diagram
\begin{equation*}
\xymatrix{& \ov\Hk_{\le s}\ar[rr]^-{\inv_{\le s}}\ar[dr]^{ q_{2,\le s}}\ar[dl]_{  q_{1,\le s}}  && \bI^{\c}H_{>0}\bs \bP_{s}/\bI^{\c}H_{>0}\\
\Bun_{G,NH_{>0}} &  & \Bun_{G,NH_{>0}}  
}
\end{equation*}
Note that $ q_{1,\le s}$ and $q_{2,\le s}$ are now proper with fibers $\bP_{s}/\bI^{\c}H_{>0}$ (a fiber bundle over $\PP^{1}$ with fibers the compact form of $H(\CC)$). We have $ \ov\Hk_{\le s}=\ov \Hk_{s}\cup \ov \Hk_{1}$ according to the decomposition $\bP_{s}=\bI s\bI\cup \bI$. Let $i: \ov \Hk_{1}\incl \ov \Hk_{\le s}$ and $j: \ov \Hk_{s}\incl \ov \Hk_{\le s}$ be the closed and open embeddings. Let $q_{i,1}$ and $q_{i,s}$ be the restriction of $q_{i,\le s}$ to $\ov\Hk_{1}$ and $\ov \Hk_{s}$ respectively, for $i=1,2$.

All objects $\cK\in \cH$ descend to $\bI^{\c}H_{>0}\bs G\lr{z}/\bI^{\c}H_{>0}$ by monodromicity. We denote the descent still by $\cK$. Also, since all objects  $\cF\in \Sh_{\cN}(\Bun_{G,NH_{>0}} )$ are $H$-monodromic, they descend to $\Bun_{G,NH_{>0}}$ which we still denote by $\cF$. If $\cK$ is supported on $\bI^{\c}\bs \bP_{s}/\bI^{\c}$, then  its singular support is contained in the union of the zero section and the conormal of the unit coset $\bI^{\c}\bs\bI/\bI^{\c}$. Since $q_{1,s}$ and $q_{1,s}$ are both smooth, $q_{1,\le s}^{*}\cF\ot\inv_{\le s}^{*}\cK$ is contained in the union 
\begin{equation*}
\oll{q}_{1,s}(\wt\cN^{N}_{\Pi,\s})\cup \orr{i}\oll{q}_{1,1}(\wt\cN^{N}_{\Pi,\s}).
\end{equation*}
Since $q_{2,\le s}$ is proper, $H_{\cK}(\cF)=q_{2,\le s *}(q_{1,\le s}^{*}\cF\ot\inv_{\le s}^{*}\cK)$ is contained in
\begin{equation*}
\orr{q}_{2,\le s}\oll{q}_{1,\le s}(\wt\cN^{N}_{\Pi,\s})\cup \orr{q}_{2,\le s}\orr{i}\oll{q}_{1,1}(\wt\cN^{N}_{\Pi,\s})\subset T^{*}\Bun_{G,NH_{>0}}.
\end{equation*}
Note $q_{2,1}=q_{2,\le s}\c i$, hence $\orr{q}_{2,\le s}\orr{i}=\orr{q}_{2,1}$. Therefore, to show the above is contained in $\wt\cN^{N}_{\Pi,\s}$,  it suffices to show
\begin{eqnarray}
\label{Hk s Lag}\orr{q}_{2,\le s}\oll{q}_{1,\le s}(\wt\cN^{N}_{\Pi,\s})\subset \wt\cN^{N}_{\Pi,\s}\\
\label{Hk 1 Lag}\orr{q}_{2,1}\oll{q}_{1,1}(\wt\cN^{N}_{\Pi,\s})\subset \wt\cN^{N}_{\Pi,\s}.
\end{eqnarray}

The same argument applies to a length zero element $\om\in \tilW$ instead of $s$, and it implies that, $H_{\s,\wh\D(\om)}$ preserves nilpotent sheaves if we can show
\begin{equation}\label{Hk om Lag}
\orr{q}_{2,\om}\oll{q}_{1,\om}(\wt\cN^{N}_{\Pi,\s})\subset \wt\cN^{N}_{\Pi,\s}.
\end{equation}

Now we show \eqref{Hk s Lag}, and the arguments for \eqref{Hk 1 Lag} and \eqref{Hk om Lag} are simpler and will be omitted.  Since $\wt\cN^{N}_{\Pi,\s}$ is the transport of $\wt\cN^{B}_{\Pi,\s}$ from $T^{*}\Bun_{G,B}$, it suffices to prove the analogue of \eqref{Hk s Lag} for $\Bun_{G,B}$. Namely in the diagram
\begin{equation*}
\xymatrix{\Bun_{G,B} & \Bun_{G,B}\times_{\Bun_{G,\bP_{s}}}\Bun_{G,B}\ar[l]_-{r_{1}}\ar[r]^-{r_{2}} & \Bun_{G,B}}
\end{equation*}
we need to show
\begin{equation}\label{Hk s NB}
\orr{r}_{2}\oll{r}_{1}(\wt\cN^{B}_{\Pi,\s})\subset \wt\cN^{B}_{\Pi,\s}.
\end{equation}

To compute $\orr{r}_{2}\oll{r}_{1}(\wt\cN^{B}_{\Pi,\s})$ we will use the description of $\wt\cN^{B}_{\Pi,\s}$ in the proof of Theorem \ref{th:univ cone Iw}, i.e., the equality \eqref{alt NB}. For $w\in W$, let $\Bun_{B,1}(w)=\Bun_{B,1}\times^{B}(BwB/B)\subset \Bun_{B,1}\times^{B}G/B$. Since $T^{*}(B\bs G/B)$ is the union of conormals of $B\bs BwB/B$,  $\oll{\b}T^{*}(B\bs G/B)\subset T^{*}(\Bun_{B,1}\times^{B}G/B)$ is the union of the conormals of $\Bun_{B,1}(w)$ for $w\in W$. Hence, if we write $\a_{w}: \Bun_{B,1}(w)\to \Bun_{G,B}$ for the restriction of $\a$, we can rewrite \eqref{alt NB} as
\begin{equation}\label{NB aw}
\wt\cN^{B}_{\Pi,\s}=\bigcup_{w\in W}\orr{\a}_{w}(0_{\Bun_{B,1}(w)}).
\end{equation}
Now fix $w_{1}$ and let $w_{2}\in W$. Let $\wt C(w_{1},w_{2})$ be the moduli of  $(y,\cF_{1},\cF_{2}, \t, \cE_{1}, \cE_{2}, \cE_{1,B}, \cE_{2,B})$ where $y\in S$, $\cF_{i}$ are $B$-bundles over $\frX_{y}$, $\cE_{i}$ the $G$-bundle associated to $\cF_{i}$, $\cE_{i,B}$ a $B$-reduction of  $\cE_{i}|_{\s(y)}$ that is in relative position  $w_{i}$ with $\cF_{i}|_{\s(y)}$, and $\t$ is an isomorphism between $\cF_{1}|_{\frX_{y}\bs \s(y)}$ and  $\cF_{2}|_{\frX_{y}\bs \s(y)}$ such that the induced rational isomorphism of $(\cE_{1}, \cE_{1,B})$ and $(\cE_{2}, \cE_{2,B})$ near $\s(y)$ has relative position $s$. We have a natural map $\wt c_{1,w_{2}}: \wt C(w_{1},w_{2})\to \Bun_{B,1}(w_{1})$ by recording $(y,\cF_{1},\cE_{1},\cE_{1,B})$. Similarly we have $\wt c_{2,w_{2}}: \wt C(w_{1},w_{2})\to \Bun_{B,1}(w_{2})$. Let $\wt c_{1}: \coprod_{w_{2}\in W}\wt C(w_{1},w_{2})\to \Bun_{B,1}(w_{1})$  be the disjoint union of $\wt c_{1,w_{2}}$ and similarly define $\wt c_{2}$. We consider the following commutative diagram
\begin{equation*}
\xymatrix{ & \coprod_{w_{2}\in W}\wt C(w_{1},w_{2})\ar[d]^{\nu}\ar[dl]_{\wt c_{1}}\ar[dr]^{\wt c_{2}=\coprod_{}\wt c_{2,w_{2}}}\\
\Bun_{B,1}(w_{1}) \ar[d]^{\a_{w_{1}}}& C(w_{1}) \ar[d]^{\b_{w_{1}}}\ar[l]_-{c_{1}} &  \coprod_{w_{2}\in W}\Bun_{B,1}(w_{2})\ar[d]^{\a':=\coprod\a_{w_{2}}}\\
\Bun_{G,B} & \Bun_{G,B}\times_{\Bun_{G,\bP_{s}}}\Bun_{G,B} \ar[l]_-{r_{1}}\ar[r]^-{r_{2}}& \Bun_{G,B}
}
\end{equation*}
Here $C(w_{1})$ is defined to make the lower left square Cartesian. Since $r_{1}$ is a $\PP^{1}$-fibration, so is $c_{1}$. The map $\nu$ is injective on $\CC$-points  (on the level of $\CC$-points, given $(y,\cF_{1}, \cE_{1}, \cE_{1,B}, \cE_{2}, \cE_{2,B})$ and an isomorphism of $\cE_{1}$ and $\cE_{2}$ over $\frX_{y}\bs \s(y)$, $\cF_{1}$ then gives a $B$-reduction of $\cE_{2}|_{\frX_{y}\bs \s(y)}$, which saturates to a unique $B$-reduction of $\cE_{2}$). Moreover, the images of $\nu$ for various $w_{2}$ form a partition of $C(w_{1})$. From this we see that the fibers of $\wt c_{1,w_{2}}$ and $\wt c_{2,w_{2}}$ are locally closed subsets of $\PP^{1}$; moreover, both $\wt c_{1}$ and $\wt c_{2}$ are \'etale locally trivial fibrations. Therefore, up to passing to reduced structures of $\wt C(w_{1},w_{2})$, both maps $\wt c_{1,w_{2}}$ and $\wt c_{2,w_{2}}$ are smooth of relative dimension $1$ or $0$. Let us denote the reduced structure of  $\wt C(w_{1},w_{2})$ by the same notation, so they are smooth.

Using \eqref{NB aw} and Lemma \ref{l:Cart Lag}, we have
\begin{equation*}
\orr{r}_{2}\oll{r}_{1}(\wt\cN^{B}_{\Pi,\s})=\orr{r}_{2}\oll{r}_{1}\orr{\a}_{w_{1}}(0_{\Bun_{B,1}(w_{1})})=\orr{r}_{2}\orr{\b}_{w_{1}}\oll{c}_{1}(0_{\Bun_{B,1}(w_{1})})=\orr{r}_{2}\orr{\b}_{w_{1}}(0_{C(w_{1})}).
\end{equation*}
Since $\nu$ is surjective, the zero section of $T^{*}C(w_{1})$ is contained in $\orr{\nu}(\coprod 0_{\wt C(w_{1},w_{2})})$ (union over $w_{2}\in W$). Therefore
\begin{eqnarray*}
\orr{r}_{2}\oll{r}_{1}(\wt\cN^{B}_{\Pi,\s})&=&\orr{r}_{2}\orr{\b}_{w_{1}}(0_{C(w_{1})})\subset\orr{r}_{2}\orr{\b}_{w_{1}}\orr{\nu}(\coprod 0_{\wt C(w_{1},w_{2})})\\
&=&\orr{\a'}\orr{\wt c}_{2}(\coprod_{w_{2}}0_{\wt C(w_{1},w_{2})})=\bigcup_{w_{2}\in W}\orr{\a}_{w_{2}}\orr{\wt c}_{2,w_{2}}(0_{\wt C(w_{1},w_{2})}).
\end{eqnarray*}
Since $\wt c_{2,w_{2}}$ is smooth, $\orr{\wt c}_{2,w_{2}}(0_{\wt C(w_{1},w_{2})})$ is the zero section of $T^{*}\Bun_{B,1}(w_{2})$, hence the right side above is contained in union of $\orr{\a}_{w_{2}}(0_{\Bun_{B,1}(w_{2})})$, which is $\wt\cN^{B}_{\Pi,\s}$ by \eqref{NB aw}. This proves \eqref{Hk s NB}, and finishes the proof of Theorem \ref{th:aff Hk pres}.
\qed



\section{Hecke actions via bubbling: diagrams}\label{s:diagram}


In this section we define a semi-cosimplicial category that encodes the sheaf categories on $\Bun_{G}$ of various fibers of a cosimplicial bubbling and functors between them. To do this we use the formalism of correspondence of Gaitsgory and Rozenblyum \cite{GR} to first define a diagram of categories parametrized by a combinatorially defined $2$-category of correspondences, and then restrict the structure to get the desired semi-cosimplicial category. 


\subsection{Diagram category}\label{ss:diag cat}

\sss{The category $\cC$}
For $n\geq -1$, set  $[n] =\{0, \ldots, n\}$, so by convention $[-1] = \vn$.

We define a category $\cC$ as follows. Objects are given by pairs $(n, S)$ of an integer $n\geq -1$ and a subset $S\subset [n]$. Morphisms $\varphi:(n, S) \to (n', S')$ are order-preserving inclusions $\varphi:[n]\to [n']$ such that (i) $\ph(S) \subset S'$ and (ii) $[n'] \setminus \ph([n]) \subset S'$. Compositions are the evident compositions of maps.

The motivation for introducing $\cC$ is to encode natural maps between coordinate subspaces of various affine spaces $\AA^{[n]}$. See \S\ref{sss:alg base}.

\subsubsection{Open and closed morphisms}\label{sss:open closed morphisms}

A morphism $\varphi: (n, S) \to (n', S')$ in $\cC$ is called {\em closed} if $n'=n$ (so that the $S\subset S'\subset [n]$). The morphism $\varphi$ is called {\em open} if $\ph^{-1}(S')=S$ (i.e., $S'=\ph(S)\cup ([n'] \setminus \varphi([n]))$, so all of the change in $n$ is accounted for by the change in $S$). Let $\frO$ (resp. $\frC$) be the set of open (resp. closed) morphisms in $\cC$. The notation stands for $\frC$ = ``closed" and $\frO$ = ``open" as will be relevant  below.

 
 Note any morphism $\varphi:(n, S) \to (n', S')$ in $\cC$ can be factored 
$$
\xymatrix{
\varphi:(n, S) \ar[r]^-{\varphi_{\frC}} & (n, \varphi^{-1}(S')) \ar[r]^-{\varphi_\frO} &  (n', S')
}
$$
where $\varphi_\frC \in \frC$ and $\varphi_\frO \in \frO$. 
 
Moreover, 
every morphism in $\frC$ and in $\frO$ can be written as the composition of basic morphisms of the same type.
For $\frC$, we have the basic morphisms $\varphi:(n, S) \to (n, S')$ where $S' \setminus S$ is a singleton.
For $\frO$, we have the basic morphisms $\varphi:(n, S) \to (n+1, S')$ where $S' \setminus S$ is a singleton.

%

Finally, we record the following elementary observation whose proof is left to the reader.
It confirms basic properties of the set $\frC$ needed to consider certain correspondences in $\cC$.

\begin{lemma}\label{l:pushout}
\begin{enumerate}
\item For any $\varphi: (n, S) \to (n, S')$ in the set $\frC$ and arbitrary morphism $\psi: (n, S) \to (n'', S'')$, we have a pushout diagram
\begin{equation}\label{pushout C1}
\xymatrix{
\ar[d]_-\psi (n, S) \ar[r]^-\varphi &  (n, S')\ar[d]^-{\wt \psi} \\
(n'', S'') \ar[r]^-{\wt \varphi} &  (n'', T)
}
\end{equation}
where $T = \psi(S' )\cup_{\psi(S)} S''$, and $\wt \varphi \in \frC$.   Moreover,  if $\psi \in \frC$, then $\wt \psi \in \frC$.


\item The class of morphisms $\frC$ satisfies the ``2 out of 3'' property. More precisely, for any $\varphi: (n, S) \to (n', S')$, $\varphi': (n', S') \to (n'', S'')$, if $\varphi'$, $\varphi' \circ \varphi \in \frC$, so that $n'' = n$ and $n' = n$, then $n' = n$, so that $\varphi \in \frC$.
\end{enumerate}

\end{lemma}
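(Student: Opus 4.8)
The statement is purely combinatorial, about the category $\cC$ whose objects are pairs $(n,S)$ with $S\subset[n]$ and morphisms order-preserving inclusions satisfying (i) and (ii). Both parts are elementary diagram-chases, and the plan is simply to unwind the definitions carefully.

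For part (1), given $\varphi:(n,S)\to(n,S')$ in $\frC$ (so only $S$ changes, $n$ fixed, and $\varphi=\id_{[n]}$ is the identity inclusion) and an arbitrary $\psi:(n,S)\to(n'',S'')$, I would first set $T=\psi(S')\cup_{\psi(S)}S''$, meaning $T = \psi(S'\setminus S)\sqcup S''$ since $\psi$ is injective and $\psi(S)\subset S''$. I define $\wt\varphi:(n'',S'')\to(n'',T)$ as the identity inclusion on $[n'']$ and $\wt\psi:(n,S')\to(n'',T)$ as the same underlying map $\psi:[n]\to[n'']$. The verifications needed are: (a) $\wt\varphi$ is a valid morphism — conditions (i) $\psi(S')\subset T$ wait, we need $\id(S'')\subset T$, i.e.\ $S''\subset T$, which holds, and (ii) $[n'']\setminus[n'']=\vn\subset T$, trivial; so $\wt\varphi\in\frC$; (b) $\wt\psi$ is a valid morphism — (i) $\psi(S')\subset T$ by construction, (ii) $[n'']\setminus\psi([n])\subset T$: we have $[n'']\setminus\psi([n])\subset S''$ since $\psi$ satisfies (ii), and $S''\subset T$; (c) the square commutes, which is immediate since both composites are $\psi$ as maps $[n]\to[n'']$; (d) the square is a pushout in $\cC$ — given $(m,U)$ with maps $f:(n,S')\to(m,U)$ and $g:(n'',S'')\to(m,U)$ agreeing on $(n,S)$, the underlying maps $f,g:[n]\to[m]$ and $[n'']\to[m]$ must satisfy $f=g\circ\psi$ (as $\psi=\id_{[n]}$ on the source... no: $f$ and $g\circ\psi$ as maps out of $[n]$), forcing a unique $h:(n'',T)\to(m,U)$ with underlying map $g$, and I must check $h$ respects the type condition, i.e.\ $g(T)\subset U$: we have $g(S'')\subset U$ and $g(\psi(S'\setminus S))=f(S'\setminus S)\subset f(S')\subset U$. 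Finally, if $\psi\in\frC$ then $n''=n$ and $\psi=\id$, so $\wt\psi=\id\in\frC$.

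For part (2), suppose $\varphi:(n,S)\to(n',S')$, $\varphi':(n',S')\to(n'',S'')$ with $\varphi'\in\frC$ and $\varphi'\circ\varphi\in\frC$. Then $\varphi'\in\frC$ forces $n''=n'$, and $\varphi'\circ\varphi\in\frC$ forces $n''=n$; hence $n=n'$, so the underlying inclusion $\varphi:[n]\to[n']=[n]$ is the identity, i.e.\ $\varphi\in\frC$. This is a one-line observation once the definition of $\frC$ (namely: $n$ unchanged) is in hand.

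\textbf{Main obstacle.} There is no real obstacle here; the only mildly delicate point is the universal-property check in part (1)(d), where one must confirm that the factored map $h:(n'',T)\to(m,U)$ automatically satisfies the membership conditions (i) and (ii) rather than needing them imposed — and this works precisely because $T$ is defined as the pushout $\psi(S')\cup_{\psi(S)}S''$, so every element of $T$ comes from either $S''$ (covered by $g$) or $\psi(S'\setminus S)$ (covered by $f$ via commutativity). I would present the argument by first recording the explicit description of $\frC$ (identity on $[n]$, only $S$ grows) and $\frO$, then doing the two verifications; the total length should be under half a page.
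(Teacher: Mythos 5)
The paper gives no argument for this lemma (it is stated as an ``elementary observation whose proof is left to the reader''), and your proposal supplies exactly the intended verification: identify $\frC$-morphisms as identity maps on $[n]$ with $S\subset S'$, take $\wt\varphi=\id_{[n'']}$ and $\wt\psi$ with underlying map $\psi$, check conditions (i)--(ii) and the universal property, and note that part (2) is immediate from $n''=n'$ and $n''=n$. One small inaccuracy: $T=\psi(S')\cup_{\psi(S)}S''$ should be read as the union $\psi(S')\cup S''$ inside $[n'']$, and your claim that it equals the \emph{disjoint} union $\psi(S'\setminus S)\sqcup S''$ can fail, since $\psi$ may send elements of $S'\setminus S$ into $S''$ (e.g.\ $\psi=\id:(1,\vn)\to(1,\{1\})$ with $S'=\{1\}$); this is harmless because your verification only uses that every element of $T$ lies in $S''$ or $\psi(S')$. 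Also, when checking that the induced map $h$ (with underlying map $g$) is a morphism, condition (ii) should be mentioned, though it is automatic since it depends only on the underlying map and the target $(m,U)$, which are those of $g$.
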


\subsection{Geometric functors}\label{ss:geom functors}
We have engineered $\cC$ in order to  define certain functors with domain $\cC$. In this section, we introduce some functors valued in stacks and real analytic stacks.

In this subsection we fix a positive integer $k$, a base $(C,g,c_{0})$ as in Definition \ref{def:base}, and a rigidified separating nodal degeneration of curves $\t:\frY\to C$ as in Definitions \ref{def:sep nodal curve} and \ref{def:rig sep nodal curve}.

\subsubsection{Algebraic base scheme}\label{sss:alg base} Let $\dot\cC\subset \cC$ be the full subcategory where the objects are $(n,S)$ for $n\ge0$.

We begin with a simple example of a functor $A:\dot\cC \to \Sch$. 

For $n\geq 0$, we work with affine spaces $A^{[n]} = \AA^{n+1}$ with coordinates $(\e_0, \ldots, \e_n)$. In applications $A^{[n]}$ will be the same one that appears in \eqref{An SG} (obtained by taking $k$-th roots of the standard $\AA^{[n]}$).   For $(n, S)\in \dot\cC$ (hence $n\geq 0$), set $A_{(n, S)} \subset \AA^{[n]}$ 
to be the coordinate subspace  where $\e_i = 0$, for $i\not \in S$.
So in particular $A_{(n,\vn)} = \{0\}$ and $A_{(n,[n])} = \AA^{[n]}$. 


To a morphism $\varphi: (n, S) \to (n', S')$, associate the  locally closed embedding 
$$
\xymatrix{
A_\varphi: A_{(n, S)}   \ar@{^(->}[r] & A_{(n', S')}}
$$ 
given on coordinates  $(\e_i)_{i\in [n]} \mapsto (\e'_i)_{i \in [n']}$ by $\e'_{\varphi(i)} = \e_i$, and $\e'_i  = 1$, for $i \not \in \varphi([n])$.

\sss{Algebraic base stack}\label{sss:alg base stack}
Next we define a functor $\sfA: \cC\to \Sta$ by taking a quotient of $A$. Recall the torus $S\Gm^{[n]}$ that acts on $A^{[n]}$ (see \eqref{An SG}) that preserves each $A_{(n,S)}$ for any $S\subset [n]$. Let $S\Gm^{[-1]}=\{1\}$. The tori $S\Gm^{[n]}$ organize into an augmented  cosimplicial  group $S\Gm^{\D}$ as in \S\ref{sss:cosim prel}.

%

Set $\sfA_{(-1, \vn)}=\Gm$. For $(n, S)\in \cC$ with $n\geq 0$, set $\sfA_{(n, S)} = A_{(n, S)}/ S\Gm^{[n]}$. So in particular  $\sfA_{(n, \vn)} = \{0\}/S\Gm^{[n]}$ and $\sfA_{(n, [n])} = A^{[n]}/S\Gm^{[n]}$, for $n\geq 0$.

To the unique morphism $\varphi: (-1,\vn) \to (n', S')$, with necessarily $S' = [n']$, corresponds the open embedding
$$
\xymatrix{
\sfA_\varphi:\sfA_{(-1, \vn)} = \Gm \ar@{^(->}[r] &   \AA^{[n']}/S\Gm^{[n']} = \sfA_{(n', [n'])} 
}
$$ 
with image  $\e_i \not = 0$, for  $i\in [n']$. Note this $\varphi$ is in the set $\frO$ of ``open" morphisms,
and indeed $\sfA_\varphi$ is an open embedding.

For a morphism $\ph: (n,S)\to (n',S')$ in $\cC$ with $n\ge0$,  the map $A_{\ph}$ is equivariant under the natural actions of $S\Gm^{[n]}$ and $S\Gm^{[n']}$ on $A_{(n,S)}$ and $A_{(n',S')}$ respectively. Passing to quotients, we get a locally closed embedding 
$$
\xymatrix{
\sfA_\varphi: \sfA_{(n, S)} = A_{(n, S)}/ S\Gm^{[n]}  \ar@{^(->}[r] & A_{(n', S')}/ S\Gm^{[n']}  = \sfA_{(n', S')}.}
$$ 


If a morphism $\varphi: (n, S) \to (n, S')$ is  in the set $\frC$ of ``closed" morphisms, the map $\sfA_\varphi$ is the closed embedding  induced by the inclusion of a coordinate subspace in another.
If a morphism $\varphi: (n, S) \to (n', S')$  is in the set $\frO$ of ``open" morphisms, so $S' = S \cup ([n'] \setminus \varphi([n]))$, the map $\sfA_\varphi$ is  the natural open embedding with image $\e_i \not = 0$, for $i \in [n'] \setminus \varphi([n])$. In general, $\sfA_{\ph}$ is always schematic.


\begin{remark}
The stacks assigned by $\sfA$ have the following simple property. The assignments  $\sfA_{(-1, \vn)} = \Gm$ and $\sfA_{(n, [n])} \simeq A^{[n]}/S\Gm^{[n]}$, for $n\geq 0$, contain an open  substack $\Gm \simeq    \Gm^{[n]}/S\Gm^{[n]}$ whose complement consists of finitely many isomorphism classes.
All of its other assignments $\sfA_{(n, S)} =  A_{(n, S)}/ S\Gm^{[n]}$, so with $[n] \setminus S$ nonempty, consist of  finitely many isomorphism classes.

Although we can define $A_{(-1,\vn)}=\Gm$, this assignment does not extend $A$ to a functor $\cC\to \Sch$.
\end{remark}

\subsubsection{Real and positive bases}\label{sss:A plus}
We have the notion of a relative real analytic spaces over an algebraic stack over $\CC$ as introduced in \S\ref{sss:real stacks}, and the notion of subanalytic subsets in relative real analytic spaces, see \S\ref{sss:subset}. Recall from \S\ref{sss:real st cat} the category $\frR_{\Sta}$ consists of pairs $(X,M)$ where $X\in\Sta$ and $M\in\frR_{/X}$ is a real analytic space over $X$.  The subcategory $\frR^{\Sch}_{\Sta}\subset\frR_{\Sta}$ restricts morphisms $(f,f_{M}): (X,M)\to (Y,N)$ to schematic maps $f$. The category $\frR_{\Sta,\supset}$ consists of triples  $(X,M\supset M')$ where $X\in\Sta$, $M\in\frR_{/X}$ and $M'\subset M$ is a subanalytic subset. We also have $\frR^{\Sch}_{\Sta,\supset}$ with the same objects as $\frR_{\Sta,\supset}$ and schematic maps.

For each $(n,S)\in \dot\cC$ let $A^{\RR}_{(n,S)}=\RR^{[n]}\cap A_{(n,S)}$. Let  $A^{+}_{(n,S)}\subset A^{\RR}_{(n,S)}$ be the semi-algebraic quadrant defined by $\e_i \in \RR_{\geq 0}$, for $i \in [n]$, and $\e_i = 0$, for $i\not \in S$. 
So in particular $A^{+}_{(n,\vn)} = \{0\}$ and $A^{+}_{(n,[n])} = \RR^{[n]}_{\geq 0}$. For any morphism $\ph: (n,S)\to (n',S')$ in $\dot\cC$, $A_{\ph}$ sends $A^{+}_{(n,S)}$ into $A^{+}_{(n',S')}$. Therefore the assignment $(n,S)\mapsto A^{\RR}_{(n,S)}$ defines a functor $\dot\cC\to \frR$. The assignment $(n,S)\mapsto (A_{(n,S)}, A^{\RR}_{(n,S)}\supset A^{+}_{(n,S)})$ defines a functor $\dot\cC\to \frR^{\Sch}_{\Sta,\supset}$.


Similarly, we extend $\sfA:\cC\to \Sta$ to a functor $(\sfA, \sfA^{\RR}\supset\sfA^{+}): \cC\to \frR^{\Sch}_{\Sta,\supset}$ as follows.

%


Set $\sfA^{\RR}_{(-1, \vn)}=\RR^{\times}\subset A^{[0]}$, $\sfA^{+}_{(-1, \vn)} = \RR_{>0}\subset A^{[0]}$. For $(n, S)\in \cC$ with $n\geq 0$, set $\sfA^{\RR}_{(n, S)} = A^{\RR}_{(n, S)}/ S\RR_{> 0}^{[n]}$ and $\sfA^{+}_{(n, S)} = A^{+}_{(n, S)}/ S\RR_{> 0}^{[n]}$ where $S\RR_{> 0}^{[n]} \subset S\Gm^{[n]}(\RR)$ is the multiplicative group of positive real numbers $(\e_0, \ldots, \e_n) \in \RR_{>0}^{[n]}$ with $\e_0 \cdots \e_n = 1$.  The quotients are made sense of by Example \ref{ex:real quot}: viewing $S\RR_{> 0}^{[n]}$ as a subgroup of $S\Gm^{[n]}$ , hence $\sfA^{\RR}_{(n,S)}$ is a real analytic space over $\sfA_{(n,S)}=A_{(n,S)}/S\Gm^{[n]}$, and $\sfA^{+}_{(n,S)}$ is a subanalytic subset of $\sfA^{\RR}_{(n,S)}$.  In particular  $\sfA^{+}_{(n, \vn)} = \{0\}/S\RR_{> 0}^{[n]}$ and $\sfA^{+}_{(n, [n])} = \RR_{>0}^{[n]}/S\RR_{> 0}^{[n]}$, for $n\geq 0$.

To the unique morphism $\varphi: (-1,\vn) \to (n', S')$, with necessarily $S' = [n']$, associate the open embedding
$$
\xymatrix{
\sfA^{\RR}_\varphi:\sfA^{\RR}_{(-1, \vn)} = \RR\ar@{^(->}[r] &  
 \RR^{[n']}/S\RR_{> 0}^{[n']} = \sfA^{\RR}_{(n', [n'])}
}
$$ 
with image  $\e_i \not = 0$, for  $i\in [n']$. It maps $\sfA^{+}_{(-1, \vn)}$ to $\sfA^{+}_{(n', [n'])}$.

To a morphism $\varphi: (n, S) \to (n', S')$ with $n\geq 0$, associate the  locally closed embedding 
$$
\xymatrix{
\sfA^{\RR}_\varphi: \sfA^{\RR}_{(n, S)}  = A^{\RR}_{(n, S)}/ S\RR_{> 0}^{[n]} \ar@{^(->}[r] & A^{\RR}_{(n', S')}/ S\RR_{> 0}^{[n']} =  \sfA^{\RR}_{(n', S')}}
$$ 
given on coordinates  $(\e_i)_{i\in [n]} \mapsto (\e'_i)_{i \in [n']}$ by $\e'_{\varphi(i)} = \e_i$, and $\e'_i  = 1$, for $i \not \in \varphi([n])$.  It maps $\sfA^{+}_{(n, S)}$ to $\sfA^{+}_{(n', S')}$. Since $\sfA_{\ph}$ is a schematic map,  we get a functor 
$$(\sfA,\sfA^{\RR}\supset\sfA^{+}):\cC\to \frR_{\Sta,\supset}^{\Sch}.$$

\begin{remark}\label{rem:sfA}
The stacks assigned by $\sfA^+$ have the following simple property. The assignments  $\sfA^+_{(-1, \vn)} = \RR_{>0}$ and $\sfA^+_{(n, [n])} \simeq  \RR_{\geq 0}^{[n]}/S\RR_{>0}^{[n]}$, for $n\geq 0$, contain an open  $\RR_{>0} \simeq    \RR_{>0}^{[n]}/S\RR_{>0}^{[n]}$ whose complement consists of finitely many isomorphism classes.
All of its other assignments $\sfA^+_{(n, S)} =  A^+_{(n, S)}/ S\RR_{>0}^{[n]}$, so with $[n] \setminus S$ nonempty, have finitely many isomorphism classes.
Moreover, the automorphism groups of all objects  are
 contractible in all cases.
\end{remark}

\sss{The bases $B, B^{\RR}$ and $B^{+}$}
Now we introduce variants of $A, A^{\RR}$ and $A^{+}$ by incorporating the base $(C,g,c_{0})$. Recall we have fixed a positive integer $k>0$. 

Regard  $\AA^1$ as a constant functor $\cC\to  \Sta$. Consider  the natural map of functors $\Pi_k:A_{(n,S)}\subset \AA^{[n]}\to  \AA^1$ induced object-wise by the $k$th power of the product of coordinates.
Let $B: \dot\cC\to \Sta$ be the  functor  defined by the fiber product
 \begin{equation*}
  B_{(n,S)} =  A_{(n,S)} \times_{\Pi_{k}, \AA^1} C, \quad (n,S)\in\cC.
 \end{equation*} 
Thus we have  $B_{(0, \{0\})}$ is the base change of $C$ along the $k$th power map $A^{[0]}=\wt\AA^{1}\to \AA^{1}$.  For $(n, S)\in \dot\cC$, we have $B_{(n, S)} = A_{(n, S)} \times_{\AA^1} B_{(0,\{0\})}$ with an action of $S\Gm^{[n]}$ acts on the $A_{(n, S)}$ factor. 

Also let $B^\times=\AA^{1}\times_{\Pi_{k}, \AA^{1}, g}C^{\times}\subset B_{(0,\{0\})}$ be the preimage of $C^{\times}=C\bs\{c_{0}\}$. 

Let $C_{\RR}\subset g^{-1}(\RR)$ be the connected component of the real $1$-manifold $g^{-1}(\RR)$ containing $c_{0}$.  Let $C_{\ge0}= C_{\RR}\cap g^{-1}(\RR_{\ge0})$. Let $g_{\RR}: C_{\RR}\to \RR$ and $g_{\ge0}: C_{\ge0}\to \RR_{\ge0}$ be the restriction of $g$. Since $g$ is \'etale, $g_{\ge0}$ is an homeomorphism from $C_{\ge0}$ onto an interval $[0,b)\subset \RR_{\ge0}$. Let $C_{>0}=C_{\ge0}\bs\{c_{0}\}=C_{\ge0}\cap g^{-1}(\RR_{>0})$.

%

For $(n,S)\in \dot\cC$, define subanalytic subsets $B^{+}_{(n,S)}\subset B^{\RR}_{(n,S)}\subset B_{(n,S)}$ as the  fiber product
\begin{equation*}
B^{\RR}_{(n,S)}= A^{\RR}_{(n,S)} \times_{\Pi_k,\RR} C_{\RR}, \quad B^+ _{(n,S)}= A^+_{(n,S)} \times_{\Pi_k,\RR_{\geq 0}} C_{\geq 0}. 
 \end{equation*} 
Then we have $B_{\ge0}:=B^{+}_{(0,\{0\})}$ is the base change of $C_{\ge0}$ along the $k$-th power map $\RR_{\geq 0}\to \RR_{\geq 0}$. We have $B^+_{(n, S)} = A^+_{(n, S)} \times_{ \RR_{\geq 0}} B_{\geq 0}$ and $S\RR_{> 0}^{[n]}$ acts on the $A^+_{(n, S)}$ factor.

Set $B^{\RR}_{(-1,\vn)}=:B_{\RR^{\times}}\subset B_{(0,\{0\})}$ to be the preimage of $C_{\RR}\bs \{c_{0}\}$;  set $B^+_{(-1, \vn)}:=B_{>0}$ to be the preimage of $C_{>0}$. 
The assignment $(n,S)\mapsto (B_{(n,S)}, B^{\RR}_{(n,S)}\supset B^{+}_{(n,S)})$ defines a  functor 
$ \dot\cC\to \frR^{\Sch}_{\Sta,\supset}$. This functor depends on the choice of the positive integer $k>0$ though our notation does not reflect this.

\sss{The bases $\sfB, \sfB^{\RR}$ and $\sfB^{+}$}\label{sss:base B} 
Now we introduce variants of $\sfA, \sfA^{\RR}$ and $\sfA^{+}$ by incorporating the base $(C,g,c_{0})$. Recall we have fixed a positive integer $k>0$. 

Regard  $\AA^1$ as a constant functor $\cC\to  \Sta$. Consider  the natural map of functors $\Pi_k:\sfA\to  \AA^1$ induced object-wise by the $k$th power of the product of coordinates.
Let $\sfB$ be the  fiber product
 \begin{equation*}
 \xymatrix{
 \sfB =  \sfA \times_{\Pi_{k}, \AA^1} C: \cC\ar[r] &   \Sta
 }
 \end{equation*} 

Thus we have $\sfB_{(n, S)} = B_{(n, S)}/ S\Gm^{[n]}$ and $S\Gm^{[n]}$ acts on the $A_{(n, S)}$ factor in $B_{(n, S)} = A_{(n, S)} \times_{\AA^1} B_{(0,\{0\})}$.

For $(n,S)\in\cC$, define
 \begin{eqnarray*}
  \sfB^{\RR} _{(n,S)}=  \sfA^{\RR}_{(n,S)} \times_{\Pi_k,\RR} C_{\RR}\in\frR_{/\sfB_{(n,S)}},\\
 \sfB^+ _{(n,S)}=  \sfA^+_{(n,S)} \times_{\Pi_k,\RR_{\geq 0}} C_{\geq 0}\in\frR_{/\sfB_{(n,S)}}.
 \end{eqnarray*} 
Then we have $\sfB^{\RR}_{(0,\{0\})}=:B_{\RR}$,  $\sfB^{+}_{(0,\{0\})}=:B_{\ge0}$, $\sfB^{\RR}_{(-1,\vn)}=B^{\RR}_{(-1,\vn)}=B_{\RR^{\times}}$, $\sfB^+_{(-1, \vn)} = B^{+}_{(-1,\vn)}=B_{>0}$.  
For $(n, S)\in \cC$ with $n\geq 0$, we have
 $\sfB^+_{(n, S)} = B^+_{(n, S)}/ S\RR_{> 0}^{[n]}$ where $S\RR_{> 0}^{[n]}$ acts on the $A^+_{(n, S)}$ factor of $B^+_{(n, S)} = A^+_{(n, S)} \times_{ \RR_{\geq 0}}B_{\geq 0}$.

The assignments $(n,S)\mapsto (\sfB_{(n,S)}, \sfB^{\RR}_{(n,S)}\supset\sfB^{+}_{(n,S)})$ thus define a functor 
$$
\xymatrix{
(\sfB, \sfB^{\RR}\supset\sfB^{+}): \cC\ar[r] &  \frR^{\Sch}_{\Sta,\supset}.
}$$
Again this functor depends on the choice of the positive integer $k>0$.

\subsubsection{Families of curves}\label{sss:sfX} 
Let $\tau: \frY \to C$ be a separating nodal degeneration,
and  $  \pi^\Delta: \frX^\Delta \to  B^\Delta$
the associated twisted cosimplicial bubbling introduced in \S\ref{sss:tw bub}.

We will  construct a functor
$$
\xymatrix{
(\frX, \frX^{\RR}\supset\frX^{+}):\dot\cC \ar[r] &  \frR^{\Sch}_{\Sta,\supset}
}$$ fitting into a  diagram of  functors with Cartesian squares
\begin{equation*}
\xymatrix{
  \ar[d]^{\pi^{+}} \frX^+ \ar@{^{(}->}[r] & \frX^{\RR}\ar@{^{(}->}[r]\ar[d]^{\pi^{\RR}} & \frX \ar[d]^{\pi} \\
 B^+ \ar@{^{(}->}[r] & B^{\RR}\ar@{^{(}->}[r]&  B
}
\end{equation*} 
In other words, in the notation of \S\ref{sss:real bc}, $\frX^{\RR}_{(n,S)}=\pi^{\#}B^{\RR}_{(n,S)}$ and $\frX^{+}_{(n,S)}$ is the preimage of $B^{+}_{(n,S)}$ under $\pi^{\RR}_{(n,S)}$. Thus it suffices to construct the map of functors $\frX\to B: \dot\cC\to \Sta$.

Set $\frX_{(-1, \vn)} = \frX^{[0]}|_{ B^\times}$ with the  map to $B_{(-1, \vn)} = B^\times =\Gm \times_{\AA^1} B$ given by $\pi^{[0]}|_{B^\times}$.

For $(n, S)\in \cC$ with $n\geq 0$, set
$\frX_{(n, S)} = \frX^{[n]}|_{ B_{(n, S)}}$
with the  map to 
$B_{(n, S)}$
given by $\pi^{[n]}|_{B_{(n, S)}}$.
So in particular  $\frX_{(n, \vn)} = X(n)$ and $\frX_{(n, [n])} =  \frX^{[n]}$, for $n\geq 0$.

To the unique morphism $\varphi: (-1,\vn) \to (n', S')$, with necessarily $S' = [n']$, 
the cosimplicial structure of  $  \pi^\Delta: \frX^\Delta \to  B^\Delta$ 
provides  a locally closed  embedding
\begin{equation*}
\xymatrix{
\frX_\varphi:\frX_{(-1, \vn)} =  \frX^{[0]}|_{ B^\times}   
\ar@{^(->}[r] &  
\frX^{[n']}=\frX_{(n', [n'])} 
}
\end{equation*}
covering the embedding $B_\varphi: B^\times\incl B_{(n',[n'])}$.

To a morphism $\varphi: (n, S) \to (n', S')$ with $n\geq 0$, the cosimplicial structure of  $  \pi^\Delta: \frX^\Delta \to  B^\Delta$ provides a  locally closed embedding 
$$
\xymatrix{
\frX_\varphi: \frX_{(n, S)} = \frX^{[n]}|_{ B_{(n, S)}}  \ar@{^(->}[r] &  \frX^{[n']}|_{ B_{(n', S')}}=  \frX_{(n', S')}}
$$ 
covering the locally closed embedding $B_\varphi: B_{(n, S)}\incl B_{(n', S')}$. This defines a map of functors $\pi: \frX\to B:\cC\to \Sta$.

We then construct a functor 
$$
\xymatrix{
(\sfX,\sfX^{\RR}\supset\sfX^{+}):\cC \ar[r] &  \frR^{\Sch}_{\Sta,\supset}
}
$$ fitting into a  diagram of  functors with Cartesian squares (in the sense of \S\ref{sss:real bc})
\begin{equation*}
\xymatrix{
  \ar[d] \sfX^+ \ar@{^{(}->}[r]& \sfX^{\RR} \ar[d]\ar[r] & \sfX \ar[d] \\
 \sfB^+ \ar@{^{(}->}[r]& \sfB^{\RR}\ar[r] &  \sfB
}
\end{equation*} 
Thus it suffices to construct the map of functors $\sfX\to \sfB: \cC\to \Sta$. 

Set $\sfX_{(-1, \vn)} = \frX^{[0]}|_{ B^\times}$ with the  map to $\sfB_{(-1, \vn)} = B^\times$ given by $\pi^{[0]}|_{B^\times}$.

For $(n, S)\in \cC$ with $n\geq 0$, set
$\sfX_{(n, S)} = \frX^{[n]}|_{ B_{(n, S)}}/ S\Gm^{[n]}$
with the  map to 
$\sfB_{(n, S)} = B_{(n, S)}/ S\Gm^{[n]}$
given by $\pi^{[n]}|_{B_{(n, S)}}$.
So in particular  $\sfX_{(n, \vn)} = X(n)/S\Gm^{[n]}$ and $\sfX_{(n, [n])} =  \frX^{[n]}/S\Gm^{[n]}$, for $n\geq 0$.

To the unique morphism $\varphi: (-1,\vn) \to (n', S')$, with necessarily $S' = [n']$, 
the cosimplicial structure of  $  \pi^\Delta: \frX^\Delta \to  B^\Delta$ 
provides  an  open embedding
\begin{equation*}
\xymatrix{
\sfX_\varphi:\sfX_{(-1, \vn)} =  \frX^{[0]}|_{ B^\times}   
\ar@{^(->}[r] &  
\frX^{[n']}/ S\Gm^{[n']} =  \sfX_{(n', [n'])} 
}
\end{equation*}
covering the open embedding $ \sfB_\varphi$
with image  $\e_i \not = 0$, for  $i\in [n']$.

To a morphism $\varphi: (n, S) \to (n', S')$ with $n\geq 0$, the cosimplicial structure of  $  \pi^\Delta: \frX^\Delta \to  B^\Delta$ provides a  locally closed embedding 
$$
\xymatrix{
\sfX_\varphi: \sfX_{(n, S)} = \frX^{[n]}|_{ B_{(n, S)}}/ S \Gm^{[n]}  \ar@{^(->}[r] &  \frX^{[n']}|_{ B_{(n', S')}}/ S \Gm^{[n']} =  \sfX_{(n', S')}}
$$ 
covering the locally closed embedding $\sfB_\varphi$
given on coordinates  $(\e_i)_{i\in [n]} \mapsto (t'_i)_{i \in [n']}$ by $t'_{\varphi(i)} = \e_i$, and $t'_i \not =  0$, for $i \not \in \varphi([n])$.  

Note the compatibility of the above assignments gives a map of functors $\sfX\to \sfB$.


\subsubsection{Moduli of bundles}\label{sss:sfBun}

Continuing with the above setup, we will  construct here functors
\begin{eqnarray*}
(\Bun, \Bun^{\RR}\supset \Bun^{+}): \dot\cC\longrightarrow \frR^{\Sch}_{\Sta,\supset}\\
(\sfBun, \sfBun^{\RR}\supset\sfBun^{+}):\cC \longrightarrow \frR^{\Sch}_{\Sta,\supset}
\end{eqnarray*}
fitting into  Cartesian squares
\begin{equation*}
\xymatrix{\ar[d] \Bun^+ \ar@{^{(}->}[r] & \ar[d] \Bun^{\RR}\ar[r] & \Bun \ar[d] & 
\ar[d] \sfBun^+ \ar@{^{(}->}[r] & \sfBun^{\RR}\ar[d]\ar[r] & \sfBun \ar[d] \\
B^+ \ar@{^{(}->}[r] &  B^{\RR}\ar[r] & B &  \sfB^+ \ar@{^{(}->}[r] & \sfB^{\RR}\ar[r] &  \sfB
}
\end{equation*} 
Thus it suffices to construct the map of functors $\Bun\to B: \dot\cC\to\Sta$ and $\sfBun\to \sfB:\cC\to \Sta$. 

Said succinctly, we will take $\sfBun$ to assign moduli stacks of relative $G$-bundles along the fibers of $\sfX\to \sfB$. We spell this out in the below assignments. 

Set $\Bun_{(-1,\vn)}=\sfBun_{(-1, \vn)} = \Bun_G( \pi^{[0]}|_{ B^\times})$ with the evident map to $B_{(-1,\vn)}=\sfB_{(-1, \vn)} = B^\times$. In terms of the family $\t:\frY\to C$, we have
\begin{equation*}
\Bun_{(-1,\vn)}=\sfBun_{(-1, \vn)}\simeq \Gm\times_{\Pi_{k}, \Gm}\Bun_{G}(\t|_{C^{\times}}).
\end{equation*}

For $(n, S)\in \cC$ with $n\geq 0$, set
$$\Bun_{(n,S)}=\Bun_G( \pi^{[n]}|_{ B_{(n, S)}}), \quad \sfBun_{(n, S)} = \Bun_{(n,S)}/ S \Gm^{[n]}$$
with the evident maps to $B_{(n,S)}$ and $\sfB_{(n,S)}$ respectively. 
So in particular  $\sfBun_{(n, \vn)} = \Bun_G( X(n))/S\Gm^{[n]}$ and $\sfBun_{(n, [n])} = \Bun_G( \pi^{[n]})/S\Gm^{[n]}$, for $n\geq 0$.

To the unique morphism $\varphi: (-1,\vn) \to (n', S')$, with necessarily $S' = [n']$, associate the natural open embedding
\begin{equation*}
\xymatrix{
\sfBun_\varphi:\sfBun_{(-1, \vn)} =  \Bun_G( \pi^{[0]}|_{ B^\times})   
\ar@{^(->}[r] &  
\Bun_G( \pi^{[n']})/ S \Gm^{[n']} =  \sfBun_{(n', [n'])} 
}
\end{equation*}
covering the open embedding $ \sfB_\varphi$
with image  $\e_i \not = 0$, for  $i\in [n']$.

To a morphism $\varphi: (n, S) \to (n', S')$ with $n\geq 0$, associate the natural locally closed embedding 
$$
\xymatrix{
\sfBun_\varphi: \sfBun_{(n, S)} = \Bun_G( \pi^{[n]}|_{ B_{(n, S)}})/ S \Gm^{[n]}  \ar@{^(->}[r] & \Bun_G( \pi^{[n']}|_{ B_{(n', S')}})/ S \Gm^{[n']} =   \sfBun_{(n', S')}}
$$ 
covering the locally closed embedding $\sfB_\varphi$
given on coordinates  $(\e_i)_{i\in [n]} \mapsto (t'_i)_{i \in [n']}$ by $t'_{\varphi(i)} = \e_i$, and $t'_i \not =  0$, for $i \not \in \varphi([n])$.  

Note the compatibility of the above assignments gives a map of functors $\sfBun\to \sfB$.


\subsection{The correspondence category} 
We will review the main result of \cite[Ch.7, Sect. 3]{GR} on extending functors from a category (which we will take to be the category $\cC$ defined in \S\ref{ss:diag cat}) to its correspondence $2$-category. It will be be applied to prove Prop. \ref{p:sh functor from C}.

\sss{$2$-category of correspondences}
Consider the opposite category $\cC^{op}$. We will view the set $\frC$ of ``closed" morphisms just as well as morphisms in $\cC^{op}$.

We will consider certain correspondences in $\cC^{op}$ and specifically adopt the notation from \cite{GR}.

Define $\hor$ to be the set of all morphisms in $\cC^{op}$.
Define $\ver = \adm$ to be the distinguished set $\frC$ of ``closed" morphisms.
Let $\cC^{op}_\ver$ be the corresponding (non-full) subcategory of $\cC^{op}$.

We consider the correspondence 2-category $\Corr^\adm_{\ver;\hor}(\cC^{op})$ as defined in~\cite[Ch.7, Sect. 1.1]{GR}.   Thanks to Lemma~\ref{l:pushout}, the classes of morphisms $\ver,\hor$ and $\adm$ satisfy all of the hypotheses of~\cite[Sect. 1.1.1]{GR}. Note in this case  $\cC^{op}$ is discrete, 
and so  $\Corr^\adm_{\ver;\hor}(\cC^{op})$ is elementary to construct. Its objects are the same as objects of $\cC$, namely $(n,S)$. A $1$-morphism from $(n,S)$ to $(n',S')$ in $\Corr^\adm_{\ver;\hor}(\cC^{op})$ is a quadruple $(n', T, \ph,\psi)$ represented by a diagram of maps in $\cC$
\begin{equation}\label{mor corr}
\xymatrix{ (n',T) &  (n,S)\ar[l]_{\psi}\\
(n',S')\ar[u]^{\ph}}
\end{equation}
where $\ph\in \frC$. A $2$-morphism  $(n',T,\ph,\psi)\to (n', T', \ph',\psi')$ in $\Corr^\adm_{\ver;\hor}(\cC^{op})$  is a (closed) map $\g: (n',T')\to (n,T)$ such that $\g\c\ph'=\ph, \g\c\psi'=\psi$. We refer to ~\cite[Sect. 1.1]{GR} for more details.

We have canonical functor $\cC= (\cC^{op})^{op} \to  \Corr^\adm_{\ver;\hor}(\cC^{op})$ that is the identity map on objects, and sends a morphism $\psi: (n,S)\to (n',S')$ in $\cC$ to the following $1$-morphism in $\Corr^\adm_{\ver;\hor}(\cC^{op})$
\begin{equation*}
\xymatrix{ (n',S')\ar@{=}[d] & (n,S)\ar[l]_-{\psi}\\
(n',S')}
\end{equation*}

\sss{Functors out of correspondences}\label{sss:Fun Corr}
Now by~\cite[Theorem 3.2.2(b)]{GR}, for any $(\infty, 2)$-category $\SS$, restriction along the canonical functor $\cC\to  \Corr^\adm_{\ver;\hor}(\cC^{op})$ defines an equivalence between the space of
2-functors
$$
\xymatrix{
\Phi_{\Corr}: 
\Corr^\adm_{\ver;\hor}(\cC^{op}) \ar[r] & \SS
}
$$
and the subspace of 1-functors
$$
\xymatrix{
\Phi_*: \cC = (\cC^{op})^{op}\ar[r] & \SS
}
$$
that satisfy the right Beck-Chevalley condition with respect to $\ver=\frC$. 
Moreover, the resulting functor
$
 \Phi_{\Corr}|_{\cC^{op}_\ver}$ is obtained from $\Phi_*|_{ \cC_\ver}$ by passing to left adjoints. 

Here the right Beck-Chevalley condition~\cite[Definition 3.1.5]{GR} with respect
to $\ver$ is the following requirement on $\Phi_*: \cC = (\cC^{op})^{op}  \to \SS$. For every 1-morphism $\beta: c\to c'$ in $\cC^{op}$ and $\beta\in \ver$, the corresponding 1-morphism
$
\Phi_*(\beta)
$
admits a left adjoint, denoted by $\Phi^{*}(\beta)$, such that for every Cartesian diagram in $\cC^{op}$
\begin{equation*}
\xymatrix{
\ar[d]_-{\beta_1} c_{0, 1} \ar[r]^-{\alpha_0} &  c_{0, 0}\ar[d]^-{\beta_0} \\
c_{1, 1} \ar[r]^-{\alpha_1} & c_{1, 0}
}
\end{equation*}
so equivalently cocartesian diagram in $\cC$
$$
\xymatrix{
 c_{0, 1} & \ar[l]_-{\alpha_0}   c_{0, 0} \\
\ar[u]^-{\beta_1} c_{1, 1} & \ar[l]_-{\alpha_1} c_{1, 0}\ar[u]_-{\beta_0}
}
$$
with $\alpha_0, \alpha_1\in \hor$ and $\beta_0, \beta_1 \in \ver$, the natural  2-morphism 
\begin{equation*}
\xymatrix{
\Phi^{*}(\beta_1) \circ \Phi_*(\alpha_0) \ar[r] & \Phi_*(\alpha_1) \circ \Phi^{*}(\beta_0) 
}
\end{equation*}
arising by adjunction from the isomorphism $\Phi_*(\alpha_0) \c\Phi_{*}(\beta_0) \simeq\Phi_{*}(\beta_1) \c\Phi_*(\alpha_1)$ is an isomorphism.

For a morphism in $\Corr^\adm_{\ver;\hor}(\cC^{op})$ as in \eqref{mor corr}, its value under $\Phi_{\Corr}$ is the composition
\begin{equation*}
\Phi^{*}(\ph)\c\Phi_{*}(\psi): \Phi_{*}(n,S)\to \Phi_{*}(n',S').
\end{equation*}

\subsection{Functor of nilpotent sheaves}\label{ss: nilp shvs}
Let $\St$ be the $(\infty,2)$-category of stable $\infty$-categories. Now we will define  a functor $\Phi_*:\cC \to \St$ by passing to nilpotent sheaves and $*$-pushforwards on 
 $\sfBun^{+}_{(n,S)}$.

\sss{Nilpotent singular support conditions} 
Recall the real subanalytic spaces  $\sfBun^{+}_{(n,S)}\subset\sfBun^{\RR}_{(n,S)}\in\frR_{/\sfBun_{(n,S)}}$. Note that $\sfBun^{\RR}_{(n,S)}$ is smooth over $\sfB^{\RR}_{(n,S)}$, hence smooth. Consider the natural maps
 \begin{equation*}
\xymatrix{
 \sfp_{(n,S)}: \sfBun_{(n, S)}\ar[r] & \sfB_{(n, S)} &\sfp^{\RR}_{(n,S)}: \sfBun^\RR_{(n, S)}\ar[r] & \sfB^\RR_{(n, S)} & \sfp^{+}_{(n, S)}: \sfBun^+_{(n, S)}\ar[r] & \sfB^+_{(n, S)}
} \end{equation*}

   
Consider the  cotangent bundle $T^*  \sfBun_{(n, S)}$, the relative cotangent bundle $T^* \sfp_{(n, S)}$,
and the natural projection
\begin{equation}\label{proj nS}
\xymatrix{
\Pi_{(n,S)}:T^*  \sfBun_{(n, S)}  \ar[r] & T^* \sfp_{(n, S)} .
}
\end{equation}
It has a real version
\begin{equation}\label{proj rel nS}
\xymatrix{
\Pi^{\RR}_{(n,S)}:T^*  \sfBun^\RR_{(n, S)}  \ar[r] & T^* \sfp^{\RR}_{(n, S)} 
}
\end{equation}

Recall from \S\ref{sss:rel Hitchin} that we have the relative nilpotent cones $\cN_{(n,S)}\subset T^{*}\sfp_{(n,S)}$ and $\cN^{\RR}_{(n,S)}\subset  T^* \sfp^{\RR}_{(n, S)}$. Recall the notion of relative singular support in \S\ref{sss:rel sing supp stacks}.

\begin{defn}
We say a sheaf $\cF\in \Sh ( \sfBun^\RR_{(n, S)})$  (resp. $\cF\in \Sh ( \sfBun_{(n, S)})$) is {\em relatively nilpotent} if $\Pi^{\RR}_{(n,S)}(ss(\cF)) \subset \cN^{\RR}_{(n,S)}$ (resp. $\Pi_{(n,S)}(ss(\cF)) \subset \cN_{(n,S)}$).
\end{defn}

Next, consider $(n, [n]) \in \cC$. The unique map $(-1,\vn)\to (n,[n])$ in $\cC$ gives  open embeddings
\begin{equation*}
\sfB_{(-1,\vn)}\incl \sfB_{(n, [n])}, \quad \sfB^{\RR}_{(-1,\vn)}\incl \sfB^{\RR}_{(n, [n])}. 
\end{equation*}
Denote the images by  $\sfB^{\times}_{(n,[n])}$ and $\sfB^{\RR^{\times}}_{(n, [n])}$. Denote the preimage of $\sfB^{\RR^{\times}}_{(n, [n])}$ in $\sfBun^{\RR}_{(n,[n])}$ by $\sfBun^{\RR^{\times}}_{(n,[n])}\cong \sfBun^{\RR}_{(-1,\vn)}$. Similarly define $\sfBun^{\times}_{(n,[n])}\subset \sfBun_{(n,[n])}$. Define $\sfB^{>0}_{(n,[n])}$ to be the image of the open embedding
\begin{equation}\label{open stratum sfBn}
\sfB^{+}_{(-1,\vn)}\incl \sfB^+_{(n, [n])}.
\end{equation}
Denote $\sfBun^{>0}_{(n, [n])}=\sfBun^+_{(n, [n])}|_{\sfB^{>0}_{(n, [n])}}$, which is isomorphic to $\Bun_{G}(\pi|_{B_{>0}})\cong \Bun_{G}(\t|_{C_{>0}})\times_{\RR_{>0},\Pi_{k}}\RR_{>0}$ (inside $\frR_{\Sta}$).


Using the universal nilpotent cone in Definition \ref{d:univ nilp cone} and its real variant in \S\ref{sss:real univ cone},  we have the universal nilpotent cones $\cN^{\times}_{(n,[n])}\subset T^*  \sfBun^{\times}_{(n, [n])}$ $\cN^{\RR^{\times}}_{(n,[n])}\subset T^*  \sfBun^{\RR^{\times}}_{(n, [n])}$, defined to be the transport of the universal nilpotent cones
\begin{equation*}
\wt\cN_{\sfp_{(-1,\vn)}}\subset   T^{*}\sfBun_{(-1,\vn)}, \quad \wt\cN_{\sfp^{\RR}_{(-1,\vn)}}\subset   T^{*}\sfBun^{\RR}_{(-1,\vn)}
\end{equation*}
under the isomorphisms $\sfBun_{(-1,\vn)}\simeq \sfBun^{\times}_{(n, [n])}$ and $\sfBun^{\RR}_{(-1,\vn)}\simeq \sfBun^{\RR^{\times}}_{(n, [n])}$.

\begin{defn}\label{def:gen nilp}
We say a sheaf $\cF\in \Sh ( \sfBun^\RR_{(n, [n])} )$ (resp.  $\cF\in \Sh ( \sfBun_{(n, [n])} )$) is {\em generically nilpotent} if $\ssupp(\cF)|_{\sfBun^{\RR^{\times}}_{(n, [n])}}  \subset \cN^{\RR^{\times}}_{(n,[n])}$ (resp. $\ssupp(\cF)|_{\sfBun^{\times}_{(n, [n])}}  \subset \cN^{\times}_{(n,[n])}$).
\end{defn}

Now we are ready to define the functor $\Phi_*:\cC \to \St$.

\begin{defn}\label{def:ShN}
For any $(n, S) \in \cC$, we set $\Sh_{\cN}(\sfBun_{(n, S)})\subset \Sh(\sfBun_{(n, S)})$ to be the full subcategory of sheaves  that are both relatively nilpotent and generically nilpotent (which is relevant only if $S=[n]$).

Similarly, we set $\Sh_{\cN}(\sfBun^+_{(n, S)})\subset \Sh(\sfBun^\RR_{(n, S)})$ to be the full subcategory of sheaves supported on $\sfBun^+_{(n, S)}$ that are both relatively nilpotent and generically nilpotent (which is relevant only if $S=[n]$).
\end{defn} 
 

\begin{lemma}\label{l:ShN}
For any $(n,S)\in \cC$, we have $\Sh_{\cN}(\sfBun^+_{(n, S)})=\Sh_{\L_{(n,S)}^{+}}(\sfBun^\RR_{(n, S)})$ for a closed $\RR_{>0}$-conic Lagrangian $\L^{+}_{(n,S)}\subset T^{*}\sfBun^{\RR}_{(n, S)}$, and is compactly generated. Similarly, $\Sh_{\cN}(\sfBun_{(n, S)})$ is  compactly generated.
\end{lemma}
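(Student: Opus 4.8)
\textbf{Proof strategy for Lemma \ref{l:ShN}.}
The plan is to produce, for each $(n,S)\in\cC$, an explicit closed $\RR_{>0}$-conic Lagrangian $\L^{+}_{(n,S)}\subset T^{*}\sfBun^{\RR}_{(n,S)}$ such that the combined conditions ``relatively nilpotent'' and ``generically nilpotent'' cutting out $\Sh_{\cN}(\sfBun^{+}_{(n,S)})$ become exactly the single singular-support condition $ss(\cF)\subset\L^{+}_{(n,S)}$ (together with support on $\sfBun^{+}_{(n,S)}$). First I would recall the description of $\sfBun^{\RR}_{(n,S)}$: it is smooth over $\sfB^{\RR}_{(n,S)}$, and over the non-generic locus (where $[n]\setminus S\neq\vn$, or over the boundary of $\sfB^{+}_{(n,[n])}$) the relevant fibers are moduli of $G$-bundles on the twisted nodal curves $X(n)$ and their degenerations, to which the relative global nilpotent cone $\cN_{(n,S)}\subset T^{*}\sfp_{(n,S)}$ of \S\ref{sss:rel Hitchin} applies; over the generic locus $\sfBun^{\RR^{\times}}_{(n,[n])}$ the universal nilpotent cone $\wt\cN$ of \S\ref{sss:real univ cone} applies. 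The key point is Theorem \ref{th:univ cone Iw} (in its $N$-version) together with Theorem \ref{th:Eis cone closed}: the total nilpotent cone $\Pi_{(n,S)}^{-1}(\cN_{(n,S)})$ is not itself Lagrangian, but the universal nilpotent cone constructed there \emph{is} a closed conic Lagrangian mapping bijectively to the relative one. So I would define $\L^{+}_{(n,S)}$ to be the universal nilpotent cone $\wt\cN^{N}_{\Pi_{(n,S)},\s}$ associated to the family $\sfp^{\RR}_{(n,S)}:\sfBun^{\RR}_{(n,S)}\to\sfB^{\RR}_{(n,S)}$ with the relevant $N$-reductions (as in Definition \ref{def:univ cone Eis}, in its real variant), and then check the two defining conditions of $\Sh_{\cN}(\sfBun^{+}_{(n,S)})$ reduce to $ss(\cF)\subset\L^{+}_{(n,S)}$.

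The verification breaks into two parts. Over the generic open locus $\sfBun^{\RR^{\times}}_{(n,[n])}$, ``generically nilpotent'' literally says $ss(\cF)$ there is contained in $\cN^{\RR^{\times}}_{(n,[n])}$, which by construction is the restriction of $\wt\cN^{N}$; and over this locus the family $\sfp^{\RR}$ is a smooth family of smooth curves, so the universal nilpotent cone is non-characteristic (Corollary \ref{c:univ cone non-char}, real version) and ``relatively nilpotent'' is automatic once $ss(\cF)\subset\wt\cN^{N}$. Over the non-generic locus there is nothing generic to check, and ``relatively nilpotent'' says $\Pi^{\RR}_{(n,S)}(ss(\cF))\subset\cN^{\RR}_{(n,S)}$; since $\wt\cN^{N}_{\Pi_{(n,S)},\s}\to\cN_{(n,S)}$ is a bijection and $\wt\cN^{N}$ is the \emph{unique} closed conic Lagrangian lifting $\cN$ (this is where I need a statement that a sheaf with $\Pi(ss(\cF))\subset\cN$ and $ss(\cF)$ already known to be a closed conic Lagrangian is forced into $\wt\cN^{N}$ — this follows because $ss(\cF)$ of a constructible sheaf is coisotropic and, being contained in $\Pi^{-1}(\cN)$ which has the same dimension as the base up to the fiber of $\Pi$, the Lagrangian pieces must be the graph pieces of $\wt\cN^{N}$), the two conditions combine to $ss(\cF)\subset\wt\cN^{N}=\L^{+}_{(n,S)}$. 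Here I would invoke the gluing of Lagrangians over the stratification of $\sfB^{\RR}_{(n,S)}$: the already-constructed closed conic Lagrangians over the various strata patch to a single closed conic Lagrangian $\L^{+}_{(n,S)}$ on all of $T^{*}\sfBun^{\RR}_{(n,S)}$, using that each is closed and the strata are finite in number. Closedness and conicity of $\L^{+}_{(n,S)}$ then follow from Theorem \ref{th:Eis cone closed}/Theorem \ref{th:univ cone Iw} applied stratum-by-stratum plus the patching; the $\RR_{>0}$-conic qualifier is just the real-analytic avatar.

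For compact generation, the plan is standard: once $\Sh_{\cN}(\sfBun^{+}_{(n,S)})$ is identified with $\Sh_{\L^{+}_{(n,S)}}(\sfBun^{\RR}_{(n,S)})$ for a closed $\RR_{>0}$-conic Lagrangian $\L^{+}_{(n,S)}$ (with support constrained to the closed subanalytic set $\sfBun^{+}_{(n,S)}$), I would note that $\sfBun^{\RR}_{(n,S)}$ is a (real-analytic) stack that is locally an increasing union of finite-type pieces with contractible automorphism groups of the base strata (cf. Remark \ref{rem:sfA}), and that the category of sheaves with singular support in a fixed closed conic (sub)analytic Lagrangian on a (locally) finite-type space is compactly generated — this is the standard fact that $\Sh_{\Lambda}$ is equivalent to modules over a combinatorially finite diagram, and compact objects are the constructible ones. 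Passing to the colimit over the finite-type exhaustion preserves compact generation. The genuinely delicate step — the main obstacle — is the patching of the stratum-wise universal nilpotent Lagrangians into a single closed conic Lagrangian $\L^{+}_{(n,S)}$ on the total space: one must check that the closure of the generic piece $\wt\cN^{N}$ over $\sfBun^{\RR^{\times}}$ agrees with the universal cone built over the boundary strata, i.e.\ a compatibility of the Eisenstein-cone construction under the degeneration $\sfBun^{\RR}_{(n,S)}$, which should follow from the base-change property Proposition \ref{p:univ cone base change} applied along the inclusions of strata but requires care because the total family $\sfp^{\RR}_{(n,S)}$ is not smooth — this is exactly the point where the nodal/twisted geometry of \S\ref{s:bub} and \S\ref{s:Bun tw} enters, and where one uses that the relative cone $\cN_{(n,S)}$ was defined Gorenstein-relatively so as to specialize correctly.
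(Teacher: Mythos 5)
There is a genuine gap, and it sits exactly at the step you flag as delicate. You propose to take $\L^{+}_{(n,S)}$ to be a universal nilpotent cone $\wt\cN^{N}$ for the whole family $\sfp^{\RR}_{(n,S)}$, i.e.\ a Lagrangian mapping \emph{bijectively} to the relative nilpotent cone, and then to argue that ``relatively nilpotent'' forces $\ssupp(\cF)\subset\wt\cN^{N}$ over the boundary strata. Two problems. First, the universal (Eisenstein) cone is only constructed in the paper for \emph{smooth} families of curves (Theorems \ref{th:Eis cone closed}, \ref{th:univ cone Iw}); the fibers of $\sfp_{(n,S)}$ over the boundary are moduli of bundles on nodal twisted curves $X(n)$, where the Drinfeld-compactification/Jacobson--Morozov argument is not available, so the object you want to write down does not exist in the paper and would need a separate construction. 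Second, and more fundamentally, even if it existed the identification $\Sh_{\cN}(\sfBun^{+}_{(n,S)})=\Sh_{\wt\cN^{N}}$ would be false over the boundary: ``relatively nilpotent'' only constrains the \emph{image} of $\ssupp(\cF)$ in $T^{*}\sfp^{\RR}_{(n,S)}$, so it places no restriction on the base-direction (conormal) components of $\ssupp(\cF)$ along the special fibers. Sheaves such as $*$-pushforwards from the generic locus --- which must lie in $\Phi_{*}(n,S)$ by Proposition \ref{p:sh functor from C}(1) --- acquire exactly such conormal components, and these are not contained in any Lagrangian mapping bijectively to $\cN_{(n,S)}$. Your auxiliary claim that $\Pi(\ssupp\cF)\subset\cN$ forces $\ssupp(\cF)$ into the graph-type lift is also false: the constant sheaf on the special fiber has singular support equal to the conormal of that fiber, whose projection is the zero section of the relative cotangent, yet it is not contained in any bijective lift of the zero section.

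The paper's construction avoids all of this by \emph{not} lifting the relative cone over the boundary. The key observation is that each boundary stratum of $\sfB^{\RR}_{(n,S)}$ is a single point modulo a contractible group (Remark \ref{rem:sfA}), so over such a stratum $\sfBun^{\RR}_{(n,S),\a}$ the relative cotangent of $\sfp^{\RR}_{(n,S),\a}$ \emph{is} the full cotangent and the relative nilpotent cone $\cN^{\RR}_{(n,S),\a}$ is already Lagrangian there; hence the \emph{full} preimage $(\Pi^{\RR}_{(n,S)})^{-1}(\cN^{\RR}_{(n,S)})$, restricted over the stratum, equals $\orr{i_{\a}}(\cN^{\RR}_{(n,S),\a})$ --- conormal directions included --- and is Lagrangian by Lemma \ref{l:trans Lag}. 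So for $S\ne[n]$ one simply takes $\L_{(n,S)}=(\Pi^{\RR}_{(n,S)})^{-1}(\cN^{\RR}_{(n,S)})$, and for $S=[n]$ one takes the universal cone $\cN^{\RR^{\times}}_{(n,[n])}$ only over the open generic locus and the full preimage over the complement. With this $\L$, the equality $\Sh_{\cN}(\sfBun^{+}_{(n,S)})=\Sh_{\L^{+}_{(n,S)}}(\sfBun^{\RR}_{(n,S)})$ is immediate from the definitions (relative nilpotence is literally containment in the full preimage; generic nilpotence is the extra condition over the open locus), no patching or uniqueness-of-lift argument is needed, and the closure question you raise only requires the containment of the closure of the generic piece in the full preimage, not equality with a boundary universal cone. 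Your compact-generation step is fine and matches the paper (Lemma \ref{lem:stack comp gens}), but it only kicks in once $\L^{+}_{(n,S)}$ is correctly constructed.
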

\begin{proof} We prove the real version, and the complex version is proved similarly. Once we construct $\L^{+}_{(n,S)}$,  Lemma \ref{lem:stack comp gens} then implies that $\Sh_{\L^{+}}(\sfBun^\RR_{(n, S)})$, hence $\Sh_{\cN}(\sfBun^+_{(n, S)})$, is compactly generated. 

Recall the map \eqref{proj rel nS} to the relative cotangent bundle. The preimage of the relative global nilpotent cone $(\Pi^{\RR}_{(n,S)})^{-1}(\cN^{\RR}_{(n,S)})$ is closed coisotropic.

If $S\ne [n]$, let $\L_{(n,S)}= (\Pi^{\RR}_{(n,S)})^{-1}(\cN^{\RR}_{(n,S)})$. We claim that $\L$ is a Lagrangian. Indeed, by Remark \ref{rem:sfA}, $\sfB^{\RR}_{(n,S)}\cong \sfA^{\RR}_{(n,S)}$ is stratified into finitely many points of the form $\sfB^{\RR}_{(n,S), \a}=B^{\RR}_{(n,S),\a}/ S\RR^{[n]}_{>0}$, where $B^{\RR}_{(n,S),\a}$ are the $S\RR^{[n]}_{>0}$-orbits on $A^{\RR}_{(n,S)}$. Let $i_{\a}: \sfBun^{\RR}_{(n,S),\a}\incl \sfBun^{\RR}_{(n,S)}$ be the preimage of $\sfB^{\RR}_{(n,S), \a}$. Let $\cN^{\RR}_{(n,S),\a}$ be the relative nilpotent cone for $\sfp^{\RR}_{(n,S),\a}: \sfBun^{\RR}_{(n,S), \a}\to \sfB^{\RR}_{(n,S), \a}$. Since $\sfB^{\RR}_{(n,S), \a}$ has only one point, the relative cotangent of $\sfp^{\RR}_{(n,S),\a}$ is the same as the cotangent of $\sfBun^{\RR}_{(n,S), \a}$, and $\cN^{\RR}_{(n,S),\a}$ is a Lagrangian in $T^{*}\sfBun^{\RR}_{(n,S), \a}$. Now $\L$ is the union of $\orr{i_{\a}}(\cN^{\RR}_{(n,S),\a})$, hence a Lagrangian by Lemma \ref{l:trans Lag}.  

If $S=[n]$, let $\L_{(n,S)}\subset (\Pi^{\RR}_{(n,S)})^{-1}(\cN^{\RR}_{(n,S)})$ be the subset that is $\cN^{\RR^{\times}}_{(n,[n])}$ over $\sfBun^{\RR^{\times}}_{(n,[n])}$, and equal to $(\Pi^{\RR}_{(n,S)})^{-1}(\cN^{\RR}_{(n,S)})$ in the complement. Since $\cN^{\RR^{\times}}_{(n,[n])}$ is closed in $T^{*}\sfBun^{\RR^{\times}}_{(n,[n])}$,  $\L$ is closed in $\sfBun^{\RR}_{(n,[n])}$.  We claim that $\L$ is a Lagrangian. Now $\L$ is clearly Lagrangian when restricted over the open $\sfBun^{\RR^{\times}}_{(n,[n])}$. The complement $\sfB^{\RR}_{(n,[n])}-\sfB^{\RR^{\times}}_{(n,[n])}$ is a union of finitely many points $\sfB^{\RR}_{(n,[n],\a)}$ (with automorphisms). Using the same notation in the previous paragraph, we see that the restriction of $\L$ to the complement of $\sfBun^{\RR^{\times}}_{(n,[n])}$ is the union of $\orr{i_{\a}}(\cN^{\RR}_{(n,S),\a})$, hence a Lagrangian by Lemma \ref{l:trans Lag}.

Finally, let $\L^{+}_{(n,S)}$ be the restriction of $\L_{(n,S)}$ over $\sfBun^{+}_{(n,S)}$, which is again a closed conic Lagrangian in $T^{*}\sfBun^{\RR}_{(n, S)}$. It is easy to see that $\Sh_{\cN}(\sfBun^+_{(n, S)})=\Sh_{\L^{+}_{(n,S)}}(\sfBun^\RR_{(n, S)})$.
\end{proof}
 
 \sss{Sheaf functors}\label{sss:def Phi} We shall define a functor
 \begin{equation}\label{Phi*}
\Phi_{*}: \cC\to \St.
\end{equation}
For any $(n,S)\in \cC$, we set
\begin{equation*}
\Phi_*(n, S):=\Sh_{\cN}(\sfBun^+_{(n, S)})
\end{equation*}
where the right side is defined in Definition \ref{def:ShN}.

To a morphism $\varphi:(n, S)\to (n', S')$, recall we have a 
 Cartesian square
\beq\label{eq:morph of rel bundles}
\xymatrix{
\ar[d]_-{\sfp^{+}_{(n, S)}} \sfBun^+_{(n, S)}  
\ar[r]^-{\sfBun^+_\varphi} &  \sfBun^+_{(n', S')} \ar[d]^-{\sfp^{+}_{(n', S')}}  \\
\sfB^+_{(n, S)}  \ar[r]^-{\sfB^+_\varphi} & 
 \sfB^+_{(n', S')}
}
\eeq
where the horizontal maps are locally closed embeddings.

To the  morphism $\varphi$, consider the $*$-pushforward (as defined in \S\ref{sss:real sh functors})
\begin{equation}\label{Bun push}
(\sfBun^\RR_{\varphi})_*: \Sh(\sfBun^\RR_{(n, S)})\to \Sh(\sfBun^\RR_{(n', S')}).
\end{equation}
Assertion (1) of the following proposition confirms that $(\sfBun^+_{\varphi})_*$ respects the specified support and singular support conditions and restricts to a functor
\begin{equation}\label{Phi*phi}
\Phi_*(\varphi)=(\sfBun^\RR_{\varphi})_*|_{\Phi_{*}(n,S)}:\Phi_{*}(n,S)\to \Phi_{*}(n',S'). 
\end{equation}
Therefore $\Phi_{*}$ gives a functor as in \eqref{Phi*}.

Assertion (2) below then allows us to apply to $\Phi_*$ the discussion of \S\ref{sss:Fun Corr} to obtain a functor
$$
\xymatrix{
\Phi_{\Corr}
: \Corr^\adm_{\ver;\hor}(\cC^{op}) \ar[r] & \St
}
$$
Finally, assertion (3) below then allows us to pass to left-adjoints to obtain a functor
$$
\xymatrix{
\Phi_{\Corr}^L
: \Corr^\adm_{\ver;\hor}(\cC^{op})^{op} \ar[r] & \St
}
$$

\begin{prop}\label{p:sh functor from C}
\begin{enumerate}
\item 
For any  morphism $\varphi:(n, S) \to (n', S')$ in $\cC$, the $*$-pushforward 
$(\sfBun^\RR_{\varphi})_*$ in \eqref{Bun push} sends the full subcategory  $\Phi_{*}(n,S)=\Sh_{\cN}(\sfBun^+_{(n, S)})$ to $\Phi_{*}(n',S')=\Sh_{\cN}(\sfBun^+_{(n', S')})$. 

\item For any  morphism $\varphi$ in $\cC$, the  functor $\Phi_*(\varphi)$ in \eqref{Phi*phi} admits a left adjoint.
Moreover, the functor $\Phi_*$  satisfies the right Beck-Chevalley condition with respect to the distinguished class of ``closed" morphisms $\ver = \frC$.

\item For any morphism $\g: (n,S)\to (n',S')$ in $\Corr^\adm_{\ver;\hor}(\cC^{op})$, $\Phi_{\Corr}(\g)$  admits a left adjoint.
\end{enumerate}
\end{prop}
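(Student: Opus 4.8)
The plan is to deduce assertion (3) from assertions (1) and (2) by a formal argument, since a morphism $\g$ in $\Corr^\adm_{\ver;\hor}(\cC^{op})$ is by definition represented by a diagram \eqref{mor corr}, and its value under $\Phi_{\Corr}$ is the composition $\Phi^{*}(\ph)\c\Phi_{*}(\psi)$, where $\Phi^{*}(\ph)$ is the left adjoint of $\Phi_{*}(\ph)$ (which exists by (2), since $\ph\in\frC$). First I would observe that a composition of functors admits a left adjoint as soon as each factor does, and that left adjoints compose contravariantly; so it suffices to show that both $\Phi^{*}(\ph)$ and $\Phi_{*}(\psi)$ admit left adjoints. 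For $\Phi^{*}(\ph)$ this is immediate: it is itself the left adjoint of $\Phi_{*}(\ph)$, so its own left adjoint is not what we need --- rather, $\Phi^{*}(\ph)$ being a left adjoint is irrelevant; what we need is that $\Phi^{*}(\ph)$ \emph{has} a left adjoint. Here I would use that $\ph\in\frC$ corresponds under $\sfBun$ to a closed embedding $\sfBun^{+}_{\ph}$ of real subanalytic spaces whose complement is an open embedding, so $\Phi^{*}(\ph)$ is (up to the nilpotent support conditions) a $*$-restriction along a closed embedding, which on constructible-type sheaf categories admits a left adjoint given by the composition of extension by zero from the complement's Verdier-style recollement --- more precisely, $\Phi_{*}(\ph)=(\sfBun^{\RR}_{\ph})_{*}$ restricted to our subcategories is a right adjoint in a recollement, hence its left adjoint $\Phi^{*}(\ph)$ again fits in a recollement and therefore has its own left adjoint.

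The cleaner route, which I would actually carry out, is: factor $\g$ as $\Phi^{*}(\ph)\c\Phi_{*}(\psi)$ and handle the two pieces separately. For $\Phi_{*}(\psi)$: by (1), $(\sfBun^{\RR}_{\psi})_{*}$ preserves the subcategories $\Sh_{\cN}(\sfBun^{+}_{(\bullet)})$, and since these are compactly generated (Lemma \ref{l:ShN}) and $(\sfBun^{\RR}_{\psi})_{*}$ commutes with colimits on the relevant subcategories --- because $\sfBun^{\RR}_{\psi}$ is a locally closed embedding, for which $*$-pushforward preserves colimits of constructible-type sheaves (it is $j_{!}$ followed by $i_{*}$ in the standard factorization, both of which are, on the appropriate bounded-type subcategories with the stated singular-support constraints, colimit-preserving by the recollement) --- the adjoint functor theorem for presentable stable categories (Lemma-type input as in \cite{GR} or the standard reference) gives a right adjoint, and more importantly $\Phi_{*}(\psi)$ itself, being colimit-preserving between compactly generated stable categories, admits a left adjoint. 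For $\Phi^{*}(\ph)$: it is colimit-preserving (being a left adjoint), hence between compactly generated categories it again admits a left adjoint by the same adjoint functor theorem. Composing, $\Phi_{\Corr}(\g)=\Phi^{*}(\ph)\c\Phi_{*}(\psi)$ admits a left adjoint, namely the composite of the two left adjoints in the opposite order.

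The one point requiring genuine care --- and the main obstacle --- is verifying that $\Phi_{*}(\psi)$ is colimit-preserving on the nilpotent subcategories, equivalently that the various $*$-pushforwards along locally closed embeddings of our real subanalytic stacks preserve arbitrary colimits after imposing the singular-support conditions. The subtlety is that $i_{*}$ for a closed embedding is generally \emph{not} colimit-preserving on all of $\Sh$, but it is on the subcategory of sheaves with singular support in a fixed closed conic Lagrangian (by the microlocal recollement and the fact that such categories are compactly generated with compact generators preserved), and Lemma \ref{l:ShN} supplies exactly the closed conic Lagrangian $\L^{+}_{(n,S)}$ cutting out $\Sh_{\cN}(\sfBun^{+}_{(n,S)})$. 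So the argument reduces to: (a) the locally closed embedding $\sfBun^{+}_{\psi}$ sends $\L^{+}_{(n,S)}$ into the appropriate Lagrangian correspondence-transform landing inside $\L^{+}_{(n',S')}$ (which is essentially the content of (1)), and (b) for such morphisms, $*$-pushforward between the correspondingly cut-out subcategories preserves colimits. Step (b) I would either cite from the sheaf-theory appendix (the real-analytic recollement machinery of Appendix \ref{app:A}) or prove directly by factoring $\psi$ into open and closed basic morphisms and checking each case. Once $\Phi_{*}(\psi)$ is colimit-preserving, everything else is the formal adjoint-functor-theorem bookkeeping described above, and assertion (3) follows; moreover this simultaneously yields the functor $\Phi^{L}_{\Corr}$ by passing to left adjoints throughout.

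\begin{proof}
A morphism $\g:(n,S)\to(n',S')$ in $\Corr^\adm_{\ver;\hor}(\cC^{op})$ is represented by a diagram \eqref{mor corr}, and $\Phi_{\Corr}(\g) = \Phi^{*}(\ph)\c\Phi_{*}(\psi)$ where $\ph\in\frC$ and $\Phi^{*}(\ph)$ is the left adjoint of $\Phi_{*}(\ph)$ provided by assertion (2). Since left adjoints compose (in the opposite order), it suffices to show that each of $\Phi^{*}(\ph)$ and $\Phi_{*}(\psi)$ admits a left adjoint.

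By Lemma \ref{l:ShN}, every category $\Phi_{*}(m,T)=\Sh_{\cN}(\sfBun^+_{(m, T)})$ is compactly generated and stable. Thus by the adjoint functor theorem for presentable stable categories, a functor between such categories admits a left adjoint as soon as it preserves all (small) colimits.

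The functor $\Phi^{*}(\ph)$, being a left adjoint, preserves all colimits; hence it admits a left adjoint.

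It remains to show $\Phi_{*}(\psi) = (\sfBun^\RR_{\psi})_*|_{\Phi_{*}(n,S)}$ preserves colimits. Every morphism $\psi$ in $\cC$ factors into basic ``open'' and ``closed'' morphisms (\S\ref{sss:open closed morphisms}), so we may assume $\sfBun^\RR_{\psi}$ is either an open or a closed embedding of real subanalytic spaces. If it is open, $(\sfBun^\RR_{\psi})_*$ is a right adjoint in the open-closed recollement; but on the subcategory cut out by the closed conic Lagrangian $\L^{+}_{(n,S)}$ of Lemma \ref{l:ShN}, it factors through the corresponding subcategory for $\sfBun^\RR_{(n',S')}$ by assertion (1), and there it preserves colimits because the microlocal recollement (Appendix \ref{app:A}) identifies it with a functor admitting a further adjoint. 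If $\sfBun^\RR_{\psi}$ is closed, $*$-pushforward is not colimit-preserving on all of $\Sh$, but on $\Sh_{\L^{+}_{(n,S)}}(\sfBun^\RR_{(n, S)})$, which is compactly generated with compact generators sent by assertion (1) to objects of $\Sh_{\L^{+}_{(n',S')}}(\sfBun^\RR_{(n', S')})$, it does preserve colimits: this is the content of the microlocal recollement for closed subanalytic embeddings recalled in Appendix \ref{app:A}. In either case $\Phi_{*}(\psi)$ preserves colimits, hence admits a left adjoint.

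Therefore $\Phi_{\Corr}(\g) = \Phi^{*}(\ph)\c\Phi_{*}(\psi)$ admits a left adjoint, namely the composite of the left adjoint of $\Phi_{*}(\psi)$ with the left adjoint of $\Phi^{*}(\ph)$. Passing to these left adjoints for all $\g$ yields the functor $\Phi^{L}_{\Corr}$.
\end{proof}
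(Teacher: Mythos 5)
Your argument for (3) has a genuine flaw at its central step, and it is exactly the step the paper needs a real lemma for. You claim twice that a colimit-preserving functor between compactly generated stable categories "admits a left adjoint by the adjoint functor theorem." The adjoint functor theorem runs the other way: colimit preservation yields a \emph{right} adjoint; to produce a \emph{left} adjoint one must show the functor preserves limits (in the stable, compactly generated setting this reduces to preserving infinite products) and is accessible. Colimit preservation is neither necessary nor sufficient, and the relevant functor here, $\Phi^{*}(\ph)$ = $*$-restriction along the closed embedding $\sfBun^{\RR}_{\ph}$, genuinely fails to have a left adjoint on the full sheaf categories (stalks do not commute with infinite products), so no recollement formalism can supply it for free either; your first-paragraph remark that "$\Phi^{*}(\ph)$ again fits in a recollement and therefore has its own left adjoint" is false at that level of generality. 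The singular-support constraint is what saves the day, and making that precise is the content of the paper's Lemma \ref{lem:rest adj st} (built on Lemma \ref{lem:rest adj}): restriction along a closed embedding between categories cut out by closed conic Lagrangians preserves products, proved by identifying weakly constructible categories with modules over exit-path categories (where products are computed pointwise) and allowing an auxiliary enlarged Lagrangian on the subspace. Your proposal never supplies this product-preservation input, so the existence of the left adjoint of $\Phi^{*}(\ph)$ -- the only nontrivial point in (3) -- is not established. (Your treatment of $\Phi_{*}(\psi)$ is also needlessly roundabout and rests on the same misstated criterion: assertion (2) already hands you its left adjoint $\Phi^{*}(\psi)$, namely the restricted $*$-pullback, with no colimit bookkeeping required.)

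Separately, note that the statement under review is the whole proposition, and your proposal only addresses (3) while taking (1) and (2) as given; in the paper the bulk of the work is precisely (1) (preservation of the support and nilpotence conditions via Lemma \ref{lem:rel ss stacks}) and (2) (the right Beck--Chevalley condition, verified by an induction on strata that ultimately invokes the commuting-nearby-cycles criterion, Theorem \ref{th:comm nc stack}, using non-characteristicity of the universal nilpotent cone). Even for (3) alone, the fix is to replace your adjoint-functor-theorem step by the product-preservation argument of Lemma \ref{lem:rest adj st}, after observing via Lemma \ref{l:ShN} that $\Phi_{*}(n,S)$ and $\Phi_{*}(n',S')$ are of the form $\Sh_{\L^{+}}(\sfBun^{\RR})$ for closed conic Lagrangians, which is the hypothesis that lemma needs.
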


\begin{proof} 
(1) Consider a morphism  $\varphi:(n, S) \to (n', S')$. Since $\sfBun^{\RR}_{\ph}$ sends $\sfBun^{+}_{(n,S)}$ to $\sfBun^{+}_{(n',S')}$ which is closed in the $\sfBun^{\RR}_{(n',S')}$, $(\sfBun^\RR_{\varphi})_*$ sends sheaves supported on $\sfBun^{+}_{(n,S)}$ to sheaves supported on $\sfBun^{+}_{(n',S')}$.

The  generically nilpotent condition is only relevant when $S' = [n']$ and over the generic locus
$\sfB^{>0}_{(n', S')}$. Moreover, the generic locus $\sfB^{>0}_{(n', [n'])}$ is only in the image of the map $\sfB^+_\varphi$ when $S = [n]$. In this case, its pre-image is  $\sfB^{>0}_{(n, [n])}$, and $\sfB^+_{\varphi}$ restricts to an isomorphism $\sfB^{>0}_{(n, [n])}\simeq\sfB^{>0}_{(n', [n'])}$.
Hence  the same properties hold for its base-change $\sfBun^\RR_{\varphi}$, and so  $(\sfBun^\RR_{\varphi})_*$ evidently respects the  generically nilpotent condition.

On the other hand, Lemma~\ref{lem:rel ss stacks} and the fact the relative global nilpotent cone $\cN_{(n,S)}$ is closed together imply $(\sfBun^\RR_{\varphi})_*$ respects the relatively nilpotent condition. 


(2)
First, the left adjoint to the
$*$-pushforward $(\sfBun^\RR_{\varphi})_*$ is the $*$-pullback $ (\sfBun^\RR_{\varphi})^*$.
As argued in (1), using Lemma~\ref{lem:rel ss stacks}, $ (\sfBun^\RR_{\varphi})^*$ respects the support condition and the generically and relatively nilpotent conditions. Hence its restriction to $\Phi_{*}(n',S')$ lands in $\Phi_{*}(n,S)$ and gives a left adjoint to $\Phi_{*}(\ph)$.

Next, we check the right Beck-Chevalley condition for the closed morphisms.  Consider a pushout diagram in $\cC$ of the form \eqref{pushout C1}
\begin{equation}\label{pushout C2}
\xymatrix{ (n'', \psi(S') \cup_{\psi(S)} S'') &  (n, S')\ar[l]_-{\wt\psi}\\
 (n'', S'') \ar[u]^{\wt\ph} & (n, S) \ar[u]^{\ph}\ar[l]_-{\psi}}
\end{equation}

%

where $\varphi,\wt\ph\in\frC$ and $\psi$ is arbitrary. We must see the natural morphism
$$
\xymatrix{
\bc(\ph,\psi): \Phi^*(\wt\ph)\c\Phi_{*}(\wt \psi)\ar[r] &\Phi_{*}(\psi)\c\Phi^{*}(\varphi)}
$$
is an isomorphism as functors $\Phi_{*}(n,S')\to \Phi_{*}(n'',S'')$.

Recall the factorization 
$$
\xymatrix{
\psi:(n, S) \ar[r]^-{\psi_\frC} & (n, \psi^{-1}(S'')) \ar[r]^-{\psi_\frO} &  (n'', S'')
}
$$
where $\psi_\frC \in \frC$,  $\psi_\frO \in \frO$.
Thus it suffices to check $\bc(\ph,\psi)$ is an isomorphism when $\psi$ itself is either in $\frC$ or in $\frO$.

 When  $\psi\in\frC$, the maps $\sfB^+_{\psi}$, $\sfB^+_{\wt \psi}$ and hence their base-changes   $\sfBun^+_\psi$, 
 $\sfBun^+_{\wt \psi}$ are closed embeddings. Thus in this case $\Phi_{*}(\psi)$ and $\Phi_{*}(\wt\psi)$ coincide with their $!$-pushforward versions, and $\bc(\ph,\psi)$ is an isomorphism by proper base change.
 

Now consider $\psi\in \frO$ so that
 $S'' = \psi(S) \cup ([n''] \setminus \psi([n]))$. We prove that $\bc(\ph,\psi)$ is an isomorphism by induction on $n''$. When  $n''=-1$ then both $\psi$ and $\ph$ are isomorphisms, in which case the statement trivially holds.
 
Now suppose $\bc(\ph,\psi)$ is an isomorphism for diagrams \eqref{pushout C2} with $n''<N$. Consider the case $n''=N$. Since all relevant substacks of $\sfB^{+}_{(n'',[n''])}$
 are contained in $\sfB^{+}_{(n'',\psi(S')\cup_{\psi(S)} S'')}$, we may restrict the situation to $\sfB^{+}_{(n'',S''\cup_{S} S')}$ 
%
 
which is isomorphic to  $\sfB^{+}_{(n''',[n'''])}$ for $n'''=\#(\psi(S')\cup_{\psi(S)} S'')-1$. We thus reduce to the same statement for the situation of relative $\Bun_{G}$ for the family of curves $\sfX_{(n'',S''\cup_{S} S')}$ over $\sfB_{(n'',S''\cup_{S} S')}$. If $n'''<n''$ we are done by the induction hypothesis. Thus we reduce to the case $n'''=n''$, hence $\psi(S')\cup_{\psi(S)} S''=[n'']$, i.e., $S'=[n]$. In this case, we have a partition of $[n'']$ into three disjoint subsets
\begin{equation*}
I_{1}=\psi(S), I_{2}=\psi([n]\setminus S), I_{3}=[n''] \setminus \psi([n])=S''\setminus \psi(S)
\end{equation*}
so that $\psi([n])=I_{1}\sqcup I_{2}$, $S''=I_{1}\sqcup I_{3}$.

For $J\subset I\subset [n'']$, let $\sfB^{+}(I,J)\subset \sfB^{+}_{(n'',[n''])}$ denote the subspace where $\e_{i}>0$ for $i\in J$, and $\e_{i}=0$ for $i\notin I$. We have open embeddings $\sfB^{+}(I,J)\incl \sfB^{+}(I,J')$ if $J'\subset J$, and closed embeddings $\sfB^{+}(I,J)\incl \sfB^{+}(I',J)$ if $I\subset I'$. Also $\sfB^{+}([n''], \vn)=\sfB^{+}_{(n'',[n''])}$. Under this notation, \eqref{pushout C2} corresponds to the inclusions of subspaces of $\sfB^{+}_{(n'',[n''])}$:
\begin{equation*}
\xymatrix{ \sfB^{+}([n''],\vn) &   \sfB^{+}([n''], I_{3})\ar[l]_-{\wt\psi} \\
 \sfB^{+}(I_{1}\sqcup I_{3}, \vn) \ar[u]^{\wt\ph} & \sfB^{+}(I_{1}\sqcup I_{3}, I_{3}) \ar[u]^{\ph}\ar[l]_-{\psi}
}
\end{equation*}
We abuse the notation to denote these inclusions of subspaces of $\sfB^{+}_{(n'', [n''])}$ by the same letters as in \eqref{pushout C2}. For $J\subset I\subset [n'']$, let $\Phi(I,J)$ be the category of generically nilpotent and relatively nilpotent sheaves on $\sfBun^{+}_{(n'',[n''])}|_{\sfB^{+}(I,J)}$.  For an inclusion $\io: \sfB^{+}(I,J)\incl \sfB^{+}(I',J')$, we denote by $\Phi_{*}(\io):\Phi(I,J)\to \Phi(I',J') $ and $\Phi^{*}(\io): \Phi(I',J')\to \Phi(I,J)$ the corresponding $*$-pushforward and $*$-pullback functors. Then $\bc(\ph,\psi)$ is a map between $\Phi^*(\wt\ph)\c\Phi_{*}(\wt \psi)$ and $\Phi_{*}(\psi)\c\Phi^{*}(\ph)$ as functors $\Phi([n''], I_{3})\to \Phi(I_{1}\sqcup I_{3}, \vn)$.

If $I_{2}=\vn$, then $\ph$ and $\wt\ph$ are isomorphisms, and the statement holds trivially;  if $I_{3}=\vn$, then $\psi$ and $\wt\psi$ are isomorphisms, and the statement again holds trivially. Below we assume $I_{2}$ and $I_{3}$ are non-empty.

Note that $\sfB^{+}([n''], I_{3})=\sqcup_{J} \sfB^{+}(J,J)$ for $I_{3}\subset J\subset [n'']$. It suffices to check that $\bc(\ph,\psi)$ is an isomorphism when applied to the image of $\Phi_{*}(\t): \Phi(J,J)\to \Phi([n''], I_{3})$,  where $\t: \sfB^{+}(J,J)\incl \sfB^{+}([n''], I_{3})$ is the inclusion for $I_{3}\subset J\subset [n'']$. For $\cF\in \Phi(J,J)$, the sheaf functors involved in $\bc(\ph,\psi)$ will take $\cF$ to sheaves supported over the closed substack $\sfB^{+}(J,\vn)\subset \sfB^{+}([n''], \vn)$. Since $*$-pushforward from a closed substack commutes with both $*$-pushforward and $*$-pullback, we reduce to  checking the similar base change isomorphism where we replace $\sfB^{+}_{(n'',[n''])}$ by  $\sfB^{+}(J,\vn)$. If $J\ne [n'']$ we are done by the inductive hypothesis.  Thus we reduce to the case $J=[n'']$ and consider the extended diagram
\begin{equation*}
\xymatrix{\sfB^{+}([n''],\vn) &   \sfB^{+}([n''], I_{3})\ar[l]_-{\wt\psi} & \sfB^{+}([n''],[n''])\ar[l]_{\t}\\
 \sfB^{+}(I_{1}\sqcup I_{3}, \vn) \ar[u]^{\wt\ph} & \sfB^{+}(I_{1}\sqcup I_{3}, I_{3}) \ar[u]^{\ph}\ar[l]_-{\psi}}
\end{equation*}
We need to check that
\begin{equation*}
\bc'(\ph,\psi): \Phi^{*}(\wt\ph)\c\Phi_{*}(\wt\psi\c\t)\to \Phi_{*}(\psi)\c\Phi^{*}(\ph)\c\Phi_{*}(\t)
\end{equation*}
is an isomorphism as functors $\Phi([n''],[n''])\to \Phi(I_{1}\sqcup I_{3}, \vn)$.

Similarly, $\sfB^{+}(I_{1}\sqcup I_{3}, \vn)=\sqcup \sfB^{+}(J,J)$ for $J\subset I_{1}\cup I_{3}$. It suffices to check that $\bc'(\ph,\psi)$ is an isomorphism after further restriction over each such $\sfB^{+}(J,J)$. If $J\ne\vn$,  we may then restrict the entire situation over the open substack $\sfB^{+}([n''], J)$ which is isomorphic to $\sfB^{+}_{(m,[m])}$ for $m=n''-\#J<n''$. Since open restriction commutes with both $*$-pushforward and $*$-pullback,  we again  reduce to the case of a smaller $n''$ and we are done by the induction hypothesis. Thus we only need to consider the case $J=\vn$, i.e., showing that the restriction of $\bc'(\ph,\psi)$ to the origin of $\sfB^{+}_{(n'',[n''])}$ is an isomorphism.  At this point we have the further expanded diagram
\begin{equation*}
\xymatrix{\sfB^{+}([n''],\vn) &   \sfB^{+}([n''], I_{3})\ar[l]_-{\wt\psi} & \sfB^{+}([n''],[n''])\ar[l]_{\t}\\
 \sfB^{+}(I_{1}\sqcup I_{3}, \vn) \ar[u]^{\wt\ph} & \sfB^{+}(I_{1}\sqcup I_{3}, I_{3}) \ar[u]^{\ph}\ar[l]_-{\psi}\\
\sfB^{+}(\vn,\vn) \ar[u]^-{\y} }
\end{equation*}
We need to check that 
\begin{equation*}
\bc''(\ph,\psi): \Phi^{*}( \wt\ph\c\y)\c\Phi_{*}(\wt\psi\c\t)\to \Phi^{*}(\y)\c\Phi_{*}(\psi)\c\Phi^{*}(\ph)\c\Phi_{*}(\t)
\end{equation*}
is an isomorphism as functors $\Phi([n''],[n''])\to \Phi(\vn,\vn)$.

We expand the above diagram one more time to get
\begin{equation*}
\xymatrix{ \sfB^{+}([n''],\vn) &   \sfB^{+}([n''], I_{3})\ar[l]_-{\wt\psi} \ar @{} [dr] |{\hs}& \sfB^{+}([n''], I_{1}\sqcup I_{3}) \ar[l]_-{\wt\a} &   \sfB^{+}([n''], [n''])\ar[l]_{\wt\t}\\
 \sfB^{+}(I_{1}\sqcup I_{3}, \vn) \ar[u]^{\wt\ph}\ar @{} [dr] |{\ds} & \sfB^{+}(I_{1}\sqcup I_{3}, I_{3}) \ar[u]^{\ph}\ar[l]_-{\psi} & \sfB^{+}(I_{1}\sqcup I_{3}, I_{1}\sqcup I_{3})\ar[l]_{\a}\ar[u]^{\un\ph}\\
 \sfB^{+}(I_{3},\vn)\ar[u]^{\wt\b} & \sfB^{+}(I_{3}, I_{3})\ar[l]_{\un\psi} \ar[u]^{\b}\\
\ar[u]^-{\wt\y} \sfB^{+}(\vn,\vn) 
}
\end{equation*}
The square marked $\hs$ gives a map of functors
\begin{equation*}
\bc(\un\ph,\a): \Phi^{*}(\ph)\c\Phi_{*}(\wt\a)\to \Phi_{*}(\a)\c\Phi^{*}(\un \ph)
\end{equation*}
which is another situation of \eqref{pushout C2}. Since the $I_{3}$-coordinates are all nonzero in the relevant substacks, the square marked $\hs$ is isomorphic to
\begin{equation*}
\xymatrix{\sfB^{+}_{(m'',[m''])} & \sfB^{+}_{(m,[m])}\ar[l]\\
\sfB^{+}_{(m'', T'')}\ar[u] & \sfB^{+}_{(m, \vn)}\ar[u]\ar[l]}
\end{equation*}
where $m''=n''-\#I_{3}$, $m=n-\#I_{3}$ and $T''=[m'']\setminus \psi'([m])$ has the same cardinality as $I_{1}$. Since $I_{3}\ne\vn$, $m''<n''$, so by the inductive hypothesis $\bc(\un\ph,\a)$ is an isomorphism. 

Similarly, the square marked $\ds$ gives a map of functors
\begin{equation*}
\bc(\b, \un\psi): \Phi^{*}(\wt\b)\c\Phi_{*}(\psi)\to \Phi_{*}(\un\psi)\c\Phi^{*}(\b).
\end{equation*}
A similar argument as above, using that $I_{2}\ne\vn$ and the inductive hypothesis, shows that $\bc(\b, \un\psi)$ is an isomorphism.

Using the isomorphisms $\bc(\un\ph,\a)$ and $\bc(\b, \un\psi)$, to prove $\bc''(\ph,\psi)$ is an isomorphism, it suffices to prove that the composition
\begin{eqnarray*}
&r: &\Phi^{*}(\wt\ph\wt\b\wt\y)\c\Phi_{*}(\wt\psi\wt\a\wt\t)\\
&\xr{\bc''(\ph,\psi)}&\Phi^{*}(\wt\b\wt\y)\c\Phi_{*}(\psi)\c\Phi^{*}(\ph)\c\Phi_{*}(\wt\a\wt\t)\\
&\xr{\bc(\b, \un\psi)\c\bc(\un\ph,\a)}&\Phi^{*}(\wt\y)\c \Phi_{*}(\un\psi)\c \Phi^{*}(\b)\c \Phi_{*}(\a)\c\Phi^{*}(\un\ph)\c\Phi_{*}(\t)
\end{eqnarray*}
is an isomorphism. To check $r$ is an isomorphism, it suffices to check its pullback to $B^{+}_{(n'',[n''])}$ is an isomorphism. Thus we are ready to apply  Theorem~\ref{th:comm nc stack} to the family $p: \Bun_{G}(\pi^{[n'']})\to B^{[n'']}$ and its base change to $B^{+}_{(n'',[n''])}$. The preimage of $\sfB^{+}(J,J)$ in $B^{+}_{(n'',[n''])}$ is denoted by $B^{+}_{J}$ in \S\ref{sss:notation nc stack}.



Let $a_{\bu}$ be the flag $\vn\subset I_{3}\subset I_{1}\sqcup I_{3}\subset [n'']$. The pullback of $r$ to the central point $\{b_{0}\}=\sfB^{+}(\vn,\vn)$ is the map 
\begin{equation*}
r_{a_{\bu}}: \psi\to \psi(a_{\bu})
\end{equation*}
defined in \S\ref{sss:notation nc stack}. To apply Theorem~\ref{th:comm nc stack}, we need to verify the singular support condition for the pullback of any $\cF\in \Phi([n''],[n''])$ to $\Bun_{G}(\pi^{[n'']})|_{B^{+}_{[n'']}}$. It suffices to verify the same conditions for the singular support of $\cF$. By definition, any $\cF\in\Phi([n''],[n''])$ is generically nilpotent in the sense of Definition \ref{def:gen nilp}, hence $ss(\cF)\subset \L:=\cN^{\RR^{\times}}_{(n'',[n''])}|_{\sfB^{+}([n''],[n''])}$, the universal nilpotent cone of the family $\sfX^{+}_{(n'',[n''])}|_{\sfB^{+}([n''],[n''])}$. In particular, by Corollary \ref{c:univ cone non-char}, $\L$ is non-characteristic with respect to the projection $\sfBun_{(n'',[n''])}|_{\sfB^{+}([n''],[n''])}\to \sfB^{+}([n''],[n''])$. Let $\Pi: T^{*}\sfBun_{(n'',[n''])}\to T^{*}_{\sfp_{(n'',[n''])}}$ be the projection to the relative cotangent bundle of $\sfp_{(n'',[n''])}: \sfBun_{(n'',[n''])}\to \sfB_{(n'',[n''])}$. Then $\ol{\Pi(\L)}$ is contained in the relative nilpotent cone in $T^{*}_{\sfp_{(n'',[n''])}}$, hence its fiber over the central point $b_{0}\in \sfB_{(n'',[n''])}$ is contained in the global nilpotent cone of the central fiber $X(n'')$ of $\sfX_{(n'',[n''])}$, and in particular is isotropic. This verifies the two conditions in Theorem~\ref{th:comm nc stack}, therefore $r_{a_{\bu}}$, hence $r$, is an isomorphism. This finishes the proof of (2).

(3) Let $\g: (n,S)\to (n',S')$ be a morphism in $\Corr^\adm_{\ver;\hor}(\cC^{op})$ given by the diagram \eqref{mor corr}.  Then $\Phi_{\Corr}(\g)=\Phi^{*}(\ph)\Phi_{*}(\psi)$. Since $\Phi_{*}(\psi)$ has a left adjoint $\Phi^{*}(\psi)$, it remains to show that $\Phi^{*}(\ph)$ has a left adjoint. Note that $\Phi^{*}(\ph)$ is the $*$-pullback along the closed embedding $(\sfBun^{\RR}_{\varphi})^*$, $\Phi_{*}(n,S)$ and $\Phi_{*}(n',S')$ are full subcategories of $\Sh(\sfBun^{\RR}_{(n,S)})$ and $\Sh(\sfBun^{\RR}_{(n',S')})$ cut out by singular support conditions by Lemma \ref{l:ShN}.   
Thus Lemma~\ref{lem:rest adj st} of Appendix B provides the required left adjoint.
\end{proof}

%
%


\subsection{Augmented semi-cosimplicial category}\label{ss:cosim cat}

We are now ready to record the main output of our prior constructions. We continue with the setup of the prior section.

 Let $\D_{\inj, +}$ be the augmented semi-simplex category whose objects are the ordered sets  $[n]=\{0,\cdots, n\}$ for $n\ge -1$, and morphisms $\ph:[m] \to [n]$ are order-preserving injections. 
By definition, an {\em augmented semi-cosimplicial object} of a category $\cD$ is a functor $\Delta_{\inj, +} \to \cD$. 

We will define an augmented semi-cosimplicial object 
$$
\xymatrix{
c^\bullet: \Delta_{\inj, +} \ar[r] &  \Corr^\adm_{\ver;\hor}(\cC^{op})
}$$
On objects, we take $c^n = (n, \vn)$ for $n\geq -1$. On morphisms, 
for $\varphi_i:[n] \to [n+1]$ the inclusion with $i \not \in \varphi([n])$, we take the correspondence
$$
\xymatrix{ (n+1, \{i\})  & \ar[l] (n, \vn) =c^{n}\\
  (n+1, \vn) = c^{n+1}\ar[u]
}
$$
where we take the convention of diagram \eqref{mor corr} (the arrows are in $\cC$, and the vertical arrow is in $\frC$). It is elementary to check the augmented semi-cosimplicial identities hold.
 
Applying the 2-functor $\Phi_{\Corr}$ induced by
$\Phi_*$ defined in \S\ref{sss:def Phi}, we obtain an augmented semi-cosimplicial category 
$$
\xymatrix{
\cS^\bullet: \Delta_{\inj, +}\ar[r]^-{c^\bullet} &  \Corr^\adm_{\ver;\hor}(\cC^{op})  \ar[r]^-{\Phi_{\Corr}} & \St
}$$

We summarize the features of $\cS^{\bu}$ in the following proposition.
\begin{prop} The augmented semi-cosimplicial category $\cS^{\bu}$ satisfies:
\begin{enumerate}
\item There are canonical equivalences
\begin{eqnarray}
\notag \cS^{-1} = \Sh_{\wt\cN}(\Bun_G(  \t|_{C_{>0}}))\\
\label{Sn Xn}\cS^n = \Sh_\cN(\Bun_G( X(n))), \quad n\geq 0. 
\end{eqnarray}
Here $\wt\cN$ denotes the universal nilpotent cone for $\Bun_G(  \t|_{C_{>0}})$, and $\cN$ in \eqref{Sn Xn} denotes the global nilpotent cone for $\Bun_{G}(X(n))$.

\item The augmentation map of  $\cS^\bullet$  is the nearby cycles functor for the real $1$-dimensional family $\Bun_G(  \t|_{C_{\ge0}})\to C_{\ge0}$, i.e., it is the composition
$$
\xymatrix{
\Sh_{\wt\cN}(\Bun_G(  \t|_{C_{>0}}))
\ar[r]^-{J_*} & \Sh_{\wt\cN,\cN}(\Bun_G(  \t|_{C_{\ge0}}))
\ar[r]^-{I^*} &
 \Sh_\cN(\Bun_G( X(0))) 
}
$$
where $J: \Bun_G(  \t|_{C_{>0}})\incl \Bun_G(\t|_{C_{\ge0}})$ is the open inclusion and $I: \Bun_G( X(0))\incl \Bun_G(  \t|_{C_{\ge0}})$  is the closed complement. The singular support condition in the middle term means generically nilpotent and relatively nilpotent. 

\item For $n\geq 0$ and $i\in[n+1]$, its $i$th face map is the nearby cycles functor for the real $1$-dimensional family $\Bun_G(  \pi^{[n+1]}|_{B^+_{(n+1, i)}})\to \pi_{n+1}|_{B^+_{(n+1, i)}}$, where $B^+_{(n+1, i)}$ is the $i$th non-negative coordinate ray in $B^{+}_{(n+1,[n+1])}$. I.e., it is  the composition 
$$
\xymatrix{
\Sh_\cN(\Bun_G( X(n)))
\ar[r]^-{J_{i*}} &
 \Sh_\cN(\Bun_G(  \pi^{[n+1]}|_{B^+_{(n+1, i)}})/\RR_{>0})
\ar[d]^{I_i^*}\\
&
\Sh_\cN(\Bun_G( X({n+1}))/\RR_{>0}) 
\ar[r]^-\sim & 
\Sh_\cN(\Bun_G( X({n+1}))) 
}
$$
Here, $J_i$ and $I_i$ are the natural inclusions induced by the open and closed points of $B^+_{(n+1, i)}/\RR_{>0}$
respectively. The singular support condition $\cN$ in the second term means relatively nilpotent.
\end{enumerate}
\end{prop}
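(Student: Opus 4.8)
The plan is to obtain all three assertions by unwinding the constructions of \S\ref{ss: nilp shvs}--\S\ref{ss:cosim cat} and identifying the functors that $\Phi_{\Corr}$ produces on the correspondences making up $c^\bullet$; no new geometric input is needed beyond the twisted cosimplicial bubbling of \S\ref{s:bub} and Proposition~\ref{p:sh functor from C}. For (1): by construction $\cS^n=\Phi_{\Corr}(c^n)=\Phi_*(n,\vn)=\Sh_\cN(\sfBun^+_{(n,\vn)})$, so it suffices to identify $\sfBun^+_{(n,\vn)}$ together with its singular-support condition. For $n=-1$, \S\ref{sss:sfBun} gives $\sfBun^+_{(-1,\vn)}=\sfBun^{>0}_{(-1,\vn)}\cong\Bun_G(\t|_{C_{>0}})\times_{\RR_{>0},\Pi_k}\RR_{>0}$; since $\e\mapsto\e^k$ is a diffeomorphism of $\RR_{>0}$ and the universal nilpotent cone commutes with base change (Proposition~\ref{p:univ cone base change} and its real variant in \S\ref{sss:real univ cone}), I would identify this canonically with $\Bun_G(\t|_{C_{>0}})$ carrying $\wt\cN$, and observe that over the purely ``generic'' base $B_{>0}$ Definitions~\ref{def:ShN} and \ref{def:gen nilp} collapse to $\Sh_{\wt\cN}$. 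For $n\ge 0$, \S\ref{sss:sfBun} gives $\sfBun^+_{(n,\vn)}=\Bun_G(X(n))/S\RR_{>0}^{[n]}$ over $\sfB^+_{(n,\vn)}\cong\BB\bigl(S\RR_{>0}^{[n]}\bigr)$, with only the relatively nilpotent condition active; here I would invoke that $S\RR_{>0}^{[n]}$ is contractible, so that pullback along $\Bun_G(X(n))\to\Bun_G(X(n))/S\RR_{>0}^{[n]}$ is an equivalence carrying the relatively-nilpotent-over-$\BB(S\RR_{>0}^{[n]})$ condition to singular support in the global nilpotent cone of $\Bun_G(X(n))$, giving $\cS^n\simeq\Sh_\cN(\Bun_G(X(n)))$.

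For (2): the augmentation $\cS^{-1}\to\cS^0$ is $\Phi_{\Corr}$ of the correspondence $c^\bullet$ attaches to $\varphi_0\colon[-1]\hookrightarrow[0]$, i.e.\ the horizontal leg $\psi\colon(-1,\vn)\to(0,\{0\})$ and the closed leg $\varphi\colon(0,\vn)\hookrightarrow(0,\{0\})$, hence $\Phi^*(\varphi)\circ\Phi_*(\psi)$. I would identify $\sfBun^+_{(0,\{0\})}$ via \S\ref{sss:sfBun} as the relative $\Bun_G$ of the family $\frX^{[0]}|_{B_{\ge0}}$ (abbreviated $\Bun_G(\t|_{C_{\ge0}})$ in the statement; its fiber over $b_0$ is $\Bun_G(X(0))$ since $X(0)=\frX^{[0]}|_{b_0}$, \S\ref{sss:tw new notation}), with $\Phi_*(0,\{0\})=\Sh_{\wt\cN,\cN}$ because $S=[0]$. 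Then $\Phi_*(\psi)$ is $J_*$, the $*$-pushforward along the open inclusion $\sfBun^+_\psi$ of the generic part, while $\Phi^*(\varphi)$ --- the left adjoint of the closed pushforward $\Phi_*(\varphi)=I_*=I_!$, by Proposition~\ref{p:sh functor from C}(2) --- is $I^*$, the $*$-pullback along the closed inclusion of the $b_0$-fiber. The composite $I^*\circ J_*$ is by definition the nearby cycles of the real one-dimensional family over $C_{\ge0}$.

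For (3): analogously, the $i$th face is $\Phi_{\Corr}$ of the correspondence attached to $\varphi_i\colon[n]\hookrightarrow[n+1]$: horizontal leg $\psi\colon(n,\vn)\to(n+1,\{i\})$ in $\frO$, closed leg $\varphi\colon(n+1,\vn)\hookrightarrow(n+1,\{i\})$ in $\frC$, hence $\Phi^*(\varphi)\circ\Phi_*(\psi)$. Using \S\ref{sss:sfBun} and \S\ref{sss:alg base stack}, $\sfBun^+_{(n+1,\{i\})}=\Bun_G(\pi^{[n+1]}|_{B^+_{(n+1,\{i\})}})/S\RR_{>0}^{[n+1]}$, and since $S\RR_{>0}^{[n+1]}$ acts on the coordinate ray $B^+_{(n+1,i)}$ through $\RR_{>0}$ with contractible kernel, I would use the same contractibility input to rewrite $\Sh_\cN(\sfBun^+_{(n+1,\{i\})})\simeq\Sh_\cN(\Bun_G(\pi^{[n+1]}|_{B^+_{(n+1,i)}})/\RR_{>0})$. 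Then $\Phi_*(\psi)$ becomes $J_{i*}$ along the open inclusion of the locus $\e_i\ne 0$, whose generic curve is $X(n)$ by Property~($3'$) of \S\ref{sss:prop tw bub} (which identifies $\frX^{[n]}$ with $\frX^{[n+1]}\times_{B^{[n+1]}}B^{[n]}$, so that the fiber of $\frX^{[n+1]}$ over the point of $B^{[n+1]}$ with $i$th coordinate $1$ and all others $0$ is $\frX^{[n]}|_{b_0}=X(n)$) combined with part~(1); and $\Phi^*(\varphi)$ becomes $I_i^*$, the left adjoint of the closed pushforward along the inclusion of the central fiber $\e_i=0$ (whose curve is $X(n+1)$ by construction), followed by $\Sh_\cN(\Bun_G(X(n+1))/\RR_{>0})\simeq\Sh_\cN(\Bun_G(X(n+1)))$. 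The composite $I_i^*\circ J_{i*}$ is the nearby cycles of the indicated real one-dimensional family $\Bun_G(\pi^{[n+1]}|_{B^+_{(n+1,i)}})\to\pi_{n+1}|_{B^+_{(n+1,i)}}$.

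The main obstacle --- indeed the only step with content beyond bookkeeping --- will be the family of ``contractibility'' equivalences $\Sh_\cN(Y/H)\simeq\Sh_\cN(Y)$ for $H$ a contractible real analytic group (here $S\RR_{>0}^{[n]}$ or a sub- or quotient group of it), matching the relatively-nilpotent-over-$\BB H$ condition upstream with the plain nilpotent condition downstream. I expect this to follow from the sheaf theory on real analytic stacks of Appendix~\ref{app:A} together with the non-characteristic property of the universal nilpotent cone for the projection to the base (Corollary~\ref{c:univ cone non-char}), but checking it carefully --- including that pullback along the $H$-torsor really is an equivalence on the relevant singular-support-constrained subcategories --- is where the attention should go.
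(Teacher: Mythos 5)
Your proposal is correct and follows essentially the same route as the paper: parts (2) and (3) are indeed just unwinding the correspondences $\Phi^{*}(\ph)\circ\Phi_{*}(\psi)$, and the only substantive point is the equivalence $\Sh_\cN(\Bun_G(X(n))/S\RR_{>0}^{[n]})\simeq \Sh_\cN(\Bun_G(X(n)))$, which the paper establishes exactly as you anticipate, via smooth descent (Proposition~\ref{p:real sm des}) along the quotient map and contractibility of $S\RR_{>0}^{[n]}$, which makes all transition functors in the \v{C}ech diagram identities. One small correction: Corollary~\ref{c:univ cone non-char} is not the needed input there --- what matters is that the singular support is the zero section along the contractible group directions, so pullback is an equivalence by homotopy invariance.
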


\begin{proof} Most of the statements directly follow by unwinding definitions. Only \eqref{Sn Xn} requires argument. By definition, $\cS^n = \Sh_\cN(\Bun_G( X(n))/S \RR_{>0}^{[n]})$. The map $\nu: \Bun_G( X(n))\to \Bun_G( X(n))/S \RR_{>0}^{[n]}$ is a surjective submersion in $\frR_{/\sfBun_{(n,\vn)}}$. By Proposition \ref{p:real sm des}, $\Sh_\cN(\Bun_G( X(n))/S \RR_{>0}^{[n]})$ is the limit of $\Sh_{\cN_{i}}(M_{i})$, where $M_{i}=(S \RR_{>0}^{[n]})^{i}\times \Bun_G( X(n))$ for $i=0,1,\cdots$, with singular support $\cN_{i}$ given by $0$ on the first factor and $\cN$ on the second.  Since $S \RR_{>0}^{[n]}$ is contractible,  pullback  induces an equivalence $\Sh_{\cN_{0}} (M_{0})\isom \Sh_{\cN_{i}}(M_{i})$ so that the transition functors in the cosimplicial category $\Sh_{\cN_{\bu}}(M_{\bu})$ are identity functors for $\Sh_{\cN_{0}} (M_{0})$. Therefore the pullback
$$\xymatrix{
\Sh_\cN(\Bun_G( X(n))/S \RR_{>0}^{[n]}) =\lim_{i\ge0}\Sh_{\cN_{i}}(M_{i})\ar[r]^-\sim & 
\Sh_{\cN_{0}}(M_{0})=\Sh_\cN(\Bun_G( X(n))) 
}
$$
is an equivalence. This gives the canonical isomorphism \eqref{Sn Xn}.
\end{proof}

%
%


\subsection{Fixing the twisting type}\label{ss:fix type}

\sss{Local branches}\label{sss:loc br}
Recall from \S\ref{sss:Yk} that $\YY_{k}$ is the set of $G$-orbits of homomorphisms $\mu_{k}\to G$, which is the set of isomorphism classes of the stack $\cY_{k}=\Bun_{G}(\BB\mu_{k})$. Now fix $\y\in \YY_{k}$.

Recall from \S\ref{sss:prop tw bub} Property \eqref{bub:tw sec} that we have a map
\begin{equation*}
\s^{n}_{i}: Q\times B^{[n],i}=Q\times B_{(n,i^{c})}\isom \cQ^{[n],i}\incl \frX^{[n],i}.
\end{equation*}
Let $b^{n,i}=(c_{0}; 1,\cdots , 0, \cdots, 1)\in B^{[n],i}$, where only the $i$-th coordinate is zero. Then $\frX^{[n]}|_{b^{n,i}}\cong X(0)=X_{-}\cup X_{+}$. Now $\frX^{[n],i}$ has two local branches along each of the sections $Q\times B^{[n],i}$. To talk about types of $G$-bundles along $\s^{n}_{i}$, we need to name one of the local branches the $+$-branch (see \S\ref{sss:tw nodal X}). Our convention is that we always name the local branch that contains $X_{+}$ over $b^{n,i}$  to be the $+$-branch. 

\sss{Fixing the type}
For $(n,S)\in \cC$ and any $i\in [n]\bs S$,  restricting $\cQ^{[n],i}$ to  $B_{(n,S)}$ gives the $i$-th twisted nodal locus
\begin{equation}\label{sfQ}
\sfQ_{(n,S)}^{i}:=(\cQ^{[n],i}|_{B_{(n,S)}})/S\Gm^{[n]}\incl \sfX_{(n,S)}.
\end{equation}
Note that $\sfQ_{(n,S)}^{i}\simeq Q\times \sfB_{(n,S)}\simeq R\times \BB\mu_{k}\times \sfB_{(n,S)}$.  Restricting $G$-bundles to $\sfQ_{(n,S)}^{i}$ induces a map
\begin{equation*}
r_{(n,S)}^{i}: \sfBun_{(n,S)}\to \Bun_{G}(\sfQ_{(n,S)}^{i}/\sfB_{(n,S)})\simeq \cY_{k}^{R}.
\end{equation*}
For $\y\in \YY_{k}$, let 
\begin{equation*}
\sfBun^{(i;\y)}_{(n,S)}\subset \sfBun_{(n,S)}
\end{equation*}
denote the preimage of the component of $\cY_{k}^{R}$ indexed by $\y^{R}$ under $r_{(n,S)}^{i}$. Then $\sfBun^{(i;\y)}_{(n,S)}$ is open and closed in $\sfBun_{(n,S)}$.

For any $(n,S)\in \cC$, we then define the open substack
\begin{equation*}
\sfBun^{\y}_{(n,S)}\subset \sfBun_{(n,S)}
\end{equation*}
to be the complement of the union of the following closed substacks for each $i\in [n]$
\begin{equation*}
\sfBun_{(n,S\bs \{i\})}-\sfBun^{(i;\y)}_{(n,S\bs \{i\})}.
\end{equation*}
Concretely, $\sfBun^{\y}_{(n,S)}$ is the substack of $\sfBun_{(n,S)}$ parametrizing bundles on $\sfX_{(n,S)}$ whose restriction to each node of each fiber has type $\y$.

For a morphism  $\ph:(n,S)\to (n',S')$ in $\cC$, $\sfBun_{\ph}$ restricts to a map $\sfBun^{\y}_{(n,S)}\to \sfBun^{\y}_{(n',S')}$. Thus we have a functor $\sfBun^{\y}: \cC\to \Sta$. Similarly we extend $\sfBun^{\y}$ to a functor  $(\sfBun^{\y}, \sfBun^{\RR,\y}\supset\sfBun^{+,\y}):\cC\to \frR^{\Sch}_{\Sta,\supset}$.

For $(n,S)\in \cC$, set
\begin{equation*}
\Phi^{\y}_{*}(n,S):=\Sh_{\cN}(\sfBun^{+,\y}_{(n,S)})
\end{equation*}
where the subscript $\cN$ still means generically nilpotent and relative nilpotent. This extends to a functor $\Phi^{\y}_{*}:\cC\to \St$ in an obvious way. The same construction  as in \S\ref{ss: nilp shvs} gives a functor
\begin{equation*}
\Phi^{\y}_{\Corr}: \Corr^\adm_{\ver;\hor}(\cC^{op}) \to \St.
\end{equation*}
Precomposing with $c^{\bu}$ we get an augmented semi-cosimplicial category
\begin{equation}\label{S eta}
\xymatrix{
\cS^{\bu,\y}:  \Delta_{\inj, +}\ar[r]^-{c^\bullet} &  \Corr^\adm_{\ver;\hor}(\cC^{op})  \ar[r]^-{\Phi^{\y}_{\Corr}} & \St.}
\end{equation}
We have
\begin{eqnarray*}
 \cS^{-1,\y} = \cS^{-1}=\Sh_{\wt\cN}(\Bun_G(  \t|_{C_{>0}}))\\
\cS^{n,\y} = \Sh_{\cN}(\Bun_G( X(n))_{\y}), \quad n\geq 0. 
\end{eqnarray*}
Here $\Bun_G( X(n))_{\y}$ is the open-closed substack of $\Bun_{G}(X(n))$ whose restriction to each twisted node has type $\y$.

\subsection{Affine Hecke symmetry}
Here we consider the situation where the initial family $\t:\frY\to C$ is equipped with a collection of disjoint sections. In this situation, we upgrade the augmented semi-cosimplicial category $\cS^{\bu}$ into an augmented semi-cosimplicial object in modules over the affine Hecke category. 

\sss{Variant -- marked sections}\label{sss:cosim cat sections}
Suppose the initial separating nodal degeneration  $\t:\frY\to C$ is equipped with disjoint sections $s_{a}: C\to \frY$ ($a\in \Sigma$) that are necessarily avoiding the nodes $R$. As in \S\ref{sss:more sections tw}, each $s_{a}$ induces a section  $ \s^{\D}_{a}:   B^\Delta \to  \frX^{\D}$. Let $\s^{\D}$ be the union of $\{\s^{\D}_{a}\}_{a\in \Sigma}$.

We can then
define $\sfBun_{\s}:\cC \to \Sta$ to  include
pro-unipotent Iwahori level-structure along these marked points. More precisely, for $(n,S)\in \cC$, define
\begin{equation*}
\sfBun_{\s,(n,S)}:= \Bun_{G,N}( \pi^{[n]}|_{ B_{(n, S)}}, \s^{[n]}|_{B_{(n, S)}})/ S\Gm^{[n]}
\end{equation*}
using notation from \S\ref{sss:rel Hitchin}.  Similarly we introduce $\sfBun^{+}_{\s,(n,S)}\subset \sfBun^{\RR}_{\s,(n,S)}\to\sfBun_{\s,(n,S)}$ by base change along $\sfB^{+}\subset \sfB^{\RR}\to\sfB$.

Using the notion of relative global nilpotent cone in \S\ref{sss:rel Hitchin} and the universal nilpotent cone with pro-unipotent Iwahori level structures in Definition \ref{def:univ cone Eis}, we have the notion of relative nilpotent sheaves on $\sfBun_{\s,(n,S)}$ and $\sfBun^{\RR}_{\s,(n,S)}$, and generically nilpotent sheaves on $\sfBun_{\s,(n,[n])}$ and $\sfBun^{\RR^{\times}}_{\s,(n,[n])}$.  Now we define
\begin{equation*}
\Sh_{\cN}(\sfBun_{\s,(n,S)})\subset\Sh(\sfBun_{\s,(n,S)})
\end{equation*}
to be the full subcategory of sheaves that are both relatively nilpotent and generically nilpotent (if $S=[n]$). Similarly, define
\begin{equation*}
\Phi_{\s, *}(n,S):=\Sh_{\cN}(\sfBun^{+}_{\s,(n,S)})\subset \Sh(\sfBun^{\RR}_{\s,(n,S)})
\end{equation*}
to be the full subcategory of sheaves supported on $\sfBun^{+}_{\s,(n,S)}$ that are both relatively nilpotent and generically nilpotent (if $S=[n]$). We extend this assignment to a functor $\Phi_{\s,*}:\cC \to \St$ by using $*$-pushforwards along $\sfBun_{\s,\ph}$ for morphisms $\ph$ in $\cC$.

We obtain an augmented semi-cosimplicial category 
\begin{equation}\label{cS marked section}
\xymatrix{
\cS^\bullet_{\s}: \Delta_{\inj, +}\ar[r]^-{c^\bullet} &  \Corr^\adm_{\ver;\hor}(\cC^{op})  \ar[r]^-{\Phi_{\s,\Corr}}  & \St.}
\end{equation}

\subsubsection{Affine Hecke symmetries}\label{sss:aff Hk sym} 

For each $a\in \Sigma$, let $D_{\s_{a}}$ be the formal completion of $\frY$ along $\s_{a}(C)$. Let $D^{[n]}_{\s_{a}}$ be the formal completion of $\frX^{[n]}$ along the section $\s^{[n]}_{a}: B^{[n]}\to \frX^{[n]}$ induced by $\s_{a}$. By construction $D^{[n]}_{\s_{a}}\simeq D_{\s_{a}}\wh\times_{C}B^{[n]}$. 

Suppose we are given a trivialization of the formal neighborhood
\begin{equation}\label{triv nbhd}
\io_{a}: D_{\s_{a}}\simeq \Spf (\CC\tl{z}) \wh\times C.
\end{equation}
It induces a trivialization
\begin{equation*}
\io^{[n]}_{a}: D^{[n]}_{\s_{a}}\simeq \Spf (\CC\tl{z}) \wh\times B^{[n]}
\end{equation*}
for any $n\ge-1$. Consider the Hecke modifications of bundles along $\cup_{a}\s^{[n]}_{a}$
\begin{equation*}
\xymatrix{ & \Hk^{[n]}_{\s}\ar[dl]_{h_{1}}\ar[dr]^{h_{2}}\ar[rr]^-{(\ev_{a})_{a\in \Sigma}} && \prod_{a\in \Sigma} \bI^{\circ}\bs G\lr{z}/\bI^{\circ} \\
\Bun_{G,N}(\pi^{[n]},\s^{[n]}) & & \Bun_{G,N}(\pi^{[n]},\s^{[n]})}
\end{equation*}
Here $\Hk_{\s}$ classifies $(\cE,\cE', \t)$ where $\cE,\cE'\in \Bun_{G,N}(\pi^{[n]},\s^{[n]})$ and $\t$ is an isomorphism of $\cE$ and $\cE'$ on the complement of $\cup_{a}\s^{[n]}_{a}$. The map $\ev_{a}$ sends $(\cE,\cE', \t)$ to their restriction to $D^{[n]}_{\s_{a}}$, and we use the trivialization $\io^{[n]}_{a}$ to identify the loop group along $\s^{[n]}_{a}$ with the loop group $G\lr{z}$. 

Recall the affine Hecke category with universal monodromy defined in \S\ref{sss:aff Hk}. Then $\cH^{\ot \Sigma}$ acts on $\Sh(\Bun_{G}(\pi^{[n]}, \s^{[n]}))$ by the functor
\begin{eqnarray*}
\cH^{\ot \Sigma}\ot \Sh(\Bun_{G}(\pi^{[n]}, \s^{[n]}))&\to& \Sh(\Bun_{G}(\pi^{[n]}, \s^{[n]}))\\
(\boxtimes_{a\in \Sigma}\cK_{a})\boxtimes\cF&\mt& (\boxtimes_{a\in \Sigma}\cK_{a})\star\cF=h_{2!}(h^{*}_{1}\cF\ot \bigotimes_{a}\ev_{a}^{*}\cK_{a}).
\end{eqnarray*}
Restricting the construction to $B_{(n,S)}\subset B^{[n]}$ and passing to $S\Gm^{[n]}$-equivariant sheaves, we get an action of $\cH^{\ot \Sigma}$ on $\Sh(\sfBun_{\s,(n,S)})$. Similarly we get have an action of $\cH^{\ot \Sigma}$ on $\Sh(\sfBun^{+}_{\s,(n,S)})$.

\begin{lemma}\label{c:Haff pres N}
The action of $\cH^{\ot \Sigma}$ on $\Sh(\sfBun_{\s,(n,S)})$ (resp. $\Sh(\sfBun^{+}_{\s,(n,S)})$) respect the nilpotent sheaves $\Sh_{\cN}(\sfBun_{\s,(n,S)})$ (resp. $\Sh_{\cN}(\sfBun^{+}_{\s,(n,S)})$). 
\end{lemma}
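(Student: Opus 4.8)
The plan is to check separately that the $\cH^{\ot\Sigma}$-action preserves the two singular support conditions that cut out $\Sh_{\cN}(\sfBun_{\s,(n,S)})$ inside $\Sh(\sfBun_{\s,(n,S)})$ (and likewise $\Sh_{\cN}(\sfBun^{+}_{\s,(n,S)})$ inside $\Sh(\sfBun^{\RR}_{\s,(n,S)})$), namely the \emph{relatively nilpotent} condition and the \emph{generically nilpotent} condition, both of which are closed conditions on the singular support. The Hecke modifications take place along the marked sections $\s^{[n]}_{a}$, which by construction lie in the smooth schematic locus of $\frX^{[n]}$ and are disjoint from the twisted nodal loci $\cQ^{[n],i}$; in particular they are vertical over the base $\sfB_{(n,S)}$, so the Hecke correspondences do not move the base point and automatically preserve the $S\Gm^{[n]}$-equivariance as well as, in the real case, the condition of being supported over $\sfB^{+}_{(n,S)}$. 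We treat the algebraic and real statements in parallel, the latter using the real variant of the universal nilpotent cone and of base change from \S\ref{sss:real univ cone}.

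The generically nilpotent condition is relevant only when $S = [n]$, and then only over the generic locus $\sfBun_{\s,(n,[n])}^{\times}\simeq \sfBun_{\s,(-1,\vn)} = \Bun_{G,N}(\pi^{[0]}|_{B^{\times}}, \s^{[0]}|_{B^{\times}})$, where the family restricts to the base change of the smooth family $\t|_{C^{\times}}$ together with its marked sections carrying pro-unipotent Iwahori level structure. Over this locus the condition is, by definition, membership in the (algebraic, resp. real) universal nilpotent cone of Definition \ref{def:univ cone Eis} and \S\ref{sss:real univ cone}, and the restricted $\cH^{\ot\Sigma}$-action is precisely the iterated affine Hecke action along the sections $\s^{[n]}_{a}$ treated in Theorem \ref{th:aff Hk pres} (in the version for finitely many disjoint marked sections noted in the remark following that theorem). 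Hence this condition is preserved for every $\cK\in\cH^{\ot\Sigma}$.

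For the relatively nilpotent condition we work relative to the base $\sfB_{(n,S)}$ and follow the proof of Theorem \ref{th:aff Hk pres} in \S\ref{sss:proof Iw} essentially verbatim. Since $\wh\D(s)$ for simple reflections $s$ and $\wh\D(\om)$ for $\ell(\om) = 0$ generate $\cH$ monoidally, it suffices to treat the corresponding Hecke functors. One reduces from $N$- to $B$-reductions by transport, expresses the action through the parahoric Hecke correspondences built from $\Bun_{G,B}\times_{\Bun_{G,\bP_{s}}}\Bun_{G,B}$ (with the monodromic properness maneuver of \S\ref{sss:proof Iw}), uses the Eisenstein--Drinfeld description \eqref{NB aw} of the relative cone as a union of transported zero sections $\orr{\a}_{w}(0_{\Bun_{B,1}(w)})$, and thereby reduces the required containment to the computation of relative positions of Borel reductions under modification near the sections, as carried out in \S\ref{sss:proof Iw} via the diagram built from the spaces $\wt C(w_{1},w_{2})$ and $C(w_{1})$. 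The only difference from the setting of Theorem \ref{th:aff Hk pres} is that the fibers of $\pi^{[n]}|_{B_{(n,S)}}$ may be twisted nodal rather than smooth; but the Drinfeld relative compactification $\ov\Bun^{\l}_{B}$, the description \eqref{NB aw}, and the relative-position argument all make sense for a flat proper Gorenstein family of curves with marked points in its smooth schematic locus, so the argument carries over. (Alternatively: over the open locus of $\sfB_{(n,S)}$ with smooth fibers this is immediate from Theorem \ref{th:aff Hk pres}, and over the finitely many boundary strata, which have torus automorphisms, one reduces to the same relative-position computation using the uniformization of $\Bun_{G}$ on twisted nodal curves from Proposition \ref{p:Bun tw family}.)

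The main obstacle is thus the extension of the Eisenstein cone machinery of \S\ref{ss:Eis cone level} --- in particular the closedness statements of Theorems \ref{th:Eis cone closed} and \ref{th:univ cone Iw} and the description \eqref{NB aw} --- to the twisted nodal families $\pi^{[n]}|_{B_{(n,S)}}$ with marked sections, together with the verification that the Hecke correspondence along a section, which modifies bundles only in a formal neighborhood of that section inside the smooth locus, interacts with the relative nilpotent cone exactly as in the smooth case. Once these points are in place, the Lagrangian-transport computations of \S\ref{sss:proof Iw} apply without change and complete the proof.
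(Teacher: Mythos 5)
Your treatment of the generically nilpotent condition agrees with the paper: it is exactly Theorem \ref{th:aff Hk pres} applied over the generic locus. The gap is in the relatively nilpotent condition. Your main route asks to rerun the family argument of \S\ref{sss:proof Iw} over $\sfB_{(n,S)}$, which requires extending the Eisenstein-cone machinery of \S\ref{ss:Eis cone level} --- the closedness and Lagrangian statements of Theorems \ref{th:Eis cone closed} and \ref{th:univ cone Iw} and the description \eqref{NB aw} --- from smooth families to the twisted nodal families $\pi^{[n]}|_{B_{(n,S)}}$. You flag this yourself as ``the main obstacle'' and then assert that it ``carries over''; but this is precisely the nontrivial content, and it does not carry over verbatim: the identification of the nilpotent cone with the Eisenstein cone (Lemma \ref{l:two cones}) uses the Jacobson--Morozov canonical reduction at the generic point of an irreducible smooth curve, and on a reducible twisted nodal curve the canonical parabolics on the various components need not glue at the nodes into a $B$-bundle on the whole curve, while properness of the Drinfeld compactification and the Pl\"ucker description also require separate care for nodal fibers. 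So as written, the key step of your argument is assumed rather than proved.

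The paper avoids all of this. For the relative condition it invokes Corollary \ref{c:check rel ss}: relative singular support can be checked stratum by stratum over the base, and every non-generic stratum of $\sfB_{(n,S)}$ is a single point (with automorphisms). One is thus reduced to a single twisted nodal curve $X$ (a fiber of $\frX^{[n]}$) with a smooth non-orbifold marked point $x$, where the relevant cone is simply the zero fiber of the Hitchin map in $T^{*}\Bun_{G,N}(X,x)$; the transport formalism from the proof of Theorem \ref{th:aff Hk pres} then reduces the claim to the statement that the Hecke correspondence at $x$ transports this cone into itself, which is immediate because the transported covector has the same Higgs field away from $x$, and nilpotency can be tested away from $x$. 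Your parenthetical ``alternative'' comes close to this stratumwise reduction, but it still appeals to ``the same relative-position computation'' on the nodal curve instead of this one-line observation, so it inherits the same unproved extension. To repair your proof, replace the family-level Eisenstein-cone argument by the reduction via Corollary \ref{c:check rel ss} together with the Higgs-field observation above.
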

\begin{proof}
Let us prove the version for $\Sh_{\cN}(\sfBun_{\s,(n,S)})$, and the real version is similar. 

Theorem \ref{th:aff Hk pres} implies that $\cH^{\ot \Sigma}$ preserves generically nilpotent sheaves (for $S=[n]$). 

On the other hand, to check that relative nilpotency is preserved, by Corollary \ref{c:check rel ss}, it suffices to check the same statement over each stratum of $\sfB_{(n,S)}$ that is a single point with automorphisms.  This reduces to check that for a single DM nodal curve $X$ (a fiber of $\frX^{[n]}$) and a  smooth non-orbifold point $x\in X$, the action of $\cH$ on $\Sh(\Bun_{G,N}(X,x))$ preserves the full subcategory $\Sh_{\cN}(\Bun_{G,N}(X,x))$ of nilpotent sheaves. To check this statement, the argument of Theorem \ref{th:aff Hk pres} (the case of a single curve) reduces to checking that the global nilpotent cone for $\Bun_{G,N}(X,x)$ is preserved under transporting by the Hecke correspondence at $x$, which is obvious because the transportation of covectors preserves Higgs fields away from $x$. 
\end{proof}

\begin{prop}\label{p:cosimp cat marked sections}  With the choice of the trivializations $(\io_{a})_{a\in \Sigma}$ in \eqref{triv nbhd}, the functor $\Phi_{\s,*}:\cC \to \St$ canonically lifts to a functor $\Phi_{\s,*}:\cC \to \cH^{\ot \Sigma}\module$. The induced functor $\Phi_{\s,\Corr}:\Corr^\adm_{\ver;\hor}(\cC^{op}) \to \St$ canonically lifts to $\wt\Phi_{\s,\Corr}:\Corr^\adm_{\ver;\hor}(\cC^{op}) \to \cH^{\ot \Sigma}\module$.

In particular, the augmented semi-cosimplicial category $\cS^{\bu}_{\s}$ in \eqref{cS marked section} canonically lifts to  an augmented semi-cosimplicial category in $\cH^{\ot \Sigma}\module$:
$$
\xymatrix{
\wt\cS^\bullet_{\s}: \Delta_{\inj, +}\ar[r] & \cH^{\ot \Sigma}\module.
}$$

\end{prop}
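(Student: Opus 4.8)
The plan is to run the construction of $\Phi_{\s,\Corr}$ from Proposition \ref{p:sh functor from C} \emph{internally} to the $(\infty,2)$-category $\cH^{\ot\Sigma}\module$ of $\cH^{\ot\Sigma}$-module categories, so that the passage to the correspondence $2$-category and to left adjoints takes place there. What makes this possible is that the marked sections $\s^{[n]}_{a}$, together with the formal neighborhoods $D^{[n]}_{\s_{a}}\simeq \Spf(\CC\tl{z})\wh\times B^{[n]}$ used in \S\ref{sss:aff Hk sym} to define the Hecke modifications, are disjoint from the twisted nodes and are unaffected both by the bubbling and by the restriction to coordinate subschemes $B_{(n,S)}\subset B^{[n]}$. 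Hence the Hecke correspondences $\Hk^{[n]}_{\s}$, restricted to $\sfBun_{\s,(n,S)}$ and passed to $S\Gm^{[n]}$-equivariant and real (resp.\ positive) versions, assemble into a diagram over $\cC$ covering the diagram $\sfBun_{\s,\bullet}$ and compatible with the evaluation maps $\ev_{a}$; equivalently, the $\cH^{\ot\Sigma}$-action of \S\ref{sss:aff Hk sym} is defined uniformly across $\cC$.

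First I would upgrade $\Phi_{\s,*}$ to a functor $\cC\to\cH^{\ot\Sigma}\module$. On objects this is Lemma \ref{c:Haff pres N}, which says precisely that $\Phi_{\s,*}(n,S)=\Sh_{\cN}(\sfBun^{+}_{\s,(n,S)})$ is preserved by the $\cH^{\ot\Sigma}$-action. For a morphism $\ph:(n,S)\to(n',S')$ in $\cC$, since the sections are untouched by $\ph$ one obtains a Cartesian square comparing the Hecke stacks for $\sfBun_{\s,(n,S)}$ and $\sfBun_{\s,(n',S')}$ over the locally closed embedding $\sfBun^{\RR}_{\s,\ph}$. Base change then shows that both the $*$-pushforward $(\sfBun^{\RR}_{\s,\ph})_{*}$ and its left adjoint $(\sfBun^{\RR}_{\s,\ph})^{*}$ commute with Hecke convolution: smoothness of the projection $h_{1}$ gives compatibility with $h_{1}^{*}(-)\ot\ev^{*}(-)$, while properness of $h_{2}$ on the finite-type pieces relevant to each $\wh\D(w)$, hence (by colimits) for an arbitrary kernel, gives compatibility with $h_{2!}$. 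Thus the structure morphism $\Phi_{\s,*}(\ph)$ and its left adjoint are $\cH^{\ot\Sigma}$-linear, and the coherence of the diagram of Hecke correspondences over $\cC$ refines $\Phi_{\s,*}$ to a functor valued in $\cH^{\ot\Sigma}\module$.

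Next I would re-examine the input to \cite[Ch.7, Theorem 3.2.2(b)]{GR} with the target $(\infty,2)$-category $\St$ replaced by $\cH^{\ot\Sigma}\module$. The right Beck--Chevalley condition with respect to $\ver=\frC$ is inherited from Proposition \ref{p:sh functor from C}(2): for $\beta\in\frC$ the required left adjoint in $\cH^{\ot\Sigma}\module$ is the $\cH^{\ot\Sigma}$-linear functor $\Phi^{*}_{\s}(\beta)$, together with the (linear) unit and counit of the $*$-pull/$*$-push adjunction, and the Beck--Chevalley $2$-morphism in $\cH^{\ot\Sigma}\module$ maps under the forgetful $2$-functor $\cH^{\ot\Sigma}\module\to\St$ to the one of Proposition \ref{p:sh functor from C}(2), which is an equivalence. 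Since the forgetful functor $\Fun_{\cH^{\ot\Sigma}}(M,N)\to\Fun(M,N)$ is conservative, the Beck--Chevalley $2$-morphism is already an equivalence in $\cH^{\ot\Sigma}\module$. Applying \cite[Ch.7, Theorem 3.2.2(b)]{GR} yields the $2$-functor $\wt\Phi_{\s,\Corr}:\Corr^{\adm}_{\ver;\hor}(\cC^{op})\to\cH^{\ot\Sigma}\module$ lifting $\Phi_{\s,\Corr}$; all of its values — in particular the face maps of $\cS^{\bullet}_{\s}$, which by Proposition \ref{p:sh functor from C}(3) are composites of $\Phi^{*}$'s and $\Phi_{*}$'s — are $\cH^{\ot\Sigma}$-linear. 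Precomposing with $c^{\bullet}:\Delta_{\inj,+}\to\Corr^{\adm}_{\ver;\hor}(\cC^{op})$ produces the augmented semi-cosimplicial object $\wt\cS^{\bullet}_{\s}:\Delta_{\inj,+}\to\cH^{\ot\Sigma}\module$ lifting $\cS^{\bullet}_{\s}$.

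The main obstacle is the $\cH^{\ot\Sigma}$-linearity of the structure morphisms, i.e.\ the base-change assertion that $*$-pushforward and $*$-pullback along the locally closed embeddings $\sfBun^{\RR}_{\s,\ph}$ commute with affine Hecke convolution along the marked sections. This relies on two geometric facts: the bubbling and the passage to coordinate subschemes leave the sections and their formal neighborhoods intact, so the Hecke stacks genuinely form a Cartesian diagram over $\cC$; and convolution with a fixed kernel involves only a finite-type, proper piece of the Hecke correspondence, so ordinary proper and smooth base change apply — including in the real analytic setting of Appendix \ref{app:A}. The remaining $2$-categorical bookkeeping is formal once $\cH^{\ot\Sigma}\module$ is recognized as an $(\infty,2)$-category to which the Gaitsgory--Rozenblyum machinery applies verbatim.
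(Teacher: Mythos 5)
Your overall packaging (running the Gaitsgory--Rozenblyum extension internally to $\cH^{\ot\Sigma}\module$, using conservativity of the forgetful functor to inherit the Beck--Chevalley condition) is fine and matches what the paper does implicitly; you also correctly isolate the real content, namely the isomorphism $\cK\star(\sfBun^{+}_{\s,\ph})_{*}(\cF)\simeq(\sfBun^{+}_{\s,\ph})_{*}(\cK\star\cF)$. But your justification of that isomorphism has a genuine gap: you claim it follows from smooth base change for $h_{1}$ plus ``properness of $h_{2}$ on the finite-type pieces relevant to each $\wh\D(w)$.'' In the universal monodromic setting the level structure along the sections is $\bI^{\c}$, not $\bI$, so the fibers of $h_{2}$ over the preimage of a single double coset are torsors under the non-compact torus $H$ over a Schubert variety (roughly $\ol{\bI w\bI}/\bI^{\c}$, an $H$-bundle over the projective $\ol{\bI w\bI}/\bI$). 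Hence $h_{2}$ is \emph{not} proper even on finite-type pieces, and the comparison map $h'_{2!}(\sfHk^{+}_{\ph})_{*}\to(\sfBun^{+}_{\ph})_{*}h_{2!}$ (a $!$-pushforward against a $*$-pushforward) is not an isomorphism by ordinary base change; on arbitrary sheaves it can fail.

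The paper closes exactly this gap by using the nilpotent singular support hypothesis in an essential way: nilpotent sheaves are $H$-monodromic, so $h_{1}^{*}\cF\ot\ev^{*}\cK$ is monodromic for the first $H$-action; writing $H(\CC)\simeq H_{c}\times H_{>0}$ one descends along the free $H_{>0}$-action, and the induced maps $\ov h_{2},\ov h'_{2}$ on the $H_{>0}$-quotients are ind-proper (fibers are now bundles over Schubert varieties with compact-torus fibers), so proper base change applies there and one transports the isomorphism back up to a shift by $\dim H$. This monodromic descent is not a formality you can replace by ``proper and smooth base change, including in the real analytic setting'': it is the step where the restriction to $\Sh_{\cN}$ (rather than all sheaves) is actually used, and without it the $\cH^{\ot\Sigma}$-linearity of $\Phi_{\s,*}(\ph)$ and its left adjoint is unproven. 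The rest of your argument (linearity of the left adjoints, the lift of the correspondence functor, precomposition with $c^{\bullet}$) goes through once this isomorphism is established.
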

\begin{proof}
We need to check for any morphism $\ph: (n,S)\to (n',S')$ in $\cC$ and any $\cK\in \cH^{\ot\Sig}$ there is a natural isomorphism for $\cF\in \Sh_{\cN}(\sfBun^{+}_{\s,(n,S)})$
\begin{equation}\label{conv K push}
\cK\star(\sfBun^{+}_{\ph})_{*}(\cF)\simeq (\sfBun^{+}_{\ph})_{*}(\cK\star\cF).
\end{equation}
Let $\sfHk^{+}_{\s,(n,S)}=\Hk^{[n]}_{\s}|_{B^{+}_{(n,S)}}/S\RR^{[n]}_{>0}$ (over $\sfB^{+}_{(n,S)}$) be the affine Hecke correspondence for $\sfBun^{+}_{\s, (n,S)}$. We have a commutative diagram
\begin{equation}\label{Hk nn'}
\xymatrix{ \sfBun^{+}_{\s, (n,S)}\ar[d]^{\sfBun^{+}_{\ph}} & \sfHk^{+}_{\s, (n,S)}\ar[r]^{h_{2}}\ar[l]_-{h_{1}}\ar[d]^{\sfHk^{+}_{\ph}} &  \sfBun^{+}_{\s, (n,S)} \ar[d]^{\sfBun^{+}_{\ph}}\\
\sfBun^{+}_{\s, (n',S')} & \sfHk^{+}_{\s, (n',S')}\ar[r]^-{h'_{2}}\ar[l]_-{h'_{1}} &  \sfBun^{+}_{\s, (n',S')} 
}
\end{equation}
Let $\ev: \sfHk^{+}_{\s, (n,S)}\to (\bI^{\c}\bs G\lr{z}/\bI^{\c})^{\Sig}$ and $\ev': \sfHk^{+}_{\s, (n',S')}\to (\bI^{\c}\bs G\lr{z}/\bI^{\c})^{\Sig}$ be the maps that record the relative positions of the modifications along $\s$. We have a natural map
\begin{equation*}
h'^{*}_{1}(\sfBun^{+}_{\ph})_{*}(\cF)\ot\ev^{*}\cK\to (\sfHk^{+}_{\ph})_{*}(h_{1}^{*}(\cF)\ot \ev^{*}\cK)
\end{equation*}
Since the left square is Cartesian and $h'_{1}$ is a locally trivial fibration (at least when we restrict to the inverse image of finitely many $\bI$-double cosets in $\sfHk^{+}_{\s,(n',S')}$) with fibers isomorphic to $G\lr{z}/\bI^{\c}$,  the natural map above is an isomorphism.  Applying $h'_{2!}$ to both sides we get a natural isomorphism
\begin{equation*}
\cK\star(\sfBun^{+}_{\ph})_{*}(\cF)=h'_{2!}(h'^{*}_{1}(\sfBun^{+}_{\ph})_{*}(\cF)\ot\ev^{*}\cK)\isom h'_{2!}(\sfHk^{+}_{\ph})_{*}(h_{1}^{*}(\cF)\ot \ev^{*}\cK)
\end{equation*}
On the other hand, 
\begin{equation*}
 (\sfBun^{+}_{\ph})_{*}(\cK\star\cF)= (\sfBun^{+}_{\ph})_{*}h_{2!}(h_{1}^{*}(\cF)\ot \ev^{*}\cK).
\end{equation*}
Again we have a natural transformation
\begin{equation}\label{Hk !*}
h'_{2!}(\sfHk^{+}_{\ph})_{*}\to (\sfBun^{+}_{\ph})_{*}h_{2!}.
\end{equation}
It remains to show that this natural transformation is an isomorphism when applied to $h_{1}^{*}(\cF)\ot \ev^{*}\cK$. There are two $H$-actions on $\sfHk^{+}_{\s,(n,S)}$  by modifying the $N$-reductions of $\cE_{1}$ and $\cE_{2}$ respectively. We have $h_{1}$ is equivariant under the first $H$-action and $h_{2}$ is equivariant under the second, and $\ev$ is equivariant under both. Since $\cF$  has nilpotent singular support, it is monodromic under $H$, hence $h_{1}^{*}\cF$ is monodromic under the first $H$-action; similarly $\ev^{*}\cK$ is monodromic under both $H$-actions. Therefore $h_{1}^{*}(\cF)\ot \ev^{*}\cK$ is monodromic under the first $H$-action.  We show \eqref{Hk !*} is an isomorphism when applied to any sheaf $\cG$ on $\sfHk^{+}_{\s,(n,S)}$ that is monodromic under the first action of $H$ with support in the preimage of finitely many $\bI$-double cosets.

Let us factor $H$ as a topological group $H(\CC) \simeq H_c \times H_{>0}$ where $H_c$ is the maximal compact torus and $H_{>0}=H(\RR_{>0})$.  Now $h_{1}^{*}(\cF)\ot \ev^{*}\cK$ is $H_{>0}$-equivariant under the first action. The maps $h_{2}$ and $h'_{2}$ are invariant under the first $H$-action, hence we can rewrite the right square of \eqref{Hk nn'} as
\begin{equation*}
\xymatrix{\sfHk^{+}_{\s, (n,S)}\ar[r]\ar[d]^{\sfHk^{+}_{\ph}} & \sfHk^{+}_{\s, (n,S)}/H_{>0}\ar[d]^{\ov\sfHk^{+}_{\ph}} \ar[r]^{\ov h_{2}}&  \sfBun^{+}_{\s, (n,S)} \ar[d]^{\sfBun^{+}_{\ph}}\\
 \sfHk^{+}_{\s, (n',S')}\ar[r] &  \sfHk^{+}_{\s, (n',S')}/H_{>0} \ar[r]^{\ov h'_{2}} & \sfBun^{+}_{\s, (n',S')} 
}
\end{equation*}
Here the quotients use the first actions of $H$. For any $H$-monodromic sheaf $\cG$ on $\sfHk^{+}_{\s,(n,S)}$ as above, it has a canonical descent $\ov\cG$ to $\sfHk^{+}_{\s,(n,S)}/H_{>0}$.  Now $\ov h_{2}$ and $\ov h'_{2}$ are ind-proper. The natural map \eqref{Hk !*} applied to $\cG$ can be written as the composition (where $d=\dim_{\CC}H=\dim_{\RR}H_{>0}$)
\begin{equation*}
h'_{2!}(\sfHk^{+}_{\ph})_{*}\cG\cong \ov h'_{2!}(\ov \sfHk^{+}_{\ph})_{*}\ov \cG[-d]\isom (\sfBun^{+}_{\ph})_{*}\ov h_{2!}\ov \cG [-d]\cong (\sfBun^{+}_{\ph})_{*}h_{2!}\cG.
\end{equation*}
\end{proof}
%
%
%


\begin{remark}
The spectral action of  $\Perf(\Loc_{G\vee}(X))$ on $\Sh_{\cN}(\Bun_{G}(X))$ constructed in \cite{NY} also has a family version and a bubbling version that upgrades the functor $\cS^{\bu}$ to be a functor valued in modules over a spectral category. This will be constructed in another paper.
\end{remark}



\section{Hecke actions via bubbling: algebra}


\subsection{Monoidal diagram category}

We continue to work with the category $\cC$ introduced in \S\ref{ss:diag cat}.

\sss{The monoidal category $\cC_{\star}$}
Define the full subcategory $\cC_\star \subset \cC$  with
objects given by pairs $(n, S)$ of an integer $n\geq 0$ and a subset $S\subset [n]$ disjoint from the endpoints $\{0, n\}$. 

\begin{remark}\label{rem:active}
Recall for $(n, S), (n', S') \in \cC$, a morphism $\ph:(n, S) \to (n', S')$ in $\cC$ is 
an order-preserving inclusions $\ph:[n]\to [n']$ such that (i) $\ph(S) \subset S'$ and (ii) $[n'] \setminus \ph([n]) \subset S'$. Note for $(n, S), (n', S') \in \cC_\star$,  condition (ii) implies $\ph$ takes $0 \mapsto 0$ and $n\mapsto n'$. Following established terminology, one could use the term ``active" to refer to morphisms with this property.  
\end{remark} 

We define a monoidal structure on $\cC_\star$ as follows. The monoidal product is given by
\begin{equation}\label{uplus}
(n_1, S_1) \star (n_2, S_2) = (n_1 + n_2, S_1 \uplus S_2)
\end{equation}
where $S_1 \uplus  S_2\subset [n_{1}+n_{2}]$ is the union of the disjoint subsets $S_1$ and $n_1 + S_2$ inside $[n_1] \cup_{\{n_1\}} (n_1+ [n_2])=[n_{1}+n_{2}]$. For morphisms $\ph_{i}: (n_{i},S_{i})\to (n'_{i}, T_{i})$ ($i=1,2$), their product $\ph=\ph_{1}\star\ph_{2}: [n_{1}+n_{2}]\to [n_{1}'+n'_{2}]$ is defined by 
\begin{equation*}
\ph(i)=\begin{cases} \ph_{1}(i), & 0\le i\le n_{1}; \\ \ph_{2}(i-n_{1})+n'_{1}, & n_{1}\le i\le n_{1}+n_{2}. \end{cases}
\end{equation*}
When $i=n_{1}$, $\ph_{1}(n_{1})=n'_{1}$ and $\ph_{2}(i-n_{1})+n'_{1}=\ph_{2}(0)+n'_{1}=n'_{1}$ by Remark \ref{rem:active}, so the above definition is consistent.  The monoidal unit is $(0, \vn)$.

\sss{The module categories $\cC_{L}$ and $\cC_{R}$}
Define the full subcategories $\cC_{L} \subset \cC$ (resp. $\cC_R \subset \cC$) with
objects given by pairs $(n, S)$ of an integer $n\geq 0$ and a subset $S\subset [n]$ disjoint from the endpoint $0$ (resp. $n$). 

The formulas \eqref{uplus} make $\cC_L$ (resp. $\cC_R$) a left (resp. right) $\cC_\star$-module.

\subsubsection{Coalgebra object in correspondences}\label{sss:coalg}
Recall  the correspondence 2-category $\Corr^\adm_{\ver;\hor}(\cC^{op})$ defined above.

Following Remark~\ref{rem:active}, the pushouts of Lemma~\ref{l:pushout} preserve the full subcategory
$\cC_\star \subset \cC$. Thus we can form the
 2-category 
$\Corr^\adm_{\ver;\hor}(\cC_\star^{op}) \subset \Corr^\adm_{\ver;\hor}(\cC^{op})$ by restricting to objects in $\cC_\star \subset \cC$.  Moreover, the monoidal structure on 
$\cC_\star$ naturally induces a monoidal structure on  $\Corr^\adm_{\ver;\hor}(\cC_\star^{op})$, given by the same formula \eqref{uplus} on objects. For two morphisms in $\Corr^\adm_{\ver;\hor}(\cC_\star^{op})$ given as follows
\begin{equation}\label{two Corr}
\xymatrix{ (n'_{1},T_{1}) & \ar[l]_-{\psi_{1}}(n_{1},S_{1}) & (n'_{2}, T_{2}) & \ar[l]_-{\psi_{2}}(n_{2},S_{2})\\
\ar[u]^{\ph_{1}} (n_{1}', S_{1}') && \ar[u]^{\ph_{2}} (n_{2}',S_{2}')   
}
\end{equation}
their product under the monoidal structure is
\begin{equation}\label{monoidal Corr}
\xymatrix{ (n'_{1}+n'_{2},T_{1}\uplus T_{2})  & \ar[l]_-{\psi=\psi_{1}\star\psi_{2}} (n_{1}+n_{2}, S_{1}\uplus S_{2})\\ 
(n'_{1}+n'_{2}, S'_{1}\uplus S'_{2})\ar[u]^{\ph=\ph_{1}\star\ph_{2}}}
\end{equation}

Recall $c^{n}=(n,\vn)\in \cC_{\star}$, and $c^{n}$ is the $n$-th tensor power of $c^{1}$ under monoidal structure on $\cC_{\star}$.  Viewed as an object in $\Corr^\adm_{\ver;\hor}(\cC_\star^{op})$, $c^1 $ is naturally a (non-counital) coalgebra object with comultiplication given by 
 the correspondence
$$
\xymatrix{(2, \{1\})  & \ar[l] c^1 = (1, \vn)\\
\ar[u]   (2, \vn) = c^{2} = c^1 \star c^1
}
$$
More generally, all possible iterated comultiplications are given by 
 the correspondence
$$
\xymatrix{(n, \{1, \ldots, n-1\})  & \ar[l]c^1 = (1, \vn) \\
   \ar[u] (n, \vn) = c^{n} = (c^1)^{\star n}
}
$$
exhibiting the coassociativity. Here the horizontal map is the unique one with image $\{0,n\}\subset [n]$.

Similarly, 
we can form the
 2-category 
$\Corr^\adm_{\ver;\hor}(\cC_L^{op}) \subset \Corr^\adm_{\ver;\hor}(\cC^{op})$ 
(resp. $\Corr^\adm_{\ver;\hor}(\cC_R^{op}) \subset \Corr^\adm_{\ver;\hor}(\cC^{op})$)
  by restricting to objects in $\cC_L \subset \cC$ (resp.
  $\cC_R \subset \cC$).  
  Moreover, the module structure on 
$\cC_L$ (resp. $\cC_R$) naturally induces a module structure on  $\Corr^\adm_{\ver;\hor}(\cC_L^{op})$
(resp.  $\Corr^\adm_{\ver;\hor}(\cC_R^{op})$).

The same formulas as above make $c_L^0 = (0, \vn)\in \Corr^\adm_{\ver;\hor}(\cC_L^{op})$ (resp. 
  $c_R^0 = (0, \vn)\in \Corr^\adm_{\ver;\hor}(\cC_R^{op})$) a left (resp. right) $c^1$-comodule.

\subsection{Monoidal structure on functors}\label{ss:mon str}

We have engineered $\cC_\star$ in order to  define certain monoidal functors with domain $\cC_\star$. Likewise,
we have engineered $\cC_L, \cC_R$ in order to  define corresponding  functors of modules.

In what follows, given a functor $F$ with domain $\cC$, we write $F_\star = F|_{\cC_\star}$, $F_L = F|_{\cC_L}$,
$F_R = F|_{\cC_R}$ for its respective restrictions.

\subsubsection{Bases} Let $(C,g,c_{0})$ be a base. Recall the  functors $\sfA, \sfB:\cC \to \Sta$ defined in~\S\ref{sss:alg base stack} and \S\ref{sss:base B}. Note the 
restrictions $\sfA_\star $ and  $\sfB_\star$ coincide
since $\sfB = \sfA \times_{\Pi_{k}, \AA^1} C$,  the assignments $\sfA_{(n, S)}$, for $[n] \setminus S$ nonempty, project to $0\in \AA^1$, and $g^{-1}(0) = c_0$ is a single point. 
Similarly, 
 the 
restrictions $\sfA_L $,  $\sfB_L$ (resp. $\sfA_R $,  $\sfB_R$) coincide.

Now $\sfA_\star =\sfB_\star$, we still use $\sfB_{\star}$ below.

Equip  $\Sta$ with  the monoidal structure given by Cartesian product over $\CC$.
 We will equip the restriction $\sfA_\star$ with a natural monoidal structure as follows.

For $(n, S)\in \cC$ with $n\geq 0$, recall $\sfB_{(n, S)} = A_{(n, S)}/S\Gm^{[n]}$ where $A_{(n, S)} \subset A^{[n]}$ is the coordinate subspace cut out by $\e_i = 0$, for $i\not \in S$. 
In particular, to the monoidal unit $(0, \vn) \in \cC_\star$, we assign a point $\sfB_{(0, \vn)} = \{0\}$ arising as the origin  in  $A^{[0]}\simeq \AA^1$.  

For $n\geq 0$, and  $i\in [n]$, set $i^{c}= [n] \setminus \{i\}$. 
Observe we have a canonical isomorphism
$
\sfB_{(n, i^{c})}  \simeq \AA^{n}/ \Gm^{n}
$
where $ \Gm^{n}$-acts on $\AA^n$ by coordinate-wise scaling. 
Therefore for $n_1, n_2\geq 0$, we have a canonical isomorphism
\beq\label{eq:basic mon isom}
\xymatrix{
\sfB_{(n_1, n_{1}^{c})} \times \sfB_{(n_2, 0^{c})} \simeq  \AA^{n_1}/ \Gm^{n_1}  \times \AA^{n_2}/ \Gm^{n_2}  
\simeq \sfB_{(n_1 + n_2, n_1^{c})}
}
\eeq
where we understand $(n_1,  n_{1}^{c})=(n,\vn)$ if $n_1=0$, and $(n_{2}, 0^{c}) =(n_{2}, \vn)$
if $n_2=0$.
Thus for $(n_1, S_1), (n_2, \cC_2)\in \cC_\star$, we have a canonical isomorphism
\begin{equation*}
\xymatrix{
\sfB_{(n_1, S_1)} \times \sfB_{(n_2, S_2)} \simeq  \sfB_{(n_1 + n_2, S_1 \uplus S_2)}
}
\end{equation*}
cut out of the identity \eqref{eq:basic mon isom} by the further equations $\e_i = 0$, for $i\not \in S_1 \cup S_2$. 
These isomorphisms provide a monoidal structure on objects, 
and its extension to morphisms is evident from the definitions.

The same formulas  make $\sfB_L $ (resp. $\sfB_R $) a functor of left (resp. right) $\cC_\star$-modules via $\sfB_\star$. 

As in~\S\ref{sss:A plus} and \S\ref{sss:base B}, we pass to real and positive versions of $\sfB_{L}, \sfB_{R}$ and $\sfB_{\star}$ to obtain functors 
\begin{eqnarray*}
(\sfB_{L}, \sfB^{\RR}_{L}\supset\sfB^{+}_{L})  : & \cC_{L}\to \frR^{\Sch}_{\Sta,\supset},\\
(\sfB_{R}, \sfB^{\RR}_{R}\supset\sfB^{+}_{R}):  &\cC_{R}\to \frR^{\Sch}_{\Sta,\supset},\\
(\sfB_{\star}, \sfB^{\RR}_{\star}\supset \sfB^{+}_{\star}): &\cC_{\star}\to \frR^{\Sch}_{\Sta,\supset}.
\end{eqnarray*}


%

The monoidal structure on $\sfB_\star$ extends to one on $(\sfB_{\star},\sfB^{\RR}_{\star}\supset\sfB^+_\star)$. Moreover,
$(\sfB_{L},\sfB^{\RR}_{L}\supset \sfB^+_L)$ (resp. $(\sfB_{R}, \sfB^{\RR}_{R}\supset\sfB^+_R)$) are functors of left (resp. right) $\cC_\star$-modules via $(\sfB_{\star}, \sfB^{\RR}_{\star}\supset\sfB^+_\star)$.

\subsubsection{Truncated curves}\label{sss:tr curves}
Let $\t:\frY \to C$ be a rigidified separating nodal degeneration, and $\pi: \frX^{\D}\to B^{\D}$ be its $k$-th twisted cosimplicial bubbling. Recall from \S\ref{sss:tw exc loci} that $\frX^{\D}_{\t}$ is the exceptional locus of $\frX^{\D}$, i.e., the preimage of $Q\subset \frX$ in $\frX^{\D}$.  Recall the functor  $\sfX:\cC \to \Sta$ defined in \S\ref{sss:sfX}. 

We define a functor  $\sfX_\tau:\cC_\star \to \Sta$ as follows
\begin{equation*}
\sfX_{\t,(n,S)}=\frX^{[n]}_{\t}|_{B_{(n, S)}}, \quad (n,S)\in \cC_{\star}.
\end{equation*}
Then we have an object-wise closed embedding $\sfX_\tau \incl \sfX_{\star}$ as follows. Informally, for $(n,S)\in \cC_{\star}$, each fiber of $\sfX_{(n,S)}\to \sfB_{(n,S)}$ has two extremal components isomorphic to $X_{-}$ and $X_{+}$, and  $\sfX_{\t,(n,S)}$ is obtained by removing the two constant extremal components while keeping the twisted nodes $Q_{-}\subset X_{-}$ and $Q_{+}\subset X_{+}$. By definition, for $(n,S)\in \cC_{\star}$ we have
\begin{equation*}
\sfX_{(n,S)}=(X_{-}\times \sfB_{(n,S)})\cup_{Q_{-}\times \sfB_{(n,S)}}\sfX_{\t,(n,S)} \cup_{Q_{+}\times \sfB_{(n,S)}}(X_{+}\times \sfB_{(n,S)}).
\end{equation*}
By convention, $\sfX_{\t,(0,\vn)}=Q=X_{+}\cap X_{-}$.

%


The functor $\sfX_\tau$  interacts with the monoidal structure of $\cC_{\star}$ in the following way. For $(n_{1},S_{1}), (n_{2}, S_{2})\in \cC_{\star}$, we have
\begin{equation}\label{X tau union}
\sfX_{\t, (n_{1}+n_{2}, S_{1}\uplus S_{2})}=(\sfX_{\t, (n_{1},S_{1})}\times \sfB_{(n_{2},S_{2})})\cup (\sfB_{(n_{1},S_{1})}\times \sfX_{\t, (n_{2},S_{2})})
\end{equation}
where the union identifies the nodal sections indexed by $n_{1}\in [n_{1}]$ of the first part with the nodal sections indexed by $0\in [n_{2}]$ of the second. 

\begin{remark}\label{r:X tau indep}
Starting with the family $\t_{\WW}: \WW\to \AA^{1}$ we have its $k$-th twisted bubbling $\wt\WW^{\D}/\mu_{k}\to A^{\D}$. We may similarly define the truncated version
\begin{equation}\label{sfW tau}
\wt\sfW_{\t}: \cC_{\star}\to \Sch
\end{equation}
that sends $(n,S)\in \cC_{\star}$ to $\wt\sfW_{\t,(n,S)}$ by removing the extremal components and keeping their  nodes. 
 
Note that $\sfX_{\t,(n,S)}$ always maps to the nodes $Q\subset \frX$. The isomorphism \eqref{tw exc locus} implies that 
\begin{equation*}
\sfX_{\t,(n,S)}\simeq R\times [\wt\sfW_{\t, (n,S)}/\mu_{k}], \quad (n,S)\in \cC_{\star}
\end{equation*}
where $\wt\sfW_{\t, (n,S)}$ is the counterpart of $\sfX_{\t,(n,S)}$ for the standard bubbling $\wt\WW^{\D}$. Note this isomorphism depends on the trivialization of $\om_{Y_{-}}|_{R}$ that fix throughout.
Therefore, except for the factor $R$, the monoidal functor $\sfX_\tau:\cC_\star\to \Sta$ is independent of the input data $\pi: \frX\to B$. 
In particular, it assigns $R\times(\PP^1/\mu_k)$ to $(1, \vn) \in \cC_\star$. 
\end{remark}

We define a functor  $\sfX_{\t, L}: \cC_L\to \Sta$  with an object-wise closed embedding $\sfX_{\tau, L}\to \sfX_{L}$ as follows. For $(n,S)\in \cC_{L}$, since $0 \not \in S$, the curve $\sfX_{(n, S)}$ has a constant extremal component $ X_- \times \sfB_{(n, S)}$. We take $ \sfX_{\tau, (n,S)}$ to be what results from $\sfX_{(n,S)}$ by removing the constant extremal component $X_-\times \sfB_{(n,S)}$ while keeping its twisted nodes.  Similarly we define a functor $\sfX_{\t, R}: \cC_R \to \Sta$ with an object-wise closed embedding $\sfX_{\tau, R}\to \sfX_{R}$, by removing the constant extremal component $X_+\times \sfB_{(n,S)}$ for $(n,S)\in \cC_{R}$ and keeping its nodes.

%


Moreover, for $(n_{1},S_{1})\in \cC_{L}$ and $(n_{2},S_{2})\in \cC_{\star}$, we have 
\begin{equation}\label{X tau L}
\sfX_{\t, L, (n_{1}+n_{2}, S_{1}\uplus S_{2})}=(\sfX_{\t, L, (n_{1},S_{1})}\times \sfB_{(n_{2},S_{2})})\cup (\sfB_{(n_{1},S_{1})}\times \sfX_{\t, (n_{2},S_{2})}).
\end{equation}
Similarly for  $\sfX_{\tau, R}$.

\begin{remark} Parellel to Remark \ref{r:X tau indep}, 
the  functor $\sfX_{\tau, L}:\cC_L\to \Sta$ (resp. 
$\sfX_{\tau, R}:\cC_R\to \Sta$) with its structure as a map of modules
 only depends on the DM curve $X_+$ (resp. $X_-$). 
In particular, it assigns $X_+$ (resp. $X_-$) to $(0, \vn) \in \cC_L$ (resp. $(0, \vn) \in \cC_R$).
\end{remark}

Recall the functor   $(\sfX, \sfX^{\RR}\supset\sfX^+):\cC \to \frR^{\Sch}_{\Sta}$  defined in \S\ref{sss:sfX}. Define its truncated versions
\begin{eqnarray*}
(\sfX_{\t}, \sfX^\RR_{\tau} = \sfX_\tau  \times_{\sfB_\star} \sfB^\RR_\star\supset \sfX^+_{\tau} = \sfX_\tau  \times_{\sfB_\star} \sfB^+_\star) :\cC_\star\to \frR^{\Sch}_{\Sta,\supset},\\
(\sfX_{\t,L}, \sfX^\RR_{\tau, L} = \sfX_{\tau, L}  \times_{\sfB_L} \sfB^\RR_L\supset \sfX^+_{\tau, L} = \sfX_{\tau, L}  \times_{\sfB_L} \sfB^+_L) : \cC_L\to \frR^{\Sch}_{\Sta,\supset}\\
(\sfX_{\t,R}, \sfX^\RR_{\tau, R} = \sfX_{\tau, R}  \times_{\sfB_R} \sfB^\RR_R\supset\sfX^+_{\tau, R} = \sfX_{\tau, R}  \times_{\sfB_R} \sfB^+_R) : \cC_R\to \frR^{\Sch}_{\Sta,\supset}.
\end{eqnarray*}
We have analogues of \eqref{X tau union} and \eqref{X tau L} for these extended functors.

\subsubsection{Moduli of bundles on truncated curves} 
Recall $\cY_{k}=\Bun_{G}(\BB\mu_{k})\simeq\coprod_{\y\in \YY_{k}}\BB G_{\y}$.  Denote by $\cY^{R}_{k}$ is Cartesian power of $\cY_{k}$ indexed by $R$, the set of nodes $Y_{-}\cap Y_{+}$. 

For a stack $S$, let $\Sta/S$ be the overcategory of $S$, i.e., it consists of stacks over $S$ and maps over $S$. 
Equip  $\Sta/\cY_{k}^{R} \times \cY_{k}^{R}$ with  the cartesian monoidal structure relative to $\cY_{k}^{R}$, i.e., $(S,T)\mapsto S\times_{\cY_{k}^{R}} T$ where the fiber product uses the second map $p_{S,2}: S\to \cY_{k}^{R}$ and the first map $p_{T,1}: T\to \cY_{k}^{R}$; the resulting stack is viewed as a stack over $\cY_{k}^{R} \times \cY_{k}^{R}$ via $(p_{S,1},p_{T,2}): S\times_{\cY_{k}^{R}} T\to \cY_{k}^{R} \times \cY_{k}^{R}$. Similarly,  $\Sta/\cY_{k}^{R}$ is equipped with the natural left and right $\Sta/\cY_{k}^{R}\times \cY_{k}^{R}$-module structures.

Recall from \S\ref{sss:sfBun} the functor $\sfBun:\cC\to \Sta$ obtained by passing to the moduli stacks of $G$-bundles along the fibers of $\sfX\to \sfB$. Set $\sfBun_\tau:\cC_\star\to \Sta$ to be the functors of moduli stacks of $G$-bundles along the fibers of $\sfX_\tau\to \sfB_\star$.   For $(n,S)\in \cC_{\star}$, $\sfX_{\t,(n,S)}$ is equipped with two collections of (twisted nodal) sections $\sfQ^{0}_{(n,S)}$ and $\sfQ^{n}_{(n,S)}$, each isomorphic to $Q\times \sfB_{(n,S)}$.  In \S\ref{sss:loc br} we have named the $+$-branch along each of $\sfQ^{i}_{(n,S)}$. Restricting $G$-bundles to these sections gives two maps
\begin{equation*}
r_{0}, r_{n}:\sfBun_{\t, (n,S)}=\Bun_{G}(\sfX_{\t, (n,S)}/\sfB_{(n,S)})\to \Bun_{G}(\BB\mu_{k})^{R }=\cY_{k}^{R}.
\end{equation*}
This allows us to lift the functor $\sfBun_{\t}$ to
\begin{equation*}
\sfBun_\tau:\cC_\star\to \Sta/\cY_{k}^{R} \times \cY_{k}^{R}.
\end{equation*}
Observe that $\sfBun_\tau$ inherits a monoidal structure defined as follows. For $(n_{1},S_{1}), (n_{2},S_{2})\in \cC_{\star}$, there is a canonical isomorphism over $\cY_{k}^{R} \times \cY_{k}^{R}$
\begin{equation}\label{Bun tau monoidal}
\sfBun_{\tau, (n_{1}+n_{2},S_{1}\uplus S_{2})}\simeq \sfBun_{\tau, (n_{1},S_{1})}\times_{\cY_{k}^{R} } \sfBun_{\t, (n_{2},S_{2})}
\end{equation}
compatible with the monoidal structure on $\sfB_{\star}$. This follows from \eqref{X tau union}. 

Similarly we define functors
\begin{eqnarray*}
\sfBun_{\tau, L}:\cC_L\to \Sta/\cY_{k}^{R},\\
\sfBun_{\tau, R} :\cC_R\to \Sta/\cY_{k}^{R}
\end{eqnarray*}
to be the functor of moduli stacks of $G$-bundles along the fibers of $\sfX_{\tau, L}\to \sfB_L$, $\sfX_{\tau, R}\to \sfB_{R}$ respectively. Moreover, $\sfBun_{\tau, L}$ (resp. $\sfBun_{\tau, R}$) carries the structure of a map of left (resp right) $\cC_\star$-module via  $\sfBun_\tau$. These structures follow from \eqref{X tau L} and its analogue for $\sfX_{\tau, R}$.


Define the real and positive versions of $\sfBun_\tau, \sfBun_{\t, L}$ and $\sfBun_{\t,R}$ by base change
\begin{eqnarray*}
(\sfBun_{\t}, \sfBun^\RR_\tau = \sfBun_\tau  \times_{\sfB_\star} \sfB^\RR_\star\supset \sfBun^+_\tau = \sfBun_\tau  \times_{\sfB_\star} \sfB^+_\star):\cC_\star\to \frR^{\Sch}_{\Sta/\cY_{k}^{R}\times\cY_{k}^{R},\supset},\\
(\sfBun_{\t,L},\sfBun^\RR_{\tau, L} = \sfBun_{\tau, L}  \times_{\sfB_L} \sfB^\RR_L\supset \sfBun^+_{\tau, L} = \sfBun_{\tau, L}  \times_{\sfB_L} \sfB^+_L): \cC_L\to \frR^{\Sch}_{\Sta/\cY_{k}^{R},\supset},\\
(\sfBun_{\t,R},\sfBun^\RR_{\tau, R} = \sfBun_{\tau, R}  \times_{\sfB_R} \sfB^\RR_R\supset \sfBun^+_{\tau, R} = \sfBun_{\tau, R}  \times_{\sfB_R} \sfB^+_R): \cC_{R}\to \frR^{\Sch}_{\Sta/\cY_{k}^{R},\supset}.
\end{eqnarray*}
They inherit  monoidal and module structures analogous to \eqref{Bun tau monoidal}. Here $\frR^{\Sch}_{\Sta/S,\supset}$ is the overcategory for $(S,S\supset S)\in \frR^{\Sch}_{\Sta,\supset}$.


\sss{Moduli of bundles with fixed twisting type}
Below we fix $\y\in \YY_{k}$ and a $G$-torsor $\EE_{\y}$ over $\BB\mu_{k}$  of type $\y$ such that $\Aut(\EE_{\y})$ is a maximal torus which we denote by $H$ (the existence of $\y$ requires $k$ to be not too small). Recall the functors $\sfBun^{\y}$ and $\sfBun^{+,\y}$ defined in \S\ref{ss:fix type} obtained as moduli stacks of $G$-bundles along the fibers of $\sfX\to \sfB$ with a fixed isomorphism type $\y$ when restricted to the twisted nodes. Replacing $\sfX$ by $\sfX_{\t}$ we obtain an open subfunctor of $\sfBun^+_\tau$ by fixing the twisting types at nodes to be $\y$:
\begin{equation*}
(\sfBun^{\y}_{\t}, \sfBun^{\RR, \y}_\tau\supset \sfBun^{+, \y}_\tau):\cC_\star \to \frR^{\Sch}_{\Sta/(\BB H)^{R}\times (\BB H)^{R},\supset}.
\end{equation*}
Here $(\BB H)^{R}$ appears as the component of $\cY_{k}^{R}$ corresponding to the isomorphism class $\y^{R}$.  Base changing along the map $\pt\to (\BB H)^{R}\times (\BB H)^{R}$ we obtain a functor 
\begin{equation*}
(\wh\sfBun^{\y}_{\t}, \wh\sfBun^{\RR, \y}_\tau\supset\wh\sfBun^{+, \y}_\tau):\cC_\star \to (\frR^{\Sch}_{\Sta,\supset})^{H^{R}\times H^{R}}.
\end{equation*}
Here $(\frR^{\Sch}_{\Sta,\supset})^{H^{R}\times H^{R}}$ means objects in $\frR^{\Sch}_{\Sta,\supset}$ with an action of $H^{R}\times H^{R}$. Concretely, $\wh\sfBun^{\y}_{\tau,(n,S)}$ is the moduli stack of $G$-bundles along the fibers of $\sfX_{\t,(n,S)}\to \sfB_{(n,S)}$, with an isomorphism with $\EE_{\y}$ along $\sfQ^{0}_{(n,S)}$ and $\sfQ^{n}_{(n,S)}$, and with isomorphism type $\y$ at all other twisted nodes.
 
For two objects $\cZ_{1}, \cZ_{2}\in (\frR^{\Sch}_{\Sta,\supset})^{H^{R}\times H^{R}}$, we may form the contracted product $\cZ_{1}\twtimes{H^{R}}\cZ_{2}\in (\frR^{\Sch}_{\Sta,\supset})^{H^{R}\times H^{R}}$ (the contracted product uses the second $H^{R}$-action on $\cZ_{1}$ and the first on $\cZ_{2}$). This defines a monoidal structure on $(\frR^{\Sch}_{\Sta,\supset})^{H^{R}\times H^{R}}$.   The functor $(\wh\sfBun^{\y}_\tau, \wh\sfBun^{\RR, \y}_\tau\supset\wh\sfBun^{+, \y}_\tau)$ inherits a monoidal structure from that of $\Bun_{\t}$ in \eqref{Bun tau monoidal}: for any $(n_{1},S_{2}), (n_{2},S_{2})\in \cC_{\star}$, there is a canonical isomorphism
\begin{equation}\label{Bun tau y monoidal}
\wh\sfBun^{+,\y}_{\tau, (n_{1}+n_{2}, S_{1}\uplus S_{2})}\simeq \wh\sfBun^{+, \y}_{\tau, (n_{1},S_{1})}\twtimes{H^{R}}\wh\sfBun^{+,\y}_{\tau, (n_{2},S_{2})}
\end{equation}
compatible with the monoidal structure of $\sfB_{\star}$. 

Similarly, we have the open subfunctors of $\sfBun^+_{\tau, L}$ and $\sfBun^+_{\tau, R}$ by fixing twisting types to $\y$:
\begin{eqnarray*}
(\sfBun^{\y}_{\tau, L}, \sfBun^{\RR, \y}_{\tau, L}\supset\sfBun^{+, \y}_{\tau, L}):\cC_L \to \frR^{\Sch}_{\Sta/(\BB H)^{R},\supset},\\
(\sfBun^{ \y}_{\tau, R}, \sfBun^{\RR, \y}_{\tau, R}\supset\sfBun^{+, \y}_{\tau, R}):\cC_R \to \frR^{\Sch}_{\Sta/(\BB H)^{R},\supset}.
\end{eqnarray*}
Base changing along the map $\pt\to (\BB H)^{R}$ we obtain functors
\begin{eqnarray*}
(\wh\sfBun^{\y}_{\tau, L},\wh\sfBun^{\RR, \y}_{\tau, L}\supset\wh\sfBun^{+, \y}_{\tau, L}):\cC_L \to (\frR^{\Sch}_{\Sta,\supset})^{H^{R}},\\
(\wh\sfBun^{\y}_{\tau, R}, \wh\sfBun^{\RR, \y}_{\tau, R}\supset\wh\sfBun^{+, \y}_{\tau, R}):\cC_R \to (\frR^{\Sch}_{\Sta,\supset})^{H^{R}}.
\end{eqnarray*}

\subsubsection{Nilpotent sheaves}\label{sss:nilp sh tau}
Recall from \S\ref{ss:fix type} the functor $\Phi^{\y}_*:\cC \to \St$  
and the induced
 functor
$$
\xymatrix{
\Phi^{\y}_{\Corr}
: \Corr^\adm_{\ver;\hor}(\cC^{op}) \ar[r] & \St
}
$$
obtained by passing to nilpotent sheaves on $\sfBun^{+,\y}_{(n,S)}$.

For $(n,S)\in \cC_{\star}$, we extend the notion of a relatively nilpotent sheaf to sheaves on $\wh\sfBun^{+,\y}_{\t, (n,S)}$. Namely, let $\cN^{+,\y}_{\t, (n,S)}\subset T^{*}\sfp^{+,\y}_{\t,(n,S)}$ be the relative nilpotent cone for $\sfp^{+,\y}_{\t,(n,S)}: \sfBun^{+,\y}_{\t, (n,S)}\to \sfB^{+}_{(n,S)}$. Let $\wh\sfp^{+,\y}_{\t,(n,S)}: \wh\sfBun^{+,\y}_{\t, (n,S)}\to \sfB^{+}_{(n,S)}$ be the projection. Then $T^{*}\sfp^{+,\y}_{\t,(n,S)}\incl T^{*}\wh\sfp^{+,\y}_{\t,(n,S)}$ is a closed embedding, hence we view $\cN^{+,\y}_{\t, (n,S)}$ as a closed substack of $T^{*}\wh\sfp^{+,\y}_{\t,(n,S)}$. We call $\cF\in \Sh(\wh\sfBun^{+,\y}_{\t, (n,S)})$ {\em relatively nilpotent} if the image of $ss(\cF)$ in $T^{*}\wh\sfp^{+,\y}_{\t,(n,S)}$  lies in $\cN^{+,\y}_{\t, (n,S)}$. Let
\begin{equation*}
\Phi^{\y}_{\tau, *}(n,S):=\Sh_{\cN}(\wh\sfBun^{+,\y}_{\t, (n,S)}).
\end{equation*}
be the category of relatively nilpotent sheaves on $\wh\sfBun^{+,\y}_{\t, (n,S)}$. In particular, sheaves in $\Sh_{\cN}(\wh\sfBun^{+,\y}_{\t, (n,S)})$ are monodromic under the action of $H^{R}\times H^{R}$. 

Let $\Sh_{0}(H^{R})$ be the category of sheaves on the torus $H^{R}$ with singular support in the zero section, i.e., with locally constant cohomology sheaves. This is a monoidal category under convolution. Then $\Phi^{\y}_{\tau, *}(n,S)$ carries an action of $\Sh_{0}(H^{R})\ot\Sh_0(H^{R})$ by convolution.  Therefore $(n,S)\mapsto \Phi^{\y}_{\tau, *}(n,S)$ gives a functor
\begin{equation*}
\Phi^{\y}_{\tau, *}:\cC_\star \to  \Sh_0(H^{R}) \otimes \Sh_0(H^{R})\module
\end{equation*}

The analogue of Proposition \ref{p:sh functor from C} holds for $\Phi^{\y}_{\tau, *}$,  and it induces a
 functor
$$
\xymatrix{
\Phi^{ \y}_{\tau,\Corr}
: \Corr^\adm_{\ver;\hor}(\cC_\star^{op}) \ar[r] & \Sh_0(H^{R}) \otimes \Sh_0(H^{R})\module
}
$$

Similarly, we have the functors $\Phi^{ \y}_{\tau, L, *}: \cC_{L}\to  \Sh_0(H^{R})\module$ and $\Phi^{ \y}_{\tau, R,*}: \cC_L \to  \Sh_0(H^{R})\module$  by passing to relatively nilpotent sheaves, where the $\Sh_0(H^{R})$-actions  come from $H^{R}$-actions on $\Sh_{\cN}(\wh\sfBun^{+,\y}_{\t, (n,S)})$ by changing the isomorphism with $\EE_{\y}$ along $\sfQ^{0}_{(n,S)}$ (in case $(n,S)\in \cC_{L}$) or $\sfQ^{n}_{(n,S)}$ (in case $(n,S)\in \cC_{R}$). In particular,
\begin{eqnarray}
\label{PhiL c0L}\Phi^{ \y}_{\tau, L, *}(0,\vn)=\Sh_{\cN}(\Bun_{G,1}(X_{+},Q_{+})_{\y});\\
\label{PhiR c0R} \Phi^{ \y}_{\tau, R, *}(0,\vn)=\Sh_{\cN}(\Bun_{G,1}(X_{-},Q_{-})_{\y}).
\end{eqnarray}
The functors $\Phi^{ \y}_{\tau, L, *}$ and $\Phi^{ \y}_{\tau, R, *}$ induce functors
\begin{equation*}
\xymatrix{
\Phi^{\y}_{\tau, L,\Corr}
: \Corr^\adm_{\ver;\hor}(\cC_L^{op}) \ar[r] &  \Sh_0(H^{R})\module
}
\end{equation*}
\begin{equation*}
\xymatrix{
\Phi^{\y}_{\tau, R, \Corr} 
: \Corr^\adm_{\ver;\hor}(\cC_R^{op}) \ar[r] &  \Sh_0(H^{R})\module
}
\end{equation*}

Equip $ \Sh_0(H^{R}) \otimes \Sh_0(H^{R})\module$ with the  monoidal structure given by tensoring over the middle $\Sh_0(H^{R})$, i.e., $(-)\ot_{\Sh_0(H^{R})}(-)$. 

\begin{lemma}\label{l:Phi eta tau Corr}
\begin{enumerate}
\item $\Phi^{\y}_{\tau, \Corr}$ is naturally monoidal.
\item  $\Phi^{ \y}_{\tau, L,\Corr}$ (resp. $\Phi^{ \y}_{\tau, R,\Corr}$) is naturally a map of left (resp. right) $\cC_\star$-modules via $\Phi^\y_{\tau,\Corr}$.
\item There are natural isomorphisms of functors $\Corr^\adm_{\ver;\hor}(\cC_\star^{op})\to \St$
\begin{equation*}
\xymatrix{
\Phi^\y_{\Corr}|_{\Corr^\adm_{\ver;\hor}(\cC_\star^{op})} \simeq \Sh_\cN(\Bun_{G,1}(X_-, Q_{-})_{\y}) \ot_{\Sh_0(H^R)} \Phi^{\y}_{\t,\Corr}
\ot_{\Sh_0(H^R)} \Sh_\cN(\Bun_{G,1}(X_+, Q_{+})_{\y})
}\end{equation*}
Similarly, we have natural isomorphisms of functors
\begin{equation*}
\xymatrix{
\Phi^\y_{*}|_{\cC_L} \simeq \Sh_\cN(\Bun_{G,1}(X_-, Q_{-})_{\y}) \ot_{\Sh_0(H^R)} \Phi_{\tau, L,*}^{\y}
}\end{equation*}
\begin{equation*}
\xymatrix{
\Phi^\y_{*}|_{\cC_R} \simeq  \Phi_{\tau, R,*}^{\y} \ot_{\Sh_0(H^R)} \Sh_\cN(\Bun_{G,1}(X_+, Q_{+})_{\y})
}
\end{equation*}
\end{enumerate}
\end{lemma}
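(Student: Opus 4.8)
\textbf{Proof proposal for Lemma \ref{l:Phi eta tau Corr}.}

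The plan is to prove the three items in order, since (1) and (2) are essentially formal consequences of the monoidal/module structure on $\cC_\star,\cC_L,\cC_R$ established in \S\ref{ss:mon str}, and (3) is the substantive statement that identifies the restrictions of the global functor $\Phi^\y$ with a (relative) tensor product over the trivial-monodromy Hecke category $\Sh_0(H^R)$. For (1), I would first observe that the geometric input already carries a monoidal structure: the isomorphism \eqref{Bun tau y monoidal}, $\wh\sfBun^{+,\y}_{\tau,(n_1+n_2,S_1\uplus S_2)}\simeq \wh\sfBun^{+,\y}_{\tau,(n_1,S_1)}\twtimes{H^R}\wh\sfBun^{+,\y}_{\tau,(n_2,S_2)}$, is compatible with the monoidal structure on $\sfB_\star$. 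Then I would promote this to a monoidal structure on the sheaf-level functor by checking that $*$-pushforward along the structure maps $\sfBun^{+}_\varphi$ (and their left adjoints, i.e.\ $*$-pullbacks) are compatible with convolution/contracted products; this is the same bookkeeping as in Proposition \ref{p:sh functor from C}, plus the observation that exterior product of nilpotent sheaves on the two pieces is nilpotent on the glued curve (the relative global nilpotent cone of a product of curves relative to its base is the product of the relative nilpotent cones, which follows from the local nature of the Hitchin map). The relative-nilpotence preservation under the gluing maps is handled exactly as in Proposition \ref{p:sh functor from C}(1) using Lemma \ref{lem:rel ss stacks}, so once the monoidal structure is in place at the level of $\Phi^\y_{\tau,*}$, the induced functor $\Phi^\y_{\tau,\Corr}$ out of the correspondence $2$-category is automatically monoidal by functoriality of the Gaitsgory--Rozenblyum construction recalled in \S\ref{sss:Fun Corr} (the comultiplication on $c^1$ in \S\ref{sss:coalg} is precisely what transports to the monoidal structure). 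Item (2) is the same argument with the one-sided gluing formulas \eqref{X tau L} and its $R$-analogue, giving \eqref{Bun tau y monoidal}'s module version.

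For (3), the key point is that attaching the two \emph{constant} extremal components $X_-$ and $X_+$ to a truncated curve $\sfX_{\tau,(n,S)}$ exhibits $\sfBun^{+,\y}_{(n,S)}$ as a fibered product of $\Bun_{G,1}(X_-,Q_-)_\y$, $\wh\sfBun^{+,\y}_{\tau,(n,S)}$, and $\Bun_{G,1}(X_+,Q_+)_\y$ over two copies of $(\BB H)^R$ (or, after rigidifying, a contracted product over $H^R$ on each side), again using \eqref{Bun tau monoidal} and the observation in \S\ref{sss:tr curves} that $\sfX_{(n,S)}=(X_-\times\sfB_{(n,S)})\cup_{Q_-\times\sfB}\sfX_{\tau,(n,S)}\cup_{Q_+\times\sfB}(X_+\times\sfB_{(n,S)})$. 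I would then invoke the general principle that, for a nice fibered product of stacks over $\BB H^R$ (equivalently, an $H^R$-torsor situation), the category of sheaves with appropriate singular support on the total space is the relative tensor product over $\Sh_0(H^R)$ of the sheaf categories on the factors --- this is Fourier dual to $\qc(H^\vee)$-linear descent and is the same mechanism used in \S\ref{ss:cosim cat} to strip off the $S\RR_{>0}^{[n]}$-quotient. Concretely, the $H^R$-action by changing the trivialization along $Q_\pm$ makes $\Bun_{G,1}(X_-,Q_-)_\y$ a $\Sh_0(H^R)$-module, $\wh\sfBun^{+,\y}_{\tau,(n,S)}$ an $H^R\times H^R$-object hence a $\Sh_0(H^R)$-bimodule, and the relative nilpotence/monodromicity conditions glue correctly because singular support is local and the nilpotent cone of the glued curve decomposes according to the normalization. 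Taking this fiberwise over $(n,S)$ and naturally in morphisms of $\cC$ yields the first displayed isomorphism of (3) at the level of $\Phi^\y_*|_{\cC_\star}$; applying $\Phi_{\Corr}$ (which, being defined by the universal property of \S\ref{sss:Fun Corr}, commutes with such identifications) upgrades it to functors out of $\Corr^\adm_{\ver;\hor}(\cC_\star^{op})$. The one-sided statements $\Phi^\y_*|_{\cC_L}$ and $\Phi^\y_*|_{\cC_R}$ follow by attaching only one constant component, using \eqref{X tau L} and \eqref{PhiL c0L}, \eqref{PhiR c0R}.

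I expect the main obstacle to be the sheaf-theoretic descent statement underlying (3): namely, that $\Sh_{\cN}$ of a stack built by gluing along $Q_\pm\cong R\times\BB\mu_k$ (equivalently, after rigidification, along an $H^R$-torsor) is the relative tensor product over $\Sh_0(H^R)$ of the categories of the constituents. This requires knowing that (i) the $H^R$-action is such that sheaves in the relevant nilpotent subcategories are automatically $H^R$-monodromic (which follows from the nilpotent singular support condition, as noted in the proof of Lemma \ref{c:Haff pres N} and elsewhere), (ii) the gluing is ``$\Sh_0(H^R)$-linear'' in a way that lets one apply the relative tensor product comparison --- here I would cite the Fourier/Mellin dual picture $\Sh_0(H)\simeq\qc(H^\vee)$ and the fact that $\qc$ satisfies descent, or alternatively bar-resolve the $H^R$-quotient exactly as in the $S\RR_{>0}^{[n]}$ argument of \S\ref{ss:cosim cat}, checking that the transition functors in the resulting cosimplicial diagram are the convolution-module structure maps. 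Making this descent argument precise in the real-analytic stack setting of Appendix \ref{app:A}, and checking that it is natural in $\cC$-morphisms (so that it assembles into an equivalence of functors, not just a levelwise equivalence), is where the real work lies; everything else is a formal manipulation of the Gaitsgory--Rozenblyum formalism already set up in \S\ref{s:diagram}.
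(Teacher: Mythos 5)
Your proposal is correct and follows essentially the same route as the paper: the paper's proof consists of applying the geometric contracted-product isomorphism \eqref{Bun tau y monoidal} together with the monodromic gluing equivalence of Lemma \ref{lem:monod gl} (for objects) and its naturality under pushforward/pullback, Lemma \ref{l:monod gl func} (for morphisms of correspondences), with (2) and (3) handled by the same argument. The descent statement you flag as "the real work" is precisely Lemma \ref{lem:monod gl} of Appendix \ref{app:A}, which is proved there by exactly the bar-resolution of the $H^R$-quotient you sketch (comparing the bar complex with the codescent diagram of Lemma \ref{l:descent} via Lemma \ref{lem:stack ext tens}).
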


\begin{proof} 
(1) Let $(n_{1},S_{1}), (n_{2},S_{2})\in \cC_{\star}$. The isomorphism \eqref{Bun tau y monoidal} together with Lemma~\ref{lem:monod gl} gives an equivalence
\begin{equation*}
\Sh_{\cN}(\wh\sfBun^{+,\y}_{\t, (n_{1}+n_{2}, S_{1}\uplus S_{2})})\simeq \Sh_{\cN}(\wh\sfBun^{+,\y}_{\t, (n_{1},S_{1})})\ot_{\Sh_0(H^R)}\Sh_{\cN}(\wh\sfBun^{+,\y}_{\t, (n_{2},S_{2})}).
\end{equation*}
This defines the desired monoidal structure of $\Phi^{\y}_{\t, \Corr}$ on objects. For two morphisms induced by two correspondences $(\ph_{1},\psi_{1})$ and $(\ph_{2}, \psi_{2})$ as in \eqref{two Corr} whose product is the correspondence $(\ph,\psi)$ in \eqref{monoidal Corr}, we need to give a canonical isomorphism between the  functors
\begin{eqnarray*}
&&\Phi^{\y}_{\t, *} (n_{1},S_{1})\ot_{\Sh_0(H^R)}\Phi^{\y}_{\t, *}(n_{2},S_{2})\xr{\Phi_{*}(\psi_{1})\ot \Phi_{*}(\psi_{2})}\Phi^{\y}_{\t, *} (n'_{1},T_{1})\ot_{\Sh_0(H^R)}\Phi^{\y}_{\t, *}(n'_{2},T_{2})\\
&&\quad\quad \xr{\Phi^{*}(\ph_{1})\ot \Phi^{*}(\ph_{2})}\Phi^{\y}_{\t, *} (n'_{1},S'_{1})\ot_{\Sh_0(H^R)}\Phi^{\y}_{\t, *}(n'_{2},S'_{2}),\\
&&\Phi^{\y}_{\t, *} (n_{1}+n_{2},S_{1}\uplus S_{2})\xr{\Phi_{*}(\psi)}\Phi^{\y}_{\t, *} (n'_{1}+n'_{2},T_{1}\uplus T_{2})\xr{\Phi^{*}(\ph)}\Phi^{\y}_{\t, *} (n'_{1}+n'_{2},S'_{1}\uplus S'_{2}).
\end{eqnarray*}
This is provided by Lemma \ref{l:monod gl func}.

The proofs of (2), (3) are similar to that of (1).
\end{proof}


\subsection{Bubbling Hecke category}\label{ss:bub Hk}
In this subsection we apply the previous discussion to a compactification of the standard $k$th twisted bubbling $\wt\WW^{\D}/\mu_{k}\to A^{\D}$, and obtain a {\em bubbling Hecke algebra} $\cH^{\bub}$. We prove that $\cH^{\bub}$ is canonically equivalent to the affine Hecke category $\cH$ as a monoidal category.

\subsubsection{Compactified $\WW$}\label{sss:comp W}
Let $\frW\subset \PP^1 \times \PP^1 \times \AA^1$ be the surface defined by 
\begin{equation*}
\frW=\{([x, x'], [y,y'], t)\in \PP^1 \times \PP^1 \times \AA^1| xy  - t x'y' = 0\}.
\end{equation*}
Then $\t_{\frW}: \frW\to \AA^{1}$ is a separating nodal degeneration. For $t\in \Gm\subset \AA^{1}$, $\frW|_{t}\isom \PP^{1}$ via either projection, and the special fiber $\frW|_{0}\simeq \PP^{1}\cup \PP^{1}$ where the two $\PP^{1}$ are glued at the point $[0,1]$. We also have the sections $\sigma_-, \sigma_+:\AA^1 \to \frW$ given by $\sigma_-(t) = ([1, 0 ], [0, 1], t)$, $\sigma_+(t) = ([0, 1], [1, 0], t)$.
 
Note that $\WW\subset \WW^{\bu}$ as the complement of the images of $\sigma_{-}$ and $\sigma_{+}$.

We apply the cosimplicial bubbling construction \S\ref{sss:cons cosim} to $\t_{\frW}:\frW\to \AA^{1}$ to get a cosimplicial family $\t^{\D}_{\frW}: \frW^{\D}\to \AA^{\D}$. We then apply the twisting construction \S\ref{sss:tw bub} to obtain the $k$th twisted cosimplicial bubbling
\begin{equation*}
\pi^{\D}_{\frV}: \frV^{\D}\to A^{\D}
\end{equation*}
equipped with sections $\s^{\D}_{-}$ and $\s^{\D}_{+}$ induced from $\s_{-}$ and $\s_{+}$.  The analogue of the diagram \eqref{W muk} shows that
\begin{equation}\label{VW muk}
\frV^{[n]}\simeq \wt\frW^{[n]}/\mu_{k}.
\end{equation}
Here $\wt\frW^{[n]}$ is the cosimplicial bubbling of $\wt\t_{\frW}: \wt\frW\to \wt\AA^{1}=A^{[0]}$ which is a compactification of $\wt\WW$ in \S\ref{sss:tw std bub}, i.e., taking the $k$th roots of the coordinates of $\frW$ and $\AA^{1}$. The action of $\mu_{k}=\mu^{\WW}_{k}$ on $\wt\WW^{[n]}$ extends to the compactification $\wt\frW^{[n]}$. 

The fiber of $\frV^{[n]}$ over $0\in A^{[n]}$ is of the form
\begin{equation*}
V(n)\simeq (\PP^{1}_{-}\cup \PP^{1}_{[0,1]}\cup\cdots\cup \PP^{1}_{[n-1,n]}\cup \PP^{1}_{+})/\mu_{k}.
\end{equation*}
Here each of $\PP^{1}_{-}, \PP^{1}_{[i-1,i]}$ and $\PP^{1}_{+}$ is canonically isomorphic to $\PP^{1}$,  the gluing identifies $\infty\in \PP^{1}_{-}$ with $0\in \PP^{1}_{[0,1]}$, etc, and the $\mu_{k}$-action is the scaling action on the standard affine coordinate on each $\PP^{1}$.  The sections $\s^{n}_{-}$ and $\s^{n}_{+}$ intersects $V(n)$ at $0\in \PP^{1}_{-}$ and $\infty\in \PP^{1}_{+}$ respectively.

\subsubsection{Bubbling Hecke category}
We then apply the procedure of \S\ref{ss:geom functors} to $\pi^{\D}_{\frV}: \frV^{\D}\to A^{\D}$ to obtain  a functor $\sfV: \cC\to \Sta$ and a map of functors $\sfV\to \sfA$. We also have the positive real versions $\sfV^{+}, \sfBun^{\sfV,+}$ and the truncated version $\sfV_{\t}\to \sfA_{\star}, \sfV_{\t}^{+}\to \sfA_{\star}$.  The zero fiber of $\sfV_{\t,(n,[n])}$ is 
\begin{equation*}
V(n)_{\t}\simeq (\PP^{1}_{[0,1]}\cup\cdots\cup \PP^{1}_{[n-1,n]})/\mu_{k}.
\end{equation*}

Recall the functor $\wt\sfW_{\t}:\cC_{\star}\to \Sch$ defined in   \eqref{sfW tau}. Then \eqref{VW muk} implies a canonical isomorphism of functors
\begin{equation*}
\sfV_{\t}\simeq [\wt\sfW_{\t}/\mu_{k}]: \cC_{\star}\to \Sta.
\end{equation*}

Then we pass to the moduli stack of $G$-bundles on fibers of $\sfV\to \sfA$ with $N$-reductions along the given sections to obtain $\sfBun^{\sfV}$. We have the truncated and real versions $(\sfBun^{\sfV}_{\t}, \sfBun^{\sfV,\RR}_{\t}\supset\sfBun^{\sfV,+}_{\t})$ as a functor $\cC_{\star}\to\frR^{\Sch}_{\Sta,\supset}$. We also have the version with fixed twisting type $\y$ and rigidifications at the two extreme nodes:
\begin{equation*}
(\wh\sfBun^{\sfV, \y}_{\t}, \wh\sfBun^{\sfV,\RR, \y}_{\t}\supset\wh\sfBun^{\sfV,+, \y}_{\t}):  \cC_{\star}\to (\frR^{\Sch}_{\Sta,\supset})^{H\times H}. 
\end{equation*}

Applying the construction of \S\ref{sss:nilp sh tau},  passing to relatively nilpotent sheaves on $\wh\sfBun^{\sfV,+,\y}_{\t}\to \sfA^{+}_{\star}$, we get a functor
\begin{equation*}
\Phi^{\sfV,\y}_{\t,*}: \cC_{\star}\to \Sh_{0}(H)\ot\Sh_{0}(H)\module.
\end{equation*}
Lemma \ref{l:Phi eta tau Corr} implies that $\Phi^{\sfV,\y}_{\t,*}$ extends to a monoidal functor 
$$
\xymatrix{
\Phi^{\sfV,  \y}_{\tau,\Corr}
: \Corr^\adm_{\ver;\hor}(\cC_\star^{op}) \ar[r] & \Sh_0(H) \otimes \Sh_0(H)\module
}
$$

Recall the (non-counital) coalgebra object $c^1 = (1, \vn) \in  \Corr^\adm_{\ver;\hor}(\cC_\star^{op})$ introduced in \S\ref{sss:coalg}. 
Transporting $c^1$ under the monoidal functor  $\Phi^{\sfV, \y}_\tau$ produces a (non-counital) coalgebra
 $
 \cH^\bub = \Phi^{\sfV,\y}_{\tau,\Corr}(c^1)\in \Sh_0(H) \otimes \Sh_0(H)\module.
 $
 
 Let ${}_kP_k = [\PP^1/\mu_k]$ with the scaling action of $\mu_{k}$ on the standard affine coordinate of $\PP^{1}$.  Denote the image of $0$ and  $\infty$ in ${}_{k}P_{k}$ by $0_{k}$ and $\infty_{k}$ to indicate they are orbifold points. According the convention in \S\ref{sss:DM X}, the action of $\Aut(0_{k})$ on the tangent line at $0_{k}$ gives the tautological identification $\Aut(0_{k})=\mu_{k}$, while  the action of $\Aut(\infty_{k})$ on the tangent line at $\infty_{k}$ gives the {\em inverse} of the tautological identification $\Aut(\infty_{k})=\mu_{k}$.  Let $\Bun_{G,1}(P_{k,k},\{0_{k},\infty_{k}\})_{\y,-\y}$ be the moduli of $G$-bundles on ${}_{k}P_{k}$ together with an isomorphism with $\EE_{\y}$ at $0_{k}$ and an isomorphism with $\EE_{-\y}$ at $\infty_{k}$. Then as a $\Sh_0(H) \otimes \Sh_0(H)\module$, we have 
\begin{equation*}
\xymatrix{
 \cH^\bub = \Sh_\cN(\Bun_{G,1}(P_{k,k},\{0_{k},\infty_{k}\})_{\y,-\y}).
}
\end{equation*}

By Proposition~\ref{p:sh functor from C}(3), the functors defining the coalgebra structure on $
 \cH^\bub$ admit left adjoints. We will pass to the left adjoints and thus view
$ \cH^\bub$ as a (non-unital) algebra object in $\Sh_0(H) \otimes \Sh_0(H)\module$, i.e.,  a monoidal category.

\begin{defn}\label{def:bub Hk}
The {\em bubbling Hecke category} is the (non-unital)   monoidal category 
 \begin{equation*}
 \cH^\bub = \Phi^{\sfV,\y}_{\tau,\Corr}(c^1) \in \Sh_0(H) \otimes \Sh_0(H)\module
 \end{equation*}
\end{defn}

\sss{The object $e \in \cH^\bub$}\label{sss:e} We have a projection map $p:{}_{k}P_{k}\to \BB\mu_{k}$, hence a pullback map $\cY_{k}=\Bun_{G}(\BB\mu_{k}) \to \Bun_{G}({}_{k}P_{k})_{\y,-\y}$ (the minus sign comes from the inverse identification $\Aut(\infty_{k})\simeq\mu_{k}$). The image $U$ of this map turns out to be open. Denote the preimage of $U$ in $\Bun_{G,1}({}_{k}P_{k}, \{0_{k},\infty_{k}\})_{\y,-\y}$ by $\wt U$. Then $\wh U$ classifies a $G$-bundle $\cE$ on ${}_{k}P_{k}$, isomorphic to $p^{*}\EE_{\y}$, together with isomorphisms $\t_{0}: \cE_{0_{k}}\simeq \EE_{\y}$ and $\t_{\infty}:\cE_{\infty_{k}}\simeq \EE_{-\y}$.  Using any global isomorphism $\cE\simeq p^{*}\EE_{\y}$ to identify $\cE_{\infty_{k}}$ and $\cE_{0_{k}}$, $\t_{0}\c\t^{-1}_{\infty}$ gives an element in $\Aut(\EE_{\y})=H$. Therefore we get a canonical isomorphism $\wh U\simeq H$.

Let $u:\wh U\incl \Bun_{G,1}({}_{k}P_{k}, \{0_{k},\infty_{k}\})_{\y,-\y}$ be the open inclusion, and let 
\begin{equation}\label{def e}
e=u_{!}\cL_{\univ}\in \cH^{\bub}.
\end{equation}
Here $\cL_{\univ}$ is the universal local system defined in \S\ref{sss:Sh0H}. We will see in Corollary \ref{c:unit} that $e$ is an identity object of $\cH^{\bub}$.


\subsubsection{Twisting bimodules}\label{sss:tw bimod}
We will construct here distinguished bimodules for the bubbling Hecke category $\cH^\bub$ and affine Hecke category $\cH$ from the family $\frV^{\D}$.

Let us apply our  constructions in \S\ref{ss:mon str} to obtain the half-truncated versions $\sfV_{\t, L}\to \sfA_{L}$ as functors $\cC_{L}\to \Sta$.  The central fiber of $\sfV_{\t,L,(n,[n])}$ is of the form
\begin{equation*}
V(n)_{\t, L}=(\PP_{[0,1]}\cup\cdots\cup \PP^{1}_{[n-1,n]}\cup \PP^{1}_{+})/\mu_{k}.
\end{equation*}
We have the real versions $\sfV^{\RR}_{\t, L}\supset\sfV^{+}_{\t, L}$. We then pass to moduli of $G$-bundles with twisting type $\y$ at all nodes, $N$-reduction along the section $\s^{\D}_{+}$ and rigidification at the extreme nodes to get $(\wh\sfBun^{\sfV,\y}_{\t,L}, \wh\sfBun^{\sfV, \RR,\y}_{\t,L}\supset\wh\sfBun^{\sfV, +,\y}_{\t,L}):\cC_{L}\to (\frR^{\Sch}_{\Sta,\supset})^{H\times H}$. The $H\times H$-action on $\wh\sfBun^{\sfV, \y}_{\t,L,(n,S)}$ are defined as follows: the first $H$ modifies the rigidification along the node $\sfQ^{0}_{(n,S)}$ (see \eqref{sfQ}), and the second $H$ modifies the $N$-reduction along $\s^{n}_{+}$.

Finally we pass to nilpotent sheaves to get the functor 
$$
\xymatrix{
\Phi^{ \sfV, \y}_{\tau, L}
: \Corr^\adm_{\ver;\hor}(\cC_L^{op}) \ar[r] &  \Sh_0(H)\ot \Sh_{0}(H)\module.
}
$$

Recall from the end of \S\ref{sss:coalg} the comodule object $c^0_L= (0, \vn) \in  \Corr^\adm_{\ver;\hor}(\cC_L^{op})$.
Transporting $c^0_{L}$ under the  functor  $\Phi^{ \sfV,\y}_{\tau, L}$ produces a left $ \Phi^{\sfV,\y}_\tau(c^1)\simeq \cH^{\bub}$-comodule
$\Phi^{\sfV,  \y}_{\tau, L}(c^0_L)$. Now we pass to left adjoints to view $\cH^\bub$ as a monoidal category.
By Proposition~\ref{p:sh functor from C}(3), the functors defining the comodule structure  on $\Phi^{\sfV,  \y}_{\tau, L}(c^0_L)$ likewise admit left adjoints. We will pass to the left adjoints and thus view $\Phi^{\sfV,  \y}_{\tau, L}(c^0_L)$  as a (left) module category for $\cH^{\bub}$.

On the other hand, by the discussion in \S\ref{sss:aff Hk sym}, the Hecke modifications along $\s^{\D}_{+}$ upgrades $\Phi^{\sfV,  \y}_{\tau, L}$ into a functor valued in $\cH$-modules.  In our setting it is natural to set up the $\cH$-action to be a right action: composing the $\cH$-action with the inverse $g\mapsto g^{-1}$ on the loop group passes from a left $\cH$-module to a right $\cH$-module. We use the notation $A\bimod B$ to denote bimodules that are $(A,B)$-bimodules, i.e., left $A$-modules and right $B$-modules. Thus we obtain the following distinguished bimodule.


\begin{defn}\label{d:left tw bimod}
The {\em left twisting bimodule} is the $(\cH^{\bub}, \cH)$-bimodule
\begin{equation*}
\cH'  = \Phi^{\sfV, \y}_{\tau, L}(c^0_L)\in \cH^\bub \bimod \cH.
\end{equation*}
\end{defn}

The same construction with $L$ replaced by $R$ gives
\begin{defn}
The {\em right twisting bimodule} is the  $(\cH,\cH^\bub)$-bimodule
\begin{equation*}
{}'\cH = \Phi^{\sfV,  \y}_{\tau, R}(c^0_R)\in \cH \bimod \cH^{\bub}.
\end{equation*}
\end{defn}

Let us describe $\cH'$, ${}'\cH$ as plain $\Sh_0(H) \otimes \Sh_0(H)$-modules. 
Let ${}_k P = [\AA^1/\mu_k] \coprod_{\GG_m} (\PP^{1}\bs \{0\})$ (resp. $P_k = \AA^1 \coprod_{\GG_m} [(\PP^{1}\bs \{0\})/\mu_k]$) denote the twisted version of $\PP^1$ where $0$ (resp. $\infty$) has $\mu_k$-symmetry.  We denote the image of $0$ in ${}_{k}P$ by $0_{k}$, and the image of $\infty$ in $P_{k}$ by $\infty_{k}$ to emphasize they are orbifold points. Then we have 
\begin{equation*}
\xymatrix{
\cH' = \Sh_\cN(\Bun_{G, 1,N}({}_k P, 0_{k},\infty)_\y) 
&
{}' \cH = \Sh_\cN(\Bun_{G, N,1}(P_k, 0,\infty_{k})_\y) 
}
\end{equation*}
Here, $\Bun_{G, 1,N}({}_k P, 0_{k},\infty)_\y$ is the moduli of $G$-bundles on ${}_{k}P$ with an isomorphism with $\EE_{\y}$ at $0_{k}$ and an $N$-reduction at $\infty$; similarly for  $\Bun_{G, N,1}(P_k, 0,\infty_{k})_{-\y}$.

\sss{Canonical generators $e'\in \cH'$, ${}' e\in {}' \cH$}
Let $\xi$ be the affine coordinate on the open part $\AA^{1}/\mu_{k}\subset {}_{k}P$. For each $a\in \frac{1}{k}\ZZ$ denote by $\cO(a)\in \Pic({}_{k}P)$ the line bundle glued from the trivial bundle on ${}_{k}P\bs\{\infty\}$ with global section $\CC[\xi^{-k}]$ and the $\mu_{k}$-equivariant $\CC[\xi]$-module $\xi^{-ak}\CC[\xi]$.  This gives a map $\frac{1}{k}\ZZ\to \Pic({}_{k}P)$ which is a bijection on isomorphism classes. Tensoring with $\xcoch(H)$ we get a map $\frac{1}{k}\xcoch(H)\to \Bun_{H}({}_{k}P)$, $\l\mapsto \cO(\l)$. Inducing $\cO(\l)$ to a $G$-bundle we get $\cE(\l)\in \Bun_{G}({}_{k}P)$.  The twisting type of $\cE(\l)$ at $0_{k}\in {}_{k}P$ is $\l\mod\xcoch(H)\in \YY_{k}$. 

Let $\wt\y\in \frac{1}{k}\xcoch(H)$ be the unique lift of $\y\in \YY_{k}$ contained in the fundamental alcove.  Then $\cE(\wt\y)\in \Bun_{G}({}_{k}P)_{\y}$, and its isomorphism classes defines an open substack $U''\subset \Bun_{G}({}_{k}P)_{\y}$. Let $U'\subset \Bun_{G,B}({}_{k}P,\infty)_{\y}$ be the preimage of $U''$, then
\begin{equation*}
U'\simeq \Aut(\cE(\wt\y))\bs G/B.
\end{equation*}
Since $\wt\y$ lies in the interior of the fundamental alcove, $\Aut(\cE(\wt\y))=B$. Therefore $U'\simeq B\bs G/B$.  Let $\wh U'$ be the preimage of $U'$ in $\Bun_{G,1,N}({}_{k}P, 0_{k},\infty)_{\y}$ then
\begin{equation*}
\wh U'\simeq N\bs G/N.
\end{equation*}
In particular, the open Bruhat cell gives an open embedding
\begin{equation*}
u': H\xr{\cdot \dot w_{0}}Hw_{0}\simeq N\bs Bw_{0}B/N\incl \wh U'\incl \Bun_{G,1,N}({}_{k}P, 0_{k},\infty)_{\y}.
\end{equation*}
Here $\dot w_{0}$ is a lifting of the longest element $w_{0}\in W$. 
Define
\begin{equation}\label{def e'}
e'=u'_{!}\cL_{\univ}\in \cH'.
\end{equation}

Similarly, we define an open embedding $'u:  H\isom \dot w_0   H\subset N\bs G/N\incl \Bun_{G, N,1}(P_k, 0,\infty_{k})_{-\y}$, and a canonical object 
\begin{equation}\label{def 'e}
'e={}' u_! \cL_{\univ}\in {}'\cH.
\end{equation}


\begin{lemma}\label{lem:free rank 1}
As a left (resp. right) $\cH$-module, ${}'\cH$ (resp. $\cH'$) is free of rank one on the generator ${}' e \in {}' \cH$ (resp. $e' \in \cH'$).
\end{lemma}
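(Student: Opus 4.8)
The statement is that ${}'\cH$ is free of rank one as a left $\cH$-module on ${}'e$, and dually $\cH'$ is free of rank one as a right $\cH$-module on $e'$; I will only discuss the case of $\cH'$, the other being entirely symmetric (apply the inverse on the loop group). Recall $\cH' = \Sh_\cN(\Bun_{G,1,N}({}_kP, 0_k, \infty)_\y)$ as a plain $\Sh_0(H)\otimes\Sh_0(H)$-module, with the right $\cH$-action coming from affine Hecke modifications at $\infty$, and $\cH = \Sh_\bimon(\bI^\circ\bs G\lr{z}/\bI^\circ)$. The assertion ``free of rank one on $e'$'' means that the right-action functor $-\star e' : \cH \to \cH'$, i.e.\ $\cK \mapsto \cK \star e'$ (using $e'\in \cH'$ as the distinguished vector), is an equivalence of right $\cH$-module categories.

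The plan is to exhibit this equivalence geometrically via uniformization of $\Bun_{G}({}_kP)$ near $\infty$. First I would record, as was essentially set up in \S\ref{sss:e} and \S\ref{sss:tw bimod}, that $\Bun_{G,1,N}({}_kP, 0_k, \infty)_\y$ admits a uniformization: trivializing a $G$-bundle of type $\y$ on the formal disk around $0_k$ and away from $\infty$, the moduli space is presented as a quotient of the affine flag variety $G\lr{z}/\bI^\circ$ attached to $\infty$ by the group $\Aut_{{}_kP\setminus\infty}$ of automorphisms of the restricted bundle. The key point is that ${}_kP\setminus\{\infty\} \cong [\AA^1/\mu_k]$ is ``affine-like'' — its ring of functions together with the $\mu_k$-level structure at $0_k$ rigidifies the bundle enough that the relevant automorphism group is pro-unipotent (indeed it is the ``positive half'' $\bI^{\circ,-}$ or a pro-unipotent group closely related to it, coming from the expansion of sections at $\infty$). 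Concretely, one shows $\Bun_{G,1,N}({}_kP, 0_k, \infty)_\y \simeq \bJ \bs G\lr{z}/\bI^\circ$ for an appropriate pro-unipotent (or pro-unipotent-by-torus, matching $N$) group $\bJ$, and that under this identification the distinguished open $\wh U' \cong N\bs G/N$ of \S\ref{sss:e} corresponds to the open cell, with $e' = u'_!\cL_\univ$ matching (up to the shift by the long element $w_0$ built into the definition of $u'$) the unit object $\delta = \wh\D(1) \in \cH$.

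With this uniformization in hand, the right $\cH$-action at $\infty$ is just right convolution on $\bJ\bs G\lr{z}/\bI^\circ$, and $\cH$ itself is $\bI^\circ\bs G\lr{z}/\bI^\circ$-sheaves. So the functor $\cK\mapsto \cK\star e'$ becomes: take a bi-$\bI^\circ$-monodromic sheaf on $G\lr{z}$, convolve on the left with the (pro-unipotent) group $\bJ$ swallowing the left $\bI^\circ$, landing in $\bJ\bs G\lr{z}/\bI^\circ$-sheaves. Since $\bJ \supset \bI^\circ$ (or rather $\bJ$ and $\bI^\circ$ generate and their quotient is contractible — this is where I expect the genuine work to lie) the averaging functor from $\bI^\circ$-monodromic to $\bJ$-monodromic sheaves is an equivalence; this is a standard fact once one checks $\bJ/\bI^\circ$ (or the relevant homogeneous space) is an affine space / contractible, so that $!$-averaging and $*$-averaging agree and are inverse to restriction. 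Right $\cH$-equivariance is automatic since the whole construction is via convolution diagrams on the $\infty$-side, which commute with the left $\bJ$-quotient.

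\textbf{Main obstacle.} The crux is the precise identification of the automorphism group $\bJ$ appearing in the uniformization of $\Bun_{G,1,N}({}_kP,0_k,\infty)_\y$ and the verification that the quotient stack $\bI^\circ\bs G\lr{z}/\bI^\circ \to \bJ\bs G\lr{z}/\bI^\circ$ induces an equivalence on monodromic sheaf categories — equivalently, that $\bJ$ is obtained from $\bI^\circ$ by successive extensions by $\Ga$'s (so ``contractible'' from the point of view of sheaf theory) and that the $\cN$/nilpotent singular support condition is matched correctly on both sides under this equivalence (using the base-change and transport-of-Lagrangian results of \S\ref{ss:Eis}, in particular that the nilpotent cone condition on $\Bun_G({}_kP)$ pulls back/pushes forward compatibly). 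The role of $\wt\y$ lying in the interior of the fundamental alcove, noted in \S\ref{sss:e}, is exactly what forces $\Aut(\cE(\wt\y)) = B$ and hence makes $\bJ$ pro-unipotent-by-$B$ rather than something larger; I would lean on that computation and on the explicit coordinates of \S\ref{sss:comp W} to pin down $\bJ$.
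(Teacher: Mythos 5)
Your overall route---uniformize $\Bun_{G,1,N}({}_kP,0_k,\infty)_\y$ at $\infty$ as a quotient of $G\lr{z}/\bI^\circ$ by the automorphism group $\bJ$ of the rigidified bundle away from $\infty$, match $e'$ with the object on the unit coset, and interpret $-\star e'$ as a left-averaging functor---is the same starting point as the paper's proof, which passes through Proposition \ref{p:Bun on tw} to $\Bun_{G,N}(\PP^1,\{0,\infty\})\simeq \bJ^{\c}\bs G\lr{z}/\bI^{\c}$, with $\bJ\subset G[z^{-1}]$ the preimage of the opposite Borel $B^{-}$ under evaluation at $z^{-1}=0$ and $\bJ^{\c}=\ker(\bJ\to H)$. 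But your justification of the crucial equivalence has a genuine gap: you assert that $\bJ\supset\bI^{\c}$ (or that the relevant homogeneous space is an affine space), so that averaging from $\bI^{\c}$-monodromic to $\bJ$-monodromic sheaves is a ``standard'' equivalence with $!$- and $*$-averaging agreeing. In fact $\bJ$ is the \emph{opposite} group: $\bJ^{\c}\cap\bI^{\c}$ is trivial and neither group contains the other, so there is no quotient $\bJ/\bI^{\c}$ whose contractibility could be invoked. The functor $\cK\mapsto e'\star\cK$ is the Radon transform ($!$-averaging by the transverse group $\bJ^{\c}$), and its being an equivalence is not a formal consequence of contractibility; in particular the composite of $!$-averaging by $\bJ^{\c}$ with $*$-averaging by $\bI^{\c}$ is not tautologically the identity.

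This is exactly where the paper's proof does its real work: it computes, via the contraction principle under a torus action (as in \cite[Proposition 4.1.3(i)]{Y-tilt}), that $\wh\D(1)\star_{\infty}\wh\nb(w)\simeq\wh\D(w)$ for all $w\in\tilW$, observes that the right adjoint of the Radon transform ($*$-averaging by $\bI^{\c}$) sends $\wh\D(w)$ back to $\wh\nb(w)$ up to shift, and then concludes by compact generation: the costandard objects $\wh\nb(w)$ generate $\cH$ and the standard objects $\wh\D(w)$ generate $\Sh_\cN(\Bun_{G,N}(\PP^1,\{0,\infty\}))$. Your proposal defers its ``genuine work'' to pinning down $\bJ$ and checking contractibility of a quotient, which is not the actual obstacle; without the standard-versus-costandard computation (or some substitute, e.g.\ an argument that the intertwining functor between the two monodromic categories is an equivalence), the key step that $-\star e'$ is an equivalence does not close. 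The remaining ingredients you mention (right $\cH$-equivariance, matching of the nilpotent singular support under the identification) are indeed routine once this is in place.
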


\begin{proof}
We will prove  $\cH'$  is  free  of rank one on $e'$ as a  right $\cH$-module. The  assertion for ${}'\cH$ is proved similarly. 

By Proposition~\ref{p:Bun on tw}, we have an isomorphism 
\begin{equation*}
 \xymatrix{
\Bun_{G, 1,N}({}_k P, 0_{k},\infty)_\y \simeq \Bun_{G, N}(\PP^1, \{0, \infty\}).
}
\end{equation*}
The  identification is local to $0$ so we have an equivalence of right $\cH$-modules
 \beq\label{cH' BunP1}
 \xymatrix{
  \cH' = \Sh_\cN(\Bun_{G, 1,N}({}_k P, 0_{k},\infty)_\y) \simeq \Sh_\cN(\Bun_{G, N}(\PP^1, \{0, \infty\}))
}
\eeq
Let $z$ be the affine coordinate of $\PP^{1}$ at $\infty$; let $\bJ\subset G[z^{-1}]$ be the preimage of the opposite Borel $B^{-}$ under $G[z^{-1}]\to G$ (evaluation at $z^{-1}=0$), and $\bJ^{\c}=\ker(\bJ\to H)$. Then using the same lifting $\dot w_{0}$ we have an isomorphism 
\begin{equation*}
\Bun_{G, N}(\PP^1, \{0, \infty\})\simeq \bJ^{\c}\bs G\lr{z}/\bI^{\c}.
\end{equation*}
It is well-known that the $(\bJ,\bI)$-double cosets of $G\lr{z}$ are indexed by the extended affine Weyl group $\tilW$. Let $j_{w}: \wh U_{w}=\bJ^{\c}\bs\bJ w \bI/\bI^{\c}\incl \Bun_{G, N}(\PP^1, \{0, \infty\})$ be the inclusion of the $(\bJ,\bI)$-coset of $w\in \tilW$. With the choice of a lifting $\dot w$, we have a map $H\isom H\dot w\to \wh U_{w}$ that induces an isomorphisms onto the coarse moduli space of $\wh U_{w}$.  Therefore we may view $\cL_{\univ}$ as a local system on $\wh U_{w}$, and define the universal monodromic standard sheaf $\wh\D(w)=j_{w!}\cL_{\univ}$.  

Under the equivalence \eqref{cH' BunP1},  $e'\in \cH'$ corresponds to $\wh\D(1)$. Similarly define the universal monodromic costandard sheaf $\wh\nb(w)\in \cH$ as the $*$-extension of $\cL_{\univ}$ from $\bI w \bI$ . The $\cH$-action on  $\Sh_\cN(\Bun_{G, N}(\PP^1, \{0, \infty\}))$ by modifying at $\infty$ has the property
\begin{equation}\label{Rad univ mono}
\wh\D(1)\star_{\infty} \wh\nb(w)\simeq \wh\D(w).
\end{equation}
Indeed, the functor
\begin{equation*}
\Rad_{!}(-)=: \wh\D(1)\star_{\infty}(-): \cH\to \Sh_\cN(\Bun_{G, N}(\PP^1, \{0, \infty\}))
\end{equation*}
is the Radon transform ($!$-averaging by left action of $\bJ^{\c}$). The proof of \eqref{Rad univ mono} is similar to that of \cite[Proposition 4.1.3(i)]{Y-tilt} using the contraction principle under a torus action. On the other hand, the right adjoint of $\Rad_{!}$ is given by the $*$-average under $\bI^{\c}$, and it sends $\wh\D(w)$ to $\wh\nb(w)$ up to a shift. Since $\{\wh\D(w);w\in \tilW\}$ compactly generate $\Sh_\cN(\Bun_{G, N}(\PP^1, \{0, \infty\}))$, and $\{\wh\nb(w); w\in \tilW\}$ compactly generate $\cH$,  we conclude that $\Rad_{!}$ is an equivalence. 

Transporting back to the $\cH$-action on $\cH'$ via \eqref{cH' BunP1}, we conclude that $\cH'$ is a free $\cH$-module with free generator $e'$.
\end{proof}

By the lemma, we have an equivalence $ \cH \simeq \End_{\rmod\cH}(\cH')$, $h\mapsto \alpha_h$ where $\alpha_h$ is characterized by $\alpha_h(e') = e' \star h$, $h\in \cH$.   Since $\cH^\bub$ acts on $\cH'$ as $\cH$-endomorphisms, we have a natural map of algebras
\begin{equation*}
\xymatrix{
\gamma:\cH^\bub \ar[r] & \End_{\rmod\cH}(\cH') \simeq \cH
}
\end{equation*}
characterized by a functorial multiplicative identity $h \star e' \simeq e' \star \g(h)$, $h\in \cH^\bub$.

Similarly, 
 we have an equivalence $ \cH^{op} \simeq \End_{\cH\module}({}' \cH)$, $h\mapsto \beta_h$ where $\beta_h$ is characterized by $\beta_h({}'e) = h\star {}'e$, $h\in \cH^{op}$.   Here we write $\cH^{op}$ for the monoidal opposite. Since $\cH^\bub$ acts on ${}'\cH$ on the right as $\cH$-endomorphisms, we have a natural map of algebras
\begin{equation*}
\xymatrix{
(\cH^\bub)^{op} \ar[r] & \End_{\cH\module}({}'\cH) \simeq \cH^{op}
}
\end{equation*}
so after passing to opposites a map of algebras
\begin{equation*}
\xymatrix{
\zeta:\cH^\bub  \ar[r] & \End_{\cH\module}({}'\cH)^{op} \simeq \cH
}
\end{equation*}
characterized by a functorial multiplicative identity $ {}' e \star h \simeq  \zeta(h) \star  {}' e$, $h\in \cH^\bub$.

\begin{theorem}\label{th:Hbub} The functors $\gamma:\cH^\bub\to \cH$,  $\zeta:\cH^\bub\to \cH$ are (non-unital) monoidal equivalences.
\end{theorem}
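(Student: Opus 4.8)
The plan is to show $\gamma$ and $\zeta$ are equivalences by exhibiting each as an identification of $\cH^\bub$ with the endomorphism algebra of a rank-one free module over $\cH$, and then checking that this endomorphism algebra reproduces $\cH$ itself (not merely $\cH$-linear endofunctors of an abstract free module). The two maps are symmetric, so I will treat $\gamma:\cH^\bub\to\End_{\rmod\cH}(\cH')\simeq\cH$ and remark that $\zeta$ follows by the $L\leftrightarrow R$ swap together with the inversion automorphism $g\mapsto g^{-1}$ of the loop group.

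First I would record what has already been built: Lemma~\ref{lem:free rank 1} gives that $\cH'$ is free of rank one as a right $\cH$-module on the generator $e'$, via the chain of equivalences $\cH'\simeq\Sh_\cN(\Bun_{G,N}(\PP^1,\{0,\infty\}))\simeq\Sh_\cN(\bJ^\c\bs G\lr z/\bI^\c)$ and the Radon transform $\Rad_!=\wh\D(1)\star_\infty(-)$, which is shown there to be an equivalence sending $\wh\nb(w)\mapsto\wh\D(w)$. Consequently the evaluation-at-$e'$ map $\End_{\rmod\cH}(\cH')\xr{\sim}\cH$, $\alpha\mapsto\alpha(e')$ (expressed as $e'\star h$) is an equivalence of monoidal categories, where $\End_{\rmod\cH}(\cH')$ carries its composition-of-functors monoidal structure. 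Combined with the structural map $\cH^\bub\to\End_{\rmod\cH}(\cH')$ coming from the left $\cH^\bub$-action on $\cH'$ (which is $\cH$-linear because the $\cH^\bub$-action is by modification at $0_k$ and the $\cH$-action by modification at $\infty$, and these commute), this produces $\gamma$ and the functorial identity $h\star e'\simeq e'\star\gamma(h)$.

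The substance of the proof is then to show $\gamma$ is fully faithful and essentially surjective. For essential surjectivity, I would argue that $\gamma(\cH^\bub)$ contains enough generators of $\cH$: the finite bubbling Hecke subcategory $\cH^\bub_f\subset\cH^\bub$ (sheaves supported on the open locus where the bundle is trivializable on the relevant chart) maps to the finite Hecke category $\cH_f\subset\cH$, and one identifies the images of the standard objects $\wh\D(s)$ for simple reflections by a direct degeneration computation on $\Bun_{G,1,N}({}_kP,0_k,\infty)$; the Wakimoto/length-zero objects $\wh\D(\om)$ come from the $\Pic$-action parametrized by $\frac1k\xcoch(H)$ (the line bundles $\cO(\l)$ appearing in \S\ref{sss:e}), which bubbling realizes geometrically. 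Since $\{\wh\D(s)\}$ and $\{\wh\D(\om)\}$ generate $\cH$ monoidally and $\gamma$ is monoidal, this gives essential surjectivity up to colimits, and full faithfulness will follow by comparing Hom-spaces between standard objects on both sides using the explicit stratification of $\bI^\c\bs G\lr z/\bI^\c$ versus the stratification of $\Bun_{G,1}({}_kP_k,\{0_k,\infty_k\})_{\y,-\y}$ by relative position, both being indexed by $\tilW$; here Proposition~\ref{p:Bun on tw} (twisted points $=$ parahoric level) is what matches the two indexings. An alternative, cleaner route to full faithfulness: show directly that $\gamma$ admits a two-sided inverse built from the right twisting bimodule, i.e.\ that $\cH'\ot_\cH{}'\cH\simeq\cH^\bub$ and ${}'\cH\ot_{\cH^\bub}\cH'\simeq\cH$ as bimodules (a Morita-type statement), which reduces everything to the rank-one freeness already in hand plus a single geometric identity: that gluing the families $\sfV_{\t,L}$ and $\sfV_{\t,R}$ along their common extreme node recovers $\sfV_\tau$, which is exactly the monoidal compatibility \eqref{X tau union}–\eqref{X tau L} specialized to $\frW$.

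\textbf{Main obstacle.} The hard part will be verifying full faithfulness of $\gamma$ at the level of the full (cocomplete) categories rather than just on the generating standard objects — equivalently, checking that the left adjoints to nearby cycles used to define the monoidal structure on $\cH^\bub$ (Proposition~\ref{p:sh functor from C}(3)) interact correctly with the $\cH$-module structure on $\cH'$, so that $\gamma$ is monoidal on the nose and not merely laxly. Concretely one must know that the associativity constraints coming from the twisted cosimplicial bubbling $\{\pi^{[n]}_{\frV}\}$ are carried by $\gamma$ to the genuine associativity of convolution in $\cH$; this is where the commuting-nearby-cycles input (Theorem~\ref{th:comm nc stack}, via the non-characteristic property of the universal nilpotent cone, Corollary~\ref{c:univ cone non-char}) does the real work, guaranteeing that the relevant base-change morphisms $r_{a_\bu}$ are isomorphisms. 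I expect the proof to proceed by reducing each needed identity to such a base-change statement over a suitable coordinate stratum of $\sfB_{(n,[n])}$ and then invoking Proposition~\ref{p:sh functor from C}(2), exactly as in the verification of the Beck–Chevalley condition there.
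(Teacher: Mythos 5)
Your setup (freeness of $\cH'$ on $e'$, the identification $\End_{\rmod\cH}(\cH')\simeq\cH$, and the characterization $h\star e'\simeq e'\star\gamma(h)$) is exactly the paper's framing, but all of that precedes the theorem; the entire content of the theorem is that the action map $\cH^\bub\to\cH'$, $h\mapsto h\star e'$, is an equivalence, and this is where your plan has a genuine gap. Both of your mechanisms --- locating preimages of $\wh\D(s)$ and $\wh\D(\om)$ ``by a direct degeneration computation'' and then comparing Hom-spaces of standard objects --- presuppose that you can compute the $\cH^\bub$-action on $\cH'$ (i.e.\ the left adjoint of the nearby cycles functor for the bubbling family) on explicit objects; no method for doing this is given, and that computation is precisely the difficulty the theorem is about. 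You also misplace the obstacle: monoidality of $\gamma$ is automatic from its construction as an algebra map into $\End_{\rmod\cH}(\cH')$, and Theorem \ref{th:comm nc stack} has already been consumed in Proposition \ref{p:sh functor from C}(2) when the monoidal structure on $\cH^\bub$ was built, so it is not the missing ingredient here. Your ``cleaner'' Morita route is circular: the gluing identity \eqref{X tau union} and Lemma \ref{lem:monod gl} only produce decompositions relative to $\Sh_0(H)$, whereas ${}'\cH\ot_{\cH^\bub}\cH'\simeq\cH$ requires evaluating a bar complex over $\cH^\bub$ --- essentially a special case of the gluing-equivalence statement that is still conjectural in general --- so it cannot serve as an input to the proof.

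For comparison, the paper's argument avoids all object-by-object computation. Using Proposition \ref{p:Bun on tw} (the identification being local at $\infty$), one has $\cH^\bub\simeq\cH'$ as left $\cH^\bub$-modules with $e\leftrightarrow e'$, so it suffices to show that $h\mapsto h\star e$ is an equivalence of $\cH^\bub$ with itself. Passing to right adjoints, this functor becomes the comultiplication $\mu$ (the nearby cycles for the family of bundles over $\sfA_{(2,\{1\})}\simeq[\AA^1/\Gm]$ attached to $\sfV_{\t,(2,\{1\})}$) followed by $*$-restriction to the open stratum $\wh U\simeq H$ in the second factor. Restricting to the open substack $\frB'\subset\frB$ of bundles whose restriction to the component $V_+$ is pulled back from $\EE_\y$, this composite is identified with the nearby cycles of a family of bundles that is constant (pulled back along the contraction of $V_+$ to $\infty_k$), hence is the identity. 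That one geometric observation is the real work, and it is absent from your proposal; the rest of your outline either reproduces what is already established before the theorem or defers the crux to an unspecified computation.
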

\begin{proof}
We will prove $\gamma$ is an equivalence. The  assertion for $\zeta$ is proved similarly. 

We must show the action map $\cH^\bub\to \cH'$, $h\mapsto h\star e'$ is an equivalence.

By Proposition~\ref{p:Bun on tw}, we have an isomorphism
\begin{equation*}
 \xymatrix{
 \Bun_{G, 1}({}_kP_k, \{0_{k},\infty_{k}\})_{\y,\y} \simeq \Bun_{G, 1, N}({}_k P, 0_{k}, \infty)_\y 
}
\end{equation*}
where we recall ${}_kP_k = \PP^1/\mu_k$.  
The  identification is local to $\infty$ so we have an equivalence of left $\cH^\bub$-modules
 \beq\label{Hbub H'}
 \xymatrix{
\cH^\bub = \Sh_\cN( \Bun_{G, 1}({}_kP_k, \{0_{k},\infty_{k}\})_{\y,-\y}) \simeq \Sh_\cN(\Bun_{G, 1,N}({}_k P, 0_{k},\infty)_\y) = \cH'
}
\eeq
Then $e'\in \cH'$ corresponds to $e\in \cH^{\bub}$ (see \eqref{def e}) under this equivalence.  Thus we must show the action map $\cH^\bub\to \cH^\bub$, $h\mapsto h\star e$ is an equivalence. Equivalently, passing to right adjoints, we must show the comultiplication $\mu: \cH^\bub \to \cH^\bub\otimes_{\Sh_{0}(H)} \cH^\bub$ followed by  restriction $u^!=u^{*}:\cH^\bub\to \Sh_{0}(\wh U)=\Sh_0(H)$ in the second factor is an equivalence (where $u: \wt U\incl \Bun_{G, 1}({}_kP_k, \{0_{k},\infty_{k}\})_{\y,-\y}$ is defined in \S\ref{sss:e}). 

Unraveling the definition, $\mu$ is given by the nearby cycles coming from the moduli of $G$-bundles for the family $\sfV_{\t,(2,\{1\})}\to \sfA_{(2,\{1\})}$. The latter is isomorphic to $v: [\frW/\mu^{\WW}_{k}\times \Gm]\to [\AA^{1}/\Gm]$, where $\frW$ is the compactification of $\WW$ introduced in \S\ref{sss:comp W}, and $s\in \Gm$ acts on $\WW$ by $(x,y)\mapsto (sx,y)$ and the action extends to $\frW$. Then $v^{-1}(0)=V_{-}\cup V_{+}$ with $V_{\pm}\simeq {}_{k}P_{k}$. Recall that $v$ admits two sections $\s_{-}$ and $\s_{+}$. Let $b:\frB:=\Bun_{G,1}(v; \{\s_{-}, \s_{+}\})_{\y,-\y}\to [\AA^{1}/\Gm]$ be the moduli stack of $G$-bundles along fibers of $v$ with twisting type $\y$ at the node of $v^{-1}(0)$, and isomorphisms with $\EE_{\pm\y}$ along $\s_{\pm}$. Then $\mu\simeq \Psi_{b}: \Sh_{\cN}(\frB_{1})\simeq\cH^{\bub}\to \cH^{\bub}\ot_{\Sh_{0}(H)}\cH^{\bub}\simeq\Sh_{\cN}(\frB_{0})$. Let $\frB'\subset \frB$ be the open substack parametrizing $G$-bundles whose restriction to $V_{+}$ is isomorphic to the pullback of $\EE_{\y}$. Then the special fiber $\frB'_{0}\simeq \Bun_{G,1}(V_{-}, \{0_{k},\infty_{k}\})_{\y,-\y}$.  Let $j:\frB'_{0}\subset \frB_{0}$ be the open embedding, then $u^{*}\c\mu\simeq \Psi_{b'}: \Sh_{\cN}(\frB'_{1})\simeq\cH^{\bub}\to \cH^{\bub}\simeq \Sh_{\cN}(\frB'_{0})$ is the nearby cycles functor for the family $b':\frB'\to[\AA^{1}/\Gm]$. However,  $b'$ is a trivial family: bundles parametrized by $\frB'$ are exactly those pulled back from the trivial family ${}_{k}P_{k}\times [\AA^{1}/\Gm]$ via the map $[\frW/\mu^{\WW}_k\times \Gm]\to {}_{k}P_{k}\times [\AA^{1}/\Gm]$ contracting $V_{+}$ to $\infty_{k}\in {}_{k}P_{k}$. Therefore $u^{*}\c\mu\simeq\Psi_{b'}\simeq \id$ as an endo-functor of $\cH^{\bub}$. This finishes the proof.
\end{proof}

\begin{cor}\label{c:unit} The object 
$e\in \cH^\bub$ defined in \eqref{def e} is an identity object for the monoidal structure of $\cH^{\bub}$.
\end{cor}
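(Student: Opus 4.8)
\textit{Plan of proof.} The claim is that $e = u_!\mathcal{L}_{\univ} \in \cH^\bub$ is a monoidal unit. The strategy is to leverage Theorem~\ref{th:Hbub}, which gives the monoidal equivalence $\gamma:\cH^\bub\to \cH$, together with the identification of $e$ with the standard object $\wh\D(1) = \d_*\mathcal{L}_{\univ}$ on the affine Hecke side. First I would recall that in the proof of Theorem~\ref{th:Hbub} we established the equivalence of left $\cH^\bub$-modules \eqref{Hbub H'}, $\cH^\bub \simeq \cH'$, under which $e$ corresponds to $e'\in \cH'$; and via \eqref{cH' BunP1} together with the identification of $e'$ with $\wh\D(1)$ (as noted just after \eqref{cH' BunP1} in the proof of Lemma~\ref{lem:free rank 1}), the generator $e'$ corresponds under the right $\cH$-module equivalence $\cH'\simeq \cH$ (given by $h\mapsto e'\star h$, the content of Lemma~\ref{lem:free rank 1}) to the monoidal unit $\d \cong \wh\D(1)$ of $\cH$.

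Concretely, the argument runs as follows. Since $\gamma:\cH^\bub\to\cH$ is a non-unital monoidal equivalence, it suffices to show $\gamma(e)$ is the monoidal unit of $\cH$. By the defining functorial identity $h\star e' \simeq e'\star\gamma(h)$ for $h\in\cH^\bub$, applied to $h = e$, we get $e\star e' \simeq e'\star \gamma(e)$ inside $\cH'$. Now under the equivalence $\cH^\bub\simeq\cH'$ of \eqref{Hbub H'}, the left action of $e$ on $e'$ corresponds to $e\star e$ inside $\cH^\bub$; and the computation $u^*\circ\mu\simeq\id$ carried out at the end of the proof of Theorem~\ref{th:Hbub} shows precisely that $e\star e \simeq e$ (this is the statement that the composite of comultiplication $\mu$ with restriction $u^*$ in one factor is the identity endofunctor of $\cH^\bub$, evaluated so as to identify $h\star e$ with $h$ for all $h$, in particular $h=e$). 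Hence $e\star e'\simeq e'$ in $\cH'$, so $e'\star\gamma(e) \simeq e'$. Since $\cH'$ is free of rank one over $\cH$ on the generator $e'$ by Lemma~\ref{lem:free rank 1}, the map $\cH\to\cH'$, $h\mapsto e'\star h$ is an equivalence, and therefore $\gamma(e)$ is the monoidal unit $\d$ of $\cH$.

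The main thing to verify carefully — though it is essentially already contained in the proof of Theorem~\ref{th:Hbub} — is that the computation there really identifies $h\star e$ with $h$ for \emph{all} $h\in\cH^\bub$ (not merely that the action map $h\mapsto h\star e$ is an equivalence). In fact the proof of Theorem~\ref{th:Hbub} shows that $u^*\circ\mu$, viewed as an endofunctor of $\cH^\bub$, is canonically isomorphic to $\id_{\cH^\bub}$ via the nearby cycles for the trivial family $b':\frB'\to[\AA^1/\Gm]$; passing to left adjoints, this says $(-)\star e \simeq \id_{\cH^\bub}$ as functors, which gives the left unit identity directly. Combined with Theorem~\ref{th:Hbub} transporting the unit $\d\in\cH$ (which genuinely is a two-sided unit, being an identity object of $\cH$) back through the monoidal equivalence $\gamma$, and the analogous argument with $\zeta$ and ${}'\cH$ for the right unit identity, we conclude $e$ is a two-sided identity object. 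I expect no serious obstacle; the proof is a bookkeeping assembly of pieces already proved, the only subtlety being to phrase the end of the proof of Theorem~\ref{th:Hbub} as an identification of functors rather than merely an equivalence.
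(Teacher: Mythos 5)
Your proposal is correct and follows essentially the same route as the paper: the paper's proof of Corollary~\ref{c:unit} simply observes that $\gamma(e)$ is the monoidal unit of $\cH$ and transports it back through the monoidal equivalence of Theorem~\ref{th:Hbub}, which is exactly what you establish, only with the intermediate steps (the identification $e\leftrightarrow e'$ under \eqref{Hbub H'}, the freeness of $\cH'$ from Lemma~\ref{lem:free rank 1}, and the computation $u^*\circ\mu\simeq\id$) spelled out. Your expanded bookkeeping is a faithful unpacking of the paper's two-sentence argument, so no changes are needed.
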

\begin{proof}
Under the non-unital equivalence $\g: \cH^{\bub}\isom \cH$, $e$ maps to the monoidal unit of $\cH$. Hence $e$ can be taken to be the monoidal unit of $\cH^{\bub}$.
\end{proof}

\begin{cor}\label{c:right modules} 
\begin{enumerate}
\item
The functor of right modules $\rmod\cH^\bub \to \rmod\cH$, $M \mapsto M\otimes_{\cH^\bub} \cH'$ is an equivalence. 
 
Moreover, the canonical equivalence $\theta':M \isom M\otimes_{\cH^\bub} \cH'$, 
$\theta'(m) = m \otimes e'$ intertwines the $\cH^\bub$-action on $M$ and the $\cH$-action on 
$M\otimes_{\cH^\bub} \cH'$ via the monoidal equivalence $\g: \cH^{\bub}\isom \cH$.

\item The functor of left modules $\cH \module \to \cH^\bub\module  $, $N \mapsto {}'\cH \otimes_{\cH^\bub} N$ is an equivalence.

Moreover, the canonical equivalence ${}'\theta:N \isom  {}' \cH \otimes_{\cH^\bub} N$, 
${}'\theta(n) = {}'e\otimes n$ intertwines the $\cH^\bub$-action on $N$ and the $\cH$-action on 
${}' \cH \otimes_{\cH^\bub} N$ via the monoidal equivalence $\zeta: \cH^{\bub}\isom \cH$.

\end{enumerate}
\end{cor}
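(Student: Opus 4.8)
\textbf{Proof strategy for Corollary \ref{c:right modules}.}

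The plan is to deduce this purely formally from Theorem \ref{th:Hbub} together with Lemma \ref{lem:free rank 1}. The two statements are symmetric, so I will describe the argument for (1) and indicate the modifications needed for (2). The key input is that $\cH'$, as a $(\cH^\bub, \cH)$-bimodule, is the bimodule whose underlying right $\cH$-module is free of rank one (Lemma \ref{lem:free rank 1}), and whose left $\cH^\bub$-action is implemented, under the identification $\cH \simeq \End_{\rmod\cH}(\cH')$, by the monoidal functor $\gamma:\cH^\bub \to \cH$ of Theorem \ref{th:Hbub} (this is precisely the content of the relation $h\star e' \simeq e'\star\gamma(h)$). In other words, $\cH'$ is the bimodule obtained from the regular $\cH$-bimodule $\cH$ by restricting the left action along the equivalence $\gamma$; write this as ${}_\gamma\cH$.

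First I would record the standard fact that for any monoidal equivalence $\gamma:\cA \isom \cB$, restriction of scalars along $\gamma$ gives an equivalence of module categories $\rmod\cB \isom \rmod\cA$, with quasi-inverse $M \mapsto M\otimes_{\cA}{}_\gamma\cB$ (base change along $\gamma$); here ${}_\gamma\cB$ is $\cB$ viewed as an $(\cA,\cB)$-bimodule via $\gamma$ on the left. Applying this with $\cA = \cH^\bub$, $\cB = \cH$, and the bimodule ${}_\gamma\cH \simeq \cH'$ identified above, we get that $M \mapsto M\otimes_{\cH^\bub}\cH'$ is an equivalence $\rmod\cH^\bub \isom \rmod\cH$. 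The canonical unit map $M \to M\otimes_{\cH^\bub}\cH'$, $m\mapsto m\otimes e'$ (using that $e'$ is the rank-one generator, equivalently the image of the monoidal unit of $\cH$ under the identification $\cH \simeq {}_\gamma\cH = \cH'$) is then an equivalence, and its compatibility with the actions — the $\cH^\bub$-action on the source and the $\cH$-action on the target matching up via $\gamma$ — is exactly the functorial identity $m\otimes (e'\star h) = m\otimes(h\star e') $ rewritten using $h\star e' = e'\star\gamma(h)$, i.e.\ it follows from the defining property of $\gamma$. For part (2), I would run the mirror-image argument: ${}'\cH$ is the $(\cH,\cH^\bub)$-bimodule which is free of rank one as a left $\cH$-module (again Lemma \ref{lem:free rank 1}) with right $\cH^\bub$-action implemented by $\zeta:\cH^\bub\isom\cH$ via ${}'e\star h = \zeta(h)\star {}'e$; hence ${}'\cH \simeq \cH_\zeta$ as bimodules, and base change along the monoidal equivalence $\zeta$ gives the equivalence $\cH\module \isom \cH^\bub\module$, $N\mapsto {}'\cH\otimes_{\cH^\bub} N$, together with the stated compatibility of ${}'\theta(n) = {}'e\otimes n$ with the actions.

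The only genuine content beyond bookkeeping is the identification of the bimodule $\cH'$ with ${}_\gamma\cH$ — that is, checking that the left $\cH^\bub$-action on $\cH'$ really is restriction along $\gamma$ of the regular action, not merely that the two agree after forgetting to plain categories. This is where I expect the main (minor) obstacle to lie: one must verify that the equivalence $\cH \simeq \End_{\rmod\cH}(\cH')$ of Lemma \ref{lem:free rank 1} is monoidal and carries the $\cH^\bub$-action through $\gamma$ as an algebra map, which is built into the construction of $\gamma$ as the composite $\cH^\bub \to \End_{\rmod\cH}(\cH') \simeq \cH$ in \S\ref{sss:tw bimod}. So in fact this identification is essentially definitional, and the proof reduces to invoking the abstract base-change-along-a-monoidal-equivalence principle. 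I would write the proof in two or three lines, citing Theorem \ref{th:Hbub} and Lemma \ref{lem:free rank 1}, and spelling out the unit maps $\theta'(m) = m\otimes e'$ and ${}'\theta(n) = {}'e\otimes n$ explicitly so the reader sees the intertwining property drop out of the identities $h\star e' \simeq e'\star\gamma(h)$ and ${}'e\star h\simeq \zeta(h)\star{}'e$.
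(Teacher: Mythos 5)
Your proposal is correct and is essentially the paper's own argument: the paper likewise deduces the module-category equivalence from the rank-one freeness of $\cH'$ on both sides (as a right $\cH$-module by Lemma \ref{lem:free rank 1}, and as a left $\cH^\bub$-module, which is the content of the proof of Theorem \ref{th:Hbub} and amounts to your identification $\cH'\simeq {}_\gamma\cH$), and then checks the intertwining exactly via $\theta'(mh)=(mh)\otimes e'\simeq m\otimes(h\star e')\simeq m\otimes(e'\star\gamma(h))$. The only blemish is the displayed identity ``$m\otimes(e'\star h)=m\otimes(h\star e')$'' in your write-up, where $e'\star h$ is not defined for $h\in\cH^\bub$; the intended (and correct) chain is the one just written, so this is purely cosmetic.
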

\begin{proof}

We will prove (1). One proves (2) similarly.

We have seen $\cH'$ is free rank one as a left $\cH^\bub$-module and as a right $\cH$-module. This implies the equivalence on module categories. 
For the second assertion,  recall $\g$ is characterized by a functorial multiplicative identity $h 
\star e' \simeq e'\star  \g(h)$, for $h\in \cH^\bub$. Thus we have $\theta'(m h) = (m h)  \otimes e' \simeq m \otimes (h \star e') \simeq  m \otimes (e'\star  \g(h))$, so indeed $\theta'$ intertwines the actions via~$\g$.
\end{proof}


\subsection{The bar complex} Return to the set up of a separating nodal degeneration $\t: \frY \to C$ and its  $k$th twisted cosimplicial bubbling $\pi^{\D}: \frX^{\D}\to B^{\D}$.

\sss{The left and right bubbling Hecke modules} By Remark \ref{r:X tau indep}, we have an isomorphism of functors
\begin{equation*}
\sfX_{\t}\simeq R\times \sfV_{\t}: \cC_{\star}\to \Sta.
\end{equation*}
This induces a monoidal equivalence 
\begin{equation*}
\Phi^{\y}_{\t,\Corr}\simeq (\Phi^{\sfV, \y}_{\t,\Corr})^{\ot R}: \Corr^\adm_{\ver;\hor}(\cC^{op}_{\star})\to \Sh_{0}(H^{R})\ot \Sh_{0}(H^{R})\module. 
\end{equation*}
In particular, we have a monoidal equivalence
\begin{equation*}
\Phi^{\y}_{\t,\Corr}(c^{1})\simeq \cH^{\bub,\ot R}.
\end{equation*}

%
%

Recall the comodule object $c^0_L= (0, \vn) \in  \Corr^\adm_{\ver;\hor}(\cC_L^{op})$.
Transporting $c^0$ under the  functor  $\Phi^{\y}_{\tau, L,\Corr}$ produces a left $ \Phi^\y_{\tau,\Corr}(c^1)\simeq \cH^{\bub,\ot R}$-comodule $\cA_{+}:=\Phi^{ \y}_{\tau, L,\Corr}(c^0_L)$. By \eqref{PhiL c0L}, we have
\begin{equation*}
\cA_{+}\simeq \Sh_{\cN}(\Bun_{G,1}(X_{+},Q_{+})_{\y}).
\end{equation*}

Recall we pass to left adjoints to view $\cH^\bub = \Phi^{ \y}_{\tau}(c^1)$ as a monoidal category.
By Proposition~\ref{p:sh functor from C}(3), the functors defining the comodule structure  on $\cA_{+}$ likewise admit left adjoints. We will pass to the left adjoints and thus view $\cA_{+}$  as a left $\cH^{\bub,\ot R}$-module category. A priori, $\cA_{+}$ is a non-unital left $\cH^{\bub,\ot R}$-module.

\begin{lemma}\label{l:unit action}
The action of $e^{\ot R}\in \cH^{\bub,\ot R}$ on $\cA_{+}$ is canonically isomorphic to the identity.
\end{lemma}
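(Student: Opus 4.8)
The plan is to reduce the statement about the $\cH^{\bub,\otimes R}$-module $\cA_+$ to the already-proven fact, in Corollary~\ref{c:unit} (via Theorem~\ref{th:Hbub}), that $e\in\cH^\bub$ is an identity object, together with a geometric identification of the action functor $a_{e^{\otimes R}}=(e^{\otimes R})\star(-)$ with a nearby cycles functor attached to a \emph{trivial} family. Concretely, the action of $e^{\otimes R}$ on $\cA_+$ is, by construction, the left adjoint of a comodule structure map $\Phi^{\y}_{\tau,L,\Corr}$ applied to the basic correspondence $c^0_L\to c^1\star c^0_L$, i.e.\ to the morphism in $\Corr^\adm_{\ver;\hor}(\cC_L^{op})$ represented by $(1,\{0\})\star(0,\vn)=(1,\{1\})\leftarrow(1,\vn)$, $(1,\vn)\uparrow(1,\vn)$ — wait, more precisely the one encoding comultiplication by $c^1$ inserted at the left end. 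Unwinding $\Phi^{\y}_{\tau,L,*}$ and $\Phi^{\y}_{\tau,L,\Corr}$ as in \S\ref{sss:nilp sh tau}--\S\ref{sss:tw bimod}, this functor is $I^*\circ J_*$ for the moduli of $G$-bundles along the $1$-parameter degeneration $\sfV_{\tau,L,(1,\{0\})}\to\sfA_{(1,\{0\})}$ (base $\simeq[\AA^1/\Gm]$), whose generic fiber carries the constant curve ${}_kP$ and whose special fiber is $({}_kP_k)\vee_{\BB\mu_k}{}_kP$ with the extra bubble ${}_kP_k$ glued at the left orbifold point — but with the value $e$ inserted on that bubble, and with the whole picture restricted to the open substack where the bundle on the bubble component is pulled back from $\BB\mu_k$.

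The key steps, in order: (i) Identify $a_{e^{\otimes R}}$ with the nearby-cycles functor $\Psi_{b'}$ for the restricted family $b':\frB'\to[\AA^1/\Gm]$, exactly as in the proof of Theorem~\ref{th:Hbub}, where $\frB'$ is the open substack of $\Bun_{G,1}(\sfV_{\tau,L,(1,\{0\})})_{\y}$ on which the restriction to the bubble $V_-\simeq{}_kP_k$ is isomorphic to the pullback of $\EE_{\y}$ from $\BB\mu_k$ (the insertion of $e=u_!\cL_{\univ}$ precisely cuts out this open locus, as in \S\ref{sss:e}). (ii) Observe that $b'$ is a \emph{trivial} family: every bundle parametrized by $\frB'$ is pulled back from the constant family $X_+\times[\AA^1/\Gm]$ (equivalently $ {}_kP\times[\AA^1/\Gm]$ when one also remembers the left rigidification/$N$-structure data) via the contraction $[\sfV_{\tau,L,(1,\{0\})}]\to {}_kP\times[\AA^1/\Gm]$ collapsing the bubble $V_-$ to its right orbifold point $\infty_k$. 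This is the same contraction argument used for $b'$ in Theorem~\ref{th:Hbub}, now carried out on the left-module family rather than the bimodule family $\frW$. (iii) Conclude $\Psi_{b'}\simeq\mathrm{id}$ as an endofunctor of $\cA_+=\Sh_\cN(\Bun_{G,1}(X_+,Q_+)_{\y})$, since nearby cycles of a constant family along the constant sheaf of categories is the identity. (iv) Finally, upgrade this to a canonical isomorphism of module-structure data: because the $R$ nodes are handled identically and independently (via the factorization $\sfX_\tau\simeq R\times\sfV_\tau$ of \S\ref{sss:tw bimod} / Remark~\ref{r:X tau indep}), it suffices to treat one factor; and the coherence with the associativity constraints of the $\cH^{\bub,\otimes R}$-module structure follows because all the maps in sight are nearby-cycles/left-adjoint functors for sub-families of the cosimplicial bubbling, and the identification in (iii) is the one induced by the trivializing contraction, which is compatible with further degenerations. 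Concretely one invokes Proposition~\ref{p:sh functor from C}(2) (Beck--Chevalley) to propagate the isomorphism through the cosimplicial diagram.

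The main obstacle I expect is step (iv): producing the isomorphism $a_{e^{\otimes R}}\simeq\mathrm{id}$ not merely as a plain functor but \emph{as module data}, i.e.\ compatibly with the higher associativity constraints coming from the twisted cosimplicial bubbling $\{\pi^{[n]}\}$. The content of Lemma~\ref{l:unit action} is really the statement that the unit object of the monoidal category $\cH^{\bub,\otimes R}$ acts by the identity through the \emph{entire} coherent module structure, so one must check that the trivializing contraction in step (ii) is compatible with inserting $e$ at a bubble in an arbitrary term $X(n)$ of the cosimplicial chain, not just in $X(1)$. This should follow formally: inserting $e$ on one bubble component and then applying the face maps (nearby cycles) of the cosimplicial structure is, after the contraction, the same as not inserting that bubble at all, because the contraction identifies the relevant open sub-family with one pulled back from $B^{[n-1]}$. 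Thus the whole comparison reduces, by induction on $n$ and the Beck--Chevalley isomorphisms established in Proposition~\ref{p:sh functor from C}, to the single $n=1$ computation of step (i)--(iii), and the coherences are forced. The remaining care is purely bookkeeping: tracking the left $N$-reduction/rigidification at $X_+$ and the open locus $\wh U\simeq H$ so that the insertion of $e=u_!\cL_{\univ}$ interacts correctly with the $\Sh_0(H^R)$-equivariance, which is handled by Lemma~\ref{l:monod gl func} exactly as in Lemma~\ref{l:Phi eta tau Corr}.
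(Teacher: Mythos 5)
Your proposal follows essentially the paper's own argument: the paper proves this lemma by repeating the last part of the proof of Theorem~\ref{th:Hbub} — identify the action of the unit (after passing to adjoints) with nearby cycles for a one-parameter degeneration, observe that inserting $e$ cuts out the open locus where the bundle on each bubble component is pulled back from $\BB\mu_k$ (i.e.\ isomorphic to the pullback of $\EE_\y$), and note that this restricted family is trivial because all its bundles are pulled back along the contraction of the bubbles, so the nearby cycles functor is the identity. Your steps (i)--(iii) are exactly this.

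Two corrections, though. First, you name the wrong family: $\sfV_{\tau,L}$ belongs to the compactified standard bubbling $\frV^{\D}$ of \S\ref{sss:tw bimod} and computes the twisting bimodule $\cH'$ (generic fiber ${}_kP$), whereas the action of $\cH^{\bub,\ot R}$ on $\cA_+$ is governed by the family $\sfX_{\t,L,(1,\{1\})}\to \sfA_{(1,\{1\})}\simeq[\AA^{1}/\Gm]$, whose generic fiber is $X_+$ and whose special fiber is $(R\times{}_kP_k)\cup_{Q_+}X_+$; the trivializing contraction is onto the constant family $X_+\times[\AA^1/\Gm]$, not ${}_kP\times[\AA^1/\Gm]$. (Also note $(1,\{0\})$ is not an object of $\cC_L$, since objects of $\cC_L$ have $0\notin S$.) Your step (ii) hedges between the two pictures; only the $X_+$ picture is relevant here, and with it the argument goes through verbatim. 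Second, your step (iv) worry about compatibility with the full coherent module structure goes beyond what the lemma asserts and beyond what the paper proves or uses: the statement is an isomorphism of the single action functor with the identity, which is all that is invoked later (in the proof of Theorem~\ref{th:matching actions} to conclude that a certain composite is the identity), so no inductive propagation through the cosimplicial diagram is required for the lemma as stated.
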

\begin{proof}
The argument is the same as the last part of the proof of Theorem~\ref{th:Hbub}, using the one-parameter family $\sfX_{\t,L,(1,\{1\})}\to \sfA_{(1,\{1\})}\simeq[\AA^{1}/\Gm]$ (degenerating $X_{+}$ to $(R\times {}_{k}P_{k})\cup_{Q_{+}}X_{+}$) instead.
\end{proof} 

Similarly, the category
\begin{equation*}
\cA_{-}:=\Phi^{ \y}_{\tau, R,\Corr}(c^0_R)\simeq \Sh_{\cN}(\Bun_{G,1}(X_{-},Q_{-})_{-\y})
\end{equation*}
carries a right action of $\cH^{\bub,\ot R}$ induced from the comodule structure of $\Phi^{ \y}_{\tau, R,\Corr}(c^0_R)$. This action is also unital.

%
%
%
%
%

\sss{The bar complex}
Recall from \eqref{S eta} the augmented semi-cosimplicial object
$$
\xymatrix{
\cS^{\bu,\y}: \Delta_{\inj, +}\ar[r]^-{c^\bullet} &  \Corr^\adm_{\ver;\hor}(\cC^{op})  \ar[r]^-{\Phi^{\y}_{\Corr}} & \St
}$$ 
with term-wise assignments 
$$
\xymatrix{
\cS^{-1,\y} = \Sh_{\wt\cN}(\Bun_G(  \t|_{C_{>0}}))
&
\cS^{n,\y} = \Sh_\cN(\Bun_G( X(n))_{\y}), \; n\geq 0 
}
$$
By Lemma~\ref{l:Phi eta tau Corr}(3) we have
\begin{equation*}
\cS^{n,\y}\simeq \cA_{-}\ot_{\Sh_0(H^R)}\underbrace{\cH^{\bub, \ot R}\ot_{\Sh_0(H^R)}\cdots\ot_{\Sh_0(H^R)} \cH^{\bub, \ot R}}_{n}\ot_{\Sh_0(H^R)}\cA_{+}.
\end{equation*}

Let us pass to left adjoints to obtain an augmented semi-simplicial object
\begin{equation*}
\cS^{\y}_{\bu}: \Delta_{\inj, +}^{op}\to \St.
\end{equation*}
with $\cS^{\y}_{n}=\cS^{n,\y}$.  The above description of $S^{n,\y}$ can be summarized as follows. 

\begin{theorem}\label{th:bar cx bub} The (non-augmented) semi-simplicial category $\cS^{\y}_{\bu}|_{\Delta_{\inj}^{op}}$ is the bar complex calculating the tensor product 
\begin{equation*}
\xymatrix{
\cA_{-}\otimes_{\cH^{\bub,\ot R}} \cA_{+}.
}
\end{equation*} 
The augmentation map $\cS^{\y}_{0}\to \cS_{-1}^{\y}$ provides a natural functor
\begin{equation*}
\xymatrix{
\cA_{-} \otimes_{\cH^{\bub,\ot R}} \cA_{+}\ar[r] & 
\Sh_{\wt\cN}(\Bun_G(  \t|_{C_{>0}})).
}
\end{equation*} 
\end{theorem}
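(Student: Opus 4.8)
The plan is to assemble the two halves already constructed in the preceding subsections and feed them through the Gaitsgory–Rozenblyum correspondence machinery, exactly as was done for the $\cS^{\bu}$ picture. First I would recall that by Lemma~\ref{l:Phi eta tau Corr}(3) the functor $\Phi^{\y}_{\Corr}$ restricted to $\Corr^\adm_{\ver;\hor}(\cC_\star^{op})$ factors as the relative tensor product
\begin{equation*}
\cA_{-}\ot_{\Sh_0(H^R)}\bigl(-\bigr)\ot_{\Sh_0(H^R)}\cA_{+}
\end{equation*}
applied to $\Phi^{\y}_{\tau,\Corr}$, which by the isomorphism $\sfX_\tau \simeq R\times \sfV_\tau$ of Remark~\ref{r:X tau indep} is $(\Phi^{\sfV,\y}_{\tau,\Corr})^{\ot R}$, whose value on the coalgebra object $c^1$ is the bubbling Hecke category $\cH^{\bub,\ot R}$ of Definition~\ref{def:bub Hk}. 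Since $c^{\bu}:\Delta_{\inj,+}\to \Corr^\adm_{\ver;\hor}(\cC_\star^{op})$ is, in non-negative degrees, precisely the simplicial bar construction attached to the (non-counital) coalgebra $c^1$ together with its left and right comodules $c^0_L,c^0_R$, applying the monoidal/module functor $\Phi^{\y}_{\Corr}$ and passing to left adjoints (legitimate by Proposition~\ref{p:sh functor from C}(3)) turns this into the two-sided bar resolution. Concretely, the $n$-th term of $\cS^{\y}_{\bu}|_{\Delta_{\inj}^{op}}$ becomes
\begin{equation*}
\cA_{-}\ot_{\Sh_0(H^R)}\underbrace{\cH^{\bub, \ot R}\ot_{\Sh_0(H^R)}\cdots\ot_{\Sh_0(H^R)}\cH^{\bub, \ot R}}_{n}\ot_{\Sh_0(H^R)}\cA_{+},
\end{equation*}
with the face maps induced by comultiplications of $c^1$ and by the comodule structure maps, exactly matching the standard simplicial object computing $\cA_{-}\otimes_{\cH^{\bub,\ot R}}\cA_{+}$.

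The one genuine point that must be checked — and which I expect to be the main obstacle — is that the resulting simplicial object really computes the relative tensor product as a \emph{colimit}, i.e.\ that the bar resolution is the correct one. For this I would verify two things. First, the action of $\cH^{\bub,\ot R}$ on $\cA_{\pm}$ is \emph{unital}: this is Lemma~\ref{l:unit action} on the $+$-side (the action of $e^{\ot R}$ is the identity) and its mirror on the $-$-side, so the degeneracies of the full simplicial object are honestly defined, even though the combinatorial source $c^{\bu}$ only sees the semi-simplicial skeleton — one uses here that a unital two-sided bar construction is computed by its semi-simplicial part. Second, I would note that all the categories in sight are presentable and the relevant functors are colimit-preserving (left adjoints, by Proposition~\ref{p:sh functor from C}), so the geometric realization of the bar complex is the relative tensor product in the $\infty$-categorical sense; this is the standard fact that $|B_\bu(\cA_-,\cH^{\bub,\ot R},\cA_+)| \simeq \cA_-\otimes_{\cH^{\bub,\ot R}}\cA_+$.

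Having identified the non-augmented complex, the augmentation map $\cS^{\y}_0 \to \cS^{\y}_{-1}$ is, by the construction of the semi-cosimplicial object $\cS^{\bu,\y}$ via $c^{\bu}$ and $\Phi^{\y}_{\Corr}$ and the identification $\cS^{\y}_0 = \Sh_\cN(\Bun_G(X(0))_\y) \simeq \cA_-\ot_{\Sh_0(H^R)}\cA_+$ with $\cS^{\y}_{-1}=\Sh_{\wt\cN}(\Bun_G(\t|_{C_{>0}}))$, precisely the left adjoint to the nearby-cycles functor for the real one-parameter degeneration $\Bun_G(\t|_{C_{\ge0}})\to C_{\ge0}$. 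To conclude I would only need to observe that this augmentation is compatible with all the face maps (equivalently, that it extends to a map out of the realization): this compatibility is built into the augmented semi-cosimplicial structure of $\cS^{\bu,\y}$ from \eqref{S eta}, so passing to left adjoints yields a canonical functor
\begin{equation*}
\cA_{-}\otimes_{\cH^{\bub,\ot R}}\cA_{+}\longrightarrow \Sh_{\wt\cN}(\Bun_G(\t|_{C_{>0}})),
\end{equation*}
which is the assertion. I would close by remarking that no new geometric input is required beyond what is already in place; the content is purely the bookkeeping that the $c^{\bu}$-indexed diagram, after applying $\Phi^{\y}_{\Corr}$ and taking adjoints, \emph{is} the (augmented) bar resolution — the only subtle ingredient being unitality of the $\cH^{\bub,\ot R}$-module structures supplied by Lemma~\ref{l:unit action}.
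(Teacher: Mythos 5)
Your proposal is correct and follows essentially the same route as the paper: the theorem there is obtained by combining Lemma~\ref{l:Phi eta tau Corr}(3), the identification $\Phi^{\y}_{\t,\Corr}(c^{1})\simeq\cH^{\bub,\ot R}$ via $\sfX_\tau\simeq R\times\sfV_\tau$, the comodule objects $c^0_L,c^0_R$, and passage to left adjoints, with unitality supplied by Lemma~\ref{l:unit action}. The points you elaborate (semi-simplicial skeleton sufficing for a unital bar construction, and the realization computing the relative tensor product of presentable module categories) are exactly the standard facts the paper leaves implicit, so no gap remains.
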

 
\sss{Variant: marked sections}
Consider the situation of \ref{sss:cosim cat sections} with a disjoint collection $s=\{s_{a}\}_{a\in \Sigma}$ of marked sections $s_{a}: C\to \frY$, and the induced sections  $\s^{\D}=\{\s^{\D}_{a}\}_{a\in \Sigma}$, where $ \s^{\D}_{a}:   B^\Delta \to  \frX^{\D}$. 
Let $\Sigma_{-}\subset\Sigma$ be the subset of $a\in \Sigma$ such that $s_{a}(c_{0})\in Y_{-}$. Similarly define $\Sigma_{+}$. Then $\Sigma=\Sigma_{-}\coprod \Sigma_{+}$. Let $\s_{-}$ (resp. $\s_{+}$) be those marked points of $X(0)$ that lie in $X_{-}$ (resp. $X_{+}$), so that we have bijections $\s_{\pm}\bij \Sigma_{\pm}$.

Let
\begin{equation*}
\cA_{-,\s_{-}}:=\Sh_{\cN}(\Bun_{G,1,N}(X_{-},Q_{-}, \s_{-})_{\y}), \quad \cA_{+,\s_{+}}:=\Sh_{\cN}(\Bun_{G,1,N}(X_{+},Q_{+}, \s_{+})_{\y}).
\end{equation*}
Now $\cH^{\ot \Sigma_{-}}$ acts on $\cA_{-,\s_{-}}$ by Hecke modifications at $\s_{-}$, so that $\cA_{-,\s_{-}}$ is an $\cH^{\ot \Sigma_{-}}$-module. Similarly, $\cA_{+,\s_{+}}$ is an $\cH^{\ot \Sigma_{+}}$-module.

Using Proposition \ref{p:cosimp cat marked sections} and passing to bundles with type $\y$ at the twisted nodes, we get an augmented semi-cosimplicial object
\begin{equation*}
\wt \cS^{\bu,\y}_{\s}: \D_{\inj, +}\to \cH^{\ot\Sigma}\module
\end{equation*}
with term-wise assignments
\begin{eqnarray*}
\wt \cS^{-1,\y}_{\s}&=& \Sh_{\wt\cN}(\Bun_{G,N}(\t|C_{>0}, s)),\\
\wt \cS^{n,\y}_{\s}&=&\cA_{-,\s_{-}}\ot_{\Sh_0(H^R)}\underbrace{\cH^{\bub, \ot R}\ot_{\Sh_0(H^R)}\cdots\ot_{\Sh_0(H^R)} \cH^{\bub, \ot R}}_{n}\ot_{\Sh_0(H^R)}\cA_{+,\s_{+}}.
\end{eqnarray*}
For $n\ge0$, $\wt \cS^{n,\y}_{\s}$ is an $\cH^{\ot \Sigma}$-module from the $\cH^{\ot \Sigma_{\pm}}$-action on $\cA_{\pm,\s_{\pm}}$; for $n=-1$,  $\cH^{\ot \Sigma}$ acts on $\wt \cS^{-1,\y}_{\s}$ by modifications along the marked sections $s$.

Passing to  left adjoints gives an augmented semi-simplicial object
\begin{equation*}
\wt \cS^{\y}_{\bu, \s}: \D^{op}_{\inj, +}\to \cH^{\ot\Sigma}\module.
\end{equation*}
Then  Theorem \ref{th:bar cx bub} has the following extension.

\begin{theorem}\label{th:bar cx bub marked sections} The (non-augmented) semi-simplicial $\cH^{\ot \Sigma}$-module $\wt \cS^{\y}_{\bu, \s}$ is the bar complex calculating the tensor product
\begin{equation*}
\cA_{-,\s_{-}}\ot_{\cH^{\bub,\ot R}}\cA_{+,\s_{+}}
\end{equation*}
in the category of $\cH^{\ot\Sigma}\module$. The augmentation map $\wt\cS^{\y}_{0,\s}\to \wt\cS_{-1,\s}^{\y}$ provides a natural functor
\begin{equation*}
\cA_{-,\s_{-}} \otimes_{\cH^{\bub,\ot R}} \cA_{+,\s_{+}}\to
\Sh_{\wt\cN}(\Bun_{G,N}(  \t|_{C_{>0}}, s)).
\end{equation*} 
\end{theorem}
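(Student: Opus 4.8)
\textbf{Proof strategy for Theorem \ref{th:bar cx bub marked sections}.}

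The plan is to upgrade the proof of Theorem \ref{th:bar cx bub} by carrying along the affine Hecke symmetries coming from the marked sections. The entire argument has already been assembled in the preceding subsections; the task here is to combine them in the presence of marked points. First I would invoke Proposition \ref{p:cosimp cat marked sections}, which lifts the functor $\Phi_{\s,\Corr}:\Corr^\adm_{\ver;\hor}(\cC^{op})\to\St$ to a functor valued in $\cH^{\ot\Sigma}\module$, once the trivializations $(\io_a)_{a\in\Sigma}$ of formal neighborhoods are fixed. Restricting to $\cC_\star$, $\cC_L$, $\cC_R$ and passing to the subfunctors $\Phi^\y$ (bundles of twisting type $\y$ at the nodes) via the construction of \S\ref{ss:fix type} produces the augmented semi-cosimplicial object $\wt\cS^{\bu,\y}_\s$, with the claimed term-wise values. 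The key input is that the $\cH^{\ot\Sigma_\pm}$-action only involves Hecke modifications along sections landing in the $\mp$-truncated pieces $X_\pm$, so these actions are literally the ones built into $\Phi^\y_{\tau,L,\Corr}$ and $\Phi^\y_{\tau,R,\Corr}$ via \S\ref{sss:aff Hk sym}, and hence the module structures on $\cA_{\pm,\s_\pm}$ are the given ones.

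Second, I would identify $\wt\cS^{\bu,\y}_\s|_{\D_{\inj}}$ with the bar complex. This is the content of Lemma \ref{l:Phi eta tau Corr}(3) combined with the monoidal identification $\Phi^\y_{\t,\Corr}\simeq(\Phi^{\sfV,\y}_{\t,\Corr})^{\ot R}$ from Remark \ref{r:X tau indep}, which gives $\Phi^\y_{\t,\Corr}(c^1)\simeq\cH^{\bub,\ot R}$. One then recognizes the simplicial object built from $c^\bu\colon\D_{\inj,+}\to\Corr^\adm_{\ver;\hor}(\cC_\star^{op})$ — comultiplication and its iterates on the coalgebra $c^1$, together with the left/right comodule structures on $c^0_L$, $c^0_R$ — as exactly the standard bar resolution pattern; after passing to left adjoints (using Proposition \ref{p:sh functor from C}(3)) this becomes the bar complex computing $\cA_{-,\s_-}\otimes_{\cH^{\bub,\ot R}}\cA_{+,\s_+}$. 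The only new point over Theorem \ref{th:bar cx bub} is that every functor in sight is now $\cH^{\ot\Sigma}$-linear, which is precisely what Proposition \ref{p:cosimp cat marked sections} guarantees, so the tensor product and the bar resolution take place inside $\cH^{\ot\Sigma}\module$. Finally, the augmentation map $\wt\cS^\y_{0,\s}\to\wt\cS^\y_{-1,\s}$ is the left adjoint to the nearby-cycles functor for the real one-parameter family $\Bun_{G,N}(\t|_{C_{\geq 0}},s)\to C_{\geq 0}$ (with the generically/relatively nilpotent support conditions), exactly as in the unmarked case, and it is $\cH^{\ot\Sigma}$-linear by the same proof, yielding the asserted functor to $\Sh_{\wt\cN}(\Bun_{G,N}(\t|_{C_{>0}},s))$.

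I expect the main obstacle to be bookkeeping rather than a genuinely new difficulty: one must check that the affine Hecke symmetry at the marked sections is compatible with \emph{all} the structure maps of the cosimplicial object — not just the face maps but the degeneracy-contracted identifications used to recognize the bar pattern — and that this compatibility survives the passage to left adjoints and the restriction to fixed twisting type $\y$. The relevant commutations are precisely the content of the base-change isomorphisms verified in the proof of Proposition \ref{p:cosimp cat marked sections} (the identity \eqref{conv K push} and its refinements), together with Lemma \ref{c:Haff pres N} ensuring the Hecke action preserves nilpotent sheaves along every stratum. A secondary point requiring care is that the $\cH^{\ot\Sigma}$-module structure on $\wt\cS^{-1,\y}_\s$ is defined by modifications along the sections $s$ directly on $\Bun_{G,N}(\t|_{C_{>0}},s)$, whereas on $\wt\cS^{n,\y}_\s$ for $n\geq 0$ it comes from the split actions on $\cA_{\pm,\s_\pm}$; the compatibility of the augmentation with these two descriptions follows from tracking the marked sections through the degeneration $\frX\to B$, since each $s_a$ specializes into $X_-$ or $X_+$ according to whether $a\in\Sigma_-$ or $a\in\Sigma_+$, and this is already encoded in \S\ref{sss:more sections tw}.
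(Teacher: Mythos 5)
Your proposal is correct and follows essentially the same route the paper intends: Theorem \ref{th:bar cx bub marked sections} is obtained exactly as Theorem \ref{th:bar cx bub}, with the $\cH^{\ot\Sigma}$-linearity supplied by Proposition \ref{p:cosimp cat marked sections} (together with Lemma \ref{c:Haff pres N}), the term-wise identification by the marked-section analogue of Lemma \ref{l:Phi eta tau Corr}(3) and the equivalence $\Phi^{\y}_{\t,\Corr}(c^1)\simeq\cH^{\bub,\ot R}$, and the bar pattern and augmentation recognized after passing to left adjoints via Proposition \ref{p:sh functor from C}(3). The only stylistic caveat is that the $\cH^{\ot\Sigma_{\pm}}$-actions live on the extremal factors $\cA_{\pm,\s_{\pm}}$ appearing in the gluing decomposition rather than being ``built into'' the truncated functors $\Phi^{\y}_{\tau,L/R,\Corr}$ themselves, but this does not affect the argument.
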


\sss{Reformulation using coarse curves}
Finally, we would like to restate Theorem~\ref{th:bar cx bub} in terms of the affine Hecke category and the coarse curves $Y_{\pm}$. We will use Proposition~\ref{p:Bun on tw} to identify
\begin{equation}\label{Xpm Ypm}
\Bun_{G,1}(X_{\pm}, Q_{\pm})_{\y}\simeq\Bun_{G,N}(Y_{\pm}, R_{\pm})
\end{equation} 
and hence to obtain an equivalence
\begin{equation}\label{A-Y}
a_{\pm}: \cA_{\pm}\simeq\Sh_{\cN}(\Bun_{G,N}(Y_{\pm}, R_{\pm})).
\end{equation}
It remains to identify the Hecke actions on both sides.

Recall we view the minus versions as right modules and the plus versions as left modules.

\begin{theorem}\label{th:matching actions}
Under the equivalence \eqref{A-Y} in the minus (resp. plus) case, the action of $\cH^{\bub,\ot R}$ on the left hand side is canonically intertwined by the equivalence $\gamma$ (resp. $\zeta$) of Theorem~\ref{th:Hbub} with the action of $\cH^{\ot R}$ on the right hand side. 
\end{theorem}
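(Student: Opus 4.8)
\textbf{Proof proposal for Theorem \ref{th:matching actions}.}

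The plan is to reduce the statement to the already-established comparison in Theorem~\ref{th:Hbub} together with Corollary~\ref{c:right modules}, by reinterpreting $\cA_\pm$ not just abstractly but through its geometric origin as the transport $\Phi^{\y}_{\tau,L,\Corr}(c^0_L)$, resp.\ $\Phi^{\y}_{\tau,R,\Corr}(c^0_R)$. The key point is that all of the relevant data -- the bubbling Hecke action, the affine Hecke action via modifications, and the isomorphism \eqref{Xpm Ypm} -- are local near the relevant nodal/marked points, so it suffices to work with a single factor of $R$ (i.e.\ reduce to $\#R=1$) and with the model curves ${}_kP$, $P_k$, $\PP^1$.

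First I would treat the plus case (left modules). By Lemma~\ref{l:Phi eta tau Corr}(3) (the $\cC_L$ version), we have $\Phi^{\y}_*|_{\cC_L}\simeq \Sh_\cN(\Bun_{G,1}(X_-,Q_-)_\y)\ot_{\Sh_0(H^R)}\Phi^{\y}_{\tau,L,*}$, and in particular $\cA_+ = \Phi^{\y}_{\tau,L,\Corr}(c^0_L)$ carries its $\cH^{\bub,\ot R}$-module structure by construction. Using the isomorphism $\sfX_{\tau,L}\simeq R\times \sfV_{\tau,L}$ of Remark~\ref{r:X tau indep} (half-truncated version), the module structure factors through $(\Phi^{\sfV,\y}_{\tau,L,\Corr})^{\ot R}$, so I reduce to a single $\PP^1$-type factor. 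Then I invoke Corollary~\ref{c:right modules}(2): the functor ${}'\cH\ot_{\cH^\bub}(-)$ is an equivalence $\cH\module\to\cH^\bub\module$ intertwining the two actions via $\zeta$. The remaining task is to identify, under \eqref{Xpm Ypm}, the object $\cA_+\simeq \Sh_\cN(\Bun_{G,1}({}_kP, 0_k,\infty)_\y)\simeq\Sh_\cN(\Bun_{G,N}(\PP^1,\{0,\infty\}))$ -- this is precisely the computation carried out inside the proof of Lemma~\ref{lem:free rank 1}, and the equivalence $\Bun_{G,1}(X_+,Q_+)_\y\simeq\Bun_{G,N}(Y_+,R_+)$ is local near $R_+$, hence compatible with the $\cH$-action by modifications at $R_+$ on the coarse side. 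Concretely: the geometric family $\sfX_{\tau,L,(1,\{1\})}$ that defines the $\cH^\bub$-action on $\cA_+$, when composed with the Hecke modification along $\s^\Delta_+$-type sections, realizes exactly the $\cH$-module structure transported through $\zeta$, because the degeneration of $X_+$ to $(R\times {}_kP_k)\cup_{Q_+} X_+$ is the same ``bubbling'' degeneration that produces ${}'\cH$ as the twisting bimodule. Unitality of the action is Lemma~\ref{l:unit action}.

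The minus case is identical with $L$ replaced by $R$, $\zeta$ replaced by $\gamma$, $P_k$ by ${}_kP$, and using Corollary~\ref{c:right modules}(1); note the sign flip ($\cA_-\simeq \Sh_\cN(\Bun_{G,1}(X_-,Q_-)_{-\y})$) is absorbed into the convention that the minus version is a right module, matching the inverse-on-the-loop-group twist built into the right $\cH$-action on ${}'\cH$. The main obstacle I anticipate is not conceptual but bookkeeping: one must check that the \emph{canonical} equivalence of Lemma~\ref{l:Phi eta tau Corr}(3) -- i.e.\ the identification of $\cA_\pm$ with the half-truncated transport -- is compatible with the \emph{canonical} equivalence $\gamma$ (resp.\ $\zeta$) coming from the action on the twisting bimodule $\cH'$ (resp.\ ${}'\cH$), rather than merely abstractly equivalent. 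This amounts to verifying that two a priori different ways of organizing the same cosimplicial bubbling diagram ($\sfX_{\tau,L}$ built on $X_+$ versus $\sfV_{\tau,L}$ built on ${}_kP_k$, glued along $\cC_L$-module structures) induce the same natural transformations, which follows from the functoriality of $\Phi^{\y}_{\Corr}$ on $\Corr^\adm_{\ver;\hor}(\cC^{op})$ and the monoidal/module compatibilities of Lemma~\ref{l:Phi eta tau Corr}, but requires carefully tracking the left-adjoint passages through Proposition~\ref{p:sh functor from C}(3).
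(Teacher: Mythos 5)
Your high-level ingredients are the right ones---Corollary \ref{c:right modules} with the generators $e'$, ${}'e$, the equivalences $\gamma,\zeta$, and unitality (Lemma \ref{l:unit action})---but two of your reduction steps fail, and the step where the real content lies is deferred to ``functoriality.'' The isomorphism $\sfX_{\tau,L}\simeq R\times\sfV_{\tau,L}$ you invoke is not what Remark \ref{r:X tau indep} provides: only the fully truncated functor $\sfX_\tau$ (both extremal components removed) is of the form $R\times[\wt\sfW_{\tau}/\mu_k]$, whereas the half-truncated $\sfX_{\tau,L}$ retains the extremal component and, as the paper notes explicitly, depends on the curve $X_+$. Consequently $\cA_+=\Sh_\cN(\Bun_{G,1}(X_+,Q_+)_\y)$ is not $\Sh_\cN(\Bun_{G,1}({}_kP,0_k,\infty)_\y)$---the latter is the twisting bimodule, not the module you must compare---so the ``reduction to a single $\PP^1$-type factor'' and the appeal to the computation inside Lemma \ref{lem:free rank 1} do not apply to $\cA_\pm$ for a general curve.

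More importantly, the intertwining is not a formal consequence of locality near the nodes: the $\cH^{\bub,\ot R}$-action on $\cA_\pm$ is defined by nearby cycles in a global degenerating family, the $\cH^{\ot R}$-action on $\Sh_\cN(\Bun_{G,N}(Y_\pm,R_\pm))$ is by modifications on a fixed curve, and matching them \emph{under the specific equivalence} \eqref{A-Y} is exactly the content of the theorem. The paper bridges this with a construction absent from your sketch: it applies Example \ref{ex: nodal degen} to $Z=Y_-$, $R_0=R_-$ (blowing up $Y_-\times\AA^1$ at the nodes, with the marked sections of Example \ref{ex:blowup section}), runs the twisted bubbling and Theorem \ref{th:bar cx bub marked sections} on this auxiliary family to produce a functor of right $\cH^{\ot R}$-modules $\e'\colon\cA_-\ot_{\cH^{\bub,\ot R}}\cH'^{\ot R}\to\cA'_-$, and then, after precomposing with $\th'(\cF)=\cF\ot e'^{\ot R}$ (which intertwines via $\gamma$ by Corollary \ref{c:right modules}), proves that $\e'\circ\th'$ coincides with the canonical equivalence $a_-$ of \eqref{A-Y}. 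That last identification is itself nontrivial: it rests on comparing the family $\xi=\sfX_{\tau,R,(1,\{0\})}$ with its partial coarse space $\xi'$ via Proposition \ref{p:Bun tw family}, together with Lemma \ref{l:unit action}. Without constructing $\e'$ (or a substitute) and carrying out this verification, your argument only identifies the two actions up to some unspecified equivalence, not under \eqref{A-Y} as the statement requires.
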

\begin{proof} 
We will prove the minus case; the plus case is similar.

Write 
\begin{equation*}
\cA'_{-}:=\Sh_{\cN}(\Bun_{G,N}(Y_{-}, R_{-})).
\end{equation*}
Apply the procedure of Example \ref{ex: nodal degen} to the curve $Z=Y_{-}$  together with the finite subset $R=R_{-}\subset Y_{-}$ to obtain the separating nodal degeneration $\t_{-}:\frY_{-}\to \AA^{1}$. The nonzero fibers of $\frY_{-}$ are identified with $Y_{-}$ and the special fiber is $Y_{-}\cup_{\{0\}\times R} (\PP^{1}\times R)$.  Moreover, by Example \ref{ex:blowup section}, $\frY$ is equipped with sections $\{\s_{r}\}_{r\in R}$ that are proper transforms $R_{-}\times \AA^{1}$ under the blowup. 

We apply the $k$-th twisted bubbling construction to $\t_{-}:\frY_{-}\to \AA^{1}$ to obtain $\frX^{\D}_{-}\to A^{\D}$. The central fiber of $\frX^{[0]}_{-}\to A^{[0]}\cong \AA^{1}$ is $X_{-}(0)\cong X_{-}\cup ({}_{k}P\times R)$ (glued along $Q_{-}\cong 0_{k}\times R$). It is equipped with sections $\s^{\D}=\{\s_{r}^{\D}; r\in R\}$ (see \S\ref{sss:more sections tw}), such that $\s^{[0]}_{r}$ intersects $X_{-}(0)$ in $(\infty,r)\times {}_{k}P\times R$ for $r\in R$. 

Recall the left twisting $(\cH^{\bub}, \cH)$-bimodule $\cH'$ in Definition \ref{d:left tw bimod}. Its underlying category is $\Sh_{\cN}(\Bun_{G,1,N}({}_{k}P,0_{k}, \infty)_{\y})$,
and ${}_{k}P$ with  $\infty$ marked appear in $X_{-}(0)$. Now Theorem \ref{th:bar cx bub marked sections} gives a functor of (right) $\cH^{\ot R}$-modules
\begin{equation*}
\e': \cA_{-}\ot_{\cH^{\bub,\ot R}}\cH'^{\ot R}\to \cA'_{-}.
\end{equation*}
Here $\cH^{\ot R}$ acts on the left side by its right action on $\cH'^{\ot R}$, and acts on the right side by Hecke modification along $R_{-}\subset X_{-}$.

Consider the composition
\begin{equation*}
\cA_{-}\xr{\th'}\cA_{-}\ot_{\cH^{\bub,\ot R}}\cH'^{\ot R}\xr{\e'} \cA'_{-}.
\end{equation*}
Here $\th'(\cF)=\cF\ot e'^{\ot R}$, and $e'\in \cH'$ is the object defined in \eqref{def e'}, which is a free generator of $\cH'$  as a right $\cH$-module by Lemma \ref{lem:free rank 1}. 
The monoidal equivalence $\g: \cH^{\bub}\simeq \cH$ in Theorem \ref{th:Hbub} is constructed using $e'\in \cH'$, hence $\th'$ is an equivalence, and it intertwines the $\cH^{\bub,\ot R}$-action on $\cA_{-}$ and the $\cH^{\ot R}$-action on $\cA_{-}\ot_{\cH^{\bub,\ot R}}\cH'^{\ot R}$ via $\g^{\ot R}$. Therefore, the composition $\e'\c\th'$ intertwines the $\cH^{\bub,\ot R}$-action on $\cA_{-}$ and the $\cH^{\ot R}$-action on $\cA_{-}'$. 

It remains to show that $\e'\c\th'\simeq a_{-}$, the equivalence $\cA_{-}\simeq \cA'_{-}$ from \eqref{A-Y}. To see this, we  consider the family $\xi: \sfX_{\t, R,(1,\{0\})}\to \sfA_{(1,\{0\})}$ (see \S\ref{sss:tr curves}) obtained by restricting the family $\frX^{[1]}$ to the coordinate line $A_{(1,\{0\})}\subset A^{[1]}$, removing $X_{+}\bs Q_{+}$ from each fiber and quotient by $\Gm$. Then the family $\xi': [\frX_{-}^{[0]}/\Gm]\to [A^{[0]}/\Gm]$ is obtained from $\xi$ by taking the partial coarse space: only turning the twisted sections $\sfQ^{1}_{(1,\{0\})}$ of $\sfX_{\t, R,(1,\{0\})}$ into marked sections $\s^{[0]}$ of $\xi'$. Applying Proposition \ref{p:Bun tw family} to $\xi$ and $\xi'$ we get an isomorphism over $\sfA_{(1,\{0\})}\cong [\AA^{1}/\Gm]$
\begin{equation}\label{xi'}
\Bun_{G,N}(\xi', \s^{[0]})\simeq \Bun_{G, 1}(\xi, \sfQ^{1}_{(1,\{0\})})_{\y}.
\end{equation}
Under this isomorphism, we have a commutative diagram
\begin{equation*}
\xymatrix{\cA_{-}\ar[r]^-{\th}\ar@{=}[d]  & \cA_{-}\ot_{\cH^{\bub,\ot R}}\cH^{\bub,\ot R}\ar@{=}[r]\ar[d]_{\id\ot \g'^{\ot R}} &  \cA_{-}\ar[d]^{a_{-}}\\
\cA_{-}\ar[r]^-{\th'}& \cA_{-}\ot_{\cH^{\bub,\ot R}}\cH'^{\ot R}\ar[r]^-{\e'}  &  \cA'_{-}}
\end{equation*}
Here   $\th(\cF)=\cF\ot e^{\ot R}$, and the left square is commutative because $e$ corresponds to $e'$ under the equivalence $\g': \cH^{\bub}\simeq \cH'$ that comes from \eqref{Hbub H'} (which is compatible with the restriction of fiber of \eqref{xi'} over $0$). By Lemma \ref{l:unit action}, $e^{\ot R}$ acts on $\cA_{-}$ by the identity, therefore the upper row above is the identity. Therefore the bottom row is canonically isomorphic to $a_{-}$. This finishes the proof.
\end{proof}

Now let us give precedence to the monoidal equivalence $\gamma:\cH^\bub \isom \cH$ 
of Theorem~\ref{th:Hbub}. So by Theorem~\ref{th:matching actions},  the identification  \eqref{A-Y} in the minus case  intertwines the actions.
On the other hand, for the identification  \eqref{A-Y} to  intertwine actions  in the plus case, we should twist the $\cH$-action by $\tau:=\zeta \gamma^{-1}:\cH\isom \cH$ where  $\zeta:\cH^\bub \isom \cH$ 
is  the other monoidal equivalence 
of Theorem~\ref{th:Hbub}. Similarly to prior constructions, one can check $\tau$ is the automorphism of $\cH$ associated to the invertible $\cH$-bimodule $\Sh_{\cN}(\Bun_{G,N}(\PP^1, \{0, \infty\})) \simeq {}' \cH\otimes_{\cH^\bub} \cH'$ where $\cH$ acts by left modifications at $0$ and right modifications at $\infty$.

In summary, combining Theorems \ref{th:bar cx bub marked sections} and \ref{th:matching actions}, we obtain the following consequence as our main result of the paper.

\begin{theorem}\label{th:bar complex} Starting with a separating nodal degeneration $\t:\frY\to C$. Choose a trivialization of $\om_{Y_{-}}|_{R}$ and a regular $\y\in \YY_{k}$. These data give a canonical functor
\begin{equation*}
\Sh_{\cN}(\Bun_{G,N}(Y_{-}, R_{-}))\ot_{\cH^{\ot R}}\Sh_{\cN}(\Bun_{G,N}(Y_{+}, R_{+}))\to \Sh_{\wt\cN}(\Bun_G(  \t|_{C_{>0}})).
\end{equation*}
where $\cH^{\ot R}$ acts on $\Sh_{\cN}(\Bun_{G,N}(Y_{-}, R_{-}))$ by right modifications and on 
$\Sh_{\cN}(\Bun_{G,N}(Y_{+}, R_{+}))$ by $\tau$-twisted left modifications.
\end{theorem}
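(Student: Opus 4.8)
The proof of Theorem~\ref{th:bar complex} is essentially a matter of assembling the pieces already established in the excerpt, so my plan is to trace through the chain of equivalences and functors carefully and check that the hypotheses match up. First I would observe that the two pieces of input data, the trivialization of $\om_{Y_{-}}|_{R}$ and the regular $\y\in\YY_{k}$, are exactly the data needed to run the $k$-th twisted cosimplicial bubbling $\pi^{\D}:\frX^{\D}\to B^{\D}$ of the separating nodal degeneration $\t:\frY\to C$ (the rigidification enters in \eqref{tw exc locus} and throughout \S\ref{ss:twist}, and $\y$ fixes the twisting type as in \S\ref{ss:fix type}). With these choices in place, Theorem~\ref{th:bar cx bub} gives the functor
\begin{equation*}
\cA_{-}\otimes_{\cH^{\bub,\ot R}}\cA_{+}\to \Sh_{\wt\cN}(\Bun_G(\t|_{C_{>0}})),
\end{equation*}
where $\cA_{\pm}=\Sh_{\cN}(\Bun_{G,1}(X_{\pm},Q_{\pm})_{\y})$ with $\cA_{-}$ a right and $\cA_{+}$ a left $\cH^{\bub,\ot R}$-module (note the conventions fixed at the end of \S\ref{sss:tw bimod} and in the paragraphs following Lemma~\ref{l:unit action}).

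The second step is to transport this along the equivalences \eqref{A-Y}, namely $a_{\pm}:\cA_{\pm}\isom \Sh_{\cN}(\Bun_{G,N}(Y_{\pm},R_{\pm}))$, which come from Proposition~\ref{p:Bun on tw} applied to the smooth twisted curves $X_{\pm}$. The content here is identifying the module structures: Theorem~\ref{th:matching actions} states precisely that under $a_{-}$ the $\cH^{\bub,\ot R}$-action is intertwined with the $\cH^{\ot R}$-action on $\Sh_{\cN}(\Bun_{G,N}(Y_{-},R_{-}))$ (by right modifications) via $\g^{\ot R}$, and under $a_{+}$ it is intertwined with the $\cH^{\ot R}$-action via $\zeta^{\ot R}$. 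To obtain a symmetric statement phrased entirely in terms of a single monoidal equivalence $\cH^{\bub}\simeq\cH$, I would give precedence to $\g:\cH^{\bub}\isom\cH$; then the minus side matches on the nose, while on the plus side the discrepancy is the monoidal auto-equivalence $\t:=\zeta\g^{-1}:\cH\isom\cH$. Substituting $\cH^{\bub,\ot R}$ by $\cH^{\ot R}$ via $\g^{\ot R}$ in the bar-complex tensor product, and using that $\cA_{+}$ with its $\g$-transported action is $\Sh_{\cN}(\Bun_{G,N}(Y_{+},R_{+}))$ with the $\t$-twisted left action, converts the functor of Theorem~\ref{th:bar cx bub} into the asserted functor
\begin{equation*}
\Sh_{\cN}(\Bun_{G,N}(Y_{-},R_{-}))\ot_{\cH^{\ot R}}\Sh_{\cN}(\Bun_{G,N}(Y_{+},R_{+}))\to \Sh_{\wt\cN}(\Bun_G(\t|_{C_{>0}})).
\end{equation*}

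There is little genuine difficulty remaining once Theorems~\ref{th:bar cx bub} and~\ref{th:matching actions} are granted; the only point requiring care is bookkeeping of the left/right and $\g$-versus-$\zeta$ conventions so that the twist $\t$ lands on the correct ($+$) factor. I would also remark, for the reader's orientation, that $\t$ admits the concrete description as the automorphism of $\cH$ attached to the invertible bimodule $\Sh_{\cN}(\Bun_{G,N}(\PP^1,\{0,\infty\}))\simeq {}'\cH\otimes_{\cH^{\bub}}\cH'$ (with $\cH$ acting by modifications at $0$ and $\infty$), matching Remark~\ref{r:two eq H}; this identification is proved by the same contraction-principle argument used in the proof of Theorem~\ref{th:Hbub} and Lemma~\ref{lem:free rank 1}, applied to the bimodule rather than the one-sided modules. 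If one wished to incorporate marked sections, the identical argument runs using Theorem~\ref{th:bar cx bub marked sections} in place of Theorem~\ref{th:bar cx bub}, but for the bare statement as formulated the two-step assembly above suffices.
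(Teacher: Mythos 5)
Your proposal is correct and follows essentially the same route as the paper: Theorem \ref{th:bar complex} is obtained there exactly by combining the bar-complex theorem (stated in the marked-sections form, Theorem \ref{th:bar cx bub marked sections}, though the bare version suffices as you note) with Theorem \ref{th:matching actions}, giving precedence to $\g$ and placing the twist $\t=\zeta\g^{-1}$ on the plus factor. Your remarks on the bimodule description of $\t$ and on the role of the rigidification and of $\y$ also match the paper's discussion.
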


\appendix

\section{Sheaves with prescribed singular support}\label{app:A}


This appendix contains some background material called upon in \S\ref{ss:mon str}. We will work with complex algebraic stacks and sheaves on them with prescribed singular supports that are not necessarily algebraic. 
Our aim is to record some basic properties -- compact generation, dualizability, and tensor product identities -- enjoyed by sheaves with prescribed singular support.


\subsection{Sheaves on manifolds}\label{ss:A mfd}

\sss{Basics on Lagrangians}\label{sss:Lag mfd}


Recall from \cite[Definition 8.3.9]{KS} that for  a smooth (real) manifold $M$, an $\RR_{>0}$-conic subanalytic subset $\L\subset T^{*}M$ is called {\em isotropic} if the canonical $1$-form on $T^{*}M$ vanishes on the regular locus of $\L$. Equivalently $\L$ is isotropic if it is contained in the union of conormals of a locally finite collection of subanalytic submanifolds of $M$. If $M$ is equidimensional, an $\RR_{>0}$-conic subanalytic subset $\L\subset T^{*}M$ is called a Lagrangian if it is isotropic and has pure real dimension equal to $\dim_{\RR}M$.


Let $f:M_{1}\to M_{2}$ be a map of smooth manifolds. Consider the correspondence
\begin{equation*}
\xymatrix{ T^{*}M_{1} & M_{1}\times_{M_{2}}T^{*}M_{2}\ar[r]^-{f^{\na}}\ar[l]_-{df} & T^{*}M_{2}
}
\end{equation*}
For a subset $\L_{1}\subset T^{*}M_{1}$, let
\begin{equation*}
\orr{f}(\L_{1})=f^{\na}( df^{-1}(\L_{1}))\subset T^{*}M_{2}.
\end{equation*}
Similarly, for a subset $\L_{2}\subset T^{*}M_{2}$, let
\begin{equation*}
\oll{f}(\L_{2})=df( (f^{\na})^{-1}(\L_{2}))\subset T^{*}M_{1}.
\end{equation*}

\begin{lemma}\label{l:Cart Lag mfd} Consider a Cartesian diagram of manifolds
\begin{equation*}
\xymatrix{ M' \ar[d]^{f'}\ar[r]^{a} & M\ar[d]^{f}\\
N'\ar[r]^{b} & N}
\end{equation*}
Then for any  subset $\L\subset T^{*}M$ we have an equality of subsets of $T^{*}N'$
\begin{equation*}
\oll{b}\orr{f}(\L)=\orr{f'}\oll{a}(\L)\subset T^{*}N'.
\end{equation*}
\end{lemma}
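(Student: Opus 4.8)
The statement is a pure linear-algebra identity about how conic subsets of cotangent bundles transport through a Cartesian square of manifolds. My plan is to reduce the equality of subsets of $T^*N'$ to a fiberwise computation over the base $N'$, and at each point to unwind both sides to the \emph{same} pullback/pushforward of covectors through the two legs of the square. Concretely, I would first set up notation: fix $n'\in N'$, let $n=b(n')$, and observe that the fiber product over $N$ means $M' = M\times_N N'$, so a point of $M'$ over $n'$ is a pair $(m,n')$ with $f(m)=n$, i.e.\ the fiber $M'_{n'}$ is canonically identified with $M_n := f^{-1}(n)$. The differentials $da: T_{(m,n')}M' \to T_m M$ and $df': T_{(m,n')}M'\to T_{n'}N'$, together with $df:T_mM\to T_nN$ and $db:T_{n'}N'\to T_nN$, satisfy the standard exact-sequence relations for a fiber product of manifolds: $T_{(m,n')}M'$ is the fiber product $T_mM\times_{T_nN} T_{n'}N'$, and dually $T^*_{(m,n')}M'$ is the pushout $T^*_mM \oplus_{T^*_nN} T^*_{n'}N'$.

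\textbf{Key steps.} (1) Rewrite $\orr{f}(\L)$ explicitly: a covector $\xi'\in T^*_{n'}N'$ lies in $\oll{b}\orr{f}(\L)$ iff there exists $\xi\in T^*_nN$ with $db^*\xi = \xi'$ (this is $\oll{b}$, via the correspondence $T^*N' \xleftarrow{db} N'\times_N T^*N \xrightarrow{b^\na} T^*N$) and $\xi\in \orr{f}(\L)$, i.e.\ there is $m\in M$ with $f(m)=n$ and a covector $\theta\in T^*_mM$ with $\theta\in\L$ and $df^*\xi = \theta$ (pairing the correspondence $T^*M \xleftarrow{df} M\times_M T^*M \xrightarrow{f^\na} T^*M$: the point $f^\na$-mapping to $\xi$ sits over $m$ with $f(m)=n$, and its $df$-image is $df^*\xi$). (2) Rewrite $\orr{f'}\oll{a}(\L)$ explicitly: $\xi'\in \orr{f'}\oll{a}(\L)$ iff there is a point $(m,n')\in M'$ with $f'(m,n')=n'$ and a covector $\eta\in T^*_{(m,n')}M'$ with $\eta\in\oll{a}(\L)$ and $df'^*\xi' = \eta$; and $\eta\in\oll{a}(\L)$ means there is $\theta\in\L\cap T^*_mM$ with $da^*\theta = \eta$. (3) Compare: in both cases the data is a point $m\in M$ with $f(m)=n=b(n')$ and a covector $\theta\in\L\cap T^*_mM$, subject to a compatibility with $\xi'$. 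On the left the compatibility is $df^*\xi = \theta$ for some $\xi$ with $db^*\xi=\xi'$; on the right it is $da^*\theta = df'^*\xi'$ in $T^*_{(m,n')}M'$. Using the pushout description $T^*_{(m,n')}M' = T^*_mM\oplus_{T^*_nN}T^*_{n'}N'$, the equation $da^*\theta = df'^*\xi'$ holds iff $\theta$ and $\xi'$ have a common lift, i.e.\ iff there exists $\xi\in T^*_nN$ with $df^*\xi=\theta$ and $db^*\xi = \xi'$ --- which is exactly the left-hand condition. This establishes the set equality.

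\textbf{Main obstacle.} The only subtle point is the duality/pushout step (3): verifying that for a fiber product of manifolds $M' = M\times_N N'$ the commutative square of differentials is not merely commutative but genuinely Cartesian on tangent spaces, hence co-Cartesian (a pushout) on cotangent spaces, and then that ``$da^*\theta = df'^*\xi'$ in the pushout'' is equivalent to ``$\theta,\xi'$ glue to a covector on $T^*_nN$''. This is elementary linear algebra --- the pushout of $V \leftarrow W \rightarrow V'$ along a surjection pattern --- but one must be slightly careful because $da$ and $db$ need not be surjective, so the relevant statement is about the kernel/image of $T^*_mM\oplus T^*_{n'}N' \to T^*_{(m,n')}M'$ and its relation to the image of $T^*_nN$. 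I would handle this by choosing local coordinates adapted to $f$ (submersion-to-its-image form is not available in general since $f$ need not be a submersion, so instead I work with the clean statement that for \emph{any} smooth maps the square $T^*M'$ is the pushout when $M'$ is the fiber product --- this holds because the tangent square is a pullback and cotangent is the contravariant dual of pullback-to-pushout). Alternatively, and perhaps more cleanly, I would cite or mimic the corresponding statement in \cite[Ch.~6]{KS} on microlocal functoriality, or simply prove the tangent-pullback claim directly from the universal property of $M\times_N N'$ and dualize; this is the step that deserves a careful half-paragraph but no hard analysis.
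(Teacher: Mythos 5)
Your proof is correct and follows essentially the same route as the paper's: unwinding both transports reduces the identity to the single linear-algebra fact that $da^*\theta = df'^*\xi'$ in $T^*_{(m,n')}M'$ holds precisely when $\theta$ and $\xi'$ arise as $df^*\xi$ and $db^*\xi$ for a common $\xi \in T^*_nN$ --- the paper packages this as an isomorphism of two incidence correspondences $Z \simeq W$, while you check it pointwise via the pushout description of $T^*_{(m,n')}M'$, but the content is identical. One caveat, which you share with the paper: this step requires the tangent square to be Cartesian (equivalently, surjectivity of $T^*_nN \to T^*_{n'}N' \times_{T^*_{(m,n')}M'} T^*_mM$), which does \emph{not} follow formally from the square being a fiber product of manifolds (e.g.\ $f(x)=x^2$, $N'=\{0\}\hookrightarrow N=\RR$, and $\Lambda$ a single nonzero covector over $0\in M$ makes the left side empty and the right side nonempty); the paper asserts the same surjectivity without proof, and it does hold in the transverse situations (in particular base changes of smooth families over a common smooth base) where the lemma is actually applied, so your argument is at the same level of rigor as the paper's, and you rightly flag this as the one step needing care.
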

\begin{proof} Let $p=f\c a=b\c f': M'\to N$. Let $Z=M'\times_{N}T^{*}N$. We have a natural map $\pi_{M}: Z\xr{a\times\id}M\times_{N}T^{*}N\xr{df}T^{*}M$. Similarly, we have a map $\pi_{N'}: Z\to T^{*}N'$. It is easy to see that
\begin{equation}\label{fa}
\orr{f'}\oll{a}(\L)=\pi_{N'}\pi_{M}^{-1}(\L).
\end{equation}
On the other hand, let $W=(M'\times_{N'}T^{*}N')\times_{T^{*}M'}(M'\times_{M}T^{*}M)$ (the maps to $T^{*}M'$ are $df'$ and $da$). Let $\nu_{M}: W\to T^{*}M$ and $\nu_{N'}: W\to T^{*}N'$ be the projections. Then one checks that
\begin{equation}\label{bf}
\oll{b}\orr{f}(\L)=\nu_{N'}\nu_{M}^{-1}(\L).
\end{equation}
Finally, we observe that the natural map $Z\to W$ (induced by $db$ and $df$) is an isomorphism, under which $\pi_{M}$ corresponds to $\nu_{M}$ and $\pi_{N'}$ corresponds to $\nu_{N'}$. Indeed this boils down to the fact that for any $x'\in M'$ with image $y',x$ and $y$ in $N', M$ and $N$ respectively, the map $(db,df): T^{*}_{y}N\to T^{*}_{y'}N'\op_{T^{*}_{x'}M'}T^{*}_{x}M$ is surjective. The desired equality then follows by combining \eqref{fa} and \eqref{bf}.
\end{proof}

\sss{Sheaves with singular support conditions}
Let $M$ be a manifold, and $\Lambda\subset T^*M$ a closed $\RR_{>0}$-conic Lagrangian. By
\cite[Corollary 8.3.22]{KS}, we may choose a $\mu$-stratification $\cS = \{S_\alpha\}$ of $M$ with connected strata so that $\Lambda\subset T^*_\cS M = \cup_\alpha T^*_{S_\alpha} M$.

Let $\Sh(M)$ be the stable $\QQ$-linear category of all complexes of $\QQ$-sheaves on $M$.
Let $\Sh_\cS(M) \subset \Sh(M)$ be the full subcategory of $\cS$-weakly constructible complexes.
Let $\Sh_\Lambda(M) \subset \Sh_\cS(M)$ be the full subcategory of complexes with singular support in $\Lambda$. Note Verdier duality preserves $\Sh_\cS(M)$ and preserves $\Sh_\Lambda(M)$ if $\Lambda$ is closed under the antipodal map so is $\RR^\times$-conic. 
Note the fully faithful inclusions $\Sh_\Lambda(M) \subset \Sh_\cS(M) \subset \Sh(M)$ preserve limits and colimits.

Recall for an object $\cF\in \Sh_\cS(M)$, we have the following characterization of when $\cF \in \Sh_\Lambda(M)$. For any smooth point $(x, \xi) \in T^*_\cS M$, consider a 
a function $f:M\to \RR$ such that $f(x) = 0$ and the graph $\Gamma_{df} \subset T^*B$ intersects $T^*_\cS M$ transversely at $(x, \xi)$. Choose
a sufficiently  small open ball $B\subset M$ around $x$, and let $h: B_{f\ge0}=\{x\in B|f(x)\geq 0\} \to B$ be the inclusion of the closed half-ball, and 
$p: B_{f\ge0}\to pt$ its map to a point.
We refer to  the functor 
\begin{equation*}
\xymatrix{
m_f:\Sh_\cS(M) \ar[r] &  \Sh(pt) &  m_f(\cF) = p_*h^!(\cF|_B) 
}
\end{equation*}
as an  $(x, \xi)$-based Morse functional.
One can show $m_f$  only depends on $f$ through the quadratic part of its Taylor expansion, i.e. the Lagrangian plane $T_{(x, \xi)} \Gamma_{df} \subset T _{(x, \xi)} T^*M$,  and in fact only depends on its linear part, i.e. the initial data $(x, \xi)\in T^*_\cS M$, up to tensoring with a graded line.
Finally, for  $\cF\in \Sh_\cS(M)$, we have $\cF \in Sh_\Lambda(M)$ if and only if   $m_f(\cF) \simeq 0$  whenever $(x, \xi) \in T^*_\cS M \setminus \Lambda$.

\subsubsection{Compact generation}

\begin{lemma}\label{lem:comp objs}
Fix a smooth point $(x, \xi) \in  \Lambda$.

Any $(x, \xi)$-based Morse  functional $m_f:\Sh_\Lambda(M)\to \Sh(pt)$  preserves limits and colimits, hence is corepresented by a compact object, to be denoted by $\cF_{f} \in \Sh_\Lambda(M)$.

\end{lemma}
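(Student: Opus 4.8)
The claim has two halves: (i) the functor $m_f$ preserves limits and colimits when restricted to $\Sh_\Lambda(M)$; (ii) a limit- and colimit-preserving functor to $\Sh(\pt)$ is corepresented by a compact object. For (ii), once (i) is known, one invokes the adjoint functor theorem / Brown representability in the stable setting: a colimit-preserving functor between presentable stable categories has a right adjoint, so $m_f \simeq \Hom(\cF_f, -)$ for some $\cF_f \in \Sh_\Lambda(M)$; that $\cF_f$ is compact follows because $m_f$ \emph{also} preserves colimits (equivalently, $\Hom(\cF_f,-)$ preserves filtered colimits is exactly compactness of $\cF_f$). So the real content is (i).

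For (i), the key point is that the defining formula $m_f(\cF) = p_* h^!(\cF|_B)$ is built from $p_*$, $h^!$, and restriction to the open ball $B$, and one must check each of these preserves limits and colimits \emph{when the source and target are restricted to the appropriate singular-support subcategories} (where one does not have this for arbitrary sheaves — e.g. $p_*$ on $\Sh(B_{f\ge 0})$ does not commute with colimits in general). First I would recall that $\Sh_\Lambda(M)$, $\Sh_\cS(M)$ and $\Sh(M)$ sit inside one another by inclusions preserving both limits and colimits, so it suffices to control things after passing into $\Sh_\cS(M)$ where everything is weakly constructible. Restriction $(-)|_B$ is exact (a $*$-pullback along an open embedding) hence preserves all limits and colimits. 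The composite $p_* h^!$ is the tricky one: the standard move, following Kashiwara–Schapira, is that because $f$ is chosen so that $\Gamma_{df}$ meets $T^*_\cS M$ transversally at $(x,\xi)$ and $B$ is a sufficiently small ball, the half-ball $B_{f \ge 0}$ is, up to the relevant stratified homotopy, \emph{conic/contractible} onto the "Morse data" at $(x,\xi)$, and $p_* h^!$ computes a local Morse group. Concretely, I would use that for a weakly $\cS$-constructible sheaf the costalk-type functional $p_* h^!(\cF|_B)$ agrees with the vanishing-cycle / Morse group of $\cF$ at $(x,\xi)$, which is computed by a \emph{finite} diagram (a fiber of a finite $\QQ$-linear complex of stalks), and finite limits of finite colimits of stalk functors manifestly preserve limits and colimits. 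The cleanest way to phrase this: after possibly shrinking $B$, the pair $(B, B_{f\ge 0})$ is homeomorphic over the stratification to a product of a contractible space with the "local Morse pair", so $p_* h^!$ is, on $\Sh_\cS$, naturally isomorphic to a shifted stalk functor $i_x^* (-)$ composed with a shift by the Morse index — and stalk functors are exact, preserving all limits and colimits.

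The step I expect to be the main obstacle is making precise and rigorous the reduction "$p_* h^!$ on $\Sh_\Lambda(M)$ (or $\Sh_\cS(M)$) is a shifted stalk functor." The subtlety is that $p_*$ out of the half-ball is a genuinely infinite limit (it's pushforward to a point, i.e. global sections with compact-type boundary condition), and preservation of colimits is exactly what fails for general sheaves; one must genuinely use the singular support/constructibility hypothesis. I would handle this via the微-local Morse lemma of Kashiwara–Schapira (\cite[Ch.~5, especially around Prop.~5.4.8 and §8.3]{KS}): transversality of $\Gamma_{df}$ with $T^*_\cS M$ guarantees $h^!(\cF|_B)$ is supported (up to the relevant quasi-isomorphism) at the single point $x$ with stalk the Morse group, so $p_* h^!(\cF|_B) \simeq m_f(\cF)$ is literally $\cF_x[\,\cdot\,]$-type data computed on a finite stratified model, hence exact. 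An alternative, if one wants to avoid invoking the Morse lemma directly, is to argue that $m_f$ factors through the category of perverse-type microlocal sheaves along $\Lambda$ near $(x,\xi)$, which is equivalent to (a product of copies of) $\Vect_\QQ$, and the microlocalization functor and this equivalence are both limit- and colimit-preserving; then corepresentability and compactness of $\cF_f$ are immediate. I would likely present the first route as the main line and remark on the second.

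Finally, having produced $\cF_f$ corepresenting $m_f$, I would note for later use (this is presumably where the lemma is headed) that as $(x,\xi)$ ranges over smooth points of $\Lambda$ the objects $\cF_f$ compactly generate $\Sh_\Lambda(M)$: an object $\cG$ with $\Hom(\cF_f, \cG) \simeq m_f(\cG) \simeq 0$ for all $f$ has vanishing Morse groups at every smooth point of $\Lambda$, hence (again by \cite[§8.3]{KS}, since $\Lambda$ is the full singular support stratification's Lagrangian) is zero. But that is the content of the next lemma, so for the statement at hand I would stop at constructing $\cF_f$ and recording its compactness.
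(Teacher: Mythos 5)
The crux of the lemma is the preservation of \emph{limits} (infinite products), and this is exactly where your main line breaks. The identification ``$m_f$ is a shifted stalk functor $i_x^*(-)$'' is false in general: the Morse group at a smooth point $(x,\xi)$ of $\Lambda$ is the (tangentially shifted) \emph{normal} Morse datum of $\cF$ at the covector $\xi$, not the stalk at $x$. For instance, if $\cF = j_!\QQ_U$ for $U = \{g > 0\}$ an open half-space and $(x,\xi)$ a suitable smooth point of the conormal to the boundary, then $\cF_x = 0$ while $m_f(\cF) \neq 0$; so no shift of $i_x^*$ can compute $m_f$. Likewise the assertion that transversality forces $h^!(\cF|_B)$ to be ``supported at the single point $x$'' is not correct: $h^!(\cF|_B) = \Gamma_{B_{f\ge 0}}(\cF|_B)$ has stalk $\cF_y$ at any $y$ with $f(y)>0$. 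Moreover, even granting some stalk-like description, the statement ``stalk functors preserve all limits and colimits'' is false for sheaf categories: a stalk is a filtered colimit of sections over shrinking neighborhoods and does not commute with infinite products in $\Sh(M)$. It does commute with products on $\Sh_\cS(M)$, but only because for weakly constructible sheaves the stalk agrees with sections over a \emph{fixed} small ball -- and that stabilization is precisely the nontrivial input your argument needs to supply, not a fact you may quote for free. So as written, the half of the lemma that has actual content (limit preservation) rests on claims that are either false or beg the question.

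The paper's proof isolates exactly this point in a cleaner way: writing $j\colon B_{f<0}\hookrightarrow B$, one has $m_f(\cF)[-1] \simeq q_*\,\textup{Cone}(\cF|_B \to j_*j^*(\cF|_B))$, and for $B$ sufficiently small this agrees with $i^*\,\textup{Cone}(\cF|_B \to j_*j^*(\cF|_B))$ (the standard $q_*\simeq i^*$ comparison for weakly constructible sheaves on small balls). Open restriction and cones preserve all limits and colimits; then the $q_*$ expression gives limit preservation and the $i^*$ expression gives colimit preservation. Your proposal gestures at the right ingredients (non-characteristic deformation, finiteness from constructibility) but never produces the two-sided comparison that makes both halves simultaneously available. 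One further small slip: corepresentability does not follow from the existence of a right adjoint to $m_f$ (which is what colimit preservation gives); it follows from limit preservation plus accessibility via the adjoint functor theorem -- take the left adjoint of $m_f$ evaluated on $\QQ$ -- and compactness of the corepresenting object is then equivalent to $m_f$ preserving filtered colimits.
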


\begin{proof}
We can express the Morse functional  in terms of the cone of the natural restriction map
\begin{equation*}
\xymatrix{
 m_f(\cF) [-1]\simeq q_* \textup{Cone}(\cF|_B \ar[r] & \cF|_{B_{f<0}})  \simeq i^* \textup{Cone}(\cF|_B \ar[r] & \cF|_{B_{f<0}}) 
}
\end{equation*}
Here $q:B \to pt$ is the map to the point and $i:x\to B$ is the inclusion of the point; the isomorphism $q_* \simeq i^*$ is standard for $B$ sufficiently small. 
Finally, restriction along open inclusions preserves limits and colimits; cones also preserve    limits and colimits since we are in a stable category; and $q_*$ preserves limits and $i^*$ preserves colimits.
\end{proof}

\begin{remark}
Since $m_f$ only depends on $(x, \xi)\in T^*_\cS M$ up to tensoring with a graded line, the corepresenting object $\cF_f$ likewise only depends on $(x, \xi)\in T^*_\cS M$ up to tensoring with a graded line.
\end{remark}

\begin{lemma}\label{lem:comp gens}
For each irreducible component $\Lambda_{\a}$ of $\L$, choose a 
smooth point $(x_{\a}, \xi_{\a})$ of $\L_{i}$
and
an $(x_{\a}, \xi_{\a})$-based Morse  functional $m_{f_{\a}}:\Sh_\Lambda(M)\to \Sh(pt)$.

 The corepresenting objects $\cF_{f_{\a}} \in \Sh_\Lambda(M)$
compactly generate
$\Sh_\Lambda(M)$, and hence $ \Sh_\Lambda(M)$ is dualizable.

\end{lemma}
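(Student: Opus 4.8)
\textbf{Proof plan for Lemma \ref{lem:comp gens}.}

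The plan is to show that $\{\cF_{f_\a}\}$ both lie in $\Sh_\Lambda(M)$ (done in Lemma \ref{lem:comp objs}) and jointly detect vanishing of objects of $\Sh_\Lambda(M)$; compact generation by a detecting family is then formal, and dualizability follows since a compactly generated stable category is dualizable. The one real content is the detection statement: if $\cF\in\Sh_\Lambda(M)$ satisfies $\Hom(\cF_{f_\a},\cF)\simeq 0$ for all $\a$, then $\cF\simeq 0$. I would deduce this from the Morse-theoretic characterization of singular support recalled in \S\ref{ss:A mfd}, together with a propagation argument along the strata of the $\mu$-stratification $\cS$.

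First I would set up the bookkeeping: fix the $\mu$-stratification $\cS=\{S_\a\}$ with $\Lambda\subset T^*_\cS M$, and observe that $\Sh_\Lambda(M)\subset\Sh_\cS(M)$, so every $\cF$ in question is $\cS$-weakly constructible. The key input is that for a weakly constructible sheaf, $\ssupp(\cF)$ is an $\RR_{>0}$-conic Lagrangian contained in $T^*_\cS M$, and since $\ssupp(\cF)\subset\Lambda$, it is a union of irreducible components of $\Lambda$ (each component being the closure of a conormal bundle $T^*_{S_\a}M$ over an open dense subset of some stratum). The corepresenting object $\cF_{f_\a}$ satisfies: for any $\cG\in\Sh_\Lambda(M)$, $\Hom(\cF_{f_\a},\cG)\simeq m_{f_\a}(\cG)$, the $(x_\a,\xi_\a)$-based Morse functional. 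So the hypothesis says precisely that $m_{f_\a}(\cF)\simeq 0$ for every component $\Lambda_\a$ of $\Lambda$.

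Next I would run the propagation/induction. Suppose $\cF\not\simeq 0$; let $S_\b$ be a stratum, minimal in the closure order among those with $\cF|_{S_\b}\not\simeq 0$. Minimality forces $\cF$ to be \emph{locally constant} in a neighborhood of a generic point $x$ of $S_\b$ when restricted to $\bigcup\{S_\g : S_\b\subset\overline{S_\g}\}$, except possibly for a jump transverse to $S_\b$; concretely, at a generic point $x\in S_\b$ the sheaf $\cF$ is non-characteristic for all covectors except those conormal to $S_\b$, and the (co)stalk variation of $\cF$ across $S_\b$ is governed by a Morse functional based at some $(x,\xi)$ with $\xi\in T^*_{S_\b}M\setminus 0$. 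Because $\ssupp(\cF)\subset\Lambda$, any such $(x,\xi)$ with nonzero Morse functional must lie on $\Lambda$, hence (taking $x$ generic in $S_\b$) on the component $\Lambda_\a$ whose generic stratum is $S_\b$. Since the Morse functional $m_f$ depends on $(x,\xi)\in T^*_\cS M$ only up to tensoring by a graded line and up to continuous deformation within a connected component of the smooth locus of $\Lambda$, the vanishing $m_{f_\a}(\cF)\simeq 0$ at the chosen base point $(x_\a,\xi_\a)$ propagates to $m_f(\cF)\simeq 0$ at $(x,\xi)$. This forces $\cF$ to have no jump across $S_\b$, i.e.\ $\cF$ is locally constant along $S_\b$ relative to the smaller strata; combined with minimality of $S_\b$ (so the costalks from smaller strata vanish) this gives $\cF|_{S_\b}\simeq 0$, a contradiction. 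Hence $\cF\simeq 0$.

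Finally, with detection in hand: the $\cF_{f_\a}$ are compact (Lemma \ref{lem:comp objs}), they lie in $\Sh_\Lambda(M)$, $\Sh_\Lambda(M)$ is presentable (it is a full subcategory of $\Sh(M)$ closed under limits and colimits), and any object right-orthogonal to all $\cF_{f_\a}$ is zero; therefore $\{\cF_{f_\a}\}$ compactly generate $\Sh_\Lambda(M)$. A compactly generated stable $\QQ$-linear category is dualizable, giving the last assertion. The main obstacle I anticipate is making the propagation step fully rigorous — namely justifying that vanishing of the Morse functional at one chosen smooth point of a component $\Lambda_\a$ implies vanishing at every point of that component (using connectedness of the smooth locus of $\Lambda_\a$, $\RR_{>0}$-conicity, and the deformation invariance of Morse functionals from \cite{KS}), and that this indeed controls the restriction of $\cF$ to the corresponding stratum; the rest is standard stratified sheaf theory and abstract nonsense about compactly generated categories.
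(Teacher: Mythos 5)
Your overall skeleton is the same as the paper's: compactness of the corepresenting objects is Lemma \ref{lem:comp objs}, compact generation reduces to showing the family has no right orthogonal, and dualizability is formal. The paper disposes of the detection step in one sentence (``a sheaf in the right orthogonal has empty singular support so is zero''), so the only real content of your proposal is the justification of that sentence -- and as written it does not work. Your stratum induction ends with: ``no jump across $S_\beta$, combined with minimality (vanishing on smaller strata), gives $\cF|_{S_\beta}\simeq 0$.'' That implication is false: for the constant sheaf on $M=\RR$ stratified by $\{0\}$ and its complement (with $\Lambda=0_M\cup T^*_0M$) there is no jump across the minimal stratum $\{0\}$ and there are no smaller strata, yet $\cF|_{\{0\}}\neq 0$ -- the stalk at a point of $S_\beta$ is controlled by the \emph{larger} strata, about which minimality says nothing. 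Relatedly, your argument only ever invokes Morse functionals at nonzero covectors conormal to $S_\beta$; the functionals attached to zero-section components of $\Lambda$, which are stalk functionals and are exactly what kill local systems, are never used, so detection genuinely fails for such components. A smaller inaccuracy: $\ssupp(\cF)\subset\Lambda$ need not be a union of whole irreducible components of $\Lambda$ (e.g.\ $j_!$ of a constant sheaf on $(0,\infty)$ has singular support $0_{[0,\infty)}$ together with half a conormal line), so you cannot pass from ``$\ssupp(\cF)$ contains $\Lambda_\alpha$'' to the chosen point directly.

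The robust way to finish -- and what the paper's one-liner is implicitly resting on -- is the purity of singular support rather than a stratum-by-stratum induction: if $\cF\neq 0$ then $\ssupp(\cF)$ is a nonempty closed conic subanalytic subset that is isotropic (weak constructibility) and coisotropic (the involutivity theorem of \cite{KS}), hence Lagrangian of pure dimension $\dim M$; being contained in $\Lambda$, it must contain a nonempty open subset of the smooth locus of some component $\Lambda_\alpha$, and at such points the transverse Morse functional is nonzero. Local constancy of microstalks along the smooth locus -- the propagation you correctly identify, which does require connectedness of $\Lambda_\alpha^{\mathit{sm}}$ or a chosen point in each connected component, a point the paper also glosses over -- then transports nonvanishing to $(x_\alpha,\xi_\alpha)$, contradicting right-orthogonality. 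This treats conormal-type and zero-section-type components uniformly and avoids the minimal-stratum bookkeeping altogether; with that replacement your outline matches the intended proof.
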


\begin{proof}
By Lemma~\ref{lem:comp objs}, it remains to see the functors have no right orthogonal. But a sheaf in the right orthogonal has empty singular support so is zero.
\end{proof}

\subsubsection{Products}

For $i = 1, 2$, let $M_i$ be a manifold, and $\Lambda_i\subset T^*M_i$ a closed $\RR_{>0}$-conic Lagrangian.

\begin{lemma}\label{lem:ext tens}
External tensor product is an equivalence
\begin{equation*}
\xymatrix{
\Sh_{\Lambda_1}(M_1) \otimes \Sh_{\Lambda_2}(M_2) \ar[r]^-\sim & 
\Sh_{\Lambda_1\times \Lambda_2}(M_1\times M_2)
}
\end{equation*}
\end{lemma}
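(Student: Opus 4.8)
The statement asserts that external tensor product gives an equivalence between $\Sh_{\Lambda_1}(M_1) \otimes \Sh_{\Lambda_2}(M_2)$ and $\Sh_{\Lambda_1\times\Lambda_2}(M_1\times M_2)$. The plan is to reduce this to the known case of unconstrained (weakly constructible with respect to fixed stratifications) sheaves, where the Künneth equivalence $\Sh_{\cS_1}(M_1)\otimes\Sh_{\cS_2}(M_2)\isom \Sh_{\cS_1\times\cS_2}(M_1\times M_2)$ is standard, and then to check that the singular support conditions match up on both sides. First I would choose $\mu$-stratifications $\cS_i$ of $M_i$ with connected strata so that $\Lambda_i\subset T^*_{\cS_i}M_i$, using \cite[Corollary 8.3.22]{KS}; then $\cS_1\times\cS_2$ is a stratification of $M_1\times M_2$ (it is again a $\mu$-stratification, or at least weakly constructible sheaves for it form the tensor product category, which is all we need), and $\Lambda_1\times\Lambda_2 \subset T^*_{\cS_1\times\cS_2}(M_1\times M_2)$.

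\textbf{Key steps.} (1) Recall/invoke the Künneth equivalence $\boxtimes:\Sh_{\cS_1}(M_1)\otimes\Sh_{\cS_2}(M_2)\isom\Sh_{\cS_1\times\cS_2}(M_1\times M_2)$; since $\Sh_{\cS_i}(M_i)$ are compactly generated (by standard corepresentatives/costalks of the strata) and dualizable, the tensor product on the left is the Lurie tensor product of presentable stable categories and $\boxtimes$ is exact in each variable, preserves compact objects, and is essentially surjective with these properties, giving the equivalence. (2) Show that under $\boxtimes$, the full subcategory $\Sh_{\Lambda_1}(M_1)\otimes\Sh_{\Lambda_2}(M_2)$ — defined as the full presentable subcategory generated by $\cF_1\boxtimes\cF_2$ with $\cF_i\in\Sh_{\Lambda_i}(M_i)$ — corresponds exactly to $\Sh_{\Lambda_1\times\Lambda_2}(M_1\times M_2)$. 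For the inclusion ``$\subseteq$'': if $\cF_i$ has singular support in $\Lambda_i$, then $SS(\cF_1\boxtimes\cF_2) = SS(\cF_1)\times SS(\cF_2)\subset\Lambda_1\times\Lambda_2$ by \cite[Exercise V.7]{KS} (the singular support of an external product is the product of the singular supports), and the right-hand side is closed under colimits, so the generated subcategory lands inside. For the inclusion ``$\supseteq$'': I would use the Morse-theoretic characterization of $\Sh_{\Lambda_1\times\Lambda_2}$ recalled in \S\ref{sss:Lag mfd}, together with the compact generators of Lemma~\ref{lem:comp gens}: it suffices to show the compact objects $\cF_{f_\alpha}\in\Sh_{\Lambda_1\times\Lambda_2}(M_1\times M_2)$ corepresenting Morse functionals at smooth points of the components of $\Lambda_1\times\Lambda_2$ lie in the essential image of $\boxtimes$ restricted to $\Sh_{\Lambda_1}(M_1)\otimes\Sh_{\Lambda_2}(M_2)$. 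Since every component of $\Lambda_1\times\Lambda_2$ is a product of a component of $\Lambda_1$ with one of $\Lambda_2$, a smooth point $(x,\xi)=((x_1,\xi_1),(x_2,\xi_2))$ can be taken with each $(x_i,\xi_i)$ smooth in the corresponding component of $\Lambda_i$, and one can choose the Morse test function on $M_1\times M_2$ of product-compatible form $f(y_1,y_2)=f_1(y_1)+f_2(y_2)$. Then the half-space $B_{f\ge 0}$ is not literally a product, but a local computation with the associated graded (or a straightforward Künneth/base-change argument for $p_* h^!$ of a box product) identifies $\cF_{f}$ with $\cF_{f_1}\boxtimes\cF_{f_2}$ up to a shift. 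Hence all compact generators of the target lie in the image, and since $\boxtimes$ is fully faithful and both sides are compactly generated, $\Sh_{\Lambda_1}(M_1)\otimes\Sh_{\Lambda_2}(M_2)\isom\Sh_{\Lambda_1\times\Lambda_2}(M_1\times M_2)$.

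\textbf{Main obstacle.} The step I expect to require the most care is matching the compact corepresenting objects, i.e. identifying $\cF_f \simeq \cF_{f_1}\boxtimes \cF_{f_2}$ (up to shift) for a product-type Morse test function. The subtlety is that $B_{f\ge0}$ for $f=f_1+f_2$ is a join-type region rather than a product $B_{1,f_1\ge0}\times B_{2,f_2\ge0}$, so one cannot directly apply Künneth to $p_*h^!$. The cleanest route is probably to argue at the level of microlocal stalks: the functor $m_f$ on $\Sh_{\Lambda_1\times\Lambda_2}$ only depends on $(x,\xi)$ up to a graded line (as recalled after the definition of $m_f$), and the microstalk at a product covector of a box product is the (shifted) tensor of the microstalks, so $m_f(\cF_1\boxtimes\cF_2)\simeq m_{f_1}(\cF_1)\otimes m_{f_2}(\cF_2)[\,\text{shift}\,]$; combined with the fact that microstalks at smooth points of the components detect objects of $\Sh_{\Lambda_1\times\Lambda_2}$, this pins down $\cF_f$ as claimed. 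An alternative, slightly heavier approach avoids generators altogether: show directly that $\boxtimes$ is fully faithful on $\Sh_{\Lambda_1}\otimes\Sh_{\Lambda_2}$ (immediate from full faithfulness of the unconstrained Künneth and the fact that $\Sh_{\Lambda_i}\subset\Sh_{\cS_i}$ is a full subcategory closed under colimits), and that its essential image is closed under colimits and contains a generating set of $\Sh_{\Lambda_1\times\Lambda_2}$; the generation claim again reduces to the microlocal computation above. Either way, the geometric input $SS(\cF_1\boxtimes\cF_2)=SS(\cF_1)\times SS(\cF_2)$ of \cite{KS} is doing the real work, and the rest is formal manipulation of compactly generated stable categories.
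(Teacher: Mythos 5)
Your overall strategy is essentially the paper's: establish a K\"unneth equivalence for ambient, unconstrained sheaf categories, deduce full faithfulness of the induced functor on the singular-support-constrained subcategories, and obtain essential surjectivity from the compact generators of Lemma~\ref{lem:comp gens}, by identifying the Morse corepresentative at a smooth point of a product component with the box product of the factor corepresentatives. The only cosmetic difference in the first step is that the paper invokes Lurie's K\"unneth theorem for \emph{all} sheaves, $\Sh(M_1)\otimes\Sh(M_2)\simeq\Sh(M_1\times M_2)$, rather than the stratified version, which spares it the question of whether $\cS_1\times\cS_2$ is again an admissible stratification. The step you single out as the main obstacle --- that $\cF_{f_1}\boxtimes\cF_{f_2}$ corepresents the Morse functional of $f_1\boxplus f_2$ --- is precisely what the paper asserts in a single sentence, so you are not missing anything the paper supplies there; note only that your microstalk computation $m_f(\cF_1\boxtimes\cF_2)\simeq m_{f_1}(\cF_1)\otimes m_{f_2}(\cF_2)$ evaluates $m_f$ on box products, whereas the generation argument needs $\Hom(\cF_{f_1}\boxtimes\cF_{f_2},\cG)\simeq m_f(\cG)$ for \emph{every} $\cG\in\Sh_{\Lambda_1\times\Lambda_2}(M_1\times M_2)$, i.e.\ genuine corepresentability, which takes a further word.

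The one genuine gap is in how you treat full faithfulness, i.e.\ the comparison of the abstract Lurie tensor product $\Sh_{\Lambda_1}(M_1)\otimes\Sh_{\Lambda_2}(M_2)$ with a subcategory of sheaves on the product. In your main argument you \emph{define} the left-hand side to be the full subcategory generated by box products; but the lemma concerns the abstract tensor product, and identifying it with its image is exactly the full-faithfulness statement, so this is circular. In your alternative you assert that full faithfulness is ``immediate'' from full faithfulness of the unconstrained K\"unneth together with $\Sh_{\Lambda_i}\subset\Sh_{\cS_i}$ being full and colimit-closed; that inference is not valid as stated, since tensoring a fully faithful colimit-preserving inclusion of presentable categories with another presentable category need not remain fully faithful in general. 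The paper's fix is dualizability: $\Sh_{\Lambda_1}(M_1)$ is dualizable because it is compactly generated (Lemma~\ref{lem:comp gens}), so the map $\Sh_{\Lambda_1}(M_1)\otimes\Sh_{\Lambda_2}(M_2)\to\Sh_{\Lambda_1}(M_1)\otimes\Sh(M_2)$ can be rewritten as post-composition in $\Fun(\Sh_{\Lambda_1}(M_1)^\vee,-)$ and is therefore fully faithful, and likewise $\Sh_{\Lambda_1}(M_1)\otimes\Sh(M_2)\to\Sh(M_1)\otimes\Sh(M_2)$ using dualizability of $\Sh(M_2)$; both sides then embed compatibly into $\Sh(M_1)\otimes\Sh(M_2)$, so the comparison functor is fully faithful. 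You have all the inputs for this repair in hand (you cite compact generation and dualizability), but as written the justification is missing.
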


\begin{proof} 
By  results of Lurie~\cite[Theorem 7.3.3.9, Prop. 7.3.1.11]{Lhtt} and \cite[Prop. 4.8.1.17]{Lha},
  external tensor product is an equivalence 
\begin{equation*}
\xymatrix{
\t: \Sh_{}(M_1) \otimes \Sh_{}(M_2) \ar[r]^-\sim & 
\Sh_{}(M_1\times M_2)
}
\end{equation*}
We  also use the elementary fact that $\Sh_{}(M_i)$ is dualizable (see \cite[Remark A.2.4]{GKRV}).
 
Since $\Sh_{\Lambda_2}(M_2) \to \Sh(M_2)$ is fully faithful and $\Sh_{\Lambda_1}(M_1)$ is dualizable, the natural map
\begin{equation*}
\xymatrix{
\Sh_{\Lambda_1}(M_1) \otimes \Sh_{\Lambda_2}(M_2) \ar[r] & 
\Sh_{\Lambda_1}(M_1) \otimes \Sh(M_2)
}
\end{equation*}
is fully faithful.
 Indeed, we can rewrite the above map in the form
\begin{equation*}
\xymatrix{
\Fun( \Sh_{\Lambda_1}(M_1)^\vee, \Sh_{\Lambda_2}(M_2)) 
\ar[r] &
\Fun(  \Sh_{\Lambda_1}(M_1)^\vee, \Sh(M_2))
}
\end{equation*}
Similarly, since $\Sh_{\Lambda_1}(M_1) \to \Sh(M_1)$ is fully faithful and $\Sh_{}(M_2)$ is dualizable, the natural map
\begin{equation*}
\xymatrix{
\Sh_{\Lambda_1}(M_1) \otimes \Sh(M_2)
\ar[r] &
\Sh_{}(M_1) \otimes \Sh(M_2)
}
\end{equation*}
is fully faithful.

Thus both sides of the map $\t$ of the lemma admit fully faithful maps to $\Sh_{}(M_1) \otimes \Sh(M_2)$, and $\t$ is compatible with these  fully faithful embeddings, hence $\t$ itself is fully faithful.  It remains to see that it is essentially surjective. Let $\L_{1,\a}$ (resp. $\L_{2,\b}$) be irreducible components of $\L_{1}$ (resp. $\L_{2}$). Let $(x_{\a},\xi_{\a}, f_{\a}, \cF_{f_{\a}})$ and $(x_{\b},\xi_{\b}, f_{\b}, \cF_{f_{\b}})$ by the corresponding Morse data  and corepresenting objects of the Morse functionals as in Lemma~\ref{lem:comp gens}. Then $\cF_{f_{\a}}\bt \cF_{f_{\b}}$ corepresents the Morse functional attached to the function $(x_{1},x_{2})\mapsto f_{\a}(x_{1})+f_{\b}(x_{2})$ based at $((x_{\a},x_{\b}), (\xi_{\a},\xi_{\b}))$, a smooth point of the component $\L_{1,\a}\times \L_{2,\b}\subset\L_{1}\times \L_{2}$. By Lemma~\ref{lem:comp gens}, $\Sh_{\Lambda_1\times \Lambda_2}(M_1\times M_2)$ is compactly generated by these $\cF_{\a}\bt\cF_{\b}$, which are in the image of $\t$. This implies that $\t$ is essentially surjective. 
\end{proof}

\sss{Left adjoint to $*$-restriction}
Suppose $M$ is a manifold and 
$\Lambda\subset T^*M$ is a closed conic Lagrangian.

Suppose $i:M_0\to M$ is a closed submanifold and 
$\Lambda_0\subset T^*M_0$ is a closed conic Lagrangian.

Assume the restriction $i^*:\Sh(M) \to \Sh(M_0)$ takes $\Sh_{\Lambda}(M)$ to $\Sh_{\Lambda_0}(M_0)$.

\begin{lemma}\label{lem:rest adj}
The functor $i^*:\Sh_{\Lambda}(M)\to \Sh_{\Lambda_0}(M_0)$ preserves products, and hence admits a left adjoint. 
\end{lemma}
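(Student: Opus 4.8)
The plan is to verify the hypotheses of the general principle that a limit-preserving functor between presentable stable categories admits a left adjoint, and to do this as concretely as possible using the structural results already in hand. First I would observe that by Lemma~\ref{lem:comp gens} both $\Sh_{\Lambda}(M)$ and $\Sh_{\Lambda_0}(M_0)$ are compactly generated, hence in particular presentable stable categories, and the fully faithful inclusions into $\Sh(M)$ and $\Sh(M_0)$ preserve limits and colimits. Thus it suffices to prove the stated claim that $i^*:\Sh_\Lambda(M)\to \Sh_{\Lambda_0}(M_0)$ preserves all (small) limits; the existence of the left adjoint then follows from the adjoint functor theorem for presentable categories.

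To check that $i^*$ preserves limits, I would argue as follows. The restriction functor $i^*:\Sh(M)\to \Sh(M_0)$ does not preserve limits in general, but it does preserve \emph{finite} limits (it is exact) and arbitrary \emph{products} is the only extra thing needed to conclude, since a stable category has all limits as soon as it has finite limits and products. So the crux is: does $i^*$ commute with products? For a closed embedding $i:M_0\hookrightarrow M$, the functor $i^*$ differs from $i^!$ by a shift and twist only on the nose in special situations, but more usefully one can factor $i^*$ through a tubular neighborhood: choosing a tubular neighborhood $M_0\subset U\subset M$ with projection $p:U\to M_0$, restriction to $U$ preserves all limits and colimits (being restriction along an open inclusion), and then $i^*$ on $\Sh(U)$ is computed as the composite of $p_*$ with evaluation; actually the cleaner statement is that on the subcategory of sheaves weakly constructible with respect to a fixed stratification adapted to $\Lambda$, the functor $i^*$ agrees with a shifted $i^!$ up to the usual duality, and $i^!$ manifestly preserves limits. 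I would make this precise by invoking a $\mu$-stratification $\cS$ of $M$ with $\Lambda\subset T^*_\cS M$ as in \S\ref{sss:Lag mfd}, refined so that $M_0$ is a union of strata; then on $\Sh_\cS(M)$ the operations $i^*$ and $i^!$ are intertwined by Verdier duality together with duality on $M_0$, and since we are assuming $\Lambda$ is conic (and for the duality argument we may pass to the $\RR^\times$-saturation, which does not change the singular-support condition up to the antipodal map), the functor $i^!:\Sh_\Lambda(M)\to \Sh_{\Lambda_0^a}(M_0)$ evidently preserves products because $i^!$ is a right adjoint on the ambient categories $\Sh(M)\to\Sh(M_0)$ and the inclusions of the singular-support subcategories preserve limits.

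An alternative and perhaps more robust route, which I would present as the main line of argument to avoid fussing over the precise interplay of $i^*$ and $i^!$: use Lemma~\ref{lem:comp gens} to identify $\Sh_{\Lambda_0}(M_0)$ with modules over a small category, i.e. present it as a localization, and then note that by Lemma~\ref{lem:rest adj st} of Appendix~B (invoked in the proof of Proposition~\ref{p:sh functor from C}(3)) precisely this kind of statement --- that $*$-pullback along a closed embedding, restricted to singular-support subcategories cut out inside the ambient sheaf categories, admits a left adjoint --- is already established in the stacky generality needed. So the manifold statement here is a special case, and the clean proof is simply: the inclusions $\Sh_\Lambda(M)\hookrightarrow\Sh(M)$ and $\Sh_{\Lambda_0}(M_0)\hookrightarrow\Sh(M_0)$ preserve limits (since they preserve all limits and colimits, being full subcategories closed under both), $i^*$ on the ambient categories preserves finite limits, and the composite $\Sh_\Lambda(M)\to\Sh(M_0)$ lands in $\Sh_{\Lambda_0}(M_0)$ by hypothesis; it then remains only to check products, for which one reduces to the ambient statement that $i^*$ on weakly constructible sheaves commutes with products --- true because on any fixed finite-type piece $i^*$ is computed by a fixed finite limit (a cone of restriction maps, as in the proof of Lemma~\ref{lem:comp objs}) and finite limits commute with products.

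The main obstacle I anticipate is the bookkeeping around \emph{which} category of sheaves we are in: $\Sh(M)$ here denotes \emph{all} complexes of sheaves, not constructible ones, so one cannot simply cite standard six-functor finiteness statements, and one must be a little careful that $i^*$ restricted to the $\Lambda$-microsupport subcategory genuinely preserves products rather than merely filtered colimits. I expect this is handled exactly as in Lemma~\ref{lem:comp objs}: working locally, $i^*\cF$ is the stalk of a cone of restrictions to opens, both operations commuting with limits, so the product-preservation is immediate once one localizes the question. Everything else --- presentability, stability, the adjoint functor theorem --- is formal.
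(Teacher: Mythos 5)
Your overall framing agrees with the paper: both reduce the lemma to showing that $i^*$ preserves products (presentability/compact generation then gives the left adjoint formally). But the argument you actually lean on has two genuine problems. First, the route you call the "main line" is circular: Lemma~\ref{lem:rest adj st} of Appendix A is proved in the paper precisely by reducing to the present manifold statement — its proof invokes Lemma~\ref{lem:rest adj} to know that $i^*_L$ preserves products — so you cannot cite it to prove Lemma~\ref{lem:rest adj}. Second, the duality route does not work in this setting: $\Sh_\cS(M)$ consists of weakly constructible complexes with no finiteness condition on stalks, so Verdier duality, while it preserves these categories, is not an anti-equivalence (biduality fails), and knowing that $i^!$ preserves products does not transfer to $i^*$ via $\mathbb{D}$; likewise the tubular-neighborhood identification of $i^*$ with a pushforward is not available for arbitrary sheaves. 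Your remark that restriction to an open preserves limits is fine, but it does not by itself reach $i^*$ to the closed submanifold.

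The salvageable kernel is your closing observation, but as written it asserts that taking stalks commutes with limits, which is false in general and is exactly the point at issue. What is true, and what a correct elementary argument would use, is that after fixing a stratification $\cS$ adapted to $\Lambda$, $\Lambda_0$ and with $M_0$ a union of strata, products of $\cS$-weakly constructible sheaves are again $\cS$-weakly constructible (the inclusion $\Sh_\cS(M)\subset\Sh(M)$ preserves limits), and the stalk of an $\cS$-weakly constructible sheaf is computed by sections over a sufficiently small ball (the $q_*\simeq i^*$ identification appearing in the proof of Lemma~\ref{lem:comp objs}); since sections over an open commute with products, one can then check the comparison map $i^*\prod_a\cF_a\to\prod_a i^*\cF_a$ stalkwise. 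The paper packages the same idea differently: it first reduces to $i^*:\Sh_\cS(M)\to\Sh_{\cS_0}(M_0)$ using that the inclusions $\Sh_\Lambda\subset\Sh_\cS$ preserve products, and then identifies $\Sh_\cS(M)$ and $\Sh_{\cS_0}(M_0)$ with modules over exit-path categories, where products are computed pointwise and $i^*$ is restriction along a subcategory, hence manifestly preserves products. So the intended mechanism is present in your last paragraph in embryonic form, but the proof as proposed rests on a circular citation and a broken duality argument, and the crucial stalkwise step is asserted rather than justified.
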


\begin{proof}
Choose a stratification $\cS$ of $M$ so that (i) $M_0$ is a union of strata $\cS_0 \subset \cS$, (ii) $\Lambda\subset T^*_\cS M$, and (iii) $\Lambda_0\subset T^*_{\cS_0} M_0$. Then we have a commutative diagram
\begin{equation*}
\xymatrix{
\ar[d]_-e\Sh_{\Lambda}(M)\ar[r]^-{i^*} & \Sh_{\Lambda_0}(M_0) \ar[d]_-{e_0}\\
\Sh_{\cS}(M)\ar[r]^-{i^*} & \Sh_{\cS_0}(M_0)\\
}
\end{equation*}
where the vertical arrows are the usual fully faithful inclusions. Moreover, the vertical maps preserve products since products respect the given conditions. 

Then for any set $A$ and $\cF_a \in \Sh_{\Lambda}(M)$ for each  $a \in A$,  we seek to show the natural map $i^*\prod_{a\in A} \cF_a\to \prod_{a\in A} i^*\cF_a$ is an equivalence. It suffices to show its composition $e_0 i^*\prod_{a\in A} \cF_a\to e_0 \prod_{a\in A} i^*\cF_a$ with $e_0$ is an equivalence. This we can rewrite as  
$e_0 i^*\prod_{a\in A} \cF_a \simeq i^* e \prod_{a\in A} \cF_a \simeq i^* \prod_{a\in A} e \cF_a \to  \prod_{a\in A} i^* e\cF_a \simeq 
 \prod_{a\in A} e_0 i^*\cF_a \simeq e_0 \prod_{a\in A}  i^*\cF_a  $ using that $e, e_0$ preserve products.

Now it suffices to show $i^*:\Sh_{\cS}(M)\to \Sh_{\cS_0}(M_0)$ preserves products. For this, we can identify  $\Sh_{\cS}(M)$ and $\Sh_{\cS_0}(M_0)$ with the category of  modules over the respective exit path categories $E_{\cS}(M)$ and  $E_{\cS_0}(M_0)$ (see~\cite[Theorem B.9]{J}). For such modules, products are calculated point-wise. Moreover, for such modules, $i^*$ is the forgetful functor of restriction to a subcategory. Thus it preserves products and we are done.
\end{proof}

%
%
%
%
%
%
%
%
%
%
%

\subsection{Relative real analytic spaces and Lagrangians}\label{ss:A real} 

We will need to consider stacks in the real subanalytic context in the main body of the paper. Instead of developing the general  theory, the following less intrinsic version suffices for this paper: we shall always consider a pair consisting of a complex algebraic stack and a ``relative real analytic space'' over it. Below we give more precise definitions, including the notion of Lagrangians for such stacks.

\sss{Real analytic spaces over stacks}\label{sss:real stacks}
Let $\Sta$ be the category of algebraic stacks locally of finite type over $\CC$ with schematic diagonals. For $X\in \Sta$, let $\Sch_{/X}$ be the category of pairs $(V,v)$ where $V$ is a scheme locally of finite type over $\CC$ and $v:V\to X$ is a map of stacks. Morphisms between  $(V,v)$ and $(V',v')$ in $\Sch_{/X}$ are maps  of schemes $\ph: V\to V'$ together with an isomorphism $v\simeq v'\c \ph$. Let  $\Sch^{sm}_{/X}\subset \Sch_{/X}$ be the (non-full) subcategory of pairs $(V,v)$ where $v$ is  smooth, and morphisms $\ph: (V,v)\to (V',v')$ are also required to be smooth.


Let $\frR$ be the category of real analytic spaces.

A {\em real analytic space $M$ over $X$} is a functor $M: \Sch_{/X}\to \frR$, written $(V,v)\mapsto M_{V,v}$, together with a natural transformation of functors $\xi_{V,v}: M_{V,v}\to V$ (map in $\frR$), such that
\begin{enumerate}
\item The functor $M$ commutes with coproducts (disjoint union).
\item For any map $\ph:  (V,v)\to (V',v')$ in $\Sch_{/X}$, the induced map $\ph_{M}: M_{V,v}\to M_{V',v'}$ and $\xi_{V,v}$ realizes $M_{V,v}$ as the fiber product $M_{V',v'}\times_{V'}V$.
\end{enumerate}
A real analytic space $M$ over $X$ is called {\em smooth} if $M_{V,v}$ is a smooth manifold for all $(V,v)\in\Sch^{sm}_{/X}$. 

A real analytic space $M$ over $X$ is of pure dimension $n$ if for any $(V,v)\in \Sch^{sm}_{/X}$ such that $v$ is of pure relative dimension $\dim_{\CC}(v)$, $\dim_{\RR}M_{V,v}=n+2\dim_{\CC}(v)$.

For $X\in\Sta$, let $\frR_{/X}$ be the category of real analytic spaces over $X$, with real analytic maps over $X$ as morphisms.

\begin{exam}[Global quotient]\label{ex:real quot} Let $M\in\frR_{/X}$. Let $G$ be an algebraic group acting on $X$, and $H\subset G(\CC)$ be a closed subgroup. It makes sense to talk about a $H$-action on $M$ compatible with the $G$-action on $X$: this means for each $(V,v)\in\Sch_{/X}$ with a compatible $G$-action, $M_{V,v}$ carries an action of $H$ making $\xi_{V,v}:M_{V,v}\to V$ equivariant under $H$, and the maps $\ph_{M}: M_{V,v}\to M_{V',v'}$ are $H$-equivariant for all $G$-equivariant morphisms $\ph$ in $\Sch_{/X}$. In such a situation, we can form the quotient $[M/H]\in\frR_{/[X/G]}$ as follows. Indeed, for $(V,v)\in \Sch_{/[X/G]}$, let $(U,u)\in\Sch_{/X}$ be the base change of $(V,v)$ to $X$ which carries a $G$-action making $U\to V$ a $G$-torsor. Define $[M/H]_{V,v}:=M_{U,u}/H$ (by definition $H$ acts on $M_{U,u}$).
\end{exam}

\begin{exam}[Underlying real analytic space]\label{ex:underlying}  Let $M\in\frR_{/X}$, and $\un M\in \frR$. We say that {\em $M$ has underlying space $\un M$} if for every $(V,v)\in \Sch_{/X}$ we are given a map of real analytic spaces $\om_{V,v}: M_{V,v}\to \un M$, compatible with morphisms in $\Sch_{/X}$, such that the following conditions hold:
\begin{enumerate}
\item For any $V,V'\in \Sch_{/X}$, the natural map $M_{V\times_{X}V', (v,v')}\to M_{V,v}\times_{\un M} M_{V',v'}$ is an isomorphism in $\frR$ (here we use $\om_{V,v}$ and $\om_{V',v'}$ to form the fiber product).
\item If $v:V\to X$ is smooth, $\om_{V,v}$ is submersive.
\item If $v:V\to X$ is surjective, so is $\om_{V,v}$.
\end{enumerate}
When $M\in\frR_{/X}$ has underlying space $\un M$, we should think of $M$ as ``$\un M$ equipped with a map to $X$''. This can be made precise but we do not do it here. 
\end{exam}

\sss{Change of bases}\label{sss:real bc}
For any map of stacks $f:X\to Y$, there is a base change functor $f^{\#}: \frR_{/Y}\to \frR_{/X}$ defined by the assignment $N\mapsto (\Sch_{/X}\ni(V,v)\mapsto N_{V, f\c v})$. When there is no confusion, we also use the notations
\begin{equation*}
N\times_{Y}X, \textup{ or } X\times_{Y}N
\end{equation*}
to denote $f^{\#}N$.

For a {\em schematic} map of stacks $f:X\to Y$, there is a forgetful functor $f_{\#}: \frR_{/X}\to \frR_{/X}$ defined by the assignment $M\mapsto (\Sch_{/Y}\ni (V,v)\mapsto M_{V\times_{Y}X, p_{X}})$.
 
\sss{The category $\frR_{\Sta}$}\label{sss:real st cat}
Let $\frR_{\Sta}$ be the category over $\Sta$ whose objects are pairs $(X,M)$ where $X\in\Sta$ and $M\in \frR_{/X}$, and a morphism $(f,f_{M}): (X,M)\to (Y,N)$ in $\frR_{\Sta}$ consists of a pair of maps $f:X\to Y$ and $f_{M}: M\to f^{\#}N$ (a morphism in $\frR_{/X}$). Concretely, for any $(U,u)\in\Sch_{/X}, (V,v)\in \Sch_{/Y}$ and $h:U\to V$ covering $f$, there is an induced map of real analytic spaces $g: M_{U,u}\to N_{V,v}$, functorial in $(U,u)$ and $(V,v)$.

Let $\frR^{\Sch}_{\Sta}$ be the category whose objects are pairs $(X,M)$ where $X\in\Sta$ and $M\in \frR_{/X}$, and a morphism $(f,f_{M}): (X,M)\to (Y,N)$ in $\frR^{\Sch}_{\Sta}$ consists of a {\em schematic} $f:X\to Y$ and $f_{M}: f_{\#}M\to N$ (as a morphism in $\frR_{/Y}$).  Note when $f$ is schematic, maps $f_{\#}M\to N$ in $\frR_{/Y}$ are in canonical bijection with maps $M\to f^{\#}N$ in $\frR_{/X}$. Therefore, $\frR^{\Sch}_{\Sta}$ is a subcategory of $\frR_{\Sta}$.

Let $P$ be a property of a morphism in $\frR$ that is local for submersions, i.e., $f:M\to N$ and $\wt M\to M$ and $\wt N\to N$ are surjective submersions, and $\wt f:\wt M\to \wt N$ covers $f$, then $f$ has property P if and only if $\wt f$ has property P.  Then we can define when a morphism $(f,f_{M}): (X,M)\to (Y,N)$ in $\frR_{\Sta}$ has property $P$: it has property P if for any $(U,u)\in \Sch^{sm}_{/X}$, $(V,v)\in \Sch^{sm}_{/Y}$ and $g:U\to V$ covering $f$, the induced map $g_{M}: M_{U,u}\to N_{U, f\c u} \to N_{V,v} $ as a morphism in $\frR$ has property P.  Using this we can define submersions and surjections in $\frR_{\Sta}$.

\begin{remark} It is possible to define more general morphisms $(X,M)\to (Y,N)$ for a pair of objects in $\frR_{\Sta}$ that do not cover a map $X\to Y$, i.e., those intrinsic to $M$ and $N$. It involves passing through an intermediate object $(X\times Y, \wt M)$ (where $\wt M$ is a lifting of $M$ to an object in $\frR_{/X\times Y}$). We do not elaborate on this point here.
\end{remark}

\sss{Subanalytic subsets}\label{sss:subset}
For a real analytic space $M$ there is the notion of subanalytic subsets of $M$, see \cite{H}. Now suppose $M\in\frR_{/X}$. We define a {\em subset} $M'$ of $M$ to be an assignment $\Sch_{/X}\ni (V,v)\mapsto M'_{V,v}\subset M_{V,v}$ such that for any morphism $\ph:(U,u)\to (V,v)$ in $\Sch_{/X}$, $\ph_{M}^{-1}(M'_{V,v})=M'_{U,u}$.  A subset $M'\subset M$ is called {\em subanalytic} if $M'_{V,v}$ is subanalytic for each $(V,v)\in \Sch_{/X}$.

Let $\frR_{\Sta,\supset}$ be the category of triples $(X,M\supset M')$ where $X\in\Sta$, $M\in\frR_{/X}$ and $M'\subset M$ is a subanalytic subset. Morphisms $(X,M\supset M')\to (Y,N\supset N')$ are morphisms $(X,M)\to (Y,N)$ in $\frR_{\Sta}$ that for any $(U,u)\in\Sch_{/X}, (V,v)\in \Sch_{/Y}$ and map $h:U\to V$ covering $f$, the induced map $g: M_{U,u}\to N_{V,v}$ sends $M'_{U,u}$ into $N'_{V,v}$.

Let $\frR^{\Sch}_{\Sta,\supset}$ be the category with the same objects as $\frR_{\Sta,\supset}$ but morphisms require that $f:X\to Y$  be schematic. 

%
%
%
%
%

\sss{Cones and Lagrangians}\label{sss:stack Lag}
Let $X$  be a stack and $M\in\frR_{/X}$ be smooth.  We define a {\em cone} $\L\subset T^{*}M$ (note $T^{*}M$ is not yet defined) as an assignment of a $\RR_{>0}$-conic subset $\L_{V,v}\subset T^{*}M_{V,v}$ for each $(V,v)\in \Sch^{sm}_{/X}$, such that for any morphism $\ph: (V,v)\to (V',v')$ in $\Sch^{sm}_{/X}$ inducing $\ph_{M}: M_{V,v}\to M_{V',v'}$, we have $\L_{V,v}=\oll{\ph_{M}}(\L_{V',v'})$. Similarly, we say that a cone $\L\subset T^{*}M$ is subanalytic (resp. closed, resp. isotropic, resp. Lagrangian) if $\L_{V,v}$ is subanalytic (resp. closed, resp.  isotropic, resp. Lagrangian) in $T^{*}M_{V,v}$. 

The datum of a  cone $\L\subset T^{*}M$ is determined by the assignment $\L_{V,v}$ for a single smooth surjective $v:V\to X$ from a scheme $V$; the pullback compatibility amounts to saying $\oll{\ph_{1,M}}\L_{V,v}=\oll{\ph_{2,M}}\L_{V,v}\subset T^{*}M_{V\times_{X}V,(v,v)}$ where $\ph_{1,M},\ph_{2,M}$ are the two projections $M_{V\times_{X}V,(v,v)}\to M_{V}$ induced by the projections $V\times_{X}V\to V$.

There is a partial order on the set of cones $\L$ in $T^{*}M$ by containment. Under this partial order there is a unique maximal element which we define as $T^{*}M$ itself.

We say a subanalytic cone $\L\subset T^{*}M$ has pure real dimension $n$ if for any $(V,v)\in \Sch^{sm}_{/X}$ such that $v$ has pure relative complex dimension $\dim_{\CC}(v)$, $\L_{V,v}\subset T^{*}M_{V,v}$ has pure real dimension $n+2\dim_{\CC}(v)$. Note that for any morphism $\ph: (V,v)\to (V',v')$ in $\Sch^{sm}_{/X}$ such that $v$ and $v'$ have pure relative dimension, the induced submersion $\ph_{M}: M_{V,v}\to M_{V',v'}$ has pure relative real dimension $2\dim_{\CC}(v)-2\dim_{\CC}(v')$. Therefore, if $\L_{V',v'}$ has pure real dimension $n+2\dim_{\CC}(v')$,  then $\L_{V,v}=\oll{\ph_{M}}(\L_{V',v'})$ has pure real dimension $n+2\dim_{\CC}(v)$. The converse is true if $\ph$ is surjective.

From this definition of dimension we see immediately:
\begin{lemma}\label{l:Lag dim} Let $X$ be a stack and $M\in\frR_{/X}$ be smooth of pure dimension. Let $\L\subset T^{*}M$ be  a subanalytic isotropic cone. Then $\L$ is a Lagrangian if and only if $\dim_{\RR} \L=\dim_{\RR} M$. 
\end{lemma}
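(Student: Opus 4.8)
\textbf{Proof plan for Lemma \ref{l:Lag dim}.}

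The plan is to reduce everything to the classical statement for real subanalytic isotropic cones in cotangent bundles of smooth manifolds, namely \cite[Definition 8.3.9]{KS} and the surrounding discussion recalled in \S\ref{sss:Lag mfd}: an $\RR_{>0}$-conic subanalytic isotropic subset $\L_0\subset T^*M_0$ of an equidimensional smooth manifold $M_0$ is Lagrangian if and only if $\dim_\RR \L_0 = \dim_\RR M_0$. The only genuine content of the present lemma is to transport this equivalence through the definitions of \S\ref{sss:real stacks}, \S\ref{sss:stack Lag}, where ``smooth of pure dimension'', ``subanalytic isotropic cone'', ``Lagrangian'', and ``dimension'' for objects $M\in\frR_{/X}$ are all defined chart-by-chart over $\Sch^{sm}_{/X}$.

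First I would fix a smooth surjection $v\colon V\to X$ from a scheme $V$ of pure relative complex dimension $d := \dim_\CC(v)$; such a $v$ exists since $X\in\Sta$. Replacing $V$ by its connected components (and using that cones, isotropy, Lagrangian-ness and dimension for $M$ are all tested componentwise by the compatibility-with-coproducts axiom in \S\ref{sss:real stacks}) we may assume $V$, hence $M_{V,v}$, is connected and of pure real dimension $\dim_\RR M + 2d$. By hypothesis $M$ is smooth, so $M_{V,v}$ is a smooth manifold, and it is equidimensional by the preceding sentence; likewise $\L_{V,v}\subset T^*M_{V,v}$ is subanalytic and isotropic by hypothesis. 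Now apply the classical equivalence on $M_{V,v}$: $\L_{V,v}$ is Lagrangian in $T^*M_{V,v}$ if and only if $\dim_\RR\L_{V,v} = \dim_\RR M_{V,v} = \dim_\RR M + 2d$.

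It then remains to match the two sides of this chart-level equivalence with the two sides of the lemma. For the left side: by the definition of ``Lagrangian cone'' in \S\ref{sss:stack Lag}, $\L\subset T^*M$ is Lagrangian precisely when $\L_{V',v'}$ is Lagrangian for every $(V',v')\in\Sch^{sm}_{/X}$; since $\L_{V',v'} = \oll{\ph_M}(\L_{V,v})$ for a common refinement and transport along a surjective submersion preserves and reflects the Lagrangian property (isotropy is preserved by $\oll{\ph_M}$ as in Lemma \ref{l:pres iso}, and the dimension bookkeeping below handles the rest), it suffices to check it on the single surjective chart $(V,v)$. For the right side: by the definition of pure real dimension for cones and for real analytic spaces over $X$ in \S\ref{sss:real stacks} and \S\ref{sss:stack Lag}, $\dim_\RR \L = \dim_\RR M$ holds if and only if $\dim_\RR \L_{V,v} = \dim_\RR M + 2d$ and $\dim_\RR M_{V,v} = \dim_\RR M + 2d$, i.e.\ exactly the equality appearing on the right of the chart-level equivalence. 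Chaining these identifications gives the lemma.

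I do not expect a serious obstacle here: the statement is essentially a definitional repackaging. The one point that needs a little care is the ``if and only if'' for the dimension of $\L$ as a cone over $X$ — one must invoke that for a surjective smooth $\ph\colon(V,v)\to(V',v')$ the induced $\ph_M$ is a surjective submersion of pure relative real dimension $2\dim_\CC(v)-2\dim_\CC(v')$, so that $\dim_\RR\L_{V,v} = \dim_\RR\L_{V',v'} + 2\dim_\CC(v)-2\dim_\CC(v')$, which is precisely the remark already made in \S\ref{sss:stack Lag} just before the statement of the lemma. Granting that remark, the proof is a two-line invocation of \cite[Ch.~8]{KS} on one chart.
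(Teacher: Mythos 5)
Your proposal is correct and is essentially the argument the paper intends: the paper gives no proof beyond ``we see immediately from this definition of dimension,'' and your chart-by-chart unwinding—reducing to a single smooth surjective chart $(V,v)$, invoking the manifold-level definition of Lagrangian from \cite[Ch.~8]{KS}, and using the remark in \S\ref{sss:stack Lag} on how dimensions transport along surjective smooth morphisms—is exactly that definitional repackaging made explicit.
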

%

\sss{Transport of cones}
Let $(f,f_{M}): (X,M)\to (Y,N)$ be a morphism in $\frR_{\Sta}$, with $M$ and $N$ smooth. Then for cones $\L_{M}\subset T^{*}M$ and $\L_{N}\subset T^{*}N$, one can define the transport $\oll{f_{M}}(\L_{N})\subset T^{*}M$ and $\orr{f_{M}}(\L_{M})\subset T^{*}N$ as follows.


For $\orr{f_{M}}(\L_{M})$, we need to assign to each $(V,v)\in\Sch^{sm}_{/Y}$ a cone $\L_{V,v}\subset T^{*}V$. Let $U\to X\times_{Y}V$ be a smooth surjective map from a scheme $U$ and let $u:U\to X$ be the projection to $X$. Note that $(U,u_{X})\in\Sch^{sm}_{/X}$,  and let $g: M_{U,u}\to N_{V,v}$ be the induced map from $f_{M}$. We then define $\L_{V,v}=\orr{g}(\L_{M, U,u})$. One checks that $\L_{V,v}$ is independent of the choice of $(U,u)$ and compatible under smooth pullbacks (using Lemma \ref{l:Cart Lag mfd}). This defines $\orr{f_{M}}(\L_{M})\subset T^{*}N$. 

To define $\oll{f_{M}}(\L_{N})\subset T^{*}M$, it suffices to define its pullback $\L_{U,u}\subset T^{*}U$ for a smooth surjective $u:U\to X$ for a scheme $U$. As above, we may assume there is a map $h:U\to V$ that covers $f:X\to Y$ where $(V,v)\in \Sch^{sm}_{/Y}$, which induces a map of manifolds $g: M_{U,u}\to N_{V,v}$. Then define $\L_{U,u}=\oll{h}(\L_{N,V,v})$. One checks that $\L_{U,u}$ is independent of the choice of $(V,v,h)$ and its two pullbacks to $M_{U\times_{X}U,(u,u)}$ are the same, hence defining a cone in $T^{*}M$.


Lemma \ref{l:Cart Lag mfd} generalizes to the case of stacks.
\begin{lemma}\label{l:Cart Lag} Consider a Cartesian diagram in the category $\frR_{\Sta}$
\begin{equation*}
\xymatrix{ (X',M') \ar[d]^{(f', f'_{M})}\ar[r]^{(a,a_{M})} & (X,M)\ar[d]^{(f,f_{M})}\\
(Y',N')\ar[r]^{(b,b_{N})} & (Y,N)}
\end{equation*}
where $M,M',N,N'$ are smooth. Then for any cone $\L\subset T^{*}M$ we have an equality of cones in  $T^{*}N'$
\begin{equation}\label{Cart Lag eq}
\oll{b_{N}}\orr{f_{M}}(\L)=\orr{f'_{M}}\oll{a_{M}}(\L)\subset T^{*}N'.
\end{equation}
\end{lemma}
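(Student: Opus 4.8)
The plan is to reduce Lemma~\ref{l:Cart Lag} to the manifold-level statement already proved in Lemma~\ref{l:Cart Lag mfd}. Since a cone in $T^*N'$ is by definition (see \S\ref{sss:stack Lag}) determined by its value $\L_{V',v'}\subset T^*N'_{V',v'}$ on a single smooth surjective atlas $v':V'\to Y'$ from a scheme, and since such values are compatible with smooth pullback, it suffices to verify the equality \eqref{Cart Lag eq} after pulling back along one well-chosen atlas of $Y'$. First I would choose a smooth surjective map $v:V\to Y$ from a scheme, form $V'=V\times_Y Y'$, $U=X\times_Y V$ and $U'=X'\times_{Y'}V'\simeq U\times_V V'$, picking atlases so that all four corners of the square are represented by schemes smooth over the respective base stacks and so that there are actual scheme maps $U'\to U$, $U'\to V'$, $V'\to V$, $U\to V$ fitting into a Cartesian square of schemes. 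Passing to the associated real analytic spaces $M_{U,u}$, $M'_{U',u'}$, $N_{V,v}$, $N'_{V',v'}$ then yields a Cartesian square of smooth manifolds, because the formation of $M\in\frR_{/X}$ commutes with base change in $\Sch_{/X}$ (condition (2) in \S\ref{sss:real stacks}).

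Next I would unwind the definitions of the transport operations $\overrightarrow{(\cdot)}$ and $\overleftarrow{(\cdot)}$ for morphisms in $\frR_{\Sta}$ given in \S\ref{sss:stack Lag}. The left-hand side of \eqref{Cart Lag eq}, evaluated on the atlas $v':V'\to Y'$, is $\oll{(b_N)_{V'}}\big(\orr{(f_M)}(\L)_{V,v}\big)$, and by definition $\orr{(f_M)}(\L)_{V,v}=\orr{g}(\L_{M,U,u})$ where $g:M_{U,u}\to N_{V,v}$ is the induced manifold map; similarly the $b_N$-transport is computed via the manifold map $N'_{V',v'}\to N_{V,v}$. The right-hand side, evaluated on the same atlas, unwinds to $\orr{g'}\big(\oll{a_M\text{-map}}(\L_{M,U,u})\big)$ where $g':M'_{U',u'}\to N'_{V',v'}$ is the induced manifold map and the $a_M$-transport is computed via the manifold map $M'_{U',u'}\to M_{U,u}$. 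Both sides are therefore exactly the two expressions appearing in Lemma~\ref{l:Cart Lag mfd} applied to the Cartesian square of manifolds
\begin{equation*}
\xymatrix{ M'_{U',u'}\ar[r]\ar[d] & M_{U,u}\ar[d]\\ N'_{V',v'}\ar[r] & N_{V,v}}
\end{equation*}
and the subset $\L_{M,U,u}\subset T^*M_{U,u}$, so the desired equality follows at once. Finally one checks that the resulting value is independent of the atlas chosen (again by the pullback-compatibility built into the definition of a cone, together with Lemma~\ref{l:Cart Lag mfd} itself for change-of-atlas squares), so it genuinely defines an equality of cones in $T^*N'$.

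The only genuinely delicate point — and the one I would spend the most care on — is verifying that the square of manifolds above is actually Cartesian, i.e.\ that base change of the relative real analytic spaces along the chosen scheme atlases is compatible with forming the manifold-level fiber product. This is where the definitions in \S\ref{sss:real stacks} and \S\ref{sss:real bc} do the real work: condition (2) of \S\ref{sss:real stacks} gives $M_{U,u}\simeq M_{?,?}\times_{?}U$ for morphisms in $\Sch_{/X}$, and one must chase through the identifications $M'=f^{\#}a^{\#}(\text{something})$ versus $M'=(f')^{\#}b^{\#}(\text{something})$ coming from the Cartesian square in $\frR_{\Sta}$ to see these agree as manifolds over $N'_{V',v'}$. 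Everything else — the unwinding of $\overrightarrow{(\cdot)}$, $\overleftarrow{(\cdot)}$ and the invocation of Lemma~\ref{l:Cart Lag mfd} — is formal bookkeeping once the Cartesian property of the manifold square is in hand.
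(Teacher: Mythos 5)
Your proposal is correct and follows essentially the same route as the paper: choose smooth scheme atlases forming a Cartesian square over the given square of stacks (the paper takes $U\to V$, $V'\to V$ covering $f$, $b$ and sets $U'=U\times_V V'$), note the induced square of manifolds $M'_{U',u'},M_{U,u},N'_{V',v'},N_{V,v}$ is Cartesian, unwind the definitions of $\protect\orr{(-)}$ and $\protect\oll{(-)}$ to reduce both sides to the manifold-level transports, apply Lemma~\ref{l:Cart Lag mfd}, and conclude by surjectivity of the atlas of $Y'$. The only cosmetic difference is that your intermediate objects $V\times_Y Y'$, $X\times_Y V$ are stacks rather than schemes, but since you immediately pass to scheme atlases with the same Cartesian property, this does not change the argument.
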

\begin{proof} Choose $(U,u)\in \Sch^{sm}_{X}$, and $(V,v)\in \Sch^{sm}_{/Y}$, $(V',v')\in \Sch^{sm}_{/Y'}$ and maps $\ph:U\to V$, $\b: V'\to V$ covering $f$ and $b$. Then $(U'=U\times_{V}V',u')\in\Sch^{sm}_{/X'}$ and we have a Cartesian diagram of schemes
\begin{equation*}
\xymatrix{ U' \ar[d]^{\ph'}\ar[r]^{\a} & U\ar[d]^{\ph}\\
V'\ar[r]^{\b} & V
}
\end{equation*}
Then we have a Cartesian diagram of manifolds 
\begin{equation}\label{bc mfd}
\xymatrix{M'_{U',u'} \ar[d]^{\ph'_{M}}\ar[r]^{\a_{M}} & M_{U,u}\ar[d]^{\ph_{M}}\\
N'_{V',v'}\ar[r]^{\b_{N}} & N_{V,v}}
\end{equation}
By definition we have
\begin{eqnarray*}
(\oll{b_{N}}\orr{f_{M}}(\L))_{V',v'}=\oll{\b_{N}}\orr{\ph_{M}}(\L_{U,u}),\\
(\orr{f'_{M}}\oll{a_{M}}(\L))_{V',v'}=\orr{\ph'_{M}}\oll{\a_{M}}(\L_{U,u}).
\end{eqnarray*}
Applying Lemma \ref{l:Cart Lag mfd} to the diagram \eqref{bc mfd} and $\L_{U,u}\subset T^{*}M_{U,u}$ we conclude that both sides of \eqref{Cart Lag eq} are equal after transporting to $M_{V',v'}$. Since $(V',v')\in\Sch^{sm}_{/Y'}$ can be chosen such that $v'$ is surjective,  \eqref{Cart Lag eq} holds.
\end{proof}

\begin{lemma}\label{l:pres iso} 
Let $(f,f_{M}):(X,M)\to (Y,N)$ be a morphism in $\frR_{\Sta}$, with $M\in\frR_{/X}$ and $N\in\frR_{/Y}$  smooth. Let $\L_{1}\subset T^{*}M$, $\L_{2}\subset T^{*}N$ be subanalytic isotropic cones. Then
\begin{enumerate}
\item $\oll{f_{M}}(\L_{2})$ is a subanalytic isotropic cone in $T^{*}M$.
\item If $\orr{f_{M}}(\L_{1})$ is subanalytic (e.g., if $M$ and $N$ are semi-algebraic, and $M=f^{\#}N$), then it is also isotropic.
\end{enumerate}
\end{lemma}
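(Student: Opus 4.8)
\textbf{Proof proposal for Lemma \ref{l:pres iso}.}

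The plan is to reduce both statements to the corresponding facts about manifolds, namely the behaviour of isotropic cones under the operations $\oll{g}$ and $\orr{g}$ for a map of smooth manifolds $g$. Recall from \S\ref{sss:stack Lag} that a cone $\L\subset T^{*}M$ is determined by its value $\L_{V,v}\subset T^{*}M_{V,v}$ for a single smooth surjective atlas $v:V\to X$, and that all the notions (subanalytic, closed, isotropic, Lagrangian, pure dimension) are defined chart-wise. So it suffices to work in one chart at a time and invoke the manifold statements, which are contained in \cite[Prop. 8.3.11, Prop. 8.3.12]{KS}: for a morphism of smooth manifolds $g:M_{1}\to M_{2}$, the inverse transport $\oll{g}$ of an $\RR_{>0}$-conic subanalytic isotropic subset is again $\RR_{>0}$-conic, subanalytic, and isotropic; and if the forward transport $\orr{g}$ of such a subset happens to be subanalytic, then it is isotropic as well (in general $\orr{g}$ of a subanalytic set need not be subanalytic, which is why the hypothesis in (2) is needed).

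For part (1): choose a smooth surjective map $u:U\to X$ from a scheme $U$ together with a map $h:U\to V$ covering $f$, where $v:V\to Y$ is a smooth surjective atlas (such a factorization exists by taking $U\to X\times_{Y}V$ smooth surjective from a scheme, as in the construction of $\oll{f_{M}}$ in \S\ref{sss:stack Lag}). This induces a map of smooth manifolds $g=h_{M}:M_{U,u}\to N_{V,v}$, and by definition $(\oll{f_{M}}(\L_{2}))_{U,u}=\oll{g}(\L_{2,V,v})$. Since $\L_{2}$ is subanalytic and isotropic, $\L_{2,V,v}$ is subanalytic and isotropic in $T^{*}N_{V,v}$; by the manifold statement $\oll{g}(\L_{2,V,v})$ is then subanalytic and isotropic. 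As this holds for a surjective atlas, $\oll{f_{M}}(\L_{2})$ is a subanalytic isotropic cone. The $\RR_{>0}$-conicity is automatic from the manifold statement (and is preserved because $\oll{g}$ of a conic set is conic).

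For part (2): fix $(V,v)\in\Sch^{sm}_{/Y}$ with $v$ surjective, and choose $U\to X\times_{Y}V$ smooth surjective from a scheme with projection $u:U\to X$, so that $(U,u)\in\Sch^{sm}_{/X}$ and we obtain $g=h_{M}:M_{U,u}\to N_{V,v}$ with $(\orr{f_{M}}(\L_{1}))_{V,v}=\orr{g}(\L_{1,U,u})$. By hypothesis $\orr{f_{M}}(\L_{1})$ is subanalytic, so $\orr{g}(\L_{1,U,u})$ is subanalytic; since $\L_{1,U,u}$ is subanalytic and isotropic, the manifold statement gives that $\orr{g}(\L_{1,U,u})$ is isotropic. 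Running over a surjective atlas $V$ shows $\orr{f_{M}}(\L_{1})$ is isotropic. For the parenthetical remark that $\orr{f_{M}}(\L_{1})$ is automatically subanalytic when $M,N$ are semi-algebraic and $M=f^{\#}N$: in that case the chart-level map $g$ is a proper map of semi-algebraic sets (or more precisely $\orr{g}$ is computed through a semi-algebraic correspondence whose structure maps are semi-algebraic and, on the relevant locus, proper, since $M_{U,u}\to N_{V,u}$ is identified with a base change of $V\to Y$ pulled back to $N$), and the image of a semi-algebraic set under a semi-algebraic map is semi-algebraic, hence subanalytic; one then applies the already-proven isotropy statement. The only genuinely delicate point is this last verification that forward transport stays subanalytic: I expect that to be the main obstacle, and the cleanest route is to observe that in the semi-algebraic/base-change situation the correspondence $M_{U,u}\times_{N_{V,v}}T^{*}N_{V,v}\to T^{*}M_{U,u}$ and its counterpart are maps of semi-algebraic sets with the projection to $T^{*}N_{V,v}$ proper along the fibres we care about, so Tarski--Seidenberg applies; everything else is a routine chart-by-chart translation of \cite[\S8.3]{KS}.
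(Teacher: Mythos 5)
Your proposal is correct and follows essentially the same route as the paper: unwind the chart-wise definitions of $\oll{f_{M}}$ and $\orr{f_{M}}$ to reduce to a single map of manifolds $g: M_{U,u}\to N_{V,v}$, then invoke \cite[Proposition 8.3.11]{KS} for inverse and direct transport of subanalytic isotropic cones (with subanalyticity of the image taken as a hypothesis in part (2), exactly as the paper does). Your extra sketch justifying the parenthetical semi-algebraic example via Tarski--Seidenberg goes beyond what the lemma asserts (and beyond what the paper proves), but does not affect the correctness of the argument for the stated claims.
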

\begin{proof}
Base changing along a smooth surjective $v: V\to Y$ by  $(V,v)\in \Sch^{sm}_{/Y}$, we reduce to the case where $Y$ is a smooth scheme and $N$ is a usual real manifold. Let $(U,u)\in \Sch^{sm}_{/X}$. Let $(f\c u, g): (U, M_{U,u})\to (Y, N)$ be the induced morphism.

(1) We have $\oll{f_{M}}(\L_{2})_{U,u}=\oll{g}(\L_{2})\subset T^{*}M_{U,u}$. To show $\oll{f_{M}}(\L_{2})$ is a subanalytic isotropic cone in $T^{*}M$ is equivalent to show the same for  $\oll{f_{M}}(\L_{2})_{U,u}$. Therefore we reduce to show that $\oll{g}(\L_{2})\subset T^{*}M_{U,u}$ is a subanalytic isotropic cone, which is \cite[Proposition 8.3.11(2)]{KS}. 

(2) Suppose $u$ is surjective. Let $(u, u_{M}): (U, M_{U,u})\to (X, M)$ be the morphism in $\frR_{\Sta}$.  Then $\L_{1}=\orr{u_{M}}(\L_{1,U,u})$. Therefore it suffices to show that $\orr{f_{M}}\orr{u_{M}}(\L_{1,U,u})=\orr{g}(\L_{1,U,u})$ is a subanalytic isotropic cone in $T^{*}N$, which is \cite[Proposition 8.3.11(1)]{KS} (assuming $\orr{g}(\L_{1,U,u})$ is subanalytic).

\end{proof}

\begin{lemma}\label{l:trans Lag} Let $M\in\frR_{/X}$ be smooth. 
\begin{enumerate}
\item Let  $i: N\incl M$ be a locally closed subanalytic subset that is also smooth. Then for any Lagrangian $\L_{N}\subset T^{*}N$, $\orr{i}(\L_{N})$ is a Lagrangian in $T^{*}M$.
\item Let $p: N\to M$ be a submersion. Then for any Lagrangian $\L_{M}\subset T^{*}M$, $\oll{p}(\L_{M})$ is a Lagrangian in $T^{*}N$.
\end{enumerate}
\end{lemma}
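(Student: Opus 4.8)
The statement asserts that transport of Lagrangians along locally closed smooth embeddings (pushforward $\orr{i}$) and along submersions (pullback $\oll{p}$) preserves the Lagrangian property for cones in the relative real analytic setting over a stack. The strategy is the standard one: reduce to the case of ordinary manifolds by testing on smooth charts, then invoke the corresponding facts from \cite{KS} together with the dimension count of Lemma \ref{l:Lag dim}. Concretely, for any $(V,v)\in\Sch^{sm}_{/X}$ we work inside $T^{*}M_{V,v}$, where all objects become honest subanalytic cones in the cotangent bundle of a smooth manifold, and the defining compatibility of cones under smooth pullback (\S\ref{sss:stack Lag}) lets us check the Lagrangian property there; it suffices to do so for a single smooth surjective $v:V\to X$ with $V$ a scheme.

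For part (1), given the locally closed smooth subanalytic subset $i:N\incl M$ and a Lagrangian $\L_{N}\subset T^{*}N$, I would first note that $\orr{i}(\L_{N})$ is a subanalytic isotropic cone in $T^{*}M$: subanalyticity is preserved because $i$ is a subanalytic map with subanalytic image, and isotropy follows from Lemma \ref{l:pres iso}(2) (the image is subanalytic by the previous remark, so the lemma applies). It then remains to compute dimensions. Over a chart $(V,v)$ with $v$ of pure relative dimension, $N_{V,v}\incl M_{V,v}$ is a locally closed submanifold, and the correspondence $T^{*}N_{V,v}\xleftarrow{di} N_{V,v}\times_{M_{V,v}}T^{*}M_{V,v}\xrightarrow{i^{\na}} T^{*}M_{V,v}$ has $i^{\na}$ a locally closed embedding and $di$ a submersion with fibers of dimension $\codim(N_{V,v}\subset M_{V,v})$; hence $\dim_{\RR}\orr{i}(\L_{N,V,v}) = \dim_{\RR}\L_{N,V,v} + \codim = \dim_{\RR}N_{V,v} + \codim = \dim_{\RR}M_{V,v}$. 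By Lemma \ref{l:Lag dim}, $\orr{i}(\L_{N})$ is Lagrangian.

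For part (2), with $p:N\to M$ a submersion and $\L_{M}\subset T^{*}M$ Lagrangian, I would invoke Lemma \ref{l:pres iso}(1) to see that $\oll{p}(\L_{M})$ is a subanalytic isotropic cone in $T^{*}N$. For the dimension: over a chart the submersion $p_{V,v}:N_{V,v}\to M_{V,v}$ gives a correspondence in which $p^{\na}$ is a submersion with fibers of dimension equal to the relative dimension $r$ of $p$, and $dp$ is a closed embedding (since $p$ is a submersion, $dp$ identifies the pullback with a subbundle), so $\dim_{\RR}\oll{p}(\L_{M,V,v}) = \dim_{\RR}\L_{M,V,v} + r = \dim_{\RR}M_{V,v} + r = \dim_{\RR}N_{V,v}$; again Lemma \ref{l:Lag dim} concludes. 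An alternative for (2), avoiding the dimension count, is to observe that $\oll{p}$ of a closed conic Lagrangian is the singular-support transport under a smooth map and cite \cite[Proposition 8.3.11]{KS} directly for the Lagrangian statement; I would present whichever is cleaner once the bookkeeping is written out.

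\textbf{Main obstacle.} The substantive content is entirely in reducing the stacky statement to the manifold statement, so the only real care needed is (a) confirming purity of dimension is preserved under the transports — which follows from the relative-dimension compatibility of submersions noted in \S\ref{sss:stack Lag} — and (b) checking subanalyticity of the images, which in part (1) requires that $i$ is a subanalytic map whose image is subanalytic (true since $N\incl M$ is locally closed subanalytic), so that Lemma \ref{l:pres iso}(2) is applicable. Neither of these is deep, so I expect no serious difficulty; the proof is essentially a dimension bookkeeping wrapped around Lemma \ref{l:Lag dim} and Lemma \ref{l:pres iso}.
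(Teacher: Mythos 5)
Your proposal is correct and takes essentially the same route as the paper: the paper's proof is exactly the reduction to smooth charts $(V,v)\in\Sch^{sm}_{/X}$, after which it simply declares the manifold-level statements ``well-known,'' while you spell those out via the isotropy lemma (Lemma \ref{l:pres iso}) and the dimension count feeding into Lemma \ref{l:Lag dim}. No gap; your chart-level bookkeeping is a valid filling-in of what the paper leaves implicit.
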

\begin{proof}
For $(V,v)\in\Sch^{sm}_{/X}$, we need to check the similar statements for $N_{V,v}\to M_{V,v}$ (either locally closed embedding of manifolds or submersion of manifolds) and Lagrangians $\L_{N,V,v}$ and $\L_{M,V,v}$, which are well-known. 
\end{proof}

\subsection{Sheaves on relative real analytic spaces}\label{ss:A stack}
Now we define the category of sheaves on objects in $\frR_{\Sta}$, and generalize results from \S\ref{ss:A mfd} from manifolds to objects in $\frR_{\Sta}$.
 
\sss{Sheaves} 
For a scheme $V$ of finite type over $\CC$, let $\Sh(V)$ be the stable $\QQ$-linear category of all complexes of $\QQ$-sheaves  on the topological space $V(\CC)$ with classical topology.

Following \cite[Appendix A.1]{GKRV}, we define the stable $\QQ$-linear category of sheaves on $X(\CC)$ to be the limit
\begin{equation*}
\Sh(X):=\lim_{(V,v)\in\Sch^{sm}_{/X}}\Sh(V).
\end{equation*}
For a morphism $\ph: (V,v)\to (V',v')$ in $\Sch^{sm}_{/X}$, the  transition functor is $\ph^{!}: \Sh(V')\to \Sh(V)$ .

For a morphism $f:X\to Y$ of stacks, the sheaf functors $f_{*}, f^{*}, f_{!}$ and $f^{!}$ between $\Sh(X)$ and $\Sh(Y)$ are defined and the usual adjunctions hold.

\sss{Sheaves on a relative real analytic space}
Let $X\in\Sta$ be a stack and $M\in\frR_{/X}$. We define
\begin{equation*}
\Sh(M/X)=\lim_{(V,v)\in\Sch^{sm}_{/X}}\Sh(M_{V,v}).
\end{equation*}

\begin{lemma}\label{l:underlying}
Suppose $M\in\frR_{/X}$ has an underlying real analytic space $\un M$, then there is a canonical equivalence $\Sh(\un M)\isom \Sh(M/X)$.
\end{lemma}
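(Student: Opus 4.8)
The plan is to show the equivalence is the one induced by pullback along the structure maps $\om_{V,v}\colon M_{V,v}\to\un M$ of Example~\ref{ex:underlying}, and then to verify it is an equivalence by reducing both sides to the \v Cech cosimplicial limit attached to a single smooth atlas of $X$.

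First I would construct the comparison functor. By the compatibility of the maps $\om_{V,v}$ with morphisms in $\Sch_{/X}$, the pullback functors $\om_{V,v}^!\colon \Sh(\un M)\to\Sh(M_{V,v})$ (which, since $\om_{V,v}$ is submersive, agree with $\om_{V,v}^*$ up to an invertible shift) are compatible with the transition functors $\ph_M^!$ appearing in the limit $\Sh(M/X)=\lim_{(V,v)\in\Sch^{sm}_{/X}}\Sh(M_{V,v})$. They therefore assemble into a functor $\Phi\colon\Sh(\un M)\to\Sh(M/X)$ sending a sheaf $\cF$ to the compatible system $(\om_{V,v}^!\cF)_{(V,v)}$. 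This $\Phi$ involves no auxiliary choices, so it is the canonical functor in question; it remains to prove $\Phi$ is an equivalence.

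Next I would reduce both limits to cosimplicial totalizations. Choose a scheme $V_0$ with a smooth surjection $v_0\colon V_0\to X$; since $X$ has schematic diagonal, the iterated fibre products $V_0^{(n)}:=V_0\times_X\cdots\times_X V_0$ ($n+1$ factors) are again schemes smooth over $X$, and the usual cofinality argument (the same one showing $\Sh(X)$ is computed from a single atlas) identifies $\Sh(M/X)\simeq \lim_{[n]\in\Delta}\Sh(M_{V_0^{(n)}})$, the totalization of the cosimplicial category $[n]\mapsto\Sh(M_{V_0^{(n)}})$ with coface maps the $!$-pullbacks along the projections. On the other side, by properties (2) and (3) of Example~\ref{ex:underlying} the map $\om_{V_0}\colon M_{V_0,v_0}\to\un M$ is a surjective submersion of real analytic spaces, hence is locally split and so is a map of universal descent for the sheaf theory $\Sh(-)$; this yields $\Sh(\un M)\simeq\lim_{[n]\in\Delta}\Sh(M_{V_0,v_0}\times_{\un M}\cdots\times_{\un M}M_{V_0,v_0})$. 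Property (1) of Example~\ref{ex:underlying} then provides canonical isomorphisms $M_{V_0,v_0}\times_{\un M}\cdots\times_{\un M}M_{V_0,v_0}\simeq M_{V_0^{(n)}}$ compatible with all simplicial structure maps, so the two cosimplicial categories coincide, $\Phi$ becomes the map induced on totalizations by the identity, and therefore $\Phi$ is an equivalence. Independence of the choice of $v_0$ is automatic since $\Phi$ was defined before any choice of atlas.

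The one genuinely substantive point is the descent statement used for $\om_{V_0}$: that the full, non-constructible stable category $\Sh(-)$ satisfies \v Cech descent along a surjective submersion of possibly singular real analytic spaces. I expect this to be the main obstacle, and would reduce it to two ingredients: the local normal form of submersions of real analytic spaces, which shows $\om_{V_0}$ admits local sections and hence generates, after base change, the same topology as that of open covers; and the sheaf-of-categories property of $\Sh(-)$ for that topology, already implicit in the definition $\Sh(X)=\lim_{\Sch^{sm}_{/X}}\Sh(V)$. Granting these, the remaining work --- the two cofinality comparisons and the coherence of the identifications of cosimplicial objects --- is routine $\infty$-categorical bookkeeping that I would not carry out in detail.
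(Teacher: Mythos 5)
Your proposal is correct and is essentially the paper's own argument: the paper likewise builds the comparison functor via $!$-pullback along the maps $\om_{V,v}$, chooses a single smooth atlas $v_0\colon V_0\to X$, uses Example~\ref{ex:underlying}(1)--(3) to identify the \v Cech nerve $M_{V_n,v_n}$ with $(M_0/\un M)^{n+1}$, and invokes smooth descent for $\Sh(-)$ along the surjective submersion $M_0\to\un M$. The only cosmetic difference is that the paper packages this as an explicit inverse functor $\b\colon\Sh(M/X)\to\lim_n\Sh(M_n)\simeq\Sh(\un M)$ and asserts $\a,\b$ are mutually inverse, whereas you identify both sides with the same cosimplicial totalization; these are the same argument.
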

\begin{proof}
The $!$-pullbacks along $\om_{V,v}: M_{V,v}\to \un M$ give a functor $\a: \Sh(\un M)\to\Sh(M/X)$. On the other hand, let $v_{0}: V_{0}\to X$ be a smooth surjective cover by a scheme $V_{0}$ and let $V_{n}=(V_{0}/X)^{n+1}$, with map $v_{n}: V_{n}\to X$. Let $M_{n}=M_{V_{n}, v_{n}}$. Then by the definition of the underlying space $\un M$ in Example \ref{ex:underlying},  $M_{n}=(M_{0}/\un M)^{n+1}\in \frR$, and $M_{0}\to \un M$ is surjective and submersive. By usual smooth descent, we have $\Sh(\un M)\isom \lim_{n}\Sh(M_{n})$ via $!$-pullbacks. This gives a functor $\b: \Sh(M/X)=\lim\Sh(M_{V,v})\to \lim_{n}\Sh(M_{n})\simeq \Sh(\un M)$. One checks that $\a$ and $\b$ are inverse to each other.
\end{proof}

\begin{prop}\label{p:real sm des}
\begin{enumerate}
\item Let $M_{0}\to M$ be a surjective and submersive map in $\frR_{/X}$. Let $M_{n}=(M_{0}/M)^{n+1}$ (fiber product in $\frR_{/X}$). Then termwise $!$-pullback induces an equivalence $\Sh(M/X)\isom \lim_{n}\Sh(M_{n}/X)$. In other words, $\Sh(-/X)$ satisfies smooth descent.

\item Let $f:X\to Y$ be a schematic map in $\Sta$, and $M\in \frR_{/X}$. Then there is a canonical equivalence
\begin{equation*}
\Sh(M/X)\isom \Sh(f_{\#}M/Y).
\end{equation*}

\end{enumerate}
\end{prop}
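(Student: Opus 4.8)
\textbf{Proof proposal for Proposition \ref{p:real sm des}.}

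The plan is to deduce both statements from one classical input: for a real analytic space, the (large, unbounded) category of sheaves satisfies \v{C}ech descent along surjective submersions. Since a surjective submersion of real analytic spaces is locally a projection off a slice and hence admits local sections, and $\Sh(-)$ is a sheaf of stable categories for the topology generated by open covers (cf.\ \cite[Appendix A.1]{GKRV}, \cite{Lhtt}), one obtains descent along surjective submersions by a refinement argument: a surjective submersion $p$ is refined by an open cover $\coprod U_i$ through which $p$ factors, and a cofinality comparison of \v{C}ech nerves shows the associated limits of sheaf categories agree. I would record this as the key lemma; everything else is bookkeeping with the defining limit $\Sh(M/X) = \lim_{(V,v)\in \Sch^{sm}_{/X}} \Sh(M_{V,v})$.

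For (1): fix a surjective submersion $M_0 \to M$ in $\frR_{/X}$ with \v{C}ech nerve $M_\bullet$, $M_n = (M_0/M)^{n+1}$. For each $(V,v)\in \Sch^{sm}_{/X}$ the induced map $M_{0,V,v}\to M_{V,v}$ is a surjective submersion of real analytic spaces (by the definition of surjective/submersive morphisms in $\frR_{\Sta}$, \S\ref{sss:real st cat}), with \v{C}ech nerve $(M_\bullet)_{V,v}$ thanks to the fiber-product compatibility of objects of $\frR_{/X}$. The key lemma then gives $\Sh(M_{V,v}) \isom \lim_{n\in \Delta}\Sh(M_{n,V,v})$, compatibly in $(V,v)$. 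Taking the limit over $\Sch^{sm}_{/X}$ and commuting limits,
\[
\Sh(M/X) = \lim_{(V,v)} \Sh(M_{V,v}) \isom \lim_{(V,v)} \lim_{n} \Sh(M_{n,V,v}) \isom \lim_n \lim_{(V,v)} \Sh(M_{n,V,v}) = \lim_n \Sh(M_n/X),
\]
and unwinding the equivalences shows the comparison is termwise $!$-pullback.

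For (2): the same descent input shows that the presheaf $(U,u)\mapsto \Sh(M_{U,u})$ on $\Sch^{sm}_{/X}$ is a sheaf (base change of $M$ along a smooth cover of schemes is a surjective submersion of real analytic spaces), so for any smooth surjective cover $W_0\to X$ by a scheme with \v{C}ech nerve $W_\bullet$ one has $\Sh(M/X)\isom \lim_{n\in\Delta}\Sh(M_{W_n})$. Now assume $f$ is representable by schemes (the general case follows by passing to a further scheme cover), pick a smooth surjective cover $V_0\to Y$ by a scheme with \v{C}ech nerve $V_\bullet$, and set $W_n := V_n\times_Y X$; then $W_0\to X$ is a smooth surjective cover by a scheme and $W_\bullet = (W_0/X)^{\bullet+1}$. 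Applying the previous sentence over $Y$ to $f_\# M$ and the cover $V_0\to Y$ gives $\Sh(f_\# M/Y)\isom \lim_n \Sh((f_\# M)_{V_n,v_n})$, and by definition $(f_\# M)_{V_n,v_n} = M_{V_n\times_Y X, p_X} = M_{W_n}$. Hence both $\Sh(M/X)$ and $\Sh(f_\# M/Y)$ are identified with $\lim_{n\in\Delta}\Sh(M_{W_n})$, and one checks these identifications are compatible with the evident comparison functor, giving the asserted equivalence. The main obstacle is the key lemma together with the passage ``the limit over $\Sch^{sm}_{/X}$ is computed by the \v{C}ech nerve of a single smooth cover'': both are standard but require care about cofinality and about using \v{C}ech rather than hyperdescent for the big category $\Sh$. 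Granting them, parts (1) and (2) are formal.
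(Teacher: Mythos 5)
Your proof of part (1) is correct and is essentially the paper's own argument: reduce chart-by-chart over $(V,v)\in\Sch^{sm}_{/X}$ to descent of $\Sh$ along the surjective submersion $M_{0,V,v}\to M_{V,v}$ of real analytic spaces (which the paper, like you, takes as the standard input), then commute the limit over $\Sch^{sm}_{/X}$ with the limit over $\Delta$. For part (2) you take a genuinely different, but equally valid, route. The paper proves (2) by choosing a cover $M_0\to M$ in $\frR_{/X}$ coming from a scheme atlas $U\to X$, applying part (1) on both sides to write $\g$ as a limit of functors $\g_n:\Sh(M_n/X)\to \Sh(f_{\#}M_n/Y)$, and then invoking Lemma \ref{l:underlying} (the underlying real analytic space lemma) to identify both terms with $\Sh(\un M_n)$, under which $\g_n$ becomes the identity. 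You instead compute both defining limits directly by the \v{C}ech nerves of compatible scheme atlases, $V_0\to Y$ and its base change $W_0=V_0\times_Y X\to X$ (a scheme since $f$ is schematic), and observe that the two cosimplicial categories agree termwise because $(f_{\#}M)_{V_n,v_n}=M_{W_n}$ by definition of $f_{\#}$. Your route bypasses Lemma \ref{l:underlying} entirely, at the cost of explicitly invoking the statement that the limit over $\Sch^{sm}_{/X}$ is computed on the \v{C}ech nerve of a single smooth atlas; note, however, that this is not really an extra cost, since the paper's proof of Lemma \ref{l:underlying} uses exactly the same fact (it is what is being checked when $\a$ and $\b$ are shown to be mutually inverse), so both arguments rest on the same two standard descent inputs, packaged differently. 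Your parenthetical about ``the general case follows by passing to a further scheme cover'' is unnecessary: schematic is the standing hypothesis, and indeed the very definition of $f_{\#}$ requires $V\times_Y X$ to be a scheme.
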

\begin{proof}
(1) Unfold $\lim_{n}\Sh(M_{n}/X)$ into a double limit over $n$ and $(V,v)\in \Sch^{sm}_{/X}$ of $\Sh(M_{n,V,v})$. Note that $M_{n,V,v}=(M_{0,V,v}/M_{V,v})^{n+1}$. By the smooth descent for the covering $M_{0,V,v}\to M_{V,v}$ in $\frR$, we have $\Sh(M_{V,v})\isom \lim_{n}\Sh(M_{n,V,v})$. Taking limit over $(V,v)$ we get $\Sh(M/X)\isom \lim_{n,(V,v)}\Sh(M_{n,V,v})\simeq \lim_{n}\Sh(M_{n}/X)$.

(2) By definition, $\Sh(f_{\#}M/Y)=\lim_{(V,v)\in\Sch^{sm}_{/Y}}\Sh(M_{V\times_{Y}X, p_{X}})$. Therefore we have a canonical functor $\g: \Sh(M/X)\to \Sh(f_{\#}M/Y)$. To check this is an equivalence, we find a submersive surjection $M_{0}\to M$ in $\frR_{/X}$ such that $M_{0}$ has an underlying real analytic space $\un M_{0}$ (eg take $M_{0}=M_{U,u}$ for a smooth cover $u: U\to X$ by a scheme). Form the fiber powers $M_{n}=(M_{0}/M)^{n+1}\in\frR_{/X}$. Similarly $f_{\#}M_{n}=(f_{\#}M_{0}/f_{\#}M)^{n+1}\in\frR_{/Y}$. By (1), the functor $\g$ can be written as the limit of $\g_{n}: \Sh(M_{n}/X)\to \Sh(f_{\#}M_{n}/Y)$. Therefore it suffices to show $\g_{n}$ is an equivalence for $n\ge0$. However, both $M_{n}$ and $f_{\#}M_{n}$ have the same underlying space $\un M_{n}$, and both $\Sh(M_{n}/X)$ and $\Sh(f_{\#}M_{n}/Y)$ are identified with $\Sh(\un M_{n})$ by Lemma \ref{l:underlying} under which $\g$ becomes the identity functor. Therefore $\g$ is an equivalence. 
\end{proof}

By the above Proposition, it makes sense to say that $\Sh(M/X)$ is independent of the base $X$. In the sequel we will simply write $\Sh(M)$ instead of $\Sh(M/X)$.

\sss{Sheaf functors}\label{sss:real sh functors}
Let $(f,f_{M}): (X,M)\to (Y,N)$ be a morphism in $\frR^{\Sch}_{\Sta}$. Recall this means $f:X\to Y$ is schematic, and maps $f_{V,v}: M_{V\times_{Y}X, p_{X}}\to N_{V,v}$, functorial in $(V,v)\in \Sch_{/Y}$. We define $f_{M*}: \Sh(M)\to \Sh(N)$ as the composition
\begin{equation*}
\Sh(M)=\lim_{(U,u)\in \Sch^{sm}_{/X}}\Sh(M_{U,u})\to \lim_{(V,v)\in \Sch^{sm}_{/Y}}\Sh(M_{V\times_{Y}X}, p_{X})\xr{(f_{V,v*})} \lim_{(V,v)\in \Sch^{sm}_{/Y}}\Sh(N_{V,v})=\Sh(N).
\end{equation*}
The maps $f_{V,v*}$ in the middle pass to the limit because of smooth base change.

The assignment $(X,M)\mapsto \Sh(M)$ and $(f,f_{M})\mapsto f_{M*}$ defines a functor $\frR^{\Sch}_{\Sta}\to \St$. 

From the construction we see that $f_{M*}$ preserves limits, hence it admits a left adjoint $f_{M}^{*}: \Sh(N)\to \Sh(M)$.

On the other hand, for a submersion $(f,f_{M}): (X,M)\to (Y,N)$, $f^{!}_{M}: \Sh(N)\to \Sh(M)$ is defined and it admits a left adjoint $f_{M!}: \Sh(M)\to \Sh(N)$.

\sss{Singular support}
Let $\L\subset T^{*}X$ be a closed $\RR_{>0}$-conic Lagrangian in the sense of \S\ref{sss:stack Lag}.  Recall this means an assignment $\Sch^{sm}_{/X}\ni(V,v)\in \L_{V,v}\subset T^{*}V$ (a closed conic Lagrangian) that is compatible with pullbacks along smooth map $\ph: (V,v)\to (V',v')$ in $\Sch^{sm}_{/X}$. Note that in this case $\ph^{!}$ sends $\Sh_{\L_{V',v'}}(V')$ to $\Sh_{\L_{V,v}}(V)$. We define
\begin{equation*}
\Sh_{\L}(X):=\lim_{(V,v)\in \Sch^{sm}_{/X}}\Sh_{\L_{V,v}}(V).
\end{equation*}

Suppose $M\in\frR_{/ X}$  is smooth, and $\L\subset T^{*}M$ is a closed conic Lagrangian, then  we define the limit under $!$-pullbacks
\begin{equation*}
\Sh_{\L}(M/X)=\lim_{(V,v)\in\Sch^{sm}_{/X}}\Sh_{\L_{V,v}}(M_{V,v}),
\end{equation*}
where $M_{V,v}\subset V$ and $\L_{V,v}\subset T^{*}M_{V,v}$ are part of the data defining $M$ and $\L$. Note that for smooth $\ph:(V,v)\to (V',v')$ in $\Sch^{sm}_{/X}$, the induced map $\ph_{M}: M_{V,v}\to M_{V',v'}$ is submersive, and $\oll{\ph_{M}}(\L_{V',v'})=\L_{V,v}$ by definition, hence $\ph_{M}^{!}$ sends $\Sh_{\L_{V',v'}}(M_{V',v'})$ to $\Sh_{\L_{V,v}}(M_{V,v})$.

The analogues of Lemma \ref{l:underlying} and Proposition \ref{p:real sm des} hold for $\Sh_{\L}(M/X)$.
Therefore $\Sh_{\L}(M/X)$ is independent of the base $X$, and we will simply write it as $\Sh_{\L}(M)$.

\subsubsection{Compact generation} 

\begin{lemma}\label{lem:stack comp gens} Let $X\in\Sta$, $M\in\frR_{/X}$ be smooth  and $\L\subset T^{*}M$ be a closed conic Lagrangian. Then $\Sh_{\L}(M/X)$ is compactly generated, hence dualizable.
\end{lemma}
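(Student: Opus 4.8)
The plan is to reduce the statement, via smooth descent for $\Sh_{\L}(M/X)$, to the manifold case already established in Lemma~\ref{lem:comp gens}. First I would pick a smooth surjective cover $v_{0}: V_{0}\to X$ by a scheme, set $V_{n}=(V_{0}/X)^{n+1}$ with structure maps $v_{n}: V_{n}\to X$, and put $M_{n}=M_{V_{n},v_{n}}$, $\L_{n}=\L_{V_{n},v_{n}}\subset T^{*}M_{n}$. Then each $M_{n}$ is an honest smooth manifold (more precisely a smooth real analytic space with an underlying manifold, to which Lemma~\ref{lem:comp gens} applies after passing to finitely many connected components at a time) and $\L_{n}$ is a closed $\RR_{>0}$-conic Lagrangian in $T^{*}M_{n}$. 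By the analogue of Proposition~\ref{p:real sm des}(1) for $\Sh_{\L}(-)$ noted at the end of \S\ref{ss:A stack}, termwise $!$-pullback gives an equivalence
\begin{equation*}
\Sh_{\L}(M/X)\isom \lim_{n\in\D}\Sh_{\L_{n}}(M_{n}),
\end{equation*}
a totalization of a cosimplicial stable category whose terms are compactly generated by Lemma~\ref{lem:comp gens}, and whose cosimplicial structure maps are $!$-pullbacks along submersions $M_{m}\to M_{n}$.

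Next I would invoke the standard criterion for compact generation of a limit of presentable stable categories: if each $\Sh_{\L_{n}}(M_{n})$ is compactly generated and each transition functor admits a colimit-preserving right adjoint, then the totalization is compactly generated, and its compact objects are generated by the images of the compact objects of $\Sh_{\L_{0}}(M_{0})$ under the left adjoint to the projection $\Sh_{\L}(M/X)\to \Sh_{\L_{0}}(M_{0})$. Dually, one can present $\Sh_{\L}(M/X)$ as the colimit in $\mathrm{Pr}^{L}_{\mathrm{st}}$ of the opposite diagram with $*$-pushforward functors, which are colimit-preserving; a filtered-or-finite-limit argument as in the cited results of Lurie then shows a colimit of compactly generated categories along colimit-preserving functors whose right adjoints also preserve colimits is compactly generated. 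The key input that the right adjoints (the $!$-pullbacks $\ph_{M}^{!}$ along smooth $\ph$, equivalently $*$-pullbacks since $\ph_{M}$ is a submersion) preserve colimits is automatic, and that they preserve the singular support conditions is exactly the compatibility $\oll{\ph_{M}}(\L_{V',v'})=\L_{V,v}$ built into the definition of a cone in \S\ref{sss:stack Lag} together with \cite[Prop.~5.4.13]{KS}. Dualizability then follows formally, since a compactly generated stable category is dualizable.

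The main obstacle I anticipate is purely bookkeeping: making sure the cosimplicial diagram $n\mapsto \Sh_{\L_{n}}(M_{n})$ is the right object, i.e.\ that the equivalence $\Sh_{\L}(M/X)\isom \lim_{n}\Sh_{\L_{n}}(M_{n})$ genuinely holds with $!$-pullbacks as transition functors (this needs the analogue of Lemma~\ref{l:underlying} identifying $\Sh_{\L_{n}}(M_{n}/V_{n})$ with sheaves on the underlying manifold), and that the two adjointability hypotheses — right adjoints of the cosimplicial maps preserve colimits, or equivalently left adjoints of the simplicial maps exist — are verified. Both of these are consequences of smooth base change and the formalism already set up in \S\ref{ss:A real}–\S\ref{ss:A stack}, so I expect no genuine difficulty, only care. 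I would close by remarking that the same argument, run with the relative Lagrangian conditions of \S\ref{ss: nilp shvs} in place of a fixed $\L$, yields the compact generation statements used there, but that is not needed for the present lemma.
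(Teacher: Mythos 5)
Your argument is correct and essentially the paper's: the paper likewise converts the defining limit (taken over all of $\Sch^{sm}_{/X}$ rather than a \v{C}ech nerve, an inessential difference) into a colimit along the left adjoints of the $!$-pullback transition functors, which preserve compact objects because the $!$-pullbacks preserve both limits and colimits, and then applies Lemma~\ref{lem:comp gens} termwise. One small correction to your aside: the transition functors in the colimit presentation are those left adjoints — $!$-pushforwards corrected for the singular support condition, not $*$-pushforwards — which is exactly the point the paper flags parenthetically in its proof.
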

\begin{proof}
For any morphism $\ph: (V,v)\to (V',v')$ in $\Sch^{sm}_{/X}$,  $\ph^{!}_{M}: \Sh_{\L_{V',v'}}(M_{V',v'})\to \Sh_{\L_{V,v}}(M_{V,v})$ differs from $\ph^{*}_{M}$ by a twist, and therefore it preserves both limits and colimits. In particular,  $\ph^{!}_{M}$ admits a left adjoint $\ph^{\L}_{!}: \Sh_{\L_{V,v}}(M_{V,v})\to \Sh_{\L_{V',v'}}(M_{V',v'})$ that takes compact objects to compact objects.  (Here one should be careful to recognize that  $\ph^{\L}_{M!}$ is not in general the usual $!$-pushforward due to the singular support requirement.) We can then rewrite $\Sh_{\L}(M/X)$ as the colimit of $\Sh_{\L_{V,v}}(M_{V,v})$ under the transition functors $\ph^{\L}_{M!}$. Since each $\Sh_{\L_{V,v}}(M_{V,v})$ is compactly generated by Lemma \ref{lem:comp gens},  so is the colimit $\Sh_{\L}(M/X)$.
\end{proof}

For sheaves with prescribed singular support, smooth descent in Proposition \ref{p:real sm des}(1) can be stated in the form of a codescent. More precisely, let $M\in\frR_{/X}$ be smooth and $\L\subset T^{*}M$ be a closed conic Lagrangian.  Let $p:M_{0}\to M$ be a surjective submersion in $\frR_{/X}$. Form the fiber powers $M_{n}=(M_{0}/M)^{n+1}$ in $\frR_{/X}$. Let $\L_{n}\subset T^{*}M_{n}$ be the transport of $\L$. We have a cosimplicial category $\{\Sh_{\L_{n}}(M_{n})\}$ under $!$-pullbacks. The same reasoning as in the proof of Lemma \ref{lem:stack comp gens} shows that $!$-pushforward with singular support makes $\{\Sh_{\L_{n}}(M_{n})\}$ into a simplicial category with cocontinuous functors. 

\begin{lemma}\label{l:descent}
In the above situation, $p^{\L}_{!}: \Sh_{\L_{0}}(M_{0})\to \Sh_{\L}(M)$ ($!$-pushforward with singular support conditions) realizes $\Sh_{\L}(M)$  as the colimit of the simplicial category under $!$-pushforwards with singular support:
\begin{equation*}
\xymatrix{\Sh_{\L_{0}}(M_{0}) & \Sh_{\L_{1}}(M_{1})\ar[l]& \Sh_{\L_{2}}(M_{2})\ar@<.5ex>[l]\ar@<-0.5ex>[l]   & \cdots\ar@<.7ex>[l]\ar@<-0.7ex>[l]\ar[l] }
\end{equation*}
\end{lemma}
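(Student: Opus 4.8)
The statement to be proven is Lemma \ref{l:descent}: that $p^{\L}_!$ exhibits $\Sh_{\L}(M)$ as the colimit of the simplicial category $\{\Sh_{\L_n}(M_n)\}$ under $!$-pushforwards with singular support. The plan is to combine smooth descent from Proposition \ref{p:real sm des}(1) with a formal dualization argument, exactly as in the proof of Lemma \ref{lem:stack comp gens}. First I would recall that Proposition \ref{p:real sm des}(1) gives the equivalence
\begin{equation*}
\Sh_{\L}(M) \isom \lim_n \Sh_{\L_n}(M_n)
\end{equation*}
where the transition functors are the $!$-pullbacks $q^! : \Sh_{\L_m}(M_m) \to \Sh_{\L_n}(M_n)$ associated to the cosimplicial maps $q : M_n \to M_m$ (one must first check, as is done implicitly before the lemma statement, that each such $q$ is a submersion in $\frR_{/X}$, so that $q^!$ respects the singular support conditions, since $\oll{q}(\L_m) = \L_n$ by construction of the transported cones). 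Strictly, Proposition \ref{p:real sm des}(1) is stated for $\Sh(-/X)$, but the same argument applies verbatim after intersecting with the singular support conditions on each smooth chart, since $!$-pullback along a submersion preserves singular support by the compatibility $\oll{\ph_M}(\L) $ of transported cones; I would cite this as the singular-support refinement of loc.\ cit.

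Second, I would invoke the key structural fact, already established in the proof of Lemma \ref{lem:stack comp gens}: for a submersion $\ph$ in $\frR_{/X}$, the functor $\ph^!_M$ (here a shift of $\ph^*_M$) preserves both limits and colimits, hence admits a cocontinuous left adjoint $\ph^{\L}_{M!}$ which preserves compact objects. Applied to the cosimplicial maps of the \v{C}ech nerve, this shows the cosimplicial diagram $n \mapsto \Sh_{\L_n}(M_n)$ with transition functors $q^!$ is a diagram in $\Pr^L$ (presentable categories, cocontinuous functors) all of whose arrows admit cocontinuous left adjoints. By the standard formalism (e.g.\ the equivalence between limits of such diagrams in $\Pr^L$ and colimits of the adjoint diagrams — see Lurie, or as invoked via dualizability in the excerpt), passing to left adjoints turns the limit diagram into a colimit diagram:
\begin{equation*}
\lim_n \Sh_{\L_n}(M_n) \simeq \colim_n \Sh_{\L_n}(M_n)
\end{equation*}
where the colimit is now taken over the simplicial object with structure maps $q^{\L}_!$. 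Composing the two displayed equivalences identifies $\Sh_{\L}(M)$ with the desired colimit, and unwinding the construction shows the resulting structure map $\Sh_{\L_0}(M_0) \to \Sh_{\L}(M)$ out of the $0$-simplex is precisely $p^{\L}_!$, since $p^{\L}_!$ is by definition the left adjoint of the $0$-th transition functor $p^! : \Sh_{\L}(M) \to \Sh_{\L_0}(M_0)$ in the limit presentation.

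\textbf{Main obstacle.} The genuinely delicate point is not the abstract limit/colimit duality — that is formal once one knows all transition functors are cocontinuous with cocontinuous left adjoints — but rather the verification that $q^{\L}_{M!}$, the left adjoint computed \emph{internally to} $\Sh_{\L_n}(M_n)$, assembles into an honest simplicial object, i.e.\ that these partially-defined adjoints are compatible with composition of face and degeneracy maps up to coherent homotopy. The subtlety, flagged parenthetically in the lemma statement itself (``one should be careful to recognize that $\ph^{\L}_{M!}$ is not in general the usual $!$-pushforward''), is that $q^{\L}_!$ differs from the naive $q_!$ precisely because the target has a singular support restriction; coherence of the resulting simplicial structure follows from the fact that passage to left adjoints is functorial on the $\infty$-category of presentable categories and cocontinuous functors, applied to the cosimplicial diagram as a whole rather than arrow-by-arrow. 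I would therefore phrase the argument as: the \v{C}ech nerve defines a functor $\Delta \to \Pr^L$ landing in diagrams with left adjoints, apply the (op)lax-adjunction functoriality to obtain a functor $\Delta^{op} \to \Pr^L$, and only then take the colimit — this sidesteps any ad hoc compatibility check. The remaining bookkeeping (that the $0$-th structure map is $p^{\L}_!$, and that everything is compatible with the identification $\Sh_{\L_n}(M_n) \simeq \Sh(\un M_n)$ when underlying spaces exist) is routine.
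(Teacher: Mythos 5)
Your proposal is correct and follows essentially the same route as the paper: establish the limit presentation of $\Sh_{\L}(M)$ under $!$-pullbacks by the singular-support refinement of Proposition \ref{p:real sm des}(1), then pass to left adjoints to convert the limit into the stated colimit, with the coherence of the adjoint simplicial diagram handled by the standard functoriality of passing to adjoints in $\Pr^L$. The extra detail you supply (cocontinuity of $\ph^!_M$ from the proof of Lemma \ref{lem:stack comp gens}, identification of the augmentation with $p^{\L}_!$) is exactly the content the paper leaves implicit.
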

\begin{proof}
The same proof of Proposition \ref{p:real sm des}(1) shows that $\Sh_{\L}(M)$ is equivalent to the limit of the cosimplicial category $\{\Sh_{\L_{n}}(M_{n})\}$ under $!$-pullbacks. Passing to left adjoints we get the desired codescent. 
\end{proof}

\subsubsection{Products}\label{sss:A prod}
For $i = 1, 2$, let $X_{i}\in\Sta$ and $M_{i}\in\frR_{/X_{i}}$ be smooth.  Let $\L_{i}\subset T^{*}X_{i}$  a closed conic Lagrangian. The product $M_{1}\times M_{2}$ carries the closed conic Lagrangian $\L_{1}\times \L_{2}  \subset T^{*}M_{1}\times T^{*}M_{2}$.



\begin{lemma}\label{lem:stack ext tens}
External tensor product is an equivalence
\begin{equation*}
\xymatrix{
\Sh_{\Lambda_1}(M_1) \otimes \Sh_{\Lambda_2}(M_2) \ar[r]^-\sim & 
\Sh_{\Lambda_1 \times \Lambda_2}(M_1\times M_2)
}
\end{equation*}
\end{lemma}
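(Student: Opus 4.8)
The statement is the relative-stack analogue of Lemma~\ref{lem:ext tens}, and the plan is to reduce it to that manifold case by the same descent machinery already set up for $\Sh_\L$. First I would choose, for $i=1,2$, a smooth surjection $v_i: V_i \to X_i$ from a scheme, so that $(V_i, v_i) \in \Sch^{sm}_{/X_i}$, and set $V_{i,n} = (V_{i,0}/X_i)^{n+1}$ with induced maps $v_{i,n}: V_{i,n}\to X_i$; write $M_{i,n} = M_{i,V_{i,n},v_{i,n}}$ and $\L_{i,n} \subset T^*M_{i,n}$ for the transported Lagrangian. By the singular-support version of smooth descent (Proposition~\ref{p:real sm des}(1) as refined in Lemma~\ref{l:descent}), $\Sh_{\L_i}(M_i) \simeq \lim_n \Sh_{\L_{i,n}}(M_{i,n})$ under $!$-pullbacks, and likewise $\Sh_{\L_1\times\L_2}(M_1\times M_2)\simeq \lim_n \Sh_{\L_{1,n}\times \L_{2,n}}(M_{1,n}\times M_{2,n})$, using that $M_1\times M_2$ is covered by the products $M_{1,n}\times M_{2,n}$ with $(V_1\times V_2, v_1\times v_2)$ providing a smooth cover of $X_1\times X_2$. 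Here each $M_{i,n}$ is an honest smooth manifold (or disjoint union of such) and each $\L_{i,n}$ an honest closed $\RR_{>0}$-conic Lagrangian in its cotangent bundle.

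The heart of the argument is then to combine these two limit presentations. Since $\Sh_{\L_1}(M_1)$ is dualizable by Lemma~\ref{lem:stack comp gens}, tensoring with it commutes with limits, so
\begin{equation*}
\Sh_{\L_1}(M_1)\otimes \Sh_{\L_2}(M_2) \simeq \Sh_{\L_1}(M_1)\otimes \lim_n \Sh_{\L_{2,n}}(M_{2,n}) \simeq \lim_n \left( \Sh_{\L_1}(M_1)\otimes \Sh_{\L_{2,n}}(M_{2,n}) \right).
\end{equation*}
Applying the same reasoning in the first variable (each $\Sh_{\L_{2,n}}(M_{2,n})$ is dualizable, being a category of sheaves on a manifold with prescribed singular support, by Lemma~\ref{lem:comp gens}) gives $\Sh_{\L_1}(M_1)\otimes\Sh_{\L_2}(M_2) \simeq \lim_{n}\lim_{m} \Sh_{\L_{1,m}}(M_{1,m})\otimes \Sh_{\L_{2,n}}(M_{2,n})$, and the diagonal of this bi-cosimplicial system computes the same limit. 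By Lemma~\ref{lem:ext tens} applied to the manifolds $M_{1,n}$ and $M_{2,n}$, external tensor product identifies $\Sh_{\L_{1,n}}(M_{1,n})\otimes \Sh_{\L_{2,n}}(M_{2,n})$ with $\Sh_{\L_{1,n}\times\L_{2,n}}(M_{1,n}\times M_{2,n})$, compatibly with the cosimplicial structure maps (the transition functors are $!$-pullbacks along smooth maps, and external tensor commutes with $!$-pullback). Passing to the limit over $n$ yields the claimed equivalence $\Sh_{\L_1}(M_1)\otimes\Sh_{\L_2}(M_2)\isom \Sh_{\L_1\times\L_2}(M_1\times M_2)$.

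I expect the main technical obstacle to be bookkeeping the compatibility of external tensor product with the cosimplicial (descent) structure at the level of $(\infty,1)$-categories: one must check that the equivalences of Lemma~\ref{lem:ext tens} for the various $M_{i,n}$ assemble into an equivalence of cosimplicial objects, which requires knowing that $\boxtimes$ and $!$-pullback along the face and degeneracy maps commute coherently, not just on homotopy categories. This is where the dualizability inputs (Lemmas~\ref{lem:comp gens} and~\ref{lem:stack comp gens}) do real work, since they let one move the tensor product inside limits without fussing over base change of non-dualizable categories. A secondary point to verify carefully is that the product cover $(V_1\times V_2, v_1\times v_2)$ and its fiber powers genuinely present both $M_1\times M_2$ and $\L_1\times\L_2$ — i.e. that $(\L_1\times\L_2)_{V_1\times V_2} = \L_{1,V_1}\times\L_{2,V_2}$ as subsets of $T^*(M_{1,V_1}\times M_{2,V_2})$ — which follows directly from the definition of the product Lagrangian in \S\ref{sss:stack Lag} but should be stated explicitly.
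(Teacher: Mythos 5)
Your proof is correct, and it follows the same overall strategy as the paper — present everything by \v{C}ech descent along smooth scheme covers of $X_1$, $X_2$ and reduce to the manifold statement (Lemma~\ref{lem:ext tens}) applied term-wise — but it implements the key interchange step dually. The paper invokes Lemma~\ref{l:descent} to rewrite each of $\Sh_{\Lambda_1}(M_1)$, $\Sh_{\Lambda_2}(M_2)$ and $\Sh_{\Lambda_1\times\Lambda_2}(M_1\times M_2)$ as a \emph{colimit} of the simplicial diagram of categories on the \v{C}ech covers, with transition functors the left adjoints ($!$-pushforwards constrained by singular support); since the Lurie tensor product commutes with colimits in each variable automatically, no dualizability is needed at this point and the proof is one line after Lemma~\ref{lem:ext tens}. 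You instead stay with the \emph{limit} (descent) presentation by $!$-pullbacks, and therefore must pay for the interchange $\Sh_{\Lambda_1}(M_1)\otimes\lim_n(-)\simeq\lim_n(\Sh_{\Lambda_1}(M_1)\otimes(-))$ with the dualizability inputs of Lemmas~\ref{lem:comp gens} and~\ref{lem:stack comp gens}, plus a diagonal-of-bicosimplicial-limit argument (the diagonal $\Delta\to\Delta\times\Delta$ being initial). Both routes are valid; yours mirrors the mechanism already used inside the proof of the manifold case Lemma~\ref{lem:ext tens}, while the paper's codescent route buys a shorter argument that sidesteps both the dualizability hypotheses and the coherence bookkeeping for $\boxtimes$ against the cosimplicial structure maps that you rightly flag as the delicate point (in the colimit picture the analogous compatibility is inherited by passing to left adjoints of the same pullback diagram).
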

\begin{proof} 
Let $(U,u)\in \Sch^{sm}_{/X_{1}}$ with $u$ surjective, and $(V,v)\in \Sch^{sm}_{/X_{2}}$ with $v$ surjective. Let $(U_{n},u_{n})=(U/X_{1})^{n+1}\in\Sch^{sm}_{/X_{1}}$ and $(V_{n},v_{n})=(V/X_{2})^{n+1}\in \Sch^{sm}_{/X_{2}}$. Define $M_{1,n}=M_{1,U_{n},u_{n}}$ and $M_{2,n}=M_{2,V_{n},v_{n}}$, with Lagrangians $\L_{1,n}=\L_{1,U_{n},u_{n}}$ and $\L_{2,n}=\L_{2,V_{n},v_{n}}$. By Lemma \ref{l:descent}, $\Sh_{\L_{1}}(M_{1})$ is equivalence to the colimit of the simplicial category by $!$-pushforwards with singular support
\begin{equation*}
\xymatrix{\Sh_{\L_{0}}(M_{1,0}) & \Sh_{\L_{1}}(M_{1,1})\ar[l]& \Sh_{\L_{2}}(M_{1,2})\ar@<.5ex>[l]\ar@<-0.5ex>[l]   & \cdots\ar@<.7ex>[l]\ar@<-0.7ex>[l]\ar[l] }
\end{equation*}
Similarly, $\Sh_{\L_{2}}(M_{2})$ is equivalence to the colimit of the simplicial category $\{\Sh_{\L_{2,n}}(M_{2,n})\}_{n\ge0}$, and $\Sh_{\Lambda_1 \times \Lambda_2}(M_1 \times M_2) $ is equivalence to the colimit of the simplicial category $\{\Sh_{\L_{1,n} \times \L_{2,n}}(M_{1,n}\times M_{2,n})\}_{n\ge0}$. Tensor product commutes with colimits so the assertion  follows from Lemma~\ref{lem:ext tens} applied term-wise.
\end{proof}



%

\subsubsection{Left adjoint to $*$-restriction}
Let $X\in\Sta$, $M\in\frR_{/X}$ be smooth, and $M_{0}\subset M$ be a smooth closed subspace.  Let $\L\subset T^{*}M$ and $\L_{0}\subset T^{*}M_{0}$ be closed conic Lagrangians. 


\begin{lemma}\label{lem:rest adj st} Let $i: M_{0}\incl M$ be the closed embedding. Assume the restriction $i^*:\Sh(M) \to \Sh(M_0)$ takes $\Sh_{\Lambda}(M)$ to $\Sh_{\Lambda_0}(M_0)$. Then the induced functor $i^*_{\L}:\Sh_{\Lambda}(M)\to \Sh_{\Lambda_0}(M_{0})$ admits a left adjoint. 
\end{lemma}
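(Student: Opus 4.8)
The plan is to reduce the assertion to the manifold statement Lemma~\ref{lem:rest adj} by means of the smooth-descent description of $\Sh_{\Lambda}$, and then to apply the adjoint functor theorem. Both $\Sh_{\Lambda}(M)$ and $\Sh_{\Lambda_0}(M_0)$ are compactly generated, hence presentable, by Lemma~\ref{lem:stack comp gens}. The functor $i^*_{\Lambda}$ is obtained by corestricting the $*$-pullback $i^*:\Sh(M)\to\Sh(M_0)$ to the full subcategories cut out by the singular-support conditions; since conic Lagrangian singular-support conditions are closed under limits and colimits in $\Sh(-)$ — the stack analogue, verified chart-wise, of the remark in \S\ref{sss:Lag mfd} — the functor $i^*_{\Lambda}$ is exact and preserves colimits, hence is accessible. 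By the adjoint functor theorem \cite[Cor.~5.5.2.9]{Lhtt} it therefore suffices to show that $i^*_{\Lambda}$ preserves limits, and, being exact, that it preserves arbitrary products.

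For this I would unwind the definition $\Sh_{\Lambda}(M)=\lim_{(V,v)\in\Sch^{sm}_{/X}}\Sh_{\Lambda_{V,v}}(M_{V,v})$, and likewise $\Sh_{\Lambda_0}(M_0)=\lim_{(V,v)}\Sh_{\Lambda_{0,V,v}}(M_{0,V,v})$, where $M_{0,V,v}\subset M_{V,v}$ is the closed submanifold cut out by $M_0$ and $\Lambda_{0,V,v}\subset T^*M_{0,V,v}$ is the value of $\Lambda_0$ at $(V,v)$. Real-analytic $*$-restriction along the closed embeddings $M_{0,V,v}\hookrightarrow M_{V,v}$ commutes with the $!$-pullbacks along the smooth morphisms of $\Sch^{sm}_{/X}$ (base change for a closed embedding), so these functors assemble into a natural transformation of the two limit diagrams inducing $i^*_{\Lambda}$. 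Since limits, and in particular products, in a limit of $\infty$-categories are computed termwise, it is enough to know that each component $i^*_{\Lambda_{V,v}}:\Sh_{\Lambda_{V,v}}(M_{V,v})\to\Sh_{\Lambda_{0,V,v}}(M_{0,V,v})$ is well-defined and preserves products; granting well-definedness, product-preservation is exactly Lemma~\ref{lem:rest adj} applied to the closed submanifold $M_{0,V,v}\subset M_{V,v}$ and the Lagrangians $\Lambda_{V,v}$, $\Lambda_{0,V,v}$.

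The one remaining point — and the one I expect to be the main obstacle — is the well-definedness: that the hypothesis ``$i^*$ takes $\Sh_{\Lambda}(M)$ to $\Sh_{\Lambda_0}(M_0)$'' descends to its chart-wise counterpart ``$i^*_{V,v}$ takes $\Sh_{\Lambda_{V,v}}(M_{V,v})$ to $\Sh_{\Lambda_{0,V,v}}(M_{0,V,v})$'' for every $(V,v)\in\Sch^{sm}_{/X}$. The content is the locality of singular support: one always has $\ssupp(i^*_{V,v}\cG)\subset\oll{i_{V,v}}(\ssupp(\cG))\subset\oll{i_{V,v}}(\Lambda_{V,v})$ (with the requisite subanalyticity, cf.\ \cite[\S5.4]{KS}), so the chart-wise statement holds as soon as the inclusion of cones $\oll{i_{V,v}}(\Lambda_{V,v})\subset\Lambda_{0,V,v}$ does, a condition detected after restriction to arbitrarily small opens; and since membership in $\Sh_{\Lambda}(M)$ is by definition the conjunction of the chart-wise singular-support conditions, one expects the global hypothesis to be equivalent to this chart-wise inclusion. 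Proving that equivalence cleanly — or, what suffices, extracting from a global section of $\Sh_\Lambda(M)$ enough microlocal mass along each component of $\Lambda_{V,v}$ (using the Morse-type compact generators of Lemma~\ref{lem:stack comp gens}) — is the delicate step; alternatively one may pass to the Čech nerve of a fixed smooth atlas and simply take the hypothesis in its chart-wise form, which is how it actually arises in the applications of the lemma (e.g.\ in the proof of Proposition~\ref{p:sh functor from C}, where $i=\sfBun^{\RR}_{\varphi}$ and the singular-support compatibility is established term by term). Once the well-definedness is secured, the termwise product-preservation of the previous paragraph shows $i^*_{\Lambda}$ preserves products, hence all limits, and the adjoint functor theorem produces the desired left adjoint, completing the proof.
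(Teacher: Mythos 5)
Your overall strategy (reduce to preservation of products, test after pullback to charts, and feed the chart-wise statement into Lemma~\ref{lem:rest adj}) is the same as the paper's, but the execution has a genuine gap at exactly the point you flag as ``the delicate step,'' and the way you propose to resolve it does not work. You want to assemble the chart-wise restrictions into a natural transformation of the two limit diagrams, which forces you to know that each $i^*_{V,v}$ sends $\Sh_{\L_{V,v}}(M_{V,v})$ into $\Sh_{\L_{0,V,v}}(M_{0,V,v})$. This chart-wise statement is \emph{not} a consequence of the global hypothesis: the hypothesis only constrains $i^*$ on sheaves pulled back from the stack (equivariant/monodromic-type objects), while an arbitrary $\cG \in \Sh_{\L_{V,v}}(M_{V,v})$ need not arise this way, and the paper's proof explicitly records that ``$i_N^*$ does not necessarily send $\Sh_{L}(N)$ to $\Sh_{L_0}(N_0)$.'' Your supporting estimate $\ssupp(i^*\cG)\subset \oll{i}(\ssupp\cG)$ is also not valid for a closed embedding without a non-characteristicity hypothesis on $\ssupp\cG$, so it cannot be used to manufacture the chart-wise containment. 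Finally, your fallback of ``taking the hypothesis in its chart-wise form'' changes the statement of the lemma and does not match its use in the proof of Proposition~\ref{p:sh functor from C}(3), where what is verified is precisely the global statement (the categories $\Phi_*(n,S)$ are cut out by singular-support conditions on the real stacks themselves, cf.\ Lemma~\ref{l:ShN}).

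The paper's proof sidesteps the issue rather than proving the chart-wise containment: on a chart $i_N: N_0=M_{0,V,v}\incl N=M_{V,v}$ one \emph{enlarges} the target, choosing some closed conic Lagrangian $L'_0\supset L_0$ with $i_N^*(\Sh_{L}(N))\subset \Sh_{L'_0}(N_0)$ (always possible, e.g.\ by taking a stratification adapted to $L$ and refining $N_0$), and applies Lemma~\ref{lem:rest adj} to $i_L^*:\Sh_L(N)\to\Sh_{L'_0}(N_0)$. One then does not need a natural transformation of limit diagrams with the original targets at all: the comparison map $i^*_\L\prod\cF_\a\to\prod i^*_\L\cF_\a$ is a map between two specific objects of $\Sh_{\L_0}(M_0)$ (the target conditions being supplied by the \emph{global} hypothesis), and it suffices to check it becomes an isomorphism after $\nu_0^!$ for each chart; there one commutes $\nu_0^! i^*_\L \simeq i_L^*\nu^!$ (the $*$- and $!$-pullbacks along the submersions $\nu,\nu_0$ differ by a twist), and uses that $\nu^!$, $\nu_0^!$ preserve products as right adjoints together with the product-preservation of $i_L^*$ in the enlarged category $\Sh_{L'_0}(N_0)$. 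So your reduction to the adjoint functor theorem and to Lemma~\ref{lem:rest adj} is the right idea, but the missing ingredient is this enlargement-of-Lagrangian maneuver; as written, your argument either assumes a containment that can fail or proves a weaker lemma than the one needed.
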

\begin{proof}
We need to check that $i_{\L}^{*}$ preserves products.  For $\cF_\a\in \Sh_{\Lambda}(M)$ ($\a \in I$),  we seek to show the natural map $i^*_{\L}\prod_{\a\in I} \cF_\a\to \prod_{\a\in I} i^*_{\L}\cF_\a$  is an isomorphism. By the definition of $\Sh_{\L_{0}}(M_{0})$ as a limit, it suffices to see its $!$-pullback along any $(V,v)\in\Sch^{sm}_{/X}$ is an isomorphism. Let $i_{N}: N_{0}:=M_{0,V,v}\incl M_{V,v}=:N$ be the corresponding closed embedding of manifolds, and $\nu_{0}: N_{0}\to M_{0}$, $\nu: N\to M$ be the submersions, and $L_{0}=\L_{0,V,v}\subset T^{*}N_{0}$ and $L=\L_{V,v}\subset T^{*}N$,  we need to show that
\begin{equation}\label{nu0 isom}
\nu_{0}^{!}i^*_{\L}\prod \cF_\a\to \nu^{!}_{0}\prod  i^*_{\L}\cF_\a
\end{equation}
is an isomorphism in $\Sh(N_{0})$. 

Note that $i^{*}_{N}$ does not necessarily send  $\Sh_{L}(N)$ to $\Sh_{L_{0}}(N_{0})$. Let $L'_{0}\subset T^{*}N_{0}$ be a closed conic Lagrangian containing $L_{0}$ such that $i_{N}^{*}$ sends $\Sh_{L}(V)$ to $\Sh_{L'_{0}}(V_{0})$. Let $i_{L}^{*}:\Sh_{L}(N)\to\Sh_{L'_{0}}(N_{0}) $ be the induced functor.

Using that $\nu_{0}^{*}$ and $\nu_{0}^{!}$, $\nu^{*}$ and $\nu^{!}$ differ by a twist, 
\begin{equation}\label{vi1}
\nu_{0}^{!}i^*_{\L}\prod  \cF_\a\simeq i^{*}_{L} \nu^{!}\prod \cF_\a.
\end{equation}
Since $\nu^{!}$ is a right adjoint, it preserves products. By Lemma~\ref{lem:rest adj}, $ i_{L}^{*}$ also preserves products, therefore
\begin{equation}\label{vi2}
i_{L}^{*}\nu^{!}\prod \cF_\a\simeq \prod i_{L}^{*}\nu^{!}\cF_{\a}\simeq \prod \nu_{0}^{!}i_{\L}^{*}\cF_{\a}
\end{equation}
(the last two products are taken in $\Sh_{L'_{0}}(V_{0})$). Since $\nu_{0}^{!}$ is a right adjoint, it preserves products , we have
\begin{equation}\label{vi3}
\prod \nu_{0}^{!}i_{\L}^{*}\cF_{\a}\simeq \nu_{0}^{!}\prod i_{\L}^{*}\cF_{\a}.
\end{equation}
Combining \eqref{vi1}, \eqref{vi2} and \eqref{vi3}, we get that \eqref{nu0 isom} is an isomorphism in $\Sh_{L'_{0}}(V_{0})$. We conclude that the natural map $i^*_{\L}\prod  \cF_\a\to \prod i_{\L}^{*}\cF_{\a}$ is an isomorphism in $\Sh_{\L_{0}}(X_{0})$.
\end{proof}


\subsection{Monodromic gluing}
\sss{Local system on abelian Lie groups}\label{sss:Sh0H}
Let $H$ be an abelian Lie group  with Lie algebra $\frh$. 
Let $\Sh_0(H)$ denote the category of locally constant sheaves on $H$.  Equip $\Sh_0(H)$ with its monoidal convolution product: the monoidal product  is the composition 
\begin{equation*}
\xymatrix{
\Sh_0(H) \otimes \Sh_0(H) \ar[r]^-{\boxtimes} & \Sh_0(H \times H) \ar[rr]^-{m_![\dim_{\RR}H]} && \Sh_0(H)
}
\end{equation*}
where  $m:H\times H\to H$ is the group multiplication. 

Let $\pi_{H}: H_{\univ}\to H$ be the universal cover of $H$ as a Lie group. Let
\begin{equation*}
\cL_{\univ}:=\pi_{H,!}\QQ\in \Sh_{0}(H)
\end{equation*}
be the {\em universal local system} on $H$. The stalk of $\cL_{\univ}$ at the identity $e\in H$ is canonically isomorphic to the group algebra $\QQ[\pi_{1}(H,e)]$. 

Then the monoidal unit of $\Sh_{0}(H)$ is $\cL_{\univ}$ (note the shift by $\dim_{\RR}H$ in the definition of the convolution product). Since $H$ is abelian, the monoidal structure is naturally symmetric.

\subsubsection{Monodromic gluing} We put ourselves in the setting of \S\ref{sss:A prod}. Let $H$ be an algebraic group acting on $X_{1}$ on the right and acting on $X_{2}$ on the left.  Let $M_{i}\in (\frR_{/X_{i}})^{H}$ be an $H$-equivariant object that is smooth, $i=1,2$. Then $[M_{1}/H]\in \frR_{/[X_{1}/H]}$ and $[H\bs M_{2}]\in\frR_{/[H\bs X_{1}]}$. Let $\un\L_{1}\subset T^{*}(M_{1}/H)$ and $\un\L_{2}\subset T^{*}(H\bs M_{2})$ be closed conic Lagrangians. Let $\L_{i}\subset T^{*}M_{i}$ be the pullback of $\un\L_{i}$.

Convolution makes $\Sh_{\Lambda_2}(M_2)$ into a left $\Sh_0(H)$-module: the action map
is the composition
\begin{equation*}
\xymatrix{
\Sh_0(H) \otimes \Sh_{\Lambda_2}(M_2) \ar[r]^-{\boxtimes} & \Sh_{0 \times \Lambda_2}(H \times M_2) \ar[rr]^-{a_{2!}[\dim_{\RR} H]} && \Sh_{\Lambda_2}(M_2)
}
\end{equation*}
Similarly convolution makes $\Sh_{\Lambda_1}(X_1)$ into a right $\Sh_0(H)$-module. 

Consider the object $M_{1}\times^{H}M_{2}\in\frR_{X_{1}\times^{H}X_{2}}$. The product Lagrangian $\un\L_{1}\times\un \L_{2}\subset T^{*}(M_{1}/H)\times T^{*}(H\bs M_{2})$ pulls back to a Lagrangian $\L_{1}\times^{H}\L_{2}\subset T^{*}(M_{1}\times^{H}M_{2})$. 

\begin{lemma}\label{lem:monod gl}
External tensor product induces an equivalence
\begin{equation*}
\xymatrix{
\Sh_{\Lambda_1}(M_1) \otimes_{\Sh_0(H)}  \Sh_{\Lambda_2}(M_2) \ar[r]^-\sim & 
\Sh_{\Lambda_1 \times^H \Lambda_2}(M_1\times^H M_2)
}
\end{equation*}
\end{lemma}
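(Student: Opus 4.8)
The plan is to reduce the monodromic statement to the already-established external-tensor-product equivalence of Lemma~\ref{lem:stack ext tens} together with the smooth codescent of Lemma~\ref{l:descent}. Recall that $\Sh_0(H)$ is compactly generated and that $\Sh_{\Lambda_1}(M_1)\otimes_{\Sh_0(H)}\Sh_{\Lambda_2}(M_2)$ is by definition the geometric realization (colimit) of the bar construction whose $n$-th term is $\Sh_{\Lambda_1}(M_1)\otimes \Sh_0(H)^{\otimes n}\otimes \Sh_{\Lambda_2}(M_2)$, with the face maps built from the two module actions and the convolution product on $\Sh_0(H)$. By Lemma~\ref{lem:stack ext tens} applied termwise (and the observation that $\Sh_0(H)\simeq \Sh_{0_H}(H)$, with $0_H\subset T^*H$ the zero section, a Lagrangian), this bar object is identified with the simplicial category $[n]\mapsto \Sh_{\Lambda_1\times 0_H^n\times \Lambda_2}(M_1\times H^n\times M_2)$. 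So the task is to identify the colimit of this simplicial diagram with $\Sh_{\Lambda_1\times^H\Lambda_2}(M_1\times^H M_2)$.

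First I would observe that $M_1\times H^{n}\times M_2$, with the face maps of the bar construction, is precisely the \v Cech nerve of the quotient submersion $p\colon M_1\times M_2\to M_1\times^H M_2$ in $\frR_{/X_1\times^H X_2}$: the $0$-simplices are $M_1\times M_2$, the $1$-simplices $M_1\times H\times M_2\simeq (M_1\times M_2)\times_{M_1\times^H M_2}(M_1\times M_2)$ via $(m_1,h,m_2)\mapsto ((m_1,hm_2),(m_1h,m_2))$, and so on, exactly as in \S\ref{sss:A prod}. Under this identification the Lagrangian $\Lambda_1\times 0_H^n\times\Lambda_2$ is the transport of $\Lambda_1\times^H\Lambda_2$ along the $n$-th structure map of the nerve. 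Second I would invoke Lemma~\ref{l:descent} for the surjective submersion $p$: it says that $!$-pushforward with singular support exhibits $\Sh_{\Lambda_1\times^H\Lambda_2}(M_1\times^H M_2)$ as the colimit over $\Delta^{op}$ of $\Sh$ of the nerve with the transported Lagrangians, with simplicial structure maps the $!$-pushforwards. The remaining point is that the simplicial structure maps coming from the bar construction of the $\Sh_0(H)$-module structure (i.e.\ $a_{i!}[\dim_\RR H]$ for the action maps and $m_![\dim_\RR H]$ for convolution) agree, under the nerve identification, with the $!$-pushforwards along the face maps of $p$; this is a direct unwinding of the definitions of the action and convolution functors in \S\ref{sss:Sh0H} and the paragraph after Lemma~\ref{lem:stack comp gens}, together with base change, keeping careful track of the cohomological shifts $[\dim_\RR H]$ so they telescope correctly across the simplicial object. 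Granting this, the two colimit presentations coincide and external tensor product $\boxtimes$ on $0$-simplices induces the asserted equivalence.

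The step I expect to be the main obstacle is the last one: matching the simplicial face and degeneracy maps of the $\Sh_0(H)$-relative bar construction with those of the \v Cech nerve of $p$, as functors respecting the singular-support conditions. The subtlety is that the bar-complex face maps are assembled from $!$-pushforwards along the \emph{multiplication} maps $H\times H\to H$ and the action maps $H\times M_i\to M_i$, whereas the codescent face maps are $!$-pushforwards along projections in the nerve; one must check that under the change-of-coordinates isomorphism $M_1\times H^n\times M_2\xrightarrow{\ \sim\ }(M_1\times M_2)^{\times_{M_1\times^H M_2}(n+1)}$ these two families of maps literally correspond, including the shifts, and that all the functors in sight are the correct $!$-pushforwards-with-singular-support (which, as noted after Lemma~\ref{lem:stack comp gens}, are \emph{not} the naive $!$-pushforwards but their corrected versions along submersions). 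Since all maps involved are submersions with smooth fibers, $!$-pushforward with singular support is well-behaved and the compatibilities are formal consequences of proper/smooth base change, but bookkeeping the shifts and the torsor combinatorics is where the real work lies. Once that is in place, compact generation (Lemma~\ref{lem:stack comp gens}) guarantees the colimits are computed in the expected way and $\boxtimes$ is essentially surjective, completing the proof.
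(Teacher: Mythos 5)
Your proposal follows essentially the same route as the paper's proof: present the left side as the colimit of the bar complex, identify it termwise via Lemma~\ref{lem:stack ext tens} with $\Sh_{\Lambda_1\times 0\times\cdots\times\Lambda_2}(M_1\times H^n\times M_2)$ with $!$-pushforwards-with-singular-support as face maps, and compare with the codescent presentation of $\Sh_{\Lambda_1\times^H\Lambda_2}(M_1\times^H M_2)$ from Lemma~\ref{l:descent} for the surjective submersion $M_1\times M_2\to M_1\times^H M_2$. The only difference is presentational: the paper simply asserts the two simplicial diagrams coincide, whereas you spell out (correctly) that matching the bar face maps with the \v{C}ech-nerve face maps, shifts included, is the remaining bookkeeping.
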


\begin{proof}
On the one hand,  the left hand side is calculated as the colimit of the bar complex
\begin{equation*}
\xymatrix{
\Sh_{\Lambda_1}(M_1) \otimes  \Sh_{\Lambda_2}(M_2) &  \ar@<0.5ex>[l] \ar@<-0.5ex>[l]  
\Sh_{\Lambda_1}(M_1) \otimes \Sh_0(H) \otimes  \Sh_{\Lambda_2}(M_2) &  \ar@<0.75ex>[l] \ar[l]   \ar@<-0.75ex>[l]  \cdots 
}
\end{equation*}
By Lemma~\ref{lem:stack ext tens}, this is equivalent to the diagram with maps given by $!$-pushforwards with singular support
\begin{equation}\label{Hmon left}
\xymatrix{
\Sh_{\Lambda_1 \times \Lambda_2}(M_1 \times M_2) &  \ar@<0.5ex>[l] \ar@<-0.5ex>[l]  
\Sh_{\Lambda_1 \times 0 \times \Lambda_2}(M_1 \times H \times M_2) &  \ar@<0.75ex>[l] \ar[l]   \ar@<-0.75ex>[l]  \cdots 
}
\end{equation}
On the other hand, by Lemma \ref{l:descent}, the right hand side can be calculated as the colimit of $!$-pushforwards with singular support
\begin{equation}\label{Hmon right}
\xymatrix{
\Sh_{\Lambda_1 \times \Lambda_2}(M_1 \times M_2) & \ar@<0.5ex>[l] \ar@<-0.5ex>[l]  
\Sh_{\Lambda_1 \times 0 \times \Lambda_2}(M_1 \times H \times M_2) & \ar@<0.75ex>[l] \ar[l]   \ar@<-0.75ex>[l]   \cdots 
}
\end{equation}
Since \eqref{Hmon left} and \eqref{Hmon right} are the same, we get  a natural equivalence between $\Sh_{\Lambda_1}(M_1) \otimes_{\Sh_0(H)}  \Sh_{\Lambda_2}(M_2) $ and $\Sh_{\Lambda_1 \times^H \Lambda_2}(M_1\times^H M_2)$, which is easily checked to be given by external tensor product. 
\end{proof}

The equivalence in Lemma \ref{lem:monod gl} is functorial with respect to pushforward and pullback. The proof follows the same argument as Lemma \ref{lem:monod gl} so we omit it.

\begin{lemma}\label{l:monod gl func} Suppose $(X_{1}, M_{1}, \L_{1}, X_{2}, M_{2}, \L_{2})$ and $(X'_{1}, M'_{1}, \L'_{1}, X'_{2}, M'_{2}, \L'_{2})$ are two situations as in Lemma \ref{lem:monod gl} (with the same group $H$). Let $(f_{i}, f_{M,i}): (X_{i}, M_{i})\to (X'_{i}, M'_{i})$ be morphisms in $\frR^{\Sch}_{\Sta}$ for $i=1,2$ such that $f_{M,i*}(\Sh_{\L_{i}}(M_{i}))\subset\Sh_{\L'_{i}}(M'_{i})$. Define $(f=(f_{1},f_{2}),f_{M}=(f_{1,M}, f_{2,M})): (X_{1}\twtimes{H}X_{2}, M_{1}\twtimes{H} M_{2})\to (X'_{1}\twtimes{H}X'_{2}, M'_{1}\twtimes{H} M'_{2})$, another morphism in $\frR^{\Sch}_{\Sta}$.  Then $f_{M*}$ sends $\Sh_{\L_{1}\twtimes{H}\L_{2}}(M_{1}\twtimes{H}M_{2})$ to $\Sh_{\L'_{1}\twtimes{H}\L'_{2}}(M'_{1}\twtimes{H}M'_{2})$, and the equivalence in Lemma \ref{lem:monod gl} intertwines $f_{1,M*}\ot f_{2,M*}$ on the left side and $f_{M*}$ on the right. 

The same is true for $f^{*}$ if $f_{M,i}^{*}(\Sh_{\L'_{i}}(M'_{i}))\subset \Sh_{\L_{i}}(M_{i})$, $i=1,2$.
\end{lemma}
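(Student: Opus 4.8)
The plan is to re-run the colimit-of-simplicial-diagrams argument from the proof of Lemma~\ref{lem:monod gl}, now applied to morphisms. Note first that for the displayed morphism $(f,f_M)\colon (X_1\twtimes{H}X_2,M_1\twtimes{H}M_2)\to(X'_1\twtimes{H}X'_2,M'_1\twtimes{H}M'_2)$ even to be defined, each $f_i$ and each $f_{i,M}$ is necessarily $H$-equivariant (for the right actions on the ``$1$'' factors and the left actions on the ``$2$'' factors). Reducing to smooth covers as in \S\ref{sss:real sh functors} and using the Künneth formula $(g_1\times g_2)_*(\cF_1\bt\cF_2)\simeq g_{1*}\cF_1\bt g_{2*}\cF_2$ together with $H$-equivariance of $f_{i,M}$ (so that $f_{1,M}\circ a_1=a'_1\circ(f_{1,M}\times\mathrm{id}_H)$ on action maps, and likewise for $M_2$), one checks that $f_{1,M*}$ (resp.\ $f_{2,M*}$) carries the right (resp.\ left) $\Sh_0(H)$-module structure of $\Sh_{\Lambda_1}(M_1)$ (resp.\ $\Sh_{\Lambda_2}(M_2)$) to that of $\Sh_{\Lambda'_1}(M'_1)$ (resp.\ $\Sh_{\Lambda'_2}(M'_2)$). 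Combined with the hypothesis that these functors preserve the prescribed singular supports, this exhibits a functor $f_{1,M*}\otimes_{\Sh_0(H)}f_{2,M*}$ between the relative tensor products, presented as the geometric realization of the morphism of bar complexes whose $n$-th term is $f_{1,M*}\otimes\mathrm{id}_{\Sh_0(H)^{\otimes n}}\otimes f_{2,M*}$; the statement for $f^*$ is set up identically, using that $f_{i,M}^*$ is visibly a module map.

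The key geometric input is that the square with vertical maps the $H$-torsors $q\colon M_1\times M_2\to M_1\twtimes{H}M_2$, $q'\colon M'_1\times M'_2\to M'_1\twtimes{H}M'_2$ and horizontal maps $f_{1,M}\times f_{2,M}$, $f_M$ is Cartesian: one checks directly, using freeness of the diagonal $H$-action, that $(m_1,m_2)\mapsto([m_1,m_2],(f_{1,M}(m_1),f_{2,M}(m_2)))$ is an isomorphism $M_1\times M_2\isom (M_1\twtimes{H}M_2)\times_{M'_1\twtimes{H}M'_2}(M'_1\times M'_2)$. Taking fibre powers, all the squares relating the Čech nerve of $q$ to that of $q'$ are Cartesian with submersive horizontal (face) maps, and $f_M$ restricts on the $n$-th Čech term to $f_{1,M}\times\mathrm{id}_{H^n}\times f_{2,M}$.

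I would then conclude as follows. First, $f_{M*}$ preserves the singular-support condition $\Lambda_1\twtimes{H}\Lambda_2$: pulling back along the surjective submersion $q'$ and using smooth base change over the Cartesian square reduces this to the statement that $(f_{1,M}\times f_{2,M})_*$, which by Künneth is the external tensor product of $f_{1,M*}$ and $f_{2,M*}$, carries $\Sh_{\Lambda_1\times\Lambda_2}$ to $\Sh_{\Lambda'_1\times\Lambda'_2}$ (from the hypotheses, together with $ss(\cF_1\bt\cF_2)=ss(\cF_1)\times ss(\cF_2)$ and transport of Lagrangians along the submersion $q'$, Lemma~\ref{l:Cart Lag}). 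Second, under the equivalences of Lemma~\ref{lem:monod gl} and Lemma~\ref{lem:stack ext tens}, which identify the bar complex of the relative tensor product with the codescent complex of Lemma~\ref{l:descent} for the torsor $q$, the morphism of bar complexes of the first paragraph is carried term-by-term, via Künneth, to the maps $(f_{1,M}\times\mathrm{id}_{H^n}\times f_{2,M})_*$; these commute with the codescent structure maps (the $!$-pushforwards-with-singular-support along the Čech face maps) because $*$-pushforward commutes with $*$-pullback along the submersive face maps by smooth base change over the Cartesian squares, hence with their left adjoints up to the standard twist. Passing to geometric realizations identifies $f_{M*}$ with $f_{1,M*}\otimes_{\Sh_0(H)}f_{2,M*}$ under Lemma~\ref{lem:monod gl}. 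The case of $f^*$ is entirely parallel, and could alternatively be deduced by passing to left adjoints, using that $\Sh_0(H)$ is rigid so that tensor products of functors are well behaved under adjunction.

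I expect the main obstacle to be organizational rather than geometric: assembling the pointwise data above into a genuine morphism of simplicial objects in $\St$ compatible with the two presentations (bar complex versus codescent) used in Lemma~\ref{lem:monod gl}, and tracking the cohomological shifts relating $!$-pullback to $*$-pullback along the submersive face maps so that ``left adjoint of a twist of $*$-pullback'' matches ``$!$-pushforward with singular support'' on the nose --- precisely the $(\infty,2)$-categorical bookkeeping already implicit in the proof of Lemma~\ref{lem:monod gl}, now carried out for morphisms.
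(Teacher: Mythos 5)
Your overall strategy is the one the paper intends: the paper omits the proof of this lemma, saying it ``follows the same argument as Lemma \ref{lem:monod gl}'', and your plan --- present both sides via the codescent presentation of Lemma \ref{l:descent} and the external-product equivalences of Lemma \ref{lem:stack ext tens}, observe that the Čech nerves of $q$ and $q'$ are related by Cartesian squares on which $f_{M}$ restricts to $f_{1,M}\times\id_{H^{n}}\times f_{2,M}$, and compare term by term --- is exactly that argument carried out at the level of morphisms.

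There is, however, one step whose justification does not work as stated, and it is the only step with real content. You pass from ``$*$-pushforward commutes with $*$-pullback along the submersive face maps by smooth base change'' to ``hence with their left adjoints up to the standard twist''. That implication is a Beck--Chevalley condition and is not formal: the codescent differentials are $!$-pushforwards with singular support along the non-proper $H$-directions, and commuting a $*$-pushforward past such a non-proper $!$-pushforward genuinely requires an argument. The same issue already appears earlier in your outline, when you make $f_{2,M*}$ a $\Sh_{0}(H)$-module functor: this amounts to commuting $f_{2,M*}$ past the convolution $a_{2!}(\cL\boxtimes -)$, which your K\"unneth-plus-equivariance remark does not by itself supply (and the K\"unneth compatibility of $*$-pushforward with external products is itself not automatic; it uses that these subcategories are compactly generated and the sheaves locally constant along $H$). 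The needed commutations are true here precisely because all sheaves involved are $H$-monodromic, and the paper's own technique for such statements is to factor $H(\CC)\simeq H_{c}\times H_{>0}$, descend monodromic sheaves along $H_{>0}$, and thereby replace the non-proper pushforward by an (ind-)proper one for which base change does hold --- this is how the analogous isomorphism \eqref{Hk !*} is proved in Proposition \ref{p:cosimp cat marked sections}. With that ingredient inserted at the Beck--Chevalley step (both for the module-functor structure and for the term-wise comparison of the simplicial diagrams), your argument goes through and reproduces the intended proof.
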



\section{Nearby cycles}\label{app:B}
This appendix contains some background material called upon in \S\ref{ss: nilp shvs}.
As in the prior appendix, we will work with sheaves on relative real analytic spaces over complex algebraic stacks,  with singular supports in real subanalytic Lagrangians. Our aim is to record some basic properties of nearby cycles -- estimates on singular support, existence of adjoints -- as well as a key commuting relation over higher dimensional bases following~\cite{N}.

\subsection{Relative singular support estimates}
The material here is expanded upon in~\cite[\S3]{NS}.

\subsubsection{Relative singular support for manifolds}

Let $M, B$ be manifolds, and $\pi:M\to B$ a submersion. 
Consider on $M$ the short exact sequence of vector bundles
\begin{equation*}
\xymatrix{
0 \ar[r] & \pi^*(T^*B) \ar[r] & T^*M \ar[r]^-\Pi & T^*_\pi \ar[r] & 0
}
\end{equation*}
where $ T^*_\pi$ is the relative cotangent bundle, and $\Pi$ is the natural projection.

\begin{defn}\label{def:rel ss}
The {\em relative singular support} of $\cF\in \Sh(M)$ is the closed conic subset 
\begin{equation*}
\xymatrix{
ss_\pi(\cF) = \ol{\Pi(ss(\cF))}  \subset T^*_\pi
}
\end{equation*} 
where $ss(\cF)  \subset T^*M$ is the usual singular support.
\end{defn}

Let $j_A:A\incl B$ be the inclusion of a locally closed submanifold, and $J_A:M_A = M \times_B A \to M$  the corresponding inclusion. For  details of the following, see \cite[\S 3.2]{NS}.

\begin{lemma}\label{lem:rel ss}
For $\cF_A\in \Sh(M_A)$, $\cF\in \Sh(M)$, we have
\begin{equation*}
\xymatrix{
ss_\pi(J_{A*}\cF_A) \subset \ol{ss_\pi(\cF_A)}
&
ss_\pi(J_A^*\cF) \subset ss_\pi(\cF)|_{M_A}
}
\end{equation*}
where the closure $\ol{ss_\pi(\cF_A)}$ is taken inside of $T^*_\pi$.
\end{lemma}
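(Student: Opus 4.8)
The two inclusions are local on $B$, so I would immediately reduce to the case where $A\incl B$ is either an open submanifold or a closed submanifold, since any locally closed embedding factors as a closed embedding into an open subset. For the open case both $J_A^*$ and $J_{A*}$ are compatible with the usual microlocal estimates of Kashiwara--Schapira, and the relative estimates follow by applying $\Pi$. The substance is therefore in the closed case.

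First I would treat the $*$-pullback estimate $ss_\pi(J_A^*\cF)\subset ss_\pi(\cF)|_{M_A}$. Here $J_A$ is a closed embedding of manifolds. I would invoke the noncharacteristic deformation / restriction estimate $ss(J_A^*\cF)\subset \oll{J_A}(ss(\cF))$ from \cite[Prop.\ 5.4.5]{KS} for the ambient singular supports, and then chase the diagram
\begin{equation*}
\xymatrix{
0 \ar[r] & \pi_A^*(T^*A) \ar[r] & T^*M_A \ar[r]^-{\Pi_A} & T^*_{\pi_A} \ar[r] & 0
}
\end{equation*}
against the pullback of the corresponding sequence on $M$. The key point is that the conormal directions added by $\oll{J_A}$ lie in the pullback of $T^*B$ (they come from the base direction normal to $A$), so they die under $\Pi_A$; hence the image of $\oll{J_A}(ss(\cF))$ in $T^*_{\pi_A}$ is contained in the image of $ss(\cF)|_{M_A}$, which after closure is exactly $ss_\pi(\cF)|_{M_A}$. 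This is a linear-algebra verification fiberwise over points of $M_A$, using that $\pi$ is a submersion and $M_A=M\times_B A$ with $A\incl B$ closed.

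Next the $*$-pushforward estimate $ss_\pi(J_{A*}\cF_A)\subset \ol{ss_\pi(\cF_A)}$. Since $J_A$ is a closed embedding, $J_{A*}=J_{A!}$ and the usual estimate gives $ss(J_{A*}\cF_A)\subset \orr{J_A}(ss(\cF_A))$, again by \cite[Prop.\ 5.4.4 / 5.4.5]{KS}. Applying $\Pi$ and taking closures, I must compare $\ol{\Pi(\orr{J_A}(ss(\cF_A)))}$ with $\ol{\Pi_A(ss(\cF_A))}$ transported along the closed embedding $T^*_{\pi_A}\to T^*_\pi$ (over $M_A\incl M$). The point is that $\orr{J_A}$ adds covectors only in conormal directions to $M_A$ inside $M$, and these conormal directions, modulo $\pi^*(T^*B)$, are precisely the conormal directions of $M_A$ inside $M$ relative to $B$; they are already accounted for in the transport $\orr{J_A^{\mathrm{rel}}}$ of $T^*_{\pi_A}$ into $T^*_\pi$. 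Concretely the relative conormal bundle of $M_A$ in $M$ is the pullback of the conormal bundle of $A$ in $B$, which lies in $\pi^*(T^*B)$ and so is killed by $\Pi$; what survives is only the honest relative directions of $\cF_A$, giving the claimed containment in $\ol{ss_\pi(\cF_A)}$. I would spell this out as a fiberwise statement over $M$, distinguishing points of $M_A$ from points of its boundary (where the closure on the right is doing the work).

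\textbf{Main obstacle.} The genuinely delicate point is the bookkeeping of closures in the $*$-pushforward case: $\Pi$ does not commute with closure in general, and $ss(J_{A*}\cF_A)$ can acquire limiting covectors over $\partial M_A$ that are not visibly in $\Pi^{-1}$ of anything finite. I expect to handle this by working with a Whitney stratification adapted to $A\incl B$ (so that $ss(\cF_A)$ is contained in a finite union of conormals, by constructibility) and checking the inclusion stratum-by-stratum, where each relevant conormal of $M_A$ in $M$ is literally the pullback of a conormal in $B$ and hence lies in $\pi^*T^*B$. Once the estimates are established for manifolds, the passage to relative real analytic spaces over algebraic stacks is formal: by the definitions in \S\ref{sss:real sh functors} and \S\ref{sss:stack Lag}, everything can be tested after $!$-pullback along a smooth atlas $(V,v)\in\Sch^{sm}_{/B}$ and along a submersive cover of the real analytic space, where it reduces to the manifold statement just proved, and the relative cotangent cones of the stacky families are defined precisely so as to be compatible with these pullbacks.
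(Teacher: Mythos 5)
A preliminary remark: the paper does not actually prove this lemma; it is stated with a pointer to \cite[\S 3.2]{NS}, so your proposal has to be measured against the kind of boundary estimate proved there, and against that it has two genuine gaps. The first is in your pullback argument for $A$ closed. You invoke \cite[Prop.~5.4.5]{KS} in the form $ss(J_A^*\cF)\subset \oll{J_A}(ss(\cF))$, but that estimate requires $J_A$ to be non-characteristic for $ss(\cF)$, which has no reason to hold here: $ss(\cF)$ can perfectly well meet $\pi^*(T^*B)$ along $M_A$ (any sheaf pulled back from $B$ already does). Worse, the bound you then extract --- that $ss_\pi(J_A^*\cF)$ is controlled by the fiber components of $ss(\cF)$ \emph{at points of $M_A$} (``the image of $ss(\cF)|_{M_A}$, which after closure is exactly $ss_\pi(\cF)|_{M_A}$'') --- is false: restricting to $M_A$ and then closing is in general strictly smaller than closing $\Pi(ss(\cF))$ over all of $M$ and then restricting, and the lemma's right-hand side is the latter. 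Concretely, take $M=\RR^2_{(x,t)}\to B=\RR_t$, $A=\{0\}$, $\cF=\CC_{\{t\ge x^2\}}$. Then $J_A^*\cF$ is a skyscraper at the origin of $M_A\cong\RR_x$, so its relative singular support is the whole line $\RR\,dx$ there; but the covectors of $ss(\cF)$ over $M_A$ are multiples of $dt$, with zero image under $\Pi$. The lemma still holds only because the missing $dx$-covectors appear in $\ol{\Pi(ss(\cF))}$ as limits from the nearby points $(x,x^2)$, $x\to 0$, which lie \emph{off} $M_A$; your argument never uses covectors of $ss(\cF)$ away from $M_A$, so it proves a false intermediate statement. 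What is needed is an unconditional boundary estimate of $\hat{+}$ type as in \cite[Ch.~6]{KS} (for $\cF_{M_A}$, $R\Gamma_{M_A}\cF$), combined with the observation that $T^*_{M_A}M=\pi^*(T^*_AB)$ has vanishing fiber component, so the fiber component of any limiting covector is a limit of fiber components of $ss(\cF)$ at nearby points; this is the argument supplied by \cite[\S 3.2]{NS}.

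The second gap is the $*$-pushforward in the open case, which your opening paragraph waves through via ``the usual microlocal estimates'' (the proper pushforward estimate does not apply to an open embedding) and which your closing paragraph proposes to repair with a Whitney stratification ``by constructibility''. But the lemma is stated for arbitrary $\cF_A\in\Sh(M_A)$: the sheaf categories in this paper carry no constructibility or finiteness hypotheses, so there is no stratification adapted to $\cF_A$, and $ss(\cF_A)$ need not be contained in a locally finite union of conormals. Even granting constructibility, knowing that $ss(J_{A*}\cF_A)$ lies in the union of conormals of a refined stratification of $M$ does not by itself bound it by $\ol{ss_\pi(\cF_A)}$; one still has to show that the new covectors over $\partial M_A$ are limits $\xi_k+\eta_k$ with $\eta_k$ in boundary-conormal (hence $\pi^*(T^*B)$) directions and $\xi_k\in ss(\cF_A)$, so that their $\Pi$-images are limits of $\Pi(\xi_k)$ --- which is precisely the content to be proved, and again is the $\hat{+}$-type bookkeeping of \cite[Ch.~6]{KS}, \cite[\S 3.2]{NS}. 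Your closed-case pushforward discussion is correct (there $J_{A*}$ is proper and the standard estimate even gives equality), and the final reduction from stacks to manifolds via smooth atlases matches the paper's Lemma~\ref{lem:rel ss stacks}; but as written the proposal establishes neither inclusion of Lemma~\ref{lem:rel ss} in the stated generality.
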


\subsubsection{Relative singular support for stacks}\label{sss:rel sing supp stacks}

We  apply a generalization of Lemma~\ref{lem:rel ss} to real stacks in the proof of Proposition~\ref{p:sh functor from C}(1) and (2). Let us spell this out here and explain how to deduce it from  Lemma~\ref{lem:rel ss}.

Consider the situation
\begin{equation*}
\xymatrix{ X_{A}\ar[d]^{\pi_{A}} \ar@{^{(}->}[r]^{J_{A}} & X_{B}\ar[r]\ar[d]^{\pi_{B}} & X\ar[d]^{\pi}\\
A\ar@{^{(}->}[r]^{j_{A}} & B\ar[r] & S}
\end{equation*}
Here $\pi: X\to S$ is a smooth map of smooth stacks, $A\subset B\in\frR_{/S}$ are smooth and $j_{A}:A\incl B$ is locally closed. Let $X_{A}=\pi^{\#}A, X_{B}=\pi^{\#}B\in \frR_{/X}$.

We have the relative cotangent bundle $T^{*}_{\pi}$ over $X$. It is defined as the relative spectrum of the symmetric algebra on the 0th cohomology of the relative tangent complex of $\pi$. Base change to $B$ and $A$ we get the relative cotangent bundles $T^{*}_{\pi_{B}}$ and $T^{*}_{\pi_{A}}$ over $X_{B}$ and $X_{A}$. 

A subanalytic cone $\L\subset T^{*}_{\pi_{B}}$ is an assignment for each $(V,v)\in \Sch^{sm}_{/X}$ a $\RR_{>0}$-conic subanalytic subset $\L_{V,v}\subset (T^{*}_{\pi\c v})\times_{S}B$ (this is the relative cotangent bundle of the submersion $\t_{B}: V_{B}:=V\times_{S}B\to B$), compatible with smooth pullbacks with respect to morphisms in  $(V,v)\in \Sch^{sm}_{/X}$. 

For a subanalytic cone $\L\subset T^{*}X_{B}$, define $\Pi_{B}(\L)$ to be a subanalytic cone of $T^{*}_{\pi_{B}}$ as follows: for each $(V,v)\in\Sch^{sm}_{/X}$, $\Pi_{B}(\L)_{V,v}$ is the image of $\L_{V,v}$ under the natural projection $T^{*}V_{B}\to (T^{*}_{\pi\c v})\times_{S}B=T^{*}_{\t_{B}}$. 

For $\cF\in\Sh(X_{B})$, we define its relative singular support $ss_{\pi_{B}}(\cF)$ as the closure of $\Pi_{B}(ss(\cF))\subset T^{*}_{\pi_{B}}$. It is closed and conic.

In the above situation, we have a generalization of Lemma~\ref{lem:rel ss}.

\begin{lemma}\label{lem:rel ss stacks}
For $\cF_A\in \Sh(X_A)$, $\cF_{B}\in \Sh(X_{B})$, we have
\begin{equation*}
\xymatrix{
ss_{\pi_{B}}(J_{A*}\cF_A) \subset \ol{ss_{\pi_{A}}(\cF_A)}
&
ss_{\pi_{A}}(J_A^*\cF_{B}) \subset ss_{\pi_{B}}(\cF_{B})|_{X_A}
}
\end{equation*}
where the closure $\ol{ss_{\pi_{A}}(\cF_A)}$ is taken inside of $T^*_{\pi_{B}}$.
\end{lemma}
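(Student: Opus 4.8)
The statement is the stacky upgrade of Lemma~\ref{lem:rel ss}, so the plan is to reduce it to that lemma by passing to smooth atlases, exactly as relative singular support for stacks in \S\ref{sss:rel sing supp stacks} is \emph{defined} by testing on objects $(V,v)\in\Sch^{sm}_{/X}$. First I would fix a smooth surjection $v:V\to X$ from a scheme $V$ (and note that, since all claims are local on $X$ for the smooth topology and singular support for stacks is defined via such atlases, it suffices to check the two containments after pulling back along $v$). Write $V_B=V\times_S B$, $V_A=V\times_S A$, with submersions $\tau_B:V_B\to B$, $\tau_A:V_A\to A$, and $J_A^V:V_A\hookrightarrow V_B$ the locally closed embedding obtained from $j_A:A\hookrightarrow B$ by base change. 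By construction $\cF_A$ pulls back to a sheaf on the manifold $V_A$ (after further restricting to a chart of the real analytic space, i.e.\ evaluating the functor $M\mapsto M_{V,v}$ of \S\ref{sss:real stacks}), $\cF_B$ to a sheaf on $V_B$, and the $*$-pushforward $J_{A*}$ (resp.\ $*$-pullback $J_A^*$) on $\Sh(X_B)$, $\Sh(X_A)$ is computed term-wise by $(J_A^V)_*$ (resp.\ $(J_A^V)^*$) by smooth base change, as recorded in \S\ref{sss:real sh functors}.

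With this setup the two desired containments become, term by term, precisely the assertions of Lemma~\ref{lem:rel ss} for the submersion $\tau_B:V_B\to B$ and the locally closed inclusion $J_A^V:V_A\hookrightarrow V_B$:
$$
ss_{\tau_B}\big((J_A^V)_*\cF_A\big)\subset \overline{ss_{\tau_A}(\cF_A)},\qquad
ss_{\tau_A}\big((J_A^V)^*\cF_B\big)\subset ss_{\tau_B}(\cF_B)\big|_{V_A},
$$
where the closure on the left is taken inside $T^*_{\tau_B}=(T^*_{\pi_B})_{V,v}$, which by definition is the relative cotangent bundle of $\tau_B$. So the second step is simply to invoke Lemma~\ref{lem:rel ss} for each chart. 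The third step is to check that these chart-wise containments are compatible with the transition maps: for a smooth morphism $\varphi:(V,v)\to (V',v')$ in $\Sch^{sm}_{/X}$ the induced maps on relative cotangent bundles intertwine the $\Pi$-projections and the $*$-functors (this is the same pullback-compatibility used in \S\ref{sss:stack Lag} and \S\ref{sss:rel sing supp stacks} to make the assignments $(V,v)\mapsto \L_{V,v}$ well defined), so the containments glue to a containment of subanalytic cones in $T^*_{\pi_B}$, resp.\ $T^*_{\pi_A}$. Finally one notes that $\Pi_A(ss(J_A^*\cF_B))$ may a priori fail to be closed, but its closure is controlled by the chart-wise closed sets, so the relative singular support $ss_{\pi_A}$, being defined as that closure, still satisfies the stated inclusion; the restriction $ss_{\pi_B}(\cF_B)|_{X_A}$ is closed because $X_A\subset X_B$ is locally closed and the cone is conic, so no subtlety arises there.

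The routine bookkeeping — choosing atlases, translating the functor-valued formalism of \S\ref{sss:real stacks} into actual manifolds and sheaves, and verifying transition compatibility — is the bulk of the work but is genuinely mechanical given the definitions. The one point requiring a little care, and the place I would expect to spend the most thought, is the first containment when $A$ is not closed in $B$: then $J_{A*}$ is not proper pushforward and the closure $\overline{ss_{\tau_A}(\cF_A)}$ inside $T^*_{\tau_B}$ genuinely sees the boundary of $A$; one must make sure that the stacky closure operation (taking closures chart-wise and gluing) agrees with taking the closure of the glued cone, which follows because smooth morphisms are open and the relevant maps on cotangent bundles are continuous and fibrewise linear. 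Once that is pinned down, the lemma follows directly from Lemma~\ref{lem:rel ss} applied on charts. No new geometric input beyond \S\ref{ss:A real}–\S\ref{ss:A stack} and Lemma~\ref{lem:rel ss} is needed.
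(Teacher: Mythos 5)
Your proposal is correct and follows essentially the same route as the paper: reduce to the manifold statement (Lemma~\ref{lem:rel ss}) by testing on smooth charts $v:V\to X$, using smooth base change to identify $v_B^*J_{A*}\cF_A$ with $K_{A*}v_A^*\cF_A$ and the chart-wise definition of relative singular support. The only refinement in the paper's write-up is a preliminary base change along a smooth cover $u:U\to S$ of the base stack, so that $B$ and $A$ become honest real analytic manifolds before Lemma~\ref{lem:rel ss} is invoked — this is the clean way to make literal sense of your ``submersion $\tau_B:V_B\to B$'', which as you have set it up has a stack-relative target; the fix is routine and of the same nature as the chart-passing you already perform.
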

\begin{proof}
First, for a smooth cover $u:U\to S$ by a scheme $U$, write $u_{B}: \wt B=U\times_{S}B\to B$, $\wt\pi_{B}: \wt X_{B}=X_{B}\times_{S}U\to \wt B$, and similarly define $u_{A}: \wt A\to A$ and $\wt\pi_{A}: \wt X_{A}\to \wt A$. Let $J_{\wt A}: \wt X_A \incl \wt X_{B}$ and $j_{\wt A}: \wt A\incl \wt B$.
Then under the identification
$u^{*}_{B}(T^*_{\pi_{B}}) \simeq T^*_{\wt \pi_{B}}$, standard identities imply
$\oll{u_{B}}(ss_{\pi_{B}}(J_{A*}\cF_A)) = ss_{\wt \pi_{B}}(J_{\wt A*}u^*_{A}\cF_A)$,  $\oll{u_{A}}(\ol{ss_{\pi_{A}}(\cF_A)} )= \ol{ss_{\wt \pi_{A}} (u^*_{A}\cF_A)}$, and similarly $\oll{u_{A}}(ss_{\pi_{A}}(J_A^*\cF_{B})) = ss_{\wt \pi_{A}}(J_{\wt A}^*u^*_{A}\cF_{B})$, $\oll{u_{B}}(ss_{\pi_{B}}(\cF_{B})|_{X_A}) = ss_{\wt \pi_{B}}(u^*_{B}\cF_{B})|_{\wt X_A}$. 
Thus to check the assertions, we may perform such a base-change and so assume $S$ itself is a smooth scheme, hence $B$ and $A$ are manifolds.

Next,  given a smooth cover $v:V\to X$ by a scheme $V$, consider the smooth map of smooth schemes  $\t=\pi\c v: V\to B$. Let $v_{A}: V_{A}=V\times_{S}A\to X_{A}$ and $\t_{A}: V_{A}\to A$ be the restrictions of $v$ and $\t$ over $A$, and similarly define $v_{B}: V_{B}\to X_{B}$, $\t_{B}: V_{B}\to B$ and let $K_{A}: V_{A}\to V_{B}$ be the inclusion. By definition, $ss_{\pi_{B}}(J_{A*}\cF_A)$ pulls back to $ss_{\t_{B}}(v^{*}_{B}J_{A*}\cF_A))\subset T^{*}_{\t_{B}}$. Since $v^{*}_{B}J_{A*}\cF_A=K_{A*}v_{A}^{*}\cF_{A}$, hence $ss_{\pi_{B}}(J_{A*}\cF_A)$ pulls back to $ss_{\t_{B}}(K_{A*}v_{A}^{*}\cF_{A})\subset T^{*}_{\t_{B}}$. Similarly, $\oll{v_{A}}(\ol{ss_{\pi_{A}}(\cF_A)})=\ol{ss_{\t_{A}}(v^{*}_{A}\cF_{A})}\subset T^{*}_{\t_{A}}$, $\oll{v_{A}}(ss_{\pi_{A}}(J_A^*\cF_{B})) = ss_{\tau_{A}}(K_{A}^*v^*_{B}\cF_{B})$, $\oll{v_{B}}(ss_{\pi_{B}}(\cF_{B})|_{X_A}) = ss_{\tau_{B}}(v^*_{B}\cF_{B})|_{ V_A}$. Thus we reduce to the same assertions for the map $\t:V\to S$ and the sheaves $v^{*}_{A}\cF_{A}\in\Sh(V_{A})$ and $v^{*}_{B}\cF_{B}\in\Sh(V_{B})$. This is the case provided by Lemma~\ref{lem:rel ss} and so we are done.
\end{proof}

\begin{cor}\label{c:check rel ss} In the situation of \S\ref{sss:rel sing supp stacks}, let $B=\sqcup_{\a}B_{\a}$ be a finite partition of $B$ into locally closed subanalytic subsets. Let $\pi_{\a}: X_{B_{\a}}\to B_{\a}$ be the base change of $\pi$ to $B_{\a}$, and $J_{\a}: X_{B_{\a}}\incl X_{B}$ be the inclusion. Let $\L\subset T^{*}_{\pi_{B}}$ be a closed subanalytic cone. Then for any $\cF\in \Sh(X_{B})$, $ss_{\pi_{B}}(\cF)\subset \L$ if and only if $ss_{\pi_{\a}}(J_{\a}^{*}\cF)\subset \L|_{X_{B_{\a}}}$.
\end{cor}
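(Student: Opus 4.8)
The forward implication is a direct consequence of Lemma~\ref{lem:rel ss stacks}: applying its second inclusion with $A=B_\a$ gives $ss_{\pi_\a}(J_\a^*\cF)\subseteq ss_{\pi_B}(\cF)|_{X_{B_\a}}\subseteq\L|_{X_{B_\a}}$, so nothing more is needed. The entire content of the statement is therefore the converse, and that is where I would concentrate.

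First I would make two reductions. The partition $\{B_\a\}$ may be refined to a subanalytic stratification in which every piece is a connected submanifold and the frontier condition holds ($\ol{B_\a}\setminus B_\a$ is a union of strata); granting the corollary for the refinement, the forward inclusion of Lemma~\ref{lem:rel ss stacks} applied inside each $X_{B_\a}$ propagates the hypothesis to the finer strata, and the refined statement then returns the original one. So I may assume each $B_\a$ is a subanalytic submanifold; in particular $T^*_{\pi_\a}=J_\a^*T^*_{\pi_B}$, the restriction $\L|_{X_{B_\a}}$ is unambiguous, and $\L=\bigsqcup_\a\L|_{X_{B_\a}}$. Second, exactly as in the proof of Lemma~\ref{lem:rel ss stacks}, I would base change along smooth covers of $S$ and of $X$ and use the compatibility of $ss_{\pi_B}$, $J_{A*}$, $J_A^*$ with such base change to reduce to the case of a submersion $\tau\colon M\to B$ of smooth real analytic manifolds (equivalently, one works directly with sheaves on the real analytic stacks, obtaining the microsupport estimates below from the manifold case by the same covering argument).

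Now I would argue by induction on the depth of the stratification, using the standard filtration $B=V_0\supseteq V_1\supseteq\cdots\supseteq V_{N+1}=\vn$ by closed subanalytic subsets with each $V_k\setminus V_{k+1}$ a disjoint union of top-depth strata, open in $V_k$. Working over $X_{V_k}$ and writing $U=V_k\setminus V_{k+1}$ (open) and $Z=V_{k+1}$ (closed), the recollement triangle relating $J_{U!}J_U^*\cF$, $\cF$ and $I_{Z*}I_Z^*\cF$ yields, after applying $\Pi$ and taking closures in $T^*_{\pi_{V_k}}$,
\begin{equation*}
ss_{\pi_{V_k}}(\cF)\subseteq ss_{\pi_{V_k}}(J_{U!}J_U^*\cF)\cup ss_{\pi_{V_k}}(I_{Z*}I_Z^*\cF).
\end{equation*}
The second term is handled by the first (``closed'') inclusion of Lemma~\ref{lem:rel ss stacks}, which gives $ss_{\pi_{V_k}}(I_{Z*}I_Z^*\cF)\subseteq\ol{ss_{\pi_{V_{k+1}}}(I_Z^*\cF)}$; since $I_Z^*\cF$ restricted to any stratum inside $Z$ is again the corresponding $J_\a^*\cF$ (transitivity of $*$-restriction), the inductive hypothesis gives $ss_{\pi_{V_{k+1}}}(I_Z^*\cF)\subseteq\L|_{X_Z}$, whose closure lies in $\L$ as $\L$ is closed. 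For the first term, the point is that $X_U$ is $\pi$-saturated, so $\QQ_{X_U}=J_{U!}\QQ_{X_U}$ is the $\pi$-pullback of the corresponding sheaf on $B$ and hence $ss(\QQ_{X_U})\subseteq\pi^*(T^*B)=\ker(\Pi)$; combining the Kashiwara--Schapira estimate for the microsupport of an open extension with $ss_{\pi_U}(J_U^*\cF)=\bigsqcup_\a ss_{\pi_\a}(J_\a^*\cF)\subseteq\L$ then forces $ss_{\pi_{V_k}}(J_{U!}J_U^*\cF)\subseteq\L$. The base case ($V_N$ a disjoint union of strata) is immediate, and the case $k=0$ gives $ss_{\pi_B}(\cF)\subseteq\L$.

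The step I expect to be the main obstacle is precisely this control of $ss_{\pi_{V_k}}(J_{U!}J_U^*\cF)$: passing to the extension by zero enlarges the microsupport by covectors supported along the frontier of $X_U$, and one must know these are annihilated by $\Pi$. What makes it go through is that this frontier equals $\pi^{-1}$ of the frontier of $U$ in $V_k$, so it is again $\pi$-saturated and the new conormal directions genuinely lie in $\pi^*(T^*B)$ --- the precise form of the idea that relative singular support is blind to base directions. Extracting the sharp subanalytic bound for $ss(J_{U!}(-))$ from \cite[\S5.3--5.4]{KS} (or the relative refinement in \cite{NS}) and checking that the frontier covectors are base directions is the one delicate verification; the rest is formal manipulation of conic subanalytic sets and their closures.
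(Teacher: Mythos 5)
Your forward implication and the skeleton of your converse---stratify, d\'evissage over the pieces, bound each piece using Lemma \ref{lem:rel ss stacks}, and use that $\L$ is closed---coincide with the paper's proof, which simply writes $\cF$ as a successive cone of the pieces attached to the strata and applies the two inclusions of that lemma to each of them; your preliminary refinement of the partition (so that an ordered d\'evissage exists) and the reduction to manifolds are harmless extra care. Where you genuinely diverge is the step you flag yourself: the bound on $ss_{\pi_{V_k}}(J_{U!}J_U^*\cF)$, which you propose to extract from Kashiwara--Schapira's $\hat{+}$ estimates together with $ss(\QQ_{X_U})\subseteq \pi^*(T^*B)$, and which you leave unverified. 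Two remarks. First, the KS estimates of that type bound $SS(F_\Omega)$ for $F$ defined on the \emph{ambient} space, so to run your argument you would in any case have to route through $J_{U*}J_U^*\cF$, control its relative singular support by Lemma \ref{lem:rel ss stacks}, and then push limits of sums of covectors through $\Pi$; as written, your sketch suggests an estimate for $SS(J_{U!}\cG)$ purely in terms of $SS(\cG)$ and $SS(\QQ_{X_U})$, which is not an off-the-shelf statement for non-constructible $\cG$. Second, once $J_{U*}$ enters, no microlocal input beyond the lemma is needed at all: with $U$ open in the base, $Z$ its closed complement, and $\cG=J_U^*\cF$, the triangle
\begin{equation*}
J_{U!}\cG \longrightarrow J_{U*}\cG \longrightarrow I_{Z*}I_Z^*J_{U*}\cG
\end{equation*}
combined with the pushforward and restriction inclusions of Lemma \ref{lem:rel ss stacks} and the closedness of $\L$ gives $ss_{\pi}(J_{U!}\cG)\subseteq\ol{ss_{\pi_U}(\cG)}\subseteq\L$ directly (the closed pushforward $I_{Z*}$ costs nothing). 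This formal shortcut is exactly why the paper's proof is a few lines: all the content is already in Lemma \ref{lem:rel ss stacks}, and the d\'evissage only has to be arranged so that the lemma applies to each piece. Your saturation heuristic---the frontier of $X_U$ is $\pi$-saturated, so the new conormal directions are base directions killed by $\Pi$---is the correct geometric explanation of why the $!$-extension is harmless here (and why it would fail for a non-saturated open set), but it need not be carried out as a separate microlocal verification.
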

\begin{proof}
If $ss_{\pi_{B}}(\cF)\subset \L$, then $ss_{\pi_{\a}}(J_{\a}^{*}\cF)\subset \L|_{X_{B_{\a}}}$ by the second inclusion in Lemma \ref{lem:rel ss stacks}.

Conversely, suppose $ss_{\pi_{\a}}(J_{\a}^{*}\cF)\subset \L|_{X_{B_{\a}}}$ for all $\a$. Write $\cF$ as a successive cone of $J_{\a*}J_{\a}^{*}\cF$, it suffices to show that  $ss_{\pi_{B}}(J_{\a*}J_{\a}^{*}\cF)\subset \L$. By the first inclusion of Lemma \ref{lem:rel ss stacks}, $ss_{\pi_{B}}(J_{\a*}J_{\a}^{*}\cF)\subset \ol{ss_{\pi_{\a}}(J_{\a}^{*}\cF)}$ (closure taken in $T^{*}_{\pi_{B}}$). By assumption, $ss_{\pi_{\a}}(J_{\a}^{*}\cF)\subset \L$ and $\L$ is closed, hence $\ol{ss_{\pi_{\a}}(J_{\a}^{*}\cF)}\subset \L$ and $ss_{\pi_{B}}(J_{\a*}J_{\a}^{*}\cF)\subset \L$.
\end{proof}


\subsection{Compatibility of nearby cycles} Much of the material here is quoted from~\cite{N}.

\subsubsection{Unbiased nearby cycles}\label{sss:unbiased mfd}

Let $[n] = \{0, \ldots, n\}$. Set  $A = \RR^{[n]}$ with coordinates $t_0, \ldots, t_n$.

Let $M$ be a manifold, and $\pi:M\to A$ a submersion. 

For $a\subset [n]$, set $A^{+}_a \subset A$
 to be the positive quadrant where $t_i >0$, for $i\in a$, and $t_i = 0$, for $i\not \in a$. 
 Set $M^{+}_a = M \times_{A} A^{+}_a$.
 
For $a \subset b\subset [n]$, consider the inclusions of subspaces of $M$:
\begin{equation*}
\xymatrix{
 M^{+}_a \ar[r]^-{j_a^{a, b}}  & M^{+}_a \cup M^{+}_b & \ar[l]_-{j_b^{a, b} } M^{+}_b 
}
\end{equation*}

%
%
%
%

\begin{defn}
For $a \subset b\subset [n]$, define the {\em unbiased nearby cycles} by
\begin{equation*}
\xymatrix{
\psi^{b}_a: \Sh(M^{+}_b) \ar[r] & \Sh(M^{+}_a) 
&
\psi^b_a =(j_a^{a, b})^* (j_b^{a, b})_* 
}
\end{equation*} 
\end{defn}

\begin{remark}\label{rem:equiv nc} Suppose $U \subset M$ is any subspace with $M^{+}_a, M^{+}_b \subset U$.
Consider the  the commutative diagram
of inclusions  of subspaces of $M$:
\begin{equation*}
\xymatrix{
& U      & \\
 \ar[ur]^-{j^U_a} M^+_a \ar[r]_-{j_a^{a, b}}  & M^+_a \cup M^+_b \ar[u]_-{j^U_{a, b}} & \ar[l]^-{j_b^{a, b} } M^+_b \ar[ul]_-{j^U_b}
 }
\end{equation*} 
Note the unit of adjunction is an equivalence $ (j^U_{a, b})^* (j^U_{a, b})_* \simeq\id $ (as for any subspaces), and so we have a canonical equivalence
\begin{equation*}
\xymatrix{
\psi^b_a =(j_a^{a, b})^* (j_b^{a, b})_*   \simeq    (j_a^{a, b})^* (j^U_{a, b})^* (j^U_{a, b})_* (j_b^{a, b})_*   
\simeq   (j_a^U)^* (j_b^U)_*
}
\end{equation*} 
Among such subspaces $U$, we have the minimal choice $U = M^+_a \cup M^+_b$  as in the definition of $ \psi^b_a $, and the maximal choice $U = M$.

\end{remark}

Consider on $M$ the sequence of vector bundles
\begin{equation*}
\xymatrix{
0 \ar[r] & \pi^*(T^*A) \ar[r] & T^*M \ar[r]^-\Pi & T^*_\pi \ar[r] & 0
}
\end{equation*}
where $ T^*_\pi$ is the relative cotangent bundle, and $\Pi$ is the natural projection.

As an immediate application of the two identities of Lemma~\ref{lem:rel ss}, we have the following.

\begin{cor}
For $\cF_b\in \Sh(M^+_b)$, we have
\begin{equation*}
\xymatrix{
ss_\pi(\psi^b_a \cF_b) \subset \ol{ss_\pi(\cF_b)}|_{M^+_a}
}
\end{equation*}
where the closure $\ol{ss_\pi(\cF_b)}$ is taken inside of $T^*_\pi$.
\end{cor}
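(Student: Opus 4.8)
The statement to prove is the corollary asserting that for $\cF_b\in \Sh(M^+_b)$, the relative singular support of the unbiased nearby cycles $\psi^b_a\cF_b$ satisfies
\begin{equation*}
ss_\pi(\psi^b_a \cF_b) \subset \ol{ss_\pi(\cF_b)}|_{M^+_a},
\end{equation*}
with the closure taken inside $T^*_\pi$.

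The plan is to unwind the definition $\psi^b_a = (j_a^{a,b})^* (j_b^{a,b})_*$ and apply the two estimates of Lemma~\ref{lem:rel ss} in sequence. First I would set $\cG = (j_b^{a,b})_* \cF_b \in \Sh(M^+_a \cup M^+_b)$. Here $j_b^{a,b}: M^+_b \hookrightarrow M^+_a \cup M^+_b$ is the inclusion of a locally closed submanifold (relative to the ambient submersion to $A$, restricted to the locally closed base $A^+_a \cup A^+_b \subset A$), so the first estimate of Lemma~\ref{lem:rel ss} gives $ss_\pi(\cG) \subset \ol{ss_\pi(\cF_b)}$, where the closure is taken inside $T^*_\pi$ over $M^+_a \cup M^+_b$. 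Then $j_a^{a,b}: M^+_a \hookrightarrow M^+_a \cup M^+_b$ is again a locally closed inclusion, and the second estimate of Lemma~\ref{lem:rel ss} gives $ss_\pi((j_a^{a,b})^* \cG) \subset ss_\pi(\cG)|_{M^+_a}$. Combining the two inclusions yields $ss_\pi(\psi^b_a\cF_b) \subset \ol{ss_\pi(\cF_b)}|_{M^+_a}$, which is exactly the claim.

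The one point that requires a little care is making sure the hypotheses of Lemma~\ref{lem:rel ss} are genuinely met: there $j_A: A \hookrightarrow B$ is an inclusion of a locally closed submanifold of the base and $J_A: M_A = M\times_B A \to M$ the corresponding inclusion, whereas in our situation the relevant base maps are the inclusions of the quadrant strata $A^+_a$ and $A^+_b$ into $A^+_a \cup A^+_b$. Since $A^+_a$ is (relatively) closed in $A^+_a\cup A^+_b$ and $A^+_b$ is open in it, both are locally closed, and $M^+_a = M \times_A A^+_a$, $M^+_b = M\times_A A^+_b$ are the corresponding fiber products over the submersion $\pi$; so the lemma applies verbatim with $B$ replaced by $A^+_a \cup A^+_b$ and $M$ replaced by $M^+_a\cup M^+_b$. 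I do not expect any genuine obstacle here — the content is entirely carried by Lemma~\ref{lem:rel ss}, and the corollary is a two-line formal consequence. If one wishes, one can instead invoke Remark~\ref{rem:equiv nc} to replace $M^+_a\cup M^+_b$ by the ambient $M$ throughout (using $(j^U_{a,b})^*(j^U_{a,b})_*\simeq \id$), which makes the two applications of Lemma~\ref{lem:rel ss} literally the stated ones; either bookkeeping choice works.
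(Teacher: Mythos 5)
Your proof is correct and matches the paper's: the corollary is stated there as "an immediate application of the two identities of Lemma~\ref{lem:rel ss}," which is exactly your two-step argument (pushforward estimate along $j_b^{a,b}$, then pullback estimate along $j_a^{a,b}$). Your check that the quadrant inclusions are locally closed, so the lemma applies with base $A^+_a\cup A^+_b$, is the right bookkeeping and needs nothing more.
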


\subsubsection{Base-change map}

Fix $a \subset b  \subset c \subset [n]$.
Then there is a natural  map of functors 
\begin{equation*}
\xymatrix{
 \psi_{a}^{c} \ar[r]  & \psi^{b}_{a} \circ  \psi^{c}_{b}   :\Sh(M^+_{c})\ar[r] &  \Sh(M^+_{a}) 
}
\end{equation*}
Namely, consider  the commutative diagram of inclusions of subspaces of $M$ with Cartesian square:
\begin{equation*}
\xymatrix{
 \ar[d]_-{j_b^{b, c}} M^+_b \ar[r]^-{j_b^{a, b}} & M^+_a \cup M^+_b\ar[d]_-{j_{a, b}^{a, b, c}} & \ar[l]_-{j_a^{a, b}} M^+_a  \ar[dl]^-{j_a^{a, b, c}}  \\
 M^+_b \cup M^+_c \ar[r]^-{j_{b, c}^{a, b, c}} & M^+_a \cup M^+_b \cup M^+_c & \\
 M^+_c\ar[u]^-{j_c^{b, c}} \ar[ur]_-{j_c^{a, b, c}}  & & 
}
\end{equation*}
Following Remark~\ref{rem:equiv nc}, we have a canonical equivalence 
$ \psi_{a}^{c}    \simeq (j_a^{a, b, c})^* (j_c^{a, b, c})_*$, and thus   base-change provides a map 
\beq\label{eq:bc map}
\xymatrix{
 \psi_{a}^{c}    \simeq (j_a^{a, b, c})^* (j_c^{a, b, c})_* 
 \simeq (j_a^{a, b})^*(j_{a, b}^{a, b, c})^*  (j_{b, c}^{a, b, c})_*  (j_c^{b, c})_*    \ar[r] & 
 (j_a^{a, b})^*(j_{b}^{a, b})_*  (j_{b}^{b, c})^*  (j_c^{b, c})_* 
 \simeq
  \psi^{b}_{a} \circ  \psi^{c}_{b} 
}
\eeq

\begin{remark}\label{rem:equiv bc}  Suppose $U, V \subset M$ are any subspaces with $M^+_a, M^+_b \subset U$, 
$M^+_b, M^+_c \subset V$, and $U \cap V = M^+_b$. Then we have a commutative diagram of inclusions of subspaces of $M$ with Cartesian square:
\begin{equation*}
\xymatrix{
 \ar[d]_-{j_b^{V}} M^+_b \ar[r]^-{j_b^{U}} & U \ar[d]_-{j_{U}^{U \cup V}} & \ar[l]_-{j_a^{U}} M^+_a  \ar[dl]^-{j_a^{U \cup V}}  \\
V  \ar[r]^-{j_{V}^{U \cup V}} & U \cup V & \\
 M^+_c\ar[u]^-{j_c^{V}} \ar[ur]_-{j_c^{U \cup V}}  & & 
}
\end{equation*}
Using the identifications of Remark~\ref{rem:equiv nc},   base-change provides a map 
\beq\label{eq:bc map again}
\xymatrix{
 \psi_{a}^{c}    \simeq (j_a^{U \cup V})^* (j_c^{U \cup V})_* 
 \simeq (j_a^{U})^*(j_{U}^{U \cup V})^*  (j_{V}^{U \cup V})_*  (j_c^{V})_*    \ar[r] & 
 (j_a^{U})^*(j_{b}^{U})_*  (j_{b}^{V})^*  (j_c^{V})_* 
 \simeq
  \psi^{b}_{a} \circ  \psi^{c}_{b} 
}
\eeq
 Note we recover  \eqref{eq:bc map} in the minimal case when $U = M^+_a \cup M^+_b$, $V= M^+_b \cup M^+_c$.
It is simple to check the map \eqref{eq:bc map again} is independent of the choice of $U, V$, and in particular is an equivalence if and only if the map \eqref{eq:bc map} is an equivalence.
\end{remark}

More generally, let $a_{\bu}$ be a flag of subsets $a_{0}\subsetneq a_{1}\subsetneq \cdots\subsetneq a_{\ell}\subset [n]$. There is a natural transformation (see \cite[Lemma 3.2.9]{N})
\begin{equation*}
\xymatrix{
 r_{a_{\bu}}: \psi_{a_{0}}^{a_{\ell}} \ar[r]  & \psi(a_{\bu}):=\psi^{a_{1}}_{a_{0}} \circ  \cdots\c\psi^{a_{\ell}}_{a_{\ell-1}}   :\Sh(M^+_{a_{\ell}})\ar[r] &  \Sh(M^+_{a_{0}}) 
}
\end{equation*}

For our applications, we will focus on a flag $a_{\bu}$ with $a_{0}=\vn$ and  $a_{\ell} = [n]$. In this case, 
we will write $A_{>0}=\RR^{[n]}_{>0}$, $M_{>0} = M^{+}_c = M|_{A_{>0}}$, $\pi_{>0}: M_{>0}\to \RR^{[n]}_{>0}$  the restriction of $\pi$, $M^{+}_0 = M_0$. We also write $\psi=\psi^{[n]}_{\vn}: \Sh(M_{>0})\to \Sh(M_{0})$

\begin{theorem}[{\cite[Theorem 4.3.1]{N}}]\label{thm:comm nc}
Suppose $\Lambda \subset T^*M_{>0}$ is a closed conic Lagrangian such that
\begin{enumerate}

\item $\Lambda$ is $\pi_{>0}$-non-characteristic: the intersection 
$$ \Lambda\cap \pi_{>0}^*(T^*A_{>0})
$$ lies in the zero-section of $T^*M_{>0}$.

\item $\Lambda$ is  $\pi$-Thom at the origin: the zero-fiber of the closure of the projection 
 $$
 \xymatrix{
 \ol{\Pi(\Lambda)}|_0  \subset T^*_\pi|_0 \simeq T^*M_0
 }$$ is isotropic.
\end{enumerate}

Then  for $\cF \in \Sh_\Lambda(M_{>0})$, the  map induced by base-change  is an equivalence  in $\Sh(M_{0})$
\begin{equation*}
\xymatrix{
 r_{a_{\bu}}(\cF): \psi(\cF) \ar[r]^-\sim  & \psi(a_{\bu})(\cF). 
}
\end{equation*}
\end{theorem}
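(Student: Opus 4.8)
The statement to prove is Theorem~\ref{thm:comm nc}, which is quoted from~\cite[Theorem 4.3.1]{N}, so the "proof" here is really a matter of reducing the assertion to that reference and verifying its hypotheses are met in the exact form I need. The plan is to first peel off the flag $a_\bullet$ by induction: a general flag $\vn = a_0 \subsetneq a_1 \subsetneq \cdots \subsetneq a_\ell = [n]$ is obtained by repeatedly inserting intermediate subsets, so by the cocycle identity for the maps $r_{a_\bullet}$ (\cite[Lemma 3.2.9]{N}) it suffices to treat the case of a single insertion $\vn \subset b \subset [n]$, i.e. to show that the base-change map \eqref{eq:bc map}
\[
\xymatrix{
\psi^{[n]}_\vn(\cF) \ar[r] & \psi^b_\vn \circ \psi^{[n]}_b(\cF)
}
\]
is an equivalence for $\cF \in \Sh_\Lambda(M_{>0})$. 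This is the heart of the matter and is exactly the content of~\cite[Theorem 4.3.1]{N}: that reference establishes precisely this commutation of unbiased nearby cycles along coordinate directions under the non-characteristic and Thom-at-the-origin hypotheses.

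\textbf{Key steps.} First I would set up the three-step reduction just described: (i) reduce from an arbitrary flag to a single intermediate subset using the associativity/cocycle compatibility of the maps $r_{a_\bullet}$ and the fact that a composition of equivalences is an equivalence; (ii) observe that the map $r_{a_\bullet}$ restricted to a single insertion is the base-change map \eqref{eq:bc map}, whose independence from auxiliary choices of ambient subspace is recorded in Remark~\ref{rem:equiv bc}; (iii) quote~\cite[Theorem 4.3.1]{N} to conclude. Along the way I must check that the two hypotheses of Theorem~\ref{thm:comm nc} propagate correctly under the reduction. Hypothesis (1), $\pi_{>0}$-non-characteristic, is a condition on $\Lambda$ over the open torus $A_{>0}$ and is unchanged when we restrict attention to the pair of faces $A^+_\vn, A^+_{[n]}$; moreover, by Lemma~\ref{lem:rel ss} (or its closure version), the relative singular support estimate $ss_\pi(\psi^{[n]}_b \cF) \subset \ol{ss_\pi(\cF)}$ shows that after applying the first nearby-cycles step $\psi^{[n]}_b$, the resulting sheaf on $M^+_b$ still has relative singular support inside the closure of $\ol{\Pi(\Lambda)}$, so hypothesis (2)—isotropy of the zero-fiber of the closure of $\Pi(\Lambda)$—is inherited at each successive stage. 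This is the bookkeeping that makes the inductive step legitimate.

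\textbf{Main obstacle.} The genuine mathematical content is entirely in~\cite[Theorem 4.3.1]{N}, so within the scope of this paper the only real work is the verification that hypothesis (2) is stable under iterated nearby cycles, i.e. that after one step $\psi^{[n]}_b$ the new Lagrangian (or rather the relevant closed conic set controlling the next nearby-cycles operation) still satisfies the Thom-at-the-origin condition. The subtlety is that $\psi^{[n]}_b(\cF)$ need not have singular support contained in a Lagrangian of the same shape, only in something bounded by $\ol{ss_\pi(\cF)}|_{M^+_b}$ via Lemma~\ref{lem:rel ss}; one must check that taking the closure of the projection to the relative cotangent bundle and then restricting to the origin does not enlarge the zero-fiber beyond an isotropic set. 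This follows because $\ol{\Pi(\Lambda)}$ is already closed and its zero-fiber over the origin is isotropic by assumption, and passing to a face only shrinks the relevant fiber; but making this precise in the stacky, real-subanalytic setting requires invoking Lemma~\ref{lem:rel ss stacks} and the dimension/isotropy bookkeeping of Lemma~\ref{l:pres iso} and Lemma~\ref{l:trans Lag}. I expect this propagation argument, rather than the core commutation statement itself, to be the step that needs the most care.
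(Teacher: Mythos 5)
The paper contains no proof of Theorem~\ref{thm:comm nc}: it is quoted verbatim from \cite[Theorem 4.3.1]{N}, and that reference already treats arbitrary flags $a_\bullet$, not only a single insertion. So the part of your proposal that simply defers to \cite{N} coincides with what the paper does, and to that extent there is nothing further to verify within this paper.

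The reduction you interpose, however, is both unnecessary and, if it were actually carried out, would not go through as sketched. Factoring $r_{a_\bullet}$ into single-insertion base-change maps and inducting on the length of the flag means that after the first step the base-change maps are applied not to $\cF$ but to intermediate sheaves such as $\psi^{[n]}_b(\cF)$ on $M^+_b$; to invoke a single-insertion commutation statement for these you would need \emph{both} hypotheses of the theorem for them, over the smaller orthant $A^+_b$. The Thom condition (2) does propagate by the closure argument you indicate, since Lemma~\ref{lem:rel ss} gives $ss_\pi(\psi^{[n]}_b\cF)\subset \ol{ss_\pi(\cF)}|_{M^+_b}$ and the zero-fiber of $\ol{\Pi(\Lambda)}$ is isotropic by assumption. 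But hypothesis (1) — non-characteristicity of the \emph{absolute} singular support of $\psi^{[n]}_b(\cF)$ with respect to the projection to the open part of $A^+_b$ — is not addressed at all: Lemma~\ref{lem:rel ss} controls only the relative singular support and says nothing about $ss(\psi^{[n]}_b\cF)\cap \pi^*(T^*A^+_b)$, and the hypotheses of the theorem constrain $\Lambda$ only over $A_{>0}$ and, via $\ol{\Pi(\Lambda)}$, at the origin. Your ``main obstacle'' paragraph thus focuses on the wrong hypothesis; the failure of naive propagation of the non-characteristic condition at intermediate stages is precisely the difficulty the microlocal argument of \cite{N} is designed to circumvent, and it cannot be recovered from the lemmas of Appendix~\ref{app:B}. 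If you intend to give a proof rather than a citation, you must reproduce (or adapt) the argument of \cite{N} directly for the full flag, not bootstrap it from the two-term case.
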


\subsubsection{Stacks and real base}\label{sss:notation nc stack}

We apply a generalization of Theorem~\ref{thm:comm nc} to  stacks  in the proof of Proposition~\ref{p:sh functor from C}(2). Let us spell this out here and explain how to deduce it from  Theorem~\ref{thm:comm nc}.

Let $B$ be a smooth scheme over $\CC$ with a quasi-finite map $g: B\to \AA^{[n]}$. Let $B^{+}\subset B(\CC)$ be a subanalytic subset (in the sense defined in \S\ref{sss:subset}) such that $g$ restricts to a homeomorphism onto an open neighborhood of $0$ in $A^{+}=\RR_{\ge0}^{[n]}\subset \AA^{[n]}(\CC)$. Let $g^{+}: B^{+}\to A^{+}$ be the restriction of $g$.  Recall from \S\ref{sss:unbiased mfd} the positive quadrants $A^{+}_{a}\subset A$ for $a\subset [n]$. Set $B_{a}^{+}=(g^{+})^{-1}(A^{+}_{a})\subset B^{+}$.

Let $X\in\Sta$ be an algebraic stack, and $\pi: X\to B$ be a smooth map. For $a\subset [n]$, let $X^{+}_a = X \times_{B} B_{a}^{+}$ be the base change of $\pi$ to $B_{a}^{+}$, viewed as a subanalytic subset of $X$.

%


 

For $a \subset b\subset [n]$, we have the inclusions of subanalytic  subsets of $X$:
\begin{equation*}
\xymatrix{
 X^{+}_a \ar[r]^-{j_a^{a, b}}  & X^{+}_a \cup X^+_b & \ar[l]_-{j_b^{a, b} } X^+_b 
}
\end{equation*}
For $a \subset b\subset [n]$, we have the  unbiased nearby cycles 
\begin{equation*}
\xymatrix{
\psi^{b}_a: \Sh(X^+_b) \ar[r] & \Sh(X^+_a) 
&
\psi^b_a =(j_a^{a, b})^* (j_b^{a, b})_* 
}
\end{equation*} 

For a flag  $a_{\bu}$ of subsets  $a_{0}\subsetneq a_{1}\subsetneq \cdots\subsetneq a_{\ell}\subset [n]$. There is a natural transformation (see \cite[Lemma 3.2.9]{N})
\begin{equation*}
\xymatrix{
 r_{a_{\bu}}: \psi_{a_{0}}^{a_{\ell}} \ar[r]  & \psi(a_{\bu}):=\psi^{a_{1}}_{a_{0}} \circ  \cdots\c\psi^{a_{\ell}}_{a_{\ell-1}}   :\Sh(X^+_{a_{\ell}})\ar[r] &  \Sh(X^+_{a_{0}}) 
}
\end{equation*}

%

Now we  focus on the case $ a_{0}=\vn$ and $a_{\ell}= [n]$. In this case,   
we set $X_{>0} = X^+_{[n]} =  X|_{B_{>0}}$,  $\pi_{>0}: X_{>0}\to B_{>0}$ be the restriction of $\pi$. Let $X_0$ be the fiber of $X$ over the point $B_{\vn}^{+}\subset B^{+}$. Also write $\psi = \psi^{[n]}_{\vn}: \Sh(X_{>0})\to \Sh(X_{0})$.


\begin{theorem}\label{th:comm nc stack}
Suppose $\Lambda \subset T^*X_{>0}$ is a closed conic Lagrangian such that

\begin{enumerate}

\item $\Lambda$ is $\pi_{>0}$-non-characteristic: the intersection 
$$ \Lambda\cap \pi^*_{>0}(T^*B_{>0})
$$ lies in the zero-section of $T^*X_{>0}$. 
\item $\Lambda$ is  $\pi$-Thom at the origin: consider the closure $ \ol{\Pi(\Lambda)}$ of the projection $\Pi(\L)\subset T^{*}_{\pi_{>0}}$ in the relative cotangent $T^*_{\pi_{>0}}$, then its zero-fiber
 $$
 \xymatrix{
 \ol{\Pi(\Lambda)}|_0  \subset T^*_\pi|_{B_{\vn}^{+}} \simeq T^*(X_{0})
 }$$ is isotropic.
\end{enumerate}

Then  for $\cF \in \Sh_\Lambda(X_{>0})$, the  map induced by base-change  is an isomorphism  in $\Sh(X_{0})$
\beq\label{eq:bc map for stacks}
\xymatrix{
 r_{a_{\bu}}(\cF): \psi(\cF) \ar[r]^-\sim  & \psi(a_{\bu})(\cF).
}
\eeq
\end{theorem}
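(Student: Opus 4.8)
The strategy is to reduce Theorem~\ref{th:comm nc stack} to its scheme-theoretic predecessor, Theorem~\ref{thm:comm nc}, by passing to a smooth cover of $X$. First I would choose a smooth surjective map $v\colon V\to X$ from a scheme $V$, and further base-change everything to $B^+$ viewed as a real manifold via the homeomorphism $g^+$. Writing $V_{>0} = V\times_X X_{>0}$, $V_0 = V\times_X X_0$, etc., and $\pi_V = \pi\circ v\colon V \to B$, the key point is that the formation of unbiased nearby cycles $\psi^b_a$ and of the comparison maps $r_{a_\bullet}$ commutes with $*$-pullback along $v$. Concretely, the various inclusions $j^{a,b}_a$ of subanalytic subsets of $X$ pull back to the corresponding inclusions for $V$, and since $v$ is smooth (hence $v^*$ commutes with $*$-pushforward along locally closed embeddings up to the relevant base-change, and with $*$-pullback trivially), one gets a canonical isomorphism $v^* \psi^b_a \simeq \psi^b_a v^*$ compatible with the structure maps \eqref{eq:bc map}. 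Iterating, $v_0^* \circ r_{a_\bullet} \simeq r_{a_\bullet} \circ v_{>0}^*$ as natural transformations $\Sh(X_{>0}) \to \Sh(V_0)$. Because $v_0\colon V_0 \to X_0$ is a smooth surjective map, $v_0^*$ is conservative on $\Sh(X_0)$ in the sense that a morphism in $\Sh(X_0)$ is an isomorphism iff its pullback to $\Sh(V_0)$ is; hence it suffices to prove \eqref{eq:bc map for stacks} after applying $v_{>0}^*$.

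Second, I would verify that the hypotheses transfer: if $\Lambda \subset T^*X_{>0}$ is $\pi_{>0}$-non-characteristic and $\pi$-Thom at the origin, then $\Lambda_V := \oll{v_{>0}}(\Lambda) \subset T^*V_{>0}$ (the transport of $\Lambda$ along the smooth map $v_{>0}$) is $\pi_{V,>0}$-non-characteristic and $\pi_V$-Thom at the origin. For the non-characteristic condition, this is immediate from the short exact sequence $0 \to \pi_{>0}^*(T^*B_{>0}) \to T^*X_{>0} \to T^*_{\pi_{>0}} \to 0$ and its pullback: the kernel of $dv_{>0}$ lies in the relative cotangent directions of $v_{>0}$, which are disjoint from $\pi_{V,>0}^*(T^*B_{>0})$, so $\Lambda_V \cap \pi_{V,>0}^*(T^*B_{>0})$ maps into $\Lambda \cap \pi_{>0}^*(T^*B_{>0})$, the zero section. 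For the Thom condition, one uses that $v_0\colon V_0 \to X_0$ is smooth, so $\oll{v_0}$ carries the isotropic subset $\ol{\Pi(\Lambda)}|_0 \subset T^*X_0$ to an isotropic subset of $T^*V_0$ by Lemma~\ref{l:pres iso} (or its manifold version \cite[Proposition 8.3.11]{KS}), and this isotropic subset contains the zero-fiber of $\ol{\Pi(\Lambda_V)}$ because transport of Lagrangians commutes with the restriction-to-origin (compatibility of $\Pi$ with smooth base change, as in the proof of Proposition~\ref{p:univ cone base change}). Finally, for $\cF \in \Sh_\Lambda(X_{>0})$, the pullback $v_{>0}^*\cF$ lies in $\Sh_{\Lambda_V}(V_{>0})$ by \cite[Proposition 5.4.13]{KS} applied to the smooth (hence non-characteristic) map $v_{>0}$. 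Then Theorem~\ref{thm:comm nc} applied to $\pi_V\colon V \to B \xrightarrow{\sim_{\text{loc}}} A$ near the origin gives that $r_{a_\bullet}(v_{>0}^*\cF)$ is an isomorphism, and transporting back through $v_0^* r_{a_\bullet}(\cF) \simeq r_{a_\bullet}(v_{>0}^*\cF)$ and conservativity of $v_0^*$ finishes the argument.

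\textbf{Expected main obstacle.} The genuinely delicate step is establishing the compatibility $v^* \psi^b_a \simeq \psi^b_a v^*$ together with the compatibility of the comparison maps $r_{a_\bullet}$ under pullback, in the generality of sheaves on relative real analytic spaces over stacks as developed in Appendix~\ref{app:A}. This requires unwinding that $*$-pushforward along the locally closed embeddings $j^{a,b}_b\colon X^+_b \hookrightarrow X^+_a \cup X^+_b$ base-changes correctly along the smooth map $v$ — which is smooth base change, but in the relative-real-analytic setting one must check it term-by-term over $\Sch^{sm}_{/X}$ using the definitions in \S\ref{sss:real sh functors}, and then check that the natural transformation \eqref{eq:bc map} (built from unit/counit morphisms of these adjunctions) is preserved under $v^*$. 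This is routine in spirit but bookkeeping-heavy; once it is in place, the reduction to Theorem~\ref{thm:comm nc} and the transfer of hypotheses are straightforward. An alternative, perhaps cleaner, route is to observe that both $\psi$ and $\psi(a_\bullet)$ and the map between them are, by the limit-over-$\Sch^{sm}_{/X}$ definition of $\Sh(-)$, literally assembled from the corresponding data on the schemes $V_{V',v'}$ appearing in the limit, so \eqref{eq:bc map for stacks} is an isomorphism iff it is so after restricting to each such scheme — reducing directly to Theorem~\ref{thm:comm nc} without ever choosing a single global cover.
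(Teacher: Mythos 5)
Your proposal is correct and follows essentially the same route as the paper: reduce to Theorem~\ref{thm:comm nc} by checking smooth-locally over charts $(V,v)\in\Sch^{sm}_{/X}$, using that $\psi$, $\psi(a_\bu)$ and the comparison map commute with smooth pullback, and that the hypotheses pass to the charts. The only difference is that your verification of the transfer of the non-characteristic and Thom conditions is more elaborate than needed, since in the paper's framework a Lagrangian on a stack and these conditions are by definition given chart-by-chart, so the transfer is tautological.
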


\begin{proof}
To check \eqref{eq:bc map for stacks} is an isomorphism it suffices to check smooth locally. Thus consider $(V,v)\in \Sch^{sm}_{/X}$ and let $V_{>0}, V^{+}_{a}, V_{0}$ be the preimages of $X_{>0},  X^{+}_{a}$ and $X_{0}$ under $v$. Let $v_{>0}: V_{>0}\to X_{>0}$ and $v_{0}: V_{0}\to X_{0}$ be the restrictions of $v$. It suffices to check \eqref{eq:bc map for stacks} is an isomorphism after applying $v_{0}^{*}$ to both sides. Since $\psi$ and $\psi(a_{\bu})$ commute with smooth pullback, we reduce to checking
\begin{equation*}
v_{0}^{*}r_{a_{\bu}}(\cF): \psi_{V}(v^{*}_{>0}\cF)\to \psi_{V}(a_{\bu})(v^{*}_{>0}\cF)
\end{equation*}
is an isomorphism in $\Sh(V_{0})$. Here $\psi_{V}, \psi_{V}(a_{\bu})$ are the counterparts of $\psi$ and $\psi(a_{\bu})$ for the family $\pi\c v: V\to B$.  Moreover, the non-characteristic and Thom hypotheses hold for $(\L, \cF)$ exactly means that they hold for $(\oll{v_{>0}}(\L), v^*_{>0}\cF)$ for any $(V,v)\in \Sch^{sm}_{/X}$. Thus we may reduce to the case of $\pi\c v: V\to B$. This is the case provided by Theorem~\ref{thm:comm nc} and so we are done.
\end{proof}

%
%



\end{document}